\def\ps@pprintTitle{%
	\let\@oddhead\@empty
	\let\@evenhead\@empty
	\def\@oddfoot{}%
	\let\@evenfoot\@oddfoot}
\long\def\pprintMaketitle{\clearpage
	\iflongmktitle\if@twocolumn\let\columnwidth=\textwidth\fi\fi
	\resetTitleCounters
	\def\baselinestretch{1}%
	\printFirstPageNotes
	\begin{center}%
		\thispagestyle{pprintTitle}%
		\def\baselinestretch{1}%
		\Large\@title\par\vskip18pt
		\normalsize\elsauthors\par\vskip10pt
		\footnotesize\itshape\elsaddress\par\vskip36pt
		\ifvoid\absbox\else\unvbox\absbox\par\vskip10pt\fi
		\ifvoid\keybox\else\unvbox\keybox\par\vskip10pt\fi
	\end{center}%
	\gdef\thefootnote{\arabic{footnote}}%
}
\newtheorem{theorem}{Theorem}[section]
\newtheorem{assumption}[theorem]{Assumption}
\newtheorem{proposition}[theorem]{Proposition}
\newtheorem{definition}[theorem]{Definition}
\newtheorem{corollary}[theorem]{Corollary}
\newtheorem{lemma}[theorem]{Lemma}
\newtheorem{remark}[theorem]{Remark}
\numberwithin{theorem}{section}
\newcommand{\bx}{\mathbf{x}}
\newcommand{\tbx}{\widetilde{\mathbf{x}}}
\newcommand{\bpsi}{\boldsymbol{\psi}}
\renewcommand{\Im}{\operatorname{Im}}
\renewcommand{\Re}{\operatorname{Re}}
\renewcommand{\H}{\mathcal{H}}
\renewcommand{\vec}[1]{\mathbf{#1}}
\newcommand{\re}{\varepsilon}
\newcommand{\isreport}{1}
\newcommand{\citeappendixmodel}{
	\ifnum\isreport=1
	Appendix~\ref{appendix:derivation}:
	\else
	\cite[Appendix A]{mk_peillon}:
	\fi
}
\newcommand{\citeappendixregularity}{
	\ifnum\isreport=1
	Appendix~\ref{appendix:regularity_proof}
	\else
	\cite[Appendix D]{mk_peillon}
	\fi
}
\begin{document}
	\begin{frontmatter}
			\title{Limiting absorption principle for a hybrid resonance in a two-dimensional cold plasma}
		\author{Maryna Kachanovska\corref{cor1}} 
	\cortext[cor1]{Corresponding author}
	\ead{maryna.kachanovska@inria.fr}
	\author{\'Etienne Peillon}
	\address{{POEMS, CNRS, Inria, ENSTA, Institut Polytechnique de Paris, 91120 Palaiseau, France}}
	

		\begin{abstract}
			We study a limiting absorption principle for the boundary-value  problem describing a hybrid plasma resonance, with a regular coefficient in the principal part of the operator that vanishes on a curve inside the domain and changes its sign across this curve. We prove the limiting absorption principle by establishing a priori bounds on the solution in certain weighted Sobolev spaces. Next, we show that the solution can be decomposed into  regular and  singular parts. A peculiar property of this decomposition enables us to introduce a radiation-like condition in a bounded domain and to state a well-posed problem satisfied by the limiting absorption solution. 
		\end{abstract}
		\begin{keyword}
			Limiting absorption principle \sep hybrid resonance \sep weighted Sobolev space \sep singular solutions 
			
			
			
			
		\end{keyword}
		
	\end{frontmatter}

\section{Introduction}
Time-harmonic electromagnetic wave propagation in a cold plasma is described by the Maxwell's equations with a frequency- and space-dependent tensor of dielectric permittivity. Various degeneracies of this tensor lead to plasma resonances, which, mathematically, is described by the occurance of singular solutions to the underlying PDEs. Much attention in the last decade was devoted to the mathematical and numerical analysis of the situation of a hybrid plasma resonance in two dimensions, where the diagonal of plasma tensor vanishes on a given spatial curve, but the off-diagonal entries are bounded away from zero, see the recent works \cite{MR3192426, nicolopoulos} and references therein. Inside a domain $D\subset \mathbb{R}^2$, the time-harmonic magnetic field $D\ni\vec{x}\mapsto B_3(\vec{x})$ satisfies the following PDE, see \cite{nicolopoulos,ciarlet_mk_peillon} or \citeappendixmodel:
\begin{align}
	\label{eq:B3}
	\operatorname{div}\big((\alpha\mathbb{N}+i\nu\mathbb{H})\nabla B_3^{\nu}\big)-\omega^2 B_3^{\nu}=0 \text { in }D,
\end{align}
where $\omega\geq 0$ is a given fixed frequency, $\nu>0$ is an absorption parameter, and the tensors $\mathbb{N}, \mathbb{H}: \, \overline{D}\rightarrow \mathbb{C}^{2\times 2}$ are Hermitian positive definite. The behaviour of the coefficient $\alpha: \, D\rightarrow \mathbb{C}$ is responsible for an unusual behaviour of solutions to \eqref{eq:B3}.  In particular, in the situation of a hybrid resonance, $\alpha=0$ on a loop $I\subset D$. In this work we concentrate on the case when, in the vicinity of $I$,  
\begin{align*}
	\alpha(\bx)=\operatorname{dist}(\bx,I),\quad  \text{ 	where $\operatorname{dist}(\bx,I)$ is a signed distance from $\bx$ to $I$.}
\end{align*}
We are interested in establishing a limiting absorption principle for the problem \eqref{eq:B3} equipped with appropriate boundary conditions and a sufficiently regular right-hand side. Studies of \eqref{eq:B3}, up to our knowledge, were initiated by B. Despr\'es and his many co-workers (L.-M. Imbert-Gerard, M.-C. Pinto, R. Weder, A. Nicolopoulos, O. Lafitte, P. Ciarlet Jr., cf. \cite{MR3192426,DESPRES20161284,imbertgerard2013Mathematical,MR3959808,nicolopoulos,MR3705789,MR3517445,MR3611102,nicolopoulos_phd}).  In these references, with an exception of \cite{nicolopoulos}, the first-order Maxwell system leading to \eqref{eq:B3} is considered. The following results are available in the existing literature. 

The limiting absorption principle has been proven in (a) a slab geometry with $\alpha$, $\mathbb{N}$ depending on a single variable; (b) in the 1D case, (c) in a very peculiar 2D situation where the separation of variables was possible.  Singularities of the obtained solutions are quite well-understood in these cases; in 1D, an important connection between \eqref{eq:B3} and the Bessel equation has been established. For these results, please see  \cite{MR3192426} for a thorough analysis of (a) with the third-kind integral equations,  \cite{MR3611102} for (b), and \cite{phd_thesis_peillon} for (c).

In particular, all these works seem to indicate that limiting absorption solutions to \eqref{eq:B3} ($B_3^+=\lim\limits_{\nu\rightarrow 0+}B^{\nu}_3$ in a certain topology) posses in particular a logarithmic and a jump singularity across the interface $I$ (i.e. are not in $H^1(D)$). 
%
%
%
%
Well-posed problems satisfied by the limiting absorption solution have been suggested in  \cite{MR3705789} and \cite{nicolopoulos}, with a full theoretical justification available in one dimension only. An improvement over \cite{nicolopoulos} was proposed in the work  \cite{ciarlet_mk_peillon}, which relaxed the regularity requirements necessary for the formulation of \cite{nicolopoulos} and has shown that it is injective even without the penalization terms. Moreover, under some technical assumptions,  the limiting absorption solution appears to satisfy this formulation.

From the above discussion, we see that in what concerns \eqref{eq:B3}, the following is missing:
\begin{enumerate}
	\item proof of the limiting absorption principle for \eqref{eq:B3};
	\item a well-posed problem satisfied by the limiting absorption solution;
	\item regularity results for limiting absorption solutions, especially in the case when $\mathbb{N}$ is a matrix.
\end{enumerate}
In this work we fill in these gaps. 
Unlike in the existing papers, we are able to treat the case when $\mathbb{H}, \, \mathbb{N}$ are no longer scalars, and depend on both variables $x, y$. We also prove the corresponding results for a large class of sufficiently regular domains. For the moment we concentrate our efforts around the case $\omega=0$.

Because we felt that the problem under consideration is already quite complicated, we decided to present the summary and proofs of the results of the paper for a simplified geometry first, and next argue that their extension to more general geometries is quite trivial. Thus, we refer the readers interested in the final results of the paper to the last section of the manuscript, namely, Section \ref{sec:general_geometries}; a presentation of these results to a simplified geometry, as well as more detailed comments can be found in Section \ref{sec:simplified}.  

This article is organized as follows. In Section \ref{sec:problem_setting} we introduce a simplified geometry, for which we will perform most of the computations, and outline the key results of the paper. Section \ref{sec:subdomains} is dedicated to preliminary results, namely, studies of \eqref{eq:B3} posed in subdomains with $\alpha>0$ (resp. $\alpha<0$). Next, we establish the limiting absorption principle in Section \ref{sec:LAP}. Section \ref{sec:LAP_Problem} is dedicated to a formulation of a well-posed problem satisfied by the limiting absorption solution. In Section \ref{sec:general_geometries} we show how all the arguments presented in previous sections can be altered to consider more general geometries and comment how $\omega\neq 0$ can be treated. In particular, we present the related results for a sufficiently regular domain with a hole.

\section{A simplified problem, notation, principal results}
\label{sec:problem_setting}
\subsection{A simplified problem on a rectangle}
\label{sec:simpl}
\subsubsection{The geometry and the boundary-value problem}
Let $\Omega$ be a rectangle divided in two sub-rectangles and an interface between them:
\begin{align}
	\label{eq:dom_def}
	\begin{split}
		&\Omega=(-a,a)\times (-\ell,\ell),\quad \Omega_p=(0,a)\times (-\ell,\ell), \quad \Omega_n=(-a, 0)\times(-\ell,\ell), \quad \Sigma=\{0\}\times(-\ell,\ell), \quad a>0. 
	\end{split}
\end{align}
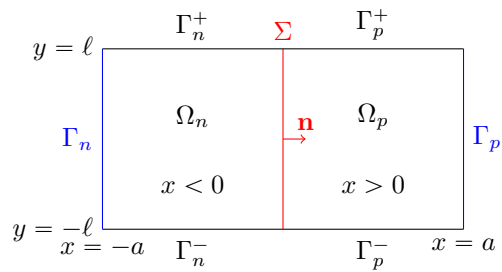
\begin{figure}[htb!]
	\centering
	\begin{tikzpicture}
		\begin{scope}[scale=1.2]
			\draw (-2,-1) -- +(4,0);
			\draw[red] (0,-1) -- +(0,2);
			\draw (-2,1) -- +(4,0);
			\draw[blue] (-2,-1) -- + (0,2);
			\draw[blue] (2,-1) -- + (0,2);
			\draw (-2,-1) node[below] { $x = - a$}
			node[left] { $y = -\ell$};
			\draw (2,-1) node[below] { $x = a$};
			\draw (-2,1) node[left] { $y = \ell$};
			
			\draw (1,0.25) node {$\Omega_p$};
			\draw (1, -0.5) node {$ x >0$};
			
			\draw (-1,0.25) node {$\Omega_n$};
			\draw (-1, -0.5) node {$ x <0$};
			
			\node[below] at (-1, -1) {$\Gamma_n^{-}$};
			\node[below] at (1, -1) {$\Gamma_p^{-}$};
			
			\node[above] at (-1, 1) {$\Gamma_n^{+}$};
			\node[above] at (1, 1) {$\Gamma_p^{+}$}; 
			
			\draw[red] (0,1) node[above] {$\Sigma$};
			\draw[blue] (-2,0) node[left] {$\Gamma_n$};
			\draw[blue] (+2,0) node[right] {$\Gamma_p$};
			
			\draw[red] (0.25,0) node[above] {$\mathbf{n}$};
			\draw[red,->] (0,0)--(0.25,0);
		\end{scope}
	\end{tikzpicture}
	\caption{An illustration to the simplified domain considered in Section \ref{sec:simpl}.}
	\label{fig:ill_geom}
\end{figure}

We denote by $\vec{x}=(x,y)$ an element of $\mathbb{R}^2$. 
The part of the boundary of $\Omega$ (resp. $\Omega_{\lambda}, \, \lambda\in\{p,n\}$) intersecting the lines $y=\pm\ell$ is denoted by $\Gamma^{\pm}$ (resp. $\Gamma^{\pm}_{\lambda}$, $\Gamma^{\pm}_{\lambda}:=\partial\Omega_{\lambda}\cap \{y=\pm\ell\}, \, \lambda\in\{p,n\}$). Let us set $\Gamma:=\partial\Omega$. We define the unit normal $\vec{n}$ to  $\partial\Omega\cup\Sigma$. It is directed into the exterior of $\partial\Omega$; and, when considering $\Sigma$, from $\Omega_n$ into $\Omega_p$, see Figure \ref{fig:ill_geom}.

Next, let us define two matrix-valued functions $\mathbb{A}, \, \mathbb{T}: \, \overline{\Omega}\rightarrow \mathbb{C}^{2\times2}$ that satisfy the following assumptions.
\begin{assumption}
	\label{assump:matrices}
	\begin{itemize}
		\item $\mathbb{A}, \, \mathbb{T}\in {C}^{1,1}(\overline{\Omega}; \mathbb{C}^{2\times 2})$. 
		\item For all $\bx\in \overline{\Omega}$, $\mathbb{A}(\bx), \mathbb{T}(\bx)$ are both Hermitian, positive-definite matrices.

		In particular,
		defining for $\vec{p}, \vec{v}\in \mathbb{C}^2$, 
		\begin{align*}
			\vec{p}\cdot \vec{v}=p_1v_1+p_2v_2,\quad |\vec{p}|^2=\|\vec{p}\|_{\mathbb{C}^2}^2=\vec{p}\cdot\overline{\vec{p}},
		\end{align*}
		it holds, for all $\vec{p}\in \mathbb{C}^2$, all $\vec{x}\in \overline{\Omega}$,  $
		\mathbb{T}(\bx)\vec{p}\cdot\overline{\vec{p}}\geq c_{\mathbb{T}}\|\vec{p}\|^2_{\mathbb{C}^2}, \quad 			\mathbb{A}(\bx)\vec{p}\cdot\overline{\vec{p}}\geq c_{\mathbb{A}}\|\vec{p}\|^2_{\mathbb{C}^2}, $ where $c_{\mathbb{T}}, c_{\mathbb{A}}>0$. 
		\item  Moreover, $\mathbb{A}$ and $\mathbb{T}$ satisfy the following periodicity constraints: 
		\begin{align*}
			\partial_y^k\mathbb{A}(.,\ell)=	\partial_y^k\mathbb{A}(.,-\ell), \qquad 	\partial_y^k\mathbb{T}(.,\ell)=	\partial_y^k\mathbb{T}(.,-\ell), \quad k=0,\,1. 
		\end{align*}
	\end{itemize}
	We will use the following notation for the values of $\mathbb{A}$ and $\mathbb{T}$ on $\Sigma$:
	\begin{align*}
		\left.	\mathbb{A}\right|_{\Sigma}=\left(
		\begin{matrix}
			a_{11} & a_{12}\\
			\overline{a_{12}} & a_{22}
		\end{matrix}
		\right), \qquad \left.	\mathbb{T}\right|_{\Sigma}=\left(
		\begin{matrix}
			t_{11} & t_{12}\\
			\overline{t_{12}} & t_{22}
		\end{matrix}
		\right).
	\end{align*}
\end{assumption}
An immediate corollary of the above assumption reads.
\begin{corollary}
	\label{cor:M}
	The matrix-valued function  $\vec{x}\mapsto\mathbb{M}_{\nu}(\vec{x}):=x\mathbb{A}(\vec{x})+i\nu\mathbb{T}(\vec{x})$ satisfies: 
	$\operatorname{\Im}\left(\mathbb{M}_{\nu}(\vec{x})\vec{p}\cdot\overline{\vec{p}}\right)\geq \nu c_{\mathbb{T}}\|\vec{p}\|^2_{\mathbb{C}^2}$, $\operatorname{\Re}\left(\mathbb{M}_{\nu}(\vec{x})\vec{p}\cdot \overline{\vec{p}}\right)=x \mathbb{A}(\vec{x})\vec{p}\cdot\overline{\vec{p}}$, for all $\vec{p}\in \mathbb{C}^2$. 
\end{corollary}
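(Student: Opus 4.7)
The statement is essentially an immediate algebraic consequence of Assumption \ref{assump:matrices}, so the proof plan is very short.

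The plan is to evaluate the quadratic form $\mathbb{M}_{\nu}(\vec{x})\vec{p}\cdot\overline{\vec{p}}$ by linearity of the pairing and separate real and imaginary parts. First I would observe that since $\mathbb{A}(\vec{x})$ and $\mathbb{T}(\vec{x})$ are Hermitian for every $\vec{x}\in\overline{\Omega}$ (by Assumption \ref{assump:matrices}), the scalars $\mathbb{A}(\vec{x})\vec{p}\cdot\overline{\vec{p}}$ and $\mathbb{T}(\vec{x})\vec{p}\cdot\overline{\vec{p}}$ are real for every $\vec{p}\in\mathbb{C}^2$. This is the only non-purely-computational input.

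Next, using the definition $\mathbb{M}_{\nu}(\vec{x}) = x\mathbb{A}(\vec{x}) + i\nu\mathbb{T}(\vec{x})$ together with the fact that $x\in\mathbb{R}$ and $\nu>0$, I would write
\begin{equation*}
  \mathbb{M}_{\nu}(\vec{x})\vec{p}\cdot\overline{\vec{p}} \;=\; x\,\bigl(\mathbb{A}(\vec{x})\vec{p}\cdot\overline{\vec{p}}\bigr) \;+\; i\nu\,\bigl(\mathbb{T}(\vec{x})\vec{p}\cdot\overline{\vec{p}}\bigr),
\end{equation*}
and since the two bracketed scalars are real, the first term is the real part and the second term (times $i$) is purely imaginary. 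This immediately gives the claimed identity $\operatorname{Re}(\mathbb{M}_{\nu}(\vec{x})\vec{p}\cdot\overline{\vec{p}})=x\,\mathbb{A}(\vec{x})\vec{p}\cdot\overline{\vec{p}}$ and $\operatorname{Im}(\mathbb{M}_{\nu}(\vec{x})\vec{p}\cdot\overline{\vec{p}}) = \nu\,\mathbb{T}(\vec{x})\vec{p}\cdot\overline{\vec{p}}$. Finally, I would conclude by applying the uniform coercivity bound $\mathbb{T}(\vec{x})\vec{p}\cdot\overline{\vec{p}}\geq c_{\mathbb{T}}\|\vec{p}\|^2_{\mathbb{C}^2}$ from Assumption \ref{assump:matrices} to obtain the lower bound on the imaginary part.

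There is no real obstacle here: the statement is a one-line consequence of the Hermitian and coercivity hypotheses on $\mathbb{T}$, and the only thing one has to be careful about is to invoke Hermiticity (not just positivity) to justify that $\mathbb{A}\vec{p}\cdot\overline{\vec{p}}$ and $\mathbb{T}\vec{p}\cdot\overline{\vec{p}}$ are real-valued, so that separating the $x$-part and the $i\nu$-part of $\mathbb{M}_\nu$ indeed corresponds to separating real and imaginary parts of the quadratic form.
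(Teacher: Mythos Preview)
Your proposal is correct and matches the paper's treatment: the paper gives no proof at all, simply labeling this as ``an immediate corollary of the above assumption,'' and your argument spells out exactly that immediate deduction from the Hermiticity and coercivity in Assumption~\ref{assump:matrices}.
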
%
We study the following family of well-posed problems, parametrized by the absorption parameter $\nu>0$: given $f\in L^2(\Omega)$, find $u^{\nu}\in H^2(\Omega)$, s.t. 
\begin{align}
	\label{eq:unu}
	\begin{split}
		& \operatorname{div}((x\mathbb{A}+i\nu\mathbb{T})\nabla u^{\nu})=f, \\
		&u^{\nu}=0 \text{ on }\Gamma_p\cup\Gamma_n, \\
		&u^{\nu}(x,\ell)=u^{\nu}(x, -\ell), \quad \text{ a. e. }x\in (-a, a),\\
		&\partial_y u^{\nu}(x,\ell)=\partial_y u^{\nu}(x, -\ell), \quad \text{a. e. }x\in (-a, a).
	\end{split}
\end{align}
Remark that in the above problem the matrix-valued function $\vec{x}\mapsto x\mathbb{A}(\vec{x})$ in the principal part of the operator degenerates on $\Sigma$, is positive definite in $\Omega_p$ and negative definite in $\Omega_n$. The problem is regularized by adding the elliptic viscosity term $i\nu\operatorname{div}(\mathbb{T}\nabla .)$. The goal of this manuscript is to show that $u^{\nu}$ converges in a given topology to a function $u^+$ and write a well-posed problem for $u^+$.



\subsubsection{Notation}
\paragraph{\textbf{Sobolev spaces with periodic and homogeneous boundary conditions}} Recall that $L^2(\Omega)$ is a space of complex-valued square-integrable functions on $\Omega$. We will use the following notation:
\begin{align*}
	\|v\|^2=\|v\|^2_{L^2(\Omega)}=\int_{\Omega}|v(\vec{x})|^2d\vec{x} \quad (u, v)=(u,v)_{L^2(\Omega)}=\int_{\Omega}u(\vec{x})\overline{v(\vec{x})}d\vec{x}.
\end{align*} 
We also have, for $m\geq 1$,  $H^m(\Omega):=\{v\in L^2(\Omega): \, \|v\|^2_{H^m(\Omega)}:=\|v\|^2+\sum\limits_{|\vec{\boldsymbol{\beta}}|_{1}\leq m}\|D^{\boldsymbol{\beta}} v\|^2<+\infty\}. $ In a similar manner, we define the spaces  $L^2(\Omega_{\lambda})$ and $H^m(\Omega_{\lambda})$, $\lambda\in \{n,p\}$. We will use notation $\|.\|_{\mathcal{O}}:=\|.\|_{L^2(\mathcal{O})}$; the meaning of the scalar product $(.,.)$ as a scalar product of $L^2(\Omega)$ or $L^2(\mathcal{O})$ will be clear from the context.  	

\textit{Typically all the functions we consider are periodic in the direction $y$ and have vanishing traces on $\Gamma_p$ (resp. $\Gamma_n$). 
	The associated spaces will be denoted by calligraphic letters ($\mathcal{C}^m$, $\mathcal{H}^s$ etc.). We avoid putting the indices $0$ in the definitions of such spaces (cf. e.g. $H^1_0$), since the functions we consider, in general, do not vanish on $\Sigma$.}
\paragraph{\textbf{Standard trace operators and their restrictions}}
For a piecewise-regular $u$,  i.e. $\left. u\right|_{\Omega_{\lambda}}\in C^1(\overline{\Omega_{\lambda}})$, $\lambda\in \{n,p\}$,  we define associated trace operators:
\begin{align}
	\label{eq:nmtraces}
	\gamma_0^\mathcal{D}u=\left. u\right|_{\mathcal{D}},   \qquad
	\gamma_n^\mathcal{D}u=\left.x \mathbb{A}\nabla u\cdot \vec{n}\right|_{\mathcal{D}}, \qquad\gamma_{n, \nu}^{\mathcal{D}}u=\left.(x \mathbb{A}+i\nu\mathbb{T})\nabla u\cdot \vec{n}\right|_{\mathcal{D}}\qquad  \mathcal{D}\subset \Sigma\cup\Gamma.
\end{align}
We introduce additionally, for $\vec{x}\in \Sigma$, right and left traces and a trace jump
\begin{align}
	\label{eq:trace_left_right}
	\begin{split}
		&\gamma_0^{\Sigma,p}u:=\lim\limits_{\Omega_p\ni \tilde{\vec{x}}\rightarrow \vec{x}}u(\tilde{\vec{x}}), \qquad	\gamma_0^{\Sigma,n}u:=\lim\limits_{\Omega_n\ni \tilde{\vec{x}}\rightarrow \vec{x}}u(\tilde{\vec{x}}),\qquad [\gamma_0^{\Sigma}u]:=	\gamma_0^{\Sigma,p}u-\gamma_0^{\Sigma,n}u.
	\end{split}	
\end{align}	
\textit{\textbf{Sobolev spaces of functions on $\Omega_{\lambda}$, $\lambda\in \{n,p\}$. }}	Given $\lambda\in \{n,p\}$, with an obvious abuse of notation where the spaces $H^{1/2}(\Gamma_{\lambda}^+)$ and $H^{1/2}(\Gamma_{\lambda}^{-})$ are identified, we define:
\begin{alignat*}{2}
	&\mathcal{H}^1(\Omega_{\lambda}) 
	&\;:=\; &\left\{ u \in H^1(\Omega_{\lambda}) : \gamma_{0}^{\Gamma_{\lambda}^+}u - \gamma_{0}^{\Gamma_{\lambda}^{-}}u = 0, \quad \gamma_{0}^{\Gamma_{\lambda}} u = 0 \right\}, \\
	&\mathcal{H}^1(\Omega) 
	&\;:=\; &\left\{ u \in H^1(\Omega) : \left. u \right|_{\Omega_{\lambda}} \in \mathcal{H}^1(\Omega_{\lambda}), \, \lambda \in \{n, p\} \right\},\\
	&\mathcal{H}^1(\Omega\setminus\Sigma) 
	&\;:=\; &\left\{ u \in L^2 (\Omega) : \left. u \right|_{\Omega_{\lambda}} \in \mathcal{H}^1(\Omega_{\lambda}), \, \lambda \in \{n, p\} \right\}.
\end{alignat*}
%
Additionally, we have, with  $\lambda\in \{n,p\}$, $k\in \mathbb{N}\cup\{+\infty\}$, 
\begin{alignat*}{2}
	&\mathcal{C}^k(\overline{\Omega}_{\lambda}) & := &\{u \in C^k(\overline{\Omega}_{\lambda}): \, \gamma_{0}^{\Gamma_{\lambda}}u=0, \, \gamma_0^{\Gamma_{\lambda}^+}D^{\vec{\beta}}u=\gamma_0^{\Gamma_{\lambda}^-}D^{\vec{\beta}}u, \quad |\vec{\beta}|\leq k\},  \quad,\\
	&\mathcal{C}^k_{comp}(\overline{\Omega}_{\lambda}) & := &\{u \in \mathcal{C}^k(\overline{\Omega}_{\lambda}): \operatorname{dist}(\operatorname{supp}u,\Sigma)>0\},\\	&\mathcal{C}^k(\overline{\Omega})&:=&\{u \in C^k(\overline{\Omega}):  \left. u\right|_{\Omega_{\lambda}}\in \mathcal{C}^k(\overline{\Omega_{\lambda}}), \quad \lambda\in \{n,p\}\}.
\end{alignat*}
For $k\geq 1$, we also define $\mathcal{H}^k(\Omega_{\lambda})=\overline{\mathcal{C}^{\infty}(\overline{\Omega}_{\lambda})}^{\|.\|_{H^1(\Omega_{\lambda})}}=\overline{\mathcal{C}^{k+1}(\overline{\Omega}_{\lambda})}^{\|.\|_{H^1(\Omega_{\lambda})}},$
and, for $s\in [0,1]$, 
\begin{align*}
	\mathcal{H}^s(\Omega_{\lambda})=\overline{\mathcal{C}^{\infty}(\overline{\Omega}_{\lambda})}^{\|.\|_{H^s(\Omega_{\lambda})}}=\overline{\mathcal{C}^k(\overline{\Omega}_{\lambda})}^{\|.\|_{H^s(\Omega_{\lambda})}}, \quad k\geq 1.
\end{align*}
In the above $\|v\|_{H^s(\Omega_{\lambda})}^2=\|v\|^2_{L^2(\Omega_{\lambda})}+|v|_{H^s(\Omega_{\lambda})}^2$, with $|.|_{H^s}$ being the usual Sobolev-Slobodeckii seminorm (cf. \cite[(3.18)]{mclean} for the respective definition). Recall that, for $0<s<1/2$, $\mathcal{H}^s(\Omega_{\lambda})=H^s(\Omega_{\lambda})$. 	

The above definitions extend naturally to $\Omega$ instead of $\Omega_{\lambda}$. 

\textit{\textbf{Sobolev spaces with weights.}} For $\delta\leq 2$, $\lambda\in \{n,p\}$, we define the family of Hilbert spaces (see  \cite[Theorem 1.3, Theorem 1.11 and its proof]{kufner1984define})
\begin{align}
	\label{eq:wsp}
	\begin{split}
		&L^2_{\delta}(\Omega_{\lambda}):=\{v\in L^2_{loc}(\Omega_{\lambda}): \, \|v\|_{L^2_{\delta}(\Omega_{\lambda})}:=\||x|^{\delta/2}v\|_{L^2(\Omega_{\lambda})}<\infty\},\\
		&\mathcal{H}^1_{\delta}(\Omega_{\lambda})=\{v\in L^2(\Omega_{\lambda}): \, \|u\|_{\mathcal{H}^1_{\delta}(\Omega_{\lambda})}^2:=\|u\|^2_{L^2(\Omega_{\lambda})}+\|\nabla u\|^2_{L^2_{\delta}(\Omega_{\lambda})}<\infty, \; \gamma_0^{\Gamma_{\lambda}}u=0,\;
		\gamma_0^{\Gamma_{\lambda}^+}u-\gamma_0^{\Gamma_{\lambda}^{-}}u=0\}.
	\end{split}
\end{align}
\begin{remark}
	To facilitate the distinction between these spaces, let us consider behaviour of functions from these spaces close to the interface $\Sigma$:
	\begin{itemize}
		\item for $\delta<1$, the trace operator $\gamma_0^{\Sigma,\lambda}\in \mathcal{L}(\mathcal{H}^1_{\delta}(\Omega_{\lambda}); L^2(\Sigma))$, cf. Corollary \ref{cor:tr} in Appendix \ref{appendix:weighted}.
		\item for $1\leq \delta<2$, this is not true. In particular,  $\mathcal{C}^{\infty}_{comp}(\Omega_{\lambda})$ are dense in $\mathcal{H}^1_{\delta}(\Omega_p)$, cf. Proposition \ref{prop:density_vreg}.  
	\end{itemize}	
\end{remark}
%
%
We single out two spaces of functions that do not admit traces on $\Sigma$: 
$
\mathcal{V}_{reg}(\Omega_{\lambda}):=\mathcal{H}^1_{1}(\Omega_{\lambda}), \, \mathcal{V}_{sing}(\Omega_{\lambda}):=\mathcal{H}^1_{2}(\Omega_{\lambda}), 
$
and define
\begin{align}
	\label{eq:two_spaces}
	\mathcal{V}_{reg}=\mathcal{V}_{reg}(\Omega):=\mathcal{V}_{reg}(\Omega_{n})\times \mathcal{V}_{reg}(\Omega_p), \quad \mathcal{V}_{sing}=\mathcal{V}_{sing}(\Omega):=\mathcal{V}_{sing}(\Omega_{n})\times \mathcal{V}_{sing}(\Omega_p).
\end{align}	
The introduction of these two spaces will be motivated further in the paper, see Section \ref{sec:homog}.

For $0\leq\delta<1$, we define the following space (remark that this space is defined globally on $\Omega$, unlike \eqref{eq:wsp}; it is Hilbert as argued in \cite[Theorem 1.11]{kufner1984define}): 
\begin{align*}
	\mathcal{H}^1_{\delta}(\Omega):=\{v\in L^2(\Omega): \, \|v\|_{\mathcal{H}^1_{\delta}(\Omega)}^2:=\|v\|^2+\||x|^{\delta/2}\nabla v\|^2<\infty,\quad \gamma_0^{\Gamma_p\cup\Gamma_n}u=0,  \quad \gamma_0^{\Gamma^+}u-\gamma_0^{\Gamma^-}u=0\},
\end{align*}
and for $\delta\geq 1$, we will make use solely of 
$\mathcal{H}^1_{\delta}(\Omega\setminus\Sigma)=\mathcal{H}^1_{\delta}(\Omega_{n})\times \mathcal{H}^1_{\delta}(\Omega_{p}).$  Importantly, for $\delta<1$,  $\mathcal{H}^1_{\delta}(\Omega\setminus\Sigma)\neq \mathcal{H}^1_{\delta}(\Omega)$. We will also need $
L^2_{\delta}(\Omega)=L^2_{\delta}(\Omega_n)\times L^2_{\delta}(\Omega_p).$\\
%
%
\textit{\textbf{Spaces on the interface $\Sigma$. }} 
Fractional Sobolev spaces $\mathcal{H}^s(\Sigma)$  are defined via 
\begin{align*}
	\mathcal{H}^s(\Sigma):=\{v\in L^2(\Sigma): \, \exists V\in \mathcal{H}^{s+1/2}(\Omega), \text{ s.t. }\, v=\gamma_0^{\Sigma}V\},
\end{align*}
for $s>0$ (cf. also Theorem 3.37 of \cite{mclean}), with the standard induced norm. 
We will also need
\begin{alignat*}{2}
	&\mathcal{H}^{-1/2}(\Sigma) 
	&\;:=\; &\left( \mathcal{H}^{1/2}(\Sigma) \right)',
\end{alignat*}
the dual space of linear forms. Let us define $\langle.,.\rangle_{V',V}$ the duality bracket \textit{linear} in both arguments:
\begin{align*}
	\langle g, h\rangle_{\mathcal{H}^{-1/2}(\Sigma), \mathcal{H}^{1/2}(\Sigma)}=\langle g, h\rangle_{\Sigma}:=\int_{\Sigma}g(y)h(y)dy, \quad \text{ when  }V=\mathcal{H}^{-1/2}(\Sigma),\quad g\in L^2(\Sigma).
\end{align*}
Similarly, $\langle g, h\rangle_{L^2(\Sigma)}=\int_{\Sigma}g(y)h(y)dy$.\\ 
\textit{\textbf{The Neumann trace on $\Sigma$}. }
We will need the conormal trace defined on $\Sigma$ for the problem \eqref{eq:unu} with $\nu=0$, whose strong counterpart is $\lim\limits_{\Omega_{\lambda}\ni \vec{x}\rightarrow \vec{x}_0\in \Sigma}({x}\mathbb{A}(\vec{x})\nabla u(\vec{x}))\cdot \vec{n}(\vec{x}_0)$. Remark that for $\mathcal{C}^1(\overline{\Omega_{\lambda}})$-functions this quantity vanishes, and we will be interested in the classes of functions where this is no longer the case. We will thus heavily use its variational characterization, which we recall for the convenience of the reader. Let us define the weighted space, $\delta\geq 0$, 
\begin{align}
	\label{eq:hdelta}
	\mathcal{H}_{\delta}(\operatorname{div}(x\mathbb{A}\nabla.); \Omega_{\lambda}):&=\{v\in \mathcal{V}_{sing}(\Omega_{\lambda}): \, \operatorname{div}(x\mathbb{A}\nabla v)\in L^2_{\delta}(\Omega_{\lambda}), \quad (\gamma_{n}^{\Gamma^+_{\lambda}}+\gamma_n^{\Gamma^{-}_{\lambda}})v=0\}, \quad \lambda\in \{n,p\}.
\end{align}
Given $u\in \mathcal{H}_0(\operatorname{div}(x\mathbb{A}\nabla.); \Omega_{\lambda})$, the conormal trace  $\gamma_n^{\Sigma,\lambda}u$ is well-defined via the generalized integration by parts formula (Theorem 2.2 in \cite{girault_raviart}), e.g. for $\lambda=p$,
\begin{align*}
	\langle \gamma_n^{\Sigma,p}u, \varphi\rangle_{\mathcal{H}^{-1/2}(\Sigma), \mathcal{H}^{1/2}(\Sigma)}:=-\int_{\Omega_p}\operatorname{div}(x\mathbb{A}\nabla u)\, {\Phi}d\vec{x}-\int_{\Omega_p}x\mathbb{A}\nabla u\, {\nabla \Phi}d\vec{x}, \quad \forall \Phi\in \mathcal{H}^1(\Omega_p) \text{ with }\gamma_0^{\Sigma}\Phi=\varphi.
\end{align*}	
The above definition of the conormal trace $\gamma_n^{\Sigma,p}u\in \mathcal{H}^{-1/2}(\Sigma)$ extends verbatim to the space   $\mathcal{H}_{\delta}(\operatorname{div}(x\mathbb{A}\nabla .); \Omega_{\lambda})$, provided that  $0<\delta<1$, since the expression 
$$\int_{\Omega_p}\operatorname{div}(x\mathbb{A}\nabla u)\, {\Phi}=\int_{\Omega_p}x^{\delta/2}\operatorname{div}(x\mathbb{A}\nabla u)\, x^{-\delta/2}{\Phi}d\vec{x}$$ is well-defined as the Lebesgue's integral, see Lemma \ref{lem:f_belongs_vreg}. Thus defined conormal trace satisfies $$\gamma_n^{\Sigma,p}\in \mathcal{L}(\,\mathcal{H}_{\delta}(\operatorname{div}(x\mathbb{A}\nabla.); \,\Omega_{\lambda}); \mathcal{H}^{-1/2}(\Sigma)), \quad 0\leq\delta<1.$$
Cf. also Theorem 2.2.22 of \cite{ciarlet_book}, as well as the discussion after Lemma 4.3 in \cite{mclean}. We will use these facts in the paper without referring to this discussion.

We will also need  $[\gamma_n^{\Sigma}u]:=\gamma_n^{\Sigma,p}u-\gamma_n^{\Sigma,n}u$, as well as the conormal trace for the problem \eqref{eq:unu} with $\nu>0$: 
\begin{align*}
	\gamma_{n,\nu}^{\Sigma,\lambda}u:=\lim\limits_{\Omega_{\lambda}\ni \vec{x}\rightarrow \vec{x}_0\in \Sigma}(({x}\mathbb{A}(\vec{x})+i\nu\mathbb{T}(\vec{x})\nabla u(\vec{x}))\cdot \vec{n}(\vec{x}_0), \quad\lambda\in \{n,p\}
\end{align*}

\textbf{\textit{Auxiliary notation. }}We will use the following notation, for the domain $\mathcal{O}$ being one of the domains $\Omega_{\lambda}$,  $\lambda\in \{n,p\}$, $\Omega$ or $\Omega\setminus\Sigma$:
\begin{align*}
	\mathcal{H}^{s-}(\mathcal{O}):=\bigcap_{0<\varepsilon\leq s}\mathcal{H}^{s-\varepsilon}(\mathcal{O}), \quad 	\mathcal{H}^{t-}(\Sigma):=\bigcap_{0<\varepsilon\leq t}\mathcal{H}^{t-\varepsilon}(\Sigma), \quad 0<t<1/2. 
\end{align*}
We will say that a sequence $v_k$ converges to $v$ in $\mathcal{H}^{s-}(\mathcal{O})$ if $v$ converges in $\mathcal{H}^{s-\re}(\mathcal{O})$ for all $\re>0$. 

Let us define a special set
\begin{align}
	\label{eq:omegaresigma}
	\Omega_{\Sigma}^{\delta}:=\{\bx\in \Omega: \, |\operatorname{dist}(\vec{x},\Sigma)|<\delta\},
\end{align}
and the family of cutoff functions, parametrized by the parameter $\varepsilon>0$, and supported in ${\Omega_{\Sigma}^{\re}}$:
\begin{align}
	\label{eq:cutoff_phi}
	\Omega\ni (x,y)\mapsto \varphi_{\varepsilon}(x):=\varphi_1\left(\frac{x}{\varepsilon}\right), \quad \varphi_1(x)=\left\{
	\begin{array}{ll}
		1, & |x|\leq 1/2,\\
		0, & |x|\geq 1,\\
		\in (0,1), & |x|\in (1/2,1),
	\end{array}
	\right.\quad\varphi_1\in C^{\infty}(\mathbb{R}).
\end{align}
In what follows, we will write $a\lesssim b$ to indicate that $a\leq Cb$, for some constant $C>0$, independent of the absorption parameter $\nu>0$ (cf. \eqref{eq:unu}) and data (traces/right-hand sides) of the problem, but possibly dependent on $\Omega$, $\Sigma$, $\mathbb{T}$, $\mathbb{A}$.

For brevity, we will sometimes write 
\begin{align*}
	&\int_{\Omega} f \text{ for }\int_{\Omega} f(\vec{x})d\vec{x},\quad 
	\int_{\Sigma} f\text { for }\int_{\Sigma}f(y)dy,\qquad\text{ BCs for boundary conditions}.
\end{align*}
By $\mathbb{A}^t$, $\mathbb{T}^t$ and so on we will denote transposes of matrices $\mathbb{A}$, $\mathbb{T}$ etc. 
\subsubsection{A preliminary well-posedness result and motivation}
For all $\nu>0$, the problem \eqref{eq:unu} is well-posed. Indeed, let the form $a_{\nu}: \, \mathcal{H}^1(\Omega)\times \mathcal{H}^1(\Omega)\rightarrow \mathbb{C}$ be defined by
\begin{align}
	\label{eq:anu}
	a_{\nu}(u, v):=(( x \mathbb{A}+i\nu\mathbb{T})\nabla u, \nabla v), \text{ so that } a_{\nu}(u^{\nu}, v)=-\int_{\Omega}f\, \overline{v}, \quad \forall v\in \H^1(\Omega). 
\end{align}
\begin{lemma}
	\label{lem:pb_abs_wp}
	For each $f\in L^2(\Omega)$, $\nu>0$, the problem \eqref{eq:unu} admits a unique solution $u^{\nu}\in \mathcal{H}^1(\Omega)$. Also, 
	\begin{align}
		\label{eq:est_main}
		&\nu^{1/2}\|\nabla u^{\nu}\|\lesssim  \sqrt{\|f\|\|u^{\nu}\|},\\
		\label{eq:est_main2}
		& \|u^{\nu}\|_{H^1(\Omega)}\lesssim \nu^{-1}\|f\|. 
	\end{align}
	The solution $u^{\nu}$ belongs to  $\mathcal{H}^2(\Omega)$ for all $\nu>0$. 
\end{lemma}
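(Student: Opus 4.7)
The plan is to apply Lax--Milgram to the sesquilinear form $a_\nu$ defined in \eqref{eq:anu} on the Hilbert space $\mathcal{H}^1(\Omega)$, and then invoke elliptic regularity for the $H^2$ claim.

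First, I would verify continuity of $a_\nu$, which is immediate from $\mathbb{A},\mathbb{T}\in C^0(\overline{\Omega})$ and boundedness of $x$ on $\Omega$. For coercivity, the key tool is Corollary \ref{cor:M}: for any $u\in \mathcal{H}^1(\Omega)$,
\begin{align*}
|a_\nu(u,u)|\;\geq\;|\Im a_\nu(u,u)|\;=\;\nu\,(\mathbb{T}\nabla u,\nabla u)\;\geq\;\nu\, c_{\mathbb{T}}\|\nabla u\|^2.
\end{align*}
Since functions in $\mathcal{H}^1(\Omega)$ vanish on $\Gamma_p\cup\Gamma_n$, a one-dimensional Poincar\'e inequality in $x$ gives $\|u\|\lesssim \|\nabla u\|$, hence $|a_\nu(u,u)|\gtrsim \nu \|u\|_{H^1(\Omega)}^2$. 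This yields coercivity for every fixed $\nu>0$, so Lax--Milgram provides a unique $u^\nu\in\mathcal{H}^1(\Omega)$ solving the variational problem associated with \eqref{eq:unu}; the periodicity and Dirichlet conditions on $\Gamma_p\cup\Gamma_n$ are enforced through the choice of trial/test space, while the periodic Neumann matching on $\Gamma^\pm$ arises as a natural boundary condition in the usual way.

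Next I would derive the two estimates by testing with $v=u^\nu$. Using $a_\nu(u^\nu,u^\nu)=-(f,u^\nu)$ and the imaginary-part bound above:
\begin{align*}
\nu\, c_{\mathbb{T}}\|\nabla u^\nu\|^2 \;\leq\; |\Im(f,u^\nu)|\;\leq\;\|f\|\,\|u^\nu\|,
\end{align*}
which gives \eqref{eq:est_main}. For \eqref{eq:est_main2}, the full coercivity bound gives $\nu\|u^\nu\|_{H^1(\Omega)}^2\lesssim \|f\|\,\|u^\nu\|\leq \|f\|\,\|u^\nu\|_{H^1(\Omega)}$, so dividing by $\|u^\nu\|_{H^1(\Omega)}$ yields the claim.

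For the $\mathcal{H}^2$-regularity, the standing $C^{1,1}$-assumption on $\mathbb{A},\mathbb{T}$ plus strong ellipticity of $x\mathbb{A}+i\nu\mathbb{T}$ (which, for $\nu>0$, follows from the positive imaginary part in Corollary \ref{cor:M}) put us in the standard second-order elliptic setting. To handle the mixed periodic/Dirichlet boundary conditions cleanly, I would extend $u^\nu$, $f$, $\mathbb{A}$, $\mathbb{T}$ periodically in $y$ to the strip $(-a,a)\times\mathbb{R}$; the matching of $\mathbb{A},\mathbb{T}$ and their $y$-derivatives on $y=\pm\ell$ (Assumption \ref{assump:matrices}) guarantees that the extended coefficients remain $C^{1,1}$, so no spurious corner singularities appear. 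Classical boundary $H^2$-regularity for elliptic systems with $C^{1,1}$ coefficients and a $C^{1,1}$ boundary (here the smooth lines $x=\pm a$) then applies, yielding $u^\nu\in H^2_{\mathrm{loc}}$ of the strip with periodic traces, i.e. $u^\nu\in \mathcal{H}^2(\Omega)$.

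The main subtlety I expect is not in Lax--Milgram itself but in rigorously checking that the periodic extension argument gives $H^2$ up to $\Gamma_p\cup\Gamma_n$ \emph{and} up to $\Gamma^\pm$; everything else is routine for $\nu>0$, and in particular no uniformity in $\nu$ is required at this stage.
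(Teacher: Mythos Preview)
Your proposal is correct and follows essentially the same route as the paper: coercivity via the imaginary part from Corollary~\ref{cor:M} combined with Poincar\'e, Lax--Milgram for existence/uniqueness, testing with $u^\nu$ for both estimates, and elliptic regularity (the paper simply cites \cite[Theorem~4.18]{mclean}) for the $\mathcal{H}^2$ claim. Your periodic-extension discussion for the $H^2$ step is more detailed than the paper's one-line reference but is in the same spirit.
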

\begin{proof}
	Consider \eqref{eq:anu} and remark that, by Corollary \ref{cor:M}, for any $u\in \mathcal{H}^1(\Omega)$, it holds that  
	\begin{align}
		\label{eq:anubound}
		\Im a_{\nu}(u, u)\geq c_{\mathbb{T}}\nu\|\nabla u\|^2\gtrsim \nu \|u\|^2_{H^1(\Omega)},
	\end{align}
	where the last bound follows by the Poincar\'e inequality in $\mathcal{H}^1(\Omega)$ (valid since functions from $\mathcal{H}^1(\Omega)$ vanish on $\Gamma_{p}\cup\Gamma_n$). The well-posedness of \eqref{eq:anu} in $\mathcal{H}^1(\Omega)$ follows by continuity of $a_{\nu}$ and the Lax-Milgram lemma.
	
	The stability estimate \eqref{eq:est_main} is obtained by taking the imaginary part of both sides of \eqref{eq:anu} and using  \eqref{eq:anubound}:
	\begin{align*}
		c_{\mathbb{T}}\nu\|\nabla u^{\nu}\|^2\leq \Im a_{\nu}(u^{\nu}, u^{\nu})=-\Im (f, u^{\nu})\leq \|f\|\|u^{\nu}\|.
	\end{align*}
	The bound \eqref{eq:est_main2} follows from the above, using the second inequality in \eqref{eq:anubound} and the Poincar\'e inequality:
	\begin{align*}
		\nu\| u^{\nu}\|^2_{H^1(\Omega)}\lesssim \|f\|\|u^{\nu}\|\lesssim \|f\|\|u^{\nu}\|_{H^1(\Omega)}.
	\end{align*}
	The fact that $u^{\nu}\in \mathcal{H}^2(\Omega)$ follows by elliptic regularity, cf. e.g. the proof of \cite[Theorem 4.18]{mclean}. 
\end{proof}
While the above estimate shows the well-posedness of \eqref{eq:anu}, it does not indicate any convergence properties of the sequence $(u^{\nu})_{\nu>0}$ as $\nu\rightarrow 0$.  The principal goal of this paper is to investigate this question in detail. We present the principal results of this paper in the following section. 
\subsection{Principal results for the simplified problem}
\label{sec:simplified}
%
%
%
Recall the definition of $\mathcal{V}_{sing}$ in \eqref{eq:two_spaces}. The following holds true. 
\begin{theorem}[Limiting absorption principle]
	\label{theorem:LAP}
	Given $f\in L^2(\Omega)$, consider the family of solutions  $(u^{\nu})_{\nu>0}\subset \mathcal{H}^1(\Omega)$ to \eqref{eq:unu}. Then, as $\nu\rightarrow 0+$, $u^{\nu}\rightarrow u^+\in \mathcal{V}_{sing}$ strongly in  $H^{1/2-}(\Omega)$. 
\end{theorem}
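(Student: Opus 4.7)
The plan is to reduce the theorem to a uniform (in $\nu$) a priori bound on $u^{\nu}$ in the weighted space $\mathcal{V}_{sing}$, and then to combine weak $\mathcal{V}_{sing}$--convergence with standard compactness and interpolation arguments to extract the strong limit in $H^{1/2-}(\Omega)$. The rough bound from Lemma \ref{lem:pb_abs_wp} blows up as $\nu\to 0+$, so genuinely new information is required; concretely, I aim to establish $\|u^{\nu}\|_{L^{2}(\Omega)} + \|x\nabla u^{\nu}\|_{L^{2}(\Omega\setminus\Sigma)} \lesssim \|f\|$ uniformly in $\nu$.

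For the weighted gradient estimate, I would test the weak formulation \eqref{eq:anu} with the multiplier $x\overline{u^{\nu}}$, which belongs to $\mathcal{H}^{1}(\Omega)$. Expanding $\overline{\nabla(xu^{\nu})} = \overline{u^{\nu}}\nabla x + x\overline{\nabla u^{\nu}}$ and taking real parts produces the coercive term $\int_\Omega x^{2}(\mathbb{A}\nabla u^{\nu})\cdot\overline{\nabla u^{\nu}} \geq c_{\mathbb{A}}\|x\nabla u^{\nu}\|^{2}$. The cross term $\operatorname{Re}\int x\,\overline{u^{\nu}}(\mathbb{A}\nabla u^{\nu})\cdot\nabla x$, after integration by parts in $x$ (using the Dirichlet condition on $\Gamma_p\cup\Gamma_n$) and in $y$ (using the periodicity in $\mathcal{H}^{1}(\Omega)$), reduces to contributions of the form $O(\|u^{\nu}\|^{2}) + O(\|u^{\nu}\|\|x\nabla u^{\nu}\|)$, with the second one absorbed through Young's inequality. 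The viscous pieces $i\nu\int(\mathbb{T}\nabla u^{\nu})\cdot\nabla x\,\overline{u^{\nu}}$ and $i\nu\int x\mathbb{T}|\nabla u^{\nu}|^{2}$ are handled by Cauchy--Schwarz, combined with $\nu^{1/2}\|\nabla u^{\nu}\| \lesssim \sqrt{\|f\|\|u^{\nu}\|}$ from \eqref{eq:est_main}, and the right-hand side $\int fx\overline{u^{\nu}}$ contributes $\|f\|\|u^{\nu}\|$. Together this yields the conditional estimate $\|x\nabla u^{\nu}\|^{2} \lesssim \|u^{\nu}\|^{2} + \|f\|^{2}$.

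The uniform $L^{2}$ bound $\|u^{\nu}\|\lesssim\|f\|$ is then obtained by a standard contradiction argument: if $\nu_k\to 0$ with $c_k := \|u^{\nu_k}\| \to \infty$, the rescalings $v^{\nu_k}:=u^{\nu_k}/c_k$ solve \eqref{eq:unu} with right-hand side $f/c_k\to 0$ in $L^{2}$ and, by the conditional bound, satisfy $\|v^{\nu_k}\|_{\mathcal{V}_{sing}} \lesssim 1$. Extracting a subsequence converging weakly in $\mathcal{V}_{sing}$, strongly in $L^{2}$ on each $\{|x|\geq\varepsilon\}\cap\Omega_\lambda$ via Rellich, and strongly in $L^{2}(\Omega)$ via a tightness argument near $\Sigma$ drawn from the equation, the limit $v^{+}$ solves the homogeneous degenerate problem $\operatorname{div}(x\mathbb{A}\nabla v^{+})=0$ on each $\Omega_{\lambda}$ with the natural conditions inherited from $\mathcal{V}_{sing}$; the uniqueness theory of Section \ref{sec:subdomains} then forces $v^{+}\equiv 0$, contradicting $\|v^{+}\|_{L^{2}}=1$. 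With the uniform $\mathcal{V}_{sing}$--bound secured, passing to the limit in \eqref{eq:anu} against $\varphi\in\mathcal{C}^{\infty}_{comp}(\overline{\Omega_{\lambda}})$ identifies any subsequential weak limit as the unique solution $u^{+}$ of the limiting problem, promoting weak subsequential convergence to convergence of the entire family.

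For the strong $H^{1/2-}(\Omega)$ convergence, I would combine the $L^{2}$--compactness from the previous step with a uniform $\mathcal{H}^{1/2-\eta_0}(\Omega)$--bound on $u^{\nu}$ for some small $\eta_0>0$ (plausible because logarithmic singularities and jumps across $\Sigma$ lie strictly below the trace threshold, and the $\mathcal{V}_{sing}$--bound together with the PDE regularity provides the required control), and conclude via the interpolation inequality $\|w\|_{H^{1/2-\eta}} \lesssim \|w\|_{L^{2}}^{\theta}\|w\|_{\mathcal{H}^{1/2-\eta_0}}^{1-\theta}$ for $\eta \in (\eta_0, 1/2)$. The main obstacle is the derivation of the weighted a priori bound: the nonscalar structure of $\mathbb{A}$ and $\mathbb{T}$ (nonzero off-diagonals $a_{12}$, $t_{12}$) mixes the $\partial_{x}$ and $\partial_{y}$ derivatives, so the integration by parts produces cross terms and borderline boundary contributions on $\Sigma$ that must be handled carefully; the contradiction step further relies crucially on the uniqueness theory for the degenerate subdomain problems developed in Section \ref{sec:subdomains}.
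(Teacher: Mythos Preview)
Your proposal has a fundamental gap in both the contradiction argument for the $L^2$ bound and the ``uniqueness of the limit'' step: you implicitly assume that the limiting homogeneous problem
\[
v\in \mathcal{V}_{sing}(\Omega),\qquad \operatorname{div}(x\mathbb{A}\nabla v)=0 \text{ in }\Omega
\]
(with the natural boundary conditions on $\partial\Omega$) is injective. It is not. This is precisely the central difficulty the paper is built around: the limits $u^+$ (from $\nu\to 0+$) and $u^-$ (from $\nu\to 0-$) are distinct but both satisfy $\operatorname{div}(x\mathbb{A}\nabla u^\pm)=f$ in $\Omega$, so their difference is a nontrivial element of the kernel. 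The uniqueness theory of Section~\ref{sec:subdomains} applies only to the \emph{decoupled} subdomain problems with homogeneous Neumann data $\gamma_n^{\Sigma}u=0$, and there is no mechanism in your argument forcing the rescaled limit $v^+$ to have vanishing conormal trace on $\Sigma$. Consequently the contradiction cannot be closed, and the identification of all subsequential limits with a single $u^+$ likewise fails.

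The paper avoids this entirely. For the $L^2$ bound it uses a \emph{direct} duality argument (Theorem~\ref{theorem:stability_estimate}): one introduces $p^{\nu}\in\mathcal{H}^1(\Omega\setminus\Sigma)$ solving the decoupled homogeneous Neumann problem with right-hand side $u^{\nu}$, tests the equation for $u^{\nu}$ against $p^{\nu}$, and controls the resulting interface term via a sharp $\mathcal{H}^{-1/2}(\Sigma)$ bound on $\gamma_{n,\nu}^{\Sigma}u^{\nu}$ (Proposition~\ref{prop:gnubound_proof}) together with the $\mathcal{H}^1$ regularity of $p^{\nu}$ from Theorem~\ref{theorem:regularity}. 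This yields $\|u^{\nu}\|\lesssim \|f\|$ without any compactness or injectivity of the limit. Uniqueness of the limit is then obtained not from well-posedness of the degenerate equation, but by showing (Theorem~\ref{theorem:convergence_decomposition}) that every subsequential limit satisfies the additional ``radiation'' condition $[\gamma_0^{\Sigma}u]=-i\pi a_{11}^{-1}\gamma_n^{\Sigma}u$, which \emph{does} force uniqueness (Proposition~\ref{theorem:Aplus}). Establishing this jump condition requires the refined $\mathcal{H}^{1/2}(\Sigma)$ control on the conormal trace (Theorem~\ref{theorem:gnubound_improved}) and the decomposition of Proposition~\ref{prop:aux}; none of this can be bypassed. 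Note also that $\mathcal{V}_{sing}=\mathcal{H}^1_2$ embeds only into $L^2$ by Lemma~\ref{lem:embedding}, so the $\mathcal{V}_{sing}$ bound alone gives no positive Sobolev regularity and your interpolation plan for strong $H^{1/2-}$ convergence lacks the needed uniform fractional estimate, which in the paper comes from Corollary~\ref{cor:cv}.
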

\begin{definition}
	The function $u^+$ defined in Theorem \ref{theorem:LAP} is called a 'limiting absorption solution'. 
\end{definition}
Next, it can be shown that in a weak sense $u^+$ satisfies $\operatorname{div}( x  \mathbb{A}\nabla u^+)=f$. To state what we mean by this, we start with the following observation: a function $u\in \mathcal{V}_{sing}(\Omega)$ necessarily satisfies $x\mathbb{A}\nabla u\in L^2(\Omega)$, as argued in Proposition \ref{prop:property2}. This enables us to introduce the following definition.
\begin{definition}
	\label{def:st}
	We will say that $u\in \mathcal{V}_{sing}(\Omega)$ satisfies  $\operatorname{div}(x\mathbb{A}\nabla u)=f$, $f\in L^2(\Omega),$ if and only if 
	\begin{align*}
		\int_{\Omega}x\mathbb{A}\nabla u\,\nabla \varphi=-\int_{\Omega}f\,\varphi, \quad  \text{for all }\varphi\in C_0^{\infty}(\Omega).
	\end{align*}
	
	The above is equivalent to requiring that
	$\operatorname{div}(x\mathbb{A}\nabla u)=f$ in $\Omega_p\cup\Omega_n$ and $[\gamma_n^{\Sigma}u]=0$. 
\end{definition}
Unfortunately, the weak solution to $\operatorname{div}( x \mathbb{A} \nabla u)=f$, when considered in the space $\mathcal{V}_{sing}$ and equipped with appropriate boundary conditions on $\partial\Omega$, appears to be non-unique. In particular, if the absorption in \eqref{eq:unu} is taken negative, i.e. $\nu<0$, then  $u^{\nu}\rightarrow u^{-}$ with $u^{-}\neq u^+$, and the limit $u^{-}$ satisfies $\operatorname{div}(x\mathbb{A}\nabla u)=f$.

This shows that to state a well-posed problem for $u^+$, we need to restrict the space of solutions. Such a space cannot be singled out by imposing the regularity constraints, as it is typical in elliptic PDEs. The reason for this is that the statement of Theorem \ref{theorem:LAP} holds true for the limit from the left $\nu\rightarrow 0-$, and the two limits $u^+$ and $u^-$ have the same regularity but do not coincide, see the discussion after Theorem \ref{theorem:main_result}. Thus, the restriction is done by introducing a radiation-like condition, similarly to how it is done for the Helmholtz equation in unbounded domains, which allows to distinguish between the two limits. 

In order to state such a radiation-like condition, we take inspiration from \cite{ciarlet_mk_peillon}. We will define a Neumann and a Dirichlet trace of a singular solution $u\in \mathcal{V}_{sing}(\Omega)$, and, as we will see, it is a relation between these traces that will ensures uniqueness of the solution to our problem. To do so, we start with the following decomposition for functions from the subspace of  \eqref{eq:hdelta}, $\delta=0$, with $\mathcal{H}^{1/2}(\Sigma)$-conormal derivatives:
\begin{align}
	\label{eq:defvsing}
	\mathcal{V}_{sing}(\operatorname{div}(x\mathbb{A}\nabla.); \Omega)=\{v\in \mathcal{V}_{sing}(\Omega): \, \operatorname{div}(x\mathbb{A}\nabla v)\in L^2(\Omega), \, (\gamma_n^{\Gamma_+}+\gamma_n^{\Gamma_-})v=0,\, \gamma_n^{\Sigma}v\in \mathcal{H}^{1/2}(\Sigma)\}, 
\end{align}
equipped with the norm $
\|v\|_{	\mathcal{V}_{sing}(\operatorname{div}(x\mathbb{A}\nabla.); \Omega)}^2:=\|v\|_{\mathcal{V}_{sing}}^2+\|\gamma_n^{\Sigma}v\|^2_{\mathcal{H}^{1/2}(\Sigma)}.$
\begin{proposition}
	\label{prop:decomp1}
	Let $u\in 	\mathcal{V}_{sing}(\operatorname{div}(x\mathbb{A}\nabla.); \Omega)$. 
	Then $u$ can be decomposed in a unique manner as follows: 
	\begin{align}
		\label{eq:decomp_pi0}
		u=u_{reg}+u_{sing},\quad u_{sing}=u_h\log|x|,
	\end{align}
	where $u_{reg}\in \mathcal{H}^{1-}(\Omega\setminus\Sigma)$ and $u_h\in \mathcal{H}^1(\Omega)$ is a piecewise-$\mathbb{A}$-harmonic function that satisfies the following decoupled boundary-value problem (see Assumption \ref{assump:matrices} for the definition of $a_{11}$):
	\begin{align*}
		&\operatorname{div}(\mathbb{A}\nabla u_h)=0 \text{ in }\Omega\setminus \Sigma, \\
		&\gamma_0^{\Sigma}u_h=a_{11}^{-1}\gamma_n^{\Sigma}u,\\
		& \gamma_0^{\Gamma_p\cup\Gamma_n}u_h=0,\qquad
		\text{ periodic BCs at $\Gamma^{\pm}_p\cup\Gamma^{\pm}_n$}. 
	\end{align*}
	In the above decomposition,  $\gamma_n^{\Sigma}u=\gamma_n^{\Sigma}u_{sing}$. 
\end{proposition}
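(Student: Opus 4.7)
The overall strategy is to first build $u_h$ from the trace data, then verify that $u_{sing}:=u_h\log|x|$ has the correct conormal jump with $u$, and finally invoke the regularity theory of the subdomain problem (Section~\ref{sec:subdomains}) to conclude that the remainder $u_{reg}:=u-u_{sing}$ lives in $\mathcal{H}^{1-}(\Omega\setminus\Sigma)$. Uniqueness follows from a contradiction argument using the fact that $\log|x|$ is not in $\mathcal{H}^{1-}$.

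\textbf{Step 1 (construction of $u_h$).} By hypothesis $g:=a_{11}^{-1}\gamma_n^{\Sigma}u\in\mathcal{H}^{1/2}(\Sigma)$, since $a_{11}>0$ is Lipschitz on $\Sigma$. The decoupled Dirichlet/periodic elliptic problem with data $g$ on $\Sigma$ is well-posed on each $\Omega_{\lambda}$, $\lambda\in\{n,p\}$ by Lax--Milgram applied to the symmetric coercive form $(\mathbb{A}\nabla\cdot,\nabla\cdot)$ on $\mathcal{H}^1(\Omega_\lambda)$ (coercivity by Assumption~\ref{assump:matrices} and the Poincar\'e inequality from the Dirichlet condition on $\Gamma_\lambda$). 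This produces a unique $u_h\in \mathcal{H}^1(\Omega)$, and standard elliptic regularity upgrades it to $u_h|_{\Omega_\lambda}\in \mathcal{H}^1(\Omega_\lambda)\cap \mathcal{C}^{\infty}$ away from $\Sigma$, with $u_h\in\mathcal{H}^{1}(\Omega)$ globally.

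\textbf{Step 2 ($u_{sing}\in\mathcal{V}_{sing}$ and computation of its conormal trace).} Write $\nabla u_{sing}=(\log|x|)\nabla u_h+u_h\,\tfrac{\operatorname{sgn} x}{|x|}\vec{e}_x$. Multiplying by $|x|$ and using boundedness of $x\log|x|$ near $\Sigma$ and $u_h\in H^1$, we get $|x|\nabla u_{sing}\in L^2(\Omega\setminus\Sigma)$, hence $u_{sing}\in\mathcal{V}_{sing}$. Moreover
\[
x\mathbb{A}\nabla u_{sing}= x(\log|x|)\,\mathbb{A}\nabla u_h+\operatorname{sgn}(x)\,u_h\,\mathbb{A}\vec{e}_x,
\]
which is in $L^2(\Omega_\lambda)^2$ with $\operatorname{div}$ in $L^2(\Omega_\lambda)$ (the contributions from $\log|x|$ vanish to leading order in $x$, and the second term is as regular as $u_h$). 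Hence $u_{sing}\in \mathcal{H}_0(\operatorname{div}(x\mathbb{A}\nabla\cdot);\Omega_\lambda)$ and the conormal trace is well defined. Passing to the $\mathcal{H}^{-1/2}(\Sigma)$-trace, the $\log$-term contributes $0$ (since $x\log|x|\to 0$) while the second term contributes $\operatorname{sgn}(x)u_h\,\mathbb{A}\vec{e}_x\cdot\vec{e}_x|_{\Sigma}=\pm a_{11}\gamma_0^\Sigma u_h=\pm\gamma_n^{\Sigma}u$, giving $\gamma_n^{\Sigma,p}u_{sing}=\gamma_n^{\Sigma,n}u_{sing}=\gamma_n^{\Sigma}u$ (no jump, and the stated identity $\gamma_n^\Sigma u=\gamma_n^\Sigma u_{sing}$). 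A clean way to justify the trace computation is to test against a lifting $\Phi\in\mathcal{H}^1(\Omega_p)$ of $\varphi\in\mathcal{H}^{1/2}(\Sigma)$ and pass to the limit using $\varphi_\varepsilon$ from \eqref{eq:cutoff_phi} to separate the contribution localized on $\Sigma$.

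\textbf{Step 3 (regularity of $u_{reg}$).} By construction $u_{reg}=u-u_{sing}\in\mathcal{V}_{sing}$, $\operatorname{div}(x\mathbb{A}\nabla u_{reg})\in L^2(\Omega_\lambda)$, and $\gamma_n^{\Sigma}u_{reg}=0$. This is exactly the class for which the subdomain regularity theory of Section~\ref{sec:subdomains} applies: a weighted a priori estimate on $\Omega_{\lambda}$ for the degenerate operator with vanishing conormal trace on $\Sigma$ yields $u_{reg}|_{\Omega_\lambda}\in\mathcal{H}^{1-}(\Omega_\lambda)$, i.e.\ $u_{reg}\in\mathcal{H}^{1-}(\Omega\setminus\Sigma)$. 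This is the step I expect to be the main obstacle, as it requires delicate weighted Sobolev estimates near $\Sigma$: the point is that without the logarithmic singularity $u_h\log|x|$, the natural Frobenius behaviour of $\operatorname{div}(x\mathbb{A}\nabla\cdot)$ near its degeneracy locus gives a remainder that only barely misses $\mathcal{H}^1$.

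\textbf{Step 4 (uniqueness).} Assume $u_{reg}^{(1)}+u_h^{(1)}\log|x|=u_{reg}^{(2)}+u_h^{(2)}\log|x|$. Then $w:=u_h^{(1)}-u_h^{(2)}\in\mathcal{H}^1(\Omega)$ solves $\operatorname{div}(\mathbb{A}\nabla w)=0$ in $\Omega_\lambda$ with the same boundary data on $\Gamma_\lambda\cup\Gamma^\pm_\lambda$, and $w\log|x|=u_{reg}^{(2)}-u_{reg}^{(1)}\in\mathcal{H}^{1-}(\Omega\setminus\Sigma)$. Since $\log|x|\notin\mathcal{H}^{1-}(\Omega_\lambda)$ in a neighbourhood of $\Sigma$, the only way for $w\log|x|$ to sit in $\mathcal{H}^{1-}$ is $\gamma_0^\Sigma w=0$; combined with the homogeneous Dirichlet/periodic conditions and the uniqueness of the elliptic problem of Step~1, this forces $w\equiv 0$ and hence $u_{reg}^{(1)}=u_{reg}^{(2)}$.
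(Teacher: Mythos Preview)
Your overall architecture matches the paper's: Proposition~\ref{prop:decomp1} is deduced from Theorem~\ref{theorem:reg_well_posedness}, which constructs $u_h$ exactly as you do, defines $u_{sing}=u_h\log|x|$, verifies $\gamma_n^\Sigma u_{sing}=g$ variationally, and then studies $u_{reg}$ via the homogeneous Neumann theory. The uniqueness argument you sketch is essentially Lemma~\ref{lem:deftraces}.

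There is, however, a genuine gap in Steps~2--3. You assert that $\operatorname{div}(x\mathbb{A}\nabla u_{sing})\in L^2(\Omega_\lambda)$, justifying this by ``the contributions from $\log|x|$ vanish to leading order in $x$''. This is false. A direct computation (see \eqref{eq:uregpb}) gives
\[
\operatorname{div}(x\mathbb{A}\nabla u_{sing})=(1+\log|x|)\,\vec{e}_x\cdot\mathbb{A}\nabla u_h+\operatorname{div}(u_h\mathbb{A}\vec{e}_x),
\]
and since $\nabla u_h$ is only in $L^2$, the term $\log|x|\,\vec{e}_x\cdot\mathbb{A}\nabla u_h$ lies in $L^2_\delta(\Omega_\lambda)$ for every $\delta>0$ but \emph{not} in $L^2(\Omega_\lambda)$. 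The paper flags this explicitly in the remark following Theorem~\ref{theorem:reg_well_posedness}: neither $\operatorname{div}(x\mathbb{A}\nabla u_{reg})$ nor $\operatorname{div}(x\mathbb{A}\nabla u_{sing})$ is in $L^2$, only in $\bigcap_{\delta>0}L^2_\delta$.

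Consequently, you cannot invoke the $L^2$-data regularity (Theorem~\ref{theorem:fl2}/\ref{theorem:regularity}) for $u_{reg}$. The paper closes this gap with Lemma~\ref{lem:propreg2} (Proposition~\ref{prop:regularity2}): for $f\in L^2_\delta(\Omega_p)$ with $0\le\delta<1$, the homogeneous Neumann problem still has a unique $\mathcal{V}_{reg}$-solution, and interpolation between the $L^2$-regularity result and the $\mathcal{V}_{reg}'$-well-posedness yields $u_{reg}\in\mathcal{H}^1_{\delta+\varepsilon}(\Omega_p)\subset\mathcal{H}^{1-\delta/2-\varepsilon}(\Omega_p)$. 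This weighted-data extension is the missing ingredient in your Step~3; without it the $\mathcal{H}^{1-}$ conclusion for $u_{reg}$ is unsupported.
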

The above proposition shows that, in general, solutions $u\in\mathcal{V}_{sing}(\Omega)$ to $\operatorname{div}(x\mathbb{A}\nabla u)\in L^2(\Omega)$ are not regular in the vicinity of the interface, and posses a logarithmic singularity on $\Sigma$. 
\begin{remark}
	The appearance of the logarithmic term can be understood by studying the 1D counterpart of \eqref{eq:decomp_pi0}. For a more general case, we refer the  interested reader to the work \cite{mazzeo}, which introduces the calculus of elliptic edge operators, and, more generally, to  $b$-calculus techniques \cite{melrose, grieser}. 
\end{remark} 
For $u$ as in the above proposition, we can define two types of traces. The first one is a classical conormal trace $\gamma_n^{\Sigma}u\in \mathcal{H}^{-1/2}(\Sigma)$, which is 'carried' by $\gamma_n^{\Sigma}u_{sing}$. The second trace, namely the one-sided Dirichlet trace for $u$, taken from $\Omega_p$ or $\Omega_n$, is not defined in a classical sense, since the singular term $u_h\log|x|$ obviously blows up in the vicinity of $\Sigma$. Nonetheless, we can define it for the regular part of the decomposition \eqref{eq:decomp_pi0}. Since $u_{reg}$ is only piecewise-regular, let us introduce the associated notation for its restrictions to $\Omega_{\lambda}$. Namely, for $v\in L^2(\Omega)$, we define 
\begin{align*}
	v_{\lambda}:=\left. v\right|_{\Omega_{\lambda}}\in L^2(\Omega_{\lambda}), \quad \lambda\in \{n,p\}.
\end{align*}
\begin{definition}
	\label{definition:one_sided_trace}
	Let $u$ be like in Proposition \ref{prop:decomp1}. We define the one-sided trace of $u$ on $\Sigma$ as a trace of its regular part: $
	\gamma_{0}^{\Sigma,\lambda}u:=\gamma_0^{\Sigma}u_{reg,\lambda}\in \mathcal{H}^{1/2-}(\Sigma), \; \lambda\in \{n,p\}.$ The jump of the traces is then defined via 
	$$
	[\gamma_{0}^{\Sigma}u]:=\gamma_0^{\Sigma,p}u_{reg}-\gamma_0^{\Sigma,n}u_{reg}.$$
\end{definition}
The notion of trace enables us to reformulate the problem satisfied by the limiting absorption solution $u^+$ essentially as a transmission problem between $\Omega_p$ and $\Omega_n$. We will single out the limiting absorption solution among all the solutions to
\begin{align}
	\label{eq:pb1}
	\begin{split}
		&u\in \mathcal{V}_{sing}(\operatorname{div}(x\mathbb{A}\nabla .); \Omega),\qquad \text{s.t. }\operatorname{div}(x\mathbb{A}\nabla u)=f \text{ in }\Omega.
	\end{split}
\end{align}
The following result is the second main result of this paper. 
\begin{theorem}
	\label{theorem:main_result}
	Given $f\in L^2(\Omega)$, the limiting absorption solution $u^+$ as defined in Theorem \ref{theorem:LAP} is a unique solution to the following well-posed problem: find $u$ that satisfies \eqref{eq:pb1},  and 
	\begin{align}
		\label{eq:traces}
		[\gamma_0^{\Sigma}u]=-i\pi a_{11}^{-1}\gamma_n^{\Sigma}u.
	\end{align}
\end{theorem}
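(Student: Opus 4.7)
The plan is to split the argument into two parts: (i) establish that the limiting absorption solution $u^+$ from Theorem~\ref{theorem:LAP} satisfies both \eqref{eq:pb1} and the jump condition \eqref{eq:traces}; and (ii) prove uniqueness within that class. Inclusion of $u^+$ in \eqref{eq:pb1} follows by passing to the limit in the weak formulation \eqref{eq:anu} using Theorem~\ref{theorem:LAP}, once the additional regularity $\gamma_n^\Sigma u^+\in\mathcal{H}^{1/2}(\Sigma)$ is secured.

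The jump relation \eqref{eq:traces} is the core new content. I would derive it by a complex-log regularization: for each $\nu>0$, adapt Proposition~\ref{prop:decomp1} to the viscous equation \eqref{eq:unu} and write $u^\nu=u^\nu_{reg}+u^\nu_h\log(x+i\nu c(\bx))$, where $c(\bx)>0$ is a real gauge factor determined by the principal part of $x\mathbb{A}+i\nu\mathbb{T}$ on $\Sigma$ (essentially $c=t_{11}/a_{11}$). The imaginary shift moves the branch cut off the real axis, and $u^\nu\in\mathcal{H}^2(\Omega)$ in agreement with Lemma~\ref{lem:pb_abs_wp}. Passing to the limit $\nu\to 0+$ and using the principal-branch identity $\log(x+i0^+)=\log|x|+i\pi\,\mathbb{1}_{\{x<0\}}$, the singular term generates a purely imaginary constant $i\pi u^+_h$ on the $\Omega_n$ side that must be absorbed into the regular part of the limiting decomposition. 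Combined with $\gamma_0^\Sigma u^+_h=a_{11}^{-1}\gamma_n^\Sigma u^+$ from Proposition~\ref{prop:decomp1} and the continuity of $u^\nu_{reg}$ across $\Sigma$, this yields exactly \eqref{eq:traces}.

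For uniqueness, take $u$ satisfying \eqref{eq:pb1} with $f=0$ and \eqref{eq:traces}, and decompose $u=u_{reg}+u_h\log|x|$. The strategy is an energy identity obtained by applying Green's formula on the truncations $\Omega_p\cap\{x>\varepsilon\}$ and $\Omega_n\cap\{x<-\varepsilon\}$ and sending $\varepsilon\to 0+$. In the limit, the volume contribution yields a real quantity $-\int_{\Omega\setminus\Sigma}x\mathbb{A}\nabla u_{reg}\cdot\overline{\nabla u_{reg}}\,d\bx$ plus lower-order remainders from the singular part, while the boundary contributions on $\{x=\pm\varepsilon\}$ converge to the duality pairing $\langle\gamma_n^\Sigma u,\overline{[\gamma_0^\Sigma u]}\rangle_\Sigma$. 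Substituting \eqref{eq:traces} turns this pairing into $i\pi\int_\Sigma a_{11}^{-1}|\gamma_n^\Sigma u|^2\,dy$, which is purely imaginary with a definite sign by Assumption~\ref{assump:matrices}. Taking the imaginary part of the identity forces $\gamma_n^\Sigma u=0$, hence $u_h\equiv 0$; then $[\gamma_0^\Sigma u_{reg}]=0$ by \eqref{eq:traces}, so $u=u_{reg}$ is a piecewise-regular weak solution of the homogeneous degenerate problem and vanishes identically by the subdomain analysis of Section~\ref{sec:subdomains}.

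The main obstacle is controlling the $\varepsilon\to 0+$ limit in the boundary terms: although $x\mathbb{A}\nabla u\in L^2$, $\nabla u_{sing}$ fails to be in $L^2$ near $\Sigma$, so each boundary integral on $\{x=\pm\varepsilon\}$ contains $O(\log\varepsilon)$ contributions that must cancel by parity between the two sides of $\Sigma$. Making this cancellation quantitative requires a higher-order version of Proposition~\ref{prop:decomp1} in the spirit of Kondratiev edge expansions, so that all remainders are genuinely $o(1)$. A parallel technical issue in step~(i) is to justify the complex-log decomposition of $u^\nu$ uniformly in $\nu$ and to transfer the asymptotics as $\nu\to 0+$ to the $H^{1/2-}(\Omega)$ topology in which Theorem~\ref{theorem:LAP} provides convergence.
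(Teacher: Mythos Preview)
Your proposal is essentially correct and follows the same route as the paper. Part (i) is exactly what the paper does in Proposition~\ref{prop:aux} and Theorem~\ref{theorem:convergence_decomposition}: one writes $u^\nu=u^\nu_h\log(x+i\nu r)+u^\nu_{cont}$ with $r=\mathbb{T}_{11}/\mathbb{A}_{11}$, obtains $\nu$-uniform bounds on both pieces (this is where Theorem~\ref{theorem:gnubound_improved} is used, supplying the crucial $\mathcal{H}^{1/2}(\Sigma)$-control of $g^\nu$), and then passes to the pointwise limit $\log(x+i0^+)=\log|x|+i\pi\mathbb{1}_{x<0}$; Lemma~\ref{lem:deftraces} identifies the traces and delivers~\eqref{eq:traces}.

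For part (ii) the paper proceeds slightly differently in organization: it first proves a standalone \emph{third} Green's formula (Theorem~\ref{theorem:green}, Appendix proof via smooth cutoffs $\varphi_\varepsilon$ rather than hard truncation to $\{|x|>\varepsilon\}$), then applies it with $v=u$ to get $2i\,\Im\langle\gamma_n^\Sigma u,\overline{[\gamma_0^\Sigma u]}\rangle_{L^2(\Sigma)}=0$, and substitutes~\eqref{eq:traces} to force $\gamma_n^\Sigma u=0$; uniqueness then follows from Theorem~\ref{theorem:fl2} on each subdomain. Your first-Green's-formula variant (test with $\bar u$, exploit that $x\mathbb{A}\nabla u\cdot\overline{\nabla u}$ is real, take imaginary parts) reaches the same endpoint; the $O(\log\varepsilon)$ cancellation you identify between the two sides is real and correct. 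The concern about needing a ``higher-order Kondratiev expansion'' is unwarranted: the paper shows that the regularity already furnished by the decomposition in Theorem~\ref{theorem:reg_well_posedness} (namely $u_{reg}\in\bigcap_{\delta>0}\mathcal{H}^1_\delta$) suffices, once one replaces the sharp truncation by the smooth cutoff $\varphi_\varepsilon$ and tracks the four cross-terms $\mathcal{I}_1^\varepsilon,\ldots,\mathcal{I}_4^\varepsilon$ separately.
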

\begin{remark}
	\label{rem:wp}
	In the above, the well-posedness is meant in the sense of Hadamard: \eqref{eq:pb1} combined with \eqref{eq:traces} admits a unique solution in $\mathcal{V}_{sing}(\operatorname{div}(x\mathbb{A}\nabla .); \Omega)$, and this solution satisfies the bound $\|u\|_{\mathcal{V}_{sing}(\operatorname{div}(x\mathbb{A}\nabla.); \Omega)}\leq C\|f\|_{L^2(\Omega)}$, with $C>0$ depending on $\Omega, \mathbb{A}, \mathbb{T}$ only.
\end{remark}
The above theorem shows that the limiting absorption solution satisfies a very peculiar relation between the jump of its traces and the co-normal trace. This relation indeed resembles the Sommerfeld radiation condition: it appears that the family of solutions to \eqref{eq:unu}  $(u^{\nu})_{\nu<0}$, as $\nu\rightarrow 0-$ admits a limit $u^{-}$, which satisfies \eqref{eq:pb1} and the condition \eqref{eq:traces} taken with the opposite sign $[\gamma_0^{\Sigma}u^{-}]=i\pi a_{11}^{-1}\gamma_n^{\Sigma}u^{-}$. This is seen later in the paper, in the proof of Theorem \ref{theorem:convergence_decomposition}, cf. Remark \ref{rem:lap_wrong_sign}. 

Proposition \ref{prop:decomp1} is proven in the end of Section \ref{sec:wp_irreg}, Theorems \ref{theorem:LAP} and \ref{theorem:main_result} are proven in Section \ref{sec:LAP_Problem}.
\begin{remark}
	The constraint $\gamma_n^{\Sigma}u\in \mathcal{H}^{1/2}(\Sigma)$ embedded into the space $\mathcal{V}_{sing}(\operatorname{div}(x\mathbb{A}\nabla.);\Omega)$ is of technical nature, since it allows us to define the notions of the Dirichlet and Neumann trace through the decomposition of Proposition \ref{prop:decomp1}. We believe that similar results hold for $\gamma_n^{\Sigma}u\in \mathcal{H}^{-1/2}(\Sigma)$, but we postpone the development of the corresponding argument to future works. 
\end{remark}
\begin{remark}
	\label{remark:periodicBCs}
	Periodic boundary conditions at $\Gamma^{\pm}$ are not essential for the analysis and can be replaced by homogeneous Dirichlet or Neumann boundary conditions.
\end{remark}
\subsection{A road-map to the proofs of the results of the paper}
Let us explain how the paper is organized in more detail. First of all, we will discuss the question of well-posedness of the problem \eqref{eq:unu} without the  absorption term. More precisely, we consider 
\begin{align}
	\label{eq:pb_two_domains}
	\begin{split}
		&\operatorname{div}(x\mathbb{A}\nabla u)=f\text{ in }\Omega_p\cup\Omega_n,\\
		&u=0 \text{ on }\Gamma_p\cup\Gamma_n,\qquad \text{ periodic BCs at }\Gamma^{\pm}_p\cup\Gamma^{\pm}_n. 
	\end{split}
\end{align}
Of course, we need to be precise on the definition of the spaces in which we will look for $u$. Because the above problem is sign-indefinite, we start by considering the above problem in one of the subdomains $\Omega_p$. 
One sees that perhaps one lacks a boundary condition at $\Sigma$. It is more natural to start with the Neumann boundary condition, since its well-definiteness relies solely on the requirement that  $x\mathbb{A}\nabla u\in L^2(\Omega_p)$ (contrary to the Dirichlet boundary condition which requires a regularity of $u$ itself). Thus, we first study the homogeneous Neumann problem (Section \ref{sec:homog}), and next the heterogeneous one (Section \ref{sec:heterog_neumann}). These studies lead us, on one hand, to Proposition \ref{prop:decomp1} about the decomposition of the fields, and, on the other had, pave the way to the proof of Theorem \ref{theorem:stability_estimate} about the boundedness of $\|u^{\nu}\|_{L^2(\Omega)}$ uniformly in $\nu$.

Next, we get back to the original problem with the absorption \eqref{eq:unu}. We will prove two facts. 
First, it is the fact that  the family $(u^{\nu})_{\nu>0}$ is uniformly bounded in $\nu$ in  $\mathcal{V}_{sing}(\Omega)$; this will show that the sequence $(u^{\nu})_{\nu>0}$ admits a weakly convergent subsequence. At this point there are potentially infinitely many such convergent subsequences. Therefore, we additionally need to prove that $u^{\nu}$ has a single limit point $u^+$. This is done indirectly, by showing that the conormal trace on $\Sigma$ is uniformly bounded in $\nu\rightarrow 0$:
\begin{align*}
	\text{ for all }0<\nu<\nu_0, \qquad  	\|\gamma_{n,\nu}^{\Sigma}u^{\nu}\|_{\mathcal{H}^{1/2}(\Sigma)}\lesssim \|f\|_{L^2(\Omega)},\qquad  \text{ see Theorem \ref{theorem:gnubound_improved}. }
\end{align*}
This will enable us to apply the decomposition of Proposition \ref{prop:decomp1} to the weak $L^2$-limits $u$ of subsequences of $(u^{\nu})_{\nu>0}$, and reveal that all such  limits satisfy \eqref{eq:traces} (see Theorem \ref{theorem:convergence_decomposition}). To prove that this condition ensures the uniqueness of the problem, will follow the idea of \cite{ciarlet_mk_peillon} by exploiting a non-self-adjoint nature of the limiting operator, which is expressed through the suitable Green's formula.

\section{Problems in sub-domains}
\label{sec:subdomains}

%
%
\subsection{A homogeneous Neumann problem}
\label{sec:homog}
First of all, we start by considering the following auxiliary homogeneous BVP: 
\begin{align}
	\label{eq:main_problem}
	\begin{split}
		&\operatorname{div}( x \mathbb{A}\nabla u)=f \text{ in }\Omega_p, \\
		&\gamma_n^{\Sigma}u=0, \\ &\gamma_0^{\Gamma_p}u=0, \text{  periodic BCs at }\Gamma^{\pm}.
	\end{split}
\end{align}
Assume that $f\in L^2(\Omega_p)$. We single out two spatial frameworks:
\begin{align}
	\label{eq:rp}
	\tag{RP}
	&\text{Find }u\in \mathcal{V}_{reg}(\Omega_p) \text{ that satisfies }\eqref{eq:main_problem}.\\
	\label{eq:sp}
	\tag{SP}
	&\text{Find }u\in \mathcal{V}_{sing}(\Omega_p) \text{ that satisfies }\eqref{eq:main_problem}.   
\end{align}
The first choice appears when considering the variational formulation associated to \eqref{eq:main_problem} and looking for the largest space in which the corresponding skew-symmetric bilinear form is continuous: 
\begin{align*}
	a_{r}(u,v):=\int_{\Omega_p} x \mathbb{A} \nabla u\,\cdot \overline{\nabla v}.
\end{align*}
Interestingly, as it was shown in \cite[Section 1.1]{nicolopoulos}, see also Lemma \ref{cor:uv}, the boundary condition on $\Sigma$ holds automatically true for all functions from $\mathcal{V}_{reg}$ satisfying \eqref{eq:main_problem} with $f\in L^2(\Omega_p)$.  

The second choice is motivated by remarking that, upon setting $\boldsymbol{v}:= x \mathbb{A}\nabla u$, the first equation in \eqref{eq:main_problem} implies that $\operatorname{div}\boldsymbol{v}\in L^2(\Omega_p)$. Therefore, if, additionally, $\boldsymbol{v}\in L^2(\Omega_p)$, then the boundary condition on $\Sigma$ can be understood in the sense of equality in $\mathcal{H}^{-1/2}(\Sigma)$. 
%

The key result of this section is that the problems \eqref{eq:rp} and \eqref{eq:sp} are both well-posed  (and thus coincide). 
\begin{theorem}
	\label{theorem:fl2}
	Let $f\in L^2(\Omega_p)$. Then \eqref{eq:rp} and \eqref{eq:sp} both admit a unique (identical) solution. Moreover, $u\in \mathcal{H}^1(\Omega_p)$ and satisfies the following bound: 
	$\|u\|_{\mathcal{H}^1(\Omega_p)}\leq C \|f\|_{L^2(\Omega_p)}$, with some $C>0$ independent of $f$.
\end{theorem}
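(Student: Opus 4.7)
The plan is to apply the Lax--Milgram lemma to \eqref{eq:rp}, then to identify its solution with the one of \eqref{eq:sp} and upgrade its regularity to $\mathcal{H}^1(\Omega_p)$.

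For \eqref{eq:rp}, the sesquilinear form rewrites as $a_r(u,v)=\int_{\Omega_p}\mathbb{A}(x^{1/2}\nabla u)\cdot\overline{(x^{1/2}\nabla v)}$, which is continuous on $\mathcal{V}_{reg}(\Omega_p)\times \mathcal{V}_{reg}(\Omega_p)$ by boundedness of $\mathbb{A}$, and satisfies $\Re a_r(u,u)\geq c_{\mathbb{A}}\|x^{1/2}\nabla u\|_{L^2(\Omega_p)}^2$ by Assumption~\ref{assump:matrices} and the fact that $x>0$ on $\Omega_p$. Coercivity then follows from a weighted Poincaré inequality $\|u\|_{L^2(\Omega_p)}\lesssim \|x^{1/2}\nabla u\|_{L^2(\Omega_p)}$, which I would obtain by writing $u(x,y)=-\int_x^a \partial_s u(s,y)\,ds$ (using the Dirichlet condition on $\Gamma_p$) and applying Cauchy--Schwarz with weight $s$ to get $|u(x,y)|^2\leq \ln(a/x)\int_0^a s|\partial_s u(s,y)|^2\,ds$; since $\ln(a/x)\in L^1(0,a)$, integration yields the bound. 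Lax--Milgram then provides a unique $u\in \mathcal{V}_{reg}(\Omega_p)$ with $\|u\|_{\mathcal{V}_{reg}(\Omega_p)}\lesssim \|f\|_{L^2(\Omega_p)}$, and the Neumann condition $\gamma_n^{\Sigma}u=0$ holds automatically by the cited Lemma~\ref{cor:uv}. In particular, since $\mathcal{V}_{reg}\subset\mathcal{V}_{sing}$, this $u$ is also a solution of \eqref{eq:sp}.

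To establish uniqueness for \eqref{eq:sp}, it suffices to show that any homogeneous solution $w\in\mathcal{V}_{sing}(\Omega_p)$ (satisfying $\operatorname{div}(x\mathbb{A}\nabla w)=0$, $\gamma_n^{\Sigma}w=0$, $\gamma_0^{\Gamma_p}w=0$ with periodic boundary conditions) actually lies in $\mathcal{V}_{reg}(\Omega_p)$, so that \eqref{eq:rp}-uniqueness then forces $w\equiv 0$. The underlying mechanism is that the homogeneous Neumann condition at $\Sigma$ rules out the logarithmic singular branch that a generic $\mathcal{V}_{sing}$-function could carry. To make this rigorous I would expand $w$ in a $y$-Fourier series (allowed by periodicity), reducing the PDE near $\{x=0\}$ to a coupled family of one-dimensional ODEs whose principal part $(xa_{11}w_k')'$ has, near $x=0$, a fundamental system composed of a bounded analytic solution and one with $\log x$ asymptotics; the condition $xw_k'|_{x=0}=0$ selects the bounded branch, and summing gives $x^{1/2}\nabla w\in L^2(\Omega_p)$, i.e. $w\in\mathcal{V}_{reg}(\Omega_p)$. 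Standard interior elliptic regularity away from $\Sigma$, combined with this analysis near $\Sigma$, simultaneously upgrades the solution of \eqref{eq:rp} to $\mathcal{H}^1(\Omega_p)$ with the claimed bound.

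I expect the main obstacle to lie in this last step: because $\mathbb{A}$ depends on both $x$ and $y$, the $y$-Fourier expansion does not decouple exactly, and one must handle the coupling coming from the $y$-dependence of $\mathbb{A}$ perturbatively in a strip $\{0<x<\varepsilon\}$, uniformly in the mode index. A more elementary route, which I would also explore, is to test the equation with $w(1-\varphi_{\varepsilon})\in\mathcal{V}_{reg}(\Omega_p)$ for the cutoff $\varphi_{\varepsilon}$ from \eqref{eq:cutoff_phi}, and to show that the remainder $\varepsilon^{-1}\int_{\{\varepsilon/2<x<\varepsilon\}}x|\nabla w|\,|w|$ vanishes as $\varepsilon\to 0$; the vanishing of this strip integral is exactly what $\gamma_n^{\Sigma}w=0$ enforces, and it is what excludes the logarithmic branch.
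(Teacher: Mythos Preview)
Your treatment of \eqref{eq:rp} via Lax--Milgram is correct and matches the paper's Theorem~\ref{theorem:well_posedness_regular}. The two substantive steps where your proposal diverges---uniqueness for \eqref{eq:sp} and the $\mathcal{H}^1$-regularity upgrade---both contain genuine gaps, and the paper's resolution of them is in the \emph{opposite logical order} from yours.

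For \eqref{eq:sp}-uniqueness, neither of your routes closes. The Fourier route, as you note, does not decouple when $\mathbb{A}$ depends on $y$, and handling the coupling uniformly in the mode index is a full proof in itself. The cutoff route also fails at the level you describe: with $w\in\mathcal{V}_{sing}(\Omega_p)$ you only know $x\nabla w\in L^2$, so the remainder is bounded by $\int_{\{\varepsilon/2<x<\varepsilon\}}|\nabla w|\,|w|\lesssim \varepsilon^{-1}\|x\nabla w\|_{L^2(\text{strip})}\|w\|_{L^2(\text{strip})}$, which is of the form $\varepsilon^{-1}\cdot o(1)\cdot o(1)$ with no rate, hence not obviously bounded, let alone vanishing. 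The condition $\gamma_n^{\Sigma}w=0$ does morally exclude the logarithmic branch, but converting that into a quantitative strip estimate is precisely what is missing.

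The paper first proves the regularity step (Theorem~\ref{theorem:regularity}, after Baouendi--Goulaouic) and then uses it as \emph{input} to a short duality argument for \eqref{eq:sp}-uniqueness (Theorem~\ref{thm:sp_wp}): given a homogeneous solution $w\in\mathcal{V}_{sing}$, solve $\operatorname{div}(x\mathbb{A}\nabla v)=w$ with homogeneous Neumann data; by the regularity theorem $v\in\mathcal{H}^1(\Omega_p)$, so testing the equation for $w$ against $v$ gives $\int x\mathbb{A}\nabla w\cdot\overline{\nabla v}=0$, while by density of $\mathcal{C}^\infty(\overline{\Omega_p})$ in $\mathcal{V}_{sing}$ one may test the equation for $v$ against $q=w$ to get $\int \nabla v\cdot\overline{x\mathbb{A}\nabla w}=-\|w\|^2$. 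Hermitian symmetry of $\mathbb{A}$ then forces $w=0$. The key asymmetry is that the pairing $\int x\mathbb{A}\nabla u\cdot\overline{\nabla v}$ is defined when one factor is in $\mathcal{V}_{sing}$ and the other in $\mathcal{H}^1$, but not for two $\mathcal{V}_{sing}$ functions; the regularity theorem manufactures exactly the $\mathcal{H}^1$ test function needed.

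Finally, the regularity step itself is far from standard: near $\Sigma$ the operator is genuinely degenerate. The paper's proof (Appendix~\ref{appendix:regularity_proof}) runs through a regularized Dirichlet problem $\operatorname{div}((x+i\nu)\mathbb{A}\nabla u_r^\nu)=f$, uniform-in-$\nu$ estimates on tangential derivatives via commutators with the Bessel potential $\mathcal{J}$, and a Hardy-type inequality applied to $q=x\vec{e}_x\cdot\mathbb{A}\nabla u_r\in\mathcal{H}^1_{\Sigma,0}(\Omega_p)$ (using $\gamma_n^\Sigma u_r=0$) to recover $\partial_x u_r\in L^2$. Your sketch ``interior regularity away from $\Sigma$ plus the analysis near $\Sigma$'' does not supply any of this machinery.
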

\begin{proof}
	See Theorem \ref{theorem:well_posedness_regular}, Theorem \ref{theorem:regularity} and Theorem \ref{thm:sp_wp}. 
\end{proof}	
\begin{remark}
	Remarkably, the problem \eqref{eq:rp}, well-posed in $\mathcal{V}_{reg}(\Omega_p)$, i.e. in the spaces of functions that do not have an $L^2(\Sigma)$-trace, admits a solution in the space $\mathcal{H}^1(\Omega_p)$. This is reminiscent of the elliptic regularity results for the classical Laplacian.
\end{remark}
We start with proving some facts about the problem \eqref{eq:rp}. 
\subsubsection{Regular problem \eqref{eq:rp}: well-posedness in $\mathcal{V}_{reg}(\Omega_p)$}
The key result of this section is given below. 
\begin{theorem}
	\label{theorem:well_posedness_regular}
	The problem \eqref{eq:rp} is well-posed, and, for all $f\in L^2(\Omega_p)$, $\|u\|_{\mathcal{V}_{reg}(\Omega_p)}\lesssim \|f\|_{L^2(\Omega_p)}$.
\end{theorem}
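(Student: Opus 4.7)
The plan is to recast \eqref{eq:rp} as a coercive sesquilinear variational problem on $\mathcal{V}_{reg}(\Omega_p)$ and invoke Lax--Milgram.

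First I would verify that $a_r$ is continuous on $\mathcal{V}_{reg}(\Omega_p)\times \mathcal{V}_{reg}(\Omega_p)$: by Assumption \ref{assump:matrices} and Cauchy--Schwarz,
\[
|a_r(u,v)|\leq \|\mathbb{A}\|_{L^\infty}\, \|x^{1/2}\nabla u\|_{L^2(\Omega_p)}\,\|x^{1/2}\nabla v\|_{L^2(\Omega_p)}\leq \|\mathbb{A}\|_{L^\infty}\|u\|_{\mathcal{V}_{reg}(\Omega_p)}\|v\|_{\mathcal{V}_{reg}(\Omega_p)},
\]
while the antilinear form $v\mapsto -(f,v)_{L^2(\Omega_p)}$ is continuous on $\mathcal{V}_{reg}(\Omega_p)$ since $\|v\|_{L^2(\Omega_p)}\leq \|v\|_{\mathcal{V}_{reg}(\Omega_p)}$.

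The central step, and the main obstacle, is a weighted Poincar\'e inequality
\[
\|u\|_{L^2(\Omega_p)}\lesssim \|x^{1/2}\nabla u\|_{L^2(\Omega_p)},\quad u\in \mathcal{V}_{reg}(\Omega_p),
\]
which I would first establish for $u\in \mathcal{C}^\infty_{comp}(\overline{\Omega_p})$ and then extend by density (Proposition \ref{prop:density_vreg}). For such $u$, the Dirichlet condition at $\Gamma_p=\{x=a\}$ allows one to write, for every $y\in(-\ell,\ell)$,
\[
u(x,y)=-\int_x^a \partial_t u(t,y)\,dt,
\]
and Cauchy--Schwarz with the weight $t$ yields
\[
|u(x,y)|^2\leq \ln(a/x)\int_0^a t\,|\partial_t u(t,y)|^2\,dt.
\]
Integrating in $x$ over $(0,a)$ and then in $y$ produces the claim with constant $a$; the crucial input is the integrability $\int_0^a \ln(a/x)\,dx=a<\infty$. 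This Hardy-type bound is the main difficulty: the degenerating weight $x$ just barely fails to be bounded below, yet it still controls $\|u\|_{L^2(\Omega_p)}$.

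Combining the above with the positivity of $\mathbb{A}$ and $x=|x|$ on $\Omega_p$, I obtain, for every $u\in \mathcal{V}_{reg}(\Omega_p)$,
\[
\Re a_r(u,u)=\int_{\Omega_p} x\,\mathbb{A}\nabla u\cdot \overline{\nabla u}\geq c_{\mathbb{A}}\|x^{1/2}\nabla u\|^2_{L^2(\Omega_p)}\gtrsim \|u\|^2_{\mathcal{V}_{reg}(\Omega_p)},
\]
so $a_r$ is coercive. The Lax--Milgram lemma then produces a unique $u\in \mathcal{V}_{reg}(\Omega_p)$ satisfying $a_r(u,v)=-(f,v)_{L^2(\Omega_p)}$ for all $v\in \mathcal{V}_{reg}(\Omega_p)$, together with the bound $\|u\|_{\mathcal{V}_{reg}(\Omega_p)}\lesssim \|f\|_{L^2(\Omega_p)}$. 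Testing against $C^\infty_c(\Omega_p)$ recovers the PDE $\operatorname{div}(x\mathbb{A}\nabla u)=f$ in $\mathcal{D}'(\Omega_p)$; the Dirichlet and periodic conditions on $\Gamma_p$ and $\Gamma^{\pm}_p$ are built into $\mathcal{V}_{reg}(\Omega_p)$; and the Neumann condition $\gamma_n^\Sigma u=0$ holds automatically, as the text notes via Lemma \ref{cor:uv}.
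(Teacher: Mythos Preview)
Your proposal is correct and follows essentially the same route as the paper: reduce \eqref{eq:rp} to the variational problem on $\mathcal{V}_{reg}(\Omega_p)$, establish coercivity of $a_r$ via positive-definiteness of $\mathbb{A}$ together with a weighted Poincar\'e inequality, apply Lax--Milgram, and recover the Neumann condition on $\Sigma$ from Lemma \ref{cor:uv}. The only cosmetic difference is that you supply a direct proof of the Poincar\'e inequality using the logarithmic bound $\int_0^a\ln(a/x)\,dx=a$, whereas the paper quotes it from the appendix (Proposition \ref{prop:hardy}).
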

\paragraph{Auxiliary results needed to prove Theorem \ref{theorem:well_posedness_regular}}
We start by recalling some facts about the space $\mathcal{V}_{reg}(\Omega_p)$. First of all, the same argument as in \cite[Theorem 1.1]{grisvard}, see Proposition \ref{prop:density_vreg}, shows 
\begin{align}
	\label{eq:vreg_density}
	\mathcal{V}_{reg}(\Omega_p)=\overline{\mathcal{C}_{comp}(\Omega_p)}^{\|.\|_{\mathcal{V}_{reg}(\Omega_p)}}.
\end{align}
We also have the Poincar\`e inequality: for all $u\in \mathcal{V}_{reg}(\Omega_p)$, it holds that (Proposition \ref{prop:hardy} in Appendix \ref{appendix:weighted}):
\begin{align}
	\label{eq:poincare_vreg}
	\|u\|_{L^2(\Omega_p)}\leq C(\Omega_p)|u|_{\mathcal{V}_{reg}(\Omega_p)}.
\end{align}
Let us now state several auxiliary results for the proof of Theorem \ref{theorem:well_posedness_regular}. We will make use of the sign-definiteness of the problem \eqref{eq:main_problem} and write a corresponding variational formulation, which will appear to be coercive. We start with the following observation. 
\begin{proposition}
	\label{prop:rp_var}
	Let $f\in L^2(\Omega_p)$. Assume that $u\in \mathcal{V}_{reg}(\Omega_p)$ satisfies \eqref{eq:main_problem}. Then, necessarily, $u$ satisfies the following variational formulation: 
	\begin{align}
		\label{eq:var_form1}
		a_r(u, v)=-\int_{\Omega_p}f\overline{v}, \quad \text{for all} \quad v\in \mathcal{V}_{reg}(\Omega_p). 
	\end{align}
	And vice versa, if $u\in \mathcal{V}_{reg}(\Omega_p)$ satisfies the above variational formulation, it satisfies \eqref{eq:main_problem}. 
\end{proposition}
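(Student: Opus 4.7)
\textbf{Proof plan for Proposition \ref{prop:rp_var}.} The plan is to establish the two implications separately: the forward direction by integration by parts on the dense subspace $\mathcal{C}_{comp}(\Omega_p)$ of \eqref{eq:vreg_density}, and the reverse direction by splitting \eqref{eq:var_form1} into the PDE (tested by compactly supported smooth functions) plus the boundary conditions (of which two are built into $\mathcal{V}_{reg}(\Omega_p)$ and the third is automatic).

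For the forward implication, suppose $u \in \mathcal{V}_{reg}(\Omega_p)$ satisfies \eqref{eq:main_problem}. I first test against $v \in \mathcal{C}_{comp}(\Omega_p)$. Since $\operatorname{dist}(\operatorname{supp} v, \Sigma) > 0$, the function $u$ is classically $H^1$ on $\operatorname{supp} v$ (because $x$ is bounded away from $0$ there and $x^{1/2}\nabla u \in L^2$), so the standard integration-by-parts formula applies:
\begin{align*}
	\int_{\Omega_p}\operatorname{div}(x\mathbb{A}\nabla u)\,\overline{v}
	= -\int_{\Omega_p} x\mathbb{A}\nabla u\cdot\overline{\nabla v}
	+ \int_{\partial\Omega_p}(x\mathbb{A}\nabla u)\cdot\vec{n}\,\overline{v}.
\end{align*}
The boundary term on $\Sigma$ vanishes by the compact-support hypothesis on $v$; the term on $\Gamma_p$ vanishes since $v\in\mathcal{C}_{comp}$ satisfies $\gamma_0^{\Gamma_p}v=0$; and the integrals over $\Gamma_p^+$ and $\Gamma_p^-$ cancel, by the periodicity of $u$, $\partial_y u$, $\mathbb{A}$ (Assumption \ref{assump:matrices}) and of $v$. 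This yields $a_r(u,v) = -\int_{\Omega_p} f\overline{v}$ on $\mathcal{C}_{comp}(\Omega_p)$. To pass to all $v \in \mathcal{V}_{reg}(\Omega_p)$ via \eqref{eq:vreg_density}, it remains to verify continuity of both sides in the $\mathcal{V}_{reg}$-norm. For $a_r$ this follows from $|a_r(u,v)| \lesssim \|x^{1/2}\nabla u\|\,\|x^{1/2}\nabla v\| \leq \|u\|_{\mathcal{V}_{reg}(\Omega_p)}\|v\|_{\mathcal{V}_{reg}(\Omega_p)}$ using boundedness of $\mathbb{A}$; for the right-hand side from $|\int_{\Omega_p} f\overline{v}| \leq \|f\|\,\|v\| \lesssim \|f\|\,\|v\|_{\mathcal{V}_{reg}(\Omega_p)}$ by the Poincar\'e inequality \eqref{eq:poincare_vreg}.

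For the reverse implication, assume $u \in \mathcal{V}_{reg}(\Omega_p)$ satisfies \eqref{eq:var_form1}. Testing with $v \in C^\infty_c(\Omega_p)$ (which belongs to $\mathcal{V}_{reg}(\Omega_p)$) gives $\operatorname{div}(x\mathbb{A}\nabla u) = f$ in $\mathcal{D}'(\Omega_p)$, hence in $L^2(\Omega_p)$ since $f \in L^2(\Omega_p)$. The boundary conditions $\gamma_0^{\Gamma_p}u=0$ and $\gamma_0^{\Gamma_p^+}u - \gamma_0^{\Gamma_p^-}u = 0$ are part of the very definition of $\mathcal{V}_{reg}(\Omega_p) = \mathcal{H}^1_1(\Omega_p)$. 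For the remaining condition $\gamma_n^{\Sigma}u = 0$, I note that since $x$ is bounded on $\overline{\Omega_p}$ we have $\mathcal{V}_{reg}(\Omega_p) \subset \mathcal{V}_{sing}(\Omega_p)$, so together with $\operatorname{div}(x\mathbb{A}\nabla u) \in L^2(\Omega_p)$ and the (already established) periodicity of Neumann traces on $\Gamma_p^\pm$, we have $u \in \mathcal{H}_0(\operatorname{div}(x\mathbb{A}\nabla\cdot); \Omega_p)$, so $\gamma_n^{\Sigma}u \in \mathcal{H}^{-1/2}(\Sigma)$ is well defined. The vanishing of this trace is exactly the automatic boundary condition announced in the paper and proved in Lemma \ref{cor:uv}, which applies to any element of $\mathcal{V}_{reg}(\Omega_p)$ with $L^2$-divergence.

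The only genuine subtlety is the automatic vanishing of $\gamma_n^{\Sigma}u$ in the reverse direction, which is precisely why Lemma \ref{cor:uv} is cited and which motivates the choice of $\mathcal{V}_{reg}$ as the correct functional setting; everything else is a careful bookkeeping of boundary terms and a density argument.
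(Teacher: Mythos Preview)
Your proof follows the same route as the paper's: forward direction by integration by parts on $\mathcal{C}^{\infty}_{comp}(\overline{\Omega}_p)$ and density, reverse direction by testing with $C_c^\infty(\Omega_p)$ to recover the PDE and invoking Lemma~\ref{cor:uv} for $\gamma_n^\Sigma u=0$.

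There is, however, a small gap in the reverse direction. You write ``the (already established) periodicity of Neumann traces on $\Gamma_p^\pm$'', but this is not established: membership in $\mathcal{V}_{reg}(\Omega_p)=\mathcal{H}^1_1(\Omega_p)$ encodes only the \emph{Dirichlet} periodicity $\gamma_0^{\Gamma_p^+}u=\gamma_0^{\Gamma_p^-}u$. The periodicity of the co-normal derivative on $\Gamma_p^\pm$ is part of ``periodic BCs'' in \eqref{eq:main_problem} and must be deduced from the variational formulation. The paper does this by testing \eqref{eq:var_form1} with $v\in\mathcal{H}^1(\Omega_p)\subset\mathcal{V}_{reg}(\Omega_p)$ (which has non-trivial trace on $\Gamma_p^\pm$, unlike $C_c^\infty(\Omega_p)$) and reading off $(\gamma_n^{\Gamma_p^+}+\gamma_n^{\Gamma_p^-})u=0$ from the resulting boundary identity.

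Relatedly, your detour through the space $\mathcal{H}_0(\operatorname{div}(x\mathbb{A}\nabla\cdot);\Omega_p)$ is unnecessary (and its hypothesis is exactly the missing condition above): Lemma~\ref{cor:uv} applies directly to any $u\in\mathcal{V}_{reg}(\Omega_p)$ with $\operatorname{div}(x\mathbb{A}\nabla u)\in L^2(\Omega_p)$, with no reference to the Neumann traces on $\Gamma_p^\pm$.
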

To prove this result, we need the following lemma, which is a generalization of a similar result in  \cite[p.70]{nicolopoulos_phd} or \cite[Section 1.1]{nicolopoulos}, and in Appendix \ref{appendix:conormal_trace}, see Proposition \ref{cor:uv_appx}. At the moment we need the result below for $\re=1/2$ only, however, we will make use of its extended form later.
\begin{lemma}
	\label{cor:uv}
	Any function $u\in \mathcal{V}_{reg}(\Omega_p)$, s.t., with some $\re>0$, $\vec{x}\mapsto x^{1/2-\varepsilon}\operatorname{div}(x\mathbb{A}(\vec{x})\nabla u(\vec{x}))\in L^2(\Omega_p)$, satisfies $
	\gamma_n^{\Sigma}u=0 \text{ in }\mathcal{H}^{-1/2}(\Sigma).$
\end{lemma}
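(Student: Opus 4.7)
The plan is to use the variational characterization of the conormal trace and to test it against liftings of $\varphi\in\mathcal{H}^{1/2}(\Sigma)$ whose support shrinks toward $\Sigma$.

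First I would verify that $\gamma_n^{\Sigma,p}u\in\mathcal{H}^{-1/2}(\Sigma)$ is well-defined. The hypothesis is equivalent to $\operatorname{div}(x\mathbb{A}\nabla u)\in L^2_{1-2\varepsilon}(\Omega_p)$ with $1-2\varepsilon<1$, while $u\in\mathcal{V}_{reg}(\Omega_p)\subset\mathcal{V}_{sing}(\Omega_p)$; hence $u\in\mathcal{H}_{1-2\varepsilon}(\operatorname{div}(x\mathbb{A}\nabla.);\Omega_p)$ and its conormal trace is well-defined through the Green's identity recalled in the paragraph preceding the lemma. It then remains to show that this functional vanishes on every $\varphi\in\mathcal{H}^{1/2}(\Sigma)$.

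Given such a $\varphi$ and any lift $\tilde\varphi\in\mathcal{H}^1(\Omega_p)$, I would substitute into that Green's formula the cutoff lift $\Phi_\delta(x,y):=\chi(x/\delta)\tilde\varphi(x,y)$, where $\chi\in C^\infty(\mathbb{R})$ equals one near $0$ and is supported in $[-1,1]$. Since $\gamma_0^\Sigma\Phi_\delta=\varphi$ for every $\delta>0$, it is enough to verify that both volume integrals on the right-hand side tend to $0$ as $\delta\to 0^+$. For the integral with $\operatorname{div}(x\mathbb{A}\nabla u)$, Cauchy--Schwarz with the paired weights $x^{1/2-\varepsilon}$ and $x^{\varepsilon-1/2}$ reduces the task to controlling $\|x^{\varepsilon-1/2}\Phi_\delta\|_{L^2(0\leq x\leq\delta)}$; a Hardy-type estimate on $\tilde\varphi$ (starting from $\tilde\varphi(x,y)=\tilde\varphi(0,y)+\int_0^x\partial_x\tilde\varphi\,dt$) together with the local integrability of $x^{2\varepsilon-1}$ near zero yields the scaling $\delta^\varepsilon\to 0$. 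For $\int_{\Omega_p}x\mathbb{A}\nabla u\cdot\nabla\Phi_\delta$ I would decompose $\nabla\Phi_\delta=\chi(x/\delta)\nabla\tilde\varphi+\delta^{-1}\tilde\varphi\,\chi'(x/\delta)\vec{e}_x$. The first piece is bounded via $\|x^{1/2}\nabla u\|_{L^2(0\leq x\leq\delta)}\|x^{1/2}\nabla\tilde\varphi\|_{L^2(\Omega_p)}$, where the first factor vanishes by absolute continuity of the Lebesgue integral. For the second piece, supported on the slab $\delta/2\leq x\leq\delta$, the coefficient $x\cdot\delta^{-1}\chi'(x/\delta)$ is uniformly bounded, so Cauchy--Schwarz gives a bound $\lesssim\|\nabla u\|_{L^2(\delta/2\leq x\leq\delta)}\|\tilde\varphi\|_{L^2(\delta/2\leq x\leq\delta)}$; on this slab $\|\nabla u\|_{L^2}\leq C\delta^{-1/2}\|x^{1/2}\nabla u\|_{L^2}$, while a Hardy estimate gives $\|\tilde\varphi\|_{L^2}\lesssim\delta^{1/2}$, so the $\delta$-powers cancel and only $\|x^{1/2}\nabla u\|_{L^2(\delta/2\leq x\leq\delta)}\to 0$ remains.

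The main obstacle is precisely this last term: the derivative of the cutoff carries a singular coefficient $\delta^{-1}$, and neutralizing it requires both the factor $x$ built into the principal part of the operator and a careful Hardy-type estimate on $\tilde\varphi$ that supplies the compensating $\delta^{1/2}$ gain near $\Sigma$; every other piece is a routine consequence of Cauchy--Schwarz and absolute continuity.
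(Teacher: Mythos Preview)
Your argument is correct and shares the paper's overall strategy: plug a lifting of $\varphi$ with support in a $\delta$-neighbourhood of $\Sigma$ into the variational definition of $\gamma_n^\Sigma u$, and let $\delta\to 0$. The difference lies in how the lifting is built. The paper constructs $\Phi^\delta$ as the harmonic extension of $\varphi$ on the thin strip $(0,\delta)\times(-\ell,\ell)$ with zero Dirichlet data at $x=\delta$ (Lemma~\ref{lemma:lifting_lemma}); this packages the needed estimates --- $\|\Phi^\delta\|_{L^2}\lesssim\delta^{1/2}$, $\|\nabla\Phi^\delta\|_{L^2}\lesssim\delta^{-1/2}$, and $\|x^{1/2}\nabla\Phi^\delta\|_{L^2}\lesssim 1$ --- into a single lemma, so the conormal pairing is bounded directly by $(\|x^{1/2-\varepsilon}f\|_{L^2(\Omega^\delta_\Sigma)}+\|x^{1/2}\nabla u\|_{L^2(\Omega^\delta_\Sigma)})\|\varphi\|_{\mathcal{H}^{1/2}}$, and one concludes by absolute continuity. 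Your cutoff lift $\chi(x/\delta)\tilde\varphi$ is more elementary (no PDE to solve) but forces you to handle the derivative-of-cutoff term by hand, which you do correctly by exploiting $x\sim\delta$ on $\operatorname{supp}\chi'$ and the one-dimensional trace bound $\|\tilde\varphi\|_{L^2(\delta/2\le x\le\delta)}\lesssim\delta^{1/2}$ (this is really a Sobolev/trace estimate rather than Hardy, but the content is the same). The paper's lifting is reused later (Lemma~\ref{lemma:lifting}, Proposition~\ref{prop:gnubound_proof}) with the explicit $\delta$-scaling, which is why it is set up as a separate lemma; for the present statement alone, your approach is equally clean.
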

The above lemma enables us to prove Proposition \ref{prop:rp_var}. 
\begin{proof}[Proof of Proposition \ref{prop:rp_var}]
	The fact that $u$ as in \eqref{eq:rp} satisfies \eqref{eq:var_form1} is standard and follows from integration by parts and the density of $\mathcal{C}^{\infty}_{comp}(\overline{\Omega_p})$ in $\mathcal{V}_{reg}(\Omega_p)$, cf. \eqref{eq:vreg_density}.

	The fact that $u$ solving \eqref{eq:var_form1} satisfies \eqref{eq:main_problem} again follows immediately, by testing \eqref{eq:main_problem} with $v\in \mathcal{D}(\Omega_p)$ which shows that 
	$\operatorname{div}(x\mathbb{A}\nabla u)=f$ in $\Omega_p$. Next we employ Lemma \ref{cor:uv} to see that $\gamma_n^{\Sigma}u=0$. Finally, testing with $v\in \mathcal{H}^1(\Omega)\subset \mathcal{V}_{reg}(\Omega)$ and using the variational definition of the co-normal trace on $\partial\Omega_p$ yields the periodicity of the co-normal derivatives in $y$-direction. 
\end{proof}
Thus, we have (classically) reduced the question of the well-posedness of \eqref{eq:rp} to the question of the well-posedness of the variational formulation \eqref{eq:var_form1}. We have the following result. 
\begin{theorem}
	\label{prop:main}
	Let $f\in \left(\mathcal{V}_{reg}(\Omega_p)\right)'$.  Then the following problem: find $u\in \mathcal{V}_{reg}(\Omega_p)$, s.t.
	\begin{align*}
		a_r(u_{reg}, v)=-\langle f, \overline{v}\rangle_{(\mathcal{V}_{reg}(\Omega_p))', \mathcal{V}_{reg}(\Omega_p)}, \quad \text{for all} \quad v\in \mathcal{V}_{reg}(\Omega_p), 
	\end{align*}
	admits a unique solution in $\mathcal{V}_{reg}(\Omega_p)$, and, moreover,  $\|u\|_{\mathcal{V}_{reg}(\Omega_p)}\lesssim \|f\|_{(\mathcal{V}_{reg}(\Omega_p))'}$.  
\end{theorem}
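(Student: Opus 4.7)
The plan is to apply the (complex, sesquilinear) Lax--Milgram lemma to the form $a_r$ on the Hilbert space $\mathcal{V}_{reg}(\Omega_p)$. The three ingredients are continuity of $a_r$, coercivity of $a_r$, and continuity of the antilinear form $v \mapsto -\langle f, \overline{v}\rangle_{(\mathcal{V}_{reg}(\Omega_p))', \mathcal{V}_{reg}(\Omega_p)}$. The last one is free from the assumption $f \in (\mathcal{V}_{reg}(\Omega_p))'$.

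For continuity, I would use the Cauchy--Schwarz inequality combined with boundedness of $\mathbb{A}$ on $\overline{\Omega_p}$ (which follows from $\mathbb{A}\in C^{1,1}(\overline{\Omega};\mathbb{C}^{2\times 2})$ in Assumption~\ref{assump:matrices}). Since on $\Omega_p$ we have $x>0$ and hence $|x|=x$, we split the weight as $x=x^{1/2}\cdot x^{1/2}$ to obtain
\begin{align*}
|a_r(u,v)| \;\leq\; \|\mathbb{A}\|_{L^\infty(\Omega_p)}\int_{\Omega_p} x^{1/2}|\nabla u|\, x^{1/2}|\nabla v| \;\leq\; \|\mathbb{A}\|_{L^\infty(\Omega_p)}|u|_{\mathcal{V}_{reg}(\Omega_p)}|v|_{\mathcal{V}_{reg}(\Omega_p)},
\end{align*}
which is bounded by a constant times $\|u\|_{\mathcal{V}_{reg}(\Omega_p)}\|v\|_{\mathcal{V}_{reg}(\Omega_p)}$.

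For coercivity, I would exploit sign-definiteness: since $x>0$ on $\Omega_p$ and $\mathbb{A}$ is Hermitian positive-definite with lower bound $c_{\mathbb{A}}$ (Assumption~\ref{assump:matrices}), one has pointwise $x\,\mathbb{A}(\vec{x})\nabla u\cdot\overline{\nabla u}\geq c_{\mathbb{A}}\,x|\nabla u|^2$. Integrating yields
\begin{align*}
\Re\, a_r(u,u) \;\geq\; c_{\mathbb{A}}\int_{\Omega_p} x\,|\nabla u|^2 \;=\; c_{\mathbb{A}}\,|u|^2_{\mathcal{V}_{reg}(\Omega_p)}.
\end{align*}
The Poincar\'e-type inequality~\eqref{eq:poincare_vreg} (Proposition~\ref{prop:hardy}), valid because functions in $\mathcal{V}_{reg}(\Omega_p)$ vanish on $\Gamma_p$, then upgrades the seminorm to the full $\mathcal{V}_{reg}(\Omega_p)$-norm, giving $\Re\, a_r(u,u)\gtrsim \|u\|^2_{\mathcal{V}_{reg}(\Omega_p)}$.

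Lax--Milgram then provides existence, uniqueness, and the quantitative bound $\|u\|_{\mathcal{V}_{reg}(\Omega_p)}\lesssim \|f\|_{(\mathcal{V}_{reg}(\Omega_p))'}$, with the constant depending only on $c_{\mathbb{A}}$, $\|\mathbb{A}\|_{L^\infty(\Omega_p)}$, and the Poincar\'e constant from~\eqref{eq:poincare_vreg}. The only mildly nontrivial step is the Poincar\'e inequality~\eqref{eq:poincare_vreg}, but that is imported from the weighted-spaces appendix, so there is essentially no obstacle here: the real work lies further on, in Theorem~\ref{theorem:regularity} (extra regularity) and Theorem~\ref{thm:sp_wp} (identification with the \eqref{eq:sp} framework), which together yield Theorem~\ref{theorem:fl2}.
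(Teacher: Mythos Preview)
Your proof is correct and follows essentially the same route as the paper: continuity of $a_r$ on $\mathcal{V}_{reg}(\Omega_p)$, coercivity from the Hermitian positive-definiteness of $\mathbb{A}$ combined with the Poincar\'e inequality~\eqref{eq:poincare_vreg}, then Lax--Milgram. The paper simply says continuity is ``evident'' while you spell it out, but otherwise the arguments are identical.
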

\begin{proof} 
	Evidently, $a_r: \mathcal{V}_{reg}(\Omega_p)\times \mathcal{V}_{reg}(\Omega_p)\rightarrow \mathbb{C}$ is continuous. Next, using Assumption  \ref{assump:matrices} on $\mathbb{A}$, 
	\begin{align}
		\Re a_r(u, u)=(x^{1/2}\mathbb{A}\nabla u,x^{1/2}\nabla u)_{L^2(\Omega_p)}\geq c_{\mathbb{A}}\|x^{1/2}\nabla u\|_{L^2(\Omega_p)}^2=c_{\mathbb{A}}|u|_{\mathcal{V}_{reg}(\Omega_p)}^2,\quad \forall u\in \mathcal{V}_{reg}(\Omega_p).	
	\end{align}
	With \eqref{eq:poincare_vreg}, this shows that $\Re a_r(u,u)\gtrsim \|u\|_{\mathcal{V}_{reg}(\Omega_p)}^2$, and we conclude using the Lax-Milgram lemma.
\end{proof}
\paragraph{Proof of Theorem \ref{theorem:well_posedness_regular}}It is an immediate corollary of Proposition \ref{prop:rp_var} and Theorem \ref{prop:main}. 
\subsubsection{Regular problem \eqref{eq:rp}: regularity estimates for regular data}
When the right-hand side $f\in L^2(\Omega_p)$, it appears that the solution to \eqref{eq:rp} possesses more regularity than predicted by Theorem \ref{prop:main}. This is reminiscent of the standard elliptic regularity for the Laplace equation. The key result of this section is Theorem \ref{theorem:regularity}, given below. 
For $\mathbb{A}\in C^{\infty}(\overline{\Omega_p}; \mathbb{C}^{2\times 2})$, the corresponding result was proven in the work \cite{baouendi_goulaouic}, modulo the boundary conditions, using pseudo-differential calculus techniques. Their proof extends quite straightforwardly to the case of less regular coefficients, and therefore we omit it here; see \citeappendixregularity for the details.
\begin{theorem}[Theorem 1 in \cite{baouendi_goulaouic}]
	\label{theorem:regularity}
	Let $f\in L^2(\Omega_p)$. Then the unique solution $u$ to  \eqref{eq:rp} belongs to $\mathcal{H}^1(\Omega_p)$, and satisfies the following stability bound: $\|u\|_{H^1(\Omega_p)}+\| x  u\|_{H^2(\Omega_p)}\lesssim \|f\|_{L^2(\Omega_p)}$. 
	
	If, moreover, $f\in \mathcal{H}^1(\Omega_p)$, then the unique solution to $u$ to  \eqref{eq:rp} satisfies $u\in \mathcal{H}^2(\Omega_p)$, and the following stability bound holds true: $\|u\|_{H^2(\Omega_p)}\lesssim \|f\|_{H^1(\Omega_p)}$. 
\end{theorem}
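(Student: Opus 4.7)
The strategy is the familiar one for elliptic operators degenerating on a submanifold: extract tangential regularity (in $y$) by exploiting translation invariance of the problem, then use the equation itself as a Fuchsian ODE in the normal direction $x$ to recover normal regularity. The tangential direction is exact here since $\mathbb{A}$ is periodic in $y$ with matching normal derivatives, and both the Dirichlet condition on $\Gamma_p$ and the Neumann condition at $\Sigma$ are preserved under $y$-translations.

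For the tangential step, either one works directly with Fourier series in $y$ (the pseudo-differential calculus route of Baouendi--Goulaouic), or one applies a finite difference $D^h_y$ to the equation. The equation satisfied by $\partial_y u$ has the same principal part $\operatorname{div}(x\mathbb{A}\nabla\cdot)$ plus a commutator term $\operatorname{div}(x(\partial_y\mathbb{A})\nabla u)$ of the same form, with $\partial_y\mathbb{A}\in C^{0,1}(\overline{\Omega})$ by Assumption \ref{assump:matrices}. Invoking the well-posedness of Theorem \ref{prop:main} for this differentiated problem, with right-hand side in $(\mathcal{V}_{reg}(\Omega_p))'$ controlled by $\|f\|_{L^2(\Omega_p)}+\|\partial_y\mathbb{A}\|_\infty\|u\|_{\mathcal{V}_{reg}(\Omega_p)}$, yields $\partial_y u \in \mathcal{V}_{reg}(\Omega_p)$ with norm dominated by $\|f\|_{L^2(\Omega_p)}$. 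In particular, $\partial_y u \in L^2(\Omega_p)$ together with $x^{1/2}\partial_{xy}u,\; x^{1/2}\partial_{yy}u \in L^2(\Omega_p)$.

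For the normal step, expand the divergence and collect the pure-$x$ derivatives:
\[
\partial_x\bigl(x a_{11}\partial_x u\bigr) = f - a_{12}\partial_y u - x\, R(x,y),
\]
where $R$ gathers mixed second derivatives and lower-order terms, all in $L^2(\Omega_p)$ by the tangential step. Lemma \ref{cor:uv} guarantees $\gamma_n^{\Sigma}u = 0$ automatically, so integrating from $0$ for a.e.\ $y$ gives
\[
x a_{11}\partial_x u(x,y) = \int_0^x \bigl(f - a_{12}\partial_y u - s R\bigr)(s,y)\,ds,
\]
and the one-dimensional Hardy inequality in $x$ applied to this identity yields $\partial_x u \in L^2(\Omega_p)$. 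Combined with the tangential bound, this gives $u \in \mathcal{H}^1(\Omega_p)$; reading off $x\partial_{xx}u \in L^2(\Omega_p)$ from the equation then produces $xu \in H^2(\Omega_p)$ with the stated estimate. The improvement $u \in \mathcal{H}^2(\Omega_p)$ when $f \in \mathcal{H}^1(\Omega_p)$ follows by bootstrap: the tangential step applied to $\partial_y u$ (whose equation now has $L^2(\Omega_p)$ right-hand side via the first estimate) yields $\partial_y u \in \mathcal{H}^1(\Omega_p)$, and one more application of Hardy to the Fuchsian identity above, differentiated once in $x$, delivers $\partial_{xx}u \in L^2(\Omega_p)$.

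The main obstacle throughout is the passage from the weighted bound $x^{1/2}\partial_x u \in L^2(\Omega_p)$ provided automatically by the variational formulation to the unweighted $\partial_x u \in L^2(\Omega_p)$. This step cannot be carried out by purely energy-type arguments; it hinges on the combination of the Fuchsian structure of the equation in $x$, the vanishing of the conormal trace on $\Sigma$, and the one-dimensional Hardy inequality. Once this is in hand, everything else is routine tangential-vs-normal bookkeeping, and the argument of \cite{baouendi_goulaouic}, adapted in \citeappendixregularity{} to merely $C^{1,1}$ coefficients and to our boundary conditions, carries through.
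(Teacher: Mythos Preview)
Your overall architecture (tangential regularity first, then recover normal regularity from the Fuchsian structure and Hardy) is correct, and the normal step via $\gamma_n^{\Sigma}u=0$ plus the one-dimensional Hardy inequality is essentially what the paper does. The gap is in the tangential step.

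You claim that the differentiated right-hand side lies in $(\mathcal{V}_{reg}(\Omega_p))'$ with norm controlled by $\|f\|_{L^2(\Omega_p)}$. This fails: for $v\in\mathcal{V}_{reg}(\Omega_p)$ one has $\langle D^h_y f,v\rangle=-(f,D^{-h}_y v)$, and bounding this requires $\|\partial_y v\|_{L^2}\lesssim\|v\|_{\mathcal{V}_{reg}}$, which is false. The space $\mathcal{V}_{reg}(\Omega_p)=\mathcal{H}^1_1(\Omega_p)$ only embeds into $H^{1/2}(\Omega_p)$ (Lemma~\ref{lem:embedding} with $\theta=1$), not into $H^1$; functions in $\mathcal{V}_{reg}$ can have $\partial_y v\notin L^2$ near $\Sigma$. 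Equivalently, testing the equation with $D^{-h}_y D^h_y u$ produces $(f,D^{-h}_y D^h_y u)$ on the right, and $\|D^{-h}_y D^h_y u\|_{L^2}$ is not controlled by the quantity $\|x^{1/2}\nabla D^h_y u\|_{L^2}$ that the coercivity of $a_r$ delivers. So one full tangential derivative cannot be gained in a single step from the $\mathcal{V}_{reg}$ framework.

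This is exactly why the paper (following Baouendi--Goulaouic) works with the half-order Bessel potential $\mathcal{J}=(1-\partial_y^2)^{1/4}$: the embedding $\|\mathcal{J}^2 v\|_{L^2}\lesssim\|\mathcal{J}v\|_{\mathcal{V}_{reg}}$ of Lemma~\ref{lem:fourier_embedding} is precisely the statement that half a tangential derivative can be gained, and testing with $\mathcal{J}^2 U$ then closes on $\|\mathcal{J}U\|_{\mathcal{V}_{reg}}$. The commutator $[\mathcal{J},\mathbb{A}]$ must be handled (Lemma~\ref{lem:commutator}), and to justify the manipulations the paper first passes to a regularized Dirichlet problem $\operatorname{div}((x+i\nu)\mathbb{A}\nabla u^{\nu}_r)=f$ with $\gamma_0^{\Sigma}u^{\nu}_r=0$, obtains the bounds uniformly in $\nu$, and takes the weak limit. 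Once $\partial_y u\in L^2$ and $x\partial_y\nabla u\in L^2$ are in hand, your Hardy argument for $\partial_x u$ goes through; for the $f\in\mathcal{H}^1$ bootstrap note that a second-order Hardy inequality (Lemma~\ref{lem:hardy_ho} in the appendix) is needed to extract $\partial_x^2 u$.
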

\subsubsection{Singular problem \eqref{eq:sp}: proof of Theorem \ref{theorem:fl2}}
Now we have all ingredients that enable us to work with \eqref{eq:sp}. Let us address first of all the question of the existence. Since $\mathcal{V}_{reg}(\Omega_p)\subset \mathcal{V}_{sing}(\Omega_p)$, the existence follows from Theorem \ref{theorem:well_posedness_regular}.  Unfortunately, we cannot conclude with the uniqueness by using Fredholm-type arguments, since the space  $\mathcal{V}_{sing}(\Omega_p)$ is not compactly embedded into $L^2(\Omega_p)$. 
Nonetheless, the following holds true. 
\begin{theorem}
	\label{thm:sp_wp}
	There exists a unique solution to \eqref{eq:sp} (and it is also a unique solution to \eqref{eq:rp}).
\end{theorem}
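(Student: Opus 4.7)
Existence for \eqref{eq:sp} is immediate: since $x\le \sqrt{a}\sqrt{x}$ on $\Omega_p$ we have $\mathcal{V}_{reg}(\Omega_p)\hookrightarrow \mathcal{V}_{sing}(\Omega_p)$, so the solution provided by Theorem~\ref{theorem:well_posedness_regular} also solves \eqref{eq:sp}. The real content is uniqueness. The standard Lax--Milgram/Fredholm route fails on the larger space $\mathcal{V}_{sing}(\Omega_p)$ because $a_r$ is no longer coercive there and $\mathcal{V}_{sing}(\Omega_p)$ is not compactly embedded into $L^2(\Omega_p)$. My plan is to argue by duality: given $u\in\mathcal{V}_{sing}(\Omega_p)$ satisfying the homogeneous version of \eqref{eq:main_problem} (i.e.\ $f=0$, $\gamma_n^\Sigma u=0$, $\gamma_0^{\Gamma_p}u=0$, $y$-periodic), I will show that $(u,g)_{L^2(\Omega_p)}=0$ for every $g\in L^2(\Omega_p)$, whence $u=0$.

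\textbf{Setup.} Fix $g\in L^2(\Omega_p)$. The first step is to introduce a dual state $w$ by applying Theorem~\ref{theorem:well_posedness_regular} and Theorem~\ref{theorem:regularity} to the transposed tensor $\mathbb{A}^t$, which is again Hermitian positive-definite with the same ellipticity constants. This produces a unique
\begin{align*}
	w\in\mathcal{H}^1(\Omega_p):\quad \operatorname{div}(x\mathbb{A}^t\nabla w)=\bar g\ \text{in}\ \Omega_p,\quad \gamma_n^\Sigma w=0,\quad \gamma_0^{\Gamma_p}w=0,\quad y\text{-periodic BCs}.
\end{align*}
On the $u$-side, the hypotheses $\operatorname{div}(x\mathbb{A}\nabla u)=0\in L^2(\Omega_p)$ and $\gamma_n^\Sigma u=0$ place $u$ in the space $\mathcal{H}_0(\operatorname{div}(x\mathbb{A}\nabla.);\Omega_p)$ of \eqref{eq:hdelta}, so the bilinear Green's identity recalled just after that definition yields $\int_{\Omega_p} x\mathbb{A}\nabla u\cdot\nabla\Phi\,d\vec{x}=0$ for every $\Phi\in\mathcal{H}^1(\Omega_p)$ with $\gamma_0^{\Gamma_p}\Phi=0$ and $y$-periodic. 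In particular, taking $\Phi=w$ gives $\int x\mathbb{A}\nabla u\cdot\nabla w\,d\vec{x}=0$. The remainder of the argument is to reconcile $(u,g)_{L^2}=\int u\,\operatorname{div}(x\mathbb{A}^t\nabla w)\,d\vec{x}$ with this vanishing quantity through integration by parts.

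\textbf{Cutoff integration by parts.} The direct integration by parts between $u$ and $x\mathbb{A}^t\nabla w$ is not admissible because $u\in\mathcal{V}_{sing}(\Omega_p)$ has no $\mathcal{H}^{1/2}(\Sigma)$-trace. I would therefore truncate with the cutoff $\varphi_\varepsilon$ of \eqref{eq:cutoff_phi}: since $u\bar g\in L^1(\Omega_p)$, dominated convergence gives $(u,g)_{L^2}=\lim_{\varepsilon\to 0+}\int(1-\varphi_\varepsilon)u\,\operatorname{div}(x\mathbb{A}^t\nabla w)\,d\vec{x}$. For every $\varepsilon>0$ the truncation $(1-\varphi_\varepsilon)u$ lies in $\mathcal{H}^1(\Omega_p)$ with support bounded away from $\Sigma$, and it inherits $\gamma_0^{\Gamma_p}(\cdot)=0$ and $y$-periodicity from $u$ and $\varphi_\varepsilon$. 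A classical integration by parts therefore applies (all boundary terms on $\Sigma$, $\Gamma_p$, $\Gamma^\pm_p$ vanish thanks to these BCs together with $\gamma_n^\Sigma w=0$ and the Neumann-periodicity of $w$), and the bilinear identity $\eta\cdot\mathbb{A}^t\xi=(\mathbb{A}\eta)\cdot\xi$ converts it into
\begin{align*}
	\int(1-\varphi_\varepsilon)u\,\operatorname{div}(x\mathbb{A}^t\nabla w)\,d\vec{x}=-\int(1-\varphi_\varepsilon)\,x\mathbb{A}\nabla u\cdot\nabla w\,d\vec{x}+\int u\,\nabla\varphi_\varepsilon\cdot x\mathbb{A}^t\nabla w\,d\vec{x}.
\end{align*}

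\textbf{Passing to the limit; main obstacle.} The first right-hand term converges to $-\int x\mathbb{A}\nabla u\cdot\nabla w\,d\vec{x}=0$ by dominated convergence (since $x\nabla u,\nabla w\in L^2(\Omega_p)$) combined with the Green's identity for $u$ obtained above. The step I expect to be the main obstacle is the commutator $\int u\,\nabla\varphi_\varepsilon\cdot x\mathbb{A}^t\nabla w\,d\vec{x}$: its saving grace is the crucial cancellation $x|\nabla\varphi_\varepsilon|\lesssim 1$ on the thin strip $S_\varepsilon=\{\varepsilon/2<x<\varepsilon\}$ supporting $\nabla\varphi_\varepsilon$. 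This cancellation kills the weight entirely, so that Cauchy--Schwarz reduces the commutator to $\lesssim \|u\|_{L^2(S_\varepsilon)}\|\nabla w\|_{L^2(S_\varepsilon)}$, which vanishes as $|S_\varepsilon|\to 0$ because $u,\nabla w\in L^2(\Omega_p)$. Collecting the three limits yields $(u,g)_{L^2(\Omega_p)}=0$ for every $g\in L^2(\Omega_p)$, hence $u=0$, which settles uniqueness and finishes the proof.
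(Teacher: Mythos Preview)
Your proof is correct and follows essentially the same duality strategy as the paper: construct a dual state in $\mathcal{H}^1(\Omega_p)$ via Theorems~\ref{theorem:well_posedness_regular}--\ref{theorem:regularity}, then transfer the vanishing of $\int x\mathbb{A}\nabla u\cdot\nabla w$ into $\|u\|_{L^2}=0$. The only real difference is in the device used for the second integration by parts: the paper exploits the density $\mathcal{V}_{sing}(\Omega_p)=\overline{\mathcal{C}^\infty(\overline{\Omega}_p)}^{\|\cdot\|_{\mathcal{V}_{sing}}}$ (equation~\eqref{eq:vsing_density}) to pass the variational identity for the dual state $v$ directly from smooth test functions to $q=u\in\mathcal{V}_{sing}(\Omega_p)$, whereas you localise away from $\Sigma$ with $\varphi_\varepsilon$ and kill the commutator via the cancellation $x|\nabla\varphi_\varepsilon|\lesssim 1$. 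The density route is shorter; your cutoff route is more self-contained and does not rely on the (nontrivial) density statement.
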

Before proving this theorem, we recall that, see Propositions \ref{prop:density_smooth}, \ref{prop:density_vreg} in Appendix \ref{appendix:weighted},  
\begin{align}
	\label{eq:vsing_density}
	\mathcal{V}_{sing}(\Omega_p)=\overline{\mathcal{C}^{\infty}(\overline{\Omega}_p)}^{\|.\|_{\mathcal{V}_{sing}(\Omega_p)}}=\overline{\mathcal{C}^{\infty}_{comp}(\overline{\Omega}_p)}^{\|.\|_{\mathcal{V}_{sing}(\Omega_p)}}.
\end{align}
\begin{proof}
	As argued before the statement of this theorem, we need to prove injectivity only. Assume that $u\in \mathcal{V}_{sing}(\Omega_p)$ satisfies \eqref{eq:sp} with $f=0$. Testing \eqref{eq:sp} with $v\in \mathcal{H}^1(\Omega_p)$, and integrating by parts yields 
	\begin{align}
		\label{eq:asinguv}
		0=-\int_{\Omega_p}x\mathbb{A}\nabla u(\vec{x})\cdot\overline{\nabla v(\vec{x})}d\vec{x}, \quad \forall v\in \mathcal{H}^1(\Omega_p).
	\end{align}
	Next, we will choose $v\in \mathcal{H}^1(\Omega_p)$ that satisfies (see Theorems \ref{theorem:well_posedness_regular} and \ref{theorem:regularity} for existence/uniqueness)
	\begin{align*}
		&\operatorname{div}(x\mathbb{A}\nabla v)=u \text{ in }\Omega_p,\\ 
		&\gamma_n^{\Sigma}v=0, \\ &\gamma_0^{\Gamma_p}v=0,\qquad
		\text{ periodic BCs at $\Gamma_p^{\pm}$. }
	\end{align*}
	By Proposition \ref{prop:rp_var} and using that $\mathbb{A}$ is Hermitian, $v\in \mathcal{H}^1(\Omega_p)$ satisfies the following variational formulation:
	\begin{align*}
		\int_{\Omega_p}\nabla v\cdot \overline{x\mathbb{A}\nabla q}=	\int_{\Omega_p}x\mathbb{A}\nabla v\cdot\, \overline{\nabla q}=-\int_{\Omega_p}u\, \overline{q}, \quad \forall q\in \mathcal{C}^{\infty}(\overline{\Omega}_p).
	\end{align*} 
	By density \eqref{eq:vsing_density} we can replace $\mathcal{C}^{\infty}(\overline{\Omega}_p)$ by $\mathcal{V}_{sing}(\Omega_p)$, so that
	\begin{align*}
		\int_{\Omega_p}\nabla v\cdot \overline{x\mathbb{A}\nabla q}=-\int_{\Omega_p}u\,\overline{q}, \quad \forall q\in \mathcal{V}_{sing}(\Omega_p).
	\end{align*} 
	This enables us to choose $q=u$, which gives  
	$
	\int_{\Omega_p}\nabla v\cdot\overline{x\mathbb{A}\nabla u}=-\int_{\Omega_p}|u|^2,
	$
	and comparing the above with \eqref{eq:asinguv} we conclude that $u=0$. 
\end{proof}
\subsection{Heterogeneous Neumann problem}
\label{sec:heterog_neumann}
In what follows, we will need to understand the behaviour of a variant of \eqref{eq:main_problem}  when  $\gamma_{n}^{\Sigma}u$ does not vanish. Evidently, in view of Lemma \ref{cor:uv}, it does not make sense to consider $u\in \mathcal{V}_{reg}(\Omega_p)$. In this section we are interested in the well-posedness of the following problem (the choice of the spaces for the data will be made clear later): given $f\in L^2(\Omega_p)$, $g\in \mathcal{H}^{1/2}(\Sigma)$, find $u\in \mathcal{V}_{sing}(\Omega_p)$, s.t. 
\begin{align}
	\label{eq:vsing_heter}
	\tag{N}
	\begin{split}
		&\operatorname{div}( x \mathbb{A}\nabla u)=f \text{ in }\Omega_p, \\ &\gamma_{n}^{\Sigma}u=g,\\
		&\gamma_{0}^{\Gamma_p}u=0, \qquad  \text{ periodic BCs at }\Gamma_p^{\pm}.
	\end{split}
\end{align}
This problem is equivalent to the following problem: find $u\in \mathcal{V}_{sing}(\Omega_p)$, s.t. 
\begin{align*}
	( x \mathbb{A}\nabla u, \nabla \varphi)=-(f, \varphi)-\langle g, \gamma_0^{\Sigma}\overline{\varphi}\rangle_{\mathcal{H}^{-1/2}(\Sigma), \mathcal{H}^{1/2}(\Sigma)}, \quad \forall \varphi \in \mathcal{H}^1(\Omega_p).
\end{align*}
The uniqueness of a solution to \eqref{eq:vsing_heter} follows from Theorem \ref{theorem:fl2}. To prove the existence, we will construct the solution using an appropriate lifting of the data $g$, which in this case will be treated like essential boundary conditions, similarly to the Dirichlet data for the non-weighted Laplacian. 
\subsubsection{Well-posedness}
\label{sec:wp_irreg}
We have the following well-posedness result. 
\begin{theorem}
	\label{theorem:reg_well_posedness}
	Let $f\in L^2(\Omega_p)$, $g\in \mathcal{H}^{1/2}(\Sigma)$. Then the problem \eqref{eq:vsing_heter} admits a unique solution $u\in \mathcal{V}_{sing}(\Omega_p)$. This solution admits the following decomposition:
	\begin{align}
		\label{eq:harmonic}
		u(x,y)={u}_{reg}(x,y)+u_{sing}(x,y), \quad u_{sing}(x,y)=u_h(x,y)\log|x|,
	\end{align}
	where ${u}_{reg}\in \mathcal{V}_{reg}(\Omega_p)\cap \bigcap\limits_{\varepsilon>0}\mathcal{H}^{1-\varepsilon}(\Omega_p)$, and $u_h\in \mathcal{H}^1(\Omega_p)$ is a piecewise-$\mathbb{A}$-harmonic function that satisfies
	\begin{align}
		\label{eq:uh}
		\begin{split}
			&\operatorname{div}(\mathbb{A}\nabla u_h)=0 \text{ in }\Omega_p, \\ &\gamma_{0}^{\Sigma}u_h = a_{11}^{-1}g, \\			
			&\gamma_0^{\Gamma_p}u_h =0, \qquad \text{ periodic BCs at }\Gamma^{\pm}_p.
		\end{split}
	\end{align} 	 
	This decomposition satisfies additional properties: \\
	{1. Stability in fractional and weighted Sobolev spaces. }For all $\re>0$, there exists $C_{\re}>0$, s.t. $$\|u_h\|_{H^1(\Omega_p)}+ \|u_{reg}\|_{H^{1-\varepsilon}(\Omega_p)}\leq C_{\varepsilon}\left( \|g\|_{\mathcal{H}^{1/2}(\Sigma)}+\|f\|_{L^2(\Omega_p)}\right).$$
	Moreover, $u_{reg}\in\bigcap\limits_{\varepsilon>0}\mathcal{H}^1_{\varepsilon}(\Omega_p)$ and, for all $\varepsilon>0$, there exists $C_{\varepsilon}>0$, s.t.  $$\|u_{reg}\|_{H^{1}_{\varepsilon}(\Omega_p)}\leq C_{\varepsilon}\left( \|g\|_{\mathcal{H}^{1/2}(\Sigma)}+\|f\|_{L^2(\Omega_p)}\right).$$ 
	{2. Property of the conormal trace.}	 It holds that $\gamma_n^{\Sigma}u=\gamma_n^{\Sigma}u_{sing}$ and $\gamma_n^{\Sigma}u_{reg}=0$.\\
	{3. Direct sum.} The decomposition \eqref{eq:harmonic} is unique, in other words, if $u=u_{reg}^j+u_h^j\log|x|$, for $j=1,2$, $u_h^j$ solves \eqref{eq:uh} and $u_{reg}^j\in \mathcal{V}_{reg}(\Omega_p)$, then, necessarily $u_{reg}^1=u_{reg}^2$ and $u_h^1=u_h^2$. 
\end{theorem}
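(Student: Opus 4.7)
The plan is to construct $u$ by an explicit lifting that simultaneously furnishes the decomposition \eqref{eq:harmonic}. Uniqueness is immediate: any two solutions to \eqref{eq:vsing_heter} differ by an element of $\mathcal{V}_{sing}(\Omega_p)$ solving \eqref{eq:sp} with vanishing right-hand side, which must vanish by Theorem \ref{thm:sp_wp}. For existence, I first solve \eqref{eq:uh}: since $\mathbb{A}$ is Hermitian positive definite and the Dirichlet datum $a_{11}^{-1}g$ lies in $\mathcal{H}^{1/2}(\Sigma)$, a standard trace lift combined with Lax-Milgram yields a unique $u_h\in\mathcal{H}^1(\Omega_p)$ with $\|u_h\|_{H^1(\Omega_p)}\lesssim\|g\|_{\mathcal{H}^{1/2}(\Sigma)}$. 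I set $u_{sing}:=u_h\log|x|$; the pointwise identity $x\nabla u_{sing}=x\log|x|\nabla u_h+u_h\vec{e}_1$ together with the boundedness of $x\log|x|$ on $\Omega_p$ gives $u_{sing}\in\mathcal{V}_{sing}(\Omega_p)$.

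The crucial computation is an integration-by-parts identity for $\Phi\in\mathcal{H}^1(\Omega_p)$. I split $\int_{\Omega_p}x\mathbb{A}\nabla u_{sing}\cdot\nabla\Phi$ via the identity above; in the $x\log|x|\mathbb{A}\nabla u_h$-piece I rewrite $x\log|x|\nabla\Phi=\nabla(x\log|x|\Phi)-(1+\log|x|)\Phi\,\vec{e}_1$, the first summand integrating to zero by Green's identity using $\operatorname{div}(\mathbb{A}\nabla u_h)=0$, the vanishing of $x\log|x|$ on $\Sigma$, and the Dirichlet/periodic conditions. In the $u_h\mathbb{A}\vec{e}_1$-piece, direct integration by parts on $\Omega_p$ (whose outward normal on $\Sigma$ is $-\vec{e}_1$) together with $\gamma_0^\Sigma u_h=a_{11}^{-1}g$ produces the boundary contribution $-\langle g,\gamma_0^\Sigma\Phi\rangle_\Sigma$. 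Collecting,
\begin{align*}
\int_{\Omega_p}x\mathbb{A}\nabla u_{sing}\cdot\nabla\Phi=-\int_{\Omega_p}F\,\Phi-\langle g,\gamma_0^\Sigma\Phi\rangle_\Sigma,\quad F:=(1+\log|x|)(\mathbb{A}\nabla u_h)_1+\operatorname{div}(u_h\mathbb{A}\vec{e}_1).
\end{align*}
Since $|x|^{\delta/2}(1+\log|x|)$ is bounded for every $\delta>0$, we have $F\in L^2_\delta(\Omega_p)$, so the conormal trace $\gamma_n^\Sigma u_{sing}$ is well-defined in $\mathcal{H}^{-1/2}(\Sigma)$ and equals $g$ by the identity above. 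Hence $u_{reg}:=u-u_{sing}$, if it exists, must satisfy the homogeneous Neumann problem \eqref{eq:main_problem} with right-hand side $f-F$ and vanishing conormal trace on $\Sigma$.

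I expect the main obstacle to be that $F\notin L^2(\Omega_p)$ in general (the $\log|x|\nabla u_h$-term is only in $L^p$ for $p<2$), so Theorem \ref{theorem:fl2} does not apply directly. I show $f-F\in(\mathcal{V}_{reg}(\Omega_p))'$ and invoke Theorem \ref{prop:main}. For $\varphi\in\mathcal{V}_{reg}(\Omega_p)$, the density \eqref{eq:vreg_density} of functions supported away from $\Sigma$ kills the trace contribution in the identity, reducing the pairing to $|\int F\varphi|=|\int x\mathbb{A}\nabla u_{sing}\cdot\nabla\varphi|$; a Cauchy-Schwarz bound in weighted $L^2$ controls the $x\log|x|\mathbb{A}\nabla u_h$-part (using $x^{1/2}(1+\log|x|)\in L^\infty$), while a Hardy-type estimate handles the $u_h\vec{e}_1$-part — obtained by decomposing $u_h$ as the sum of a lift of $a_{11}^{-1}g$ and a remainder vanishing on $\Sigma$ (the remainder enjoying Hardy, the lift treated by a further integration by parts). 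Equivalently one can first establish existence for smooth $g_n$, for which $F_n\in L^2$ and Theorem \ref{theorem:fl2} applies, and pass to the limit via the uniform $\mathcal{V}_{reg}$-bound. Theorem \ref{prop:main} then delivers a unique $u_{reg}\in\mathcal{V}_{reg}(\Omega_p)$, and $u:=u_{reg}+u_{sing}$ solves \eqref{eq:vsing_heter}.

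The announced regularity follows from a weighted bootstrap in the spirit of Theorem \ref{theorem:regularity}: since $F\in L^2_\varepsilon(\Omega_p)$ for every $\varepsilon>0$, one obtains $u_{reg}\in\mathcal{H}^1_\varepsilon(\Omega_p)$ with the claimed constant; the embedding $\mathcal{H}^1_\varepsilon(\Omega_p)\hookrightarrow\mathcal{H}^{1-\varepsilon'}(\Omega_p)$ for a suitable $\varepsilon'(\varepsilon)$ (a consequence of Hölder's inequality applied to $\nabla u=|x|^{\varepsilon/2}\nabla u\cdot|x|^{-\varepsilon/2}$, followed by the 2D Sobolev embedding) then yields the fractional Sobolev estimate. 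The trace property $\gamma_n^\Sigma u_{reg}=0$ comes from Lemma \ref{cor:uv}, and $\gamma_n^\Sigma u=\gamma_n^\Sigma u_{sing}=g$ from the identity above. Finally, the direct-sum statement is immediate: two decompositions of the same $u$ force both $u_h^j$ to be $\mathbb{A}$-harmonic with the same Dirichlet data $a_{11}^{-1}\gamma_n^\Sigma u$ on $\Sigma$, so uniqueness for \eqref{eq:uh} gives $u_h^1=u_h^2$ and hence $u_{reg}^1=u_{reg}^2$.
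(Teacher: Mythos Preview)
Your proposal is correct and follows essentially the same route as the paper: construct $u_h$ solving \eqref{eq:uh}, set $u_{sing}=u_h\log|x|$, identify the residual right-hand side $F$ (the paper calls it $f_h$), observe $F\in L^2_\delta(\Omega_p)$ for every $\delta>0$, solve the homogeneous Neumann problem for $u_{reg}$, and verify $\gamma_n^\Sigma u_{sing}=g$ via the same integration-by-parts identity you wrote down. Two minor points of comparison: (i) once you have $F\in L^2_\delta$, the membership $F\in(\mathcal{V}_{reg}(\Omega_p))'$ is immediate from the Hardy-type embedding $L^2_\delta(\Omega_p)\hookrightarrow(\mathcal{V}_{reg}(\Omega_p))'$ for $\delta<1$ (Lemma \ref{lem:f_belongs_vreg}), so your decomposition of $u_h$ into lift plus remainder is unnecessary; (ii) what you call a ``weighted bootstrap'' is packaged in the paper as Lemma \ref{lem:propreg2}, proved not by bootstrap but by complex interpolation between the $L^2\to H^1$ regularity of Theorem \ref{theorem:regularity} and the $L^2_{1-\varepsilon}\to\mathcal{V}_{reg}$ estimate of Theorem \ref{prop:main}.
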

To prove this result, we state the following auxiliary lemma.  
\begin{lemma}[Proposition \ref{prop:regularity2} in Appendix \ref{sec:wp_rough}]
	\label{lem:propreg2}
	Let $0\leq \delta<1$, and let $f\in L^2_{\delta}(\Omega_p)$. 
	Then the homogeneous Neumann problem \eqref{eq:rp} admits a unique solution $u\in \mathcal{V}_{reg}(\Omega_p)$. It satisfies $$u\in \bigcap_{0<\re<1}\mathcal{H}^1_{\delta+\re}(\Omega_p)\subset \bigcap_{0<\re\leq 1-\frac{\delta}{2}}\mathcal{H}^{1-\delta/2-\re}(\Omega_p).$$ Additionally,  $\|u\|_{\mathcal{H}^{1-\delta/2-\re}(\Omega_p)}\leq C_{\delta,\re} \|u\|_{\mathcal{H}^1_{\delta+2\re}(\Omega_p)}\leq \tilde{C}_{\delta,\re} \|x^{\delta/2}f\|_{L^2(\Omega_p)},$ for all $0<\re<1-\delta/2$. 
\end{lemma}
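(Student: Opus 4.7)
The plan is to establish three parts in succession: (a) existence and uniqueness of $u \in \mathcal{V}_{reg}(\Omega_p)$ with $\|u\|_{\mathcal{V}_{reg}(\Omega_p)} \lesssim \|f\|_{L^2_\delta(\Omega_p)}$; (b) the weighted regularity bound $\|u\|_{\mathcal{H}^1_\alpha(\Omega_p)} \lesssim \|f\|_{L^2_\delta(\Omega_p)}$ for every $\alpha \in (\delta, 2)$; and (c) the weighted-to-fractional Sobolev embedding $\mathcal{H}^1_\alpha(\Omega_p) \hookrightarrow \mathcal{H}^{1-\alpha/2}(\Omega_p)$ for $0\leq \alpha < 2$, applied with $\alpha = \delta + 2\re$.

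For (a), I would check that $f \in L^2_\delta(\Omega_p)$ induces a continuous antilinear form on $\mathcal{V}_{reg}(\Omega_p)$, which brings us into the scope of Theorem \ref{prop:main}. The one-dimensional Hardy inequality, applied slice-wise in $x$ to test functions in $\mathcal{C}^\infty_{comp}(\Omega_p)$ (which vanish near $\Sigma$) and extended by the density \eqref{eq:vreg_density}, yields $\|v\|_{L^2_{-\delta}(\Omega_p)}^2 \lesssim \int_{\Omega_p} x^{2-\delta}|\partial_x v|^2 \leq a^{1-\delta}|v|^2_{\mathcal{V}_{reg}(\Omega_p)}$ whenever $0\leq \delta < 1$. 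Cauchy--Schwarz then bounds $|\int f\, \overline{v}| \leq \|f\|_{L^2_\delta(\Omega_p)}\|v\|_{L^2_{-\delta}(\Omega_p)} \lesssim \|f\|_{L^2_\delta(\Omega_p)}\|v\|_{\mathcal{V}_{reg}(\Omega_p)}$, and Theorem \ref{prop:main} delivers existence, uniqueness, and the continuous estimate in $\mathcal{V}_{reg}(\Omega_p)$.

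For (b), I would test the variational formulation against $\varphi = (x+\eta)^{\alpha - 1} u$ for a small regularization parameter $\eta > 0$ (necessary because the formal weight $x^{\alpha - 1}$ is singular at $\Sigma$ when $\alpha < 1$). Integration by parts, using the boundary conditions and the symmetry of $\mathbb{A}$, gives in the limit $\eta \to 0+$
\begin{equation*}
\int_{\Omega_p} x^\alpha\, \mathbb{A}\nabla u \cdot \overline{\nabla u} \;=\; -\int_{\Omega_p} x^{\alpha - 1}\, f\, \overline{u} \;-\; (\alpha - 1)\int_{\Omega_p} x^{\alpha - 1}\, (\mathbb{A}\nabla u)_1\, \overline{u}.
\end{equation*}
The ellipticity of Assumption \ref{assump:matrices} bounds the left-hand side from below by $c_\mathbb{A}\|\nabla u\|_{L^2_\alpha(\Omega_p)}^2$. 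On the right, Young's inequality combined with the weighted Hardy inequality $\int x^{\alpha - 2}|u|^2 \lesssim \int x^\alpha |\partial_x u|^2$ (valid for $\alpha \neq 1$, with a constant blowing up at $\alpha = 1$) absorbs the cross term into the principal term, while the splitting $x^{\alpha - 1} f\,\overline{u} = (x^{\delta/2}f)(x^{\alpha - 1 - \delta/2}\overline{u})$ together with Cauchy--Schwarz and a further use of Hardy controls the $f$-term by $\|f\|_{L^2_\delta(\Omega_p)}\|\nabla u\|_{L^2_\alpha(\Omega_p)}$ up to an absorbable factor. This yields the weighted bound for each $\alpha \in (\delta, 2)$, with constants degenerating as $\alpha$ approaches either endpoint (which explains the loss of $\re$ and the $\re$-dependence of $\tilde C_{\delta,\re}$ in the statement).

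For (c), the embedding $\mathcal{H}^1_\alpha(\Omega_p) \hookrightarrow \mathcal{H}^{1 - \alpha/2}(\Omega_p)$ for $0 < \alpha < 2$ is a classical Kufner-type result. A clean proof proceeds by a dyadic decomposition in $x$: on the ring $R_k = \{2^{-k-1} < x < 2^{-k+1}\}$, uniform ellipticity off $\Sigma$ together with scaling gives $\|u\|_{H^{1-\alpha/2}(R_k)} \lesssim 2^{k\alpha/2}\|u\|_{H^1(R_k)}$, and summation against the weight $2^{-k\alpha}$ reconstructs the weighted norm on the right. Taking $\alpha = \delta + 2\re$ with $0 < \re < 1 - \delta/2$ produces the chain $\|u\|_{\mathcal{H}^{1 - \delta/2 - \re}(\Omega_p)} \leq C_{\delta,\re}\|u\|_{\mathcal{H}^1_{\delta + 2\re}(\Omega_p)} \leq \tilde C_{\delta,\re}\|x^{\delta/2} f\|_{L^2(\Omega_p)}$ asserted in the lemma. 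The main obstacle is step (b): rigorously justifying the formal energy identity with a test function singular at $\Sigma$, keeping track of the dependence of the Hardy constants on $\alpha$, and ensuring the cross term stays absorbable over the full range $\alpha \in (\delta, 2)$, which may require splitting the argument at $\alpha = 1$ or refining the test function near the degenerate endpoints.
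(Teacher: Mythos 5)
Your plan is genuinely different from the paper's, and the divergence is in step (b), which is exactly where the argument breaks down. The paper proves the $\mathcal{H}^1_{\delta+\re}$ bound by complex interpolation: it regards the solution map $\mathcal{S}:f\mapsto\nabla u_f$ as a bounded operator simultaneously from $L^2(\Omega_p)$ to $L^2(\Omega_p)$ (this is Theorem \ref{theorem:regularity}, the elliptic-regularity result from \cite{baouendi_goulaouic}) and from $L^2_{1-\re}(\Omega_p)$ to $L^2_1(\Omega_p)$ (the trivial $\mathcal{V}_{reg}$ estimate combined with Lemma \ref{lem:f_belongs_vreg}), and interpolates using the Stein--Weiss/Cwikel identity $[L^2,L^2_\eta]_\theta=L^2_{\theta\eta}$. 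You never invoke Theorem \ref{theorem:regularity}, and your attempt to replace it by a direct energy estimate with multiplier $(x+\eta)^{\alpha-1}u$ fails in precisely the range you need.

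The gap is your claim that the one-dimensional weighted Hardy inequality $\int_0^a x^{\alpha-2}|u|^2\,dx\lesssim\int_0^a x^\alpha|\partial_x u|^2\,dx$ is ``valid for $\alpha\neq 1$.'' For $\alpha>1$ the inequality holds under the boundary condition $u(a)=0$, which you have, but for $\alpha<1$ it requires $u$ to vanish at $x=0$ (or at least have zero trace on $\Sigma$); otherwise it is false --- take $u\equiv 1$ near $\Sigma$ so the left side diverges while the right vanishes. The point of the lemma is exactly to prove $u\in\mathcal{H}^1_{\delta+\re}(\Omega_p)$ for $\delta+\re<1$, i.e. to show $u$ is in a space that \emph{does} admit an $L^2(\Sigma)$-trace, starting from $u\in\mathcal{V}_{reg}(\Omega_p)$, a space whose elements have no trace on $\Sigma$. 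So you cannot assume the vanishing-trace form of Hardy, and both the absorption of the cross term $\int x^{\alpha-1}(\mathbb{A}\nabla u)_1\overline{u}$ and the estimate of the $f$-term $\int x^{\alpha-1}f\overline{u}$ rely on the inadmissible variant. The regularization in $\eta$ does not cure this: to pass $\eta\to 0$ you need a uniform-in-$\eta$ a priori bound, which is the thing you are trying to prove. Your closing remark that the argument ``may require splitting at $\alpha=1$ or refining the test function'' flags the difficulty but does not resolve it; the paper's interpolation route is how the obstacle is actually circumvented, and it works precisely because Theorem \ref{theorem:regularity} independently supplies the $L^2\to L^2$ endpoint without any trace-type Hardy inequality near $\Sigma$.
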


\begin{proof}[Proof of Theorem \ref{theorem:reg_well_posedness}]
	In view of the linearity of the problem and of the well-posedness/regularity result of Theorem \ref{theorem:fl2}, it suffices to prove the corresponding result for the case $f=0$.  The uniqueness is a corollary of Theorem \ref{theorem:fl2}, and we need to prove the existence only. This will be done by constructing the solution.
	
	\textbf{Definition of $u_{sing}$. }
	The main idea is to treat the boundary condition in \eqref{eq:vsing_heter} as an \textit{essential} boundary condition, in other words we will construct an appropriate lifting of $g$. To explain how to do so, let us first consider a simplified case $\mathbb{A}=\operatorname{Id}$ and a regular $g\in \mathcal{H}^1(\Sigma)$. The first obvious choice (cf. \cite{nicolopoulos}) is $u_{sing}(x,y)=g(y)\log|x|$, which would satisfy $x\partial_xu_{sing}=g$ in $\Omega_p$ and in particular on $\Sigma$ (thus $\gamma_{n}^{\Sigma}u_{sing}=g$). Decomposing the solution $u$ into the regular and the singular part $u=u_{sing}+u_{reg}$, we see that 
	\begin{align}
		\label{eq:divxureg}
		&\operatorname{div}(x\nabla u_{reg})=-\operatorname{div}(x\nabla u_{sing})=-x\log|x|\partial_y^2 g,\\
		\nonumber
		&\left.x\partial_x u_{reg}\right|_{\Sigma}=0.
	\end{align}
	It is possible to verify that the right-hand side in \eqref{eq:divxureg}, since $g\in \mathcal{H}^{1}(\Sigma)$, defines an element of $\left(\mathcal{V}_{reg}(\Omega_p)\right)'$; in this case \eqref{eq:divxureg} admits a unique solution in $\mathcal{V}_{reg}(\Omega_p)$, cf. Theorem \ref{prop:main}. When the regularity of $g$ is only $\mathcal{H}^{1/2}(\Sigma)$, it is unclear whether the right-hand side is still of regularity $\left(\mathcal{V}_{reg}(\Omega_p)\right)'$, and thus the theory developed in Section \ref{sec:homog} does not seem to be applicable. 
	%
	%
	%
	To overcome this technical difficulty and consider the case of arbitrary $ \mathbb{A}$, instead of $g(y)\log|x|$ we take $$u_{sing}(x,y)=v_{g} (x,y)\log|x|,$$ where $v_{g}\in \mathcal{H}^1(\Omega_p)$ is a well-chosen lifting of $a_{11}^{-1}(y)g(y)\in \mathcal{H}^{1/2}(\Sigma)$. The first (natural) choice is taking a piecewise  $\operatorname{div}(\mathbb{A}\nabla.)$-harmonic lifting $v_{g}=u_h$, with $u_h\in \mathcal{H}^1(\Omega\setminus\Sigma)$ being a unique solution to \eqref{eq:uh}. Before proceeding, we need to verify that  $u_{sing}$ defined as above indeed belongs to the space $\mathcal{V}_{sing}(\Omega_p)$. For this we use the fact that $u_h\in \mathcal{H}^1(\Omega_p)$, and   
	\begin{align*}
		&\vec{x}\mapsto u_h(\vec{x})\log|x| \in L^2(\Omega_p) \text{ by the Hardy-type  inequality, cf. Proposition \ref{prop:hardy} in Appendix  \ref{appendix:weighted}},\\	
		&\vec{x}\mapsto x\nabla (u_h(\vec{x})\log|x|)=x\log|x|\nabla u_h+\partial_x u_h \in L^2(\Omega_p) \text{ by $\|x\log|x|\|_{L^{\infty}(\Omega_p)}<\infty$.}
	\end{align*}
	The stated bound $\|u_{sing}\|_{\mathcal{V}_{sing}(\Omega_p)}\lesssim \|g\|_{\mathcal{H}^{1/2}(\Sigma)}$ follows from the above and \eqref{eq:uh}.
	
	We will verify a posteriori that $\gamma_n^{\Sigma}u=\gamma_n^{\Sigma}u_{sing}$,and for the moment will concentrate on constructing $u_{reg}$.

	\textbf{Definition of $u_{reg}$. }	Let us write the problem to be satisfied by $u_{reg}\in \mathcal{V}_{sing}(\Omega_p)$, which we will equip additionally with $\gamma_n^{\Sigma}u_{reg}=0$:
	\begin{align}
		\label{eq:uregpb}
		\begin{split}
			\operatorname{div}(x\mathbb{A}\nabla u_{reg})&=-\operatorname{div}(x\mathbb{A}\nabla u_{sing})=-\operatorname{div}(x\log|x|\mathbb{A}\nabla u_h)-\operatorname{div}(u_h\mathbb{A}\cdot\vec{e}_x)\\
			&=-(1+\log|x|)\vec{e}_x\cdot \mathbb{A}\nabla u_h -\operatorname{div}(u_h\mathbb{A}\cdot\vec{e}_x)=:f_h,\\
			&\gamma_0^{\Gamma_p}u=0,\text{  periodic BCs at }\Gamma^{\pm}_p.
		\end{split}
	\end{align}
	With Lemma \ref{lem:propreg2} we fix $u_{reg}$ as a unique solution from $\mathcal{V}_{reg}(\Omega_p)$ to the above problem. Indeed, since $\nabla u_h\in L^2(\Omega_p)$, the right-hand side $f_h\in L^2_{\delta}(\Omega_p)$ for all $\delta>0$. Hence $u_{reg}\in \bigcap_{0<\re<1}\mathcal{H}^{1}_{\re}(\Omega_p)\subset\mathcal{H}^{1-}(\Omega_p)$, and the control on the norms of $u_{reg}$ follows from the statement of the same lemma.

	\textbf{Verification that $u_{reg}+u_{sing}$ is indeed a solution. } We need to verify that $u=u_{reg}+u_{sing}$ as defined above satisfies \eqref{eq:uh} with $f=0$. By construction of $u_{reg}$ and $u_{sing}$ it is sufficient to check that $\gamma_n^{\Sigma}u=g$ only. By Lemma \ref{cor:uv}, it remains to verify that $\gamma_n^{\Sigma}u_{sing}=g$. With the  variational definition of the conormal derivative,  using the second line \eqref{eq:uregpb}, the fact that $f_h\in L^2_{\delta}(\Omega_p)$ and Lemma \ref{lem:f_belongs_vreg} to justify that the below integrals are defined as Lebesgue's integrals, we arrive at the following identity for all $\varphi\in \mathcal{H}^1(\Omega_p)$:
	\begin{align*}
		\langle \gamma_n^{\Sigma}u_{sing}, \gamma_0^{\Sigma}\varphi\rangle_{\mathcal{H}^{-1/2}(\Sigma), \mathcal{H}^{1/2}(\Sigma)}&= -\int_{\Omega_p}\operatorname{div}(x\mathbb{A}\nabla u_{sing}) \varphi - \int_{\Omega_p}x\mathbb{A}\nabla u_{sing}\cdot \nabla \varphi \\
		&=-\int_{\Omega_p}(1+\log|x|)\vec{e}_x\cdot \mathbb{A}\nabla u_h\, \varphi -\int_{\Omega_p}\operatorname{div}(u_h\mathbb{A}\cdot \vec{e}_x)\, \varphi-\int_{\Omega_p}x\mathbb{A}\nabla u_{sing}\cdot \nabla \varphi.
	\end{align*}
	Integrating by parts the second integral and replacing $x\mathbb{A}\nabla u_{sing}$ by
	\begin{align*}
		x \mathbb{A}\nabla u_{sing}= x \mathbb{A}\left(\begin{matrix}
			x^{-1}u_h+\log|x|\partial_x u_h \\
			\log|x|\partial_y u_h
		\end{matrix}\right)=\mathbb{A}\left(\begin{matrix}
			u_h + x \log|x|\partial_x u_h\\
			x \log|x|\partial_y u_h
		\end{matrix}\right)=\mathbb{A}\left(\begin{matrix}
			u_h\\
			0	
		\end{matrix}
		\right)+ x \log|x|\mathbb{A}\nabla u_h,
	\end{align*}
	we conclude that 
	\begin{align*}
		\langle \gamma_n^{\Sigma}u_{sing}, \gamma_0^{\Sigma}\varphi\rangle_{\mathcal{H}^{-1/2}(\Sigma), \mathcal{H}^{1/2}(\Sigma)}-\int_{\Sigma}a_{11}u_h\varphi &= -\int_{\Omega_p}(1+\log|x|)\vec{e}_x\cdot\mathbb{A}\nabla u_h \, \varphi  -\int_{\Omega_p}x\log|x|\mathbb{A}\nabla u_h \, \cdot \nabla\varphi\\
		&=-\int_{\Omega_p}\mathbb{A}\nabla u_h \nabla (x\log|x|\varphi).
	\end{align*}
	Since $\varphi\in \mathcal{H}^1(\Omega_p)$,  $\vec{x}\mapsto x\log|x|\varphi(\vec{x}) \in \mathcal{H}^1(\Omega_p)$; in particular,  $
	\nabla(x\log|x|\varphi)=	(1+\log|x|)\varphi+x\log|x|\nabla \varphi \in L^2(\Omega_p)$ by the Hardy-type inequality of Proposition \ref{prop:hardy}. We also have that $\gamma_0^{\Sigma}(x\log|x|\varphi)=0$. Therefore, by definition of $u_h$, the right-hand side vanishes, and it follows that $\gamma_n^{\Sigma}u_{sing}=a_{11}\gamma_0^{\Sigma}u_h = g =\gamma_n^{\Sigma }u$. 
	
	This also proves the property of the conormal derivative stated in the theorem.

	
	%
	%
	%
	%
	\textbf{Uniqueness of the decomposition \eqref{eq:harmonic}. }Uniqueness of the decomposition follows by uniqueness of the solution to \eqref{eq:uh} and next to \eqref{eq:uregpb} in $\mathcal{V}_{reg}(\Omega_p)$ as argued in Lemma \ref{cor:uv}.
\end{proof}
\begin{remark}
	We require that $g\in \mathcal{H}^{1/2}(\Sigma)$ in \eqref{eq:vsing_heter} in order to be able to construct its lifting in a simple manner.	We believe that the well-posedness result of Theorem \ref{theorem:well_posedness_regular} holds true also for less regular data $g\in\mathcal{H}^{-1/2}(\Sigma)$, however, the decomposition \eqref{eq:harmonic} is no longer explicit. This is postponed to the future work. 
\end{remark}
Theorem \ref{theorem:reg_well_posedness} shows that the singular part of the solution to \eqref{eq:vsing_heter} has a very peculiar behaviour in the vicinity of $\Sigma$: the singularity is necessarily of a logarithmic type, while of course the space $\mathcal{V}_{sing}(\Omega_p)$ contains functions with stronger singularities in the vicinity of $x=0$.
\begin{remark}
	As seen from the proof of Theorem \ref{theorem:reg_well_posedness}, the decomposition in \eqref{eq:harmonic} is not stable in the following sense: $\operatorname{div}( x \mathbb{A}\nabla u_{reg}), \operatorname{div}( x \mathbb{A}\nabla u_{sing})\notin L^2(\Omega_p)$, but rather in a larger space $\bigcap_{\delta>0}L^2_{\delta}(\Omega_p)$. This may seem not entirely satisfactory. One way to avoid this is to change the definition of the singular term in \eqref{eq:harmonic}, by incorporating a well-chosen weight into the PDE satisfied by $u_h$. However, the resulting decomposition appeared to be more difficult to work with, and that is why we abandoned this idea.
\end{remark}
Theorem \ref{theorem:reg_well_posedness} also allows to prove Proposition \ref{prop:decomp1}, announced in Section \ref{sec:simplified}. 
\begin{proof}[Proof of Proposition \ref{prop:decomp1}]
	The function $u$ as in Proposition \ref{prop:decomp1} satisfies \eqref{eq:vsing_heter} for some $f\in L^2(\Omega)$, separately in $\Omega_p$ and in $\Omega_n$. Also,  $\gamma_n^{\Sigma,p}u=\gamma_n^{\Sigma,n}u=\gamma_n^{\Sigma}u\in \mathcal{H}^{1/2}(\Sigma)$. The result is immediate with Theorem \ref{theorem:reg_well_posedness}.
\end{proof}
\subsubsection{The third Green's formula}
\label{sec:green_formula}
This section is dedicated to the derivation of the third Green's formula for functions satisfying \eqref{eq:vsing_heter}. This construction is inspired by \cite{ciarlet_mk_peillon}, where the Green's formula was used to define a weak jump of the Dirichlet trace of a regular part of the limiting absorption solution.  

While the notion of the Neumann trace for \eqref{eq:vsing_heter} is inherited from the definition of the normal trace of functions from  ${H}(\operatorname{div};\Omega_p)$, it seems impossible to define the Dirichlet trace for solutions of \eqref{eq:vsing_heter}, due to the presence of the logarithmic singularity, see Theorem \ref{theorem:well_posedness_regular}. Nonetheless, it appears that the third Green's formula holds true, provided a new definition of the trace (Definition \ref{definition:one_sided_trace}), re-stated below. Let (cf. \eqref{eq:hdelta})
\begin{align*}
	\mathcal{V}_{sing}(\operatorname{div}(x\mathbb{A}\nabla.); \Omega_{\lambda})&=\{v\in \mathcal{H}_0(\operatorname{div}(x\mathbb{A}\nabla .); \Omega_{\lambda}): \, \gamma_n^{\Sigma}v\in \mathcal{H}^{1/2}(\Sigma)\},\quad \lambda\in\{n,p\}, 
\end{align*}
equipped with the norm $(\|.\|_{\mathcal{V}_{sing}(\Omega_{\lambda})}^2+\|\operatorname{div}(x\mathbb{A}\nabla .)\|_{\Omega_{\lambda}}^2+\|\gamma_n^{\Sigma}.\|_{\mathcal{H}^{1/2}(\Sigma)}^2)^{1/2}$.
\begin{definition}
	\label{def:trace}
	Let $u\in	\mathcal{V}_{sing}(\operatorname{div}(x\mathbb{A}\nabla.); \Omega_{\lambda})$. Given the decomposition \eqref{eq:harmonic} of $u$, we define its trace as 
	\begin{align*}
		\gamma_{0}^{\Sigma}u:=\gamma_{0}^{\Sigma}u_{reg}\in \mathcal{H}^{1/2-}(\Sigma).
	\end{align*}
\end{definition}
The above definition of the trace takes into account the regular part of $u$ only, and the trace is well-defined, due to the uniqueness of the decomposition in Theorem \ref{theorem:reg_well_posedness}. It appears in the third Green's formula, as made precise below.
\begin{theorem}
	\label{theorem:green}
	Let $u, \, v\in \mathcal{V}_{sing}(\operatorname{div}(x\mathbb{A}\nabla.); \Omega_{p})$, and  $u_{reg}, u_h$,  $v_{reg}, v_h$ be defined in Theorem \ref{theorem:reg_well_posedness}, so that 
	\begin{align*}
		u=u_{reg}+u_h\log|x|, \quad v=v_{reg}+v_h\log|x|.
	\end{align*}
	Then the following integration by parts formula holds true: 
	\begin{align}
		\label{eq:ibp}
		\begin{split}
			\int_{\Omega_p}\operatorname{div}( x  \mathbb{A}\nabla u)\overline{v}-\int_{\Omega_p}\overline{\operatorname{div}( x \mathbb{A}\nabla v)}{u}&=-\langle a_{11}\gamma_{0}^{\Sigma} u_h, \overline{\gamma_{0}^{\Sigma} v_{reg}}\rangle_{L^2(\Sigma)}+\overline{\langle a_{11}\gamma_{0}^{\Sigma} v_h, \overline{\gamma_{0}^{\Sigma} u_{reg}}\rangle}_{L^2(\Sigma)}\\
			&=-\langle \gamma_{n}^{\Sigma}u, \, \overline{\gamma_{0}^{\Sigma}v}\rangle_{L^2(\Sigma)} +\overline{\langle \gamma_{n}^{\Sigma}v, \, \overline{\gamma_{0}^{\Sigma}u}\rangle}_{L^2(\Sigma)}
		\end{split}
	\end{align}
\end{theorem}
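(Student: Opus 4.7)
The plan is to apply the classical Green's identity on the truncated domain $\Omega_p^{\varepsilon} := \Omega_p \cap \{x > \varepsilon\}$ and pass to the limit $\varepsilon \to 0+$. On $\Omega_p^{\varepsilon}$ the weight $x$ is bounded below by $\varepsilon$, so both $u$ and $v$ lie in $H^1(\Omega_p^{\varepsilon})$ and the standard identity applies. Using that $\mathbb{A}$ is Hermitian (and hence $x\mathbb{A}\nabla u \cdot \overline{\nabla v} = \overline{x\mathbb{A}\nabla v}\cdot \nabla u$), the symmetric volume contributions cancel and one obtains
\begin{align*}
\int_{\Omega_p^{\varepsilon}}\bigl[\operatorname{div}(x\mathbb{A}\nabla u)\overline{v} - \overline{\operatorname{div}(x\mathbb{A}\nabla v)}u\bigr] = \int_{\partial\Omega_p^{\varepsilon}}\bigl[(x\mathbb{A}\nabla u \cdot \vec{n})\overline{v} - u\,\overline{(x\mathbb{A}\nabla v \cdot \vec{n})}\bigr].
\end{align*}
The Dirichlet condition on $\Gamma_p$ and the periodicity on $\Gamma_p^{\pm}$ kill those portions of the boundary; only the artificial cut $\Sigma_{\varepsilon} := \{\varepsilon\}\times (-\ell,\ell)$, with outward normal $-\vec{e}_x$, contributes. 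The left-hand side converges to its analogue on $\Omega_p$ by dominated convergence, since $\operatorname{div}(x\mathbb{A}\nabla u)\,\overline{v}$ and $u\,\overline{\operatorname{div}(x\mathbb{A}\nabla v)}$ both lie in $L^1(\Omega_p)$.

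To analyze the limit of the boundary integral on $\Sigma_{\varepsilon}$, I substitute the decomposition $u = u_{reg} + u_h\log|x|$ and $v = v_{reg} + v_h\log|x|$ from Theorem \ref{theorem:reg_well_posedness}. The algebraic key is the identity, valid for $x > 0$,
\begin{align*}
(x\mathbb{A}\nabla(u_h\log x))\cdot \vec{e}_x = x\log x\,(\mathbb{A}\nabla u_h)_1 + u_h\,a_{11},
\end{align*}
so $(x\mathbb{A}\nabla u)\cdot \vec{e}_x|_{x=\varepsilon}$ splits as $(x\mathbb{A}\nabla u_{reg})_1|_{\varepsilon} + u_h(\varepsilon,\cdot)\,a_{11}(\varepsilon,\cdot) + \varepsilon\log\varepsilon\,(\mathbb{A}\nabla u_h)_1|_{\varepsilon}$. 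Expanding the full product on $\Sigma_{\varepsilon}$ gives twelve monomials grouped by powers of $\log\varepsilon$ and $\varepsilon\log\varepsilon$. The symmetric pair $u_h\,a_{11}\,\overline{v_h}\,\log\varepsilon - u_h\log\varepsilon\,\overline{v_h\,a_{11}}$ cancels exactly because $a_{11}$ is real (Hermitian diagonal); the terms carrying a factor $\varepsilon\log\varepsilon$ or $\varepsilon(\log\varepsilon)^2$ vanish in the limit once integrated in $y$, provided one selects a sequence $\varepsilon_n \to 0$ along which the $L^2(\Sigma_{\varepsilon_n})$ norms of $\nabla u_h$ and $\nabla v_h$ remain controlled (which exists by Fubini applied to $\nabla u_h,\nabla v_h\in L^2(\Omega_p)$). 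The two surviving terms $u_h\,a_{11}\,\overline{v_{reg}} - u_{reg}\,\overline{v_h\,a_{11}}$ converge, at the trace level, to the desired pairing on $\Sigma$, yielding the first form of \eqref{eq:ibp}; the second form then follows immediately from $\gamma_n^{\Sigma}u = a_{11}\gamma_0^{\Sigma}u_h$ (Theorem \ref{theorem:reg_well_posedness}) and $\gamma_0^{\Sigma}u := \gamma_0^{\Sigma}u_{reg}$ (Definition \ref{def:trace}).

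The main obstacle is handling the residual terms coupling $(x\mathbb{A}\nabla u_{reg})_1$ on $\Sigma_{\varepsilon}$ with, respectively, $\overline{v_{reg}}$ (whose trace lies only in $\mathcal{H}^{1/2-}(\Sigma)$) and with $\log\varepsilon\,\overline{v_h}$ (which blows up logarithmically). The strategy is to exploit $\gamma_n^{\Sigma}u_{reg} = 0$ from Theorem \ref{theorem:reg_well_posedness} through its variational characterization: subtracting the full Green's identity for $u_{reg}$ on $\Omega_p$ (whose right-hand side $\langle \gamma_n^{\Sigma}u_{reg},\gamma_0^{\Sigma}\Phi\rangle$ vanishes) from its analogue on $\Omega_p^{\varepsilon}$ yields, for any admissible $\Phi \in \mathcal{H}^1(\Omega_p)$,
\begin{align*}
\int_{\Sigma_{\varepsilon}}(x\mathbb{A}\nabla u_{reg})_1\,\Phi\,dy = -\int_{\Omega_p\cap\{0<x<\varepsilon\}}\bigl[\operatorname{div}(x\mathbb{A}\nabla u_{reg})\,\Phi + x\mathbb{A}\nabla u_{reg}\cdot \nabla \Phi\bigr],
\end{align*}
whose right-hand side tends to $0$ as $\varepsilon \to 0$ by absolute continuity of the Lebesgue integral. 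Applying this to $\Phi = \overline{v_{reg}}$ (after approximation of $v_{reg}$ by smoother functions, since $v_{reg}$ lies only in $\mathcal{H}^{1-}(\Omega_p)$) takes care of the first residual. For the $\log\varepsilon$-factor, one applies the same identity to $\Phi = \overline{v_h}$ and controls the logarithmic blow-up by a polynomial-in-$\varepsilon$ decay of the volumetric integrals, which is available since $v_h \in \mathcal{H}^1(\Omega_p) \subset L^q(\Omega_p)$ for every $q < \infty$ by 2D Sobolev embedding. Once these vanishing estimates are settled, the identification of the surviving terms yields both stated forms of \eqref{eq:ibp}.
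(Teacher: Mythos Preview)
Your strategy (hard truncation at $\Sigma_\varepsilon=\{x=\varepsilon\}$ and passage $\varepsilon\to 0$) is correct and reaches the same conclusion, but the paper takes a different route that sidesteps several of the technicalities you encounter. Rather than truncating the domain, the paper multiplies by a smooth cutoff $1-\varphi_\varepsilon$ with $\varphi_\varepsilon$ supported in $\{|x|<\varepsilon\}$; every would-be boundary contribution then becomes a \emph{volumetric} integral over $\operatorname{supp}\varphi'_\varepsilon$, and the single observation $\|x\,\varphi'_\varepsilon\|_{L^\infty}\le C$ lets all vanishing groups be dispatched uniformly via the Hardy inequality and the shrinking support --- no subsequence is needed, and the residuals you kill by invoking $\gamma_n^\Sigma u_{reg}=0$ are in the paper simply absorbed into $\int_{\operatorname{supp}\varphi'_\varepsilon}|x^{\delta/2}\nabla u_{reg}|^2\to 0$. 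For the surviving term the paper integrates by parts once more in $x$, using that $\partial_x\bigl(\mathbb{A}_{11}u_h\overline{v_{reg}}\bigr)\in L^1(\Omega_p)$; this $L^1$-lemma is in fact exactly what you need (but did not supply) to justify your claim that $\int_{\Sigma_\varepsilon}u_h\mathbb{A}_{11}\overline{v_{reg}}\,dy\to\int_\Sigma a_{11}\,\gamma_0^\Sigma u_h\,\overline{\gamma_0^\Sigma v_{reg}}$, which is not immediate from trace theory since $v_{reg}$ is only in $\mathcal{H}^{1-}$. Your approach has the virtue of being conceptually direct and of making the role of $\gamma_n^\Sigma u_{reg}=0$ explicit; the paper's cutoff buys a cleaner limiting procedure. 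One minor point: the approximation of $v_{reg}$ you propose is unnecessary --- the ``full Green's identity on $\Omega_p$'' for $u_{reg}$ is nothing other than the variational formulation of the homogeneous Neumann problem (Proposition~\ref{prop:rp_var}), which already holds for all test functions in $\mathcal{V}_{reg}(\Omega_p)\ni v_{reg}$.
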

\begin{proof}
	See Appendix \ref{appendix:green_formula_to_submit}.
\end{proof}

\subsubsection{On the Dirichlet trace of singular functions}
As introduced in Definition \ref{def:trace}, the notion of the Dirichlet trace $\gamma_0^{\Sigma}$ depends on $\Omega_p$ through the chosen lifting of the Neumann trace, which we fixed to be $\mathbb{A}-$harmonic. More precisely, it is a priori unclear whether  $\gamma_0^{\Sigma,p}u=\gamma_0^{\Sigma,p}(\varphi_{\re}u)$, $0<\re<1$, where $\varphi_{\re}$ is a cut-off function localized in the vicinity of the interface, as defined in \eqref{eq:cutoff_phi}. The answer to this question appears to be positive, see Lemma \ref{lem:deftraces}. Another interesting question is whether the trace is preserved under a change of coordinates (the answer appears to be negative). We present however a related result, which will be used in a sequel. 
\begin{lemma}[Definition of traces]
	\label{lem:deftraces}
	Assume that $u\in L^2(\Omega_p)$ writes, for some $0<\delta<1$, 
	\begin{align*}
		u=u_{s,1}\log|x|+u_{r,1}=u_{s,2}\log|x|+u_{r,2}, \text{ where }u_{s,j}\in \mathcal{H}^1(\Omega_p), \; u_{r,j}\in \mathcal{H}^1_{\delta}(\Omega_p),\, j=1,p.
	\end{align*}
	Then  $\gamma_0^{\Sigma}u_{s,1}=\gamma_0^{\Sigma}u_{s,2}$ and  $\gamma_0^{\Sigma}u_{r,1}=\gamma_0^{\Sigma}u_{r,2}.$\\
	If, additionally, $u\in \mathcal{V}_{sing}(\operatorname{div}(x\mathbb{A}\nabla.), \Omega_p)$, then  $\gamma_0^{\Sigma}u=\gamma_0^{\Sigma}u_{r,j}$ and $\gamma_n^{\Sigma}u=a_{11}\gamma_0^{\Sigma}u_{h,j}$. 
\end{lemma}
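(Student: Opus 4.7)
The plan is to subtract the two decompositions and exploit the resulting identity $\tilde w = w\log|x|$ between a function with coefficient in $\mathcal{H}^1(\Omega_p)$ and one in $\mathcal{H}^1_\delta(\Omega_p)$. Setting $w := u_{s,1}-u_{s,2}\in\mathcal{H}^1(\Omega_p)$ and $\tilde w:=u_{r,2}-u_{r,1}\in\mathcal{H}^1_\delta(\Omega_p)$, the equality of the two decompositions amounts precisely to $\tilde w = w \log|x|$. I will show that this identity forces $\gamma_0^{\Sigma}w = 0$ and $\gamma_0^{\Sigma}\tilde w = 0$, which are the two claimed equalities of the first part. The second assertion will then follow by comparing the given decomposition with the canonical one obtained by applying Theorem \ref{theorem:reg_well_posedness} to $u$.

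To prove $\gamma_0^{\Sigma}w = 0$, I differentiate $\tilde w = w\log|x|$, obtaining $\partial_x\tilde w = w/x + \log|x|\,\partial_x w$ in $\Omega_p$. Multiplying by $|x|^{\delta/2}$, the left-hand side is in $L^2(\Omega_p)$ by definition of $\mathcal{H}^1_\delta$, while $|x|^{\delta/2}\log|x|\,\partial_x w$ is in $L^2(\Omega_p)$ because $|x|^{\delta/2}\log|x|\in L^{\infty}(\Omega_p)$ for $\delta>0$. Hence $|x|^{\delta/2-1}w\in L^2(\Omega_p)$. Writing $g:=\gamma_0^{\Sigma}w\in\mathcal{H}^{1/2}(\Sigma)$, the weighted Hardy inequality applied slice-wise to $w(\cdot,y)-g(y)$ (which vanishes at $x=0$) yields $\int_{\Omega_p}|x|^{\delta-2}|w-g|^2\,dx\,dy\leq C_\delta\int_{\Omega_p}|x|^{\delta}|\partial_x w|^2\,dx\,dy<\infty$. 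Combining with $|x|^{\delta/2-1}w\in L^2(\Omega_p)$ gives $|x|^{\delta/2-1}g\in L^2(\Omega_p)$, which is impossible for $g\neq 0$ since $g$ is independent of $x$ and $\int_0^{a}x^{\delta-2}\,dx=+\infty$ for $\delta<1$. For $\gamma_0^{\Sigma}\tilde w=0$, I use that for $\delta<1$ the trace on $\Sigma$ of a function in $\mathcal{H}^1_\delta(\Omega_p)$ coincides with its almost-everywhere pointwise limit $\lim_{x\to 0^+}\tilde w(x,y)$ (Fubini and one-dimensional trace theory in each slice, cf.\ Corollary \ref{cor:tr}). Since $\gamma_0^{\Sigma}w=0$, for a.e.\ $y$ the slice $w(\cdot,y)\in H^1(0,a)$ vanishes at the origin, whence $|w(x,y)|\leq\sqrt{x}\,\|\partial_x w(\cdot,y)\|_{L^2(0,a)}$ by Cauchy--Schwarz. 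Therefore $|\tilde w(x,y)|\leq\sqrt{x}\,|\log|x||\,\|\partial_x w(\cdot,y)\|_{L^2(0,a)}\to 0$ as $x\to 0^+$, giving $\gamma_0^{\Sigma}\tilde w=0$.

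For the second assertion, Theorem \ref{theorem:reg_well_posedness} applied to $u$ with data $\operatorname{div}(x\mathbb{A}\nabla u)\in L^2(\Omega_p)$ and $\gamma_n^{\Sigma}u\in\mathcal{H}^{1/2}(\Sigma)$ produces the canonical decomposition $u = u_{reg}+u_h\log|x|$ with $u_{reg}\in\bigcap_{\varepsilon>0}\mathcal{H}^1_\varepsilon(\Omega_p)$, $u_h\in\mathcal{H}^1(\Omega_p)$, $\gamma_0^{\Sigma}u_h = a_{11}^{-1}\gamma_n^{\Sigma}u$ and, by Definition \ref{def:trace}, $\gamma_0^{\Sigma}u = \gamma_0^{\Sigma}u_{reg}$. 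Applying the first part to the pair (given, canonical) with the common weight $\delta$ from the hypothesis---admissible since $u_{reg}\in\mathcal{H}^1_\delta(\Omega_p)$---yields $\gamma_0^{\Sigma}u_{r,j} = \gamma_0^{\Sigma}u_{reg} = \gamma_0^{\Sigma}u$ and $\gamma_0^{\Sigma}u_{s,j} = \gamma_0^{\Sigma}u_h = a_{11}^{-1}\gamma_n^{\Sigma}u$, which is the claim (identifying the $u_{h,j}$ of the statement with the singular coefficient $u_{s,j}$). The most delicate point is the identification of the $\mathcal{H}^1_\delta$-trace with the pointwise almost-everywhere limit; a self-contained alternative is to approximate $w$ by $\chi_n w$, with $\chi_n$ a smooth cutoff vanishing on $\{x<1/(2n)\}$ and equal to $1$ on $\{x>1/n\}$, and to check $\chi_n w\log|x|\to w\log|x|$ in $\mathcal{H}^1_\delta(\Omega_p)$ via the Hardy bound $w/x\in L^2(\Omega_p)$ and the boundedness of $|x|^{\delta/2}\log|x|$, then to pass to the limit in the continuous trace map of Corollary \ref{cor:tr}.
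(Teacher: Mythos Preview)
Your proof is correct and follows essentially the same route as the paper's. Both arguments subtract the two decompositions to get $\tilde w = w\log|x|$, differentiate, and use that $x^{\delta/2}\partial_x\tilde w$ and $x^{\delta/2}\log|x|\,\partial_x w$ lie in $L^2$ to force $x^{\delta/2-1}w\in L^2$, hence $\gamma_0^{\Sigma}w=0$; the paper invokes Grisvard's characterization $\mathcal{H}^1_\delta\cap L^2_{\delta-2}\Rightarrow\gamma_0^\Sigma=0$ directly, while you reprove that implication via the weighted Hardy inequality applied to $w-g$. For the second trace the paper simply repeats the Grisvard step (using $d_s/x\in L^2$ to place $d_r$ in $L^2_{\delta-2}$), whereas you argue via the pointwise limit $|\tilde w(x,y)|\le\sqrt{x}\,|\log x|\,\|\partial_x w(\cdot,y)\|_{L^2}\to 0$; both work, and your cutoff alternative cleanly sidesteps the identification of the $\mathcal{H}^1_\delta$-trace with the a.e.\ limit.
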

\begin{proof}
	Let us start by proving that $\gamma_0^{\Sigma}u_{s,1}=\gamma_0^{\Sigma}u_{s,2}$. To see this, we rewrite 
	\begin{align}
		\label{eq:urdiff}
		u_{r,1}-u_{r,2}=d_s\log|x|, \quad d_s:=u_{s,2}-u_{s,1},\text{ and }	\nabla (u_{r,1}-u_{r,2})=\frac{d_s}{x}+\nabla d_{s}\log|x|.
	\end{align}
	Since $u_{r,j}\in \mathcal{H}^1_{\delta}(\Omega_p)$, and $d_s\in \mathcal{H}^1(\Omega_p)$, it holds that $\vec{x}\mapsto\frac{d_s(\vec{x})}{x}\in L^2_{\delta}(\Omega_p)$, in other words, $d_s\in L^2_{\delta-2}(\Omega_p)$. In particular, $d_s\in \mathcal{H}^1_{\delta}(\Omega_p)\cap L^2_{\delta-2}(\Omega_p)$. By \cite[Theorem 1.2, Proposition 1.2]{grisvard} and \eqref{eq:h1delta} (which accounts for different conventions in the definition of spaces in this article and in \cite{grisvard}), we conclude that $\gamma_0^{\Sigma}d_s=0$.

	It remains to prove the equality of traces $\gamma_0^{\Sigma}u_{r,j}$. We proceed in a similar manner, by using now that $\gamma_0^{\Sigma}d_s=0$ and $d_s\in \mathcal{H}^1(\Omega_p)$.

	In particular, by Hardy's inequality \cite[p.313]{brezis2010functional}, the operator 	$M:\, \varphi(\vec{x})\mapsto\frac{\varphi(\vec{x})}{x}$ \text{ is a continuous operator }
	\begin{align}
		\label{eq:multhardy}
		M\in \mathcal{L}(\mathcal{H}^1_{\Sigma,0}(\Omega_p), L^2(\Omega_p)), \quad\text{ where } \mathcal{H}^1_{\Sigma,0}(\Omega_p)=\{u\in \mathcal{H}^1(\Omega_p): \, \gamma_0^{\Sigma}u=0\}.
	\end{align}
	Therefore, $\vec{x}\mapsto d_s(\vec{x})/x\in L^2(\Omega_p)$, thus $d_r:=(u_{r,1}-u_{r,2})\in L^2_{-2+\varepsilon}(\Omega_p)$, for all $\varepsilon>0$. Therefore, $d_r\in \mathcal{H}^1_{\delta}(\Omega_p)\cap L^2_{\delta-2}(\Omega_p)$, and the conclusion $\gamma_0^{\Sigma}d_r=0$ follows using the same argument as before.
	
	The final assertion of the lemma for $u\in \mathcal{V}_{sing}(\operatorname{div}(x\mathbb{A}\nabla.);\Omega_p)$ is an immediate corollary of the previous statements, the decomposition defined in Theorem \ref{theorem:reg_well_posedness} and Definition \ref{def:trace}. 
\end{proof}
An immediate corollary of the above lemma and Theorem \ref{theorem:green} reads. 
\begin{corollary}
	\label{cor:green}
	Let $u, v\in \mathcal{V}_{sing}(\operatorname{div}(x\mathbb{A}\nabla.); \Omega_{p})$,  be s.t. $u=u_s\log|x|+u_r$ and $v=v_s\log|x|+v_r$, where
	\begin{eqnarray}
		\label{eq:cond}
		\begin{split}
			&u_s, v_s\in \mathcal{H}^1(\Omega_p), \quad u_r, \, v_r\in \mathcal{H}^1_{\delta}(\Omega_p),\qquad \text{ for some }0<\delta<1.	
		\end{split}
	\end{eqnarray}	
	Then $
	\int_{\Omega_p}\operatorname{div}( x  \mathbb{A}\nabla u)\overline{v}-\int_{\Omega_p}\overline{\operatorname{div}( x \mathbb{A}\nabla v)}{u}=-\langle a_{11}\gamma_{0}^{\Sigma} u_s, \overline{\gamma_{0}^{\Sigma} v_{r}}\rangle_{L^2(\Sigma)}+\overline{\langle a_{11}\gamma_{0}^{\Sigma} v_s, \overline{\gamma_{0}^{\Sigma} u_{r}}\rangle}_{L^2(\Sigma)}.		$
\end{corollary}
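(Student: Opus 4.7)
The plan is to identify the right-hand side in \eqref{eq:ibp} as being intrinsic to $u$ and $v$, i.e.\ independent of the particular decomposition used, and then simply invoke Theorem \ref{theorem:green} applied to the canonical decomposition produced by Theorem \ref{theorem:reg_well_posedness}. Concretely, since $u,v\in \mathcal{V}_{sing}(\operatorname{div}(x\mathbb{A}\nabla.);\Omega_p)$, Theorem \ref{theorem:reg_well_posedness} furnishes canonical splittings $u=u_{reg}+u_h\log|x|$, $v=v_{reg}+v_h\log|x|$ with $u_h,v_h\in \mathcal{H}^1(\Omega_p)$ and $u_{reg},v_{reg}\in \bigcap_{\varepsilon>0}\mathcal{H}^1_\varepsilon(\Omega_p)$. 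In particular, choosing any $\varepsilon<\delta$ places $u_{reg},v_{reg}$ into $\mathcal{H}^1_\delta(\Omega_p)$.

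The next step is to apply Lemma \ref{lem:deftraces} to each of $u$ and $v$, which is precisely designed for this situation: the two admissible decompositions $u=u_h\log|x|+u_{reg}=u_s\log|x|+u_r$ both satisfy the hypotheses of the lemma (a singular coefficient in $\mathcal{H}^1(\Omega_p)$ and a remainder in $\mathcal{H}^1_\delta(\Omega_p)$). The lemma then delivers
\[
\gamma_0^\Sigma u_s=\gamma_0^\Sigma u_h,\qquad \gamma_0^\Sigma u_r=\gamma_0^\Sigma u_{reg},
\]
and likewise $\gamma_0^\Sigma v_s=\gamma_0^\Sigma v_h$, $\gamma_0^\Sigma v_r=\gamma_0^\Sigma v_{reg}$.

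Finally, Theorem \ref{theorem:green} applied to the canonical decomposition gives \eqref{eq:ibp} with $u_h,u_{reg},v_h,v_{reg}$; substituting the trace identities of the previous step on the right-hand side replaces these by $u_s,u_r,v_s,v_r$, yielding exactly the formula claimed. No obstacle is anticipated: the entire technical content has been packaged into Lemma \ref{lem:deftraces}, and the corollary reduces to a substitution once the invariance of the split traces under a change of admissible decomposition has been recorded.
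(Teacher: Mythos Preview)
Your proposal is correct and follows precisely the approach the paper intends: the corollary is stated there as ``an immediate corollary of the above lemma [Lemma~\ref{lem:deftraces}] and Theorem~\ref{theorem:green},'' and you have unpacked exactly this, using Lemma~\ref{lem:deftraces} to identify the traces of the given decomposition with those of the canonical one from Theorem~\ref{theorem:reg_well_posedness}, then invoking Theorem~\ref{theorem:green}.
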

\section{The limiting absorption principle and properties of solution to \eqref{eq:unu}}
\label{sec:LAP}
Recall the family of problems \eqref{eq:unu}: given $f\in L^2(\Omega)$, $\nu>0$, find $u^{\nu}\in \mathcal{H}^1(\Omega)$ that satisfies 
\begin{align}
	\label{eq:unu_orig}
	\begin{split}
		& \operatorname{div}(( x \mathbb{A}+i\nu\mathbb{T})\nabla u^{\nu})=f \text{ in }\Omega, \\
		&\gamma_0^{\Gamma_p\cup\Gamma_n}u^{\nu}=0, \text{ periodic BCs at }\Gamma^{\pm}.
	\end{split}
	\tag{P$\nu$}
\end{align}
As argued in Lemma \ref{lem:pb_abs_wp}, the above problem is well-posed for all $\nu>0$. Let us discuss a strategy to prove the limiting absorption principle for \eqref{eq:unu_orig}. First of all, we see that if we are able to prove that $g^{\nu}:=\gamma_{n, \nu}^{\Sigma}u^{\nu}$ is bounded uniformly in $\nu$, namely
\begin{align}
	\label{eq:gnufirst}
	\|g^{\nu}\|_{\mathcal{H}^{1/2}(\Sigma)}\lesssim \|f\|,
\end{align}
we can argue that $g^{\nu}$ admits a weakly convergent subsequence (again denoted by $g^{\nu}$) in $\mathcal{H}^{1/2}(\Sigma)$ that would converge to some $g\in \mathcal{H}^{1/2}(\Sigma)$. This allows to decouple the problem  \eqref{eq:unu_orig} into two independent, sign-definite subproblems with the Neumann boundary condition on $\Sigma$: 
\begin{align}
	\label{eq:unu_orig2}
	\begin{split}
		& \operatorname{div}(( x \mathbb{A}+i\nu\mathbb{T})\nabla u^{\nu})=f \text{ in }\Omega_p\cup\Omega_n, \\
		&\gamma_{n,\nu}^{\Sigma}u^{\nu}=g^{\nu},\\
		&\gamma_0^{\Gamma_p\cup\Gamma_n}u^{\nu}=0, \text{ periodic BCs at }\Gamma^{\pm}_p\cup\Gamma^{\pm}_n.
	\end{split}
\end{align}
and conclude about the convergence of $u^{\nu}$, using the same argument as in Theorem \ref{theorem:reg_well_posedness}. However, derivation of the estimate \eqref{eq:gnufirst} is quite technical; thus we start with a simpler bound (even its weaker version \eqref{eq:gnubound})
$
\|g^{\nu}\|_{\mathcal{H}^{-1/2}(\Sigma)}\lesssim \|f\|.
$
It is then used to prove that $\|u^{\nu}\|_{\mathcal{V}_{sing}}$ is uniformly bounded as $\nu\rightarrow 0+$, see Theorem \ref{theorem:stability_estimate}. This result is further used in the proof of the bound \eqref{eq:gnufirst}, see Theorem \ref{theorem:gnubound_improved}. Once the bound \eqref{eq:gnufirst} is established, we proceed to proving the first part of Theorem \ref{theorem:LAP}:  more precisely, we will argue that the statement of this theorem is valid up to a subsequence, see Theorem  \ref{theorem:convergence_decomposition}. These results will be used in Section \ref{sec:LAP_Problem} to prove the statements of Theorem \ref{theorem:LAP}, Theorem \ref{theorem:main_result}
\begin{remark}
	\label{rem:stab_results}
	All stability results of this section hold true when $f$ in \eqref{eq:unu_orig} is replaced by $f^{\nu}$, s.t. there exist $C,\nu_0>0$, s.t. for $0<\nu<\nu_0$, $\|f^{\nu}\|\leq C \|f\|$.
\end{remark}
\subsection{The key stability estimate}
\begin{theorem}[The first stability estimate]
	\label{theorem:stability_estimate}
	There exist $C,\, \nu_0>0$, s.t. for all $ 0<\nu<\nu_0$, the solution to \eqref{eq:unu_orig} satisfies the following stability bound: $
	\|u^{\nu}\|_{\mathcal{V}_{sing}(\Omega)}\leq C\|f\|_{L^2(\Omega)}.
	$
\end{theorem}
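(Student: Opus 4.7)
The plan is to control the two pieces of the $\mathcal{V}_{sing}$-norm, $\|u^\nu\|_{L^2}^2+\|x\nabla u^\nu\|^2$, separately and uniformly in $\nu\in(0,\nu_0)$, by combining a weighted test-function identity with a weak estimate on the conormal flux $g^\nu:=\gamma_{n,\nu}^{\Sigma}u^\nu$ in $\mathcal{H}^{-1/2}(\Sigma)$, and then closing the bound via the decomposition of Theorem~\ref{theorem:reg_well_posedness} applied in each sub-domain.

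\textbf{Step 1 (weighted-gradient estimate).} The natural test function is $v:=x\,\overline{u^\nu}\in\mathcal{H}^1(\Omega)$, admissible because $x$ vanishes on $\Sigma$ and $u^\nu$ vanishes on $\Gamma_p\cup\Gamma_n$, so all boundary conditions of $\mathcal{H}^1(\Omega)$ are preserved. Expanding $\nabla(x\,u^\nu)=u^\nu\vec{e}_x+x\nabla u^\nu$ in $a_\nu(u^\nu,v)=-(f,v)$ and taking the real part, the sign-definite quantity $\int_\Omega x^2\mathbb{A}\nabla u^\nu\cdot\overline{\nabla u^\nu}\ge c_\mathbb{A}\|x\nabla u^\nu\|^2$ appears on the left (the imaginary $i\nu$-contribution $i\nu\int x\mathbb{T}\nabla u^\nu\cdot\overline{\nabla u^\nu}$ drops out, as $\mathbb{T}\nabla u^\nu\cdot\overline{\nabla u^\nu}$ is real by Hermitian-ness of $\mathbb{T}$). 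The residual cross term $\int(x\mathbb{A}+i\nu\mathbb{T})\nabla u^\nu\cdot\vec{e}_x\,\overline{u^\nu}$ is bounded by Cauchy--Schwarz and the a priori estimate $\nu\|\nabla u^\nu\|\le\sqrt{\nu\|f\|\|u^\nu\|}$ from Lemma~\ref{lem:pb_abs_wp}; Young's inequality absorbs $\tfrac12\|x\nabla u^\nu\|^2$ on the left, yielding $\|x\nabla u^\nu\|^2\lesssim\|u^\nu\|_{L^2}^2+\|f\|^2$.

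\textbf{Step 2 (conormal flux estimate).} For a test trace $h\in\mathcal{H}^{1/2}(\Sigma)$, take as lifting $\phi\in\mathcal{H}^1(\Omega_p)$ the $\mathbb{A}$-harmonic extension of $h$ (zero on $\Gamma_p$, periodic at $\Gamma_p^\pm$), so that $\|\phi\|_{H^1(\Omega_p)}\lesssim\|h\|_{\mathcal{H}^{1/2}(\Sigma)}$. Green's identity in $\Omega_p$ gives
\[
\langle g^\nu,h\rangle_\Sigma=-(f,\phi)_{\Omega_p}-\int_{\Omega_p}(x\mathbb{A}+i\nu\mathbb{T})\nabla u^\nu\cdot\overline{\nabla\phi}.
\]
Using Hermitian-ness of $\mathbb{A}$, derivatives in the $x\mathbb{A}$-contribution are transferred onto $\phi$: since $\operatorname{div}(x\mathbb{A}\nabla\phi)=\mathbb{A}\nabla\phi\cdot\vec{e}_x$ (by $\mathbb{A}$-harmonicity of $\phi$) and the interface boundary term vanishes because $x|_\Sigma=0$, one gets $|\int_{\Omega_p}x\mathbb{A}\nabla u^\nu\cdot\overline{\nabla\phi}|=|\int_{\Omega_p} u^\nu\,\overline{\mathbb{A}\nabla\phi\cdot\vec{e}_x}|\lesssim\|u^\nu\|_{L^2}\|h\|_{\mathcal{H}^{1/2}}$. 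The viscous piece is bounded by $\nu\|\nabla u^\nu\|\|\nabla\phi\|\lesssim\sqrt{\nu\|f\|\|u^\nu\|}\|h\|_{\mathcal{H}^{1/2}}\lesssim(\|f\|+\|u^\nu\|_{L^2})\|h\|_{\mathcal{H}^{1/2}}$ for $\nu\le\nu_0$. Combining, $\|g^\nu\|_{\mathcal{H}^{-1/2}(\Sigma)}\lesssim\|f\|+\|u^\nu\|_{L^2}$.

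\textbf{Step 3 (closing the bound --- main obstacle).} Naive combination of Steps 1 and 2 yields only $\|u^\nu\|_{\mathcal{V}_{sing}}^2\lesssim\|f\|^2+\|u^\nu\|_{L^2}^2$, which is not immediately absorbable. My plan is to view each $u^\nu|_{\Omega_\lambda}$ as the solution of the heterogeneous Neumann problem \eqref{eq:vsing_heter} with data $(f,g^\nu)$ augmented by the absorption $+i\nu\mathbb{T}$, and to invoke an extension of Theorem~\ref{theorem:reg_well_posedness} to $g\in\mathcal{H}^{-1/2}(\Sigma)$-data (along the lines indicated in the remark following that theorem). The resulting decomposition $u^\nu|_{\Omega_\lambda}=u^\nu_{reg}+u^\nu_h\log|x|$ provides uniform-in-$\nu$ control $\|u^\nu_h\|_{H^1(\Omega_\lambda)}\lesssim\|g^\nu\|_{\mathcal{H}^{-1/2}}$ and $\|u^\nu_{reg}\|\lesssim\|f\|+\|g^\nu\|_{\mathcal{H}^{-1/2}}$; by localising to a sufficiently small interface neighborhood via the cutoffs $\varphi_\e$ of \eqref{eq:cutoff_phi} and using Hardy-type bounds on the logarithmic singular part, one isolates a small prefactor on $\|u^\nu\|_{L^2}$ which then permits absorption, yielding $\|u^\nu\|_{L^2}\lesssim\|f\|$ and hence the stated estimate. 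The principal difficulty is precisely this last step: $\mathcal{V}_{sing}(\Omega)$ does not embed compactly into $L^2(\Omega)$ (bounded families of cutoffs near $\Sigma$ provide a standard counterexample), so no direct compactness--contradiction shortcut is available, and one is forced to track the constants quantitatively through the decomposition.
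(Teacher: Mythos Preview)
Your Step~1 is correct and is essentially the paper's argument for the weighted-gradient bound (the paper performs it as the \emph{second} step, after already having $\|u^\nu\|\lesssim\|f\|$). Step~2 is also correct as stated, but it only delivers the \emph{rough} estimate
\[
\|g^\nu\|_{\mathcal{H}^{-1/2}(\Sigma)}\lesssim \|f\|+\|u^\nu\|_{L^2},
\]
and this is precisely where the difficulty lies: as the paper itself remarks (right after Proposition~\ref{prop:gnubound_proof}), such a rough bound is insufficient to close the $L^2$-estimate. Your Step~3 correctly diagnoses the obstruction---$\mathcal{V}_{sing}$ is not compact in $L^2$, so there is no contradiction shortcut---but the proposed resolution has a genuine gap. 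You want to invoke Theorem~\ref{theorem:reg_well_posedness} with Neumann data $g^\nu\in\mathcal{H}^{-1/2}(\Sigma)$, yet that theorem is proved only for $g\in\mathcal{H}^{1/2}(\Sigma)$, and the extension to $\mathcal{H}^{-1/2}$-data is explicitly left open in the paper. Even granting such an extension, the ``small prefactor via localisation near $\Sigma$'' step is heuristic: the $L^2$-mass of $u^\nu$ near the interface is exactly the quantity you are trying to control, and shrinking the neighbourhood does not make the corresponding norm small without further input.

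The paper's proof takes a different route that avoids these issues. First, it obtains a \emph{sharp} conormal estimate (Proposition~\ref{prop:gnubound_proof})
\[
\|g^\nu\|_{\mathcal{H}^{-1/2}(\Sigma)}\lesssim \nu^{1/2}\|f\|+\sqrt{\|f\|\,\|u^\nu\|},
\]
by using not an $\mathbb{A}$-harmonic lifting but a lifting supported in a strip of width~$\delta$ around $\Sigma$ (Lemma~\ref{lemma:lifting}), with the choice $\delta=\nu$; the point is that on this narrow support the factor $|x|\le\nu$ cancels the blow-up of $\|\nabla\Phi^\delta\|$. Second, the $L^2$-bound is obtained by a \emph{duality argument}: one introduces $p^\nu\in\mathcal{H}^1(\Omega\setminus\Sigma)$ solving the decoupled homogeneous Neumann problem $\operatorname{div}(x\mathbb{A}\nabla p^\nu)=u^\nu$, $\gamma_n^\Sigma p^\nu=0$; by the regularity result of Theorem~\ref{theorem:regularity} one has $\|p^\nu\|_{H^1(\Omega\setminus\Sigma)}\lesssim\|u^\nu\|$. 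Testing the $u^\nu$-equation against $p^\nu$ and the $p^\nu$-equation against $u^\nu$, then subtracting, produces
\[
\|u^\nu\|^2\lesssim \nu\|\nabla u^\nu\|\,\|p^\nu\|_{H^1}+\|g^\nu\|_{\mathcal{H}^{-1/2}}\|p^\nu\|_{H^1}+\|f\|\,\|p^\nu\|,
\]
and inserting the sharp bound on $g^\nu$ together with $\nu\|\nabla u^\nu\|\lesssim\|f\|$ gives $\|u^\nu\|\lesssim\|f\|+\sqrt{\|f\|\,\|u^\nu\|}$, which closes by Young. Note that with your rough bound $\|g^\nu\|\lesssim\|f\|+\|u^\nu\|$ this same duality identity yields only $\|u^\nu\|^2\lesssim\|f\|\,\|u^\nu\|+\|u^\nu\|^2$, which is circular; the square-root structure of the sharp estimate is essential.
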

To prove this theorem, we start with the following estimate, which shows that the norm of the conormal trace of $u^{\nu}$ is well-controlled in the space $\mathcal{H}^{-1/2}(\Sigma)$. 
\begin{proposition}
	\label{prop:gnubound_proof}
	Given $u^{\nu}$ as in \eqref{eq:unu_orig}, let the co-normal derivative at the interface $\Sigma$ be denoted by 
	$g^{\nu}:=\gamma_{n, \nu}^{\Sigma}u^{\nu}=\left.( x \mathbb{A}+i\nu\mathbb{T})\nabla u^{\nu}\cdot \vec{n}\right|_{\Sigma}.
	$
	There exist $C,\, \nu_0>0$, s.t. for all $0<\nu<\nu_0$, it holds that
	\begin{align}
		\label{eq:gnubound}
		\|g^{\nu}\|_{\mathcal{H}^{-1/2}(\Sigma)}\leq C\left(  \nu^{1/2}\|f\|+\sqrt{\|f\|\|u^{\nu}\|}\right).
	\end{align}
\end{proposition}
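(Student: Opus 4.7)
The plan is to estimate $\|g^{\nu}\|_{\mathcal{H}^{-1/2}(\Sigma)}$ by duality, i.e.\ by bounding $|\langle g^{\nu},\varphi\rangle|$ uniformly over test functions $\varphi\in\mathcal{H}^{1/2}(\Sigma)$ of unit norm. Applying Green's formula in $\Omega_p$ with a $\nu$-adapted lifting $\Phi^{\nu}\in\mathcal{H}^1(\Omega_p)$ of $\varphi$ (i.e.\ $\gamma_0^{\Sigma}\Phi^{\nu}=\varphi$, $\gamma_0^{\Gamma_p}\Phi^{\nu}=0$, periodic in $y$), combined with the PDE $\operatorname{div}((x\mathbb{A}+i\nu\mathbb{T})\nabla u^{\nu})=f$, yields the representation
\begin{align*}
\langle g^{\nu},\varphi\rangle = -\int_{\Omega_p} f\,\Phi^{\nu} - \int_{\Omega_p} x\mathbb{A}\nabla u^{\nu}\cdot \nabla\Phi^{\nu} - i\nu\int_{\Omega_p} \mathbb{T}\nabla u^{\nu}\cdot \nabla\Phi^{\nu}.
\end{align*}
The goal is then to bound each of these three terms by a constant multiple of $(\nu^{1/2}\|f\|+\sqrt{\|f\|\|u^{\nu}\|})\|\varphi\|_{\mathcal{H}^{1/2}(\Sigma)}$, exploiting the first energy estimate $\nu^{1/2}\|\nabla u^{\nu}\|\lesssim \sqrt{\|f\|\|u^{\nu}\|}$ from Lemma \ref{lem:pb_abs_wp}.

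The crux is the construction of a lifting that is effectively concentrated in a layer of width $\sim\nu$ around $\Sigma$. Writing $\varphi(y)=\sum_k c_k e^{i\pi ky/\ell}$ (natural since $\varphi$ is $2\ell$-periodic, with $\|\varphi\|_{\mathcal{H}^{1/2}(\Sigma)}^2\sim\sum_k(1+|k|)|c_k|^2$), I would set $\Phi^{\nu}(x,y)=\sum_k c_k e^{i\pi ky/\ell}\psi_k^{\nu}(x)$ with profile $\psi_k^{\nu}(x)=e^{-\sigma_k x}$ where $\sigma_k:=\max(\nu^{-1},|k|\pi/\ell)$: for modes with $|k|\lesssim\nu^{-1}$ the profile is artificially compressed to the absorption scale $\nu$, whereas for higher modes the usual harmonic profile already concentrates on a scale $\leq\nu$. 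A direct Fourier computation should then yield the three key estimates
\begin{align*}
\|\Phi^{\nu}\|_{L^2(\Omega_p)}\lesssim \nu^{1/2}\|\varphi\|_{\mathcal{H}^{1/2}(\Sigma)},\;\; \|\nabla\Phi^{\nu}\|_{L^2(\Omega_p)}\lesssim \nu^{-1/2}\|\varphi\|_{\mathcal{H}^{1/2}(\Sigma)},\;\; \|x\nabla\Phi^{\nu}\|_{L^2(\Omega_p)}\lesssim \nu^{1/2}\|\varphi\|_{\mathcal{H}^{1/2}(\Sigma)},
\end{align*}
the last bound encoding the effective support $\{x\lesssim\nu\}$.

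Armed with these, each term is controlled by Cauchy-Schwarz. The source gives $|\int f\Phi^{\nu}|\leq \|f\|\|\Phi^{\nu}\|_{L^2}\lesssim \nu^{1/2}\|f\|\|\varphi\|_{\mathcal{H}^{1/2}(\Sigma)}$. For the degenerate principal part, the weighted estimate combined with $\|\nabla u^{\nu}\|\lesssim\nu^{-1/2}\sqrt{\|f\|\|u^{\nu}\|}$ yields $|\int x\mathbb{A}\nabla u^{\nu}\cdot\nabla\Phi^{\nu}|\lesssim \|\nabla u^{\nu}\|\,\|x\nabla\Phi^{\nu}\|\lesssim \sqrt{\|f\|\|u^{\nu}\|}\|\varphi\|_{\mathcal{H}^{1/2}(\Sigma)}$, with the powers $\nu^{\pm1/2}$ cancelling exactly. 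The viscous term is similar: $|i\nu\int\mathbb{T}\nabla u^{\nu}\cdot\nabla\Phi^{\nu}|\lesssim \nu\|\nabla u^{\nu}\|\|\nabla\Phi^{\nu}\|\lesssim \sqrt{\|f\|\|u^{\nu}\|}\|\varphi\|_{\mathcal{H}^{1/2}(\Sigma)}$. Taking the supremum over unit-norm $\varphi$ then gives the claimed bound.

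The main technical obstacle is the rigorous construction of $\Phi^{\nu}$, with the three norm bounds valid simultaneously for rough data $\varphi\in\mathcal{H}^{1/2}(\Sigma)$. Some care is needed to enforce the Dirichlet condition on $\Gamma_p$: multiplying each $\psi_k^{\nu}$ by a smooth cutoff vanishing near $x=a$ introduces only an exponentially-small-in-$1/\nu$ correction, since $\psi_k^{\nu}(a)\leq e^{-a/\nu}$ for every $k$, so the cutoff does not spoil any of the estimates. The delicate interplay between the harmonic scale $\ell/(|k|\pi)$ and the absorption scale $\nu$ packaged into $\sigma_k$ is precisely what produces the advertised combination $\nu^{1/2}\|f\|+\sqrt{\|f\|\|u^{\nu}\|}$ on the right-hand side.
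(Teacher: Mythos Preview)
Your proposal is correct and follows essentially the same approach as the paper: bound $\langle g^{\nu},\varphi\rangle$ by Green's formula using a lifting of $\varphi$ concentrated in a layer of width $\sim\nu$ near $\Sigma$, then combine Cauchy--Schwarz with the energy estimate $\nu^{1/2}\|\nabla u^{\nu}\|\lesssim\sqrt{\|f\|\|u^{\nu}\|}$. The only minor technical difference is in the lifting: the paper uses a compactly supported harmonic extension in the strip $(0,\delta)\times(-\ell,\ell)$ (Lemma~\ref{lemma:lifting}) and then sets $\delta=\nu$, whereas you use exponentially decaying profiles with mode-adaptive rate $\sigma_k=\max(\nu^{-1},|k|\pi/\ell)$; both choices yield the same three norm bounds on the lifting and hence the same final estimate.
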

\begin{remark}
	It is quite easy to obtain a rougher version of this estimate. Indeed, since $\vec{v}:=(x\mathbb{A}+i\nu\mathbb{T})\nabla u^{\nu}\in \mathcal{H}(\operatorname{div};\Omega_p)$, it holds that $\|\gamma_{n,\nu}^{\Sigma }u^{\nu}\|_{\mathcal{H}^{-1/2}(\Sigma)}\lesssim \|\operatorname{div}\vec{v}\|+\|\vec{v}\|\lesssim\|f\|+\|x\nabla u^{\nu}\|$. By integration by parts one sees that $\|x\nabla u^{\nu}\|\lesssim \|u^{\nu}\|$, see the proof of Theorem \ref{theorem:stability_estimate}. Thus $\|g^{\nu}\|_{\mathcal{H}^{-1/2}(\Sigma)}\lesssim \|f\|+\|u^{\nu}\|$. Unfortunately, this rougher bound does suffice for our proof of Theorem \ref{theorem:stability_estimate}.
\end{remark}
The proof of this proposition relies on the following lifting lemma. To state it, let us repeat the definition \eqref{eq:omegaresigma} for the convenience of the reader: $
\Omega^{\delta}_{\Sigma}:=
\{\bx=(x,y)\in \Omega: \, |x|<\delta\},  \quad \delta>0.
$
\begin{lemma}
	\label{lemma:lifting}
	For all $\phi\in \mathcal{H}^{1/2}(\Sigma)$, all $0<\delta<\ell$, there exists $\Phi^{\delta}\in \mathcal{H}^1(\Omega_p)$, s.t. 
	\begin{align}
		\nonumber
		&\gamma_0^{\Sigma}\Phi^{\delta}=\phi,\qquad 
		\operatorname{supp}\operatorname{\Phi}^{\delta}\subseteq \overline{\Omega_p\cap\Omega^{\delta}_{\Sigma}}, \qquad \Phi^{\delta}(\delta,\vec{y})=0,	\text{ and }\\
		\label{eq:bound_data}
		&\|\Phi^{\delta}\|_{L^2(\Omega_p)}+\delta \|\nabla \Phi^{\delta}\|_{L^2(\Omega_p)}\leq C \delta^{1/2}\|\phi\|_{\mathcal{H}^{1/2}(\Sigma)}, \quad 
	\end{align}
	where $C>0$ is independent of $\delta$, $\phi$.
\end{lemma}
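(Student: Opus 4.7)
My plan is to construct $\Phi^\delta$ explicitly by combining a Fourier decomposition in $y$ with a frequency-dependent cutoff in $x$. The motivation for making the cutoff width depend on the Fourier index is the following: a naive product lifting $\phi(y)\chi(x/\delta)$ fails, because controlling $\|\partial_y\Phi^\delta\|_{L^2(\Omega_p)}$ would then require $\partial_y\phi\in L^2(\Sigma)$, which is strictly stronger than $\phi\in\mathcal{H}^{1/2}(\Sigma)$. Shrinking the support of the $k$-th Fourier mode in $x$ to a width of order $\delta/(1+|k|)$ is precisely what remedies this deficiency while still keeping the trace at $\Sigma$ equal to $\phi$.

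Since $\mathcal{H}^1(\Omega)$ enforces periodicity in $y$, the trace space $\mathcal{H}^{1/2}(\Sigma)$ is naturally characterized by the Fourier basis $\{e^{ik\pi y/\ell}\}_{k\in\mathbb{Z}}$ with the standard equivalent norm $\|\phi\|_{\mathcal{H}^{1/2}(\Sigma)}^2\sim\sum_k(1+|k|)|\phi_k|^2$, where $\phi=\sum_k\phi_k\,e^{ik\pi y/\ell}$. Fixing once and for all a cutoff $\chi\in C^\infty(\mathbb{R})$ with $\chi(0)=1$ and $\operatorname{supp}\chi\subset[0,1/2]$, I set
\begin{align*}
\Phi^\delta(x,y):=\sum_{k\in\mathbb{Z}}\phi_k\,\chi\!\left(\frac{x(1+|k|)}{\delta}\right)e^{ik\pi y/\ell}.
\end{align*}
The $k$-th summand is supported in $x\in[0,\delta/(2(1+|k|))]\subset[0,\delta/2]$, so, for $\delta$ sufficiently small, $\operatorname{supp}\Phi^\delta\subseteq\overline{\Omega_p\cap\Omega_\Sigma^\delta}$, $\Phi^\delta(\delta,\cdot)\equiv 0$, and $\Phi^\delta$ vanishes on $\Gamma_p$; periodicity in $y$ is automatic and $\gamma_0^\Sigma\Phi^\delta=\sum_k\phi_k\,e^{ik\pi y/\ell}=\phi$.

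The key step is verifying that this series converges in $H^1(\Omega_p)$ with the claimed scaling. Using orthogonality of the Fourier basis in $y$ and the change of variable $t=x(1+|k|)/\delta$ in the $x$-integrals yields
\begin{align*}
\|\Phi^\delta\|_{L^2(\Omega_p)}^2&\lesssim\sum_k|\phi_k|^2\,\frac{\delta}{1+|k|}\|\chi\|_{L^2}^2\lesssim\delta\,\|\phi\|_{L^2(\Sigma)}^2,\\
\|\partial_x\Phi^\delta\|_{L^2(\Omega_p)}^2&\lesssim\sum_k|\phi_k|^2\,\frac{1+|k|}{\delta}\|\chi'\|_{L^2}^2\lesssim\delta^{-1}\,\|\phi\|_{\mathcal{H}^{1/2}(\Sigma)}^2,\\
\|\partial_y\Phi^\delta\|_{L^2(\Omega_p)}^2&\lesssim\sum_k k^2|\phi_k|^2\,\frac{\delta}{1+|k|}\|\chi\|_{L^2}^2\lesssim\delta\,\|\phi\|_{\mathcal{H}^{1/2}(\Sigma)}^2.
\end{align*}
Multiplying the second line by $\delta^2$ and combining with the first and third yields \eqref{eq:bound_data}. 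Applying the same mode-by-mode bounds to differences of partial sums shows that the series defines an element of $\mathcal{H}^1(\Omega_p)$.

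The only genuine subtlety, and the only part that is not completely routine, is choosing the cutoff width $\delta/(1+|k|)$: a $k$-independent width (such as $\delta$) would destroy the $\partial_y$ estimate at high frequencies, while a purely frequency-dependent width (such as $1/(1+|k|)$) would fail to keep the support inside $\Omega_\Sigma^\delta$. The intermediate scale $\delta/(1+|k|)$ is dictated precisely by balancing the support constraint against the $\partial_y$ estimate, after which all three bounds follow from a single computation.
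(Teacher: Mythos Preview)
Your construction is correct and the estimates are clean. The phrase ``for $\delta$ sufficiently small'' is unnecessary: since each mode is supported in $[0,\delta/2]$, the support of $\Phi^\delta$ lies in $\overline{\Omega_p\cap\Omega_\Sigma^\delta}$ for every $0<\delta<\ell$, and $\Phi^\delta(\delta,\cdot)=0$ is automatic.

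Your route is genuinely different from the paper's. The paper takes $\Phi^\delta$ to be the harmonic extension of $\phi$ to the strip $(0,\delta)\times(-\ell,\ell)$ with zero Dirichlet data at $x=\delta$; it then expands in the periodic eigenbasis and computes $\|\nabla\Phi^\delta\|_{L^2}^2$ via the Dirichlet-to-Neumann symbol $\lambda_m\coth(\lambda_m\delta)$, splitting the resulting series into regimes $\lambda_m\delta\lesssim 1$ and $\lambda_m\delta\gtrsim 1$. Your approach replaces this PDE machinery by an explicit cutoff profile $\chi(x(1+|k|)/\delta)$. Both constructions encode the same key idea---the $x$-width of the $k$-th mode must shrink like $\delta/(1+|k|)$---but yours makes this entirely transparent and avoids solving any equation. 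The paper's harmonic lifting, on the other hand, is canonical (it minimizes the $H^1$-seminorm among all liftings with the given support) and reappears naturally in other parts of the paper where finer estimates on $\mathcal{V}_{reg}$-norms are needed; your cutoff lifting would yield those as well, but the harmonic choice makes the integration-by-parts arguments there slightly more direct.
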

\begin{proof}
	See Lemma \ref{lemma:lifting_lemma} in Appendix  \ref{appendix:conormal_trace}.
\end{proof}
\begin{proof}[Proof of Proposition \ref{prop:gnubound_proof}]
	Given $\phi\in \H^{1/2}(\Sigma)$ and $\delta>0$, let $\Phi^{\delta}$ be its lifting from Lemma \ref{lemma:lifting}. Recall 
	\begin{align*}
		\|g^{\nu}\|_{\H^{-1/2}(\Sigma)}&=\sup\limits_{\phi\in \H^{1/2}(\Sigma): \, \|\phi\|_{\mathcal{H}^{1/2}(\Sigma)}=1}\left|\langle g^{\nu}, \phi\rangle_{\H^{-1/2}(\Sigma), \H^{1/2}(\Sigma)}\right|. 
	\end{align*}
	With the variational definition of the conormal derivative, it holds that 
	\begin{align*}
		-\langle g^{\nu}, \phi\rangle_{\H^{-1/2}(\Sigma), \H^{1/2}(\Sigma)}=\int_{\Omega_{\Sigma}^{\delta}\bigcap\Omega_p}\operatorname{div}\left(( x \mathbb{A}+i\nu\mathbb{T})\nabla u^{\nu}\right)\, {\Phi^{\delta}}+\int_{\Omega_{\Sigma}^{\delta}\bigcap\Omega_p}( x \mathbb{A}+i\nu\mathbb{T})\nabla u^{\nu}\cdot \nabla{\Phi}^{\delta}.
	\end{align*}
	The Cauchy-Schwarz inequality, Assumption  \ref{assump:matrices} on $\mathbb{A}$, $\mathbb{T}$ and the inequality$| x |<\delta$ in $\Omega_{\Sigma}^{\delta}$ yield
	\begin{align*}
		|\langle g^{\nu},\phi\rangle_{\Sigma}|\leq 
		\|f\|_{L^2(\Omega_p)}\|\Phi^{\delta}\|_{L^2(\Omega_p)}+(\delta+\nu)\|\nabla u^{\nu}\|_{L^2(\Omega_p)}\|\nabla \Phi^{\delta}\|_{L^2(\Omega_p)}. 
	\end{align*}
	Next, we use \eqref{eq:bound_data} for $\Phi^{\delta}$ and \eqref{eq:est_main} for $u^{\nu}$, more precisely $\nu^{1/2}\|\nabla u^{\nu}\|\leq c  \sqrt{\|f\|\|u^{\nu}\|}$:
	\begin{align*}
		|\langle g^{\nu},\phi\rangle_{\Sigma}|&\leq C		 \delta^{1/2}\|f\|\|\phi\|_{\mathcal{H}^{1/2}(\Sigma)}+C\delta^{-1/2}(\delta+\nu)\|\nabla u^{\nu}\|\|\phi\|_{\mathcal{H}^{1/2}(\Sigma)}\\
		&\leq C\left( \|f\|\delta^{1/2}+c\delta^{-1/2}(\delta+\nu)\nu^{-1/2}\sqrt{\|f\|\|u^{\nu}\|}\right)\|\phi\|_{\mathcal{H}^{1/2}(\Sigma)}.
	\end{align*}
	Choosing $\delta=\nu$ proves the bound in the statement of the proposition.
\end{proof}
Once we have the result of Proposition \ref{prop:gnubound_proof}, we can prove Theorem \ref{theorem:stability_estimate}. 
We use  duality-type techniques. 
\begin{proof}
	\textbf{Proof that $\|u^{\nu}\|\lesssim \|f\|$. }
	Let $p^{\nu}\in \mathcal{H}^1(\Omega\setminus\Sigma)$ be a unique solution to 
	\begin{align*}
		&\operatorname{div}\left(x \mathbb{A}\nabla p^{\nu}\right)=u^{\nu} \text{  in }\Omega_p\cup\Omega_n,\\
		&\gamma_{n}^{\Sigma}p^{\nu}=0,\\ &\gamma_0^{\Gamma_p\cup\Gamma_n}p^{\nu}=0,\qquad \text{ periodic BCs at }\Gamma_p^{\pm}\cup\Gamma_n^{\pm},
	\end{align*}
	namely, the homogeneous decoupled Neumann problem, 
	cf. Theorem \ref{theorem:fl2} for the well-posedness. Thus defined $p^{\nu}\in \mathcal{H}^1(\Omega\setminus\Sigma)$ satisfies the following variational formulation: 
	\begin{align}
		\label{eq:omegap}
		\int_{\Omega_p}\nabla p^{\nu}\cdot\overline{x \mathbb{A}\nabla v^{\nu}}+\int_{\Omega_n}\nabla p^{\nu}\cdot\overline{ x \mathbb{A}\nabla v^{\nu}}=-\int_{\Omega}u^{\nu}\overline{v^{\nu}},\quad \forall v^{\nu}\in \mathcal{H}^1(\Omega\setminus \Sigma).
	\end{align}
	On the other hand, testing \eqref{eq:unu_orig} with $p^{\nu}$ yields the following identity:
	\begin{align*}
		\int_{\Omega_p}( x \mathbb{A}+i\nu\mathbb{T})\nabla u^{\nu}\cdot\overline{\nabla p^{\nu}}+	\int_{\Omega_n}( x \mathbb{A}+i\nu\mathbb{T})\nabla u^{\nu}\cdot\overline{\nabla p^{\nu}}+\int_{\Sigma} \gamma_{n, \nu}^{\Sigma}u^{\nu}\, \overline{\left(\gamma_{0}^{\Sigma,p}p^{\nu}-\gamma_{0}^{\Sigma,n}p^{\nu}\right)}=-\int_{\Omega}f\overline{p^{\nu}}.
	\end{align*}
	Replacing the terms that involve $x\mathbb{A}\nabla u^{\nu}\cdot\overline{\nabla p^{\nu}}$ in the first two integrals in the above by the conjugated right-hand-side of \eqref{eq:omegap} where $v^{\nu}=u^{\nu}$ yields the identity:
	\begin{align*}
		-\int_{\Omega}|u^{\nu}|^2=-\int_{\Omega_p\cup\Omega_n}i\nu\mathbb{T}\nabla u^{\nu}\cdot\overline{\nabla p^{\nu}}-\int_{\Omega}f\overline{p}^{\nu}-\int_{\Sigma} \gamma_{n, \nu}^{\Sigma}u^{\nu}\, \overline{[\gamma_0^{\Sigma}p^{\nu}]}.
	\end{align*}
	Applying the Cauchy-Schwarz inequality we obtain the following bound:
	\begin{align*}
		\|u^{\nu}\|^2&\lesssim \nu\| \nabla u^{\nu}\|(\|\nabla p^{\nu}\|_{L^2(\Omega_p)}+\|\nabla p^{\nu}\|_{L^2(\Omega_n)})+\|\gamma_{n,\nu}^{\Sigma}u^{\nu}\|_{\mathcal{H}^{-1/2}(\Sigma)}(\|\gamma_0^{\Sigma,p}p^{\nu}\|_{\mathcal{H}^{1/2}(\Sigma)}+\|\gamma_0^{\Sigma,n}p^{\nu}\|_{\mathcal{H}^{-1/2}(\Sigma)})\\
		&\lesssim   \nu\|\nabla u^{\nu}\|\|p^{\nu}\|_{\mathcal{H}^1(\Omega\setminus \Sigma)}+(\nu^{1/2}\|f\|+\sqrt{\|f\|\|u^{\nu}\|})\|p^{\nu}\|_{\mathcal{H}^1(\Omega\setminus \Sigma)},
	\end{align*}
	where in the last inequality we used the continuity of the trace operator in $\mathcal{H}^1(\Omega_p)$ (resp. $\mathcal{H}^1(\Omega_n)$ and the bound on the conormal trace \eqref{eq:gnubound}. Next, we apply Theorem \ref{theorem:regularity} to bound  $\|p^{\nu}\|_{\mathcal{H}^1(\Omega\setminus \Sigma)}\lesssim \|u^{\nu}\|$ in the right-hand side of the above, and use the bound $\nu\|\nabla u^{\nu}\|\lesssim \|f\|$ from \eqref{eq:est_main2}. Dividing both sides of the obtained expression by $\|u^{\nu}\|$ leads to the following bound: 
	\begin{align*}
		\|u^{\nu}\|\lesssim \|f\|+(\nu^{1/2}\|f\|+\sqrt{\|f\|\|u^{\nu}\|}). 
	\end{align*}
	Using the Young inequality $\sqrt{\|f\|\|u^{\nu}\|}\leq \frac{1}{2}(\varepsilon^{-1}\|f\|+\varepsilon\|u^{\nu}\|)$  with $\varepsilon$ sufficiently small independent of $\nu$ yields
	\begin{align}
		\label{eq:unubound}
		\|u^{\nu}\|\lesssim \|f\|. 
	\end{align}
	\textbf{Proof that $\| x  u^{\nu}\|\lesssim \|f\|$. }
	We test \eqref{eq:unu_orig} with $ x  u^{\nu}$. This yields 
	\begin{align*}
		-\int_{\Omega} x ( x \mathbb{A}+i\nu\mathbb{T})\nabla u^{\nu}\cdot \overline{\nabla u^{\nu}}-\int_{\Omega}\overline{u^{\nu}}( x \mathbb{A}+i\nu\mathbb{T}) \nabla u^{\nu}\cdot \mathbf{e}_x=\int_{\Omega}f\overline{ x  u^{\nu}}.
	\end{align*}
	Taking the real part of the above and using that $\mathbb{A}, \, \mathbb{T}$ are Hermitian (cf. Corollary \ref{cor:M}), we obtain 
	\begin{align*}
		-\int_{\Omega}x^2 \mathbb{A}\nabla u^{\nu}\cdot \overline{\nabla u^{\nu}}-\Re \int_{\Omega}\overline{u^{\nu}}( x \mathbb{A}+i\nu\mathbb{T}) \nabla u^{\nu}\cdot \mathbf{e}_x=\Re \int_{\Omega}f\overline{xu^{\nu}}.
	\end{align*}
	Together with the Cauchy-Schwarz inequality this shows that
	\begin{align*}
		\int_{\Omega} x ^2|\nabla u^{\nu}|^2\lesssim   \left(\| x  \nabla u^{\nu}\|+\nu\|\nabla u^{\nu}\|\right)\|u^{\nu}\|+\|f\|\|u^{\nu}\|.
	\end{align*}
	With \eqref{eq:unubound} and \eqref{eq:est_main2} (namely $\nu\|\nabla u^{\nu}\|\lesssim \|f\|$) we conclude that 
	\begin{align*}
		\|x\nabla u^{\nu}\|\lesssim \|f\|\|x\nabla u^{\nu}\|+\|f\|^2.	
	\end{align*}
	Using the Young's inequality, we prove that $\|xu^{\nu}\|\lesssim \|f\|^2$. 
\end{proof}
Quite a rough result of Theorem \ref{theorem:stability_estimate} paves the way to proving a series of regularity estimates. We will present the corresponding results step-by-step in the list of propositions below. Before continuing, we state an immediate corollary of Theorem \ref{theorem:stability_estimate} and Lemma \ref{lem:pb_abs_wp}, in particular of \eqref{eq:est_main}: for all $0<\nu<\nu_0$, 
\begin{align}
	\label{eq:est_main_v2}
	&\|u^{\nu}\|_{L^2(\Omega)}+\|x\nabla u^{\nu}\|_{L^2(\Omega)}+\nu^{1/2}\|\nabla u^{\nu}\|_{L^2(\Omega)}\lesssim \|f\|_{L^2(\Omega)}.
\end{align}
\subsection{Refined stability estimates on the conormal trace}
The key result of this section is an improved regularity of the conormal derivative of $u^{\nu}$. As we will see further, this result will play a crucial role in constructing the limiting absorption solution.
\begin{theorem}
	\label{theorem:gnubound_improved}
	Given $u^{\nu}$ as in \eqref{eq:unu_orig}, let $g^{\nu}:=\gamma_{n, \nu}^{\Sigma}u^{\nu}$. Then $g^{\nu}\in \mathcal{H}^{1/2}(\Sigma)$, and there exist $C, \nu_0>0$, s.t. 
	\begin{align}
		\label{eq:gnubound_key}
		\|g^{\nu}\|_{\mathcal{H}^{1/2}(\Sigma)}\leq C\|f\|, \quad \text{ for all }0<\nu<\nu_0.
	\end{align}
\end{theorem}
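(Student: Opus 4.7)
Since $\Sigma$ is periodic in $y$, Fourier series give the equivalence $\|g\|_{\mathcal{H}^{1/2}(\Sigma)}^{2} \asymp \|g\|_{\mathcal{H}^{-1/2}(\Sigma)}^{2} + \|\partial_y g\|_{\mathcal{H}^{-1/2}(\Sigma)}^{2}$. The first summand is already bounded by $\|f\|$: substituting $\|u^{\nu}\|\lesssim\|f\|$ from Theorem~\ref{theorem:stability_estimate} into \eqref{eq:gnubound} yields $\|g^{\nu}\|_{\mathcal{H}^{-1/2}(\Sigma)}\lesssim\|f\|$. Hence the task reduces to showing $\|\partial_y g^{\nu}\|_{\mathcal{H}^{-1/2}(\Sigma)}\lesssim\|f\|$, which I would attack via Nirenberg-type tangential difference quotients, exploiting the $y$-translation invariance of the geometry and of the boundary conditions.

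For $0\neq|h|$ small, let $\tau_h u(x,y):=u(x,y+h)$ (with periodic extension in $y$) and $\Delta^h u := h^{-1}(\tau_h u - u)$. Because the sides $\Gamma_p,\Gamma_n,\Gamma^{\pm}$ and the interface $\Sigma$ are $\tau_h$-invariant and the PDE \eqref{eq:unu_orig} is preserved under simultaneous shift of $\mathbb{A},\mathbb{T},u^{\nu},f$, one checks that $\Delta^h u^{\nu}\in\mathcal{H}^1(\Omega)$ solves
\begin{align*}
\operatorname{div}\bigl((x\mathbb{A}+i\nu\mathbb{T})\nabla\Delta^h u^{\nu}\bigr) = \Delta^h f - \operatorname{div}\bigl((x\,\Delta^h\mathbb{A} + i\nu\,\Delta^h\mathbb{T})\nabla\tau_h u^{\nu}\bigr),
\end{align*}
with the same Dirichlet/periodic data. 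The $C^{1,1}$-regularity of $\mathbb{A},\mathbb{T}$ yields $\|\Delta^h\mathbb{A}\|_{L^{\infty}}+\|\Delta^h\mathbb{T}\|_{L^{\infty}}\lesssim 1$ uniformly in $h$, so the divergence source is a divergence of a vector field bounded in $L^2(\Omega)^2$ by $\|f\|$ via \eqref{eq:est_main_v2}. I then mimic the proof of Proposition~\ref{prop:gnubound_proof} with $\Delta^h u^{\nu}$ in place of $u^{\nu}$, employing the same localized lifting $\Phi^{\delta}$ from Lemma~\ref{lemma:lifting}. The key novelty is that $\Delta^h f$ is only bounded in $H^{-1}(\Omega)$ and cannot be paired directly with $\Phi^{\delta}$; one circumvents this through discrete integration by parts in $y$ (legitimate by periodicity): $\int(\Delta^h f)\,\Phi^{\delta}=-\int f\,\Delta^{-h}\Phi^{\delta}$, combined with the elementary inequality $\|\Delta^{-h}\Phi^{\delta}\|_{L^2(\Omega_p)}\leq\|\partial_y\Phi^{\delta}\|_{L^2(\Omega_p)}\lesssim\delta^{-1/2}\|\phi\|_{\mathcal{H}^{1/2}(\Sigma)}$ coming from \eqref{eq:bound_data}. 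Choosing $\delta=\nu$ as in Proposition~\ref{prop:gnubound_proof} then leads to $\|\gamma_{n,\nu}^{\Sigma}\Delta^h u^{\nu}\|_{\mathcal{H}^{-1/2}(\Sigma)}\lesssim\|f\|$, uniformly in $h$ and in $\nu\in(0,\nu_0)$.

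To close, I identify the commutator
\begin{align*}
\Delta^h g^{\nu} - \gamma_{n,\nu}^{\Sigma}\Delta^h u^{\nu} = \bigl[(x\,\Delta^h\mathbb{A}+i\nu\,\Delta^h\mathbb{T})\nabla\tau_h u^{\nu}\bigr]\cdot\vec{n}\big|_{\Sigma},
\end{align*}
whose $\mathcal{H}^{-1/2}(\Sigma)$-norm is $\lesssim\|f\|$ by the standard $H(\operatorname{div};\Omega_p)$-trace theorem applied to the smooth vector field $(x\,\Delta^h\mathbb{A}+i\nu\,\Delta^h\mathbb{T})\nabla\tau_h u^{\nu}$, combined with the energy bounds from Theorem~\ref{theorem:stability_estimate}. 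Since $u^{\nu}\in\mathcal{H}^2(\Omega)$ for each fixed $\nu>0$, the weak limit of $\Delta^h g^{\nu}$ as $h\to 0$ in $\mathcal{H}^{-1/2}(\Sigma)$ is $\partial_y g^{\nu}$, and the uniform bound passes to the limit; together with the $\mathcal{H}^{-1/2}$-bound on $g^{\nu}$ itself, this delivers \eqref{eq:gnubound_key}.

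The main obstacle I foresee is the uniform-in-$h$ reproduction of the auxiliary energy estimates of Lemma~\ref{lem:pb_abs_wp} and Theorem~\ref{theorem:stability_estimate} for $\Delta^h u^{\nu}$, in particular $\nu^{1/2}\|\nabla\Delta^h u^{\nu}\|+\|x\nabla\Delta^h u^{\nu}\|\lesssim\|f\|$, needed to make the final step of the optimization in $\delta$ go through. The absence of an $L^2$-bound on $\Delta^h f$ prevents direct application of the original testing arguments; every instance of $(f,\varphi)$ must be rewritten as $(f,\Delta^{-h}\varphi)$ via discrete integration by parts, and one must then verify that the additional $\|\partial_y\varphi\|$-type contributions are either absorbed on coercive left-hand sides (after Young's inequality) or controlled by quantities already bounded by $\|f\|$ through the non-tangential estimates of Theorem~\ref{theorem:stability_estimate}.
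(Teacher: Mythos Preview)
Your plan identifies the correct reduction (to bounding $\|\partial_y g^{\nu}\|_{\mathcal{H}^{-1/2}(\Sigma)}$) and the correct tool (tangential difference quotients), but the obstacle you flag at the end is real and not resolvable by discrete integration by parts alone. Concretely, when you mimic the proof of Proposition~\ref{prop:gnubound_proof} for $\Delta^h u^{\nu}$ and set $\delta=\nu$, the term
\[
\int_{\Omega_{\Sigma}^{\nu}\cap\Omega_p}(x\mathbb{A}+i\nu\mathbb{T})\nabla\Delta^h u^{\nu}\cdot\vec{e}_x\,\partial_x\Phi^{\nu}
\]
is bounded only by $\nu\|\nabla\Delta^h u^{\nu}\|\,\|\partial_x\Phi^{\nu}\|\lesssim \nu\cdot\nu^{-1}\|f\|\cdot\nu^{-1/2}\|\phi\|=\nu^{-1/2}\|f\|\|\phi\|$, which blows up. The reason you cannot improve $\nu\|\nabla\Delta^h u^{\nu}\|\lesssim\|f\|$ to $\nu^{1/2}\|\nabla\Delta^h u^{\nu}\|\lesssim\|f\|$ is precisely that the source is $\Delta^h f\in H^{-1}$: testing the $\Delta^h u^{\nu}$-equation with $\Delta^h u^{\nu}$ gives $(\Delta^h f,\Delta^h u^{\nu})=-(f,\Delta^{-h}\Delta^h u^{\nu})$, which is controlled by $\|f\|\,\|\nabla\Delta^h u^{\nu}\|$ rather than $\|f\|\,\|\Delta^h u^{\nu}\|$. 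Young's inequality then only reproduces $\nu\|\nabla\Delta^h u^{\nu}\|\lesssim\|f\|$, never $\nu^{1/2}$. To recover that missing half power one would need $\|\Delta^h u^{\nu}\|\lesssim\|f\|$, i.e.\ $\|\partial_y u^{\nu}\|\lesssim\|f\|$, which is exactly the companion estimate (Proposition~\ref{proposition:regularity_dy}) whose proof in turn requires the $H^{1/2}$-trace bound---so the argument is circular.

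The paper breaks this circularity by two devices that your proposal lacks. First, a refined lifting (Lemma~\ref{lem:lifting_lem_important}) whose \emph{tangential} derivative obeys $\|\partial_y L^{\nu}\psi\|\lesssim\|\psi\|_{H^{1/2}}$ uniformly in $\nu$ (this already fixes your $(\Delta^h f,\Phi^{\delta})$ term), but whose \emph{normal} derivative is only uniformly controlled on high frequencies $|\xi|\gtrsim\nu^{-1}$. Second, an intermediate fractional estimate $\nu\|\mathcal{J}\nabla U^{\nu}_{\delta}\|^2\lesssim\|f\|^2+\|f\|\|\partial_y u^{\nu}\|$ (Proposition~\ref{proposition:jbound}), with $\mathcal{J}=(1-\partial_y^2)^{1/4}$, which handles the low-frequency part of the normal derivative and yields only $\sqrt{\|f\|\|\partial_y u^{\nu}\|}$ rather than a full $\|\partial_y u^{\nu}\|$. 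The resulting bound $\|g^{\nu}\|_{\mathcal{H}^{1/2}}\lesssim\|f\|+\sqrt{\|f\|\|\partial_y u^{\nu}\|}$ then feeds into a duality argument for $\|\partial_y u^{\nu}\|$ that closes by Young's inequality. Your approach would need both ingredients to avoid the $\nu^{-1/2}$ loss.
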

From Proposition \ref{prop:gnubound_proof} combined with Theorem \ref{theorem:stability_estimate} we already know that $
\|g^{\nu}\|_{\mathcal{H}^{-1/2}(\Sigma)}\leq C\|f\|.	$
Thus the result of Theorem \ref{theorem:gnubound_improved} improves this regularity by one order. The proof of Theorem \ref{theorem:gnubound_improved} relies on two auxiliary results. The first one is a counterpart of Proposition \ref{prop:gnubound_proof}, and is given below.  
\begin{proposition}
	\label{prop:gnubound_improved}
	Let $g^{\nu}$ be like in Theorem \ref{theorem:gnubound_improved}. There exist $C_1, C_2, \, \nu_0>0$, s.t. for all $0<\nu<\nu_0$,
	\begin{align}
		\label{eq:gnubound_improved}
		\|\partial_y g^{\nu}\|_{\mathcal{H}^{-1/2}(\Sigma)}\leq C_1 
		\|g^{\nu}\|_{\mathcal{H}^{1/2}(\Sigma)}\leq C_2\left(\|f\|+\sqrt{\|f\|\|\partial_y u^{\nu}\|}\right).
	\end{align}
\end{proposition}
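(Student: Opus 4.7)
The first inequality $\|\partial_y g^{\nu}\|_{\mathcal{H}^{-1/2}(\Sigma)}\leq C_1\|g^{\nu}\|_{\mathcal{H}^{1/2}(\Sigma)}$ is elementary: it reflects the continuity of the tangential derivative between periodic fractional Sobolev spaces on the one-dimensional curve $\Sigma$, which follows at once from the Fourier series characterisation $\|\partial_y g\|_{\mathcal{H}^{-1/2}(\Sigma)}^2\sim \sum_k k^2(1+k^2)^{-1/2}|g_k|^2\leq \sum_k(1+k^2)^{1/2}|g_k|^2\sim \|g\|_{\mathcal{H}^{1/2}(\Sigma)}^2$. The same Fourier computation also furnishes the norm equivalence
\begin{equation*}
\|g\|_{\mathcal{H}^{1/2}(\Sigma)}^2 \;\sim\; \|g\|_{\mathcal{H}^{-1/2}(\Sigma)}^2+\|\partial_y g\|_{\mathcal{H}^{-1/2}(\Sigma)}^2,
\end{equation*}
which is the backbone of the plan for the second (main) inequality. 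Applied to $g^{\nu}$, and using that $\|g^{\nu}\|_{\mathcal{H}^{-1/2}(\Sigma)}\lesssim \|f\|$ is already in hand via Proposition \ref{prop:gnubound_proof} combined with Theorem \ref{theorem:stability_estimate}, the task reduces to proving $\|\partial_y g^{\nu}\|_{\mathcal{H}^{-1/2}(\Sigma)}\lesssim \|f\|+\sqrt{\|f\|\|\partial_y u^{\nu}\|}$.

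I plan to obtain this last bound by adapting the variational strategy behind Proposition \ref{prop:gnubound_proof}. Given $\phi\in\mathcal{H}^{1/2}(\Sigma)$ with $\|\phi\|_{\mathcal{H}^{1/2}(\Sigma)}=1$, let $\Phi^{\delta}\in\mathcal{H}^1(\Omega_p)$ be the lifting of $\phi$ from Lemma \ref{lemma:lifting}. Writing $\langle\partial_y g^{\nu},\phi\rangle_{\Sigma}=-\langle g^{\nu},\partial_y\phi\rangle_{\Sigma}$ and unrolling the variational characterisation of the conormal trace, I would perform an interior integration by parts in $y$ (legitimate by the periodicity on $\Gamma_p^{\pm}$) to transfer the tangential derivative from the test function onto $u^{\nu}$. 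This produces three categories of volume contributions: (i) a source term $\int_{\Omega_p}f\,\overline{\partial_y\Phi^{\delta}}$; (ii) a commutator term $\int_{\Omega_p}(x\partial_y\mathbb{A}+\mathrm{i}\nu\partial_y\mathbb{T})\nabla u^{\nu}\cdot\overline{\nabla\Phi^{\delta}}$ due to the $y$-dependence of the coefficients; (iii) the principal term $a_{\nu}(w^{\nu},\Phi^{\delta})$ with $w^{\nu}:=\partial_y u^{\nu}\in\mathcal{H}^1(\Omega)$ (which makes sense because $u^{\nu}\in\mathcal{H}^2(\Omega)$ is $y$-periodic and vanishes on $\Gamma_p\cup\Gamma_n$). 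Using the lifting bounds $\|\Phi^{\delta}\|\lesssim\delta^{1/2}$ and $\|\nabla\Phi^{\delta}\|\lesssim\delta^{-1/2}$, the a priori estimates in \eqref{eq:est_main_v2}, and -- for the principal term -- an energy bound of the form $\nu^{1/2}\|\nabla w^{\nu}\|\lesssim\sqrt{\|f\|\|w^{\nu}\|}+\|f\|$, the choice $\delta=\nu$ (mirroring the proof of Proposition \ref{prop:gnubound_proof}) should deliver $|\langle\partial_y g^{\nu},\phi\rangle_{\Sigma}|\lesssim (\|f\|+\sqrt{\|f\|\|\partial_y u^{\nu}\|})\|\phi\|_{\mathcal{H}^{1/2}(\Sigma)}$.

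The principal obstacle is establishing the required energy bound for $w^{\nu}$, namely the analogue of \eqref{eq:est_main} with $u^{\nu}$ replaced by $\partial_y u^{\nu}$. My plan is to test the original problem \eqref{eq:unu} with $v_h:=-D_y^{-h}D_y^h u^{\nu}\in\mathcal{H}^1(\Omega)$, the discrete $y$-Laplacian of $u^{\nu}$ (well-defined thanks to the $y$-periodicity and $u^{\nu}\in\mathcal{H}^2$), and then pass to the limit $h\to 0^+$. A shift in $y$ inside the bilinear form, together with the telescoping identity $D_y^h[\mathbb{A}\nabla u^{\nu}](\vec{x})=\mathbb{A}(x,y+h)\nabla D_y^h u^{\nu}+(D_y^h\mathbb{A})\nabla u^{\nu}$ and its analogue for $\mathbb{T}$, yields in the limit an identity for $a_{\nu}(w^{\nu},w^{\nu})$ whose imaginary part reads, schematically, $c_{\mathbb{T}}\nu\|\nabla w^{\nu}\|^2\leq |\operatorname{Im}\int_{\Omega}f\,\overline{\partial_y w^{\nu}}|+|\text{commutators in }\partial_y\mathbb{A},\partial_y\mathbb{T}|$. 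The commutators are bounded via \eqref{eq:est_main_v2} and absorbed using Young's inequality. The hard part is handling the $f$-term without ever producing a $\partial_y f$, which is not at our disposal since $f\in L^2$ only: the key step is to read $\int_{\Omega}f\,\overline{\partial_y w^{\nu}}$ as the $H^{-1}$--$H^1$ duality pairing $-\langle\partial_y f,w^{\nu}\rangle$ between the periodic Sobolev spaces, bounded by $\|\partial_y f\|_{H^{-1}}\|w^{\nu}\|_{H^1}\leq \|f\|\bigl(\|w^{\nu}\|+\|\nabla w^{\nu}\|\bigr)$, and then to absorb the $\|f\|\|\nabla w^{\nu}\|$ contribution back into the coercivity via a Young-type inequality. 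Once this delicate trade of derivatives is effected cleanly, assembling the resulting energy bound with the variational estimate of the previous paragraph completes the proof of Proposition \ref{prop:gnubound_improved}.
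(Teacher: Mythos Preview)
Your reduction to bounding $\|\partial_y g^{\nu}\|_{\mathcal{H}^{-1/2}(\Sigma)}$ via the norm equivalence is exactly the paper's starting point, and the first inequality is indeed immediate. The gap is in the energy bound you need for $w^{\nu}=\partial_y u^{\nu}$. After testing with $-D_y^{-h}D_y^{h}u^{\nu}$ and taking the imaginary part, the inequality you arrive at is, up to constants,
\[
\nu\|\nabla w^{\nu}\|^2 \;\lesssim\; \|f\|\,\|\nabla w^{\nu}\| \;+\; \|f\|\,\|w^{\nu}\|,
\]
where the first term on the right collects both the commutator contributions (bounded via $\|x\nabla u^{\nu}\|+\nu\|\nabla u^{\nu}\|\lesssim\|f\|$) and, crucially, the part $\|f\|\,\|\nabla w^{\nu}\|$ coming from your duality reading $|\langle\partial_y f,w^{\nu}\rangle|\leq\|f\|\,\|w^{\nu}\|_{H^1}$. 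Absorbing $\|f\|\,\|\nabla w^{\nu}\|$ into the left-hand side by Young's inequality forces the splitting parameter to be $O(\nu)$, which yields $\nu^{-1}\|f\|^2$ on the right. Solving the quadratic gives only $\nu^{1/2}\|\nabla w^{\nu}\|\lesssim \nu^{-1/2}\|f\|+\sqrt{\|f\|\,\|w^{\nu}\|}$, not the uniform bound you need. In other words, the argument recovers the weaker estimate $\nu\|\nabla\partial_y u^{\nu}\|\lesssim\|f\|$ (this is Proposition~\ref{proposition:bounds_nu}) but cannot produce the analogue of \eqref{eq:est_main} for $\partial_y u^{\nu}$. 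The paper is explicit about this obstruction just before the proof.

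The paper's remedy is to replace the full tangential derivative by the half-order Bessel potential $\mathcal{J}=(1-\partial_y^2)^{1/4}$: testing the extended problem with $\mathcal{J}^2 U^{\nu}_{\delta}$ yields $\nu\|\mathcal{J}\nabla U^{\nu}_{\delta}\|^2\lesssim\|f\|^2+\|f\|\,\|\partial_y u^{\nu}\|$ (Proposition~\ref{proposition:jbound}), because now the source term is $\|f\|\,\|\mathcal{J}^2 U^{\nu}_{\delta}\|\sim\|f\|(\|u^{\nu}\|+\|\partial_y u^{\nu}\|)$ and no $\|\nabla w^{\nu}\|$ appears there. This half-derivative estimate is not by itself compatible with the simple lifting of Lemma~\ref{lemma:lifting}, so the paper introduces a $\nu$-dependent frequency splitting and a refined lifting (Lemma~\ref{lem:lifting_lem_important}): for high tangential frequencies ($|\xi|\gtrsim\nu^{-1}$) the lifting satisfies a uniform $H^1$ bound and one uses $\nu\|\partial_y\nabla U^{\nu}_{\delta}\|\lesssim\|f\|$; for low frequencies only $\|\mathcal{J}\partial_x L^{\nu}\psi\|$ is controlled (with an extra $\nu^{-1/2}$), and this pairs exactly with the $\mathcal{J}$-estimate. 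Your simpler route, with the naive lifting and a single energy estimate for $\partial_y u^{\nu}$, does not close.
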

The second results indicates an improved regularity of $u^{\nu}$ in the direction tangential to the interface.
\begin{proposition}
	\label{proposition:regularity_dy}
	There exist $C, \, \nu_0>0$, s.t. for all $0<\nu<\nu_0$, the solution to \eqref{eq:unu_orig} satisfies:
	\begin{align*}
		\|\partial_y u^{\nu}\|\leq C\|f\|.  
	\end{align*}
\end{proposition}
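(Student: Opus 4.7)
The plan is to adapt the duality argument of Theorem \ref{theorem:stability_estimate} to control the tangential derivative. Since $u^{\nu}\in\mathcal{H}^2(\Omega)$ (Lemma \ref{lem:pb_abs_wp}) and $\mathbb{A},\mathbb{T}\in C^{1,1}(\overline{\Omega})$, I distributionally differentiate \eqref{eq:unu_orig} in $y$ (tangential to $\Sigma$) to obtain
\begin{align*}
\operatorname{div}((x\mathbb{A}+i\nu\mathbb{T})\nabla \partial_y u^{\nu})=\operatorname{div}\vec{H}^{\nu},\qquad \vec{H}^{\nu}:=f\vec{e}_y-(x\partial_y\mathbb{A}+i\nu\partial_y\mathbb{T})\nabla u^{\nu},
\end{align*}
with $\partial_y u^{\nu}\in \mathcal{H}^1(\Omega)$ inheriting vanishing Dirichlet traces on $\Gamma_p\cup\Gamma_n$ and $y$-periodicity. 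By \eqref{eq:est_main_v2} we have $\|\vec{H}^{\nu}\|_{L^2(\Omega)}\lesssim \|f\|$, and testing this equation against $\partial_y u^{\nu}$ and taking the imaginary part (as in Lemma \ref{lem:pb_abs_wp}) yields the auxiliary bound $\nu\|\nabla\partial_y u^{\nu}\|\lesssim \|f\|$.

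Next, I introduce the auxiliary $p^{\nu}\in\mathcal{H}^1(\Omega\setminus\Sigma)$ as the unique solution to the decoupled homogeneous Neumann problem $\operatorname{div}(x\mathbb{A}\nabla p^{\nu}) = \partial_y u^{\nu}$ in $\Omega_p\cup\Omega_n$ with $\gamma_n^{\Sigma}p^{\nu}=0$ and the standard boundary conditions on $\partial\Omega$; Theorem \ref{theorem:regularity} gives $\|p^{\nu}\|_{H^1(\Omega\setminus\Sigma)}\lesssim\|\partial_y u^{\nu}\|$. Testing the equation for $p^{\nu}$ against $\overline{\partial_y u^{\nu}}$ piecewise in each $\Omega_\lambda$—the $\Sigma$-boundary terms vanish since $\gamma_n^{\Sigma}p^{\nu}=0$ and $\partial_y u^{\nu}$ is continuous across $\Sigma$—and using the Hermitian character of $\mathbb{A}$ yields $\int_\Omega x\mathbb{A}\nabla\partial_y u^{\nu}\cdot\overline{\nabla p^{\nu}}=-\|\partial_y u^{\nu}\|^2$. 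Testing the differentiated equation against $\overline{p^{\nu}}$ piecewise, integrating by parts in each subdomain (the conormal contributions on $\Sigma$ combine via the product-rule identity $\partial_y g^{\nu}=((x\mathbb{A}+i\nu\mathbb{T})\nabla\partial_y u^{\nu}+(x\partial_y\mathbb{A}+i\nu\partial_y\mathbb{T})\nabla u^{\nu})\cdot\vec{n}|_\Sigma$) and integrating by parts in $y$ for the $\partial_y f$ contribution (using the $y$-periodicity of $p^{\nu}$) produces, after substituting the previous identity, the master equality
\begin{align*}
\|\partial_y u^{\nu}\|^2 = -\int_\Omega f\,\overline{\partial_y p^{\nu}} + \int_\Omega (x\partial_y\mathbb{A}+i\nu\partial_y\mathbb{T})\nabla u^{\nu}\cdot\overline{\nabla p^{\nu}} + \int_\Sigma \partial_y g^{\nu}\,\overline{[\gamma_0^{\Sigma}p^{\nu}]} + i\nu\int_\Omega \mathbb{T}\nabla\partial_y u^{\nu}\cdot\overline{\nabla p^{\nu}}.
\end{align*}

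The three volume terms on the right are each bounded by $C\|f\|\|\partial_y u^{\nu}\|$, using \eqref{eq:est_main_v2}, the auxiliary bound $\nu\|\nabla\partial_y u^{\nu}\|\lesssim\|f\|$, and $\|p^{\nu}\|_{H^1(\Omega\setminus\Sigma)}\lesssim\|\partial_y u^{\nu}\|$. The main obstacle is the $\Sigma$-boundary integral, which is \emph{superlinear} in $\|\partial_y u^{\nu}\|$: combining $\|\partial_y g^{\nu}\|_{\mathcal{H}^{-1/2}(\Sigma)}\leq\|g^{\nu}\|_{\mathcal{H}^{1/2}(\Sigma)}$ with Proposition \ref{prop:gnubound_improved} and the trace estimate $\|[\gamma_0^{\Sigma}p^{\nu}]\|_{\mathcal{H}^{1/2}(\Sigma)}\lesssim\|p^{\nu}\|_{H^1(\Omega\setminus\Sigma)}\lesssim\|\partial_y u^{\nu}\|$ produces
\begin{align*}
\left|\int_\Sigma \partial_y g^{\nu}\,\overline{[\gamma_0^{\Sigma}p^{\nu}]}\right| \lesssim \bigl(\|f\|+\sqrt{\|f\|\|\partial_y u^{\nu}\|}\bigr)\|\partial_y u^{\nu}\|.
\end{align*}
Summing all contributions gives $\|\partial_y u^{\nu}\|^2\leq C_1\|f\|\|\partial_y u^{\nu}\|+C_2\sqrt{\|f\|}\|\partial_y u^{\nu}\|^{3/2}$; dividing through by $\|\partial_y u^{\nu}\|$ and applying Young's inequality $\sqrt{\|f\|}\sqrt{\|\partial_y u^{\nu}\|}\leq\varepsilon\|\partial_y u^{\nu}\|+(4\varepsilon)^{-1}\|f\|$ with $\varepsilon>0$ small enough absorbs the superlinear term into the left-hand side and yields the desired uniform bound $\|\partial_y u^{\nu}\|\lesssim\|f\|$.
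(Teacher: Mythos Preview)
Your proof is correct and follows essentially the same duality argument as the paper: both introduce the auxiliary solution $p^{\nu}$ of the decoupled homogeneous Neumann problem with right-hand side $\partial_y u^{\nu}$, derive the identity $\int_\Omega x\mathbb{A}\nabla\partial_y u^{\nu}\cdot\overline{\nabla p^{\nu}}=-\|\partial_y u^{\nu}\|^2$, combine it with the equation for $u^{\nu}$ to obtain the same master equality, and close using Proposition~\ref{prop:gnubound_improved}, the bound $\nu\|\nabla\partial_y u^{\nu}\|\lesssim\|f\|$ (Proposition~\ref{proposition:bounds_nu}), and Young's inequality. The only cosmetic difference is that the paper tests the original equation \eqref{eq:unu_orig} with $\partial_y p^{\nu}$ (invoking $p^{\nu}\in\mathcal{H}^2(\Omega\setminus\Sigma)$ via Theorem~\ref{theorem:regularity}) and then integrates by parts in $y$, whereas you test the $y$-differentiated equation with $p^{\nu}$; the two routes are equivalent and lead to the same terms.
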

\begin{proof}
	Once Proposition \ref{prop:gnubound_improved} is proven, the result  follows by an argument resembling the proof of Theorem \ref{theorem:stability_estimate}, thus can be found in Appendix \ref{sec:prop_reg_dy}.
\end{proof}
As soon as we have these two Propositions proven, we can prove Theorem \ref{theorem:gnubound_improved}. 
\begin{proof}[Proof of Theorem \ref{theorem:gnubound_improved}]
	Follows at once from Propositions  \ref{prop:gnubound_improved} and \ref{proposition:regularity_dy}.
\end{proof}
The remainder of the section is dedicated to the proof of Proposition \ref{prop:gnubound_improved}. 
Since this proposition  is an improvement over the earlier result of Proposition \ref{prop:gnubound_proof}, we pursue a similar strategy in its proof. Looking through the proof of Proposition \ref{prop:gnubound_proof}, we see that it relies on the following two components: 
\begin{itemize}
	\item the bound \eqref{eq:est_main} for $\|\nabla u^{\nu}\|$, namely $\nu^{1/2}\|\nabla u^{\nu}\|\lesssim \sqrt{\|f\|\|u^{\nu}\|}$; 
	\item a lifting Lemma \ref{lemma:lifting} with well-tuned parameters.
\end{itemize}
To prove Proposition \ref{prop:gnubound_improved}, it would be natural to require a bound of the type \eqref{eq:est_main} with $u^{\nu}$ replaced by $\partial_y u^{\nu}$; however, such a bound does not seem to hold true. The key observation is the following: if we split the solution into high- and low-frequency components (with respect to the variable $y$, tangential to the interface $\Sigma$), with a well-chosen cutoff frequency, a necessary bound can be shown to hold true for the high-frequency component. On the other hand, low-frequency components have a high regularity, and the control on the corresponding conormal derivative can be obtained easier. This is the starting idea of the approach currently hidden in technical details. The suitable definition of a low- and high- frequency appears to be $\nu$-dependent, and thus more care should be given to the construction of an appropriate lifting, as well as obtaining low-frequency bounds. In particular, we will require fine regularity estimates on the solution in fractional Sobolev spaces; this is inspired by the earlier paper of Baouendi and Goulaouic \cite{baouendi_goulaouic}. 
\subsubsection{{Extension operators and Bessel potentials}}
\label{sec:extension_bessel}
In what follows, we will use Fourier analysis/pseudo-differential calculus techniques, in the spirit of \cite{baouendi_goulaouic}, to relate weighted and fractional Sobolev norms through appropriate embeddings. 

We start by rewriting the problem \eqref{eq:unu_orig} in the strip $(-a,a)\times \mathbb{R}$, by using periodic boundary conditions, next localize the new extended solution around the strip $\Omega_{\Sigma}^{\varepsilon}$, and, finally, will make use of the Fourier transform in the $y$-direction. To do so, we need to introduce extension, localization and restriction operators. In the case of periodic functions on $\Omega$ those operators take an extremely simple form.

\textit{\textbf{Extension operators. }}Let us fix $0<\delta<\ell/2$ (recall that $\Omega=(-a,a)\times (-\ell,\ell)$), and define a truncation function in the tangential (i.e. $y-$) direction  $\chi_{\ell,\delta}\in C^{\infty}( \mathbb{R}^2;\mathbb{R})$ as follows:
\begin{align}
	\label{eq:childelta}
	\mathbb{R}^2\ni \vec{x}=(x,y)\mapsto	\chi_{\ell,\delta}(x,y):=\left\{
	\begin{array}{ll}
		1, & |y|\leq \ell+\delta/2, \\
		0,& |y|>\ell+\delta, \\
		\in (0,1), & \text{ otherwise. }
	\end{array}	\right.
\end{align}
Sometimes we will write $\chi_{\ell,\delta}(y)$ instead of $\chi_{\ell,\delta}(x,y)$, to underline that $\chi_{\ell,\delta}$ is independent of $x$.  

Let us set $\mathbb{R}^2_a=\{(x,y)\in \mathbb{R}^2: \, x\in (-a, a)\}$. Then we define the operator $\mathcal{E}_{\delta}$ as a product of a truncation and the periodic extension operator: 
\begin{align*}
	\mathcal{E}_{\delta}&: \,L^2(\Omega)\rightarrow L^2(\mathbb{R}^2_a), \quad \mathcal{E}_{\delta}u(x,y)=\chi_{\ell,\delta}(x,y)\mathcal{E}u(x,y),\\[10pt]
	\mathcal{E}&: \,L^2(\Omega)\rightarrow L^2_{loc}(\mathbb{R}^2_a),\quad \mathcal{E}u(x,y)=\left\{
	\begin{array}{ll}
		u(x,y),& |y|<\ell, \\[5pt]
		u(x,y-2k\ell), & y\in (\ell(2k-1),\, \ell(2k+1)).  
	\end{array}
	\right.
\end{align*}
The following observations will be of use: there exists $C_{\delta}>0$, s.t. for all $u$ sufficiently regular, 
\begin{align}
	\label{eq:important_mappings}
	\begin{split}
		&\|u\|_{\Omega}\leq \|\mathcal{E}_{\delta} u\|_{\mathbb{R}^2_a}\leq C_{\delta}\|u\|_{\Omega}, \quad \|\partial_y u\|_{\Omega}\leq \|\partial_y \mathcal{E}_{\delta} u\|_{\mathbb{R}^2_a}\leq C_{\delta}(\|\partial_y u\|_{\Omega}+\|u\|_{\Omega}),\\
		&\|\partial_y (\mathbb{P}\nabla u)\|_{\Omega}\leq \|\partial_y(\mathbb{P} \nabla\mathcal{E}_{\delta}u)\|_{\mathbb{R}^2_a}\leq C_{\delta,\mathbb{P}}( \|\mathbb{P}\partial_y\nabla u\|_{\Omega}+\|\partial_y u\|_{\Omega}+\|u\|_{\Omega}),\quad \mathbb{P}\in \mathbb{R}^{2\times 2},\\
		&\|x\nabla \mathcal{E}_{\delta}u\|_{\mathbb{R}^2_a}\leq C_{\delta} \|u\|_{\mathcal{V}_{sing}(\Omega)}.
	\end{split}
\end{align} 
\textit{\textbf{Definition of $\mathcal{J}$.}} Next, let us define a Bessel potential in the direction $y$. Let us introduce the space of functions compactly supported in the direction $y$: 
$${C}^{\infty}_{0,y}(\mathbb{R}^2_a):=\{v\in C^{\infty}(\mathbb{R}^2_a): \, \operatorname{supp}v \subseteq [-a,a]\times [-L, L], \text{ for some }L>0\}.$$
Given $u\in {C}_{0,y}^{\infty}(\mathbb{R}^2_a)$, we define the (uni-directional) Bessel potential $\mathcal{J}: \, {C}_{0,y}^{\infty}(\mathbb{R}^2_a)\rightarrow {C}^{\infty}(\mathbb{R}^2_a)$ as follows:
\begin{align*}
	\mathcal{J}u:=\mathcal{F}_y^{-1}\left((1+\xi_y^2)^{1/4}\mathcal{F}_y u\right), \quad
	\mathcal{F}_yv(x,\xi_y):=\frac{1}{\sqrt{2\pi}}\int_{\mathbb{R}}\mathrm{e}^{-i\xi_y y}v(x,y)dy. 
\end{align*}
The operator $\mathcal{J}$ extends by density to a bounded linear operator from $H^{1/2}(\mathbb{R}^2_a)$ into $L^2(\mathbb{R}^2_a)$. 

By the Plancherel identity together with the Fubini theorem, we have that 
\begin{align}
	\label{eq:Ju}
	\|\mathcal{J}^2 u\|^2_{L^2(\mathbb{R}^2_a)}=\int_{-a}^a\int_{\mathbb{R}}(1+\xi_y^2)|\mathcal{F}_yu(x,\xi_y)|^2d\xi_y\, dx=\|u\|^2_{L^2(\mathbb{R}^2_a)}+\|\partial_y u\|^2_{L^2(\mathbb{R}^2_a)},\quad \forall u\in H^1(\mathbb{R}^2_a).
\end{align}
\subsubsection{{Extending the problem \eqref{eq:unu_orig} to $\mathbb{R}^2_a$ and relating the conormal derivatives.}}
Let us introduce
\begin{align*}
	U^{\nu}(x,y):=\mathcal{E}u^{\nu}(x,y), \quad U^{\nu}_{\delta}(x,y):=\mathcal{E}_{\delta}u^{\nu}(x,y),
\end{align*}
and extend $\mathbb{A}, \mathbb{T}$ by periodicity to the full strip $\mathbb{R}_a^2$ preserving the notation. 
The truncated function $U^{\nu}_{\delta}=\chi_{\ell,\delta}U^{\nu}$ satisfies the new problem:
\begin{align}
	\label{eq:Unudelta}
	\operatorname{div}\left((x\mathbb{A}+i\nu\mathbb{T})\nabla U^{\nu}_{\delta}\right)&=F^{\nu}_{\delta} \text{ in }\mathbb{R}^2_a, \\
	\nonumber
	F^{\nu}_{\delta}&=\mathcal{E}_{\delta}f+U^{\nu}\operatorname{div}((x\mathbb{A}+i\nu\mathbb{T})\nabla \chi_{\ell,\delta})+\nabla U^{\nu}\cdot (x\mathbb{A}+i\nu\mathbb{T})\nabla \chi_{\ell,\delta}+\nabla \chi_{\ell,\delta}\cdot (x\mathbb{A}+i\nu\mathbb{T})\nabla U^{\nu}.
\end{align}
Remark that $F^{\nu}_{\delta}$ is supported on  $|y|\leq \ell+\delta$, and  $U^{\nu}=\chi_{\ell,2\delta}U^{\nu}$ on $|y|\leq \ell+\delta$. Therefore,
\begin{align}
	\|F^{\nu}_{\delta}\|_{L^2(\mathbb{R}^2_a)}\leq \|\mathcal{E}_{\delta}f\|_{L^2(\mathbb{R}^2_a)}+c_{1,\delta}\left(	\|U^{\nu}_{2\delta}\|_{L^2(\mathbb{R}^2_a)}+\|x\nabla U^{\nu}_{2\delta}\|_{L^2(\mathbb{R}^2_a)}+\nu\|\nabla U^{\nu}_{2\delta}\|_{L^2(\mathbb{R}^2_a)}\right),
\end{align}
for some $c_{1,\delta}>0$.
The following stability bound follows from \eqref{eq:important_mappings} and \eqref{eq:est_main_v2} (remark that the bound below is stronger than needed, cf. the power of $\nu$):
\begin{align}
	\label{eq:stabU}
	\|U^{\nu}_{2\delta}\|_{L^2(\mathbb{R}^2_a)}+\|x\nabla U^{\nu}_{2\delta}\|_{L^2(\mathbb{R}^2_a)}+\nu^{1/2}\|\nabla U^{\nu}_{2\delta}\|_{L^2(\mathbb{R}^2_a)}&\leq   c_{2,\delta}\|f\|_{L^2(\Omega)},
\end{align}
with $c_{2,\delta}>0$. Since  $\|\mathcal{E}_{\delta}f\|_{L^2(\mathbb{R}^2_a)}\leq C_{\delta}\|f\|_{L^2(\Omega)}$ (see \eqref{eq:important_mappings}), we conclude that there exists $c_{3,\delta}>0$, s.t. 
\begin{align}
	\label{eq:stabFdelta}
	\|F^{\nu}_{\delta}\|_{L^2(\mathbb{R}^2_a)}\leq c_{3,\delta} \|f\|_{L^2(\Omega)}, \quad \forall 0<\nu<1. 
\end{align}
In what follows, we will also need  
\begin{lemma}
	\label{lem:ellreg}
	For all $\nu>0$, $U^{\nu}_{\delta}\in H^2(\mathbb{R}^2_a)$; also $\gamma_{0}^{\partial\mathbb{R}^2_a}\left(\partial_y U^{\nu}_{\delta}\right)=0$, $\lambda\in \{n,p\}$.
\end{lemma}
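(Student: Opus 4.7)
The plan is to exploit the fact that $u^{\nu} \in \mathcal{H}^2(\Omega)$, which is known from Lemma~\ref{lem:pb_abs_wp}. Since $u^{\nu}$ satisfies \textit{both} periodic conditions at $y = \pm \ell$ (for the trace and for $\partial_y$), the periodic extension $\mathcal{E}u^{\nu}$ across the lines $y = (2k+1)\ell$ is continuous together with its first-order weak derivatives. This gluing argument, combined with the fact that on each strip $(-a,a) \times ((2k-1)\ell, (2k+1)\ell)$ the function $\mathcal{E}u^{\nu}$ is a translate of $u^{\nu}$ and hence in $H^2$, shows that $\mathcal{E}u^{\nu} \in H^2_{\mathrm{loc}}(\mathbb{R}^2_a)$.

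Next, since $\chi_{\ell,\delta} \in C^{\infty}_c(\mathbb{R};\mathbb{R})$ depends on $y$ only and is compactly supported, the product $U^{\nu}_{\delta} = \chi_{\ell,\delta}\, \mathcal{E}u^{\nu}$ is compactly supported in $y$ in $\mathbb{R}^2_a$. The Leibniz rule and the $H^2_{\mathrm{loc}}$-regularity of $\mathcal{E}u^{\nu}$ (together with the boundedness of $\chi_{\ell,\delta}, \chi'_{\ell,\delta}, \chi''_{\ell,\delta}$) then yield $U^{\nu}_{\delta} \in H^2(\mathbb{R}^2_a)$, which is the first claim.

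For the boundary condition, note that $\partial \mathbb{R}^2_a = \{x = -a\} \cup \{x = a\}$. From the Dirichlet conditions $\gamma_0^{\Gamma_p \cup \Gamma_n} u^{\nu} = 0$ built into $\mathcal{H}^1(\Omega)$, the periodic extension satisfies $\gamma_0^{\partial \mathbb{R}^2_a}(\mathcal{E}u^{\nu}) = 0$. Since $\partial_y$ is tangential to the vertical boundary $\{x = \pm a\}$, tangential differentiation preserves the vanishing trace, so $\gamma_0^{\partial \mathbb{R}^2_a}(\partial_y \mathcal{E}u^{\nu}) = 0$. The identity
\begin{equation*}
\partial_y U^{\nu}_{\delta} = \chi'_{\ell,\delta}(y)\,\mathcal{E}u^{\nu} + \chi_{\ell,\delta}(y)\,\partial_y \mathcal{E}u^{\nu}
\end{equation*}
then gives $\gamma_0^{\partial \mathbb{R}^2_a}(\partial_y U^{\nu}_{\delta}) = 0$, as both summands vanish on $x = \pm a$.

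The only step requiring some care is the claim that the periodic extension preserves $H^2$-regularity across the gluing lines $y = (2k+1)\ell$; but this is exactly what the periodicity assumption $\partial_y^k \mathbb{A}(\cdot,\ell) = \partial_y^k \mathbb{A}(\cdot,-\ell)$ for $k=0,1$ (Assumption~\ref{assump:matrices}) was designed to guarantee at the level of the coefficients, and at the level of $u^{\nu}$ it follows from the periodic boundary conditions of \eqref{eq:unu}, which are inherited by $\partial_y u^{\nu}$ up to order $k=1$. No other subtlety arises.
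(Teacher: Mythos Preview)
Your proof is correct and follows essentially the same approach as the paper, which simply states that the result ``follows by elliptic regularity ($u^{\nu}\in \mathcal{H}^2(\Omega)$ by Lemma~\ref{lem:pb_abs_wp}) and definition of $\mathcal{E}_{\delta}$.'' You have merely spelled out the details of the gluing argument and the tangential-derivative step that the paper leaves implicit.
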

\begin{proof}
	Follows by elliptic regularity ($u^{\nu}\in  \mathcal{H}^2(\Omega)$ by Lemma \ref{lem:pb_abs_wp}) and definition of $\mathcal{E}_{\delta}$. 
\end{proof}
The rewriting \eqref{eq:Unudelta} of the problem \eqref{eq:unu} will enable us to apply the Fourier transform in the direction $y$ and obtain more precise regularity estimates. To prove Proposition \ref{prop:gnubound_improved} we will resort to proving an analogous result for $U^{\nu}_{\delta}$. For this, let us introduce
\begin{equation}
	\label{eq:sigmainf}
	\Sigma_{\infty}:=\{(0,y), \, y\in \mathbb{R}\}.
\end{equation}
The result below is standard and links norms of  $\gamma_{n,\nu}^{\Sigma}u^{\nu}$ and $\gamma_{n,\nu}^{\Sigma_{\infty}}U^{\nu}_{\delta}:=\left.(\mathbb{A}+i\nu\mathbb{T})\nabla U^{\nu}_{\delta}\cdot \vec{n}\right|_{\Sigma^{\infty}}$ (see in particular \cite[Theorem 4.2.1]{wendland_hsiao} on equivalence of different $\mathcal{H}^{1/2}(\Sigma)$-norms, and use e.g. a standard trace norm based on parametrization of $\Gamma$, cf. \cite[p.176]{wendland_hsiao}).   
\begin{proposition}
	\label{prop:traces}
	Let $\delta>0$. There exists $C_{\delta}, \nu_0>0$, s.t. for all $0<\nu<\nu_0$, 
	\begin{align*}	
		\|\gamma_{n,\nu}^{\Sigma}u^{\nu}\|_{\mathcal{H}^{1/2}(\Sigma)}\leq C_{\delta} \|\gamma_{n,\nu}^{\Sigma_{\infty}}U^{\nu}_{\delta}\|_{H^{1/2}(\mathbb{R})}.
	\end{align*}
\end{proposition}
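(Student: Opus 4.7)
The plan is to reduce the inequality to two ingredients: a pointwise identification of the two trace functions on $\Sigma$, and a standard norm-equivalence/extension estimate that bounds a periodic fractional Sobolev norm by the $H^{1/2}(\mathbb{R})$-norm of a cutoff periodic extension. The parameter $\nu$ enters only through $u^{\nu}$ itself; no $\nu$-dependent constant will appear in the comparison between the two norms of the trace.

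First, I would unpack the definitions to identify the two traces as functions of the variable $y$. On $\Sigma=\{0\}\times(-\ell,\ell)\subset\{|y|<\ell\}$ the cutoff $\chi_{\ell,\delta}$ is identically equal to $1$, and by construction $\mathcal{E}u^{\nu}$ coincides with $u^{\nu}$ on $(-a,a)\times(-\ell,\ell)$. Since $u^{\nu}\in\mathcal{H}^2(\Omega)$ by Lemma \ref{lem:pb_abs_wp}, the restriction of $U^{\nu}_{\delta}$ to a neighbourhood of $\Sigma$ is $H^2$ and shares with $u^{\nu}$ all first-order traces on $\Sigma$. Writing $g^{\nu}:=\gamma_{n,\nu}^{\Sigma}u^{\nu}$ and $G^{\nu}:=\gamma_{n,\nu}^{\Sigma_{\infty}}U^{\nu}_{\delta}$, we conclude that $G^{\nu}|_{(-\ell,\ell)}=g^{\nu}$. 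Moreover, $G^{\nu}$ is compactly supported in $[-\ell-\delta,\ell+\delta]$ and, on $(\ell,\ell+\delta)\cup(-\ell-\delta,-\ell)$, coincides with $\chi_{\ell,\delta}$ times the periodic extension of $g^{\nu}$ (this is where elliptic regularity together with the periodic boundary conditions satisfied by $u^{\nu}$ and $\nabla u^{\nu}$ on $\Gamma^{\pm}$ — cf. Lemma \ref{lem:ellreg} — is used to ensure that $G^{\nu}$ really belongs to $H^{1/2}(\mathbb{R})$ with no hidden jump at $y=\pm\ell$).

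Second, the periodic fractional space $\mathcal{H}^{1/2}(\Sigma)$, defined as the trace space of $\mathcal{H}^1(\Omega)$, admits an equivalent intrinsic characterization through a parametrization of the closed curve obtained by identifying $y=-\ell$ with $y=\ell$; see \cite[Theorem 4.2.1]{wendland_hsiao} and the trace norm discussed on p.~176 therein. Through this identification, $\|g^{\nu}\|_{\mathcal{H}^{1/2}(\Sigma)}$ is equivalent to an $H^{1/2}$-norm on the torus $\mathbb{R}/2\ell\mathbb{Z}$, which, via a direct Fourier-series/Fourier-transform computation (or by duality with the trace-extension theorem), is bounded above by the $H^{1/2}(\mathbb{R})$-norm of any compactly-supported $H^{1/2}(\mathbb{R})$-function whose restriction to $(-\ell,\ell)$ coincides with $g^{\nu}$, up to a multiplicative constant depending only on the chosen cutoff (hence on $\delta$). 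Applying this extension estimate to $G^{\nu}$ yields
\begin{align*}
\|g^{\nu}\|_{\mathcal{H}^{1/2}(\Sigma)}\;\leq\; C_{\delta}\,\|G^{\nu}\|_{H^{1/2}(\mathbb{R})},
\end{align*}
which is exactly the claim.

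No serious obstacle is anticipated: the first step is a direct verification from the definition of $\mathcal{E}_{\delta}$ and $\chi_{\ell,\delta}$, while the second step is a standard functional-analytic fact. The only point requiring care is to ensure that the periodic boundary conditions for $u^{\nu}$ and $\nabla u^{\nu}$ propagate through the construction so that $G^{\nu}$ is indeed globally in $H^{1/2}(\mathbb{R})$; this is guaranteed by the $\mathcal{H}^2$-regularity of $u^{\nu}$ and the smoothness of $\chi_{\ell,\delta}$.
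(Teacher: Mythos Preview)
Your approach is correct and is essentially what the paper does: it declares the result ``standard'' and points to \cite[Theorem 4.2.1 and p.~176]{wendland_hsiao} for the equivalence of $\mathcal{H}^{1/2}(\Sigma)$-norms via a parametrization of the closed curve, which is exactly the norm-equivalence you invoke in your second step. Your first step (identifying $G^{\nu}|_{(-\ell,\ell)}=g^{\nu}$ and noting that the periodic $\mathcal{H}^2$-regularity prevents a jump at $y=\pm\ell$) is the right unpacking of the definitions.

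One small imprecision worth tightening: the bound $\|g^{\nu}\|_{\mathcal{H}^{1/2}(\Sigma)}\le C\|G\|_{H^{1/2}(\mathbb{R})}$ is \emph{not} valid for an arbitrary compactly supported $G$ that merely restricts to $g^{\nu}$ on $(-\ell,\ell)$, because the torus seminorm contains a ``wrap-around'' contribution (pairs $y\approx -\ell$, $y'\approx\ell$) that an arbitrary extension need not see. It does hold for the specific $G^{\nu}=\chi_{\ell,\delta}\cdot(\text{periodic extension of }g^{\nu})$, precisely because $\chi_{\ell,\delta}=1$ on $|y|\le \ell+\delta/2$ so that the wrap-around differences in the torus seminorm are reproduced by nearby differences of $G^{\nu}$ on $\mathbb{R}$. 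Since you apply the estimate only to this $G^{\nu}$, the argument goes through; just rephrase the intermediate claim accordingly.
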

\begin{remark}
	In what follows, we will never need to make $\delta\rightarrow 0$ or $\delta\rightarrow +\infty$. Therefore, where appropriate, we will not indicate the dependence of the bounds on $\delta$, but rather consider that we fix $\delta>0$.  
\end{remark}
\subsubsection{{Auxiliary regularity bounds}}
In what follows we will need two stability bounds. The first one is a counterpart of the bound \eqref{eq:est_main}, namely $\nu\|\nabla u^{\nu}\|^2\lesssim \|f\|\|u^{\nu}\|$.  Informally, the result below says that $\nu^{1/2}\|(-\partial_y^{2})^{1/4}\nabla u^{\nu}\|\lesssim \|f\|+\sqrt{\|f\|\|\partial_y u^{\nu}\|}$. 
\begin{proposition}
	\label{proposition:jbound}
	Let $(u^{\nu})_{\nu>0}$ solve \eqref{eq:unu_orig},  $U^{\nu}_{\delta}=\mathcal{E}_{\delta}u^{\nu}$ satisfy \eqref{eq:Unudelta}, and let the matrix-valued function $\mathbb{B}\in C^{1,1}([-a,a]\times \mathbb{R}; \mathbb{C}^{2\times 2})$. Then there exist  $C,\,\nu_0>0$, s.t. for all $0<\nu<\nu_0$, 
	\begin{align*}
		\nu\|\mathcal{J}(\mathbb{B}\nabla U^{\nu}_{\delta})\|^2_{L^2(\mathbb{R}^2_a)}\leq C\left( \|f\|^2_{L^2(\Omega)}+\|f\|_{L^2(\Omega)}\|\partial_y u^{\nu}\|_{L^2(\Omega)}\right).
	\end{align*}
\end{proposition}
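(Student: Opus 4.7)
The strategy I would pursue mimics the proof of the standard energy bound $\nu^{1/2}\|\nabla u^\nu\|\lesssim \sqrt{\|f\|\|u^\nu\|}$ from Lemma \ref{lem:pb_abs_wp}, but with the tangential Bessel potential $\mathcal{J}$ inserted throughout. Concretely, I would work on the strip $\mathbb{R}^2_a$ with the extended solution $V := U^{\nu}_{\delta}$, which by Lemma \ref{lem:ellreg} belongs to $H^2(\mathbb{R}^2_a)$, vanishes on $\{x=\pm a\}$, and is compactly supported in $y$. This regularity and the fast decay of its $y$-Fourier transform legitimate multiplication by $\overline{\mathcal{J}^2 V}$ and the relevant integrations by parts. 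The algebraic backbone of the argument is that (i) $\mathcal{J}$ commutes with $\partial_x$, $\partial_y$, and with multiplication by $x$ (since $\mathcal{J}$ acts only in the $y$-variable), (ii) $\mathcal{J}$ is self-adjoint because its Fourier symbol $(1+\xi_y^2)^{1/4}$ is real, and (iii) $\|\mathcal{J}^2 V\|^2 = \|V\|^2+\|\partial_y V\|^2$, as per \eqref{eq:Ju}.

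Testing \eqref{eq:Unudelta} against $\overline{\mathcal{J}^2 V}$ and integrating by parts yields, after moving one copy of $\mathcal{J}$ to the left factor by self-adjointness,
\begin{equation*}
  -\int_{\mathbb{R}^2_a} \bigl[(x\mathbb{A}+i\nu\mathbb{T})\mathcal{J}\nabla V + x[\mathcal{J},\mathbb{A}]\nabla V + i\nu[\mathcal{J},\mathbb{T}]\nabla V\bigr]\cdot\overline{\mathcal{J}\nabla V} \;=\; \int_{\mathbb{R}^2_a} F^{\nu}_{\delta}\,\overline{\mathcal{J}^2 V}.
\end{equation*}
Taking the imaginary part kills the diagonal contribution $\int x\mathbb{A}\mathcal{J}\nabla V\cdot\overline{\mathcal{J}\nabla V}$ (real, by Hermitianity of $\mathbb{A}$ and reality of $x$), and the coercivity of $\mathbb{T}$ from Corollary \ref{cor:M} gives
\begin{equation*}
  c_{\mathbb{T}}\nu\|\mathcal{J}\nabla V\|^2 \;\leq\; |\Im\langle F^{\nu}_{\delta},\mathcal{J}^2 V\rangle| + |\Im\langle x[\mathcal{J},\mathbb{A}]\nabla V,\mathcal{J}\nabla V\rangle| + \nu\,|\Re\langle [\mathcal{J},\mathbb{T}]\nabla V,\mathcal{J}\nabla V\rangle|.
\end{equation*}
The data term is bounded directly: by \eqref{eq:stabFdelta} and \eqref{eq:important_mappings},
$|\langle F^{\nu}_{\delta},\mathcal{J}^2 V\rangle|\leq \|F^{\nu}_{\delta}\|(\|V\|+\|\partial_y V\|)\lesssim \|f\|(\|f\|+\|\partial_y u^{\nu}\|)$, which already produces the right-hand side of the proposition. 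The $\mathbb{T}$-commutator is tame: since $\mathbb{T}\in C^{1,1}$, the standard bound $\|[\mathcal{J},\mathbb{T}]\nabla V\|_{L^2}\lesssim \|\nabla V\|_{L^2}$ together with $\sqrt{\nu}\|\nabla V\|\lesssim\|f\|$ from \eqref{eq:est_main} gives, after Young's inequality with $\epsilon\nu$, a harmless contribution $\|f\|^2$.

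The delicate step, and the main obstacle, is the $\mathbb{A}$-commutator term. The key simplification is the identity
\begin{equation*}
  x[\mathcal{J},\mathbb{A}]\nabla V \;=\; [\mathcal{J},\mathbb{A}](x\nabla V),
\end{equation*}
which follows immediately from $[\mathcal{J},x]=0$. Since $[\mathcal{J},\mathbb{A}]$ is a pseudo-differential operator of order $-1/2$ in $y$ for $\mathbb{A}\in C^{1,1}$, one has the refined bound $\|[\mathcal{J},\mathbb{A}]w\|_{L^2}\lesssim \|w\|_{H^{-1/2}_y}$. Applied to $w=x\nabla V=(x\partial_x V,\,\partial_y(xV))$, the component-wise interpolation
\[
  \|\partial_y(xV)\|_{H^{-1/2}_y}^2 \;\lesssim\; \|xV\|\,\|xV\|_{H^1_y} \;\lesssim\; \|V\|(\|V\|+\|\partial_y V\|) \;\lesssim\; \|f\|^2 + \|f\|\|\partial_y u^{\nu}\|,
\]
together with $\|x\partial_x V\|_{H^{-1/2}_y}\lesssim\|x\partial_x V\|_{L^2}\lesssim \|f\|$ from \eqref{eq:est_main_v2}, gives $\|[\mathcal{J},\mathbb{A}](x\nabla V)\|^2\lesssim \|f\|^2+\|f\|\|\partial_y u^{\nu}\|$. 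Combining with Young's inequality absorbed into the LHS and the trivial step $\|\mathcal{J}(\mathbb{B}\nabla V)\|\lesssim \|\mathcal{J}\nabla V\|+\|[\mathcal{J},\mathbb{B}]\nabla V\|\lesssim \|\mathcal{J}\nabla V\|+\|\nabla V\|$ (to re-insert $\mathbb{B}$), yields the claimed estimate. The technical heart of the proof is the interplay between the $-1/2$-order of the commutator and the $H^{1/2}_y$-control of $xV$ that effectively "absorbs" a half-derivative in $y$ at the expense of the bound $\|f\|^2+\|f\|\|\partial_y u^\nu\|$ on the right-hand side.
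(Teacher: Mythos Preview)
Your overall strategy---test \eqref{eq:Unudelta} against $\mathcal{J}^2 V$, move one $\mathcal{J}$ across by self-adjointness, take the imaginary part---is exactly the route the paper takes, and your treatment of the data term and of the $\mathbb{T}$-commutator (which carries its own factor $\nu$) is fine. The gap is in the $\mathbb{A}$-commutator term
\[
  \bigl|\Im\langle x[\mathcal{J},\mathbb{A}]\nabla V,\ \mathcal{J}\nabla V\rangle\bigr|
  \;=\;\bigl|\Im\langle [\mathcal{J},\mathbb{A}](x\nabla V),\ \mathcal{J}\nabla V\rangle\bigr|.
\]
Even granting the order-$(-1/2)$ commutator estimate you invoke (which the paper does not prove and is stronger than its Lemma~\ref{lem:commutator}), your Cauchy--Schwarz step leaves a factor $\|\mathcal{J}\nabla V\|$ on the right with \emph{no} $\nu$ in front. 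Young's inequality then forces a $1/\nu$, and the best you can conclude is $\nu^{2}\|\mathcal{J}\nabla V\|^{2}\lesssim \|f\|^{2}+\|f\|\|\partial_y u^{\nu}\|$, one full power of $\nu$ short of the claim. The phrase ``Young's inequality absorbed into the LHS'' hides precisely this loss.

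The paper circumvents this by exploiting that only the \emph{imaginary part} of the pairing is needed. Since multiplication by a Hermitian matrix is self-adjoint, Lemma~\ref{lem:self_adjoint_commutator} gives
\[
  \Im\bigl([\mathcal{J}\mathbb{I},\mathcal{A}]\vec v,\ \mathcal{J}\vec v\bigr)
  \;=\;\tfrac{1}{2i}\bigl([\mathcal{J}^{2}\mathbb{I},\mathcal{A}]\vec v,\ \vec v\bigr),
\]
so the dangerous factor $\mathcal{J}\nabla V$ is replaced by $\nabla V$ in the pairing, and the (order-$0$) commutator $[\mathcal{J}^{2},\cdot]$ suffices. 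This would still generate a term $\|x\partial_x V\|\,\|\partial_x V\|\lesssim \nu^{-1/2}\|f\|^{2}$ from the $(1,1)$-entry; the paper neutralises that by first rewriting the equation with $\widetilde{\mathbb{A}}=\mathbb{A}_{11}^{-1}\mathbb{A}$ so that $\widetilde{\mathbb{A}}_{11}\equiv 1$ commutes with $\mathcal{J}$. After this normalisation every surviving pairing contains at least one copy of $\partial_y V$, and the bounds $\|x\nabla V\|\lesssim\|f\|$, $\|\partial_y V\|\lesssim\|u^{\nu}\|+\|\partial_y u^{\nu}\|$ close the estimate without any absorption into the left-hand side. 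Your proposal is missing both ingredients: the self-adjoint commutator identity and the $(1,1)$-normalisation.
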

\begin{proof}
	See Appendix \ref{sec:prop_jbound}. 
\end{proof}
The second bound is a standard elliptic regularity bound, explicit in $\nu>0$.  
\begin{proposition}
	\label{proposition:bounds_nu}
	Let $U^{\nu}_{\delta}$ and $\mathbb{B}\in C^{0,1}([-a,a]\times \mathbb{R};\mathbb{C}^{2\times 2})$. Then there exist  $C,\,\nu_0>0$, s.t.
	\begin{align}
		\label{eq:unubtmp}
		\nu\| \partial_y(\mathbb{B}\nabla U^{\nu}_{\delta})\|_{L^2(\mathbb{R}^2_a)}\leq  C \|f\|_{L^2(\Omega)}, \text{ for all }0<\nu<\nu_0.
	\end{align}
\end{proposition}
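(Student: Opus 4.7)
The target estimate is of the form ``$\nu \times (\text{one tangential derivative of } \nabla u^{\nu}) \lesssim \|f\|$'', i.e.\ an $H^1$-like bound in the tangential $y$-direction for $\nabla U^{\nu}_{\delta}$, scaled by the absorption parameter. My plan is to differentiate the PDE \eqref{eq:unu_orig} once in $y$, derive a variational problem for $w^{\nu}:=\partial_y u^{\nu}$, and re-run the absorption-type argument of Lemma \ref{lem:pb_abs_wp}, using the bounds \eqref{eq:est_main_v2} to absorb the lower-order remainders produced by differentiating the coefficients. The bound for $U^{\nu}_{\delta}$ will then follow immediately from the continuity properties of the extension operator $\mathcal{E}_{\delta}$ collected in \eqref{eq:important_mappings}.

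\textbf{Step 1: PDE for $w^{\nu}$.} By Lemma \ref{lem:pb_abs_wp} we have $u^{\nu}\in \mathcal{H}^2(\Omega)$, so $w^{\nu}=\partial_y u^{\nu}\in \mathcal{H}^1(\Omega)$; moreover $w^{\nu}$ vanishes on $\Gamma_p\cup\Gamma_n$ (these are vertical segments) and is $y$-periodic. Testing \eqref{eq:unu_orig} with $-\partial_y\tilde{\varphi}$ for $\tilde{\varphi}\in \mathcal{H}^1(\Omega)$ and integrating by parts in $y$ (the periodicity of $\mathbb{A}$, $\mathbb{T}$ and of $\tilde{\varphi}$ kill all boundary terms on $\Gamma^{\pm}$, while $\Gamma_p, \Gamma_n$ produce no $y$-IBP contributions) yields
\begin{equation*}
a_{\nu}(w^{\nu},\tilde{\varphi}) \;=\; (f,\partial_y\tilde{\varphi}) \;-\; \bigl((x\partial_y\mathbb{A}+i\nu\partial_y\mathbb{T})\nabla u^{\nu},\,\nabla\tilde{\varphi}\bigr), \qquad \forall\, \tilde{\varphi}\in\mathcal{H}^1(\Omega).
\end{equation*}
Note that it is crucial here that $f$ is \emph{not} differentiated: the $y$-IBP is done against the test function, not against $f$.

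\textbf{Step 2: Absorption estimate.} Choosing $\tilde{\varphi}=w^{\nu}$ and taking the imaginary part, Corollary \ref{cor:M} gives $c_{\mathbb{T}}\nu\|\nabla w^{\nu}\|^2\le \Im a_{\nu}(w^{\nu},w^{\nu})$, whence by Cauchy--Schwarz and Assumption \ref{assump:matrices} (so that $\partial_y\mathbb{A}, \partial_y\mathbb{T}\in L^{\infty}$)
\begin{equation*}
c_{\mathbb{T}}\nu\|\nabla w^{\nu}\|^2 \;\lesssim\; \|f\|\,\|\nabla w^{\nu}\| \;+\; \bigl(\|x\nabla u^{\nu}\|+\nu\|\nabla u^{\nu}\|\bigr)\|\nabla w^{\nu}\|.
\end{equation*}
By \eqref{eq:est_main_v2}, $\|x\nabla u^{\nu}\|\lesssim\|f\|$ and $\nu\|\nabla u^{\nu}\|\le \nu^{1/2}(\nu^{1/2}\|\nabla u^{\nu}\|)\lesssim \nu^{1/2}\|f\|$, so for $\nu<\nu_0$ the prefactor on $\|\nabla w^{\nu}\|$ is $\lesssim \|f\|$; dividing yields
\begin{equation*}
\nu\,\|\nabla \partial_y u^{\nu}\|_{L^2(\Omega)}\;\lesssim\; \|f\|_{L^2(\Omega)}.
\end{equation*}

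\textbf{Step 3: Transfer to $U^{\nu}_{\delta}$.} Finally, from \eqref{eq:important_mappings} with $\mathbb{P}=\mathbb{B}$ (which requires just $\mathbb{B}\in C^{0,1}$ to commute with $\partial_y$ modulo bounded multiplication),
\begin{equation*}
\nu\|\partial_y(\mathbb{B}\nabla U^{\nu}_{\delta})\|_{\mathbb{R}^2_a}
\;\lesssim\; \nu\,\|\mathbb{B}\partial_y\nabla u^{\nu}\|_{\Omega}+\nu\,\|\partial_y u^{\nu}\|_{\Omega}+\nu\,\|u^{\nu}\|_{\Omega}
\;\lesssim\; \nu\|\nabla w^{\nu}\|_{\Omega}+\nu^{1/2}\|f\|+\nu\|f\|\;\lesssim\;\|f\|,
\end{equation*}
using Step 2 together with \eqref{eq:est_main_v2}. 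This establishes \eqref{eq:unubtmp}.

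\textbf{Main obstacle.} The only delicate point is Step 1: the justification that $w^{\nu}\in\mathcal{H}^1(\Omega)$ (needed to apply the Lax--Milgram/variational machinery) and that no boundary terms arise upon differentiating the PDE in $y$. Both rely on the $\mathcal{H}^2$-regularity provided by Lemma \ref{lem:pb_abs_wp} and the fact that \emph{all} relevant objects---$\mathbb{A}$, $\mathbb{T}$, $u^{\nu}$, and the test functions---are periodic in $y$, which makes the geometry of the differentiation procedure transparent. Once this is in place, the rest of the argument mirrors the absorption estimate of Lemma \ref{lem:pb_abs_wp} and the uniform bounds in \eqref{eq:est_main_v2} do the work.
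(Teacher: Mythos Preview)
Your proof is correct and follows essentially the same strategy as the paper: differentiate the problem once in the tangential direction $y$, run the absorption estimate of Lemma~\ref{lem:pb_abs_wp} for $w^{\nu}=\partial_y u^{\nu}$, and then transfer to $U^{\nu}_{\delta}$ via~\eqref{eq:important_mappings}. The only technical difference is that the paper implements the tangential differentiation through Nirenberg's finite difference quotients (testing \eqref{eq:unu_orig} with $\delta_h^y\delta_{-h}^y u^{\nu}\in\mathcal{H}^1(\Omega)$ and passing $h\to 0$), whereas you differentiate directly and rely on the $\mathcal{H}^2(\Omega)$-regularity of $u^{\nu}$ supplied by Lemma~\ref{lem:pb_abs_wp} to justify the $y$-integration by parts. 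Both routes are standard and yield the same inequality; the difference-quotient route has the advantage of not needing the a~priori $\mathcal{H}^2$-regularity, but since that regularity is already available here, your shortcut is legitimate.
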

\begin{proof}
	See Appendix \ref{sec:proof_bounds_nu}.
\end{proof}
\subsubsection{{Frequency filters and corresponding results}} 
As discussed before, our proof of Proposition \ref{prop:gnubound_improved} relies on the decomposition of the solution in high- and low-frequency components in the direction tangential to the interface, and working with them in a separate manner. The originality of the approach is that the definition of the 'low-' and 'high-' frequency components is now $\nu$-dependent; as we will see later, we will need more sophisticated stability estimates for the low-frequency components of the solution, which make use of an improved regularity in the tangential direction. This is due to the definition of the lifting operator, which allows to control well high frequencies, but is less efficient at low frequencies, cf. Lemma \ref{lem:lifting_lem_important}.

\textit{Interface filters. }Let us introduce high- and low-frequency filters in the $y$-direction on the line ${\Sigma}_{\infty}$, cf. \eqref{eq:sigmainf}, and next on $\mathbb{R}^2_a$. For a fixed $w>0$, we define  (where $\mathcal{F}$ is the Fourier transform):
\begin{align}
	\label{eq:filter_operators}
	\begin{split}
		&\mathcal{L}_{w}: \, L^2(\mathbb{R})\rightarrow L^2(\mathbb{R}), \quad \mathcal{L}_w q:=\mathcal{F}^{-1}\left(\mathbb{1}_{|\xi|<w}\mathcal{F}q(\xi)\right),\quad \mathcal{H}_w:=\operatorname{Id}-\mathcal{L}_w.
	\end{split}
\end{align}
It is straightforward to verify that a priori $\|\mathcal{L}_w u\|_{H^s(\mathbb{R})}\leq C\|u\|_{H^s(\mathbb{R})}$, for all $u\in H^s(\mathbb{R})$, thus both filters are continuous on the space $H^s(\mathbb{R})$, $s\in \mathbb{R}$. Moreover, we also have that $\|\mathcal{L}_w u\|_{H^p(\mathbb{R})}\leq C_{s,p}\|u\|_{H^s(\mathbb{R})}$, for all $p\in \mathbb{R}$. Additionally, these operators commute with $\partial_y$, i.e. $[\partial_y, \mathcal{L}_w]=0$, and similarly for $\mathcal{H}_w$. 

\textit{Volume filters. }In a similar manner, we define the filter operators on the strip $\mathbb{R}_a^{2}$:
\begin{align}
	\label{eq:filter_operatorsR2}
	\begin{split}
		&\boldsymbol{\mathcal{L}}_{w}: \, L^2(\mathbb{R}^{2}_a)\rightarrow L^2(\mathbb{R}^{2}_a), \quad \boldsymbol{\mathcal{L}}_{w} q=\mathcal{F}^{-1}_y\left(\mathbb{1}_{|\xi_y|<w}\mathcal{F}_yq(.,\xi_y)\right),\\
		&\boldsymbol{\mathcal{H}}_{w}=\operatorname{Id}-\boldsymbol{\mathcal{L}}_{w}, \quad w>0.
	\end{split}
\end{align}
From the Plancherel identity, it follows that these operators are $L^2(\mathbb{R}^2_a)$-orthogonal projectors. Again, we have the following commutation relations: $\nabla \boldsymbol{\mathcal{L}}_{w}=\boldsymbol{\mathcal{L}}_{w}\nabla$, and, of course,  $f(x) \boldsymbol{\mathcal{L}}_{w}=\boldsymbol{\mathcal{L}}_{w}(f(x)\cdot)$, for any $f\in L^{\infty}(-a,a)$ (i.e. $[\boldsymbol{\mathcal{L}}_{w}, f(x)]=0$).
\subsubsection{{Lifting lemmas}} 
Another important component of our approach are lifting lemmas, cf. the proof of the less precise counterpart of Proposition \ref{prop:gnubound_improved}, namely Proposition \ref{prop:gnubound_proof}. Below we will present a lifting lemma for functions defined on a real line $\Sigma_{\infty}$. 
Let us also denote by $\mathbb{R}^{2,+}_{a}:=(0,a)\times \mathbb{R}$. 
\begin{lemma}
	\label{lem:lifting_lem_important}
	Let $0<\nu<a$. There exists a bounded linear operator 
	\begin{align*}
		L^{\nu}: \, H^{1/2}(\Sigma_{\infty})\rightarrow H^{1}(\mathbb{R}^{2,+}_a),\text{ s.t. }\gamma_0^{\Sigma_{\infty}}L^{\nu}\psi=\psi,\text{ and } \operatorname{supp}L^{\nu}\psi\subseteq [0,\nu]\times\mathbb{R}.
	\end{align*}
	This operator commutes with $\partial_y$: for all $\psi\in H^{3/2}(\Sigma_{\infty})$, $\partial_y L^{\nu}\psi=L^{\nu}\partial_y \psi$, and satisfies the following bounds, valid for all $0<\re<1/2$, $\psi\in H^{1/2}(\Sigma_{\infty})$, with $C_{\re}>0$ independent of $\nu$, but depending on $\re$:
	\begin{align}
		\label{eq:psibound_start0_mt}
		\|\partial_yL^{\nu}\psi\|_{L^2(\mathbb{R}^{2,+}_{\nu})}\leq C_{\re}\|\psi\|_{H^{1/2}(\Sigma_{\infty})},\\
		\label{eq:psibound_start_dx_mt}
		\nu^{1/2}\|\mathcal{J}\partial_x L^{\nu}\mathcal{L}_{\re\nu^{-1}}\psi\|_{L^2(\mathbb{R}^{2,+}_{\nu})}\leq C_{\re}\|\psi\|_{H^{1/2}(\Sigma_{\infty})},\\
		\label{eq:partialy_psibound_mt}
		\|\partial_x L^{\nu}\mathcal{H}_{\re\nu^{-1}}\psi\|_{L^2(\mathbb{R}^{2,+}_{\nu})}\leq C_{\re}\|\psi\|_{H^{1/2}(\Sigma_{\infty})}.
	\end{align}
\end{lemma}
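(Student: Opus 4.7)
The plan is to construct $L^{\nu}$ as a single tangential Fourier multiplier whose effective decay length equals $\min(|\xi_y|^{-1},\nu)$. Concretely, I would fix once and for all a profile $\phi\in C^{\infty}([0,\infty);\mathbb{R})$ with $\phi(0)=1$ and $\operatorname{supp}\phi\subseteq[0,1]$, introduce the anisotropic frequency scale $\Lambda_{\nu}(\xi_y):=\max(|\xi_y|,\nu^{-1})$, and set
$$
L^{\nu}\psi(x,y):=\mathcal{F}_y^{-1}\Bigl(\phi\bigl(x\,\Lambda_{\nu}(\xi_y)\bigr)\,\mathcal{F}_y\psi(\xi_y)\Bigr)(y).
$$
Since $\phi(0)=1$ the trace identity $\gamma_0^{\Sigma_{\infty}}L^{\nu}\psi=\psi$ is immediate. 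Because $\Lambda_{\nu}(\xi_y)\geq\nu^{-1}$ uniformly, the factor $\phi(x\Lambda_{\nu}(\xi_y))$ vanishes as soon as $x>\nu$; hence $\operatorname{supp}L^{\nu}\psi\subseteq[0,\nu]\times\mathbb{R}$. Commutation with $\partial_y$ is automatic since the multiplier depends only on $\xi_y$, so multiplication by $i\xi_y$ commutes with multiplication by $\phi(x\Lambda_{\nu}(\xi_y))$.

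\textbf{The three estimates.} Each of (\ref{eq:psibound_start0_mt})--(\ref{eq:partialy_psibound_mt}) follows from Plancherel's identity in $\xi_y$ together with a dichotomy between the regimes $|\xi_y|>\nu^{-1}$ and $|\xi_y|\leq\nu^{-1}$. For (\ref{eq:psibound_start0_mt}): on $\{|\xi_y|>\nu^{-1}\}$ the change of variable $u=x|\xi_y|$ yields $\int_0^{\nu}|\phi(x|\xi_y|)|^2\,dx\leq \|\phi\|_{L^2(\mathbb{R}_+)}^2|\xi_y|^{-1}$, giving $\int \xi_y^2\cdot|\widehat\psi|^2\cdot|\xi_y|^{-1}\,d\xi_y\lesssim\|\psi\|_{H^{1/2}}^2$; on $\{|\xi_y|\leq\nu^{-1}\}$ one has $\int_0^{\nu}|\phi(x/\nu)|^2\,dx=\nu\|\phi\|_{L^2(0,1)}^2$ while $\xi_y^2\leq|\xi_y|/\nu$, giving the same bound. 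For (\ref{eq:partialy_psibound_mt}) the derivative $\partial_x$ produces a factor $\Lambda_{\nu}^2|\phi'(x\Lambda_{\nu})|^2$; the high regime $|\xi_y|>\nu^{-1}$ is handled as above with $\phi'$ in place of $\phi$, and on the intermediate band $\re/\nu\leq|\xi_y|<\nu^{-1}$ (where $\Lambda_{\nu}=\nu^{-1}$) one uses $\nu^{-1}\leq|\xi_y|/\re$, which is the sole source of the $\re$-dependence of $C_{\re}$. For (\ref{eq:psibound_start_dx_mt}) the low-pass cutoff $\mathcal{L}_{\re\nu^{-1}}$ forces $|\xi_y|<\re/\nu<\nu^{-1}$, so $\Lambda_{\nu}=\nu^{-1}$ on the whole range of $\xi_y$; the prefactor $\nu^{1/2}$ then exactly balances the scale $\int_0^{\nu}\Lambda_{\nu}^2|\phi'(x/\nu)|^2\,dx=\nu^{-1}\|\phi'\|_{L^2(0,1)}^2$, and what survives is precisely $\int (1+\xi_y^2)^{1/2}|\widehat\psi|^2\,d\xi_y$ contributed by the Bessel weight of $\mathcal{J}$, which is bounded by $\|\psi\|_{H^{1/2}}^2$.

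\textbf{Expected obstacle.} The construction itself is short; the real difficulty is arranging for one multiplier to satisfy all three inequalities simultaneously, since they correspond to genuinely different scalings. The threshold $\nu^{-1}$ inside $\Lambda_{\nu}$ is forced by the support constraint, while the branch $|\xi_y|$ is forced by dimensional balance at high tangential frequencies in (\ref{eq:partialy_psibound_mt}). The strict separation $\re/\nu<\nu^{-1}$, available only because $\re<1/2$, is what allows the degenerate middle band $\re/\nu\leq|\xi_y|\leq\nu^{-1}$ in (\ref{eq:partialy_psibound_mt}) to be absorbed into an $\re^{-1}$ factor; were $\re=1$, the two regimes would collide and the estimate would fail. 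A minor technical point is that $\Lambda_{\nu}$ is only Lipschitz in $\xi_y$, which is harmless for the $H^1$-level estimates used here but could be smoothened to $(|\xi_y|^2+\nu^{-2})^{1/2}$ if pseudo-differential-calculus language were required downstream.
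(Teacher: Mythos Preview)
Your proof is correct, and it takes a genuinely different route from the paper. The paper defines $L^{\nu}\psi$ as the harmonic extension of $\psi$ to the strip $(0,\nu)\times\mathbb{R}$ with zero Dirichlet data at $x=\nu$, i.e.\ the Fourier multiplier with symbol
\[
\lambda_{\nu}(x,\xi)=\frac{\sinh(\xi(\nu-x))}{\sinh(\xi\nu)},
\]
and then derives the three estimates by explicit computations on $\lambda_{\nu}$ and $\partial_x\lambda_{\nu}$, splitting into the regimes $|\xi|\nu<\re$ and $|\xi|\nu>\re$. Your construction replaces the hyperbolic kernel by a compactly supported smooth profile $\phi$ dilated at the adaptive scale $\Lambda_{\nu}(\xi_y)=\max(|\xi_y|,\nu^{-1})$, which encodes the same two-regime behaviour but makes the support constraint and the scaling of the $L^2$-integrals in $x$ transparent by a single change of variable. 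Both arguments ultimately reduce to the same dichotomy and the same arithmetic ($\xi_y^2\nu\le|\xi_y|$ on the low band, $\nu^{-1}\le\re^{-1}|\xi_y|$ on the intermediate band), so the gain of your approach is brevity: you avoid the pointwise estimates on $\sinh$ and its derivative. The paper's choice has the minor conceptual advantage that the lifting is a genuine PDE extension, which fits the surrounding narrative of Lemma~\ref{lemma:lifting_lemma}. One small correction: your remark that the separation $\re\nu^{-1}<\nu^{-1}$ requires $\re<1/2$ is too strong---it only requires $\re<1$, and the constraint $\re<1/2$ in the statement is not sharp for your construction (nor for the paper's, where $\re\le 1/2$ is used only to make the constant in one lower bound on $\sinh$ explicit).
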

\begin{proof}
	See Appendix \ref{appendix:lifting_lemmas}.
\end{proof}
We see that the above lifting lemma distinguishes between the low- and high-frequency cases. We are able to construct a lifting that is supported on $(0,\nu)$, and whose $H^1$-seminorm is controlled for 'high' frequencies, however, we are not able to do this for the 
'low' frequency case, where only derivatives in the tangential to $\Sigma$ direction are bounded uniformly in $\nu$. 
\subsubsection{{Proof of Proposition \ref{prop:gnubound_improved}}}
We start by remarking that
\begin{align}
	\label{eq:unusimple}
	\|\partial_y\gamma_{n, \nu}^{\Sigma}u^{\nu}\|_{\mathcal{H}^{-1/2}(\Sigma)}\leq C_1\|\gamma_{n, \nu}^{\Sigma}u^{\nu}\|_{\mathcal{H}^{1/2}(\Sigma)},
\end{align}
which follows by using the Fourier characterization of the norms on closed regular curves \cite[Section 4.2.1]{wendland_hsiao}. To bound the above further, we use Proposition \ref{prop:traces}, and thus it remains to establish the corresponding bound for  $\|\gamma_{n,\nu}^{\Sigma_{\infty}}U^{\nu}_{\delta}\|_{{H}^{1/2}(\Sigma_{\infty})}$. The Fourier definition of the $H^{s}(\mathbb{R})$-norm and $(1+\xi^2)^{1/2}= (1+\xi^2)^{-1/2}+\xi^2(1+\xi^2)^{-1/2}$ yield
\begin{align}
	\label{eq:fc}
	\|\gamma_{n,\nu}^{\Sigma_{\infty}}U^{\nu}_{\delta}\|_{H^{1/2}(\Sigma_{\infty})}^2= \|\gamma_{n,\nu}^{\Sigma_{\infty}}U^{\nu}_{\delta}\|_{H^{-1/2}(\Sigma_{\infty})}^2+\|\partial_y \gamma_{n,\nu}^{\Sigma_{\infty}}U^{\nu}_{\delta}\|_{H^{-1/2}(\Sigma_{\infty})}^2.
\end{align}
The $H^{-1/2}(\Sigma_{\infty})$-part can be controlled using 
\begin{align*}
	\|\gamma_{n,\nu}^{\Sigma_{\infty}}U^{\nu}_{\delta}\|_{H^{-1/2}(\Sigma_{\infty})}\lesssim \|\operatorname{div}((x\mathbb{A}+i\nu)\nabla U^{\nu}_{\delta})\|_{L^2(\mathbb{R}^2_a)}+\|(x\mathbb{A}+i\nu)\nabla U^{\nu}_{\delta}\|_{L^2(\mathbb{R}^2_a)}\lesssim \|f\|_{L^2(\Omega)},
\end{align*} 
see \eqref{eq:stabU} and \eqref{eq:stabFdelta}.The estimate \eqref{eq:unusimple}, Proposition \ref{prop:traces}, \eqref{eq:fc} and the estimate above yield
\begin{align}
	\label{eq:unu_vs_Unu}
	\|\partial_y\gamma_{n, \nu}^{\Sigma}u^{\nu}\|_{\mathcal{H}^{-1/2}(\Sigma)}\leq C_1 \|\gamma_{n,\nu}^{\Sigma}u^{\nu}\|_{\mathcal{H}^{1/2}(\Sigma)}\leq C(\|f\|+	\|\partial_y\gamma_{n, \nu}^{\Sigma_{\infty}}U^{\nu}_{\delta}\|_{\mathcal{H}^{-1/2}(\Sigma_{\infty})}),
\end{align}
with some $C>0$. It remains to control the following quantity:
\begin{align}
	\label{eq:unudelta_trace}
	\|\partial_y \gamma_{n, \nu}^{\Sigma_{\infty}}U^{\nu}_{\delta}\|_{H^{-1/2}(\Sigma_{\infty})}=\sup\limits_{0\neq\varphi\in C_0^{\infty}(\Sigma_{\infty})}\frac{\left|\langle \partial_y \gamma_{n, \nu}^{\Sigma_{\infty}}U^{\nu}_{\delta}, \overline{\varphi}\rangle_{H^{-1/2}(\Sigma_{\infty}), H^{1/2}(\Sigma_{\infty})}\right| }{\|\varphi\|_{H^{1/2}(\Sigma_{\infty})}},
\end{align}
where we used the density of $C_0^{\infty}(\Sigma_{\infty})$ in $H^{1/2}(\Sigma_{\infty})$.  By definition of the distributional derivative 
\begin{align*}
	\langle \partial_y \gamma_{n, \nu}^{\Sigma_{\infty}}U^{\nu}_{\delta}, \overline{\varphi}\rangle_{H^{-1/2}(\Sigma_{\infty}), H^{1/2}(\Sigma_{\infty})}=-\langle  \gamma_{n, \nu}^{\Sigma_{\infty}}U^{\nu}_{\delta}, \partial_y\overline{\varphi}\rangle_{H^{-1/2}(\Sigma_{\infty}), H^{1/2}(\Sigma_{\infty})}.
\end{align*} 
To estimate the above, we use the variational definition of the conormal trace, with $\Phi^{\nu}:=L^{\nu}\varphi$, see Lemma \ref{lem:lifting_lem_important}, and the fact that $[\partial_y, L^{\nu}]=0$: 
\begin{align*}
	\langle \partial_y \gamma_{n, \nu}^{\Sigma_{\infty}}U^{\nu}_{\delta}, \overline{\varphi}\rangle_{H^{-1/2}(\Sigma_{\infty}), H^{1/2}(\Sigma_{\infty})}&=\int_{\mathbb{R}^{2,+}_{\nu}}\operatorname{div}((x\mathbb{A}+i\nu\mathbb{T})\nabla U^{\nu}_{\delta})\overline{\partial_y\Phi^{\nu}}+\int_{\mathbb{R}^{2,+}_{\nu}}(x\mathbb{A}+i\nu\mathbb{T})\nabla U^{\nu}_{\delta}\,\overline{\partial_y\nabla \Phi^{\nu}}\\
	&=\int_{\mathbb{R}^{2,+}}F^{\nu}_{\delta}\overline{\partial_y \Phi^{\nu}}-\int_{\mathbb{R}^{2,+}_{\nu}}\partial_y\left((x\mathbb{A}+i\nu\mathbb{T})\nabla U^{\nu}_{\delta}\right)\cdot \overline{\nabla \Phi^{\nu}},
\end{align*}
where we also used in the integration by parts that $U^{\nu}_{\delta}\in H^2(\mathbb{R}^2_a)$, cf. Lemma \ref{lem:ellreg}. To obtain the required estimates, we will exploit the fact that $\|\partial_y \Phi^{\nu}\|_{L^2(\mathbb{R}^{2,+}_{\nu})}/\|\varphi\|_{H^{1/2}(\Sigma_{\infty})}$ is uniformly bounded in $\nu$, cf. \eqref{eq:psibound_start0_mt}, and single out the respective terms:
\begin{align}
	\label{eq:dyy}
	&\langle \partial_y \gamma_{n, \nu}^{\Sigma_{\infty}}U^{\nu}_{\delta}, \overline{\varphi}\rangle_{H^{-1/2}(\Sigma_{\infty}), H^{1/2}(\Sigma_{\infty})}=\sum\limits_{j=1}^3 \mathcal{I}_{j},\quad \text{ where } \mathcal{I}_1=\int_{\mathbb{R}^{2,+}}F^{\nu}_{\delta}\overline{\partial_y \Phi^{\nu}},\\
	\nonumber
	&\mathcal{I}_2=-\int_{\mathbb{R}^{2,+}_{\nu}}\vec{e}_y\cdot\partial_y\left((x\mathbb{A}+i\nu\mathbb{T})\nabla U^{\nu}_{\delta}\right) \overline{\partial_y \Phi^{\nu}}, \qquad 
	\mathcal{I}_3=-
	\int_{\mathbb{R}^{2,+}_{\nu}}\vec{e}_x \cdot\partial_y\left((x\mathbb{A}+i\nu\mathbb{T})\nabla U^{\nu}_{\delta}\right)\overline{\partial_x \Phi^{\nu}}.
\end{align}
\textbf{A bound on $\mathcal{I}_1$. }
With the Cauchy-Schwarz inequality we have 
\begin{align}
	\label{eq:i1}
	\left|\mathcal{I}_1\right|\leq \|F_{\delta}^{\nu}\|_{L^2(\mathbb{R}_{\nu}^{2,+})}\|\partial_y\Phi^{\nu}\|_{L^2(\mathbb{R}_{\nu}^{2,+})}\leq C_{\delta} \|f\|\|\varphi\|_{H^{1/2}(\Sigma_{\infty})},\quad C_{\delta}>0,
\end{align}
where the last bound follows from the bound  \eqref{eq:stabFdelta} on $\|F_{\delta}^{\nu}\|$ and \eqref{eq:psibound_start0_mt}.\\
\textbf{A bound on $\mathcal{I}_2$. }By the same argument as above, it holds that
\begin{align*}
	|\mathcal{I}_2|&\leq \|\vec{e}_y\cdot\partial_y\left((x\mathbb{A}+i\nu\mathbb{T})\nabla U^{\nu}_{\delta}\right)\|_{L^2(\mathbb{R}^{2,+}_{\nu})}\|\varphi\|_{H^{1/2}(\Sigma_{\infty})}\\
	&\leq \left(\|(x\partial_y \mathbb{A}+i\nu\partial_y\mathbb{T})\nabla U^{\nu}_{\delta}\|_{L^2(\mathbb{R}^{2,+}_{\nu})}+
	\|(x\mathbb{A}+i\nu\mathbb{T})\partial_y\nabla U^{\nu}_{\delta}\|_{L^2(\mathbb{R}^{2,+}_{\nu})}\right)\|\varphi\|_{H^{1/2}(\Sigma_{\infty})}.
\end{align*}
We bound $\|(x\partial_y \mathbb{A}+i\nu\partial_y\mathbb{T})\nabla U^{\nu}_{\delta}\|_{L^2(\mathbb{R}^{2,+}_{\nu})}\lesssim \|f\|$, cf. \eqref{eq:stabU}; as for the second term, we use $|x|<\nu$:
\begin{align*}
	\|(x\mathbb{A}+i\nu\mathbb{T})\partial_y\nabla U^{\nu}_{\delta}\|_{L^2(\mathbb{R}^{2,+}_{\nu})}\lesssim \nu\|\partial_y \nabla U^{\nu}_{\delta}\|_{L^2(\mathbb{R}^{2,+}_{\nu})}\lesssim \|f\|_{L^2(\Omega)},
\end{align*}
as argued in \eqref{eq:unubtmp}.  This proves that 
\begin{align}
	\label{eq:i2}
	|\mathcal{I}_2|\lesssim \|f\|\|\varphi\|_{H^{1/2}(\Sigma_{\infty})}.
\end{align}
\textbf{A bound on $\mathcal{I}_3$. }We split the term $\partial_x\Phi^{\nu}$ into low- and high- frequencies:
\begin{align*}
	\mathcal{I}_3&=\mathcal{I}_{3,l}+\mathcal{I}_{3,h},\qquad 
	\mathcal{I}_{3,l}=-\int_{\mathbb{R}^{2,+}_{\nu}}\vec{e}_x\cdot (x\partial_y(\mathbb{A}\nabla U^{\nu}_{\delta})+i\nu\partial_y (\mathbb{T}\nabla U^{\nu}_{\delta}))\cdot\overline{\partial_x L^{\nu}{\mathcal{L}}_{(2\nu)^{-1}}\varphi},\\ 
	\mathcal{I}_{3,h}&=-\int_{\mathbb{R}^{2,+}_{\nu}}\vec{e}_x\cdot (x\partial_y(\mathbb{A}\nabla U^{\nu}_{\delta})+i\nu\partial_y (\mathbb{T}\nabla U^{\nu}_{\delta}))\cdot\overline{\partial_x L^{\nu}{\mathcal{H}}_{(2\nu)^{-1}}{\varphi}}.	
\end{align*}
We start by bounding $\mathcal{I}_{3,h}$. Using the Cauchy-Schwarz inequality and $|x|<\nu$ we obtain 
\begin{align}
	\label{eq:i3h}
	|\mathcal{I}_{3,h}|\leq \nu (\|\partial_y (\mathbb{A}\nabla U^{\nu}_{\delta})\|_{L^2(\mathbb{R}^{2,+}_{\nu})}+\|\partial_y (\mathbb{T}\nabla U^{\nu}_{\delta})\|_{L^2(\mathbb{R}^{2,+}_{\nu})})\|\partial_x L^{\nu}\mathcal{H}_{(2\nu)^{-1}}\varphi\|_{L^2(\mathbb{R}^{2,+}_{\nu})}\leq C\|f\|\|\varphi\|_{H^{1/2}(\Sigma_{\infty})},
\end{align}
where the desired bound follows by combining \eqref{eq:unubtmp} and \eqref{eq:partialy_psibound_mt} with $\re=1/2$. 

The quantity $\mathcal{I}_{3,l}$ is further rewritten using the Plancherel identity ($\xi$ is the Fourier variable in $y$-direction):
\begin{align*}
	\mathcal{I}_{3,l}=-\int_{\mathbb{R}^{2,+}_{\nu}}i\xi\left(x\mathcal{F}_y(\mathbb{A}\nabla U^{\nu}_{\delta})+i\nu \mathcal{F}_y(\mathbb{T}\nabla U^{\nu}_{\delta})\right)\, \overline{\mathcal{F}_y(\partial_x L^{\nu}\mathcal{L}_{(2\nu)^{-1}}\varphi)}dx d\xi,
\end{align*}
and with the Cauchy-Schwarz inequality and $|\xi|\leq (1+\xi^2)^{1/2}$, we obtain 
\begin{align*}
	|\mathcal{I}_{3,l}|\leq \|(1+\xi^2)^{1/4}\left(x\mathcal{F}_y(\mathbb{A}\nabla U^{\nu}_{\delta})+i\nu \mathcal{F}_y(\mathbb{T}\nabla U^{\nu}_{\delta})\right)\|_{L^2(\mathbb{R}^{2,+}_{\nu})}\|(1+\xi^2)^{1/4}{\mathcal{F}_y(\partial_x L^{\nu}\mathcal{L}_{(2\nu)^{-1}}\varphi)}\|_{L^2(\mathbb{R}^{2,+}_{\nu})}.
\end{align*}
With the definition of $\mathcal{J}$ (see p. \pageref{eq:Ju}), the Plancherel identity, and the bound $|x|<\nu$, we conclude that   
\begin{align*}
	|\mathcal{I}_{3,l}|\leq \nu(\|\mathcal{J}\mathbb{A}\nabla U^{\nu}_{\delta}\|_{L^2(\mathbb{R}^{2,+}_{\nu})}+\|\mathcal{J}\mathbb{T}\nabla U^{\nu}_{\delta}\|_{L^2(\mathbb{R}^{2,+}_{\nu})})\,\|\mathcal{J}\partial_x L^{\nu}\mathcal{L}_{(2\nu)^{-1}}\varphi\|_{L^2(\mathbb{R}^{2,+}_{\nu})}.
\end{align*}
We use Proposition \ref{proposition:jbound} to bound the terms involving $U^{\nu}_{\delta}$ and Lemma \ref{lem:lifting_lem_important}, more precisely, \eqref{eq:psibound_start_dx_mt} with $\re=1/2$, for the term involving $\varphi$. This gives the upper bound  
\begin{align}
	\label{eq:i3l}
	|\mathcal{I}_{3,l}|\leq C (\|f\|+\sqrt{\|f\|\|\partial_y u^{\nu}\|})\|\varphi\|_{H^{1/2}(\Sigma_{\infty})},
\end{align}
with $C>0$ independent of $\nu$. 
Combining \eqref{eq:i3h} and \eqref{eq:i3l} we obtain 
\begin{align}
	\label{eq:i3}
	|\mathcal{I}_{3}|\lesssim (\|f\|+\sqrt{\|f\|\|\partial_y u^{\nu}\|})\|\varphi\|_{H^{1/2}(\Sigma_{\infty})}.
\end{align}
\textbf{The final bound. }Combining the bounds \eqref{eq:i3}, \eqref{eq:i2}, \eqref{eq:i1} into \eqref{eq:dyy} and next \eqref{eq:unudelta_trace} yields
\begin{align*}
	\|\partial_y\partial_{n,\nu}^{\Sigma_{\infty}}U^{\nu}_{\delta}\|_{H^{-1/2}(\Sigma_{\infty})}\lesssim \|f\|+\sqrt{\|f\|\|\partial_y u^{\nu}\|}.
\end{align*}
The desired bound in the statement of the proposition is immediate from the above and \eqref{eq:unu_vs_Unu}. \qed

\subsection{An important jump property of the solutions to \eqref{eq:unu_orig}: a weakened form of Theorem \ref{theorem:LAP}}
\label{sec:result_decomposition}
Now that we have the result of Theorem \ref{theorem:gnubound_improved} on the regularity of the Neumann trace of $u^{\nu}$ on $\Sigma$, we can consider separately the problem satisfied by $u^{\nu}$ in $\Omega_p$ and $\Omega_n$, with the boundary condition $\gamma_{n}^{\Sigma}u^{\nu}=g^{\nu}$. As $\nu\rightarrow 0+$, the couple $(u^{\nu}, g^{\nu})$ admits, up to a subsequence, a weak limit in the topology $\mathcal{V}_{sing}(\Omega)\times \mathcal{H}^{1/2}(\Sigma)$. At this point we are not able to conclude about the uniqueness of the limit.
Nonetheless, we will be able apply the result of Theorem \ref{theorem:reg_well_posedness} to the $u^+_k$, which would yield the decomposition of $u^+_k$ into a regular and singular parts. This approach will reveal one important property of the limiting absorption solution, namely, the relation between the jump across $\Sigma$ of the regular part and the conormal trace. Recall \eqref{eq:defvsing}: 
\begin{align}
	\label{eq:vsingdiv}
	\mathcal{V}_{sing}(\operatorname{div}(x\mathbb{A}\nabla .); \Omega)=\{{v}\in \mathcal{V}_{sing}(\Omega): \operatorname{div}(x\mathbb{A}\nabla v)\in L^2(\Omega), \, \gamma_n^{\Sigma}v\in \mathcal{H}^{1/2}(\Sigma), \, (\gamma_n^{\Gamma_+}+\gamma_n^{\Gamma_{-}})v=0\},
\end{align}
and Definition \ref{definition:one_sided_trace} of the trace of functions from the above space.
\begin{theorem}
	\label{theorem:convergence_decomposition}
	Let $(u^{\nu})_{\nu>0}\subset \mathcal{H}^1(\Omega)$ be a sequence of solutions to \eqref{eq:unu_orig}. Then there exists a weakly convergent, as $\nu\rightarrow 0+$,  subsequence in $L^2(\Omega)$. For any such subsequence  $(u^{\nu_k})_{k\in\mathbb{N}}$, the following holds true. \\
	As $\nu_k\rightarrow 0$, it converges strongly in ${H}^{1/2-\varepsilon}(\Omega)$ for all $\varepsilon>0$ to a limit $\widetilde{u}\in \mathcal{V}_{sing}(\operatorname{div}(x\mathbb{A}\nabla .); \Omega)$, which satisfies
	\begin{align}
		\label{eq:bvpu}
		\begin{split}
			&\operatorname{div}(x\mathbb{A}\nabla \widetilde{u})=f \text{ in }\Omega,\\
			&[\gamma_0^{\Sigma}\widetilde{u}]=-i\pi a_{11}^{-1} \gamma_{n}^{\Sigma}\widetilde{u}.
		\end{split}
	\end{align}
\end{theorem}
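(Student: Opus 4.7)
First I would use the uniform bounds already established: by Theorem \ref{theorem:stability_estimate}, $(u^\nu)_{\nu>0}$ is bounded in $\mathcal{V}_{sing}(\Omega)$, and by Theorem \ref{theorem:gnubound_improved}, $g^\nu := \gamma_{n,\nu}^\Sigma u^\nu$ is bounded in $\mathcal{H}^{1/2}(\Sigma)$. Banach--Alaoglu then yields a subsequence (relabelled $\nu_k$) with $u^{\nu_k}\rightharpoonup \widetilde u$ in $L^2(\Omega)$, $x\nabla u^{\nu_k}\rightharpoonup x\nabla\widetilde u$ in $L^2(\Omega)^2$ (so $\widetilde u\in\mathcal{V}_{sing}(\Omega)$), and $g^{\nu_k}\rightharpoonup g$ in $\mathcal{H}^{1/2}(\Sigma)$. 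Since \eqref{eq:est_main_v2} implies $\nu_k\nabla u^{\nu_k}\to 0$ in $L^2(\Omega)^2$, passing to the limit in \eqref{eq:unu_orig} tested against $\mathcal{D}(\Omega)$ gives $\operatorname{div}(x\mathbb{A}\nabla\widetilde u)=f$ in $\mathcal{D}'(\Omega)$, hence by Definition \ref{def:st} in $\Omega_p\cup\Omega_n$ with $[\gamma_n^\Sigma\widetilde u]=0$. Passing to the limit in the variational characterisation of the conormal trace on each subdomain identifies $\gamma_n^{\Sigma,p}\widetilde u=\gamma_n^{\Sigma,n}\widetilde u=g\in\mathcal{H}^{1/2}(\Sigma)$, so $\widetilde u\in\mathcal{V}_{sing}(\operatorname{div}(x\mathbb{A}\nabla\cdot);\Omega)$ with $\gamma_n^\Sigma\widetilde u = g$.

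\textbf{Singular approximant carrying the $+i0^+$ branch.} To upgrade the convergence to strong $H^{1/2-\varepsilon}$ and extract the jump in \eqref{eq:bvpu}, I would compare $u^{\nu_k}$ with an explicit model, inspired by the $1$D Frobenius analysis of $\partial_x((x a_{11}+i\nu t_{11})\partial_x\cdot)$. Let $\widetilde u_h\in\mathcal{H}^1(\Omega)$ be the piecewise $\mathbb{A}$-harmonic lifting of $a_{11}^{-1}g$ from Proposition \ref{prop:decomp1}, and let $\widetilde u_h^{\nu_k}$ be the analogous lifting of $a_{11}^{-1}g^{\nu_k}$; compactness of the embedding $\mathcal{H}^{1/2}(\Sigma)\hookrightarrow\mathcal{H}^{1/2-\varepsilon}(\Sigma)$ together with continuity of the elliptic lifting in \eqref{eq:uh} yields $\widetilde u_h^{\nu_k}\to\widetilde u_h$ strongly in $\mathcal{H}^{1-\varepsilon}(\Omega)$. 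Taking a positive Lipschitz extension $\kappa\in C^{0,1}(\overline\Omega)$ of $t_{11}/a_{11}|_\Sigma$ and using the principal branch, I define
\begin{align*}
S^{\nu_k}(\vec{x}) \,:=\, \widetilde u_h^{\nu_k}(\vec{x})\,\log\bigl(x + i\nu_k\kappa(\vec{x})\bigr).
\end{align*}
Then $S^{\nu_k}\in\mathcal{H}^1(\Omega)$ for each $k>0$; pointwise $\log(x+i\nu_k\kappa)\to\log|x|+i\pi\,\mathbb{1}_{x<0}$ on $\Omega\setminus\Sigma$, and dominated convergence on $\{|x|\geq\delta\}$ together with an elementary weighted bound on the sliver $\{|x|<\delta\}$ yields $S^{\nu_k}\to \widetilde u_h\log|x|+i\pi\,\widetilde u_h\,\mathbb{1}_{x<0}$ strongly in $H^{1/2-\varepsilon}(\Omega)$.

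\textbf{Residual analysis, jump, and main obstacle.} Set $r^{\nu_k}:=u^{\nu_k}-S^{\nu_k}$. A direct computation using $\kappa = t_{11}/a_{11}$ on $\Sigma$ and that $\widetilde u_h^{\nu_k}$ is piecewise $\mathbb{A}$-harmonic shows that the leading Frobenius singularity in the $x$-direction cancels, so $\operatorname{div}((x\mathbb{A}+i\nu_k\mathbb{T})\nabla S^{\nu_k})$ is uniformly $L^2(\Omega)$-bounded, while its conormal trace on $\Sigma$ differs from $g^{\nu_k}$ by terms going to $0$ strongly in $\mathcal{H}^{1/2}(\Sigma)$. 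Hence $r^{\nu_k}$ solves an absorption-regularised problem of the form \eqref{eq:unu_orig} with uniformly $L^2(\Omega)$-bounded right-hand side and uniformly $\mathcal{H}^{1/2}(\Sigma)$-bounded conormal residual; rerunning the proofs of Theorems \ref{theorem:stability_estimate}, \ref{theorem:gnubound_improved} together with the elliptic regularity of Theorem \ref{theorem:regularity} applied on each subdomain gives uniform bounds on $r^{\nu_k}$ in $\mathcal{H}^{1-\varepsilon}(\Omega)$ (globally, since $r^{\nu_k}$ is continuous across $\Sigma$). Rellich then yields strong convergence $r^{\nu_k}\to\rho^\infty$ in $H^{1/2-\varepsilon}(\Omega)$ with $\rho^\infty\in\mathcal{H}^{1-\varepsilon}(\Omega)$ continuous across $\Sigma$, and combining with the convergence of $S^{\nu_k}$ proves $u^{\nu_k}\to\widetilde u$ strongly in $H^{1/2-\varepsilon}(\Omega)$. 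Writing $\widetilde u = \rho^\infty + \widetilde u_h\log|x| + i\pi\,\widetilde u_h\,\mathbb{1}_{x<0}$ and comparing with the unique decomposition of Proposition \ref{prop:decomp1} (via Lemma \ref{lem:deftraces}), the regular part $\widetilde u_{reg}$ equals $\rho^\infty + i\pi\,\widetilde u_h\,\mathbb{1}_{x<0}$ piecewise, whence $[\gamma_0^\Sigma\widetilde u] = [\gamma_0^\Sigma\widetilde u_{reg}] = -i\pi\,\gamma_0^\Sigma\widetilde u_h = -i\pi\,a_{11}^{-1}\gamma_n^\Sigma\widetilde u$, which is exactly \eqref{eq:bvpu}. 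The main obstacle is the cancellation argument underlying the uniform $L^2(\Omega)$-bound on $\operatorname{div}((x\mathbb{A}+i\nu_k\mathbb{T})\nabla S^{\nu_k})$: the off-diagonal entries $a_{12}$, $t_{12}$ and the tangential variation of $\widetilde u_h^{\nu_k}$ and $\kappa$ generate $O(\log|x+i\nu_k\kappa|)$-type source terms whose uniform control in $k$ relies on the improved tangential regularity of Proposition \ref{proposition:regularity_dy} and on the weighted-regularity framework of Section \ref{sec:subdomains}.
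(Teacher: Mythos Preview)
Your approach is essentially the paper's: subtract a singular model $S^{\nu_k}=\widetilde u_h^{\nu_k}\log(x+i\nu_k\kappa)$ carrying the $+i0^+$ branch, show the remainder is uniformly more regular, and read off the jump from the limit decomposition via Lemma~\ref{lem:deftraces}. The convergence/identification paragraph and the final computation of $[\gamma_0^\Sigma\widetilde u]$ are fine and match the paper almost verbatim.

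The gap is in the residual step. The claim that $\operatorname{div}\bigl((x\mathbb{A}+i\nu_k\mathbb{T})\nabla S^{\nu_k}\bigr)$ is uniformly bounded in $L^2(\Omega)$ is false: since $\operatorname{div}(\mathbb{A}\nabla\widetilde u_h^{\nu_k})=0$ but $\operatorname{div}(x\mathbb{A}\nabla\widetilde u_h^{\nu_k})=\vec e_x\cdot\mathbb{A}\nabla\widetilde u_h^{\nu_k}$, the source for $r^{\nu_k}$ necessarily contains a term of the form $\log(x+i\nu_k\kappa)\,\vec e_x\cdot\mathbb{A}\nabla\widetilde u_h^{\nu_k}$, which is only in $L^2_\delta(\Omega_\lambda)$ for every $\delta>0$, not in $L^2(\Omega)$ (compare \eqref{eq:rhs_bound} in the paper). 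Consequently you cannot ``rerun Theorems~\ref{theorem:stability_estimate}, \ref{theorem:gnubound_improved}'' (they need an $L^2$ source), and Theorem~\ref{theorem:regularity} is for the $\nu=0$ problem, so it does not produce the uniform-in-$\nu$ $\mathcal H^{1-\varepsilon}$ bound you want. The improved tangential regularity of Proposition~\ref{proposition:regularity_dy} that you invoke does not close this gap either: the obstruction is in the $x$-direction.

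The paper resolves this in two moves. First it rewrites the operator as $(x+i\nu r)\mathbb{A}$ with $r=\mathbb{T}_{11}/\mathbb{A}_{11}$ so that $(\mathbb{T}-r\mathbb{A})_{11}=0$, pushing the matrix mismatch into an $O(\nu)$-small $L^2$ remainder (see \eqref{eq:fcontnu}--\eqref{eq:unur}). Then the continuous part is split as $u_{cont}^\nu=u_{cont,0}^\nu+s^\nu$: the piece $u_{cont,0}^\nu$ solves a \emph{decoupled homogeneous Neumann} problem with the $L^2_\delta$ source, and the key missing ingredient is Proposition~\ref{prop:lap_reg}, a uniform-in-$\nu$ limiting absorption estimate in the weighted scale $\mathcal H^1_{\delta+\varepsilon}$ for that decoupled problem; the piece $s^\nu$ carries the conormal residual, which is shown to be $O(\nu|\log\nu|)$ in $\mathcal H^{-1/2}(\Sigma)$ and is controlled by a direct energy argument \eqref{eq:snu1}. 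Once you supply the analogue of Proposition~\ref{prop:lap_reg} (it is not a consequence of Theorems~\ref{theorem:stability_estimate}, \ref{theorem:gnubound_improved}, \ref{theorem:regularity}) your scheme goes through exactly as in the paper.
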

The above convergence statement is a weakened version of Theorem \ref{theorem:LAP}, since it is valid up to a subsequence. The key, unusual property, is of course $[\gamma_0^{\Sigma} \widetilde{u}]=-i\pi a_{11}^{-1}\gamma_n^{\Sigma}\widetilde{u}$, which, as we will see later, ensures the uniqueness of the limit of $(u^{\nu})_{\nu>0}$. 
\begin{remark}
	Up to our knowledge, this property was first observed in the work \cite{ciarlet_mk_peillon}; moreover, it was proven to hold true under the assumption that, as $\nu\rightarrow 0+$, $u^{\nu}\rightarrow u^+$ in $L^2(\Omega)$, where $u^+(x,y)=g(y)(\log|x|+i\pi\mathbb{1}_{x<0})+u^+_{cont}(x,y)$, with $g\in \mathcal{H}^1(\Sigma)$ and $u^+_{cont}\in \mathcal{V}_{reg}(\Omega)$. Unlike in quite an implicit proof of the respective result in \cite{ciarlet_mk_peillon}, in the approach of the present work this property of the traces comes out naturally, as a corollary of the fact that the solution with the absorption can be split as $u^{\nu}=u^{\nu}_{s}+u^{\nu}_{cont}$, with $u^{\nu}_{s}$ converging to a function with in particular a logarithmic and jump singularities, and $u^{\nu}_{cont}\in \mathcal{H}^1(\Omega)$ to a regular function in the topology of  $\mathcal{H}^{1-}(\Omega)$. See the proof of Theorem \ref{theorem:convergence_decomposition} for more details. 
\end{remark}
The proof of Theorem \ref{theorem:convergence_decomposition} is based on two auxiliary results, summarized in the following sesction.
\subsubsection{Auxiliary results}
The first result is a counterpart of Proposition \ref{prop:decomp1} for the problem with absorption \eqref{eq:main_problem}.
\begin{proposition}
	\label{prop:aux}
	There exists $\nu_0>0$, s.t. for any $0<\nu<\nu_0$, the solution to \eqref{eq:unu_orig} writes
	\begin{align*}
		u^{\nu}=u^{\nu}_h\log\left(x+i\nu r\right)+u^{\nu}_{cont},\qquad r:=\mathbb{T}_{11}(\mathbb{A}_{11})^{-1},
	\end{align*}
	where, for all $0<\re<1/2$ there exists $C_{\re}>0$ independent of $\nu$, s.t.    $\|u^{\nu}_{cont}\|_{\mathcal{H}^{1-\re}(\Omega)}+ \|u^{\nu}_{cont}\|_{\mathcal{H}^{1}_{\re}(\Omega)}\leq C_{\re}\|f\|$, and there exists $C>0$ independent of $\nu, \re,$ s.t. $\|u^{\nu}_h\|_{\mathcal{H}^1(\Omega)}\leq C\|f\|$. 
\end{proposition}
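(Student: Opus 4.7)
The strategy is to mimic the decomposition of Theorem \ref{theorem:reg_well_posedness}, replacing $\log|x|$ by the complex logarithm $L^\nu(\vec{x}) := \log(x+i\nu r(\vec{x}))$. The definition $r = t_{11}/a_{11}$ is chosen precisely so that the first-row identity
$$
xa_{11}+i\nu t_{11}=a_{11}(x+i\nu r)
$$
makes the leading derivative $(\mathcal{M}^\nu\vec{e}_x)\partial_x L^\nu = a_{11}(1+i\nu\partial_x r)$ bounded in $L^\infty(\Omega)$ uniformly in $\nu$, despite the $1/(x+i\nu r)$ blow-up of $\partial_x L^\nu$ near $\Sigma$. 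Moreover $L^\nu\to\log|x|$ in $\Omega_p$ and $L^\nu\to\log|x|+i\pi$ in $\Omega_n$ as $\nu\to 0+$, so the ansatz already encodes the $-i\pi a_{11}^{-1}$ phase jump that will later appear in \eqref{eq:traces}.

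\textbf{Construction.} By Theorem \ref{theorem:gnubound_improved}, $g^\nu := \gamma_{n,\nu}^{\Sigma}u^\nu\in\mathcal{H}^{1/2}(\Sigma)$ with $\|g^\nu\|_{\mathcal{H}^{1/2}(\Sigma)}\leq C\|f\|$. Define $u^\nu_h\in\mathcal{H}^1(\Omega)$ as the $\Sigma$-continuous, piecewise $\mathbb{A}$-harmonic lifting of $a_{11}^{-1}g^\nu$, cf.\ \eqref{eq:uh}; standard $\mathbb{A}$-elliptic theory yields $\|u^\nu_h\|_{\mathcal{H}^1(\Omega)}\leq C\|f\|$. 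Set
$$
u^\nu_{sing} := u^\nu_h L^\nu,\qquad u^\nu_{cont} := u^\nu - u^\nu_{sing}.
$$
A Hardy-type inequality combined with $xL^\nu\in L^\infty(\Omega)$ and $|x\nabla L^\nu|\leq C$ shows $\|u^\nu_{sing}\|_{\mathcal{V}_{sing}(\Omega)}\leq C\|f\|$.

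\textbf{PDE for $u^\nu_{cont}$.} The key pointwise bound, obtained from $|x+i\nu r|^2\asymp(|x|+\nu)^2$ (which uses that $r$ is bounded above and below by positive constants), is
$$
\mathcal{M}^\nu\nabla L^\nu = a_{11}\vec{e}_x + \vec{W}^\nu,\qquad \|\vec{W}^\nu\|_{L^\infty(\Omega)}+\|\operatorname{div}\vec{W}^\nu\|_{L^\infty(\Omega)}\leq C,
$$
uniformly in small $\nu$. Expanding $\operatorname{div}(\mathcal{M}^\nu\nabla u^\nu_{sing})$ by the product rule and using $\operatorname{div}(\mathbb{A}\nabla u^\nu_h)=0$ piecewise, one finds that $u^\nu_{cont}$ solves $\operatorname{div}(\mathcal{M}^\nu\nabla u^\nu_{cont}) = f^\nu_{cont}$, with $f^\nu_{cont}$ a sum of $L^2(\Omega)$-terms and $L^\nu$-multiplied $L^2$-terms. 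Since $L^\nu$ maps $L^2(\Omega)$ into $L^2_\delta(\Omega)$ boundedly for every $\delta>0$ with constants independent of $\nu$, one gets $\|f^\nu_{cont}\|_{L^2_\delta(\Omega)}\leq C_\delta\|f\|$ uniformly. The cancellation $a_{11}u^\nu_h|_\Sigma = g^\nu$ further ensures that the absorptive conormal trace $\gamma_{n,\nu}^\Sigma u^\nu_{cont}$ vanishes to leading order in $\nu$.

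\textbf{Regularity of $u^\nu_{cont}$ and main obstacle.} On each subdomain $\Omega_\lambda$, the operator $\mathcal{M}^\nu$ has sign-definite real part $x\mathbb{A}$, so the sesquilinear form $(\mathcal{M}^\nu\nabla\cdot,\nabla\cdot)$ is coercive on $\mathcal{V}_{reg}(\Omega_\lambda)$ with constant independent of $\nu$. A perturbative extension of Lemma \ref{lem:propreg2}---where the skew term $i\nu\operatorname{div}(\mathbb{T}\nabla u^\nu_{cont})$ is moved to the right-hand side and controlled by \eqref{eq:est_main_v2}---then yields $\|u^\nu_{cont}\|_{\mathcal{H}^1_\re(\Omega)}\leq C_\re\|f\|$ for every $\re>0$, and the announced $\|u^\nu_{cont}\|_{\mathcal{H}^{1-\re}(\Omega)}\leq C_\re\|f\|$ follows from the embedding $\mathcal{H}^1_{2\re}\hookrightarrow\mathcal{H}^{1-\re}$ noted in that lemma. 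The main technical obstacle is the pointwise analysis of $\mathcal{M}^\nu\nabla L^\nu$ and its divergence: every potentially singular ratio must be shown to cancel against a matching power of $(x+i\nu r)$, which works only thanks to the precise choice $r=t_{11}/a_{11}$. A secondary obstacle is the uniform-in-$\nu$ weighted regularity invoked in the last step, which requires carefully extending the Baouendi--Goulaouic-type theory underpinning Lemma \ref{lem:propreg2} to the absorptive setting.
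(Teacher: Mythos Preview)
Your high-level strategy --- define $u^\nu_h$ as the $\mathbb{A}$-harmonic lifting of $a_{11}^{-1}g^\nu$, set $u^\nu_{sing}=u^\nu_h\log(x+i\nu r)$, and show $u^\nu_{cont}$ has uniform weighted regularity --- matches the paper. But two steps in your execution do not go through as written.

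\textbf{Moving the skew term fails.} You propose to move $i\nu\operatorname{div}(\mathbb{T}\nabla u^\nu_{cont})$ to the right-hand side and control it by \eqref{eq:est_main_v2}. That estimate only gives $\nu^{1/2}\|\nabla u^\nu\|\lesssim\|f\|$; it says nothing about $\nu\|\partial_x^2 u^\nu\|$, which appears in $i\nu\mathbb{T}_{11}\partial_x^2 u^\nu_{cont}$. The paper's cure is to first rewrite the operator as
\[
x\mathbb{A}+i\nu\mathbb{T}=(x+i\nu r)\mathbb{A}+i\nu(\mathbb{T}-r\mathbb{A}),
\]
and move only the second piece. The point of $r=\mathbb{T}_{11}/\mathbb{A}_{11}$ is precisely that $(\mathbb{T}-r\mathbb{A})_{11}=0$, so $\operatorname{div}((\mathbb{T}-r\mathbb{A})\nabla u^\nu)$ contains no $\partial_x^2 u^\nu$; the surviving $\partial_y\nabla u^\nu$ terms are then controlled by Proposition~\ref{proposition:bounds_nu}. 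After this rewriting, the operator governing $u^\nu_{cont}$ is $(x+i\nu r)\mathbb{A}$, and the uniform-in-$\nu$ weighted regularity you need is not a perturbation of Lemma~\ref{lem:propreg2} but the separate Proposition~\ref{prop:lap_reg}, whose proof (Appendix~H) exploits the homogeneous Neumann condition via Hardy's inequality.

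\textbf{The Neumann trace of $u^\nu_{cont}$ is not zero.} Your phrase ``vanishes to leading order in $\nu$'' hides a real obstruction: the paper computes $\widetilde\gamma^{\Sigma,\lambda}_{n,\nu}u^\nu_{cont}=\nu|\log\nu|\,e^\nu_\lambda$ with $\|e^\nu_\lambda\|_{\mathcal{H}^{-1/2}(\Sigma)}\lesssim\|f\|$. Since Proposition~\ref{prop:lap_reg} requires a \emph{homogeneous} Neumann condition, one cannot apply it directly. The paper splits $u^\nu_{cont}=u^\nu_{cont,0}+s^\nu$, applies Proposition~\ref{prop:lap_reg} to $u^\nu_{cont,0}$, and estimates $s^\nu$ by hand (testing against $s^\nu$ and interpolating between $\|\nabla s^\nu\|\lesssim|\log\nu|\,\|f\|$ and $\||x|^{1/2}\nabla s^\nu\|\lesssim\nu^{1/2}|\log\nu|\,\|f\|$ to get $\|s^\nu\|_{\mathcal{H}^1_\varepsilon}\lesssim\nu^{\varepsilon/2}|\log\nu|\,\|f\|$). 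Without this splitting your argument is incomplete.
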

This result, in turn, relies on the following limiting absorption result, proven in Appendix \ref{appendix:lap_auxiliary}. 
\begin{proposition}
	\label{prop:lap_reg}
	Let $f\in L^2_{\delta}(\Omega_p)$ for some $0<\delta<1$, and  let $(v^{\nu})_{\nu>0}\subset \mathcal{H}^1(\Omega_p)$ be a family of the solutions to the well-posed problems parametrized by $\nu>0$: find $v^{\nu}\in \mathcal{H}^1(\Omega_p)$, s.t.
	\begin{align*}
		&\operatorname{div}((x+i\nu r)\mathbb{A})\nabla v^{\nu})=f \text{ in }\Omega_p,\qquad r=\mathbb{T}_{11}(\mathbb{A}_{11})^{-1},\\
		&\widetilde{\gamma}_{n,\nu}^{\Sigma}v^{\nu}:=\left.(x+i\nu r)\mathbb{A}\nabla v^{\nu}\cdot \vec{n}\right|_{\Sigma}=0,\\
		&\gamma_0^{\Gamma_p}v=0, \text{ periodic BCs at }\Gamma^{\pm}_p.
	\end{align*}
	Then there exists $\nu_0>0$, s.t. for all $0<\nu<\nu_0$, all $0<\re<1$,  $\|v^{\nu}\|_{\mathcal{H}^1_{\delta+\re}(\Omega_p)}\leq C_{\re} \|f\|_{L^2_{\delta}(\Omega_p)}$, with $C_{\re}>0$ independent of $\nu$. 
\end{proposition}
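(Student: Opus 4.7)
My approach is to adapt the argument of the $\nu=0$ analogue cited in Lemma \ref{lem:propreg2}, while carefully tracking the contributions arising from the viscous perturbation $i\nu r\,\mathbb{A}$ and verifying that they can be absorbed into the principal coercive term uniformly in $\nu\in(0,\nu_0)$.

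Well-posedness in $\mathcal{H}^1(\Omega_p)$ for each fixed $\nu>0$ follows at once from Lax--Milgram applied to the bilinear form $\widetilde{a}_\nu(u,w):=\int_{\Omega_p}(x+i\nu r)\mathbb{A}\nabla u\cdot\overline{\nabla w}$: by Assumption \ref{assump:matrices}, $r=\mathbb{T}_{11}(\mathbb{A}_{11})^{-1}\geq c_r>0$, so $\Im\widetilde{a}_\nu(u,u)\geq \nu c_r c_{\mathbb{A}}\|\nabla u\|^2$, and the Poincar\'e inequality on $\mathcal{H}^1(\Omega_p)$ (using $\gamma_0^{\Gamma_p}u=0$) gives coercivity. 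The inclusion $L^2_\delta(\Omega_p)\hookrightarrow(\mathcal{H}^1(\Omega_p))'$ for $\delta<1$ (via a H\"older argument exploiting 2D Sobolev embeddings against $x^{-\delta}$, which is locally integrable) places $f$ in the admissible dual space.

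For the $\nu$-uniform weighted estimate I would test the variational formulation with $\varphi_\eta=\chi_\eta(x)\,x^{\delta+\re-1}\overline{v^\nu}$, where $\chi_\eta\in C^\infty([0,a])$ vanishes on $\{x<\eta\}$ and converges to $\mathbf{1}_{(0,a)}$ as $\eta\to 0^+$, regularizing the singular weight near $\Sigma$. Taking the real part, and using that $\mathbb{A}\nabla v^\nu\cdot\overline{\nabla v^\nu}$ is real by Hermiticity of $\mathbb{A}$, the $i\nu r$ contribution vanishes at the principal order, leaving the coercive term $\int_{\Omega_p}\chi_\eta\,x^{\delta+\re}\mathbb{A}\nabla v^\nu\cdot\overline{\nabla v^\nu}\gtrsim\int_{\Omega_p}\chi_\eta\,x^{\delta+\re}|\nabla v^\nu|^2$. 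The cross terms produced by differentiating $\chi_\eta x^{\delta+\re-1}$ split into (i) a $\nu$-independent piece, handled exactly as in the $\nu=0$ proof (Cauchy--Schwarz and a weighted Hardy inequality using $v^\nu|_{\Gamma_p}=0$), and (ii) a $\nu$-proportional piece of the form $\nu\int_{\Omega_p}\chi_\eta\,r\,x^{\delta+\re-2}\mathbb{A}\nabla v^\nu\cdot\vec{e}_x\overline{v^\nu}$, which by the same Hardy inequality is dominated by $C\nu\int_{\Omega_p}\chi_\eta\,x^{\delta+\re}|\nabla v^\nu|^2$ and absorbed for $\nu$ sufficiently small. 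The datum term $-\int_{\Omega_p}f\,\chi_\eta\,x^{\delta+\re-1}\overline{v^\nu}$ is estimated by $\|f\|_{L^2_\delta(\Omega_p)}\|x^{(\delta+\re-2)/2}v^\nu\|_{L^2(\Omega_p)}$, again controlled by the Hardy inequality and absorbed against the principal term. Passing $\eta\to 0^+$ and combining with a Poincar\'e bound for $\|v^\nu\|_{L^2(\Omega_p)}$ yields the claim.

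\textbf{Main obstacle.} The delicate point is the uniform-in-$\nu$ weighted Hardy inequality of the form $\int_0^a x^{\delta+\re-2}|v(x,y)|^2\,dx\leq C_\re\int_0^a x^{\delta+\re}|\partial_xv(x,y)|^2\,dx$, whose structure changes across the threshold $\delta+\re=1$. For $\delta+\re<1$ the trace of $v^\nu$ on $\Sigma$ exists and contributes an extra term that must be treated using the homogeneous condition $\widetilde{\gamma}_{n,\nu}^\Sigma v^\nu=0$; for $1\leq\delta+\re<2$ no trace is available and the estimate relies solely on the Dirichlet condition at $\Gamma_p$, with the Hardy constant deteriorating as $\re\to 0$ (hence $C_\re\to\infty$, matching the statement). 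A second technical point is justifying the limit $\eta\to 0^+$: the boundary contribution on $\{x=\eta\}$ must vanish, which follows from interior $\mathcal{H}^2$ elliptic regularity of $v^\nu$ away from $\Sigma$ (qualitatively, for each fixed $\nu$) combined with the uniform weighted bound obtained independently for each $\eta>0$.
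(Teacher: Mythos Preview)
Your weighted-multiplier argument has a genuine gap in the treatment of the $\nu$-proportional cross term. After testing with $\chi_\eta x^{\delta+\re-1}v^\nu$ and taking the real part, the relevant term is
\[
\nu(\delta+\re-1)\int_{\Omega_p}r\,\chi_\eta\, x^{\delta+\re-2}\,\Im\bigl(\overline{v^\nu}\,\mathbb{A}\nabla v^\nu\cdot\vec{e}_x\bigr),
\]
which by Cauchy--Schwarz is controlled by $\nu\,\|x^{(\delta+\re-2)/2}\nabla v^\nu\|\,\|x^{(\delta+\re-2)/2}v^\nu\|$. Even after applying the weighted Hardy inequality to the second factor, the first factor carries the weight $x^{(\delta+\re-2)/2}$ on $\nabla v^\nu$, one full power of $x$ short of what the coercive term $\int\chi_\eta x^{\delta+\re}|\nabla v^\nu|^2$ controls. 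There is no way to trade this missing power against the small prefactor $\nu$ uniformly as $\nu\to 0$, so the claimed absorption fails. A related issue arises already in the $\nu$-independent cross term: after Hardy, it has size $\frac{2\|\mathbb{A}\|_\infty}{c_{\mathbb{A}}}$ times the coercive term, with no small parameter available to absorb it. Finally, for $\delta+\re<1$ the Hardy inequality you invoke requires $v^\nu(0,\cdot)=0$, which the homogeneous \emph{Neumann} condition does not give; your suggestion that ``the extra term must be treated using $\widetilde{\gamma}_{n,\nu}^\Sigma v^\nu=0$'' is not a mechanism that works here.

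The paper's proof is structurally different and explains where the Neumann condition really enters. One first treats the endpoint $\delta=0$ and proves the stronger bound $\|v^\nu\|_{\mathcal{H}^1(\Omega_p)}\lesssim\|f\|_{L^2(\Omega_p)}$ uniformly in $\nu$. This is done by (i) establishing $\nu$-uniform tangential regularity $\|\partial_y v^\nu\|+\|x\partial_y\nabla v^\nu\|+\nu\|\partial_y\nabla v^\nu\|\lesssim\|f\|$ via Nirenberg difference quotients, and then (ii) setting $q^\nu:=(x+i\nu r)\mathbb{A}\nabla v^\nu\cdot\vec{e}_x$ and observing from the PDE that $\|q^\nu\|_{H^1(\Omega_p)}\lesssim\|f\|$. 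The Neumann condition is exactly $\gamma_0^\Sigma q^\nu=0$, so the classical Hardy inequality gives $\|q^\nu/x\|_{L^2}\lesssim\|q^\nu\|_{H^1}$, whence $\|\mathbb{A}\nabla v^\nu\|_{L^2}\lesssim\|f\|$. The case $\delta>0$ is then obtained by complex interpolation between the map $L^2(\Omega_p)\to L^2(\Omega_p)$ (from step $\delta=0$) and the map $L^2_{1-\re}(\Omega_p)\to L^2_1(\Omega_p)$ (from the trivial $\mathcal{V}_{reg}$ bound), exactly as in Proposition~\ref{prop:regularity2}. The paper also remarks that the analogous statement with Dirichlet data on $\Sigma$ is \emph{false}, which is a strong indication that a direct multiplier argument insensitive to the nature of the $\Sigma$-boundary condition cannot succeed.
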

\begin{proof}[Proof of Proposition \ref{prop:aux}]
	\textbf{Rewriting of the problem.}	Before proving the desired result, we will rewrite the problem satisfied by $u^{\nu}$ in a more convenient for us form. For this we will alter the original formulation:
	\begin{align*}
		\operatorname{div}((x\mathbb{A}+i\nu\mathbb{T})\nabla u^{\nu})=\operatorname{div}((x+i\nu r)\mathbb{A}\nabla u^{\nu})+i\nu \operatorname{div}((\mathbb{T}-r\mathbb{A})\nabla u^{\nu}).
	\end{align*}
	We set $
	f_{1}^{\nu}:=\operatorname{div}((\mathbb{T}-r\mathbb{A})\nabla u^{\nu})$, and remark that $(\mathbb{T}-r\mathbb{A})_{11}=0$, therefore 
	\begin{align}
		\label{eq:fcontnu}
		\|f_{1}^{\nu}\|\leq C \nu(\|\partial_y \nabla u^{\nu}\|+\|\nabla u^{\nu}\|+\|u^{\nu}\|)\lesssim \|f\|,
			\end{align}
	as follows from Proposition \ref{proposition:bounds_nu} and \eqref{eq:est_main_v2}. This shows that 
	\begin{align}
		\label{eq:unur}
		\operatorname{div}((x+i\nu r)\mathbb{A}\nabla u^{\nu})=f-f_1^{\nu}, \quad \|f-f_1^{\nu}\|_{L^2(\Omega)}\leq C\|f\|.
	\end{align}
	Then all the previous results apply to this new problem, see also Remark \ref{rem:stab_results}, with the adapted conormal derivative $\widetilde{\gamma}_{n,\nu}^{\Sigma}u^{\nu}:=\left.(x+i\nu r)\mathbb{A}\nabla u^{\nu}\cdot \vec{n}\right|_{\Sigma}=:\widetilde{g}^{\nu}$.
	
	\textbf{Decomposition.}	We make an ansatz (cf. the decomposition of Proposition \ref{prop:decomp1}):
	\begin{align*}
		u^{\nu}(x,y):=u^{\nu}_{s}+u^{\nu}_{cont},\qquad u^{\nu}_{s}(x,y):=u^{\nu}_{h}(x,y)\log\left(x+i\nu r(x,y)\right), 
	\end{align*}
	with $u^{\nu}_h\in \mathcal{H}^1(\Omega)$ being a unique solution to the decoupled boundary-value problem: 
	\begin{align}
		\label{eq:unuh}
		\begin{split}
			&\operatorname{div}(\mathbb{A}\nabla u^{\nu}_h)=0 \text{ in }\Omega\setminus \Sigma,\\
			&\gamma_{0}^{\Sigma}u^{\nu}=a_{11}^{-1}\widetilde{g}^{\nu},\\
			&\gamma_0^{\Gamma_p\cup\Gamma_n}u^{\nu}_h=0,  \text{ periodic BCs at }\Gamma^{\pm}_p\cup\Gamma^{\pm}_n,
		\end{split}
	\end{align}
	and $u^{\nu}_{cont}:=u^{\nu}-u^{\nu}_h$. The stated upper bound on $u^{\nu}_h$ is a corollary of Theorem \ref{theorem:gnubound_improved} applied to \eqref{eq:unur}. 
	
	It remains to prove the bound on $u^{\nu}_{cont}$. For this we will write the problem satisfied by $u^{\nu}_{cont}$.  To do so, we will need the one-sided conormal trace of $u^{\nu}_h$, associated to the problem \eqref{eq:unuh}, and defined for sufficiently regular functions via $\partial_{n}^{\Sigma,\lambda}v:=\gamma_0^{\Sigma,\lambda}(\mathbb{A}\nabla v)\cdot \vec{n}$, $\lambda\in \{n,p\}$. Remark that from the stated upper bound on $\|u^{\nu}_h\|_{\mathcal{H}^1(\Omega)}$ it follows that
	\begin{align}
		\label{eq:conorm_der}
		q^{\nu}_{\lambda}:=\partial_{n}^{\Sigma,\lambda}u^{\nu}_h\quad \text{ satisfies }\quad
		\|q^{\nu}_{\lambda}\|_{\mathcal{H}^{-1/2}(\Sigma)}\lesssim \|f\|, \quad \lambda\in \{n,p\}.
	\end{align}
	Let us now study the \textbf{problem satisfied by $u^{\nu}_{cont}$.} A priori, $\operatorname{div}((x+i\nu r)\mathbb{A}\nabla u^{\nu}_{cont})\notin L^2(\Omega)$, since the jump of the conormal trace of $u^{\nu}_{cont}$ across $\Sigma$ may not vanish. Therefore, we will write a decoupled problem satisfied by $u^{\nu}_{cont}$, on $\Omega_p$ and $\Omega_n$ separately. In particular, on $\Omega\setminus \Sigma$, using that $\operatorname{div}(\mathbb{A}\nabla u^{\nu}_h)=0$, 
	\begin{align}
		\nonumber
		\operatorname{div}((x+i\nu r)\mathbb{A}\nabla u^{\nu}_{cont})&=f-f^{\nu}_{1}-\operatorname{div}((x+i\nu r)\mathbb{A}\nabla u^{\nu}_{s})\\
		\label{eq:unuc}
		&=f-f^{\nu}_{1}-\operatorname{div}(\mathbb{A}(\vec{e}_x+i\nu \nabla r)u^{\nu}_h)-\nabla((x+i\nu r)\log(x+i\nu r))\cdot \mathbb{A}\nabla u^{\nu}_h=:f^{\nu}.
	\end{align}
	By a straightforward computation, with  $\|u^{\nu}_h\|_{\mathcal{H}^1(\Omega)}\lesssim \|f\|_{L^2(\Omega)}$ and \eqref{eq:fcontnu}, it follows that for all $\delta>0$, \begin{align}
		\label{eq:rhs_bound}
		\|f^{\nu}\|_{L^2_{\delta}(\Omega_{\lambda} )}\leq C_{\delta}\|f\|_{L^2(\Omega)}, \quad \lambda\in \{n,p\}.
	\end{align}
	Next, since $u^{\nu}_{cont}$ satisfies the decoupled problem \eqref{eq:unuc} in $\Omega\setminus \Sigma$, we need to provide it with the boundary condition on $\Sigma$, which would render it well-posed. We choose the Neumann boundary condition, adapted to \eqref{eq:unuc}. Remark that 
	\begin{align*}
		(x+i\nu r)\mathbb{A}\nabla u^{\nu}_{s}=\mathbb{A}(\vec{e}_x+i\nu \nabla r)u^{\nu}_{h}+(x+i\nu r)\log(x+i\nu r)\mathbb{A}\nabla u^{\nu}_h, 
	\end{align*}
	hence, using the fact that $\partial_{n}^{\Sigma,\lambda}u^{\nu}_h\in \mathcal{H}^{-1/2}(\Sigma)$, one can check that the following identity holds in  $\mathcal{H}^{-1/2}(\Sigma)$:
	\begin{align*}
		\widetilde{\gamma}_{n,\nu}^{\Sigma,\lambda}u^{\nu}_{s}&=\mathbb{A}(\vec{e}_x+i\nu \nabla r)\cdot \vec{e}_x\gamma_0^{\Sigma,\lambda}u^{\nu}_{h}+i\nu r\log(i\nu r)\partial_n^{\Sigma,\lambda}u^{\nu}_h  \\
		&=\widetilde{g}^{\nu}(1+i\nu \gamma)+i\nu r\log(i\nu r)q^{\nu}_{\lambda}, \quad \gamma:=a_{11}^{-1}\left.\vec{e}_x\cdot\mathbb{A}\nabla r\right|_{\Sigma}.
	\end{align*}
	Since $u^{\nu}_{cont}=u^{\nu}-u^{\nu}_{s}$, we  conclude that $u^{\nu}_{cont}$ satisfies the following two-sided Neumann BC on $\Sigma$, $0<\nu<1/2$:
	\begin{align}
		\label{eq:two_sided_neumann}
		\tilde{\gamma}_{n,\nu}^{\Sigma,\lambda}u^{\nu}_{cont}=\nu |\log\nu| e^{\nu}_{\lambda}, \text{ where } e^{\nu}_{\lambda}=-i|\log\nu|^{-1} \left(\gamma \widetilde{g}^{\nu}-i r\log(i\nu r)q^{\nu}_{\lambda}\right)\in \mathcal{H}^{-1/2}(\Sigma), \quad \lambda\in \{n,p\}.
	\end{align}
	Crucially, from Theorem \ref{theorem:gnubound_improved} and \eqref{eq:conorm_der}, it follows that for all $\nu<1/2$,  
	\begin{align}
		\label{eq:enu}
		\|e^{\nu}_{\lambda}\|_{\mathcal{H}^{-1/2}(\Sigma)}\lesssim \|f\|.
	\end{align}
	Thus, we have shown that $u^{\nu}_{cont}\in \mathcal{H}^1(\Omega\setminus\Sigma)$ solves the following decoupled Neumann problem: with $\lambda\in \{n,p\}$, 
	\begin{align*}
		&\operatorname{div}((x+i\nu r)\mathbb{A}u^{\nu}_{cont})=f^{\nu} \text{ in }\Omega_{\lambda},\\
		&\tilde{\gamma}_{n,\nu}^{\Sigma,\lambda}u^{\nu}_{cont}=\nu |\log\nu| e^{\nu}_{\lambda}, \\ &\gamma_0^{\Gamma_{\lambda}}u=0, \quad\text{periodic BCs at }\Gamma^{\pm}_{\lambda}. 
	\end{align*}
	The above problem is well-posed in $\mathcal{H}^1(\Omega\setminus\Sigma)$, by the same argument as used in Lemma \ref{lem:pb_abs_wp}. 
	
	\textbf{The bound on $u^{\nu}_{cont}$: splitting. }
	We split $u^{\nu}_{cont}=u^{\nu}_{cont,0}+s^{\nu}$, and bound the two terms separately. Here $u^{\nu}_{cont,0},\,  s^{\nu}\in \mathcal{H}^1(\Omega\setminus\Sigma)$ are unique solutions to the problems: for $\lambda\in \{n,p\}$, 
	\begin{align}
		\label{eq:unucont0pb}
		\begin{split}
			&\operatorname{div}((x+i\nu r)\mathbb{A}\nabla u^{\nu}_{cont,0})=f^{\nu} \text{ in }\Omega_{\lambda},\\
			&\widetilde{\gamma}_{n,\nu}^{\Sigma,\lambda}u^{\nu}_{cont}=0, \qquad{\gamma_0^{\Gamma_{\lambda}}u^{\nu}_{cont,0}=0, } \text{ periodic BCs at }\Gamma^{\pm}_{\lambda},
		\end{split}
	\end{align}
	and 
	\begin{align}
		\label{eq:neumann}
		\begin{split}
			&\operatorname{div}((x+i\nu r)\mathbb{A}\nabla s^{\nu})=0 \text{ in }\Omega_{\lambda},\\
			&\widetilde{\gamma}_{n,\nu}^{\Sigma,\lambda}s^{\nu}=\nu|\log\nu|e^{\nu}_{\lambda}, \qquad{\gamma_0^{\Gamma_{\lambda}}s^{\nu}=0, } \text{ periodic BCs at }\Gamma^{\pm}_{\lambda}.
		\end{split}
	\end{align}
	We will obtain the bounds on $u^{\nu}_{cont,0}$ and $s^{\nu}$ separately in $\Omega_n$ and $\Omega_p$, and next argue that they imply the global bound on $u^{\nu}_{cont}$. \\
	\textbf{A bound on $u^{\nu}_{cont,0}$. }From Proposition \ref{prop:lap_reg} and \eqref{eq:rhs_bound}, it follows that for all $\re>0$, there exists $C_{\re}>0$, 
	\begin{align}
		\label{eq:boununu0}
		\|u^{\nu}_{cont,0}\|_{\mathcal{H}^1_{\re}(\Omega_p)}+	\|u^{\nu}_{cont,0}\|_{\mathcal{H}^1_{\re}(\Omega_n)}\leq C_{\re}\|f\|.
	\end{align}
	\textbf{A bound on $s^{\nu}$. }We start by proving the following two auxiliary statements, and next proceed by a very simple, interpolation-like argument:
	\begin{align}
		\label{eq:snu1}
		(a)\; 
		\|\nabla s^{\nu}\|_{\Omega_{\lambda}}\lesssim \log\nu^{-1}\|f\|_{\Omega}, \quad 
		(b)\;\||x|^{1/2}\nabla s^{\nu}\|_{\Omega_{\lambda}}\lesssim \nu^{1/2}\log\nu^{-1}\|f\|_{\Omega},\qquad  \lambda\in \{n,p\}.  
	\end{align}
	It suffices to prove these estimates in $\Omega_p$. We test \eqref{eq:neumann} with $s^{\nu}$ and integrate by parts. This yields 
	\begin{align}
		\label{eq:mid}
		\begin{split}
			\int_{\Omega_p}(x+i\nu r)\mathbb{A}\nabla s^{\nu}\cdot\overline{\nabla s^{\nu}}=-\nu|\log\nu|\langle e^{\nu}_p, \overline{\gamma_{0}^{\Sigma,p}s^{\nu}}\rangle_{\mathcal{H}^{-1/2}(\Sigma), \mathcal{H}^{1/2}(\Sigma)}.
		\end{split}
	\end{align}
	Taking the imaginary part of the above shows that 
	\begin{align*}
		\nu\|\nabla s^{\nu}\|^2_{L^2(\Omega_p)}\lesssim \nu |\log\nu|\|e^{\nu}\|_{\mathcal{H}^{-1/2}(\Sigma)}\|\gamma_0^{\Sigma,p}s^{\nu}\|_{\mathcal{H}^{1/2}(\Sigma)}\lesssim \nu|\log\nu| \|f\|\|\nabla s^{\nu}\|_{L^2(\Omega_p)}, 
	\end{align*}
	where the last bound follows from \eqref{eq:enu},   continuity of $\gamma_0^{\Sigma,p}$ on $\mathcal{H}^1(\Omega_p)$ and the Poincar\'e inequality in $\mathcal{H}^1(\Omega_p)$. This yields \eqref{eq:snu1}(a). The bound \eqref{eq:snu1}(b) is obtained by taking the real part of \eqref{eq:mid} and proceeding like before: 
	\begin{align*}
		\|x^{1/2}\nabla s^{\nu}\|^2\lesssim \nu |\log\nu|\|e^{\nu}\|_{\mathcal{H}^{-1/2}(\Sigma)}\|\gamma_0^{\Sigma,p}s^{\nu}\|_{\mathcal{H}^{1/2}(\Sigma)}\lesssim \nu|\log\nu| \|f\|\|\nabla s^{\nu}\|_{L^2(\Omega_p)}\lesssim \nu \log^2\nu \|f\|^2, 
	\end{align*}
	where the last bound follows from \eqref{eq:snu1}(a).
	The bounds \eqref{eq:snu1}(a), (b) imply similar bounds in the weighted space $\mathcal{H}^1_{\re}(\Omega_p)$. Indeed, for $0<\delta<1$, with $\Omega_{\Sigma}^{\nu}=\{(x,y)\in \Omega: \, |x|<\nu\}$, and for $\nu<1/2$,  
	\begin{align*}
		\|x^{\delta/2}\nabla s^{\nu}\|^2_{\Omega_p}=\int_{\Omega_{\Sigma}^{\nu}\cap \Omega_p}x^{\delta}|\nabla s^{\nu}|^2+\int_{\Omega_p\setminus \overline{\Omega}_{\Sigma}^{\nu}}x^{\delta-1}x|\nabla s^{\nu}|^2\leq \nu^{\delta}\|\nabla s^{\nu}\|^2_{\Omega_p}+\nu^{\delta-1}\|x^{1/2}\nabla s^{\nu}\|^2_{\Omega_p}
		\lesssim \nu^{\delta}\log^2\nu\|f\|^2,
	\end{align*}
	as follows from \eqref{eq:snu1}(a), (b). By the Poincar\'e inequality in weighted spaces, cf. e.g.  Proposition \ref{prop:hardy}, and with the above bound, we conclude that 
	\begin{align}
		\label{eq:boundsnu}
		\|s^{\nu}\|_{\mathcal{H}_{\delta}^1(\Omega_p)}+	\|s^{\nu}\|_{\mathcal{H}_{\delta}^1(\Omega_n)}\leq C_{\delta} \nu^{\delta/2}\log\nu^{-1}\|f\|.
	\end{align}
	\textbf{Final bound on $u^{\nu}_{cont}$. }We combine the bounds \eqref{eq:boununu0} and \eqref{eq:boundsnu} to conclude that, since, in particular, for all $\nu>0$, $u^{\nu}_{cont}=u^{\nu}-u^{\nu}_h\log(x+i\nu r)\in \mathcal{H}^1(\Omega)$ (as $u^{\nu}_h\in \mathcal{H}^1(\Omega)$), it holds that
	\begin{align*}
		\|u^{\nu}_{cont}\|_{\mathcal{H}^1_{\varepsilon}(\Omega)}\leq 	\|u^{\nu}_{cont}\|_{\mathcal{H}^1_{\varepsilon}(\Omega_p)}+\|u^{\nu}_{cont}\|_{\mathcal{H}^1_{\varepsilon}(\Omega_{n})}\leq  \sum\limits_{\lambda\in\{n,p\}}(\|u^{\nu}_{cont,0}\|_{\mathcal{H}^1_{\varepsilon}(\Omega_{\lambda})}+\|s^{\nu}\|_{\mathcal{H}^1_{\varepsilon}(\Omega_{\lambda})})\leq C_{\re} \|f\|_{L^2(\Omega)}, 
	\end{align*}
	for all $\re>0$, with some $C_{\re}>0$ independent of $\nu$ but depending on $\re$. This proves one part of the bound. To prove the fractional space bound, we resort 
	to  Lemma \ref{lem:embedding}; for all $0<\re<1/2$, there exists $c_{\re}>0$, s.t.
	\begin{align*}
		\|u^{\nu}_{cont}\|_{\mathcal{H}^{1-\varepsilon}(\Omega_p)}+	\|u^{\nu}_{cont}\|_{\mathcal{H}^{1-\varepsilon}(\Omega_n)}\leq c_{\re} \|f\|.
	\end{align*}
	Because additionally $u^{\nu}_{cont}\in \mathcal{H}^{1}(\Omega)$, we have that $[\gamma_0^{\Sigma}u^{\nu}_{cont}]=0$. By \cite[Lemma 1.5.1.8]{grisvard_book} or adapting the proof of a more detailed result \cite[Theorem 2.4]{jakovlev}, we conclude that $
	\|u^{\nu}_{cont}\|_{\mathcal{H}^{1-\varepsilon}(\Omega)}\leq C_{\re}  \|f\|,$ 
	with some $C_{\re}>0$ depending on $\re>0$ but independent of $\nu$. 
\end{proof}
A rather immediate corollary of the bounds of  Proposition \ref{prop:aux} is 
\begin{corollary}
	\label{cor:cv}
	There exists $\nu_0>0$, s.t. for all $0<\nu<\nu_0$, 
	\begin{align*}
		\|u^{\nu}\|_{H^{1/2-\re/2}(\Omega)}\leq c_{\re}	\|u^{\nu}\|_{\mathcal{H}^1_{1+\re}(\Omega\setminus\Sigma)}\leq C_{\re}\|f\|_{L^2(\Omega)}, \quad \text{ for all }0<\re<1,
	\end{align*}
	where $c_{\re},\,C_{\re}>0$ depend on $\re>0$ but are independent of $\nu$. 
\end{corollary}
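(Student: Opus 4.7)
The plan is to use the decomposition $u^{\nu}=u^{\nu}_h\log(x+i\nu r)+u^{\nu}_{cont}$ of Proposition \ref{prop:aux} and estimate each summand separately in $\mathcal{H}^1_{1+\re}(\Omega\setminus\Sigma)$ uniformly in $\nu$; the first inequality of the statement then follows from the weighted-to-fractional Sobolev embedding $\mathcal{H}^1_\sigma(\Omega_\lambda)\hookrightarrow \mathcal{H}^{1-\sigma/2}(\Omega_\lambda)$ of Lemma \ref{lem:embedding} (cf.\ also the inclusion stated in Lemma \ref{lem:propreg2}) applied with $\sigma=1+\re$ on each subdomain $\Omega_\lambda$, the pieces being glued into an $H^{1/2-\re/2}(\Omega)$-estimate without any trace-matching condition since $1/2-\re/2<1/2$. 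The continuous part is immediate: on the bounded domain $\Omega$ one has $|x|^{(1+\re)/2}\leq \operatorname{diam}(\Omega)^{1/2}|x|^{\re/2}$, hence $\mathcal{H}^1_\re(\Omega)\hookrightarrow \mathcal{H}^1_{1+\re}(\Omega\setminus\Sigma)$ and the bound on $u^{\nu}_{cont}$ from Proposition \ref{prop:aux} directly yields $\|u^{\nu}_{cont}\|_{\mathcal{H}^1_{1+\re}(\Omega\setminus\Sigma)}\lesssim \|f\|$.

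The delicate step is controlling $u^\nu_s:=u^\nu_h\log(x+i\nu r)$ uniformly in $\nu$. Since $r=\mathbb{T}_{11}/\mathbb{A}_{11}$ is continuous and strictly positive on $\overline{\Omega}$ by Assumption \ref{assump:matrices}, the sandwich $|x|\leq|x+i\nu r|\leq |x|+\nu\|r\|_{L^\infty}$ together with a short case analysis (comparing $|x+i\nu r|$ with $1$) produces the uniform-in-$\nu$ pointwise bound
\[
|\log(x+i\nu r)|\lesssim |\log|x||+1 \quad \text{on } \Omega, \qquad 0<\nu<\nu_0.
\]
Combined with the elementary fact that $|x|^{1+\re}\log^2|x|$ is bounded on $\Omega$, this gives $\bigl\||x|^{(1+\re)/2}\log(x+i\nu r)\nabla u^\nu_h\bigr\|_{L^2}\lesssim \|\nabla u^\nu_h\|_{L^2}\lesssim\|f\|$ and, via the Hardy-type inequality of Proposition \ref{prop:hardy} applied to $u^\nu_h\in\mathcal{H}^1(\Omega)$ (the same argument as in the proof of Theorem \ref{theorem:reg_well_posedness}), the $L^2$-control $\|u^\nu_s\|_{L^2}\lesssim \|f\|$.

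The remaining gradient contribution to $\nabla u^\nu_s$, namely $u^\nu_h(\vec{e}_x+i\nu\nabla r)/(x+i\nu r)$, is handled using $|x+i\nu r|^{-1}\leq |x|^{-1}$, which reduces matters to the weighted estimate $\int_\Omega|x|^{\re-1}|u^\nu_h|^2\lesssim \|u^\nu_h\|_{H^1(\Omega)}^2$. This I would prove by the one-dimensional identity $u^\nu_h(x,y)=\gamma_0^\Sigma u^\nu_h(y)+\int_0^x\partial_x u^\nu_h(s,y)\,ds$ on horizontal fibers $y=\mathrm{const}$, Cauchy--Schwarz against the locally integrable weight $|x|^{\re-1}$, and the trace inequality $\|\gamma_0^\Sigma u^\nu_h\|_{L^2(\Sigma)}\lesssim\|u^\nu_h\|_{H^1(\Omega)}$. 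Assembling the pieces yields the second inequality $\|u^\nu\|_{\mathcal{H}^1_{1+\re}(\Omega\setminus\Sigma)}\lesssim\|f\|$; the first then follows from the embedding-and-gluing argument of the first paragraph. The principal obstacle is the uniform-in-$\nu$ handling of the logarithmic term: naively its $\nu$-dependent shift could contribute a factor of $|\log\nu|$, but the comparison $|\log(x+i\nu r)|\lesssim|\log|x||+1$ (valid precisely because $|x+i\nu r|\geq|x|$) eliminates this dependence and allows the weight $|x|^{1+\re}$ together with the $\mathcal{H}^1$-regularity of $u^\nu_h$ to absorb what remains.
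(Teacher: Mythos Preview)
Your proof is correct and follows essentially the same route as the paper: decompose via Proposition~\ref{prop:aux}, bound each piece in $\mathcal{H}^1_{1+\re}(\Omega_\lambda)$, and then invoke Lemma~\ref{lem:embedding} together with the gluing argument (the paper cites \cite[Theorem~2.4]{jakovlev}, your observation that no trace-matching is needed for $s<1/2$ is the same point). One simplification: your one-dimensional computation for $\int_\Omega|x|^{\re-1}|u^\nu_h|^2\lesssim\|u^\nu_h\|_{H^1}^2$ is unnecessary, since this is exactly Proposition~\ref{prop:hardy} with exponent $-1/2+\re/2$, which the paper invokes directly.
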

\begin{proof}
	By \cite[Theorem 2.4]{jakovlev} it is sufficient to prove the corresponding fractional Sobolev estimates separately in subdomains $\Omega_p$ and $\Omega_n$, and by Lemma \ref{lem:embedding}, it is sufficient to show the stated estimates in the weighted Sobolev spaces only. In $\Omega_p$, using the decomposition of Proposition \ref{prop:aux},
	\begin{align*}
		x^{\frac{1+\re}{2}}\nabla u^{\nu}=\frac{x^{\frac{1+\re}{2}}}{x+i\nu r}(\vec{e}_x+i\nu\nabla r)u^{\nu}_h+	x^{\frac{1+\re}{2}}\log(x+i\nu)\nabla u^{\nu}_h+x^{\frac{1+\re}{2}}\nabla u^{\nu}_{cont}.
	\end{align*}
	By the same result, the last two terms in the right-hand side are bounded  in $L^2(\Omega_p)$ uniformly in $\nu$. The first term is bounded pointwise by $Cx^{-(1-\re)/2}|u^{\nu}_h|$, with $C$ independent of $\nu$,  
	and the desired inequality follows by Proposition \ref{prop:hardy}. 
\end{proof}
\subsubsection{Proof of Theorem \ref{theorem:convergence_decomposition}}
\textbf{Convergence. }The existence of the weakly convergent in $L^2(\Omega)$ subsequence follows from Theorem \ref{theorem:stability_estimate}. Assume now that $(u^{\nu_k})_{k\in \mathbb{N}}$ converges weakly in $L^2(\Omega)$ to the limit $\widetilde{u}$. Its weak convergence in $\mathcal{H}^{1/2-\re}(\Omega)$ for any $\re>0$ is immediate from Corollary \ref{cor:cv} and the fact that $u^{\nu_k}$ admits a single limit point $\widetilde{u}$ in $L^2(\Omega)$.

The stated strong convergence in $\mathcal{H}^{1/2-\re}(\Omega)$ for any $\re>0$ follows by the compact embedding of Sobolev spaces $\mathcal{H}^{1/2-\re}(\Omega)$ into $\mathcal{H}^{1/2-\re/2}(\Omega)$.  

Let us now prove that $\widetilde{u}\in \mathcal{V}_{sing}(\operatorname{div}(x\mathbb{A}\nabla .); \Omega)$, cf. \eqref{eq:vsingdiv}. We will prove this alongside with showing that $\widetilde{u}$ satisfies the first identity in \eqref{eq:bvpu}. In virtue of \eqref{eq:unu_orig2}, we also have that  $u^{\nu}\rightharpoonup \widetilde{u}$ in $\mathcal{V}_{sing}(\Omega)$. The  standard argument about convergence of distributional derivatives, separately in $\Omega_p$ and $\Omega_n$, yields $$x\mathbb{A}\nabla u^{\nu_k}\rightharpoonup x\mathbb{A}\nabla \widetilde{u}\quad \text{in}\quad  L^2(\Omega_p) \text{ and }L^2(\Omega_n).$$ 
Since $\tilde{u}\in \mathcal{V}_{sing}(\Omega)$, by Proposition \ref{prop:property2}, $x\mathbb{A}\nabla \widetilde{u}\in L^2(\Omega)$. Therefore, using \eqref{eq:unu_orig2}, as $\nu_k\rightarrow 0+$, 
\begin{align}
	\label{eq:vnuk}
	\vec{v}^{\nu_k}:=(x\mathbb{A}+i\nu\mathbb{T})\nabla u^{\nu_k}\overset{L^2(\Omega)}{\rightharpoonup} \widetilde{\vec{v}}:=x\mathbb{A}\nabla \widetilde{u}.
\end{align}
Since $\operatorname{div}\vec{v}^{\nu_k}=f$ in $\Omega$, and using \eqref{eq:vnuk}, necessarily,  $\operatorname{div}\vec{v}^{\nu_k}= \operatorname{div}\widetilde{\vec{v}}.$

Next, by continuity of the normal trace mapping $\mathcal{H}(\operatorname{div};\Omega)\ni\vec{h}\mapsto \left.\vec{h}\cdot \vec{n}\right|_{\Sigma}\in \mathcal{H}^{-1/2}(\Sigma)$, the above weak convergence results, and Theorem \ref{theorem:gnubound_improved} we conclude $\gamma_{n,\nu_k}^{\Sigma}u^{\nu_k}\rightharpoonup \gamma_{n}^{\Sigma}\widetilde{u}$ in $\mathcal{H}^{1/2}(\Sigma)$.  

The above considerations and continuity of the normal trace prove that $\widetilde{u}\in \mathcal{V}_{sing}(\Omega)$ satisfies
\begin{align*}
	&\operatorname{div}(x\mathbb{A}\nabla \widetilde{u})=f \text{ in }\Omega,\qquad\gamma_{n}^{\Sigma}\widetilde{u}\in \mathcal{H}^{1/2}(\Sigma), \\
	&\gamma_0^{\Gamma_p\cup\Gamma_n}\widetilde{u} =0, \qquad\text{periodic BCs at }\Gamma^{\pm}.
\end{align*}
This shows that $\widetilde{u}\in \mathcal{V}_{sing}(\operatorname{div}(x\mathbb{A}\nabla.); \Omega)$. 

\textbf{The jump property of $\widetilde{u}$. }It remains to argue that $[\gamma_0^{\Sigma}\widetilde{u}]=-i\pi a_{11}^{-1}\gamma_n^{\Sigma}\widetilde{u}$. Recalling Definition  \ref{def:trace} of the Dirichlet trace, to prove this property we need to construct the decomposition of $\widetilde{u}$ into the singular and regular parts, as defined in Theorem \ref{theorem:reg_well_posedness}. On the other hand, we have at hand the decomposition of Proposition \ref{prop:aux}, which was constructed in a slightly different manner compared to \eqref{eq:harmonic}, see the proofs of the corresponding results. Nonetheless, Lemma \ref{lem:deftraces} enables us to make use of this modified decomposition.

We remark that, with notation of Proposition \ref{prop:aux}, $\|u^{\nu_k}_h\|_{\mathcal{H}^1(\Omega)}$, and $\|u^{\nu_k}_{cont}\|_{\mathcal{H}^1_{\re}(\Omega)}$ are uniformly bounded in $\nu_k\rightarrow 0$ (for all $0<\re<1/2$). This shows that, up to a subsequence, as $\nu_k\rightarrow 0$, for all $0<\re<1/2$, 
\begin{align}
	\label{eq:limitsd}
	u^{\nu_k}_h\overset{\mathcal{H}^{1}(\Omega)}{\rightharpoonup} \widetilde{u}_h, \qquad u^{\nu_k}_{cont}\overset{\mathcal{H}^{1-\re}(\Omega)}{\rightharpoonup}\widetilde{u}_{cont},\qquad u^{\nu_k}_h\overset{\mathcal{H}^{1-\re}(\Omega)}{\rightarrow} \widetilde{u}_h, \qquad u^{\nu_k}_{cont}\overset{\mathcal{H}^{1-\re}(\Omega)}{\rightarrow}\widetilde{u}_{cont},
\end{align}
where we used in the last statements the compactness of the embeddings $H^s(\Omega)\subset H^{s+\re}(\Omega)$, for $\re>0$. 

Since, additionally, $u^{\nu_k}$, $u^{\nu_k}_h$ and $u_h^{\nu_k}$ admit subsequences that converge pointwise (see \cite[Theorem 4.9]{brezis2010functional}) to their $L^2(\Omega)$-strong limits, we pass to the pointwise limit in the decomposition of Proposition \ref{prop:aux}: 
\begin{align}
	\label{eq:utildlog}
	\widetilde{u}=\widetilde{u}_h\lim\limits_{\nu\rightarrow 0+}\log(x+i\nu r)+\widetilde{u}_{cont}=\widetilde{u}_h(\log|x|+i\pi\mathbb{1}_{x<0})+\widetilde{u}_{cont}.
\end{align}
By \eqref{eq:limitsd} and uniqueness of the limits, we have obtained the decomposition of $\widetilde{u}$ whose restrictions to $\Omega_p$ (resp. $\Omega_n$) satisfy conditions of Lemma \ref{lem:deftraces}. Application of Lemma \ref{lem:deftraces} shows that 
\begin{align*}
	\gamma_n^{\Sigma,\lambda}\widetilde{u}=a_{11}\gamma_0^{\Sigma,\lambda}\widetilde{u}_h,\quad\lambda\in\{n,p\},\quad \gamma_0^{\Sigma,p}\widetilde{u}=\gamma_0^{\Sigma,p}\widetilde{u}_{cont}, \qquad \gamma_0^{\Sigma,n}\widetilde{u}=\gamma_{0}^{\Sigma,n}\widetilde{u}_{cont}+i\pi \gamma_0^{\Sigma,n}\widetilde{u}_h.
\end{align*}
The regularity of the limits \eqref{eq:limitsd} implies that, cf. Lemma 1.5.1.8 of \cite{grisvard_book} for fractional Sobolev spaces,  
$
[\gamma_0^{\Sigma}\widetilde{u}_h]=0, \, [\gamma_0^{\Sigma}\widetilde{u}_{cont}]=0, 
$
and hence the assertion of the theorem. 
\begin{remark}
	\label{rem:lap_wrong_sign}
	Remark that \eqref{eq:utildlog} written for $\nu\rightarrow 0-$ yields the decomposition $\widetilde{u}_h(\log|x|-i\pi\mathbb{1}_{x<0})+\widetilde{u}_{cont}$, and the jump condition  $[\gamma_0^{\Sigma}\tilde{u}]=i\pi \gamma_n^{\Sigma}\tilde{u}$. 
\end{remark}
\section{The limiting problem. Proofs of Theorems \ref{theorem:LAP} and \ref{theorem:main_result}}
\label{sec:LAP_Problem}
We start by defining a well-posed problem satisfied by the limiting absorption solution. To state it, we will make use of the trace jump property observed in Theorem \ref{theorem:convergence_decomposition}. The first result states that the jump property ensures the uniqueness of the associated transmission problem. 
\begin{proposition}[Uniqueness]
	\label{theorem:Aplus}
	Assume that $u\in \mathcal{V}_{sing}(\operatorname{div}(x\mathbb{A}\nabla.);\Omega)$ satisfies
	\begin{align}
		\label{eq:alphaAnablaU}
		\begin{split}
			&\operatorname{div}(x\mathbb{A}\nabla u)=0 \text{ in }\Omega,\\
			&[\gamma_0^{\Sigma}u]=-i\pi a_{11}^{-1}\gamma_n^{\Sigma}u.
		\end{split}
	\end{align}
	Then $u=0$.
\end{proposition}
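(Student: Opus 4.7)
The plan is to mimic the classical Sommerfeld-type uniqueness argument: pair the (singular) Green's identity of Theorem~\ref{theorem:green} with the radiation-like condition in \eqref{eq:alphaAnablaU} to force the conormal trace to vanish, then fall back on the well-posedness results of Section~\ref{sec:subdomains} in each subdomain separately. The non-self-adjoint nature of the limiting operator, hidden in the boundary term of Theorem~\ref{theorem:green}, is what will make the argument work.

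First, I would apply Theorem~\ref{theorem:green} in $\Omega_p$ with the choice $v=u$. Since $u\in \mathcal{V}_{sing}(\operatorname{div}(x\mathbb{A}\nabla\cdot);\Omega)$ and $\operatorname{div}(x\mathbb{A}\nabla u)=0$, the bulk terms on the left-hand side vanish and one obtains
\begin{align*}
0=-\langle \gamma_{n}^{\Sigma}u,\overline{\gamma_{0}^{\Sigma,p}u}\rangle_{L^2(\Sigma)}+\overline{\langle \gamma_{n}^{\Sigma}u,\overline{\gamma_{0}^{\Sigma,p}u}\rangle}_{L^2(\Sigma)}.
\end{align*}
Writing the same identity in $\Omega_n$ (where the outward normal on $\Sigma$ is $\vec{n}$ itself rather than $-\vec{n}$, producing the opposite sign in the boundary contribution), I would obtain the analogous formula with the sign flipped and with $\gamma_{0}^{\Sigma,n}u$ replacing $\gamma_{0}^{\Sigma,p}u$. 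Summing the two identities and recalling Definition~\ref{definition:one_sided_trace} yields
\begin{align*}
0=-\langle \gamma_{n}^{\Sigma}u,\overline{[\gamma_{0}^{\Sigma}u]}\rangle_{L^2(\Sigma)}+\overline{\langle \gamma_{n}^{\Sigma}u,\overline{[\gamma_{0}^{\Sigma}u]}\rangle}_{L^2(\Sigma)}=2i\operatorname{Im}\overline{\langle \gamma_{n}^{\Sigma}u,\overline{[\gamma_{0}^{\Sigma}u]}\rangle}_{L^2(\Sigma)}.
\end{align*}

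Now I would substitute the radiation-like condition $[\gamma_{0}^{\Sigma}u]=-i\pi a_{11}^{-1}\gamma_{n}^{\Sigma}u$. A direct computation gives
\begin{align*}
\overline{\langle \gamma_{n}^{\Sigma}u,\overline{[\gamma_{0}^{\Sigma}u]}\rangle}_{L^2(\Sigma)}=-i\pi\int_{\Sigma}a_{11}^{-1}|\gamma_{n}^{\Sigma}u|^2,
\end{align*}
which is purely imaginary; since $a_{11}>0$ by Assumption~\ref{assump:matrices}, the vanishing of its imaginary part forces $\gamma_{n}^{\Sigma}u=0$ in $L^2(\Sigma)$, and then the radiation condition itself yields $[\gamma_{0}^{\Sigma}u]=0$.

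It remains to conclude that $u=0$. With $\gamma_{n}^{\Sigma}u=0$, the harmonic problem \eqref{eq:uh} appearing in the decomposition of Proposition~\ref{prop:decomp1} has vanishing Dirichlet data, hence $u_h=0$ and the singular part disappears; so $u=u_{reg}\in \mathcal{H}^{1-}(\Omega\setminus\Sigma)$ with $[\gamma_{0}^{\Sigma}u]=0$. Restricting to $\Omega_p$ and $\Omega_n$, the function $u$ then satisfies the homogeneous Neumann problem \eqref{eq:main_problem} with $f=0$ in each subdomain, and Theorem~\ref{theorem:fl2} gives $u|_{\Omega_p}=u|_{\Omega_n}=0$. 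The main obstacle I expect is purely bookkeeping: getting the sign in the $\Omega_n$-version of the Green's identity right, and making sure the duality pairing on $\Sigma$ collapses to an $L^2$-integral (which it does, because $\gamma_{n}^{\Sigma}u\in \mathcal{H}^{1/2}(\Sigma)\subset L^2(\Sigma)$ by the definition of $\mathcal{V}_{sing}(\operatorname{div}(x\mathbb{A}\nabla\cdot);\Omega)$).
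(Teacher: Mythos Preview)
Your proof is correct and follows essentially the same approach as the paper: the paper packages your ``apply Theorem~\ref{theorem:green} in $\Omega_p$ and $\Omega_n$ and sum'' step into a separate corollary (the global Green's identity \eqref{eq:key_green}), then sets $v=u$ and proceeds exactly as you do. Your detour through $u_h=0$ and $[\gamma_0^\Sigma u]=0$ at the end is harmless but unnecessary---once $\gamma_n^\Sigma u=0$, Theorem~\ref{theorem:fl2} applies directly to $u|_{\Omega_\lambda}\in\mathcal{V}_{sing}(\Omega_\lambda)$ as a solution of the homogeneous problem~\eqref{eq:sp}.
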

\begin{remark}
	In the above, we require that $\gamma_n^{\Sigma}u\in \mathcal{H}^{1/2}(\Sigma)$ (cf. the definition of the space \eqref{eq:vsingdiv}), since the notion of $[\gamma_0^{\Sigma}.]$ introduced in Definition \ref{definition:one_sided_trace} relies on the decomposition of Proposition \ref{prop:decomp1}, which we have proven only in the case when $\gamma_n^{\Sigma}u\in \mathcal{H}^{1/2}(\Sigma)$. 
\end{remark}
The results of Theorems \ref{theorem:LAP} and \ref{theorem:main_result} follow from Proposition \ref{theorem:Aplus} and Theorem \ref{theorem:convergence_decomposition} at once. 
\begin{proof}[Proof of Theorem \ref{theorem:LAP}]
	By Theorem \ref{theorem:convergence_decomposition} $(u^{\nu})_{\nu>0}$ admits a subsequence that converges $\mathcal{H}^{1/2-\re}(\Omega)$ strongly, as $\nu\rightarrow 0+$, to some $u^+\in \mathcal{V}_{sing}(\operatorname{div}(x\mathbb{A}\nabla.); \Omega)$.  
	
	Assume now that $(u^{\nu})_{\nu>0}$ admits two  $L^2(\Omega)$-weakly convergent as $\nu\rightarrow 0$ subsequences that converge to different limits $u_1\neq u_2$. In virtue of Theorem \ref{theorem:convergence_decomposition}, we see that $u_j\in \mathcal{V}_{sing}(\operatorname{div}(x\mathbb{A}); \Omega)$ satisfies 
	\begin{align*}
		\operatorname{div}(x\mathbb{A}\nabla u_j)=f, \qquad [\gamma_0^{\Sigma}u_j]=-i\pi a_{11}^{-1}\gamma_n^{\Sigma}u_j,\quad j=1,2.
	\end{align*}
	Therefore, $d=u_1-u_2$ solves the problem \eqref{eq:alphaAnablaU}, and by Proposition \ref{theorem:Aplus}, $d=0$.  This shows that, as $\nu\rightarrow 0$, the whole family $u^{\nu}$ necessarily converges to $u^+$ in $\mathcal{H}^{1/2-\re}(\Omega)$.
\end{proof}
\begin{proof}[Proof of Theorem \ref{theorem:main_result}]
	Follows by the application of Theorem \ref{theorem:LAP},  Theorem \ref{theorem:convergence_decomposition} and the uniqueness result of Proposition \ref{theorem:Aplus}. The stated stability result, cf. Remark \ref{rem:wp}, follows from these results as well. 
\end{proof}
It remains to prove Proposition \ref{theorem:Aplus}. 
%
Our proof is inspired by a similar result in \cite{ciarlet_mk_peillon}, where uniqueness was proven for a non-standard variational formulation of \cite{nicolopoulos}. We use a similar approach to prove uniqueness for \eqref{eq:alphaAnablaU}. 
The following result follows at once from the Green's formula of Theorem \ref{theorem:green}.
\begin{corollary}
	For all $u, v\in \mathcal{V}_{sing}(\operatorname{div}(x\mathbb{A}\nabla.); \Omega)$, the following integration by parts formula holds true: 
	\begin{align}
		\label{eq:key_green}
		(\operatorname{div}(x\mathbb{A}\nabla u), v)_{L^2(\Omega)}-(u, \operatorname{div}(x\mathbb{A}\nabla v))_{L^2(\Omega)}=-\langle \gamma_n^{\Sigma}u, \overline{[\gamma_0^{\Sigma}v]}\rangle_{L^2(\Sigma)}
		+\overline{\langle \gamma_n^{\Sigma}v, \overline{[\gamma_0^{\Sigma}u]}\rangle}_{L^2(\Sigma)}.
	\end{align}
\end{corollary}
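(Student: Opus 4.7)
The plan is to obtain the global identity by applying the local Green's formula of Theorem \ref{theorem:green} separately on $\Omega_p$ and $\Omega_n$ and adding the two contributions, with careful bookkeeping of the orientation of $\vec{n}$.

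First I would note that for $u,v \in \mathcal{V}_{sing}(\operatorname{div}(x\mathbb{A}\nabla .);\Omega)$, the membership $\operatorname{div}(x\mathbb{A}\nabla u) \in L^2(\Omega)$ (rather than only in $L^2(\Omega_p)\times L^2(\Omega_n)$) forces the conormal trace to agree from both sides, i.e.\ $\gamma_n^{\Sigma,p}u = \gamma_n^{\Sigma,n}u = \gamma_n^{\Sigma}u$, and likewise for $v$. Thus both restrictions $u|_{\Omega_\lambda}, v|_{\Omega_\lambda}$ belong to $\mathcal{V}_{sing}(\operatorname{div}(x\mathbb{A}\nabla.);\Omega_\lambda)$ for $\lambda\in\{n,p\}$, so Theorem \ref{theorem:green} directly supplies the identity
\[
\int_{\Omega_p}\operatorname{div}(x\mathbb{A}\nabla u)\,\overline{v} - \int_{\Omega_p} u\,\overline{\operatorname{div}(x\mathbb{A}\nabla v)} = -\langle \gamma_n^{\Sigma}u,\,\overline{\gamma_0^{\Sigma,p}v}\rangle_{L^2(\Sigma)} + \overline{\langle \gamma_n^{\Sigma}v,\,\overline{\gamma_0^{\Sigma,p}u}\rangle}_{L^2(\Sigma)},
\]
using Definition \ref{definition:one_sided_trace} to interpret $\gamma_0^{\Sigma,p}$ as the trace of the regular part of the decomposition \eqref{eq:decomp_pi0}.

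Second, I would establish the analogous Green's formula on $\Omega_n$. Since Theorem \ref{theorem:green} is stated only for $\Omega_p$, this requires redoing its proof in $\Omega_n$. The argument is verbatim the same (decompose $u|_{\Omega_n},v|_{\Omega_n}$ via Theorem \ref{theorem:reg_well_posedness} applied in $\Omega_n$, compute using the integration by parts of \citeappendixregularity), except that the normal $\vec{n}$ on $\Sigma$ is now the \emph{outward} normal of $\Omega_n$ rather than the inward one of $\Omega_p$. This produces the opposite sign:
\[
\int_{\Omega_n}\operatorname{div}(x\mathbb{A}\nabla u)\,\overline{v} - \int_{\Omega_n} u\,\overline{\operatorname{div}(x\mathbb{A}\nabla v)} = +\langle \gamma_n^{\Sigma}u,\,\overline{\gamma_0^{\Sigma,n}v}\rangle_{L^2(\Sigma)} - \overline{\langle \gamma_n^{\Sigma}v,\,\overline{\gamma_0^{\Sigma,n}u}\rangle}_{L^2(\Sigma)}.
\]
Crucially, the periodic and homogeneous Dirichlet boundary conditions built into $\mathcal{H}^1(\Omega_\lambda)$ ensure that no additional boundary terms appear on $\Gamma_\lambda \cup \Gamma_\lambda^{\pm}$.

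Finally, adding the two identities and using $[\gamma_0^{\Sigma}u] = \gamma_0^{\Sigma,p}u - \gamma_0^{\Sigma,n}u$ (and likewise for $v$), together with bilinearity of $\langle\cdot,\cdot\rangle_{L^2(\Sigma)}$, immediately yields
\[
(\operatorname{div}(x\mathbb{A}\nabla u),v)_{L^2(\Omega)} - (u,\operatorname{div}(x\mathbb{A}\nabla v))_{L^2(\Omega)} = -\langle \gamma_n^{\Sigma}u,\,\overline{[\gamma_0^{\Sigma}v]}\rangle_{L^2(\Sigma)} + \overline{\langle \gamma_n^{\Sigma}v,\,\overline{[\gamma_0^{\Sigma}u]}\rangle}_{L^2(\Sigma)}.
\]
The only delicate point is the sign discrepancy between the two subdomain identities; this is not really an obstacle but simply requires tracking the convention fixed in the paper that $\vec{n}$ always points from $\Omega_n$ into $\Omega_p$. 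Once this sign bookkeeping is resolved, the result is an essentially trivial summation.
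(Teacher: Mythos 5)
Your proof is correct and follows exactly the approach the paper uses: apply Theorem \ref{theorem:green} on $\Omega_p$ and $\Omega_n$ separately (noting the sign flip of the outward normal at $\Sigma$ for $\Omega_n$), then sum and use that $[\gamma_n^{\Sigma}u]=0$ for elements of $\mathcal{V}_{sing}(\operatorname{div}(x\mathbb{A}\nabla.);\Omega)$ together with the definition $[\gamma_0^{\Sigma}w]=\gamma_0^{\Sigma,p}w-\gamma_0^{\Sigma,n}w$. The paper states this in one sentence; your explicit tracking of the orientation convention and of the no-flux-on-$\Gamma^\pm$ condition built into the space is a helpful expansion of the same argument.
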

\begin{proof}
	This result follows by a direct computation from Theorem \ref{theorem:green}: we apply it to $\Omega_p$ and $\Omega_n$ and sum up the obtained identities; recall that for the elements $u\in \mathcal{V}_{sing}(\operatorname{div}(x\mathbb{A}\nabla.); \Omega)$, $[\gamma_n^{\Sigma}u]=0$. 
\end{proof}
\begin{proof}[Proof of Proposition \ref{theorem:Aplus}]
	Applying \eqref{eq:key_green} with $v=u$ yields 
	\begin{align*}
		2i\Im	\langle \gamma_n^{\Sigma}u, [\gamma_0^{\Sigma}u]\rangle_{L^2(\Sigma)} =0.
	\end{align*}
	Since $[\gamma_0^{\Sigma}u]=-i\pi a_{11}^{-1}\gamma_n^{\Sigma}u$ the above implies  $
	\int_{\Sigma}|\gamma_n^{\Sigma}u|^2 a_{11}^{-1}=0, $
	hence $\gamma_n^{\Sigma}u=0$. Then $\left.u\right|_{\Omega_{\lambda}}\in \mathcal{V}_{sing}(\Omega_{\lambda})$, $\lambda\in \{n,p\}$, satisfies the homogeneuos Neumann problem \eqref{eq:main_problem} with vanishing data.  
	By Theorem \ref{theorem:fl2}, $u=0$.
\end{proof}
\section{Results for a domain with a hole and extension to more general cases}
\label{sec:general_geometries}
In this section we argue how the results of the previous sections can be extended to different geometries, first for a particular case of geometries with a hole and $\omega=0$, and next comment on the results for $\omega\neq 0$ and a more general class of geometries (see Section \ref{sec:other}).  
\subsection{A domain with a hole, $\omega=0$}
\label{sec:domhole}
We are given $\Omega_{ext}$ and $\Omega_{int}$, two bounded Lipschitz simply connected domains in $\mathbb{R}^2$, s.t. $\overline{\Omega}_{int}\subset\Omega_{ext}$; we assume $\Omega_{int}\neq \emptyset$ (see Remark \ref{rem:assumptionomegaint}). With their help, we define a domain $D:=\Omega_{ext}\setminus \overline{\Omega_{int}}$. Let $I$ be a $C^{3}$-loop inside $D$ without self-intersections, which surrounds $\Omega_{int}$, as shown in Figure \ref{fig:loop}. This loop divides $D$ into two $C^{2,1}$-subdomains $D_p$ and $D_{n}$. Let $I_p, I_n$ be the outer and inner boundaries of $D_p$ and $D_n$ respectively:
\begin{align}
	I_p:=\partial D_p\setminus I, \quad I_n:=\partial D_n\setminus I.
\end{align}
On the loop $I$ we define a unit normal $\vec{n}=\vec{n}_{I}$, which is oriented from $D_n$ into $D_p$. 
\begin{figure}
	\begin{tikzpicture}
		\begin{scope}[scale=1.2]
			\begin{scope}[scale = 1.5]
				\coordinate (A) at (0,1);
				\coordinate (B) at (1,0);
				\coordinate	(C) at (-1,-1);
				\draw[blue,fill=gray!10] (A) .. controls	+(0.7,0) and +(0,1) ..
				(B) .. controls +(0,-1) and +(1,-1) .. (C) .. controls +(-2,2) and
				+(0.2,0) .. (A);
			\end{scope}
			
			\coordinate (A) at (0,1);
			\coordinate (B) at (1,0);
			\coordinate (C) at (-1,-1);
			\draw[red] (A) .. controls +(0.7,0) and +(0,1) .. (B) .. controls +(0,-1)
			and +(1,-1) .. (C) .. controls +(-2,2) and +(0.2,0) .. (A);
			
			\begin{scope}[scale = 0.3]
				\coordinate (A) at (0,1);
				\coordinate (B) at (1,0);
				\coordinate (C) at (-1,-1);
				\draw[blue, fill=white] (A) .. controls +(0.7,0) and +(0,1) ..
				(B) .. controls    +(0,-1) and +(1,-1) .. (C) .. controls +(-2,2) and
				+(0.2,0) .. (A);
			\end{scope}
			
			\coordinate (A) at (0,1);
			\coordinate (B) at (1,0);
			\coordinate (C) at (-1,-1);
			
			\draw[red] (1,1) node {$I$};
			
			\draw[black] (-1.75,-0.9) node {$D_n$};
			\draw[black] (-0.8,-0.5) node {$D_p$};
		\end{scope}
	\end{tikzpicture}
	\caption{Illustration to the geometric configuration of Section \ref{sec:domhole}.} 
	\label{fig:loop}
\end{figure}
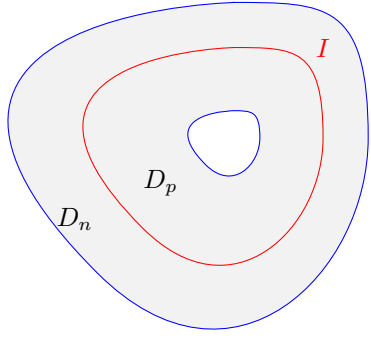

Now that the necessary geometrical preliminaries have been defined, let us introduce the coefficients of the model. In particular, let $\alpha: \, D\rightarrow \mathbb{R}$ satisfy the following assumptions. 
\begin{assumption}
	\label{assump:alpha}
	\begin{itemize}
		\item $\alpha\in C^{1,1}(\overline{D}; \mathbb{R})$
		\item $\alpha>0$ in $\overline{D_p}\setminus I$ and $\alpha<0$ in $\overline{D_n}\setminus I$;
		\item there exists an open neighborhood $U_I$ of $I$, s.t. 
		\begin{align}
			\label{eq:signed_distance}
			\alpha(\vec{x}):=\left\{
			\begin{array}{ll}
				\left|\operatorname{dist}(\vec{x},I)\right|,  & \vec{x}\in D_p\cap U_I,\\
				-\left|\operatorname{dist}(\vec{x},I)\right|, & \vec{x}\in D_{n}\cap U_I.
			\end{array}
			\right.
		\end{align} 
		Let us remark that the above definition is compatible with the regularity of $\alpha$, as argued in \cite{krantz_parks} or \cite[the proof of Lemma 14.6]{gilbrag_trudinger}. 
	\end{itemize}
\end{assumption}
We are given two matrix-valued functions that satisfy the following assumptions. 
\begin{assumption}
	\label{assump:matrices_general}
	\begin{itemize}
		\item $\mathbb{H}, \, \mathbb{N}\in {C}^{1,1}(\overline{D}; \mathbb{C}^{2\times 2})$. 
		\item For all $\bx\in \overline{D}$, $\mathbb{H}(\bx), \mathbb{N}(\bx)$ are both Hermitian, positive-definite matrices. In particular, 
		\begin{align*}
			\text{ for all }\bx\in \overline{D}, \, \vec{p}\in \mathbb{C}^2, \quad 	\mathbb{N}(\bx)\vec{p}\cdot \overline{\vec{p}}\geq c_{\mathbb{N}}|\vec{p}|^2, \quad 		\mathbb{H}(\bx)\vec{p}\cdot\overline{\vec{p}}\geq c_{\mathbb{H}}|\vec{p}|^2, \quad c_{\mathbb{N}}, \, c_{\mathbb{H}}>0.  
		\end{align*}
		\item by $\mathbb{H}_{I}$, resp. $\mathbb{N}_I$, we will denote the values of $\mathbb{H}$, resp. $\mathbb{N}$ on the interface $I$.
	\end{itemize}
\end{assumption}
We do not repeat the rest of the notations, recalling however that $\gamma_{n}^{I}u=\left.\alpha \mathbb{H}\nabla u\right|_{I}\cdot \vec{n}$ and $\gamma_{n,\nu}^{I}u=\left.(\alpha \mathbb{H}+i\nu \mathbb{N})\nabla u\right|_{I}\cdot \vec{n}.$
We consider the following BVP. Given $f\in L^2(D)$, let $(v ^{\nu})_{\nu>0}\subset  H^1_0(D)$ solve
\begin{align}
	\label{eq:B32}
	\begin{split}
		&\operatorname{div}((\alpha\mathbb{H}+i\nu\mathbb{N})\nabla v^{\nu})=0 \text { in }D,\\
		&\gamma_0 v^{\nu}=0.
	\end{split}
\end{align}
The following result is proven just like Lemma \ref{lem:pb_abs_wp}. 
\begin{lemma}
	\label{lem:pb_abs_wp_general}
	For each $f\in L^2(\Omega)$, $\nu>0$, the problem \eqref{eq:B32} admits a unique solution $v^{\nu}\in {H}^1_0(D)$, which also belongs to $H^2(D)$. 
\end{lemma}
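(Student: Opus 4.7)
The plan is to mirror the proof of Lemma \ref{lem:pb_abs_wp} in this more general setting. First I note that the natural variational space here is $H^1_0(D)$ (rather than $\mathcal{H}^1(\Omega)$), since the new boundary condition is a homogeneous Dirichlet condition on all of $\partial D$, with no periodicity needed. I would introduce the sesquilinear form
\begin{align*}
b_\nu(u, v) := \int_{D} (\alpha \mathbb{H} + i\nu\mathbb{N})\nabla u \cdot \overline{\nabla v}, \qquad u,v \in H^1_0(D),
\end{align*}
and show that $u^\nu \in H^1_0(D)$ solves \eqref{eq:B32} with right-hand side $f$ if and only if $b_\nu(u^\nu, v) = -(f,v)_{L^2(D)}$ for all $v \in H^1_0(D)$. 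Continuity of $b_\nu$ on $H^1_0(D) \times H^1_0(D)$ is immediate from $\alpha \in C^{1,1}(\overline D)$, $\mathbb{H}, \mathbb{N} \in C^{1,1}(\overline D; \mathbb{C}^{2\times 2})$.

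Next, I would establish coercivity exactly as in Corollary \ref{cor:M} and \eqref{eq:anubound}: since $\mathbb{N}(\vec{x})$ is Hermitian positive definite with $\mathbb{N}\vec{p}\cdot\overline{\vec{p}} \geq c_\mathbb{N} |\vec{p}|^2$ (Assumption \ref{assump:matrices_general}), and $\alpha$ is real-valued, one has
\begin{align*}
\Im b_\nu(u,u) = \nu\int_{D} \mathbb{N}\nabla u \cdot \overline{\nabla u} \geq c_{\mathbb{N}} \nu \|\nabla u\|_{L^2(D)}^2.
\end{align*}
The classical Poincaré inequality in $H^1_0(D)$ upgrades this to $\Im b_\nu(u,u) \gtrsim \nu \|u\|_{H^1(D)}^2$, and the Lax--Milgram lemma yields a unique $v^\nu \in H^1_0(D)$ solving \eqref{eq:B32}.

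For the $H^2$ regularity, I would invoke standard interior/boundary elliptic regularity for the complex, uniformly elliptic operator $\operatorname{div}((\alpha\mathbb{H} + i\nu\mathbb{N})\nabla \cdot)$ with $C^{1,1}$ coefficients and right-hand side $f \in L^2(D)$, along the lines of \cite[Theorem 4.18]{mclean} (as used in Lemma \ref{lem:pb_abs_wp}). Here one must rely on sufficient smoothness of $\partial D$; the only mild subtlety relative to the rectangular case is verifying that $\partial\Omega_{ext}$ and $\partial\Omega_{int}$ are regular enough (one would tacitly add a $C^{1,1}$ assumption on these boundaries if only Lipschitz regularity is given, since the $H^2$ statement does not hold for a purely Lipschitz domain). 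Since there are no interface terms at this stage (the coefficient $\alpha\mathbb{H} + i\nu\mathbb{N}$ does not degenerate for $\nu > 0$), no obstacle comparable to those in Sections \ref{sec:subdomains}--\ref{sec:LAP} appears: the proof is a straightforward transcription of Lemma \ref{lem:pb_abs_wp}, and I expect no substantial difficulty beyond keeping track of the change in geometry and boundary conditions.
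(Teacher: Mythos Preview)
Your proposal is correct and follows exactly the approach the paper intends: the paper's own proof is simply the one-line remark ``This result is proven just like Lemma \ref{lem:pb_abs_wp},'' and your write-up is a faithful unpacking of that argument in the new geometry. Your flag about boundary regularity is a legitimate observation---the paper states only that $\Omega_{ext}$ and $\Omega_{int}$ are Lipschitz, which is not quite enough for global $H^2$ regularity, so an implicit $C^{1,1}$ assumption on $\partial D$ is indeed being used.
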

To state a generalization of the results of Section \ref{sec:simplified}, let us introduce the following spaces, for $\lambda\in \{n,p\}$,
\begin{alignat*}{2}
	&\mathcal{H}^1_{\delta}(D_{\lambda})&:=&\{v\in L^2(D_{\lambda}): \||\alpha|^{\delta/2}\nabla v\|<\infty, \gamma_0^{I_{\lambda}} v=0\},\\
	&\mathcal{V}_{reg}(D_{\lambda})&:=&\{v\in L^2(D_{\lambda}): \||\alpha|^{1/2}\nabla v\|<\infty, \, \gamma_0^{I_{\lambda}} v=0\}, \\ &\mathcal{V}_{sing}(D_{\lambda})&:=&\{v\in L^2(D_{\lambda}): \|\alpha\nabla v\|<\infty, \gamma_0^{I_{\lambda}} v=0\},
\end{alignat*}	
and set
\begin{align*}
	\mathcal{V}_{reg}(D)=\mathcal{V}_{reg}(D_{n})\times \mathcal{V}_{reg}(D_p), \quad \mathcal{V}_{sing}(D)=\mathcal{V}_{sing}(D_{n})\times \mathcal{V}_{sing}(D_p).
\end{align*}	
Additionally, let 
\begin{align*}
	\mathcal{V}_{sing}(\operatorname{div}(\alpha\mathbb{H}\nabla.); D):=\{v\in \mathcal{V}_{sing}(D): \, \operatorname{div}(\alpha\mathbb{H}\nabla v)\in L^2(D), \, \gamma_n^I v\in H^{1/2}(I)\}.
\end{align*}
Then the following counterparts of the results of Section \ref{sec:simplified} hold true.
\begin{theorem}[Limiting absorption principle]
	\label{theorem:LAP2}
	Given $f\in L^2(D)$, consider the family of solutions to \eqref{eq:B32} $(v^{\nu})_{\nu>0}\subset H^1_0(D)$. Then, as $\nu\rightarrow 0+$, $v^{\nu}\rightarrow v^+$ in  $\bigcap_{\varepsilon>0}H^{1/2-\varepsilon}(D)$, and $v^+\in \mathcal{V}_{sing}(\operatorname{div}(\alpha\mathbb{H}\nabla.); D)$. 
\end{theorem}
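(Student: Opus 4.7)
The strategy is to reduce the statement to the already-established Theorem \ref{theorem:LAP} by a localization-plus-change-of-coordinates argument, treating the neighborhood of $I$ and the regions away from $I$ separately. Away from $I$, the operator $\operatorname{div}((\alpha\mathbb{H}+i\nu\mathbb{N})\nabla\cdot)$ is uniformly elliptic on each subdomain $D_\lambda$, so there the analysis is standard and uniform in $\nu$; all the difficulty is concentrated in a tubular neighborhood of $I$.

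First, I would introduce a tubular neighborhood $\mathcal{T}_{2\delta}:=\{\vec{x}\in D:\operatorname{dist}(\vec{x},I)<2\delta\}$ for $\delta$ small enough that the normal exponential map from $I$ is a $C^{2,1}$-diffeomorphism onto $\mathcal{T}_{2\delta}$. The resulting coordinates $(s,\tau)$ on $\mathcal{T}_{2\delta}$, with $s$ the signed distance (positive on $D_p$-side, cf.\ \eqref{eq:signed_distance}) and $\tau$ an arclength parameter on $I$ (periodic with period $|I|$), satisfy $\alpha(\vec{x})=s$, and $I$ corresponds exactly to $\{s=0\}$. In these coordinates the PDE in \eqref{eq:B32} becomes, after multiplying by the Jacobian, an equation of the form
\begin{align*}
\operatorname{div}_{(s,\tau)}\bigl((s\widetilde{\mathbb{A}}(s,\tau)+i\nu\widetilde{\mathbb{T}}(s,\tau))\nabla_{(s,\tau)}\widetilde{v}^{\nu}\bigr)+\text{(l.o.t.)}=\widetilde{f},
\end{align*}
where $\widetilde{\mathbb{A}},\widetilde{\mathbb{T}}$ are $C^{1,1}$-smooth, Hermitian positive-definite matrices that inherit from $\mathbb{H},\mathbb{N}$ the properties of Assumption \ref{assump:matrices}, and where the lower-order term comes from the curvature of $I$ and from the derivative of the Jacobian. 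The $\tau$-direction is naturally periodic (since $I$ is a loop), so the new problem fits into the framework of \eqref{eq:unu}, modulo a harmless compactly supported lower-order perturbation.

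Next, I would localize: fix a cutoff $\chi\in C^\infty(\overline{D})$ equal to $1$ on $\mathcal{T}_\delta$ and supported in $\mathcal{T}_{2\delta}$, and split $v^\nu=\chi v^\nu+(1-\chi)v^\nu$. The piece $(1-\chi)v^\nu$ satisfies an equation with uniformly elliptic principal part on each $D_\lambda$, with right-hand side in $L^2(D)$ uniformly in $\nu$ (the commutator $[\operatorname{div}(\alpha\mathbb{H}\nabla\cdot),\chi]$ involves only $\nabla v^\nu$ multiplied by $\alpha$ or by $\nu$, which is controlled by \eqref{eq:est_main_v2} once the corresponding global weighted estimates, re-derived as in Theorem \ref{theorem:stability_estimate}, are available). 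This piece therefore converges in $H^1$ on each $D_\lambda$ by standard elliptic theory. The piece $\chi v^\nu$, after pulling back to the tubular coordinates, becomes a solution of a problem on $(-2\delta,2\delta)\times \mathbb{R}/|I|\mathbb{Z}$, extended by zero to a rectangle of the type considered in Section \ref{sec:problem_setting} (the outer Dirichlet conditions being replaced by vanishing in a neighborhood of $|s|=2\delta$ thanks to the cutoff). One can then replay the entire analysis of Sections \ref{sec:subdomains}--\ref{sec:LAP_Problem}: the density, Hardy, Poincaré and trace results in the weighted spaces, the key stability estimate of Theorem \ref{theorem:stability_estimate}, the refined conormal bound of Theorem \ref{theorem:gnubound_improved}, and the decomposition/trace-jump of Theorem \ref{theorem:convergence_decomposition} all depend only on the structural properties $s\widetilde{\mathbb{A}}+i\nu\widetilde{\mathbb{T}}$ with $\widetilde{\mathbb{A}},\widetilde{\mathbb{T}}$ satisfying Assumption \ref{assump:matrices}, together with periodicity in the tangential direction; all are stable under the addition of the lower-order term, which is absorbed via the Young inequality and the Poincaré inequality in $\mathcal{V}_{reg}$.

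Finally, patching the two pieces gives a limit $v^+\in\mathcal{V}_{sing}(\operatorname{div}(\alpha\mathbb{H}\nabla\cdot);D)$ and strong convergence in $H^{1/2-\varepsilon}(D)$ by the same compact-embedding argument as in the proof of Theorem \ref{theorem:LAP}; uniqueness of the limit follows from the transplanted jump relation $[\gamma_0^{I}v^+]=-i\pi(\mathbb{H}_I\vec{n}\cdot\vec{n})^{-1}\gamma_n^{I}v^+$ and the analogue of Proposition \ref{theorem:Aplus}. \textbf{The main obstacle} I expect is bookkeeping: checking that the lower-order terms generated by the change of variables and by the commutators with $\chi$ genuinely do not spoil the delicate frequency-splitting argument of Proposition \ref{prop:gnubound_improved}, where the lifting operator $L^\nu$ is tuned precisely to the principal part. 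This requires verifying that such terms contribute only $O(\|f\|)$ to each of $\mathcal{I}_1,\mathcal{I}_2,\mathcal{I}_3$ in \eqref{eq:dyy}, which in turn relies on the uniform-in-$\nu$ elliptic regularity estimate of Proposition \ref{proposition:bounds_nu}, itself applicable to the curved geometry after the same coordinate change.
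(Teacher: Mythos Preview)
Your proposal is correct and follows essentially the same route as the paper. The paper first re-derives the global $\mathcal{V}_{sing}$-bound (Theorem~\ref{theorem:stability_estimate_general}) directly in $D$ by integration by parts, the Baouendi--Goulaouic regularity result, and a lifting lemma adapted to $I$ via a local change of variables; it then localizes near $I$ with a cutoff, changes coordinates to the rectangular model, and reruns Sections~\ref{sec:LAP}--\ref{sec:LAP_Problem} verbatim --- exactly your plan.

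Two minor remarks. First, the paper uses a finite partition of unity $\{\chi_k\}$ subordinate to local charts $\bpsi_k$ rather than your single global Fermi chart; since $I$ is a $C^3$ loop both choices work, and yours is arguably cleaner. Second, your ``lower-order terms'' do not in fact appear: under the divergence-form change of variables, $\operatorname{div}(\alpha\mathbb{H}\nabla u)$ becomes $j^{-1}\widetilde{\operatorname{div}}(\tilde{x}\widetilde{\mathbb{A}}\widetilde{\nabla}\widetilde{u})$ with $\widetilde{\mathbb{A}}=\gamma j\,\mathbb{J}^{-1}\widetilde{\mathbb{H}}\mathbb{J}^{-t}$ absorbing the factor $\gamma$ from $\widetilde{\alpha}=\tilde{x}\gamma$ (Lemma~\ref{lem:alphadef}), so the transformed equation is again of the exact form \eqref{eq:unu} with a modified $L^2$ right-hand side and no first-order perturbation. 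This makes your ``main obstacle'' disappear: the frequency-splitting argument of Proposition~\ref{prop:gnubound_improved} applies to $\widetilde{v}_k^\nu$ without any additional bookkeeping.
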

Like before, the function $v^+$ as defined in the above theorem will be referred to as a 'limiting absorption solution'.  
Let us state a counterpart of Proposition \ref{prop:decomp1}. For this let us define following function on $I$:
\begin{align}
	\label{eq:Hi}
	h_I:=\left.\vec{n}_I\cdot\mathbb{H}\nabla \alpha\right|_{I}=\vec{n}_I\cdot \mathbb{H}_I\vec{n}_I,
\end{align}
where we used that, since $\alpha$ is a signed distance in the vicinity of the interface, necessarily, \begin{align}\label{eq:alphaprop}\left.\nabla \alpha\right|_{I} =\vec{n}_I,
\end{align} see \cite [proof of Lemma 14.16]{gilbrag_trudinger}. The function $h_I$ plays a role of $a_{11}$ in the previous sections. 
\begin{proposition}
	\label{prop:decomp2}
	Let $v\in \mathcal{V}_{sing}(\operatorname{div}(\alpha\mathbb{H}\nabla.);D)$. 
	Then $u$ can be decomposed in a unique manner as follows: 
	\begin{align}
		\label{eq:decomp_pi0_2}
		v=v_h\log|\alpha|+v_{reg},
	\end{align}
	where $v_h\in H^1_0(D)$ is a piecewise-$\mathbb{H}$-harmonic function that satisfies 
	\begin{align*}
		&\operatorname{div}(\mathbb{H}\nabla v_h)=0 \text{ in }D\setminus I, \\
		&\gamma_0^I v_h=h_{I}^{-1}\gamma_n^I v,\\
		&\gamma_0^{I_p\cup I_n}v_h=0,
	\end{align*}
	and $v_{reg}\in {H}^{1-}(D\setminus I)\bigcap \bigcap_{\varepsilon>0}\mathcal{H}^1_{\re}(D\setminus I)$. 
\end{proposition}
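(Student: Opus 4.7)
The plan is to mirror the proof of Proposition \ref{prop:decomp1} by first isolating the logarithmic singularity and then showing the residual is regular, replacing the role of the coordinate $x$ by the signed distance $\alpha$. Given $v\in\mathcal{V}_{sing}(\operatorname{div}(\alpha\mathbb{H}\nabla.);D)$ one has $\gamma_n^Iv\in H^{1/2}(I)$ and, by Assumption \ref{assump:matrices_general}, $h_I=\vec{n}_I\cdot\mathbb{H}_I\vec{n}_I>0$, so standard elliptic theory for the $\mathbb{H}$-Dirichlet problem produces a unique $v_h\in H^1_0(D)$ solving the system in \eqref{eq:decomp_pi0_2}; continuity across $I$ (hence globally $H^1_0$) is automatic because the two one-sided Dirichlet traces on $I$ both equal $h_I^{-1}\gamma_n^Iv$. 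I then set $v_{sing}:=v_h\log|\alpha|$ and $v_{reg}:=v-v_{sing}$. That $v_{sing}\in\mathcal{V}_{sing}(D)$ is immediate: $v_h\in L^p(D)$ for all finite $p$ by 2D Sobolev embedding while $\log|\alpha|\in L^p(D)$ similarly, and
\[
\alpha\nabla v_{sing}=\alpha\log|\alpha|\,\nabla v_h+v_h\,\nabla\alpha\in L^2(D),
\]
using $\alpha\log|\alpha|\in L^\infty(\overline D)$ and $\nabla\alpha\in L^\infty(D)$ (from $\alpha\in C^{1,1}$).

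Next, using $\operatorname{div}(\mathbb{H}\nabla v_h)=0$ in $D\setminus I$, a direct computation yields in each subdomain
\[
\operatorname{div}(\alpha\mathbb{H}\nabla v_{sing})=(2+\log|\alpha|)\,\nabla\alpha\cdot\mathbb{H}\nabla v_h+v_h\,\operatorname{div}(\mathbb{H}\nabla\alpha),
\]
which belongs to $L^2_\delta(D_\lambda)$ for every $\delta>0$ because $\int_D|\alpha|^\delta\log^2|\alpha|<\infty$. Hence $\operatorname{div}(\alpha\mathbb{H}\nabla v_{reg})\in L^2_\delta(D_\lambda)$ for every $\delta>0$. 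Moreover, since $\alpha\log|\alpha|$ vanishes on $I$ and $\nabla\alpha|_I=\vec{n}_I$ by \eqref{eq:alphaprop}, the variational definition of the conormal trace gives $\gamma_n^{I,\lambda}v_{sing}=h_I\gamma_0^{I,\lambda}v_h=\gamma_n^Iv$, so $\gamma_n^{I,\lambda}v_{reg}=0$ in $H^{-1/2}(I)$.

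The main obstacle is the regularity upgrade for $v_{reg}$, namely the conclusion $v_{reg}\in\bigcap_{\varepsilon>0}\mathcal{H}^1_\varepsilon(D_\lambda)$ (and hence, via Lemma \ref{lem:embedding}, $v_{reg}\in H^{1-\varepsilon}(D_\lambda)$), which requires the curved-geometry analogue of Lemma \ref{lem:propreg2} for the homogeneous Neumann problem with right-hand side in $L^2_\delta(D_\lambda)$. In the flat case this rested on Fourier/pseudo-differential techniques adapted to $\Sigma=\{x=0\}$; for a $C^3$-loop $I$ I would flatten the interface in a tubular neighborhood via the signed-distance chart $\vec{x}\mapsto(\alpha(\vec{x}),s(\vec{x}))$ with $s$ an arclength parameter, exploiting $|\nabla\alpha|=1$ and $\nabla\alpha\cdot\nabla s=0$ near $I$ to reduce the principal part of $\operatorname{div}(\alpha\mathbb{H}\nabla.)$, up to lower-order smooth perturbations, to the model operator treated in Section \ref{sec:homog}. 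A partition of unity then combines this local analysis near $I$ with classical elliptic regularity on regions where $|\alpha|$ is bounded away from zero.

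Uniqueness of the decomposition follows as in Theorem \ref{theorem:reg_well_posedness}: if a second decomposition $v=\tilde v_h\log|\alpha|+\tilde v_{reg}$ existed, the difference $(v_h-\tilde v_h)\log|\alpha|=\tilde v_{reg}-v_{reg}$ would satisfy the hypotheses of the curved analogue of Lemma \ref{lem:deftraces} (whose proof is local near $I$ and transfers by the same change of coordinates), forcing $\gamma_0^Iv_h=\gamma_0^I\tilde v_h$, whence $v_h=\tilde v_h$ by uniqueness of the $\mathbb{H}$-harmonic Dirichlet problem, and therefore $v_{reg}=\tilde v_{reg}$.
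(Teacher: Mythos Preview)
Your proposal is correct and follows essentially the same route as the paper: define $v_h$ via the piecewise $\mathbb{H}$-harmonic Dirichlet problem, set $v_{sing}=v_h\log|\alpha|$, verify $\gamma_n^I v_{sing}=\gamma_n^I v$, and obtain the regularity of $v_{reg}$ from the curved analogue of Lemma~\ref{lem:propreg2} via local flattening of $I$ (the paper packages this as Theorem~\ref{theorem:decomposition_general}, proved ``verbatim'' from Theorem~\ref{theorem:reg_well_posedness}). One minor algebraic slip: since $\mathbb{H}$ is Hermitian but not real-symmetric, the cross term should read $(1+\log|\alpha|)\nabla\alpha\cdot\mathbb{H}\nabla v_h+\nabla v_h\cdot\mathbb{H}\nabla\alpha$ rather than $(2+\log|\alpha|)\nabla\alpha\cdot\mathbb{H}\nabla v_h$, but this does not affect the $L^2_\delta$ conclusion.
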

Next, for functions like in Proposition \ref{prop:decomp2}, we denote by $v_{reg,\lambda}=\left. v_{reg}\right|_{D_{\lambda}}$. The regular part of such functions carries the Dirichlet trace. 
\begin{definition}
	\label{def:trace_general}
	Let $v$ be like in Proposition \ref{prop:decomp2}. We define the one-sided trace of $\left. v\right|_{D_{\lambda}}$, $\lambda\in \{n,p\}$, on $I$ as a trace of its regular part:
	\begin{align*}
		\gamma_{0}^{I,\lambda}v:=\gamma_0^{I,\lambda}v_{reg,\lambda}\in H^{1/2-}(I), \quad \lambda\in \{n,p\}.
	\end{align*}
	The jump of the traces is then defined via $
	[\gamma_{0}^{I}v]:=\gamma_0^{I,p}v-\gamma_0^{I,n}v.$
\end{definition}
Like before, the notion of trace enables us to single out the limiting absorption solution among all the functions $v\in \mathcal{V}_{sing}(\operatorname{div}(\alpha\mathbb{H}\nabla.); D)$ that satisfy 
\begin{align}
	\label{eq:pb1_m}
	\begin{split}
		&\operatorname{div}(\alpha\mathbb{H}\nabla v)=f \text{ in }D.
	\end{split}
\end{align}
The following result is the second main result of this section. 
\begin{theorem}
	\label{theorem:main_result2}
	Given $f\in L^2(D)$, the limiting absorption solution $v^+$ as defined in Theorem \ref{theorem:LAP2} is a unique solution to the following well-posed problem: find $v\in \mathcal{V}_{sing}(\operatorname{div}(\alpha\mathbb{H}\nabla .); D)$ that satisfies \eqref{eq:pb1_m} and 
	\begin{align}
		\label{eq:traces2}
		[\gamma_0^{I}v]=-i\pi h_{I}^{-1}\gamma_n^{I}v. 
	\end{align}
\end{theorem}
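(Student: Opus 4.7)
The plan is to mirror exactly the scheme used for Theorem \ref{theorem:main_result}: existence via Theorem \ref{theorem:LAP2}, verification of the jump identity via the extension of Theorem \ref{theorem:convergence_decomposition}, and uniqueness via a Green's formula plus Theorem \ref{theorem:fl2} transplanted to the curved geometry. Since the general-geometry versions of Theorem \ref{theorem:LAP2} and Proposition \ref{prop:decomp2} are assumed already established, the only new work is (a) obtaining the jump relation \eqref{eq:traces2} for $v^+$, and (b) proving a uniqueness statement analogous to Proposition \ref{theorem:Aplus}.

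First I would establish the analog of Proposition \ref{prop:aux} in the curved geometry. The key observation is that $\alpha$ is a $C^{1,1}$ signed distance in a tubular neighborhood $U_I$ of $I$, so $|\nabla \alpha|=1$ on $I$ and $\nabla \alpha|_I = \vec n_I$, see \eqref{eq:alphaprop}. Setting $r := \vec n_I \cdot \mathbb{N}\vec n_I / h_I$ extended smoothly into $D$, one rewrites
\begin{equation*}
\operatorname{div}((\alpha\mathbb{H}+i\nu\mathbb{N})\nabla v^\nu) = \operatorname{div}((\alpha+i\nu r)\mathbb{H}\nabla v^\nu) + i\nu\operatorname{div}((\mathbb{N}-r\mathbb{H})\nabla v^\nu),
\end{equation*}
where $(\mathbb{N}-r\mathbb{H})\vec n_I\cdot \vec n_I = 0$ on $I$, so the correction term enjoys the same $O(\|f\|)$ $L^2$-bound as in \eqref{eq:fcontnu}. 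One then makes the ansatz $v^\nu = v_h^\nu \log(\alpha + i\nu r) + v_{cont}^\nu$, where $v_h^\nu\in H^1_0(D)$ solves the decoupled $\mathbb{H}$-harmonic problem with Dirichlet datum $h_I^{-1}\widetilde g^\nu$, $\widetilde g^\nu$ being the conormal trace of $v^\nu$ for the modified operator. The uniform bound $\|v_{cont}^\nu\|_{H^{1-\varepsilon}(D)} + \|v_{cont}^\nu\|_{\mathcal{H}^1_\varepsilon(D)} \lesssim \|f\|$ is derived exactly as in the proof of Proposition \ref{prop:aux}, using the curved-geometry counterparts of Proposition \ref{prop:lap_reg} and of Theorem \ref{theorem:gnubound_improved}; the Hardy-type inequalities and weighted Poincar\'e estimates transfer verbatim because $\alpha$ behaves like the distance to $I$ in $U_I$.

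Next I would pass to the limit $\nu \to 0+$ along a weakly convergent subsequence. Bounded pointwise almost everywhere convergence of $\log(\alpha + i\nu r)$ to $\log|\alpha| + i\pi \mathbb{1}_{D_n}$ yields, by the uniqueness of the decomposition in Lemma \ref{lem:deftraces} adapted to the curved interface, the identity
\begin{equation*}
v^+ = v_h^+(\log|\alpha| + i\pi\mathbb{1}_{D_n}) + v_{cont}^+,
\end{equation*}
with $v_h^+\in H^1(D\setminus I)$ and $v_{cont}^+\in H^{1-\varepsilon}(D)$; in particular $[\gamma_0^I v_{cont}^+]=0$ and $[\gamma_0^I v_h^+]=0$. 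Comparing with Definition \ref{def:trace_general} and with Proposition \ref{prop:decomp2} gives $\gamma_n^I v^+ = h_I \gamma_0^I v_h^+$ and $[\gamma_0^I v^+] = -i\pi \gamma_0^I v_h^+$, hence \eqref{eq:traces2}.

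For uniqueness I would prove the Green's formula
\begin{equation*}
(\operatorname{div}(\alpha\mathbb{H}\nabla u),v)_{L^2(D)} - (u,\operatorname{div}(\alpha\mathbb{H}\nabla v))_{L^2(D)} = -\langle \gamma_n^I u, \overline{[\gamma_0^I v]}\rangle_{L^2(I)} + \overline{\langle \gamma_n^I v, \overline{[\gamma_0^I u]}\rangle}_{L^2(I)},
\end{equation*}
valid for all $u,v \in \mathcal{V}_{sing}(\operatorname{div}(\alpha\mathbb{H}\nabla.);D)$. This is obtained by inserting the decomposition of Proposition \ref{prop:decomp2} into each of $D_p,D_n$ and mimicking the computation of Theorem \ref{theorem:green}; the only nontrivial input is that $\alpha\mathbb{H}\nabla(v_h\log|\alpha|)$ contributes an $\mathbb{H}\nabla \alpha \cdot \vec n_I\, v_h = h_I v_h$ term on the trace, by \eqref{eq:alphaprop} and \eqref{eq:Hi}, which is exactly what is needed for the formula to close. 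Given a candidate $u$ solving \eqref{eq:pb1_m} with $f=0$ and satisfying \eqref{eq:traces2}, taking $v=u$ yields $2i\operatorname{Im}\langle \gamma_n^I u, [\gamma_0^I u]\rangle = 0$, and the sign of the jump relation forces $\int_I h_I^{-1}|\gamma_n^I u|^2 = 0$, so $\gamma_n^I u = 0$. Then $u|_{D_\lambda}$ solves a homogeneous sign-definite Neumann problem on $D_\lambda$, and the curved-geometry analog of Theorem \ref{theorem:fl2} forces $u=0$.

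The main obstacle is step one, i.e.\ porting the decomposition result of Proposition \ref{prop:aux} to the curved interface: the explicit lifting $v_h \log(\alpha+i\nu r)$ needs to be shown to produce a right-hand side in $\bigcap_{\delta>0} L^2_\delta(D_\lambda)$ and compatible Neumann data on $I$, both of which rely on $\alpha$ being a true signed distance only inside $U_I$. This is handled by localization: multiplying $v_h$ by a cutoff supported in $U_I$ does not alter the leading trace behaviour on $I$, and the discrepancy outside $U_I$ is absorbed into the regular part. Once this point is settled, the rest of the proof is a translation of the flat-interface argument.
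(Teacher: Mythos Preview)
Your high-level scheme—existence from Theorem \ref{theorem:LAP2}, the jump identity from a curved analogue of Theorem \ref{theorem:convergence_decomposition}, uniqueness from a Green's formula plus the homogeneous Neumann well-posedness—is exactly the paper's. The difference is where the flattening happens. You work directly on $D$ with the lifting $v_h^\nu\log(\alpha+i\nu r)$ and derive the Green's formula intrinsically; the paper instead localizes with a partition of unity $\{\chi_k\}$ subordinate to charts $\bpsi_k:\Omega\to\mathcal U_k$, pulls each piece $\chi_k v^\nu$ back to the rectangle (where $\widetilde\alpha=\tilde x\,\gamma$ with $\gamma>0$, Lemma \ref{lem:alphadef}), and then invokes the already-proven rectangular Proposition \ref{prop:aux} and Corollary \ref{cor:green} without modification—see Theorem \ref{theorem:convergence_decomposition_general} and Proposition \ref{prop:green_general_ingredient}. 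Your route is more geometric in presentation, but the technical inputs you assume (a curved Proposition \ref{prop:lap_reg}, the tangential-regularity bound behind \eqref{eq:fcontnu}) themselves need local coordinates to state and prove, so the chart work is deferred rather than avoided; the paper's approach buys a clean reuse of all rectangular machinery at the cost of tracking the Jacobian factors (e.g.\ $\widetilde a_{11}=\rho_\Sigma\widetilde{h_I}$) when transforming traces back. One small refinement: for the bound on $\nu\operatorname{div}((\mathbb N-r\mathbb H)\nabla v^\nu)$ to go through exactly as in \eqref{eq:fcontnu} you need $(\mathbb N-r\mathbb H)\nabla\alpha\cdot\nabla\alpha=0$ throughout $U_I$, not just on $I$, so take $r=(\nabla\alpha\cdot\mathbb N\nabla\alpha)/(\nabla\alpha\cdot\mathbb H\nabla\alpha)$ in $U_I$ rather than an arbitrary smooth extension from $I$.
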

The proofs of the statements of Propositions \ref{prop:decomp1}, Theorem \ref{theorem:LAP}, \ref{theorem:main_result} in the domain $\Omega$ can be quite easily adapted to a more general geometric setting. An argument on why this can be done is the following. 

First of all, our previous $L^2(\Omega)$-stability estimates on $u^{\nu}$ are based on the integration by parts arguments, well-posedness and regularity results for the homogeneous decoupled Neumann problem and quite delicate lifting lemmas, cf. the proof of Theorem \ref{theorem:stability_estimate}. The lifting lemmas are easily adapted by a change of coordinates to the curve $I$; well-posedness and regularity for the homogeneous Neumann problem are taken from \cite{baouendi_goulaouic}. This enables us to obtain the estimate  $\|v^{\nu}\|_{\mathcal{V}_{sing}(D)}\lesssim \|f\|$.

Once this estimate is obtained, we localize the problem in the vicinity of the interface $I$ with the help of the cut-off function $\chi$ to $\operatorname{div}(\alpha\nabla (\chi v))=f_{\chi}$, with $\|f_{\chi}\|\lesssim \|v\|_{\mathcal{V}_{sing}}+\|f\|\lesssim \|f\|$, and next perform an appropriate change of coordinates to get back to the problem of Section \ref{sec:simpl}, and repeat all the regularity derivations for this problem, including decompositions of the functions from $\mathcal{V}_{sing}(\operatorname{div}(x\mathbb{A}\nabla.); \Omega)$ into a regular and a singular parts.  Since, compared to the results of the previous sections, only technical modifications are necessary, all the required derivations and modifications are moved to Appendix \ref{appendix:proof_general}. 
\begin{remark}
	\label{rem:assumptionomegaint}
	The assumption on $\Omega_{int}\neq \emptyset$ is important, since it ensures that the homogeneous Neumann problem for $\operatorname{div}(\alpha\mathbb{N}\nabla u)=f$ in $D_p$ is well-posed, due to the additional Dirichlet boundary condition on $I_p$.
\end{remark}
\subsection{Case $\omega\neq 0$, other types of domains}
\label{sec:other}
The results of the previous section translate verbatim to the case when one replaces $\operatorname{div}((\alpha\mathbb{H}+i\nu \mathbb{N})\nabla v^{\nu})=f$ by   $\operatorname{div}((\alpha\mathbb{H}+i\nu \mathbb{N})\nabla v^{\nu})+\omega^2 v^{\nu}=f$ in \eqref{eq:B32}, and the domain $D_I$ by a simply connected domain, provided that $\omega$ satisfies the following assumption.
\begin{assumption}
	\label{remark:assumption}
	Assume that $\omega\in \mathbb{R}$ is s.t. the following problem is well-posed, for $\lambda\in \{n,p\}$. Given $f\in L^2(D_{\lambda})$, find $u_{\lambda}\in \mathcal{V}_{reg}(D_{\lambda})$ s.t. $\operatorname{div}(\alpha\mathbb{A}\nabla u_{\lambda})+\omega^2 u_{\lambda}=f \text{ in }D_{\lambda},$ and $\gamma_0^{\partial D_{\lambda}\setminus I}u_{\lambda}=0$. 
\end{assumption}
Because the space $\mathcal{V}_{reg}(D_{\lambda})$ is compactly embedded into $L^2(D_{\lambda})$ (this follows e.g. from Lemma \ref{lem:embedding} and compact embedding of $H^s$ into $L^2$, $s>0$), and the problem can be written in a self-adjoint form, there exists only a discrete number of frequencies, for which it is not well-posed. See \cite{baouendi_goulaouic} for more details.

The above assumption ensures in particular that Proposition \ref{prop:decomp2} holds true provided a non-vanishing zero order term, since the  corresponding problem with the frequency $\omega\neq 0$ admits a unique solution, see Remark \ref{rem:assumptionomegaint}.

%

\section*{Acknowledgements}
The authors are grateful to Patrick Ciarlet (ENSTA Paris, France) and Lyonell Boulton (Heriot-Watt University, UK) for many fruitful discussions. We thank as well to Daniel Grieser (University of Oldenburg) for his interest in this work and for pointing out a possibility of use of the Melrose' $b-$calculus techniques.

	\renewcommand{\appendix}{\par
		\setcounter{section}{0}
		\setcounter{subsection}{0}
		\gdef\thesection{\Alph{section}}
	}

	\appendix
	\numberwithin{theorem}{section}

\renewcommand{\appendix}{\par
	\setcounter{section}{0}
	\setcounter{subsection}{0}
	\gdef\thesection{\Alph{section}}
}
\numberwithin{theorem}{section}
\appendix
\section{Derivation of the model \eqref{eq:B3}}
\label{appendix:derivation}
The following presentation is due to Stix \cite{stix1992waves}, and Freidberg \cite[Sections 15.5, 15.6]{freidberg2008plasma}. 
Time-harmonic electromagnetic wave propagation in one-species cold magnetized plasma under the magnetic field $\boldsymbol{B}_0=(0,0,B_0)$ is described by the Maxwell's equations (with an appropriate rescaling, so that $\varepsilon_0=\mu_0=c=1$)
\begin{align*}
&	\operatorname{\textbf{curl}}\vec{E}-i\omega \vec{B}=0,\\
&	\operatorname{\textbf{curl}}\vec{B}+i\omega \uuline{\varepsilon}(\omega)\vec{E}=0,\qquad \omega>0, 
\end{align*}
with the cold plasma dielectric tensor 
\begin{align*}
	\uuline{\varepsilon}=\left(
	\begin{matrix}
	S & -iD & 0 \\
	iD & S & 0\\
	0 & 0 & P
	\end{matrix}
\right), \text{ where }
S=1-\frac{\omega_p^2}{\omega^2-\omega_c^2}, \quad D=\frac{\omega_c}{\omega}\frac{\omega_p^2}{\omega^2-\omega_c^2}, \; P=1-\frac{\omega_p^2}{\omega^2}.
\end{align*}
Here $\omega_c=c_cB_0$ is an algebraic cyclotron frequency, while $\omega_p=c_p\mathcal{N}$ is a plasma frequency;  the constants $c_c, \, c_p\in \mathbb{R}$ depend only on the nature of the particles the plasma is comprised of, while $\mathcal{N}=\mathcal{N}(\vec{x})$ is the plasma density. Variations in the plasma density in space $\vec{x}$ can lead to various degeneracies of the above model. 

The above PDE is posed in a bounded smooth domain $\mathcal{D}_{3D}$, and equipped with appropriate boundary conditions. One could think of $\mathcal{D}_{3D}$ being a tore (tokamak).

The model presented above does not take into account damping due to collisions, which occur with the collision frequency $\nu>0$ (i.e. we took $\nu=0$ to derive the above); to account for them, it suffices to replace in $\uuline{\varepsilon}$ the following quantities, see \cite[p.38]{stix1992waves},
\begin{align*}
	\omega_p^2\rightarrow \omega_p^2 \frac{\omega}{\omega_{\nu}}, \quad 	\omega_c\rightarrow \omega_c \frac{\omega}{\omega_{\nu}}, \qquad  \omega_{\nu}:=\omega+i\nu.
\end{align*}
We will denote the corresponding tensor by $\uuline{\varepsilon}^{\nu}$ and use the index $\nu$ in $S^{\nu}$, $D^{\nu}$, $P^{\nu}$.

We are interested in the situation when the fields do not depend on the $z$-variable. The field $(E_1, \, E_2, \, B_3)$ then satisfies the 2D Maxwell equations in a domain $\mathcal{D}$  (e.g. a cut of $\mathcal{D}_{3D}$ with a plane $z=0$):
\begin{align*}
	&\operatorname{curl}\mathbf{E}_{\perp}-i\omega B_3=0,\\
	&\operatorname{\mathbf{curl}}_{\perp}B_3+i\omega \uuline{\varepsilon_{\perp}}\mathbf{E}_{\perp}=0,
\end{align*}
with 
\begin{align*}
	\uuline{\varepsilon_{\perp}}=\left(
\begin{matrix}
	S & -iD \\
	iD & S 
\end{matrix}
\right).
\end{align*}
As usual in the 2D case, we can rewrite the above in a more convenient, scalar, form
\begin{align}
	\label{eq:divB}
	\operatorname{div}_{\perp}(	\uuline{\varepsilon_{\perp}}^{-1}\nabla B_3)+\omega^2 B_3=0,
\end{align}
with 
\begin{align}
	\label{eq:uualpha}
\uuline{\alpha}:=\uuline{\varepsilon_{\perp}}^{-1}=\frac{1}{S^2-D^2}\left(
	\begin{matrix}
		S & iD \\
		-iD & S
	\end{matrix}
\right).
\end{align}
In what follows, we will assume that $\uuline{\alpha}$ is well-defined everywhere in the domain $\mathcal{D}$.
\begin{assumption}[$\uuline{\alpha}$ is well-defined]
	\label{assump:SD}
It holds that $S^2\neq D^2 \text{ in }\mathcal{D}$. 
\end{assumption}
Recall now that all the coefficients in the above depend on frequency, as well as on the spatial variable $\vec{x}$, through the plasma density. 
An interesting situation occurs when $S=S(\vec{x})=0$ on a given curve $I\subset \mathbb{R}^3$ and $D=D(\vec{x})\neq 0$. 
Because the dependence on $\vec{x}$ manifests through the plasma frequency $\omega_p$ only, the level sets of $S$ coincide with the level sets of $D$; let us denote $\left. D\right|_{I}:=D_I=\operatorname{const}$. Let us remark at this point that $D=\frac{\omega_c}{\omega}(1-S)=D_I(1-S)$, and it holds that 
\begin{align}
	\label{eq:SD}
	S^{-1}(D-D_{I})=-\frac{\omega_c}{\omega}=-D_I.
\end{align}
The above rewriting allows to decompose
\begin{align*}
&\uuline{\alpha}=\frac{1}{S^2-D^2}
\left(
\begin{matrix}
S & iD_I-iD_IS\\
	-iD_I+iD_IS & S
\end{matrix}
\right)=
-\frac{1}{D_I^2}\left(
\begin{matrix}
	0 & iD_I\\
	-iD_I & 0
\end{matrix}
\right)+\uuline{r}, \\ &\uuline{r}:=\frac{S}{S^2-D^2}\left(
\begin{matrix}  
1 & i\frac{S+DD_I}{D_I}\\
-i\frac{S+DD_I}{D_I} & 1
\end{matrix}
\right),
\end{align*}
where we used, cf \eqref{eq:SD}, 
\begin{align*}
&\frac{1}{S^2-D^2}+\frac{1}{D_I^2}=	\frac{D_I^2+S^2-D^2}{(S^2-D^2)D_I^2}=\frac{S(S+D_I(D+D_I))}{(S^2-D^2)D_I^2}, \quad \text{ and  }\\
&\frac{S+D_I(D+D_I)}{D_I}-D_I=\frac{S+D_I D}{D_I}.
\end{align*}
Due to the anti-symmetry of the first term in the above expansion of $\uuline{\alpha}$, and the fact that this term is constant, we have that $\operatorname{div}(\uuline{\alpha}\nabla u)=\operatorname{div}(\uuline{r}\nabla u)$, which allows to rewrite  \eqref{eq:divB} as follows: 
\begin{align}
	\label{eq:divB2}
	\operatorname{div}\left(\uuline{r}\nabla B_3\right)+\omega^2 B_3=0.
\end{align}
Up to now, the derivation had been rather formal. We aim at giving a more precise meaning to the above expression. Indeed, in the vicinity of $I$, where $S(\vec{x})$ vanishes and $D=D_I=\operatorname{const}$, we have that 
\begin{align*}
	\uuline{r}=-\frac{S(\vec{x})}{D^2_I}\left(
	\begin{matrix}  
		1 & iD_I\\
		-iD_I& 1
	\end{matrix}
	\right)+O(S^2(\vec{x})),
\end{align*}
which shows that the equation \eqref{eq:divB2} has a degenerate coefficient in the principal part of the operator.

If we want to exclude a possible difficulty of different signs of eigenvalues of $S^{-1}\uuline{r}$, we can make the following assumption.
\begin{assumption}[Piecewise-ellipticity of the degenerate coefficient]
	\label{assumption:positive_definite_coplex}
	The matrix $S^{-1}\uuline{r}$ is hermitian, sign-definite. In other words, $\uuline{r}(S^2-D^2)$ is positive definite (its trace is $>0$), i.e
	\begin{align*}
		&1-\left(\frac{S+DD_I}{D_I}\right)^2>0 \text{ in }\mathcal{D}.
	\end{align*}
The latter condition is a condition on a relative variation of $D$ and $S$. 
It rewrites 
\begin{align*}
	(D_I-S-D_ID)(D_I+S+D_ID)>0.
\end{align*}
When evaluated on $I$, it requires that $(D_I-D_I^2)(D_I+D_I^2)>0\iff (D_I^2-D_I^4)>0\iff |D_I|<1$.

\end{assumption}
To ensure the above, we can impose a stricter assumption:
\begin{align*}
	D_I(1-D)-S>0 \text{ and }D_I(1+D)+S>0. 
\end{align*}
On the other hand, recalling that $D=D_I(1-S)$, cf. \eqref{eq:SD}, this implies the following restriction on $S$:
\begin{align*}
	S<\frac{D_I(D_I-1)}{(D_I^2-1)}<\frac{D_I}{D_I+1}, \quad S>-\frac{D_I+D_I^2}{1-D_I^2}=\frac{D_I}{D_I-1}.
\end{align*}
In other words, we can impose the following sufficient conditions on the coefficients.
\begin{assumption}[Simplification of Assumption \ref{assumption:positive_definite_coplex}]
	\label{assumption:positive_definite}
	We assume that 
	\begin{align}
	0<D_I<1 \text{ and }-\frac{D_I}{1-D_I}<S<\frac{D_I}{1+D_I},
	\end{align}
which implies that $\uuline{r}(\vec{x})=S(\vec{x})\uuline{a}(\vec{x})$, with $(S^2-D^2)\uuline{a}(\vec{x})$ being a Hermitian positive definite matrix. Remark that actually this assumption implies that $\uuline{a}$ is a Hermitian negative definite matrix. 

Indeed, for $D_I, S$ as above $S\neq \pm D$; this holds because $S=\pm D \iff S=\pm D_I(1-S)\iff S=\pm\frac{D_I}{1\pm D_I}$, which is excluded by our assumption. Therefore, $\operatorname{sign}(S^2-D^2)=-\operatorname{sign}D_I^2<0$ on $\mathcal{D}$. 
\end{assumption}

Next, we formalize \eqref{eq:divB2} more, by adding a source term $F$ and equipping it with appropriate boundary conditions: 
\begin{align}
		\label{eq:divB32}
		\operatorname{div}\left(\uuline{r}\nabla B_3\right)\equiv \operatorname{div}\left(\uuline{\alpha}\nabla B_3\right)=-\omega^2 B_3+F,\qquad \text{ in }\mathcal{D},\qquad 
		\gamma_0 B_3=0.
\end{align}
To give a meaning to a 'physical' solution to the above, which is far from being evident due to the degeneracy in $\uuline{r}$, one can pursue at least two approaches: 
\begin{itemize}
	\item  consider a non-vanishing collision frequency, which results in $\uuline{r}$ replaced by $\uuline{r}^{\nu}$ (resp. $\uuline{\alpha}$ by $\uuline{\alpha}^{\nu}$); hoping that the resulting problem is well-posed, denote the solution by $B_3^{\nu}$,  and next study the limit, if exists, as $\nu\rightarrow 0+$, of $B_3^{\nu}$. 
	\item consider $\nu=0$, add absorption to the frequency $\omega$ (i.e. replace $\omega\rightarrow \omega+i\eta$), and next study the limit of the obtained solution, which we, with an abuse of notation, again denote by $B_3^{\eta}$, as  $\eta\rightarrow 0+$. 
\end{itemize}
In general, it is unclear whether these two limits commute. We start by considering the first approach. Without going to actual numbers, we start by studying a general case when $S$ and $D$ are perturbed with a small absorption parameter, and the  tensor $\uuline{\alpha}$ is replaced by its perturbation.

\textit{General computations. }Given a small parameter $\mu>0$ and two expansions
\begin{align*}
	{S}^{\mu}(\vec{x}):=S(\vec{x})+i\mu\delta_S(\vec{x})+O_{L^{\infty}(\mathcal{D})}(\mu^2), \quad {D}^{\mu}(\vec{x}):=D(\vec{x})+i\mu \delta_D(\vec{x}) +O_{L^{\infty}(\mathcal{D})}(\mu^2),
\end{align*}
we can rewrite the perturbed tensor 
\begin{align}
\label{eq:perturbed_tensor}
	\uuline{\alpha}^{\mu}&=\frac{1}{(S^{\mu})^2-(D^{\mu})^2}\left(
	\begin{matrix}
		S^{\mu} & iD^{\mu}\\
		-iD^{\mu} & S^{\mu}
	\end{matrix}
\right)=\uuline{\alpha}+i\mu \uuline{t}+O(\mu^2),\\
\nonumber
\uuline{t}&:=-\frac{2S\delta_S-2D\delta_D}{(S^2-D^2)^2}\left(\begin{matrix}
	S & iD\\
	-iD & S
\end{matrix}\right)+\frac{1}{S^2-D^2}\left(
\begin{matrix}
	\delta_S & i\delta_D\\
	-i\delta_D & \delta_S
\end{matrix}
\right)\\
\label{eq:t_matrix_absorption}
&=\frac{1}{(S^2-D^2)^2}\left(
\begin{matrix}
2DS\delta_D-\delta_S(S^2+D^2) & i(\delta_D(S^2+D^2)-2SD\delta_S)\\
-i(\delta_D(S^2+D^2)-2SD\delta_S)&	2DS\delta_D-\delta_S(S^2+D^2)
\end{matrix}
\right).
\end{align}
In particular, on the interface $\vec{x}\in I$, 
\begin{align}
	\label{eq:tensorReg}
	\uuline{t}(\vec{x})&=\frac{1}{D^2_I}\left(
	\begin{matrix}
		-\delta_S(\vec{x}) & i\delta_D(\vec{x})\\
		-i\delta_D(\vec{x}) & -\delta_S(\vec{x})
	\end{matrix}
	\right).
\end{align}
The tensor $\uuline{t}$ will be responsible for a 'regularization' of the problem. Indeed, \eqref{eq:divB32} with a perturbed tensor $\uuline{\alpha}^{\mu}$ (where we use the same argument as before to replace $\uuline{\alpha}$ by $\uuline{r}$) rewrites  
\begin{align}
	\label{eq:divB3}
\operatorname{div}(\uuline{\alpha}^{\mu}\nabla \widetilde{B}_3^{\mu})=	\operatorname{div}((\uuline{r}+i\mu\uuline{t}+O(\mu^2))\nabla \widetilde{B}_3^{\mu})=-\omega^2 \widetilde{B}_3^{\mu}+F.
\end{align}
Because $\uuline{r}$ vanishes on $I$, we would like $\uuline{t}$ be sign-definite in the vicinity of $I$; in particular, this requires that 
\begin{align}
	\label{eq:sign_definite}
	\delta_S^2-\delta_D^2>0 \text{ on }I.
\end{align}
On the other hand, if $S$, $D$, $\delta_S$, $\delta_D$ do not vary drastically in the domain, $\uuline{t}$ remains sign-definite. Thus, a generalization of \eqref{eq:sign_definite} reads. 
\begin{assumption}[Ellipticity of $\uuline{t}$]
	\label{assump:coefficients_positivity}
The coefficients $S, \, \delta_S, \, D, \delta_D$ satisfy in $\mathcal{D}$: 
	\begin{align*}
\left(2DS\delta_D-\delta_S(S^2+D^2)\right)^2-(\delta_D(S^2+D^2)-2SD\delta_S)^2>0.
	\end{align*}
The above is equivalent to
\begin{align*}
	-(\delta_S+\delta_D)(S-D)^2(\delta_D-\delta_S)(S+D)^2>0 \iff \delta_S^2-\delta_D^2>0 \text{ and }S\neq \pm D\text{ in }\mathcal{D}. 
\end{align*}
The above implies that $\uuline{t}$ is a strictly positive (or negative)-definite matrix in $\mathcal{D}$. 
\end{assumption}
\paragraph{Collision limit}
When $\nu>0$ is indeed a collision frequency, we can compute the perturbations of the coefficients $S, \, D$ explicitly; in particular, 
	\begin{align*}
S^{\nu}&=1-\frac{\omega_p^2\omega_{\nu}}{\omega(\omega_{\nu}^2-\omega_c^2)}=S+i\nu\frac{(\omega^2+\omega_c^2)\omega_p^2}{\omega(\omega^2-\omega_c^2)^2}+O_{L^{\infty}(\mathcal{D})}(\nu^2),\\
D^{\nu}&=\frac{\omega_c\omega_p^2}{\omega(\omega_{\nu}^2-\omega_c^2)}=D-i\nu\frac{2\omega_c\omega_p^2}{(\omega^2-\omega_c^2)^2}+O_{L^{\infty}(\mathcal{D})}(\nu^2).
	\end{align*}
We then have 
\begin{align}
	\label{eq:deltas}
	\delta_S=\frac{\omega^2+\omega_c^2}{\omega(\omega^2-\omega_c^2)}\left(1-S\right), \quad \delta_D=-\frac{2\omega_c}{(\omega^2-\omega_c^2)}\left(1-S\right). 
\end{align}
For Assumption \ref{assump:coefficients_positivity} to be fulfilled, we want that 
\begin{align*}
	|\delta_S|>|\delta_D|,
\end{align*}
and as $S<1$ by Assumption \ref{assumption:positive_definite}, the above is equivalent to requiring that $|\omega|\neq |\omega_c|$ and $\omega^2+\omega_c^2>2|\omega|\omega_c$, which is always true.

%
%
%
%
With these new expressions, we have the following important result. 
\begin{corollary}
	\label{cor:expansion}
Assume that $\omega>\omega_c>0$, and the following holds true:
	\begin{align}
		\label{eq:assump_var}
	 -\frac{D_I}{1-D_I}<S<\frac{D_I}{1+D_I}.
	\end{align}
Then, for all sufficiently regular $u$, it holds that  
\begin{align*}
	\operatorname{div}(\uuline{\alpha}^{\nu}\nabla u)=	\operatorname{div}(\left(S(\vec{x})\uuline{a}+i\nu\uuline{t}+\nu^2\uuline{c}^{\nu}\right)\nabla u), 
\end{align*}
where 
\begin{align*}
	\uuline{a}=\frac{1}{S^2-D^2}\left(
	\begin{matrix}
		1 & i\frac{S+DD_I}{D_I}\\
		-i\frac{S+DD_I}{D_I}& 1	
	\end{matrix}
	\right)
\end{align*}
is a Hermitian, negative definite in $\mathcal{D}$ matrix; the matrix $\uuline{t}$ is given by \eqref{eq:t_matrix_absorption} with the coefficients defined in \eqref{eq:deltas} and is Hermitian, negative definite in $\mathcal{D}$.

Finally, for sufficiently regular $S, \, D$, the matrix $\uuline{c}^{\nu}(\vec{x})$ is bounded in $\mathcal{D}$ uniformly in $\nu$, in other words, $\sup\limits_{0\leq \nu<1}\|\uuline{c}^{\nu}\|_{L^{\infty}}<\operatorname{const}$.
\end{corollary}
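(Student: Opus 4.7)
The plan is to treat this as a direct-computation corollary of the formulas $\eqref{eq:perturbed_tensor}$--$\eqref{eq:deltas}$ already assembled in the text. I will split the argument into three tasks: (i) verifying that the stated hypotheses imply Assumptions \ref{assump:SD}--\ref{assumption:positive_definite_coplex}, so that the claim about $\uuline{a}$ and the form of the expansion follow for free; (ii) checking negative-definiteness of $\uuline{t}$ by diagonalising; and (iii) obtaining a $\nu$-uniform $L^\infty$-bound on the second-order remainder. Only (iii) will require any care.

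For (i), the hypothesis $\omega > \omega_c > 0$ gives $D_I = \omega_c/\omega \in (0,1)$, while the bound on $S$ in \eqref{eq:assump_var} is exactly the one appearing in Assumption \ref{assumption:positive_definite}. That assumption already yields that $\uuline{a}$ is Hermitian negative-definite on $\mathcal{D}$ and, in particular, that $S^2 \neq D^2$ throughout $\mathcal{D}$. Since $\uuline{\alpha} - S\uuline{a}$ is a constant antisymmetric matrix, which is annihilated by $\operatorname{div}(\cdot\,\nabla u)$, the identity
\begin{align*}
    \operatorname{div}(\uuline{\alpha}^\nu\nabla u) = \operatorname{div}\bigl((S\uuline{a} + [\uuline{\alpha}^\nu - \uuline{\alpha}])\nabla u\bigr)
\end{align*}
reduces the whole claim to showing that the Taylor expansion $\uuline{\alpha}^\nu - \uuline{\alpha} = i\nu\uuline{t} + \nu^2 \uuline{c}^\nu$ holds with the right first-order coefficient and a bounded remainder; the former is given by \eqref{eq:t_matrix_absorption}--\eqref{eq:deltas}.

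For (ii), I will use the fact that $\uuline{t}$ has the Hermitian form $\bigl(\begin{smallmatrix} a & ib \\ -ib & a\end{smallmatrix}\bigr)$, whose eigenvalues are $a \pm b$. After algebraic simplification these will read
\begin{align*}
    \lambda_+ = \frac{\delta_D - \delta_S}{(S-D)^2}, \qquad \lambda_- = -\frac{\delta_D + \delta_S}{(S+D)^2}.
\end{align*}
Substituting \eqref{eq:deltas} and using $\omega > \omega_c > 0$ together with $S < D_I/(1+D_I) < 1$, I will read off $\delta_S > 0 > \delta_D$ (hence $\lambda_+ < 0$) and the identity $\delta_S + \delta_D = (1-S)(\omega-\omega_c)^2 / [\omega(\omega^2-\omega_c^2)] > 0$ (hence $\lambda_- < 0$). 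Compactness of $\overline{\mathcal{D}}$ will then upgrade this to uniform negative-definiteness.

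The hard part is (iii), because $[0,1)$ is not compact and a naive continuity argument does not suffice. The trick I plan to exploit is the rational factorisation
\begin{align*}
    S^\nu \mp D^\nu = 1 - \frac{\omega_p^2(\vec{x})}{\omega(\omega_\nu \mp \omega_c)} = \frac{A_\mp(\vec{x}) + i\omega\nu}{B_\mp + i\omega\nu},
\end{align*}
with $A_\mp(\vec{x}) := \omega(\omega\mp\omega_c) - \omega_p^2(\vec{x})$ and constant $B_\mp := \omega(\omega\mp\omega_c)$, which gives $|S^\nu\mp D^\nu|^2 = (A_\mp^2 + \omega^2\nu^2)/(B_\mp^2 + \omega^2\nu^2)$. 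By Assumption \ref{assump:SD} (already established in step (i)) together with continuity of $\omega_p^2$ on the compact $\overline{\mathcal{D}}$, one has $\inf_{\vec{x}\in\overline{\mathcal{D}}} A_\mp^2 > 0$, while the denominator is bounded above for $\nu\in[0,1)$. Consequently $|(S^\nu)^2 - (D^\nu)^2|$ admits a lower bound uniform in $(\vec{x},\nu)\in\overline{\mathcal{D}}\times[0,1)$. Since $S^\nu$ and $D^\nu$ are themselves uniformly bounded in modulus, $\uuline{\alpha}^\nu$ is a smooth function of $\nu$ with derivatives of any fixed order uniformly bounded, and the integral form of the Taylor remainder
\begin{align*}
    \uuline{c}^\nu(\vec{x}) = \int_0^1 (1-s)\, \partial_\nu^2 \uuline{\alpha}^{s\nu}(\vec{x})\, ds
\end{align*}
will therefore be uniformly bounded in $L^\infty(\mathcal{D})$ for $\nu\in[0,1)$, concluding the proof.
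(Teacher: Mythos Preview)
Your proposal is correct and follows the same overall scheme as the paper: (i) is identical, and both arguments reduce (ii) and (iii) to the same basic ingredients (Assumption~\ref{assumption:positive_definite} for $\uuline{a}$, Assumption~\ref{assump:coefficients_positivity} for $\uuline{t}$, regularity in $\nu$ for the remainder).

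Where you differ is in execution. For (ii), the paper does not compute eigenvalues: it invokes Assumption~\ref{assump:coefficients_positivity} to conclude that $\uuline{t}$ is strictly sign-definite throughout $\mathcal{D}$, and then determines the sign by evaluating $\operatorname{sign}\operatorname{Tr}\uuline{t}$ at a single point of $I$ (using \eqref{eq:tensorReg}). Your explicit diagonalisation $\lambda_\pm = (a\pm b)$ and the factorisations $(\delta_D-\delta_S)/(S-D)^2$, $-(\delta_S+\delta_D)/(S+D)^2$ are cleaner and yield both eigenvalues directly, avoiding the connectedness step. For (iii), the paper's proof is a one-line appeal to ``regularity of $\uuline{\varepsilon}^\nu$ in $\nu$ and $\vec{x}$''; your rational factorisation $S^\nu\mp D^\nu = (A_\mp+i\omega\nu)/(B_\mp+i\omega\nu)$ makes that appeal concrete by exhibiting the uniform lower bound on $|(S^\nu)^2-(D^\nu)^2|$, which is the only place a naive continuity argument could fail. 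One small caveat: your use of $\inf_{\overline{\mathcal{D}}}A_\mp^2>0$ tacitly assumes $S\neq\pm D$ on the closure, not just on $\mathcal{D}$; this is harmless under the standing hypothesis that \eqref{eq:assump_var} holds on $\overline{\mathcal{D}}$ (or can be replaced by a local argument), and the paper is no more careful on this point.
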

\begin{proof}
	We use the expansion \eqref{eq:perturbed_tensor}, and the same argument as before to pass from \eqref{eq:divB} to \eqref{eq:divB2}; we have $\uuline{r}=S\uuline{a}$. Remark that $\omega>\omega_c>0$ implies that $0<D_I=\frac{\omega_c}{\omega}<1$, cf. \eqref{eq:SD}. Then 
	 the assumption \eqref{eq:assump_var} of the corollary is exactly Assumption \ref{assumption:positive_definite}, which ensures that $\uuline{a}$ is a Hermitian negative definite matrix. 
	
	Next, let us show that \eqref{eq:assump_var} implies Assumption \ref{assump:coefficients_positivity}, which, in turn ensures, that $\uuline{t}$ is a strictly positive- (or negative-)definite matrix in $\mathcal{D}$. As argued before the statement of the corollary, since Assumption \ref{assumption:positive_definite} holds true, we have that 
	\begin{align*}
		\delta_S^2-\delta_D^2>0 \text{ and }|S|< |D| \text{ in }\mathcal{D}.
	\end{align*}

Thus, $\uuline{t}$ is (strictly) sign-definite in $\mathcal{D}$.  As its both eigenvalues do not change their signs in $D^{\nu}$, it suffices to examine what happens on $I$. Using the expression \eqref{eq:tensorReg}, we see that $\operatorname{sign}\operatorname{Tr}\uuline{t}=-\operatorname{sign}\left.\delta_S\right|_{I}=-\operatorname{sign}\frac{(1-S)}{1-\omega^{-2}\omega_c^2}=-\operatorname{sign}\frac{1}{1-D_I^2}<0,$ where we remarked that $S<1$ by \eqref{eq:assump_var}, recalled \eqref{eq:SD}, namely $D_I=\omega_c/\omega$ and used $\omega>\omega_c$.

	Finally, the uniform bound on $\uuline{c}^{\nu}$ follows from the regularity of $\uuline{\varepsilon}^{\nu}(\vec{x})$, both in $\nu$ and $\vec{x}$. 
\end{proof}
We are thus interested in the following boundary-value problem: 
\begin{align*}
	\operatorname{div}\left(\left(S(\vec{x})(-\uuline{a})+i\nu(-\uuline{t})\right)\nabla B_3^{\nu}\right)-\omega^2 B_3^{\nu}=F, \quad \text{ in }\mathcal{D}, \quad \gamma_0 B_3^{\nu}=0.
\end{align*}
We omit the term $O(\nu^2)$ compared \eqref{eq:divB3}, since, as we will see later in the course of the article, for $\nu\rightarrow 0$, it does not seem to play a role in any of the conclusions of the paper. 
\paragraph{Limiting absorption limit}
When $\nu=\eta$ is an absorption added to the frequency, we can compute the perturbations of the coefficients $S, \, D$ explicitly; in particular, 
\begin{align*}
	S^{\eta}&=1-\frac{\omega_p^2}{(\omega_{\eta}^2-\omega_c^2)}=S+\frac{2\omega_p^2\omega}{(\omega^2-\omega_c^2)^2}i\eta+O_{L^{\infty}(\mathcal{D})}(\eta^2),\\
	D^{\eta}&=\frac{\omega_c\omega_p^2}{\omega_{\eta}(\omega_{\eta}^2-\omega_c^2)}=D-i\eta \frac{\omega_c\omega_p^2(\omega^2+\omega_c^2)}{\omega^2(\omega^2-\omega_c^2)^2}
+O_{L^{\infty}(\mathcal{D})}(\eta^2).
\end{align*}
We then have 
\begin{align}
	\label{eq:deltaseta}
	\delta_S=\frac{2\omega}{\omega^2-\omega_c^2}\left(1-S\right), \quad \delta_D=-\frac{\omega_c(\omega^2+\omega_c^2)}{\omega^2(\omega^2-\omega_c^2)}\left(1-S\right). 
\end{align}
For Assumption \ref{assump:coefficients_positivity} to be fulfilled, we want that 
\begin{align*}
	|\delta_S|>|\delta_D|,
\end{align*}
and as $S<1$ by Assumption \ref{assumption:positive_definite}, the above is equivalent to requiring that, for $\omega\neq \pm \omega_c$,  $\omega\neq 0$,
\begin{align*}
2|\omega|^3>|\omega_c|(\omega^2+\omega_c^2),
\end{align*}
and for the above to hold true it is sufficient that 
\begin{align*}
	|\omega|>|\omega_c|. 
\end{align*}
We then have the following result, proven like Corollary \ref{cor:expansion}.
\begin{corollary}
	Assume that $\omega>\omega_c>0$, and the following holds true:
	\begin{align}
		\label{eq:assump_var2}
		 -\frac{D_I}{1-D_I}<S<\frac{D_I}{1+D_I}.
	\end{align}
	Then, for all sufficiently regular $u$, it holds that  
	\begin{align*}
		\operatorname{div}(\uuline{\alpha}^{\eta}\nabla u)=	\operatorname{div}(\left(S(\vec{x})\uuline{a}+i\eta\uuline{t}+\eta^2\uuline{c}^{\nu}\right)\nabla u), 
	\end{align*}
	where 
	\begin{align*}
		\uuline{a}=\frac{1}{S^2-D^2}\left(
		\begin{matrix}
			1 & i\frac{S+DD_I}{D_I}\\
			-i\frac{S+DD_I}{D_I}& 1	
		\end{matrix}
		\right)
	\end{align*}
	is a Hermitian, negative definite in $\mathcal{D}$ matrix; the matrix $\uuline{t}$ is given by \eqref{eq:t_matrix_absorption} with the coefficients defined in \eqref{eq:deltaseta} and is Hermitian, negative definite in $\mathcal{D}$. Finally, for sufficiently regular $S, \, D$, the matrix $\uuline{c}^{\eta}(\vec{x})$ is bounded in $\mathcal{D}$ uniformly in $\nu$, in other words, $\sup\limits_{0\leq \nu<1}\|\uuline{c}^{\eta}\|_{L^{\infty}}<\operatorname{const}$.
\end{corollary}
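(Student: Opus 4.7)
The plan is to follow the template of Corollary \ref{cor:expansion} verbatim, substituting the new perturbations from \eqref{eq:deltaseta} in place of those in \eqref{eq:deltas}. First I would insert $\mu=\eta$ and the perturbations \eqref{eq:deltaseta} into the general tensor expansion \eqref{eq:perturbed_tensor}, then repeat the algebraic manipulation (anti-symmetric constant part has zero divergence, application of \eqref{eq:SD}) that converts $\operatorname{div}(\uuline{\alpha}\nabla u)$ into $\operatorname{div}(\uuline{r}\nabla u)=\operatorname{div}(S\,\uuline{a}\nabla u)$. This yields the stated decomposition of $\operatorname{div}(\uuline{\alpha}^{\eta}\nabla u)$; the remainder $\uuline{c}^{\eta}$ is bounded uniformly in $\eta\in[0,1)$ because $\uuline{\varepsilon}^{\eta}(\vec{x})$ is smooth in $\eta$ on the compact set $\overline{\mathcal{D}}\times[0,1]$ under Assumption \ref{assump:SD}.

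Next, since \eqref{eq:assump_var2} coincides with Assumption \ref{assumption:positive_definite}, and $0<D_I=\omega_c/\omega<1$ follows from $\omega>\omega_c>0$ via \eqref{eq:SD}, the negative definiteness of $\uuline{a}$ in $\mathcal{D}$ follows exactly as in Corollary \ref{cor:expansion}. Assumption \ref{assumption:positive_definite} also implies $S\neq\pm D$ in $\mathcal{D}$, so the ellipticity criterion in Assumption \ref{assump:coefficients_positivity} reduces to verifying $\delta_S^2-\delta_D^2>0$ pointwise in $\mathcal{D}$.

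The main (and only) bit of algebra to carry out explicitly is this inequality. Substituting \eqref{eq:deltaseta} and factoring out $(1-S)^2/(\omega^2-\omega_c^2)^2>0$ (note $1-S>0$ by \eqref{eq:assump_var2} since $S<D_I/(1+D_I)<1$), the sign of $\delta_S^2-\delta_D^2$ equals the sign of
\[
4\omega^6 - \omega_c^2\,(\omega^2+\omega_c^2)^2.
\]
Under the hypothesis $\omega>\omega_c>0$, one has $\omega^2+\omega_c^2<2\omega^2$ and $\omega_c^2<\omega^2$, hence $\omega_c^2(\omega^2+\omega_c^2)^2<\omega^2\cdot 4\omega^4=4\omega^6$, which gives the required strict positivity. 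Thus $\uuline{t}$ is strictly sign-definite on $\mathcal{D}$.

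Finally, to pin down the sign of $\uuline{t}$, I would compute its trace on the interface $I$ using \eqref{eq:tensorReg}: $\operatorname{sign}(\operatorname{Tr}\uuline{t}|_I)=-\operatorname{sign}(\delta_S|_I)$, and with \eqref{eq:deltaseta} this equals $-\operatorname{sign}\bigl(2\omega(1-S)/(\omega^2-\omega_c^2)\bigr)|_I<0$, so $\uuline{t}$ is Hermitian negative definite throughout $\mathcal{D}$. There is no real obstacle here; the proof is a direct transcription of the collision-limit case, with the one-line positivity check $4\omega^6>\omega_c^2(\omega^2+\omega_c^2)^2$ being the only genuinely new computation.
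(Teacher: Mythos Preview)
Your proposal is correct and follows essentially the same route as the paper, which simply states that the result is ``proven like Corollary \ref{cor:expansion}'' after having already recorded the key inequality $2|\omega|^3>|\omega_c|(\omega^2+\omega_c^2)$ (equivalent to your $4\omega^6>\omega_c^2(\omega^2+\omega_c^2)^2$) in the paragraph preceding the statement. The only cosmetic slip is that after factoring out $(1-S)^2/(\omega^2-\omega_c^2)^2$ you still need to clear the extra $\omega^{-4}$ coming from $\delta_D^2$ before arriving at $4\omega^6-\omega_c^2(\omega^2+\omega_c^2)^2$; since $\omega^4>0$ this does not affect the sign argument.
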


	\section{Properties of weighted spaces}
\label{appendix:weighted}
We will state the results for $\Omega_p$ only; they extend in a trivial manner to $\Omega_n$. 
\subsection{Basic properties and inequalities}
Let us introduce an auxiliary Hilbert space
\begin{align*} 
			\mathcal{H}^1_{\nu,\delta}(\Omega_p)=\{v\in L^2_{loc}(\Omega_p): \, \int_{\Omega_p}(x^{\nu}|u|^2+x^{\delta}|\nabla u|^2)d\vec{x}<\infty, \quad \gamma_0^{\Gamma_p}u=0, \quad \gamma_0^{\Gamma_p^+}u=\gamma_0^{\Gamma_p^{-}}u\},
\end{align*}
so that $\mathcal{H}^1_{\delta}(\Omega_p)=\mathcal{H}^1_{0,\delta}(\Omega_p)$. We start with the Poincar\'e inequality. 
\begin{proposition}
	\label{prop:poincare_appendix}
	Let $0\leq \delta\leq 2$. There exists $C_{\delta}>0$, s.t. for all $u\in \mathcal{H}^1_{\delta,\delta}(\Omega_p)$, 
	it holds that 
	\begin{align*}
		\|u\|_{L^2(\Omega_p)}^2\leq C_{\delta}	\int_{\Omega_p}x^{\delta}|\nabla u|^2d\vec{x}.
	\end{align*}
\end{proposition}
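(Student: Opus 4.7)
The plan is to prove the stated inequality by reducing it to a one-dimensional weighted inequality in the variable $x$, exploiting the Dirichlet boundary condition $u(a,y)=0$ on $\Gamma_p$. The main identity I would use is the integration by parts
\begin{align*}
\int_0^a |u(x,y)|^2\, dx \;=\; \bigl[x|u(x,y)|^2\bigr]_0^a - \int_0^a x\, \partial_x |u(x,y)|^2\, dx \;=\; -2 \int_0^a x \,\mathrm{Re}\bigl(\overline{u}(x,y)\, \partial_x u(x,y)\bigr)\, dx,
\end{align*}
valid for smooth functions vanishing on $\Gamma_p$ and compactly supported in $x$ away from $\Sigma$ (so that the boundary term at $x=0$ drops for free).

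Once this identity is in hand, the Cauchy--Schwarz inequality, splitting the weight as $x = x^{(2-\delta)/2} \cdot x^{\delta/2}$, gives
\begin{align*}
\int_0^a |u(x,y)|^2\, dx \;\leq\; 2\,\biggl(\int_0^a x^{2-\delta}|u(x,y)|^2\, dx\biggr)^{1/2} \biggl(\int_0^a x^{\delta}|\partial_x u(x,y)|^2\, dx\biggr)^{1/2}.
\end{align*}
Since $0\leq \delta \leq 2$, we have $2-\delta\geq 0$ and $x\in(0,a)$, hence $x^{2-\delta}\leq a^{2-\delta}$. Substituting this bound and squaring/dividing yields, for a.e.\ $y$,
\begin{align*}
\int_0^a |u(x,y)|^2\, dx \;\leq\; 4\, a^{2-\delta} \int_0^a x^{\delta}|\partial_x u(x,y)|^2\, dx,
\end{align*}
and integration over $y\in(-\ell,\ell)$, together with $|\partial_x u|^2\leq |\nabla u|^2$, gives the claim with $C_\delta = 4a^{2-\delta}$.

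To make the argument rigorous for every $u\in \mathcal{H}^1_{\delta,\delta}(\Omega_p)$ (and not only for smooth test functions), I would invoke a density result for smooth functions compactly supported away from $\Sigma$, vanishing on $\Gamma_p$ and periodic in $y$, analogous to Proposition~\ref{prop:density_vreg} / \eqref{eq:vreg_density}; the inequality then extends by continuity. The only delicate point is precisely this density step for $1\leq \delta\leq 2$, where functions in the space need not possess a trace on $\Sigma$. This is the part I expect to be the main technical obstacle, but it is already handled (in the same spirit as Grisvard's Theorem~1.1) by the density statements already cited for $\mathcal{V}_{reg}$ and $\mathcal{V}_{sing}$; once this is granted, the integration by parts above works uniformly in $\delta\in[0,2]$, avoiding the usual dichotomy $\delta<1$ versus $\delta\geq 1$ that a direct Hardy-type computation $u(x,y)=-\int_x^a\partial_x u(s,y)\,ds$ would force.
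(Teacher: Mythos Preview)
Your core computation is exactly the paper's: the identity
\[
\int_0^a |u(x,y)|^2\,dx = -2\int_0^a x\,\mathrm{Re}\bigl(\overline{u}\,\partial_x u\bigr)\,dx
\]
followed by Cauchy--Schwarz/Young and the bound $x^{2-\delta}\leq a^{2-\delta}$ is precisely what the paper does (it phrases the estimate via Young's inequality and then $x^2\lesssim x^{\delta}$, but this is the same manipulation).

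There is, however, a genuine gap in your density step. You want to approximate by smooth functions \emph{compactly supported away from $\Sigma$}, citing Proposition~\ref{prop:density_vreg}. That proposition only gives density of $\mathcal{C}^{\infty}_{comp}(\overline{\Omega}_p)$ in $\mathcal{H}^1_{\delta}(\Omega_p)$ for $1\leq\delta\leq 2$; for $0\leq\delta<1$ functions in $\mathcal{H}^1_{\delta}(\Omega_p)$ carry a nontrivial trace on $\Sigma$ (Corollary~\ref{cor:tr}), so the compactly supported class is \emph{not} dense and your argument breaks down precisely on the range you were hoping to cover uniformly.

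The fix is simple and is what the paper actually does: use density of $\mathcal{C}^{\infty}(\overline{\Omega}_p)$ in $\mathcal{H}^1_{\delta,\delta}(\Omega_p)$ (Kufner, Theorem~7.2), which holds for all $0\leq\delta\leq 2$. For such $u$, the boundary term $\bigl[x|u(x,y)|^2\bigr]_{x=0}$ vanishes not because $u$ is compactly supported but because $u$ is bounded on $\overline{\Omega}_p$ and the factor $x$ kills it. With that adjustment your proof is complete and coincides with the paper's.
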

\begin{proof}
	By repeating the argument of \cite[Theorem 7.2]{kufner}, $\mathcal{C}^{\infty}(\overline{\Omega}_p)$ functions are dense in $\mathcal{H}^1_{\delta,\delta}(\Omega_p)$, hence it suffices to prove the desired result for $u\in \mathcal{C}^{\infty}(\overline{\Omega}_p)$. Without loss of generality, we assume that $u$ is real-valued. By integration by parts we have
	\begin{align*}
		u^2(x,y)=-\int_{x}^a 2\partial_{x'}u(x',y)u(x',y)dx', \quad (x,y)\in \Omega_p,
	\end{align*}
	where we used $u^2(a, y)=0$. The above yields 
	\begin{align*}
		\int_0^a u^2(x,y)dx&=-2\int_0^a\int_x^a \partial_{x'} u(x',y)u(x',y)dx' dx
	=-2\int_0^a x\partial_x u(x,y)u(x,y)dx.
	\end{align*}
	Using the Young inequality implies that, for all $\varepsilon>0$,  there exists $C_{\varepsilon}>0$, s.t. 
	\begin{align*}
		\int_0^a u^2(x,y)dx\leq C_{\varepsilon}\int_0^a x^2(\partial_x u)^2 dx +\varepsilon\int_0^a u^2 dx. 
	\end{align*}
	Integrating the above in $y\in (-\ell,\ell)$ and using $x^2\lesssim x^{\delta}$, $0\leq\delta\leq 2$,  yields the desired statement. 
\end{proof}
With the above, we obtain
\begin{proposition}
	\label{prop:density_smooth}
	Let $0\leq \delta\leq 2$. Then ${\mathcal{H}}^1_{\delta}(\Omega_p)=\overline{\mathcal{C}^{\infty}(\overline{\Omega}_p)}^{\|.\|_{{\mathcal{H}}^1_{\delta}(\Omega_p)}}$.
\end{proposition}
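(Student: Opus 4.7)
The plan is to approximate $u \in \mathcal{H}^1_{\delta}(\Omega_p)$ by a two-step procedure: first use a dilation in the $x$-direction to move the support away from both $\Sigma = \{x=0\}$ and $\Gamma_p = \{x=a\}$, and then mollify against a smooth kernel that respects the periodicity in $y$. This is a variant of Kufner's argument, tailored to our rectangular geometry and the mixed boundary/periodicity conditions encoded in $\mathcal{C}^\infty(\overline{\Omega}_p)$.

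First I would fix $0 < \epsilon < \frac{1}{2}$ and define
\begin{align*}
u_{\epsilon}(x,y) := \begin{cases} u\!\left(\dfrac{x-\epsilon a}{1-2\epsilon},\, y\right), & (x,y) \in [\epsilon a,(1-\epsilon)a]\times(-\ell,\ell),\\[4pt] 0, & \text{otherwise on }\Omega_p.\end{cases}
\end{align*}
Since the map $x'\mapsto (1-2\epsilon)x'+\epsilon a$ sends $(0,a)$ onto $(\epsilon a,(1-\epsilon)a)$, the function $u_\epsilon$ belongs to $\mathcal{H}^1_{\delta}(\Omega_p)$, is supported in a closed rectangle bounded away from $\Sigma$ and from $\Gamma_p$, vanishes identically near $\Gamma_p$, and inherits $y$-periodicity from $u$. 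A change of variables gives
\begin{align*}
\int_{\Omega_p}x^{\delta}|\nabla u_{\epsilon}|^{2} \;=\; \frac{1}{1-2\epsilon}\int_{\Omega_p}\bigl((1-2\epsilon)x'+\epsilon a\bigr)^{\delta}\,|\nabla u(x',y)|^{2}\,dx'dy,
\end{align*}
and $L^1$-dominated convergence (using that $x^\delta|\nabla u|^2 \in L^1$ and the integrand is bounded by $a^{\delta}|\nabla u|^{2}$) yields convergence of this quantity to $\int_{\Omega_p} x^{\delta}|\nabla u|^{2}$. Combined with $L^2$-continuity of affine reparametrizations, this proves $u_{\epsilon}\to u$ in $\mathcal{H}^1_{\delta}(\Omega_p)$ as $\epsilon\to 0^{+}$.

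Second, for each $u_\epsilon$, I would extend it $2\ell$-periodically in $y$ to obtain $\widetilde{u}_{\epsilon}$ on $[\epsilon a,(1-\epsilon)a]\times\mathbb{R}$, and then by zero to all of $\mathbb{R}^2$. Convolving with a standard radial mollifier $\rho_{\eta}\in C_c^{\infty}(\mathbb{R}^2)$ of support radius $\eta < \epsilon a/2$ gives $v_{\epsilon,\eta}:=\rho_{\eta}*\widetilde{u}_{\epsilon}\in C^{\infty}(\mathbb{R}^2)$, which is $2\ell$-periodic in $y$ (since convolution commutes with periodic translations) and whose $x$-support lies in $[\epsilon a/2,\,(1-\epsilon/2)a]$. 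Restricting to $\Omega_p$, we therefore have $v_{\epsilon,\eta}\in \mathcal{C}^{\infty}(\overline{\Omega}_p)$: it vanishes identically in an $x$-neighborhood of $\Gamma_p$ (so $\gamma_0^{\Gamma_p}v_{\epsilon,\eta}=0$) and its periodicity in $y$ propagates to all derivatives. On the support of $v_{\epsilon,\eta}$ the weight $x^{\delta}$ is bounded above and below by positive constants depending on $\epsilon$, hence the $\mathcal{H}^1_{\delta}$-norm is equivalent to the standard $H^{1}$-norm there, and classical mollification estimates yield $v_{\epsilon,\eta}\to u_{\epsilon}$ in $\mathcal{H}^1_{\delta}(\Omega_p)$ as $\eta\to 0^{+}$. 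A diagonal extraction then produces a sequence in $\mathcal{C}^{\infty}(\overline{\Omega}_p)$ converging to $u$.

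The main technical point, and the only step where the constraint $0\leq \delta\leq 2$ is actively used, is the limit passage in Step 1: convergence of $\int x^{\delta}|\nabla u_{\epsilon}|^{2}$ to $\int x^{\delta}|\nabla u|^{2}$. This is immediate once $|\nabla u|^2 x^\delta \in L^1$, so no further restriction is needed. Step 2 is routine once the support of $u_\epsilon$ has been separated from the singular set $\Sigma$; the only care required is the use of periodic (rather than zero) extension in $y$ before mollifying, which is essential to preserve the periodicity conditions defining $\mathcal{C}^{\infty}(\overline{\Omega}_p)$.
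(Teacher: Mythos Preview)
Your Step 1 has a genuine gap: the dilation $x\mapsto\frac{x-\epsilon a}{1-2\epsilon}$ transports the behaviour of $u$ near $x'=0$ to a neighbourhood of the \emph{interior} line $x=\epsilon a$, where the weight $x^{\delta}$ no longer vanishes. Since a generic $u\in\mathcal{H}^1_{\delta}(\Omega_p)$ satisfies only $x^{\delta/2}\nabla u\in L^2$, not $\nabla u\in L^2$, the rescaled gradient $\nabla u_{\epsilon}$ can fail to be square-integrable near $x=\epsilon a$, and then $u_{\epsilon}\notin\mathcal{H}^1_{\delta}(\Omega_p)$. Concretely, take $\delta=2$ and $u(x,y)=x^{-1/4}\psi(x)$ with $\psi$ a cutoff equal to $1$ near $0$ and $0$ near $a$: one checks $u\in L^2(\Omega_p)$ and $\int x^2|\partial_x u|^2<\infty$, so $u\in\mathcal{H}^1_2(\Omega_p)$; yet after your change of variables the integrand behaves like $(\epsilon a)^{2}(x')^{-5/2}$ near $x'=0$, which diverges. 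This also explains why your dominated convergence step cannot work: the proposed majorant $a^{\delta}|\nabla u|^{2}$ is not integrable unless $\nabla u\in L^2$, which is precisely what fails for $\delta>0$. A secondary manifestation of the same issue is the zero extension on $(0,\epsilon a)$: for $\delta<1$ the trace $\gamma_0^{\Sigma}u$ need not vanish, so gluing to zero creates a jump and a singular measure in the distributional gradient.

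The paper sidesteps all of this by a different route: it quotes Kufner's density theorem for the space $\mathcal{H}^1_{\delta,\delta}(\Omega_p)$ (where \emph{both} $u$ and $\nabla u$ carry the weight $x^{\delta}$), and then uses the Poincar\'e-type inequality $\|u\|_{L^2}\lesssim\|x^{\delta/2}\nabla u\|_{L^2}$, valid for $0\le\delta\le2$ thanks to the Dirichlet condition on $\Gamma_p$, to identify $\mathcal{H}^1_{\delta}(\Omega_p)=\mathcal{H}^1_{\delta,\delta}(\Omega_p)$ with equivalent norms. The restriction $\delta\le2$ enters only through this Poincar\'e step. If you want a direct argument in the spirit of yours, the dilation must be chosen so that the weight is respected (e.g.\ a multiplicative scaling $x\mapsto\lambda x$ toward $\Sigma$, combined with a separate treatment of the $\Gamma_p$ side), or one uses a variable-scale mollifier whose radius shrinks like the distance to $\Sigma$; either way the argument is considerably more delicate than what you wrote.
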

\begin{proof}
		This result follows from \cite[Theorem 7.2]{kufner} about  the density of $C^{\infty}(\overline{\Omega}_p)$ functions in $\mathcal{H}^1_{\delta,\delta}(\Omega_p)$, once we argue that 
		\begin{align}
			\label{eq:h1delta}
		\text{for $0\leq \delta\leq 2$, }	\mathcal{H}^1_{\delta}(\Omega_p)=\mathcal{H}^1_{\delta,\delta}(\Omega_p) \text{ with equivalent norms. }
		\end{align}
The inclusion $\subseteq$ is evident, while $\supseteq$ follows from the Poincar\'e inequality \ref{prop:poincare_appendix}.
\end{proof}
For $\delta\geq 1$, we have a stronger result. 
\begin{proposition}
	\label{prop:density_vreg}
	Let $1\leq \delta\leq 2$. Then $\mathcal{H}^1_{\delta}(\Omega_p)=\overline{\mathcal{C}^{\infty}_{comp}(\overline{\Omega}_p)}^{\|.\|_{\mathcal{H}^1_{\delta}(\Omega_p)}}.$
\end{proposition}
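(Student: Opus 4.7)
The plan is to combine Proposition~\ref{prop:density_smooth} with a truncation argument: since $\mathcal{C}^{\infty}(\overline{\Omega}_p)$ is already known to be dense in $\mathcal{H}^1_\delta(\Omega_p)$, it suffices to show that every $u\in \mathcal{C}^{\infty}(\overline{\Omega}_p)$ is approximable in the $\|\cdot\|_{\mathcal{H}^1_\delta(\Omega_p)}$-norm by elements of $\mathcal{C}^\infty_{comp}(\overline{\Omega}_p)$. I will construct such an approximating sequence in the form $u_\varepsilon := (1-\phi_\varepsilon)u$, where $\phi_\varepsilon = \phi_\varepsilon(x)$ is an $x$-dependent cutoff localized near $\Sigma$. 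Because $\phi_\varepsilon$ depends on $x$ alone, $u_\varepsilon$ inherits the Dirichlet condition on $\Gamma_p$ and the $y$-periodicity from $u$, while by construction $\operatorname{supp} u_\varepsilon$ sits at positive distance from $\Sigma$, so that $u_\varepsilon\in \mathcal{C}^\infty_{comp}(\overline{\Omega}_p)$.

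The naive rescaling $\phi_\varepsilon(x)=\phi_1(x/\varepsilon)$ from \eqref{eq:cutoff_phi} yields $\int x^\delta|\phi_\varepsilon'|^2 \lesssim \varepsilon^{\delta-1}$, which vanishes for $\delta\in(1,2]$ but only remains bounded at the critical exponent $\delta=1$. To treat the whole range $\delta\in[1,2]$ uniformly, I will use instead a logarithmic rescaling
$$\phi_\varepsilon(x):=\psi\!\left(\frac{\log x}{\log \varepsilon}\right),\qquad \psi\in C^{\infty}(\mathbb{R};[0,1]),\ \psi\equiv 1\text{ on }[1,\infty),\ \psi\equiv 0 \text{ on }(-\infty,1/2],$$
which equals $1$ on $\{0<x<\varepsilon\}$, vanishes on $\{x>\sqrt{\varepsilon}\}$, and satisfies $|\phi_\varepsilon'(x)|\le C/(x|\log \varepsilon|)$ on its annular support $\{\varepsilon<x<\sqrt{\varepsilon}\}$.

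To show $u_\varepsilon\to u$ in $\mathcal{H}^1_\delta(\Omega_p)$, I will bound
$$\|u-u_\varepsilon\|_{\mathcal{H}^1_\delta(\Omega_p)}^{2}\;\lesssim\;\|\phi_\varepsilon u\|^{2}\;+\;\|x^{\delta/2}\phi_\varepsilon\nabla u\|^{2}\;+\;\|x^{\delta/2}\phi_\varepsilon'\,u\|^{2}.$$
The first two terms go to zero by dominated convergence, since $\phi_\varepsilon\to 0$ pointwise on $\Omega_p$ and the dominating integrands $|u|^2$ and $x^\delta|\nabla u|^2$ lie in $L^1(\Omega_p)$. The remaining commutator term is the main obstacle. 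Using $|u|\le \|u\|_{L^\infty(\Omega_p)}$ (since $u\in \mathcal{C}^{\infty}(\overline{\Omega}_p)$) together with the pointwise bound on $\phi_\varepsilon'$, I obtain
$$\|x^{\delta/2}\phi_\varepsilon'\,u\|^{2}\;\le\;\frac{C\,\|u\|_{L^\infty}^{2}\,\ell}{|\log \varepsilon|^{2}}\int_{\varepsilon}^{\sqrt{\varepsilon}} x^{\delta-2}\,dx .$$

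The key conceptual point is that this bound vanishes as $\varepsilon\to 0^+$ precisely because of the logarithmic rescaling. At the borderline value $\delta=1$ the inner integral equals $\tfrac{1}{2}|\log\varepsilon|$, so the whole expression is $O(1/|\log\varepsilon|)$; for $\delta\in(1,2]$ the inner integral is $O(\varepsilon^{(\delta-1)/2})$, which gives decay even before accounting for the extra $|\log\varepsilon|^{-2}$ factor. In either case the commutator tends to zero, which completes the approximation. The main obstacle I expect is thus the critical exponent $\delta=1$, where a polynomial-scale cutoff produces a non-vanishing boundary contribution; the logarithmic cutoff is the standard Grisvard-type device (cf.\ \cite[Thm.~1.1]{grisvard}) that turns this borderline divergence into a controlled $1/|\log\varepsilon|$ factor.
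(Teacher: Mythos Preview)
Your proof is correct and follows essentially the same approach as the paper: the paper simply invokes Proposition~\ref{prop:density_smooth} together with a direct adaptation of \cite[Theorem~1.1]{grisvard}, and your logarithmic-cutoff argument is precisely that adaptation spelled out in full. The only difference is that you have written out the Grisvard truncation explicitly rather than citing it.
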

\begin{proof}
Follows by a straightforward adaptation of the proof of \cite[Theorem 1.1]{grisvard}, originally stated for $\mathcal{H}^1_{\delta,\delta}(U)$ with $U=\{(x,y)\in \mathbb{R}^2: x>0\}$, combined with Proposition \ref{prop:density_smooth}.
\end{proof}
We will also need the following Hardy inequality, which we state for the space $\mathcal{V}_{reg}(\Omega_p)$.
\begin{proposition}[Hardy inequality]
	\label{prop:hardy}
	For all $u\in \mathcal{V}_{reg}(\Omega_p)$, and all $\varepsilon>0$, it holds that 
	\begin{align*}
		\|x^{-1/2+\varepsilon}u\|_{L^2(\Omega_p)}\leq C(\Omega_p,\varepsilon)  \|u\|_{\mathcal{V}_{reg}(\Omega_p)},\quad \text{ with some }C(\Omega_p, \re)>0.
	\end{align*}
\end{proposition}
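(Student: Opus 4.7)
The plan is to reduce to smooth functions compactly supported away from $\Sigma$ via the density result of Proposition \ref{prop:density_vreg}, then to perform a one-dimensional integration by parts in the $x$-direction, exploiting the identity $x^{-1+2\varepsilon}=\frac{1}{2\varepsilon}\partial_x(x^{2\varepsilon})$. Since the statement is local in $y$, integration over $y\in(-\ell,\ell)$ at the end will be immediate.

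First I would reduce to the case $u\in\mathcal{C}^{\infty}_{comp}(\overline{\Omega}_p)$ using Proposition \ref{prop:density_vreg}. For such $u$ there exists $\delta>0$ with $\operatorname{supp}u\subset\{x\geq \delta\}$, so all one-dimensional boundary terms at $x=0$ will vanish, and the boundary term at $x=a$ will vanish thanks to $\gamma_0^{\Gamma_p}u=0$. Then, fixing $y\in(-\ell,\ell)$, I would write
\begin{equation*}
\int_0^a x^{-1+2\varepsilon}|u(x,y)|^2\,dx = \frac{1}{2\varepsilon}\int_0^a \partial_x\bigl(x^{2\varepsilon}\bigr)|u(x,y)|^2\,dx,
\end{equation*}
and integrate by parts in $x$ to obtain
\begin{equation*}
\int_0^a x^{-1+2\varepsilon}|u(x,y)|^2\,dx = -\frac{1}{\varepsilon}\operatorname{Re}\int_0^a x^{2\varepsilon}\,\overline{u(x,y)}\,\partial_x u(x,y)\,dx.
\end{equation*}

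Next I would apply the Cauchy--Schwarz inequality to the right-hand side, splitting the weight as $x^{2\varepsilon}=x^{-1/2+\varepsilon}\cdot x^{1/2+\varepsilon}$, and divide by the square root of $\int_0^a x^{-1+2\varepsilon}|u|^2\,dx$ (assuming this is nonzero; otherwise the bound is trivial). This yields
\begin{equation*}
\int_0^a x^{-1+2\varepsilon}|u(x,y)|^2\,dx \leq \frac{1}{\varepsilon^2}\int_0^a x^{1+2\varepsilon}|\partial_x u(x,y)|^2\,dx \leq \frac{a^{2\varepsilon}}{\varepsilon^2}\int_0^a x\,|\partial_x u(x,y)|^2\,dx.
\end{equation*}
Integrating in $y$ over $(-\ell,\ell)$ and using $|\partial_x u|^2\leq |\nabla u|^2$ gives
\begin{equation*}
\|x^{-1/2+\varepsilon}u\|_{L^2(\Omega_p)}^2 \leq \frac{a^{2\varepsilon}}{\varepsilon^2}\int_{\Omega_p} x|\nabla u|^2\,d\vec{x} \leq \frac{a^{2\varepsilon}}{\varepsilon^2}\|u\|_{\mathcal{V}_{reg}(\Omega_p)}^2.
\end{equation*}
A density argument (Proposition \ref{prop:density_vreg}) then extends the inequality to every $u\in\mathcal{V}_{reg}(\Omega_p)$, giving the constant $C(\Omega_p,\varepsilon)=a^{\varepsilon}/\varepsilon$.

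There is no genuine obstacle here; the only point requiring attention is the boundary term at $x=0$, which is precisely why I work first on $\mathcal{C}^{\infty}_{comp}(\overline{\Omega}_p)$ rather than directly on $\mathcal{V}_{reg}(\Omega_p)$, where the trace on $\Sigma$ need not exist. The blow-up of the constant as $\varepsilon\to 0^+$ is expected and consistent with the non-existence of an $L^2(\Sigma)$-trace on $\mathcal{V}_{reg}(\Omega_p)$.
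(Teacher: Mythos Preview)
Your proof is correct and self-contained. The paper does not actually prove this proposition; it simply cites \cite[Lemma 6]{nguyen} and \cite[Theorem 1.5]{necas}. Your argument is the classical one-dimensional Hardy inequality derivation via the identity $x^{-1+2\varepsilon}=\frac{1}{2\varepsilon}\partial_x(x^{2\varepsilon})$, integration by parts, and Cauchy--Schwarz, followed by density. This is almost certainly the content of the cited references in this special setting, so there is no meaningful difference in approach---you have simply written out what the paper left to the literature, and obtained an explicit constant $C(\Omega_p,\varepsilon)=a^{\varepsilon}/\varepsilon$ in the process.
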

\begin{proof}
	See \cite[Lemma 6]{nguyen} and references therein, in particular \cite[Theorem 1.5]{necas}.
\end{proof}
An important corollary of Proposition \ref{prop:hardy} reads.
\begin{lemma}
	\label{lem:L1}
	Let $u\in \mathcal{H}^1(\Omega_p)$, $v\in \mathcal{H}_{\delta}^1(\Omega_p)$, for some $0<\delta<1$. Then $uv\in L^1(\Omega_p)$ and $\partial_x (uv)\in L^1(\Omega_p)$. 
\end{lemma}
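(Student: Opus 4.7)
The first assertion is immediate from the Cauchy--Schwarz inequality: by the very definitions of $\mathcal{H}^1(\Omega_p)$ and $\mathcal{H}^1_\delta(\Omega_p)$, both $u$ and $v$ belong to $L^2(\Omega_p)$, so $\|uv\|_{L^1(\Omega_p)}\leq \|u\|_{L^2(\Omega_p)}\|v\|_{L^2(\Omega_p)}<\infty$. For the derivative assertion I plan to apply the Leibniz rule $\partial_x(uv)=v\,\partial_x u+u\,\partial_x v$ and show each term lies in $L^1(\Omega_p)$. The term $v\,\partial_x u$ is again handled at once by Cauchy--Schwarz since $v\in L^2(\Omega_p)$ and $\partial_x u\in L^2(\Omega_p)$. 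The term $u\,\partial_x v$ is the delicate one, because $\partial_x v$ is controlled only in the weighted space $L^2_\delta(\Omega_p)$.

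To treat $u\,\partial_x v$, I would split the weight and estimate
\[
\int_{\Omega_p}|u\,\partial_x v|\,d\vec{x}\leq \|x^{-\delta/2}u\|_{L^2(\Omega_p)}\,\|x^{\delta/2}\partial_x v\|_{L^2(\Omega_p)},
\]
where the second factor is finite by hypothesis. The crux is therefore to establish the embedding $\mathcal{H}^1(\Omega_p)\hookrightarrow L^2_{-\delta}(\Omega_p)$ for $0<\delta<1$. Since functions in $\mathcal{H}^1(\Omega_p)$ vanish on $\Gamma_p=\{a\}\times(-\ell,\ell)$, for a.e.\ $y\in(-\ell,\ell)$ the fundamental theorem of calculus and Cauchy--Schwarz give $|u(x,y)|^2\leq(a-x)\int_x^a|\partial_s u(s,y)|^2\,ds\leq a\int_0^a|\partial_s u(s,y)|^2\,ds$. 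Multiplying by $x^{-\delta}$ and integrating in $x\in(0,a)$ (which converges precisely because $\delta<1$) and then in $y$ yields
\[
\|u\|_{L^2_{-\delta}(\Omega_p)}\leq C(a,\delta)\,\|\partial_x u\|_{L^2(\Omega_p)}\leq C(a,\delta)\,\|u\|_{H^1(\Omega_p)},
\]
which completes the pointwise integrability bound.

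The main (minor) technical obstacle is justifying the Leibniz rule for this pair of Sobolev functions in the distributional sense. I would do this by approximating $v$ by $\mathcal{C}^\infty(\overline{\Omega}_p)$ functions $v_n$ in the $\mathcal{H}^1_\delta(\Omega_p)$--norm, which is legitimate by Proposition \ref{prop:density_smooth}, and applying the classical product rule to the smooth products $uv_n$. Passing to the limit is straightforward: $uv_n\to uv$ and $v_n\,\partial_x u\to v\,\partial_x u$ in $L^1(\Omega_p)$ by Cauchy--Schwarz, while $u\,\partial_x v_n\to u\,\partial_x v$ in $L^1(\Omega_p)$ follows from the weighted estimate of the previous paragraph applied to $v_n-v$. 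One then identifies $\partial_x(uv)=v\,\partial_x u+u\,\partial_x v$ in $\mathcal{D}'(\Omega_p)$, and both summands have been shown to lie in $L^1(\Omega_p)$, concluding the proof.
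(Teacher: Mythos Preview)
Your proof is correct and follows the same overall strategy as the paper: both apply the Leibniz rule, handle $v\,\partial_x u$ by Cauchy--Schwarz, and treat the delicate term $u\,\partial_x v$ via the weighted splitting $\int|u\,\partial_x v|\leq \|x^{-\delta/2}u\|_{L^2}\|x^{\delta/2}\partial_x v\|_{L^2}$. The only difference lies in how the bound $\|x^{-\delta/2}u\|_{L^2(\Omega_p)}\lesssim \|u\|_{H^1(\Omega_p)}$ is obtained: the paper simply invokes the Hardy inequality of Proposition~\ref{prop:hardy} (valid since $\mathcal{H}^1(\Omega_p)\subset\mathcal{V}_{reg}(\Omega_p)$ and $-\delta/2>-1/2$), whereas you give a self-contained argument using the vanishing of $u$ on $\Gamma_p$ and the integrability of $x^{-\delta}$ on $(0,a)$. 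Your route is slightly more elementary and avoids the external reference; the paper's is shorter since the needed inequality is already on hand. The density argument is also handled the same way (the paper approximates both $u$ and $v$, you approximate only $v$, but either works).
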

\begin{proof}
	We argue by density. Let $u, v\in \mathcal{C}^{\infty}(\overline{\Omega_p})$. Then 
	\begin{align*}
		\int_{\Omega_p}|\partial_x(uv)|\lesssim \int_{\Omega_p}|\partial_x u \, v|+\int_{\Omega_p}|\partial_x v \, u|.
	\end{align*}
	The first integral is well-defined since $u\in \mathcal{H}^1(\Omega)$ and $v\in L^2(\Omega)$; as for the second integral, we employ the Cauchy-Schwarz inequality and next the Hardy inequality of Proposition \ref{prop:hardy} for $u$ (recall that $\delta<1$):
	\begin{align*}
		\int_{\Omega_p}|\partial_x v \, u|=\int_{\Omega_p}|x^{\delta/2}\partial_x v| \, |x^{-\delta/2}u|\leq \|v\|_{\mathcal{H}^1_{\delta}(\Omega_p)} \|x^{-\delta}u\|_{L^2(\Omega_p)}\leq C_{\delta} \|v\|_{\mathcal{H}^1_{\delta}(\Omega_p)} \|u\|_{\mathcal{V}_{reg}(\Omega_p)}.\qquad\qquad\qedhere
	\end{align*}
\end{proof}
Another important property concerns the space $\mathcal{V}_{sing}(\Omega)$. 
\begin{proposition}
	\label{prop:property2}
	Assume that $u\in \mathcal{V}_{sing}(\Omega)$. Then $
		x\mathbb{A}\nabla u\in L^2(\Omega).
$
\end{proposition}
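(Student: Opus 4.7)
The statement should unpack to a direct consequence of the definitions, so the plan is to trace through them carefully rather than invent new machinery.

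First, I would unfold the definition of the product space. By \eqref{eq:two_spaces}, writing $u \in \mathcal{V}_{sing}(\Omega)$ means that the restrictions $u_\lambda := u|_{\Omega_\lambda}$ belong to $\mathcal{V}_{sing}(\Omega_\lambda) = \mathcal{H}^1_2(\Omega_\lambda)$ for $\lambda \in \{n, p\}$. By the definition \eqref{eq:wsp} of the weighted space with weight parameter $\delta = 2$, this is equivalent to $u_\lambda \in L^2(\Omega_\lambda)$ and $\||x|\, \nabla u_\lambda\|_{L^2(\Omega_\lambda)} < \infty$ (together with the homogeneous Dirichlet and periodicity traces, which play no role here). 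So, on each half-rectangle separately, the weighted gradient $x \nabla u_\lambda$ is an $L^2(\Omega_\lambda; \mathbb{C}^2)$ field.

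Second, I would use that $\mathbb{A} \in C^{1,1}(\overline{\Omega}; \mathbb{C}^{2\times 2})$ by Assumption \ref{assump:matrices}, so in particular $M := \|\mathbb{A}\|_{L^\infty(\Omega; \mathbb{C}^{2\times2})} < \infty$. Then on each subdomain
\begin{align*}
  \|x \mathbb{A} \nabla u_\lambda\|_{L^2(\Omega_\lambda)} \;\leq\; M \, \||x|\, \nabla u_\lambda\|_{L^2(\Omega_\lambda)} \;=\; M \, |u_\lambda|_{\mathcal{V}_{sing}(\Omega_\lambda)} \;<\; \infty.
\end{align*}

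Third, I would assemble the global statement. The piecewise-defined field $x\mathbb{A}\nabla u$, meaning the measurable function equal to $x\mathbb{A}\nabla u_p$ on $\Omega_p$ and $x\mathbb{A}\nabla u_n$ on $\Omega_n$ (with $\Sigma$ of two-dimensional Lebesgue measure zero), satisfies
\begin{align*}
  \|x\mathbb{A}\nabla u\|_{L^2(\Omega)}^2 \;=\; \|x\mathbb{A}\nabla u_p\|_{L^2(\Omega_p)}^2 + \|x\mathbb{A}\nabla u_n\|_{L^2(\Omega_n)}^2 \;\leq\; M^2\bigl(|u_p|^2_{\mathcal{V}_{sing}(\Omega_p)} + |u_n|^2_{\mathcal{V}_{sing}(\Omega_n)}\bigr),
\end{align*}
which is finite. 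There is no real obstacle here: the statement is essentially a bookkeeping lemma that records the embedding $u \mapsto x\mathbb{A}\nabla u$ from $\mathcal{V}_{sing}(\Omega)$ into $L^2(\Omega; \mathbb{C}^2)$, and its only substantive content is that the weight $|x|$ built into the definition of $\mathcal{V}_{sing}$ exactly compensates the vanishing coefficient $x$ in the principal part of the operator. This observation, of course, is precisely the reason the space $\mathcal{V}_{sing}$ was introduced in \eqref{eq:two_spaces}.
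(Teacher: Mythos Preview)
Your argument is correct: under the piecewise interpretation (which is the natural one, since $\mathcal{V}_{sing}(\Omega)$ is defined as a product of subdomain spaces and $\nabla u$ is only a distributional gradient on each $\Omega_\lambda$), the claim is immediate from the definitions, exactly as you say.

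The paper takes a different and somewhat more substantive route. Instead of reading off $x\nabla u_\lambda \in L^2(\Omega_\lambda)$ directly, it sets $v := xu$, checks that $v \in \mathcal{H}^1(\Omega_p)\times\mathcal{H}^1(\Omega_n)$, and then proves the stronger fact that $v \in \mathcal{H}^1(\Omega)$ \emph{globally}. This is done by showing that both one-sided traces $\gamma_0^{\Sigma,\lambda} v$ vanish, via the density $\mathcal{V}_{sing}(\Omega_\lambda) = \overline{\mathcal{C}^\infty_{comp}(\overline{\Omega_\lambda})}$ of Proposition~\ref{prop:density_vreg}: approximating $u$ by $\varphi_n \in \mathcal{C}^\infty_{comp}(\overline{\Omega_\lambda})$ in $\mathcal{V}_{sing}(\Omega_\lambda)$, one gets $x\varphi_n \to xu$ in $H^1(\Omega_\lambda)$, and since each $x\varphi_n$ has vanishing trace on $\Sigma$, so does $xu$. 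The payoff of this approach is that $\nabla(xu)$ is then the global distributional gradient on $\Omega$, so that $x\nabla u = \nabla(xu) - u\,\vec{e}_x$ is unambiguously an $L^2(\Omega)$ function as a distribution on $\Omega$, not merely a piecewise-assembled one; in particular there are no hidden singular contributions on $\Sigma$. For the uses the paper makes of the proposition (Definition~\ref{def:st} and the proof of Theorem~\ref{theorem:convergence_decomposition}), your simpler argument already suffices.
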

\begin{proof}
Given $u\in \mathcal{V}_{sing}(\Omega)$, a straightforward computation yields that $v:=xu\in \mathcal{H}^1(\Omega_p)\times \mathcal{H}^1(\Omega_n)$. Let us argue that $v\in \mathcal{H}^1(\Omega)$. For this it is sufficient to show that $\gamma_0^{\Sigma,p}v=\gamma_0^{\Sigma,n}v=0$, or, equivalently, that $v\in \mathcal{H}^1_{\Sigma,0}(\Omega_p)\times \mathcal{H}^1_{\Sigma,0}(\Omega_n)$, where
\begin{align*}
	\mathcal{H}^1_{\Sigma,0}(\Omega_{\lambda})=\{v\in \mathcal{H}^1(\Omega_{\lambda}): \, \gamma_0^{\Sigma,\lambda}v=0\}=\overline{\mathcal{C}^{\infty}_{comp}(\Omega_{\lambda})}^{\|.\|_{H^1(\Omega_{\lambda})}}.
\end{align*} 
Let $\varphi_n\in \mathcal{C}^{\infty}_{comp}(\Omega_p)$ be s.t. $\varphi_n\rightarrow u$ in $\mathcal{V}_{sing}(\Omega_p)$ (it exists by Proposition \ref{prop:density_vreg}). Then $x\varphi_n\rightarrow xu=v$ in $H^1(\Omega_p)$. Since $x\varphi_n\in \mathcal{C}^{\infty}_{comp}(\Omega_p)$, we conclude that $v=xu\in \mathcal{H}^1_{\Sigma,0}(\Omega_p)$. Repeating the argument for $\Omega_n$ yields the desired conclusion.
\end{proof}
\subsection{Relation to fractional Sobolev spaces}The weighted spaces can be shown to be embedded into fractional Sobolev spaces.
\begin{lemma}
	\label{lem:embedding}
	For $0<\theta\leq2$, the space $\mathcal{H}^1_{ \theta}(\Omega_p)$ is continuously embedded into  ${H}^{1-\frac{\theta}{2}}(\Omega_p)$.
	%
\end{lemma}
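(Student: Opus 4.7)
The embedding $\mathcal{H}^1_\theta(\Omega_p)\hookrightarrow H^{1-\theta/2}(\Omega_p)$ is a classical weighted Sobolev statement. My plan is to prove it by reducing to a one-dimensional problem through tangential Fourier series (which is natural here because of the periodic boundary condition in $y$ inherited from the definition of $\mathcal{H}^1_\theta(\Omega_p)$), and then combining a 1D Hardy/Cauchy--Schwarz estimate on the Slobodeckii seminorm with a mode-by-mode dyadic splitting.

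\textbf{Step 1 (Fourier reduction in $y$).} Using the $2\ell$-periodicity in the tangential direction, I would expand $u(x,y)=\sum_{n\in \mathbb{Z}}\hat u_n(x)e^{i\pi n y/\ell}$, with $\hat u_n\in L^2(0,a)$ and $\hat u_n(a)=0$ coming from $\gamma_0^{\Gamma_p}u=0$. Parseval then gives the equivalence
\begin{align*}
\|u\|_{\mathcal{H}^1_\theta(\Omega_p)}^2 \;\sim\; \sum_{n\in \mathbb{Z}} \Bigl( \|\hat u_n\|_{L^2(0,a)}^2 + \|x^{\theta/2}\hat u_n'\|_{L^2(0,a)}^2 + n^2\|x^{\theta/2}\hat u_n\|_{L^2(0,a)}^2 \Bigr).
\end{align*}
On the target side, by the standard equivalence of the Slobodeckii seminorm with its two directional versions, $\|u\|_{H^{1-\theta/2}(\Omega_p)}^2$ controls, and is controlled by, the sum $\sum_n (1+n^2)^{1-\theta/2}\|\hat u_n\|_{L^2(0,a)}^2 + \sum_n |\hat u_n|_{H^{1-\theta/2}(0,a)}^2$.

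\textbf{Step 2 (1D weighted-to-fractional estimate, normal part).} I would prove the one-dimensional embedding
$$
\Big\{\phi\in L^2(0,a): \; x^{\theta/2}\phi'\in L^2(0,a), \; \phi(a)=0\Big\}\; \hookrightarrow \; H^{1-\theta/2}(0,a).
$$
The key estimate, for $0<x'<x\leq a$, is the weighted Cauchy--Schwarz bound
\begin{align*}
|\phi(x)-\phi(x')|^2 \leq \Bigl(\int_{x'}^x \xi^{-\theta}d\xi\Bigr)\Bigl(\int_{x'}^x \xi^{\theta}|\phi'(\xi)|^2 d\xi\Bigr),
\end{align*}
which, inserted into the Slobodeckii double integral $\iint |\phi(x)-\phi(x')|^2|x-x'|^{-2+\theta}dx\,dx'$ and rearranged via Fubini, reduces to a Beta-function computation giving the desired bound by $\|x^{\theta/2}\phi'\|_{L^2(0,a)}^2$.

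\textbf{Step 3 (Mode-by-mode tangential bound).} For the tangential contribution I need
\begin{align*}
(1+n^2)^{1-\theta/2}\|\hat u_n\|_{L^2(0,a)}^2 \;\lesssim\; \|\hat u_n\|_{L^2(0,a)}^2 + \|x^{\theta/2}\hat u_n'\|_{L^2(0,a)}^2 + n^2\|x^{\theta/2}\hat u_n\|_{L^2(0,a)}^2.
\end{align*}
I would split $(0,a)=(0,1/n)\cup(1/n,a)$: on the outer piece $x^\theta\geq n^{-\theta}$, which yields the correct power of $n$ out of the zeroth-order weighted term; on the inner piece I would apply a rescaled weighted Poincar\'e inequality (consequence of Proposition B.1 after the change of variable $x=s/n$) to absorb $\|\hat u_n\|^2_{L^2(0,1/n)}$ into the weighted derivative term. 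Summing over $n$ finishes the argument.

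\textbf{Main obstacle.} The delicate case is $\theta=1$, where the target exponent $s=1/2$ sits exactly at the trace threshold: the 1D Cauchy--Schwarz step of Step~2 becomes logarithmic ($\int_{x'}^x\xi^{-1}d\xi=\log(x/x')$), so some care is needed to close the Slobodeckii double integral cleanly. If the direct argument proves too fragile near $\theta=1$, I would instead deduce the lemma from Grisvard's theorem identifying $\mathcal{H}^1_\theta(\Omega_p)$ with the complex interpolation space $[\mathcal{H}^1(\Omega_p), L^2(\Omega_p)]_{\theta/2}$ (applicable because the weight $|x|^\theta$ is Muckenhoupt on $(0,a)$ and compatible with the homogeneous Dirichlet condition on $\Gamma_p$ by the density results of Propositions B.2--B.3), and then compose with the standard interpolation identity $[H^1(\Omega_p),L^2(\Omega_p)]_{\theta/2}=H^{1-\theta/2}(\Omega_p)$ and the obvious inclusion $\mathcal{H}^1(\Omega_p)\hookrightarrow H^1(\Omega_p)$ at the endpoints. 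The endpoint cases $\theta=2$ (trivial, $\mathcal{H}^1_2\subset L^2=H^0$) and $\theta\downarrow 0$ (trivial, $\mathcal{H}^1\subset H^1$) are evident and serve as a sanity check.
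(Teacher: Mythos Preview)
The paper does not give a proof here: it cites the half-space results of Lizorkin and Uspenski\u{\i}, invokes a localization argument, and points to Jerison--Kenig as an alternative reference. Your direct argument via tangential Fourier series and one-dimensional weighted estimates is therefore a genuinely different, self-contained route, at the cost of the case analysis near $\theta=1$ that you already flag (and for $\theta>1$, where $\int_{x'}^{x} s^{-\theta}\,ds$ is no longer controlled by a power of $x-x'$ alone, your ``Beta-function computation'' in Step~2 also needs a dyadic near/far splitting rather than a single closed-form identity).

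There is, however, a real gap in Step~3. The rescaled weighted Poincar\'e inequality you invoke on the inner interval $(0,1/n)$ would require $\hat u_n(1/n)=0$: Proposition~B.1 (and its one-dimensional version) uses vanishing at the endpoint \emph{away} from the degeneracy in an essential way---its proof begins from $u(a,y)=0$. After your rescaling $x=s/n$, that far endpoint becomes $s=1$, i.e.\ $x=1/n$, and $\hat u_n(1/n)$ is generically nonzero, so Proposition~B.1 does not apply. The repair is routine but missing from your outline: subtract the constant $\hat u_n(1/n)$ on $(0,1/n)$, apply the weighted Poincar\'e to the remainder, and control the leftover term $n^{1-\theta}|\hat u_n(1/n)|^2$ by a scaled trace inequality on the adjacent interval $(1/n,2/n)$, where $x\sim 1/n$ converts unweighted norms back to the weighted ones with exactly the right powers of $n$. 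As written, the step does not close.
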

\begin{proof}
	Up to our knowledge, this result dates back to the works \cite{lizorkin, uspenskii}, in the case when $\Omega_p$ is a half-space (in other words, $a=\ell=+\infty$). The corresponding result for $\Omega_p$ follows by a localization argument, cf. \cite[Theorem 3.20]{mclean}. A different version of the proof can be found in \cite[Theorems 4.1, 4.2]{jerison_kenig};  remark that while the needed result is stated for harmonic functions, its proof does not rely on this property.
\end{proof}
	
An immediate corollary of the above result and the  continuity of the trace in fractional Sobolev spaces of Theorem 3.37 in \cite{mclean} reads.
\begin{corollary}
	\label{cor:tr}
	For $0<\theta<1$, the trace operator $\gamma_0^{\Sigma}: \, \mathcal{H}^1(\Omega_p)\rightarrow \mathcal{H}^{1/2-\theta/2}(\Sigma)$ extends by density to a bounded linear operator from $\mathcal{H}^1_{\theta}(\Omega_p)$ into $\mathcal{H}^{1/2-\theta/2}(\Sigma)$.
\end{corollary}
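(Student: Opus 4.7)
The plan is to combine the embedding of Lemma \ref{lem:embedding} with the standard fractional Sobolev trace theorem, and conclude by density. First, I would note that by Proposition \ref{prop:density_smooth}, the space $\mathcal{C}^{\infty}(\overline{\Omega}_p)\cap \mathcal{H}^1_\theta(\Omega_p)$ (i.e.\ smooth functions vanishing on $\Gamma_p$ with periodic traces on $\Gamma_p^\pm$) is dense in $\mathcal{H}^1_\theta(\Omega_p)$, which is a subset of $\mathcal{H}^1(\Omega_p)$ on which $\gamma_0^{\Sigma}$ is already defined as a restriction. It therefore suffices to establish the uniform bound $\|\gamma_0^{\Sigma}u\|_{\mathcal{H}^{1/2-\theta/2}(\Sigma)}\lesssim \|u\|_{\mathcal{H}^1_\theta(\Omega_p)}$ for $u$ ranging over this dense subspace, as the continuous linear extension to $\mathcal{H}^1_\theta(\Omega_p)$ is then automatic.

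Next, for any such smooth $u$, Lemma \ref{lem:embedding} gives $\|u\|_{H^{1-\theta/2}(\Omega_p)}\leq C\|u\|_{\mathcal{H}^1_\theta(\Omega_p)}$. Since $0<\theta<1$ implies $1-\theta/2>1/2$, the standard trace theorem (Theorem 3.37 in \cite{mclean}) provides a bounded operator $\gamma_0:\, H^{1-\theta/2}(\Omega_p)\to H^{1/2-\theta/2}(\partial\Omega_p)$, whose composition with the restriction to $\Sigma\subset \partial\Omega_p$ yields $\|\gamma_0^{\Sigma}u\|_{H^{1/2-\theta/2}(\Sigma)}\lesssim \|u\|_{H^{1-\theta/2}(\Omega_p)}\lesssim \|u\|_{\mathcal{H}^1_\theta(\Omega_p)}$. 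Since $1/2-\theta/2\in(0,1/2)$, the definition of the calligraphic Sobolev spaces on $\Omega$ and the note recalled after \eqref{eq:wsp} identify $\mathcal{H}^{1/2-\theta/2}(\Sigma)$ with $H^{1/2-\theta/2}(\Sigma)$ (no periodicity constraint is active in this range of regularity), so the right-hand side of this bound controls exactly the required norm.

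The only technical point to watch for is that the extended operator takes values in $\mathcal{H}^{1/2-\theta/2}(\Sigma)$ as defined in the paper via liftings into $\mathcal{H}^{1-\theta/2}(\Omega)$; here this is immediate, since the approximating smooth functions themselves serve as liftings in $\mathcal{H}^1(\Omega)\subset \mathcal{H}^{1-\theta/2}(\Omega)$, whose periodic traces on $\Gamma^\pm$ and vanishing traces on $\Gamma_p\cup \Gamma_n$ pass to the limit. I do not anticipate a genuine obstacle: the whole argument is essentially bookkeeping once Lemma \ref{lem:embedding} is in hand, the only subtlety being the identification between the weighted-trace space $\mathcal{H}^{1/2-\theta/2}(\Sigma)$ and the corresponding standard fractional space, which is a direct consequence of the definition for subhalf regularities.
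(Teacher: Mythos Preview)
Your proposal is correct and follows essentially the same route as the paper: combine the embedding of Lemma \ref{lem:embedding} with the fractional trace theorem (Theorem 3.37 in \cite{mclean}), then extend by density. The paper states this in one line as an ``immediate corollary'' of those two results; your version simply spells out the density step via Proposition \ref{prop:density_smooth} and the identification of $\mathcal{H}^{1/2-\theta/2}(\Sigma)$ with the standard fractional space, which is indeed the only bookkeeping required.
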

\begin{proposition}[Prop. 1.2 in \cite{grisvard}]
	\label{prop:gamma0}
	The space $\mathcal{H}^1_{\theta;0}(\Omega_p):=\{u\in \mathcal{H}^1_{\theta}(\Omega_p), \text{ s.t. }\gamma_0^{\Sigma,p}u=0\}$ equals to $\overline{\mathcal{C}^{\infty}_{comp}(\Omega_p)}^{\|.\|_{\mathcal{H}^1_{\theta}(\Omega_p)}}$.
\end{proposition}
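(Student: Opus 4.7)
The plan is to prove the two inclusions separately, focusing on the interesting range $0<\theta<1$ (for $\theta\geq 1$ the proposition reduces to Proposition~\ref{prop:density_vreg}, since then $\mathcal{H}^1_{\theta;0}(\Omega_p)=\mathcal{H}^1_\theta(\Omega_p)$ and $\mathcal{C}^{\infty}_{comp}(\overline{\Omega}_p)$-approximants can be further truncated smoothly at positive distance from $\Gamma_p$). The inclusion $\overline{\mathcal{C}^{\infty}_{comp}(\Omega_p)}^{\|.\|_{\mathcal{H}^1_\theta(\Omega_p)}}\subseteq\mathcal{H}^1_{\theta;0}(\Omega_p)$ is immediate: every $\varphi\in\mathcal{C}^{\infty}_{comp}(\Omega_p)$ has $\gamma_0^{\Sigma,p}\varphi=0$, and by Corollary~\ref{cor:tr} the operator $\gamma_0^{\Sigma,p}$ extends continuously from $\mathcal{H}^1_\theta(\Omega_p)$ into $\mathcal{H}^{1/2-\theta/2}(\Sigma)$, so its kernel is closed in the $\mathcal{H}^1_\theta(\Omega_p)$-norm.

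For the reverse inclusion, given $u\in\mathcal{H}^1_{\theta;0}(\Omega_p)$, the key ingredient is the weighted Hardy estimate
\begin{equation}\label{eq:hardy-plan}
\int_{\Omega_p} x^{\theta-2}\,|u|^2\,d\vec{x}\;\leq\; C_\theta \int_{\Omega_p} x^{\theta}\,|\partial_x u|^2\,d\vec{x},
\end{equation}
which I would first prove for $v\in\mathcal{C}^{\infty}(\overline{\Omega}_p)$ satisfying $v|_\Sigma=0$ by Fubini in $y$ combined with the classical one-dimensional Hardy inequality on $(0,a)$ (valid precisely because $\theta<1$), and then extend to $\mathcal{H}^1_{\theta;0}(\Omega_p)$ using the density in Proposition~\ref{prop:density_smooth} together with the continuity of the trace. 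Given \eqref{eq:hardy-plan}, I would truncate near $\Sigma$: choose $\eta\in C^\infty(\mathbb{R})$ with $\eta(s)=0$ for $s\leq 1$ and $\eta(s)=1$ for $s\geq 2$, and set $u_n(x,y):=\eta(nx)\,u(x,y)$. Dominated convergence immediately gives $u_n\to u$ in $L^2(\Omega_p)$ and $\eta(n\cdot)\,\nabla u\to \nabla u$ in $L^2_\theta(\Omega_p)$, while the commutator term is handled by \eqref{eq:hardy-plan}:
\[
\|u\,\partial_x[\eta(n\cdot)]\|_{L^2_\theta(\Omega_p)}^{\,2} \;\leq\; n^2\|\eta'\|_\infty^2\!\!\int_{\{1/n<x<2/n\}}\!\! x^\theta|u|^2\,d\vec{x}\;\leq\; 4\|\eta'\|_\infty^2\!\!\int_{\{1/n<x<2/n\}}\!\! x^{\theta-2}|u|^2\,d\vec{x}\;\xrightarrow{n\to\infty}\;0,
\]
so $u_n\to u$ in $\mathcal{H}^1_\theta(\Omega_p)$.

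Finally, each $u_n$ has $\operatorname{dist}(\operatorname{supp}u_n,\Sigma)\geq 1/n>0$, so on its support the weight $x^\theta$ is bounded both above and below and $\mathcal{H}^1_\theta$-convergence is equivalent to ordinary $H^1$-convergence. Since $u_n$ vanishes on $\Gamma_p$ and is $y$-periodic, a standard argument (periodic extension in $y$ to a cylinder, followed by mollification, and then a smooth cut-off in $x$ near $\Gamma_p$) produces a sequence in $\mathcal{C}^{\infty}_{comp}(\Omega_p)$ converging to $u_n$ in $H^1$, hence in $\mathcal{H}^1_\theta$; a diagonal extraction yields the desired approximating sequence for $u$. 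The main obstacle is the weighted Hardy estimate \eqref{eq:hardy-plan}, but it is classical; the rest is a routine truncation and mollification, the only bookkeeping being the simultaneous handling of the vanishing Dirichlet trace on $\Gamma_p$ and the $y$-periodicity on $\Gamma_p^\pm$ in the last step.
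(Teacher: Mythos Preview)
Your proposal is correct and follows precisely the standard argument due to Grisvard that the paper cites rather than reproduces: the weighted Hardy inequality $\int x^{\theta-2}|u|^2\lesssim\int x^{\theta}|\partial_x u|^2$ for functions with vanishing trace on $\Sigma$, followed by the cutoff $\eta(nx)u$ and a final mollification away from $\Sigma$. One small point to be careful about: extending the Hardy estimate from smooth functions with $v|_\Sigma=0$ to all of $\mathcal{H}^1_{\theta;0}(\Omega_p)$ via Proposition~\ref{prop:density_smooth} is slightly delicate, since that proposition only gives approximants in $\mathcal{C}^\infty(\overline{\Omega}_p)$ without the zero-trace constraint; you must either correct them by subtracting a bounded lifting of their (vanishing) traces, or run the one-dimensional integration-by-parts argument on $(\epsilon,a)$ and pass to the limit $\epsilon\to 0$ using that $u$ has an a.e.\ representative with $u(0+,y)=0$---both are routine, but worth making explicit to avoid circularity with the very density you are proving.
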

the latter space being well-defined, as follows by adapting the proof of \cite[Proposition 1.1']{grisvard} (remark that the norm in \cite{grisvard} is equivalent to the norm $\|.\|_{H^1_{\re}(\Omega_p)}$ we use, as shown in Proposition \ref{prop:poincare_appendix}).

	\section{Properties of a regular problem \eqref{eq:rp}}
\label{sec:apx_reg_pb}
\subsection{Vanishing conormal trace}
\label{appendix:conormal_trace}
\begin{proposition}
	\label{cor:uv_appx}
	Any function $u\in\mathcal{V}_{reg}(\Omega_p)$, s.t., for some $\varepsilon>0$, $\vec{x}\mapsto x^{1/2-\varepsilon}\operatorname{div}(x\mathbb{A}(\vec{x})\nabla u(\vec{x}))\in L^2(\Omega_p)$, satisfies: $
		\gamma_{n}^{\Sigma}u=0 \text{ in }\mathcal{H}^{-1/2}(\Sigma).$
\end{proposition}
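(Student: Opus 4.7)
The plan is to use the variational definition of the conormal trace together with the family of liftings $\Phi^{\delta}$ provided by Lemma~\ref{lemma:lifting}, and show that the resulting right-hand side tends to zero as $\delta \to 0+$.

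\textbf{Step 1 (justifying that the trace is defined).} Without loss of generality I may assume $0<\varepsilon<1/2$; otherwise the hypothesis only becomes stronger after decreasing $\varepsilon$. Setting $\delta_0 := 1-2\varepsilon \in (0,1)$, the assumption $x^{1/2-\varepsilon}\operatorname{div}(x\mathbb{A}\nabla u)\in L^2(\Omega_p)$ is exactly $\operatorname{div}(x\mathbb{A}\nabla u)\in L^2_{\delta_0}(\Omega_p)$. Since $\mathcal{V}_{reg}(\Omega_p)\subset \mathcal{V}_{sing}(\Omega_p)$, one has $u\in \mathcal{H}_{\delta_0}(\operatorname{div}(x\mathbb{A}\nabla .);\Omega_p)$, so $\gamma_n^{\Sigma}u\in \mathcal{H}^{-1/2}(\Sigma)$ is well-defined through the extension of the variational formula recalled right after \eqref{eq:hdelta}.

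\textbf{Step 2 (variational expression and choice of test function).} Fix $\varphi\in \mathcal{H}^{1/2}(\Sigma)$ and, for $0<\delta<\ell$, let $\Phi^{\delta}\in \mathcal{H}^1(\Omega_p)$ be the lifting produced by Lemma~\ref{lemma:lifting}, so that $\gamma_0^{\Sigma}\Phi^{\delta}=\varphi$, $\operatorname{supp}\Phi^{\delta}\subset \overline{\Omega_p\cap\Omega_{\Sigma}^{\delta}}$, $\Phi^{\delta}(\delta,y)=0$, and $\|\Phi^{\delta}\|_{L^2(\Omega_p)}+\delta\|\nabla \Phi^{\delta}\|_{L^2(\Omega_p)}\le C\delta^{1/2}\|\varphi\|_{\mathcal{H}^{1/2}(\Sigma)}$. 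Then
\begin{align*}
-\langle \gamma_n^{\Sigma}u,\varphi\rangle_{\mathcal{H}^{-1/2}(\Sigma),\mathcal{H}^{1/2}(\Sigma)}
=\int_{\Omega_p\cap \Omega_{\Sigma}^{\delta}}\operatorname{div}(x\mathbb{A}\nabla u)\,\Phi^{\delta}
+\int_{\Omega_p\cap \Omega_{\Sigma}^{\delta}} x\mathbb{A}\nabla u\cdot \nabla\Phi^{\delta}=:\mathcal{J}_1^{\delta}+\mathcal{J}_2^{\delta}.
\end{align*}
It remains to prove $\mathcal{J}_1^{\delta}\to 0$ and $\mathcal{J}_2^{\delta}\to 0$ as $\delta\to 0+$.

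\textbf{Step 3 (bound on $\mathcal{J}_2^{\delta}$).} Using $|x|\le\delta$ on $\operatorname{supp}\Phi^{\delta}$ and the bound \eqref{eq:bound_data},
\begin{align*}
|\mathcal{J}_2^{\delta}|\;\lesssim\; \|x^{1/2}\nabla u\|_{L^2(\Omega_p\cap\Omega_{\Sigma}^{\delta})}\,\|x^{1/2}\nabla \Phi^{\delta}\|_{L^2(\Omega_p)}
\;\lesssim\; \|x^{1/2}\nabla u\|_{L^2(\Omega_p\cap\Omega_{\Sigma}^{\delta})}\,\|\varphi\|_{\mathcal{H}^{1/2}(\Sigma)},
\end{align*}
and the first factor tends to zero as $\delta\to 0$ by absolute continuity of the integral, since $x^{1/2}\nabla u\in L^2(\Omega_p)$.

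\textbf{Step 4 (bound on $\mathcal{J}_1^{\delta}$).} This is the core of the proof. Write
\begin{align*}
|\mathcal{J}_1^{\delta}|\;\le\; \|x^{1/2-\varepsilon}\operatorname{div}(x\mathbb{A}\nabla u)\|_{L^2(\Omega_p)}\,\|x^{\varepsilon-1/2}\Phi^{\delta}\|_{L^2(\Omega_p\cap\Omega_{\Sigma}^{\delta})}.
\end{align*}
Since $\Phi^{\delta}(\delta,y)=0$, a standard one-dimensional Hardy inequality on $(0,\delta)$ (applied slicewise in $y$, using $2\varepsilon-1>-1$) gives
\begin{align*}
\int_{0}^{\delta} x^{2\varepsilon-1}|\Phi^{\delta}(x,y)|^2\,dx
\;\le\; C_{\varepsilon}\int_{0}^{\delta} x^{2\varepsilon+1}|\partial_x \Phi^{\delta}(x,y)|^2\,dx,
\end{align*}
and integrating in $y\in (-\ell,\ell)$ together with $|x|\le \delta$ on $\operatorname{supp}\Phi^{\delta}$ and \eqref{eq:bound_data} yields
\begin{align*}
\|x^{\varepsilon-1/2}\Phi^{\delta}\|_{L^2(\Omega_p)}^2
\;\le\; C_{\varepsilon}\,\delta^{2\varepsilon+1}\|\nabla \Phi^{\delta}\|_{L^2(\Omega_p)}^2
\;\le\; C_{\varepsilon}'\,\delta^{2\varepsilon}\|\varphi\|_{\mathcal{H}^{1/2}(\Sigma)}^2.
\end{align*}
Hence $|\mathcal{J}_1^{\delta}|\lesssim \delta^{\varepsilon}\|x^{1/2-\varepsilon}\operatorname{div}(x\mathbb{A}\nabla u)\|_{L^2(\Omega_p)}\|\varphi\|_{\mathcal{H}^{1/2}(\Sigma)}\to 0$.

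\textbf{Conclusion.} Since $\mathcal{J}_1^{\delta}+\mathcal{J}_2^{\delta}\to 0$ while the left-hand side $-\langle \gamma_n^{\Sigma}u,\varphi\rangle$ is independent of $\delta$, we conclude $\langle \gamma_n^{\Sigma}u,\varphi\rangle=0$ for every $\varphi\in\mathcal{H}^{1/2}(\Sigma)$, i.e.\ $\gamma_n^{\Sigma}u=0$ in $\mathcal{H}^{-1/2}(\Sigma)$.

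The main obstacle is the control of $\mathcal{J}_1^{\delta}$: the weight $x^{\varepsilon-1/2}$ is singular at $\Sigma$, so we cannot simply invoke $\|\Phi^{\delta}\|_{L^2}\le C\delta^{1/2}\|\varphi\|_{\mathcal{H}^{1/2}}$. The key ingredient that unlocks the argument is the Hardy inequality applied on the slice $(0,\delta)$, which is available because the lifting of Lemma~\ref{lemma:lifting} is built to vanish at $x=\delta$; it is this structural property of $\Phi^{\delta}$, rather than the norm bound alone, that is exploited here.
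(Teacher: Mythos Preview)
Your proof is correct and follows essentially the same strategy as the paper: use the variational definition of $\gamma_n^{\Sigma}u$ with the lifting $\Phi^{\delta}$ of Lemma~\ref{lemma:lifting}, split the right-hand side into the two integrals $\mathcal{J}_1^{\delta}$ and $\mathcal{J}_2^{\delta}$, and let $\delta\to 0$. The treatment of $\mathcal{J}_2^{\delta}$ is identical.

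The only (minor) difference is in handling $\mathcal{J}_1^{\delta}$. The paper invokes the Hardy-type inequality of Proposition~\ref{prop:hardy}, which gives $\|x^{\varepsilon-1/2}\Phi^{\delta}\|_{L^2(\Omega_p)}\lesssim \|\Phi^{\delta}\|_{\mathcal{V}_{reg}(\Omega_p)}$, and then uses the $\mathcal{V}_{reg}$-bound \eqref{eq:bound_deltaL2}(b) of the full lifting lemma to get a uniform (in $\delta$) control; the vanishing then comes from $\|x^{1/2-\varepsilon}\operatorname{div}(x\mathbb{A}\nabla u)\|_{L^2(\Omega_{p}\cap\Omega_{\Sigma}^{\delta})}\to 0$ by absolute continuity. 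You instead derive the needed weighted bound directly via a slicewise Hardy inequality on $(0,\delta)$, exploiting the structural property $\Phi^{\delta}(\delta,\cdot)=0$ of the lifting, and obtain an explicit $O(\delta^{\varepsilon})$ rate. This is a legitimate alternative: it avoids citing Proposition~\ref{prop:hardy} and bound~(b), relying only on bound~(a) from Lemma~\ref{lemma:lifting} together with the vanishing at $x=\delta$. Your slicewise Hardy inequality is indeed valid for $\alpha:=2\varepsilon-1\in(-1,0)$ with boundary condition at the right endpoint (the integration-by-parts argument with $v=x^{\alpha+1}/(\alpha+1)$ goes through since $\alpha+1>0$ kills the boundary term at $0$ and $g(\delta)=0$ kills it at $\delta$).
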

To prove the above result, we rely on the following auxiliary statements. Let us denote $
	f:=\operatorname{div}(x\mathbb{A}(\vec{x})\nabla u(\vec{x}))$, 
and argue that $f$ as in the statement of Proposition \ref{cor:uv_appx} satisfies $f\in (\mathcal{V}_{reg}(\Omega_p))'$.
\begin{lemma}
	\label{lem:f_belongs_vreg}
	For all $0\leq \delta<1$, the space $L^2_{\delta}(\Omega_p)$ is continuously embedded into $\mathcal{V}_{reg}'(\Omega_p)$. In particular, for all $f\in L^2_{\delta}(\Omega_p)$, the Lebesgue integral $\int_{\Omega_p}fvd\vec{x}$ is well-defined for all $v\in \mathcal{V}_{reg}(\Omega_p)$. 
\end{lemma}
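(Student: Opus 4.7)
The plan is to obtain the embedding directly from the Cauchy--Schwarz inequality combined with the Hardy-type inequality of Proposition \ref{prop:hardy}, by splitting the weight $1 = x^{\delta/2} \cdot x^{-\delta/2}$ in the duality pairing.

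Concretely, I would first dispose of the easy case $\delta = 0$, where $L^2(\Omega_p) \hookrightarrow \mathcal{V}_{reg}'(\Omega_p)$ is immediate from the continuous embedding $\mathcal{V}_{reg}(\Omega_p) \hookrightarrow L^2(\Omega_p)$ guaranteed by the Poincar\'e inequality \eqref{eq:poincare_vreg}. For general $0 < \delta < 1$, given $f \in L^2_\delta(\Omega_p)$ and $v \in \mathcal{V}_{reg}(\Omega_p)$, I would write
\begin{align*}
\int_{\Omega_p} |f(\vec{x}) v(\vec{x})| \, d\vec{x} = \int_{\Omega_p} \bigl(x^{\delta/2}|f(\vec{x})|\bigr)\bigl(x^{-\delta/2}|v(\vec{x})|\bigr) d\vec{x}
\end{align*}
and apply Cauchy--Schwarz to bound this by $\|x^{\delta/2} f\|_{L^2(\Omega_p)} \|x^{-\delta/2} v\|_{L^2(\Omega_p)} = \|f\|_{L^2_\delta(\Omega_p)} \|x^{-\delta/2} v\|_{L^2(\Omega_p)}$.

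The second factor is then controlled by Proposition \ref{prop:hardy} applied with $\varepsilon := \frac{1-\delta}{2} > 0$ (which is positive precisely because $\delta < 1$), so that $-\delta/2 = -1/2 + \varepsilon$ and
\begin{align*}
\|x^{-\delta/2} v\|_{L^2(\Omega_p)} = \|x^{-1/2+\varepsilon} v\|_{L^2(\Omega_p)} \leq C(\Omega_p, \varepsilon) \|v\|_{\mathcal{V}_{reg}(\Omega_p)}.
\end{align*}
Combining the two bounds yields $\int_{\Omega_p} |fv| \, d\vec{x} \leq C_\delta \|f\|_{L^2_\delta(\Omega_p)} \|v\|_{\mathcal{V}_{reg}(\Omega_p)}$, which establishes both the Lebesgue-integrability of $fv$ and the continuity of the linear form $v \mapsto \int_{\Omega_p} f v \, d\vec{x}$ on $\mathcal{V}_{reg}(\Omega_p)$, i.e.\ the desired continuous embedding into $\mathcal{V}_{reg}'(\Omega_p)$.

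There is no real obstacle here: the argument is a one-line application of Cauchy--Schwarz together with the already established weighted Hardy inequality, and the constraint $\delta < 1$ enters in exactly the right way to keep $\varepsilon = (1-\delta)/2$ strictly positive so that Proposition \ref{prop:hardy} is applicable.
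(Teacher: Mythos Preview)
Your proposal is correct and follows essentially the same approach as the paper: split the weight as $x^{\delta/2}\cdot x^{-\delta/2}$, apply Cauchy--Schwarz, and invoke the Hardy inequality of Proposition~\ref{prop:hardy} to control $\|x^{-\delta/2}v\|_{L^2(\Omega_p)}$ by $\|v\|_{\mathcal{V}_{reg}(\Omega_p)}$. The paper's proof is slightly terser (it does not single out the case $\delta=0$ or spell out the choice of $\varepsilon$), but the argument is the same.
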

\begin{proof}
Let $f\in L^2_{\delta}(\Omega_p)$, for some $0\leq \delta<1$. It suffices to prove the second statement. 
In virtue of the Hardy inequality of Proposition \ref{prop:hardy} the integral is well-defined as a scalar product of two $L^2(\Omega_p)$-functions:
	\begin{align}
		\label{eq:two_funcs}
		\int_{\Omega_p}fvd\vec{x}=\int_{\Omega_p}x^{\delta/2}f\,x^{-\delta/2}v d\vec{x}=(x^{\delta/2}f, x^{-\delta/2}\overline{v})_{L^2(\Omega_p)}, \quad \forall v\in \mathcal{V}_{reg}(\Omega_p).
	\end{align}
From the above and the Hardy inequality it is immediate that $\|f\|_{\mathcal{V}'_{reg}(\Omega_p)}\lesssim \|x^{\delta}f\|_{L^2(\Omega_p)}$. 
\end{proof}
The next result is an auxiliary lifting lemma. Before stating it, let us recall the notation  \eqref{eq:omegaresigma}: $
	\Omega_{\Sigma}^{\delta}:=\{\bx\in \Omega: \, |\operatorname{dist}(\vec{x},\Sigma)|<\delta\}, \quad \delta>0.$
\begin{lemma}[Lifting lemma]
	\label{lemma:lifting_lemma}
	Given $v\in \mathcal{H}^{1/2}(\Sigma)$ and  $0<\delta<1$, let $V^{\delta}\in \mathcal{H}^1(\Omega_p)$ be a unique solution to the following boundary-value problem:
	\begin{align*}
		&-\Delta V^{\delta}=0 \text{ on }\Omega_{\Sigma}^{\delta}\cap\Omega_p,\\
		&V^{\delta}(0,y)=v(y), \qquad V^{\delta}(\delta,y)=0,\\
		&\text{ periodic BCs at }y=\pm \ell.
	\end{align*}
	Then, with a constant $C>0$ independent of $\delta$, 
	\begin{align}
		\begin{split}
		\label{eq:bound_deltaL2}
		&(a)\,\;\|V^{\delta}\|_{L^2(\Omega_p)}+\delta\|V^{\delta}\|_{H^1(\Omega_p)}\leq C \delta^{1/2}\|v\|_{\mathcal{H}^{1/2}(\Sigma)},\quad
		(b)\,\;\|V^{\delta}\|_{\mathcal{V}_{reg}(\Omega_p)}\leq C\|v\|_{\mathcal{H}^{1/2}(\Sigma)}.
		\end{split}
	\end{align}
\end{lemma}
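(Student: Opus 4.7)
Because the domain $(0,\delta)\times(-\ell,\ell)$ is a strip with periodic boundary conditions in $y$, the natural approach is to solve the boundary-value problem for $V^{\delta}$ explicitly via Fourier series in $y$. Writing $v(y)=\sum_{n\in\mathbb{Z}}\hat{v}_n e^{ik_n y}$ with $k_n=\pi n/\ell$, so that $\|v\|_{\mathcal{H}^{1/2}(\Sigma)}^2\asymp\sum_n(1+|k_n|)|\hat v_n|^2$, the unique $\mathcal{H}^1$-solution is
\begin{equation*}
V^{\delta}(x,y)=\sum_{n\neq 0}\hat v_n\,\phi_n(x)\,e^{ik_n y}+\hat v_0\Bigl(1-\tfrac{x}{\delta}\Bigr),\qquad \phi_n(x):=\frac{\sinh(|k_n|(\delta-x))}{\sinh(|k_n|\delta)},
\end{equation*}
extended by zero for $x\geq\delta$. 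By Parseval, all three quantities $\|V^{\delta}\|_{L^2(\Omega_p)}^2$, $\|V^{\delta}\|_{H^1(\Omega_p)}^2$, $\|x^{1/2}\nabla V^{\delta}\|_{L^2(\Omega_p)}^2$ decouple into sums $\sum_n|\hat v_n|^2\,I_n(\delta)$ for suitable $x$-integrals $I_n(\delta)$, so the proof reduces to showing that each of these $I_n(\delta)$ is bounded by the appropriate multiple of $(1+|k_n|)$.

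First I would handle the elementary integrals. A substitution $u=|k_n|(\delta-x)$ together with the explicit primitives of $\sinh^2$ and $\cosh^2$ yields the uniform bounds
\begin{equation*}
\int_0^\delta|\phi_n|^2\,dx\lesssim\min\Bigl(\delta,\tfrac{1}{|k_n|}\Bigr),\qquad
\int_0^\delta|\phi_n'|^2\,dx\lesssim\max\Bigl(\tfrac{1}{\delta},|k_n|\Bigr),
\end{equation*}
with matching asymptotics for small and large $|k_n|\delta$. The $L^2$ bound $\|V^{\delta}\|^2\lesssim\delta\|v\|_{L^2(\Sigma)}^2\leq\delta\|v\|^2_{\mathcal{H}^{1/2}(\Sigma)}$ then follows by dropping $1/|k_n|$ in favour of $\delta$. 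The tangential derivative is bounded by $\|\partial_y V^{\delta}\|^2\lesssim\sum_n|\hat v_n|^2|k_n|^2\min(\delta,|k_n|^{-1})\leq\sum_n|\hat v_n|^2|k_n|\lesssim\|v\|^2_{\mathcal{H}^{1/2}(\Sigma)}$, while the normal derivative gives $\|\partial_x V^{\delta}\|^2\lesssim\sum_n|\hat v_n|^2(\delta^{-1}+|k_n|)\lesssim\delta^{-1}\|v\|^2_{\mathcal{H}^{1/2}(\Sigma)}$. Multiplying by $\delta^2$ yields~\eqref{eq:bound_deltaL2}(a).

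For \eqref{eq:bound_deltaL2}(b) the hardest estimate, and the main technical point, is the weighted bound $\|x^{1/2}\partial_x V^{\delta}\|_{L^2(\Omega_p)}\lesssim\|v\|_{\mathcal{H}^{1/2}(\Sigma)}$, since $|\partial_x V^{\delta}|\sim\delta^{-1}$ near $x=0$ for the low modes and the weight $x^{1/2}$ must exactly absorb this blowup uniformly in $n$. The key computation is
\begin{equation*}
\int_0^\delta x\,|\phi_n'(x)|^2\,dx
=\frac{|k_n|\delta}{\sinh^2(|k_n|\delta)}\int_0^{|k_n|\delta}\cosh^2u\,du
-\frac{1}{\sinh^2(|k_n|\delta)}\int_0^{|k_n|\delta}u\cosh^2u\,du.
\end{equation*}
Using the explicit antiderivatives $\int_0^A\cosh^2u\,du=\tfrac{\sinh 2A}{4}+\tfrac{A}{2}$ and $\int_0^A u\cosh^2u\,du=\tfrac{A^2}{4}+\tfrac{A\sinh 2A}{4}-\tfrac{\cosh 2A-1}{8}$, a careful asymptotic analysis shows that the leading exponentials cancel: the quantity above is $O(1)$ uniformly for $|k_n|\delta\to 0^+$ (where it tends to $1/2$) and for $|k_n|\delta\to+\infty$ (where it tends to $1/4$). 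Together with the analogous uniform bound $|k_n|^2\int_0^\delta x|\phi_n|^2\,dx\lesssim 1+|k_n|$ (which follows from the product of $\delta<1$ with the first elementary estimate above), this yields $\|V^{\delta}\|_{\mathcal{V}_{reg}(\Omega_p)}^2\lesssim\sum_n(1+|k_n|)|\hat v_n|^2\asymp\|v\|^2_{\mathcal{H}^{1/2}(\Sigma)}$, as required.

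The main obstacle is therefore not the well-posedness of the auxiliary BVP, which is standard, but rather the cancellation argument showing that $\int_0^\delta x|\phi_n'|^2\,dx$ is bounded uniformly in both $n$ and $\delta$; without this cancellation one would only obtain a bound with a spurious logarithmic or polynomial blow-up in $\delta^{-1}$, which would destroy (b).
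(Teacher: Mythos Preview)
Your proof is correct, but it takes a more computational route than the paper, and in particular the step you flag as ``the main obstacle'' is in fact a detour.

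Both proofs use the same Fourier series representation $V^{\delta}=\sum_n \hat v_n\,\phi_n(x)e^{ik_ny}$ with $\phi_n(x)=\sinh(|k_n|(\delta-x))/\sinh(|k_n|\delta)$. For part~(a), the paper proceeds differently in two places: instead of estimating $\int_0^{\delta}|\phi_n'|^2$ mode-by-mode, it uses the Dirichlet energy identity $\|\nabla V^{\delta}\|^2=-\int_{\Sigma}\partial_x V^{\delta}(0,y)\,\overline{v}$ (valid since $V^{\delta}$ is harmonic with $V^{\delta}(\delta,\cdot)=0$), which yields the closed-form series $\sum_m |v_m|^2 \lambda_m\,\coth(\lambda_m\delta)$ directly; and for $\|V^{\delta}\|_{L^2}$ it uses the one-line Poincar\'e inequality $\|V^{\delta}\|\le\delta\|\nabla V^{\delta}\|$ coming from $V^{\delta}(\delta,y)=0$. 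Your mode-by-mode integrals give the same bounds, so this is a matter of taste.

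The significant difference is in part~(b). You identify the weighted estimate $\|x^{1/2}\partial_x V^{\delta}\|\lesssim\|v\|_{\mathcal{H}^{1/2}}$ as the crux of the matter and prove it by a delicate cancellation in $\int_0^{\delta} x|\phi_n'|^2\,dx$. The paper bypasses this entirely: since $\operatorname{supp}V^{\delta}\subset\{0\le x\le\delta\}$, one has trivially $\|x^{1/2}\nabla V^{\delta}\|_{L^2(\Omega_p)}\le\delta^{1/2}\|\nabla V^{\delta}\|_{L^2(\Omega_p)}$, and the right-hand side is $\lesssim\|v\|_{\mathcal{H}^{1/2}}$ by part~(a). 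So your statement that ``without this cancellation one would only obtain a bound with a spurious logarithmic or polynomial blow-up'' is a misdiagnosis: the cancellation is real but unnecessary, because (b) is an immediate corollary of (a) via the support property. Your argument gives slightly more information (the weighted integral is bounded uniformly in $n$, not just after summation), but the extra work is not needed for the lemma as stated.
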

Before proving the above result, let us recall a standard equivalent definition of the $\mathcal{H}^{1/2}(\Sigma)$-norm. 
First of all, consider the 1D periodic Laplacian operator $-\partial_y^2$ on $(-\ell,\ell)$; we define its eigenfunctions $\{\phi_m\}_{m\in\mathbb{N}}$ and the corresponding eigenvalues $\lambda_m^2$ (where $0=\lambda_0^2\leq \lambda_1^2\leq\ldots \rightarrow +\infty$) via 
\begin{align*}
		&-\partial_y^2 \varphi_m=\lambda_m^2 \varphi_m, \quad \text{ on }(-\ell,\ell),\\
		&\text{periodic BCs at }y=\pm \ell,\qquad \|\varphi_m\|_{L^2(-\ell,\ell)}=1. 
\end{align*}
Identifying $\Sigma$ with an interval $(-\ell,\ell)$, and given $v\in L^2(\Sigma)$, we can expand it into the Fourier series in $\varphi_m$. Let  $v_m:=(v,\varphi_m)_{L^2(-\ell,\ell)}$ and define the norm $
	\|v\|_{\mathcal{H}^{1/2}_{per}(\Sigma)}^2:=\sum\limits_{m=0}^{+\infty}(1+\lambda_m^2)^{1/2}|v_m|^2.$ 
The norms $\|.\|_{\mathcal{H}^{1/2}_{per}(\Sigma)}$ and $\|.\|_{\mathcal{H}^{1/2}(\Sigma)}$ are equivalent, and 
$
	\mathcal{H}^{1/2}_{per}(\Sigma)=\{v\in L^2(\Sigma): \, \sum\limits_{m=0}^{+\infty}(1+\lambda_m^2)^{1/2}|v_m|^2<\infty\}=\mathcal{H}^{1/2}(\Sigma).$
This follows by the same argument as in the proof of \cite[Lemma 2.10]{cabre_tan}. 
\begin{proof}[Proof of Lemma \ref{lemma:lifting_lemma}]
It holds that $
	V^{\delta}(x,y)=\sum\limits_{m=0}^{\infty}V^{\delta}_m(x)\phi_m(y), $
	where each $V^{\delta}_m(x)$ solves the ODE
	\begin{align*}
		-\partial_x^2 V^{\delta}_m+\lambda_m^2 V^{\delta}_m=0 \text{ on }(0,\,\delta), \quad 
		V^{\delta}_m(0)=v_m, \quad V^{\delta}_m(\delta)=0.
	\end{align*}
	Solving the above explicitly yields the identity:
	\begin{align*}
		V_m^{\delta}(x)=v_m\frac{\mathrm{e}^{\lambda_m(\delta-x)}-\mathrm{e}^{-\lambda_m(\delta-x)}}{\mathrm{e}^{\lambda_m\delta}-\mathrm{e}^{-\lambda_m\delta}}.
	\end{align*}
To bound the $H^1$-norm, we use $\Delta V^{\delta}=0$ in $\Omega_p\cap\Omega_{\Sigma}^{\delta}$ and $V^{\delta}(\delta,y)=0$ to integrate by parts:
	\begin{align*}
		\int_{\Omega_p\cap\Omega_{\Sigma}^{\delta}}|\nabla V^{\delta}|^2=-\int_{\Sigma}\partial_x V^{\delta}(0,y)\overline{V^{\delta}}(0,y)d\Gamma_y=-\int_{\Sigma}\partial_x V^{\delta}(0,y)\overline{v}(y)d\Gamma_y. 
	\end{align*}
	Then a straightforward computation yields $\partial_x V^{\delta}(0,y)=-\sum\limits_{m=0}^{\infty}\lambda_mv_m\frac{\mathrm{e}^{\lambda_m\delta}+\mathrm{e}^{-\lambda_m\delta}}{\mathrm{e}^{\lambda_m\delta}-\mathrm{e}^{-\lambda_m\delta}}\phi_m(y)$, so that 
	\begin{align}
		\label{eq:series}
		\|\nabla V^{\delta}\|^2_{L^2(\Omega_p)}=\sum\limits_{m=0}^{\infty}|v_m|^2\lambda_m\frac{1+\mathrm{e}^{-2\delta\lambda_m}}{1-\mathrm{e}^{-2\lambda_m\delta}}\leq \sum\limits_{m=0}^{\infty}|v_m|^2\frac{2\lambda_m}{1-\mathrm{e}^{-2\lambda_m\delta}},
	\end{align}
where we used $\mathrm{e}^{-2\delta\lambda_m}<1$. Next, 
for some $0<\re<1$ sufficiently small, there exists $C_{\re}>0$, s.t.
	\begin{align*}
		&1-\mathrm{e}^{-t}\geq C_{\re}t, \text{ for all }|t|<\re,\qquad \text{ and }\quad 
		1-\mathrm{e}^{-t}\geq C_{\re}\re, \text{ for all }|t|\geq \re.
	\end{align*}
We split the series in \eqref{eq:series} into two parts, and use the above bounds for a fixed $\re>0$ in both parts:
	\begin{align}
		\label{eq:final1}
		\|\nabla V^{\delta}\|^2_{L^2(\Omega_p)}\leq  \frac{1}{\delta C_{\re}}\sum\limits_{2\lambda_m\delta<\re}|v_m|^2+\frac{2}{C_{\re}\re}\sum\limits_{2\lambda_m\delta\geq \re}|v_m|^2\lambda_m\lesssim \max(1,\delta^{-1})\|v\|^2_{\mathcal{H}^{1/2}(\Sigma)}.
	\end{align}
	To estimate the $L^2$-norm, we combine the above  with the Poincar\'e inequality. Since $V^{\delta}(\delta,0)=0$, we use $$V^{\delta}(x,y)=-\int_{x}^{\delta}\partial_{\tilde{x}}V(\tilde{x},y)d\tilde{x},\text{ a.e. }y\in (-\ell,\ell),$$
	and by the Cauchy-Schwarz inequality, it holds a.e. $y\in (-\ell,\ell)$, that
	\begin{align*}
		|V^{\delta}(x,y)|^2\leq (\delta-x)\int_{x}^\delta|\partial_{\tilde{x}} V^{\delta}(\tilde{x},y)|^2 d\tilde{x} \implies \int_0^{\delta}|V^{\delta}(x,y)|^2dx\leq \delta^2 \int_0^{\delta}|\partial_x V^{\delta}(\tilde{x},y)|^2 d\tilde{x}.
	\end{align*}
	Therefore,  $
		\|V^{\delta}\|^2_{L^2(\Omega_p)}=		\|V^{\delta}\|^2_{L^2(\Omega_p\cap\Omega_{\Sigma}^{\delta})}\lesssim \delta^2\|\nabla V^{\delta}\|^2_{L^2(\Omega_p)}. $
		The bound \eqref{eq:bound_deltaL2}(a)  then follows with \eqref{eq:final1}. The remaining bound \eqref{eq:bound_deltaL2}(b) is immediate from the above: since $\operatorname{supp}V^{\delta}\subseteq \overline{\Omega}_{\Sigma}^{\delta}$, 
	\begin{align*}
		\|x^{1/2}\nabla V^{\delta}\|_{L^2(\Omega_p)}=	\|x^{1/2}\nabla V^{\delta}\|_{L^2(\Omega_p\cap \Omega_{\Sigma}^{\delta})}\leq   \delta^{1/2}\|\nabla V^{\delta}\|_{L^2(\Omega_p\cap \Omega_{\Sigma}^{\delta})}\leq C \|v\|_{\mathcal{H}^{1/2}(\Sigma)},
	\end{align*}
	as follows from \eqref{eq:final1}. Combining the above with $\|V^{\delta}\|_{L^2(\Omega_p)}\lesssim \delta^{1/2}\|v\|_{\mathcal{H}^{1/2}(\Sigma)}$ yields the sought inequality.
\end{proof}
Now we have the necessary ingredients to prove Proposition \ref{cor:uv_appx}. 
\begin{proof}[Proof of Proposition \ref{cor:uv_appx}]
Recall the variational definition of the conormal derivative: given $\varphi\in \mathcal{H}^{1/2}(\Sigma)$,
\begin{align}
	\label{eq:nderdef}
	\langle 	\gamma_{n}^{\Sigma}u, \varphi\rangle_{\mathcal{H}^{-1/2}(\Sigma), \mathcal{H}^{1/2}(\Sigma)}=-\int_{\Omega_p} f\Phi -\int_{\Omega_p}x\mathbb{A} \nabla u\, \nabla \Phi,
\end{align}
with $\Phi\in\H^1(\Omega_p)$ being such that $\gamma_0^{\Sigma}\Phi=\varphi$. The right-hand side is well-defined by Lemma \ref{lem:f_belongs_vreg}. 

To prove that $\gamma_n^{\Sigma}u=0$, we fix $\varphi\in \mathcal{H}^{1/2}(\Sigma)$, $0<\delta<a$, and choose the lifting of $\varphi$ as $\Phi=\Phi^{\delta}$ defined in Lemma \ref{lemma:lifting_lemma}. We introduce $\Omega_{p,\Sigma}^{\delta}:=\Omega_{\Sigma}^{\delta}\cap \Omega_p$, use Lemma \ref{lem:f_belongs_vreg} (see \eqref{eq:two_funcs}), the fact that $\operatorname{supp}\Phi^{\delta}\subseteq\overline{\Omega_{p,\Sigma}^{\delta}}$ and the Cauchy-Schwarz inequality (with the hidden constant depending on $\re$):
\begin{align*}
	\left|\langle 	\gamma_{n}^{\Sigma}u, \varphi\rangle_{\mathcal{H}^{-1/2}(\Sigma), \mathcal{H}^{1/2}(\Sigma)}\right|&\lesssim  \|x^{1/2-\varepsilon}f\|_{L^2(\Omega_{p,\Sigma}^{\delta})}\|\Phi^{\delta}\|_{\mathcal{V}_{reg}(\Omega_{p,\Sigma}^{\delta})}+\|x^{1/2}\nabla u\|_{L^2(\Omega_{p,\Sigma}^{\delta})}\|x^{1/2}\nabla \Phi_{\delta}\|_{L^2(\Omega_{p,\Sigma}^{\delta})}.
\end{align*}
Using the bound \eqref{eq:bound_deltaL2}(b), we  conclude that, with some $C>0$, independent of $\delta$, it holds that 
\begin{align*}
	\left|\langle 	\gamma_{n}^{\Sigma}u, \varphi\rangle_{\mathcal{H}^{-1/2}(\Sigma), \mathcal{H}^{1/2}(\Sigma)}\right|&\leq C \left(\|x^{1/2-\varepsilon}f\|_{L^2(\Omega_{p,\Sigma}^{\delta})}+\|u\|_{\mathcal{V}_{reg}(\Omega_{p,\Sigma}^{\delta})}\right)\|\varphi\|_{\mathcal{H}^{1/2}(\Sigma)}.
\end{align*}
Since $u\in \mathcal{V}_{reg}(\Omega_p)$ and  $x^{1/2-\varepsilon}f\in L^2(\Omega_p)$, we conclude that by taking $\delta\rightarrow 0$, the norm $\|\gamma_n^{\Sigma}u\|_{\mathcal{H}^{-1/2}(\Sigma)}$ can be made arbitrarily small, hence the conclusion of the proposition.
\end{proof}
\subsection{Well-posedness for non-$L^2$-data}
\label{sec:wp_rough}
\begin{proposition}
	\label{prop:regularity2}
	Let $0\leq \delta<1$, and let $f\in L^2_{\delta}(\Omega_p)$. 
	Then the problem: find $u\in \mathcal{V}_{reg}(\Omega_p)$, s.t. 
	\begin{align}
		\label{eq:divxu}
		\begin{split}
		&\operatorname{div}(x\mathbb{A}\nabla u)=f \text{ in }\Omega_p,\\
		&\gamma_0^{\Gamma_p}u=0, \quad \text{ periodic BCs at }\Gamma_p^{\pm},
		\end{split}
	\end{align}
admits a unique solution $u\in \mathcal{V}_{reg}(\Omega_p)$. It satisfies $\gamma_n^{\Sigma}u=0$ and $$u\in \bigcap_{0<\re<1}\mathcal{H}^1_{\delta+\re}(\Omega_p)\subset \bigcap_{0<\re\leq 1-\frac{\delta}{2}}\mathcal{H}^{1-\delta/2-\re}(\Omega_p).$$
Also, $\|u\|_{\mathcal{H}^{1-\delta/2-\re/2}(\Omega_p)}\leq C_{\delta,\re} \|u\|_{\mathcal{H}^1_{\delta+\re}(\Omega_p)}\leq \tilde{C}_{\delta,\re} \|x^{\delta/2}f\|_{L^2(\Omega_p)},$ for all $0<\re<1/2$.   
\end{proposition}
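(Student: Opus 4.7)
The plan proceeds in three stages.

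\textbf{Stage 1 (existence, uniqueness, and vanishing Neumann trace in $\mathcal{V}_{reg}(\Omega_p)$).} Since $0\leq\delta<1$, Lemma \ref{lem:f_belongs_vreg} yields the continuous embedding $L^2_\delta(\Omega_p)\hookrightarrow(\mathcal{V}_{reg}(\Omega_p))'$, so Theorem \ref{prop:main} produces a unique $u\in\mathcal{V}_{reg}(\Omega_p)$ solving \eqref{eq:divxu}, with $\|u\|_{\mathcal{V}_{reg}(\Omega_p)}\lesssim\|x^{\delta/2}f\|_{L^2(\Omega_p)}$. Setting $\varepsilon:=(1-\delta)/2>0$ makes $x^{1/2-\varepsilon}f=x^{\delta/2}f\in L^2(\Omega_p)$, so Lemma \ref{cor:uv} immediately gives $\gamma_n^{\Sigma}u=0$.

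\textbf{Stage 2 (weighted regularity).} Fix $0<\re<1$, set $\sigma:=(\delta+\re)/2$, and target the estimate $\|x^{\sigma}\nabla u\|_{L^2(\Omega_p)}\lesssim\|x^{\delta/2}f\|_{L^2(\Omega_p)}$. The engine is a weighted energy identity obtained by testing \eqref{eq:divxu} against $\varphi_\epsilon\overline u$, where $\varphi_\epsilon(x):=(x+\epsilon)^{2\sigma-1}\in C^\infty([0,a])$ is a bounded regularization of $x^{2\sigma-1}$ (making $\varphi_\epsilon\overline u\in\mathcal{V}_{reg}(\Omega_p)$ admissible). Integration by parts (using $\gamma_n^{\Sigma}u=0$ from Stage 1, the Dirichlet condition on $\Gamma_p$, and periodicity), real-part extraction, ellipticity of $\mathbb{A}$, Cauchy--Schwarz, Young's inequality, and a monotone-convergence passage $\epsilon\to 0^+$ yield
\begin{align*}
c\,\|x^{\sigma}\nabla u\|_{L^2(\Omega_p)}^2\leq C\bigl(\|x^{\delta/2}f\|_{L^2(\Omega_p)}^2+\|x^{\re/2}u\|_{L^2(\Omega_p)}^2+\|x^{\sigma-1}u\|_{L^2(\Omega_p)}^2\bigr),
\end{align*}
the two $u$-terms coming respectively from the Cauchy--Schwarz pairing of $\int f x^{2\sigma-1}\overline u$ and from the commutator $\int x^{2\sigma-1}\mathbb{A}\nabla u\cdot\vec{e}_x\overline u$. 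The $\|x^{\re/2}u\|$-term is trivially absorbed by $\|u\|_{L^2}\leq\|u\|_{\mathcal{V}_{reg}}$ (Stage 1) since $\re/2>0$ and $\Omega_p$ is bounded. The delicate term $\|x^{\sigma-1}u\|$ is handled through the Hardy--Poincar\'e inequality
\begin{align*}
\|x^{s}u\|_{L^2(\Omega_p)}\leq C\|x^{s+1/2}\nabla u\|_{L^2(\Omega_p)}\qquad(s>-1/2),
\end{align*}
which follows from $\gamma_0^{\Gamma_p}u=0$ by an elementary Fubini argument (cf.\ the proof of Proposition \ref{prop:poincare_appendix}). When $\sigma>1/2$ (that is, $\delta+\re>1$) it applies directly; when $\sigma\leq 1/2$, a short finite bootstrap is run, initialized at $u\in\mathcal{H}^1_1=\mathcal{V}_{reg}$, with each iteration using the weighted estimate available from the preceding step to close the Hardy control at a slightly sharper weight. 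A fixed decrement in the exponent per step makes the iteration terminate after finitely many steps at the target $\mathcal{H}^1_{\delta+\re}$, with constants under control.

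\textbf{Stage 3 (fractional Sobolev embedding).} Lemma \ref{lem:embedding} gives $\mathcal{H}^1_{\delta+\re}(\Omega_p)\hookrightarrow H^{1-(\delta+\re)/2}(\Omega_p)=H^{1-\delta/2-\re/2}(\Omega_p)$ with constant depending only on $\delta,\re$; combined with Stage 2, this yields the announced Sobolev bound with a loss of $\re/2$.

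\textbf{Main obstacle.} The heart of the matter is Stage 2, specifically the control of $\|x^{\sigma-1}u\|$ when $\delta+\re\leq 1$: the one-shot Hardy inequality just fails to apply, so a carefully set-up finite bootstrap is needed to transfer regularity from $\mathcal{V}_{reg}$ down to $\mathcal{H}^1_{\delta+\re}$ while keeping constants uniform across iterations. A less hands-on alternative is to adapt the pseudodifferential analysis of \cite{baouendi_goulaouic} underlying Theorem \ref{theorem:regularity} to the weighted scale, as outlined in \citeappendixregularity.
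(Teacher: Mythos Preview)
Your Stage 1 and Stage 3 are correct and coincide with the paper's argument. The gap is in Stage 2: the weighted energy identity with test function $x^{2\sigma-1}\overline u$ cannot reach the nontrivial range $\sigma=(\delta+\re)/2<1/2$ (equivalently $\delta+\re<1$), and your bootstrap does not fix this.

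Concretely, the commutator term in your identity is $(2\sigma-1)\int_{\Omega_p} x^{2\sigma-1}\,\overline u\,\mathbb{A}\nabla u\cdot\vec e_x$, which after Cauchy--Schwarz forces you to control $\|x^{\sigma-1}u\|_{L^2(\Omega_p)}$. But for $\sigma<1/2$ the exponent $\sigma-1$ lies \emph{below} $-1/2$, and no Hardy/Poincar\'e inequality for functions vanishing only at $x=a$ can bound $\|x^{\sigma-1}u\|$ in terms of $\|x^{\sigma'}\nabla u\|$ for any $\sigma'\ge 0$: take $u(x,y)=1-x/a$ to see that $\|x^{\sigma-1}u\|=\infty$ whenever $\sigma<1/2$, although $u$ is smooth and in $\mathcal V_{reg}$. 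Your stated inequality $\|x^s u\|\le C\|x^{s+1/2}\nabla u\|$ (valid for $s>-1/2$) therefore gives, for $\sigma>1/2$, only $\|x^{\sigma-1}u\|\lesssim\|x^{\sigma-1/2}\nabla u\|$, which is a \emph{stronger} norm than the target $\|x^{\sigma}\nabla u\|$; the recursion moves $\sigma$ upward (less regularity), never downward. Since the case $\sigma\ge 1/2$ is trivially inherited from $\mathcal V_{reg}$ by $x^{\sigma}\le a^{\sigma-1/2}x^{1/2}$, your method yields nothing beyond the starting estimate.

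The paper bypasses this by complex interpolation on the solution operator $\mathcal S:f\mapsto\nabla u$. The two endpoints are $\mathcal S\in\mathcal L(L^2,L^2)$, which is exactly the regularity Theorem~\ref{theorem:regularity} (the Baouendi--Goulaouic input), and $\mathcal S\in\mathcal L(L^2_{1-\re},L^2_1)$, which is the basic $\mathcal V_{reg}$ coercivity combined with Lemma~\ref{lem:f_belongs_vreg}. Interpolating via $[L^2,L^2_\eta]_\theta=L^2_{\theta\eta}$ then gives $\mathcal S\in\mathcal L(L^2_\delta,L^2_{\delta/(1-\re)})$ for every small $\re$, i.e.\ $\nabla u\in L^2_{\delta+\re'}$ for all $\re'>0$. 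The point is that the unweighted endpoint $L^2\to L^2$ encodes genuine elliptic regularity near $\Sigma$ that your energy method cannot see; this is precisely the ``less hands-on alternative'' you mention at the end, and it is not optional.
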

\begin{proof}
By Lemma \ref{lem:f_belongs_vreg}, $f\in \mathcal{V}_{reg}'(\Omega_p)$. Testing the problem \eqref{eq:divxu} with $v\in \mathcal{C}_{comp}^{\infty}(\Omega_p)$, and using the density of $\mathcal{C}_{comp}^{\infty}(\Omega_p)$ in $\mathcal{V}_{reg}(\Omega_p)$, see Proposition \ref{prop:density_vreg},  yields the well-posed problem examined in Theorem \ref{prop:main}. By Proposition \ref{cor:uv_appx}, $\gamma_n^{\Sigma}u=0$. 
It remains to argue about the regularity of $u$, more precisely, we would like to show that $\nabla u\in L^2_{\delta+\re}(\Omega_p)$, for all $\re>0$. We proceed by interpolation. According to \cite[Theorem 3.1]{cwikel_einav} stated in the form that we need (which is based on the results from \cite{stein_weiss} and \cite{calderon}):
\begin{align}
	\label{eq:l2int}
	[ L^2(\Omega_p), L^2_{\eta}(\Omega_p)]_{\theta}=L^2_{{\theta\eta}}(\Omega_p), \quad 0<\theta<1, \quad \eta>0,
\end{align}
where $[,]_{\theta}$ stands for complex interpolation. 
Let us now consider the operator $\mathcal{S}: \, \mathcal{V}'_{reg}(\Omega_p)\rightarrow L^2_{1}(\Omega_p)$, defined by $\mathcal{S}f=\nabla u_f$, where $u_f\in \mathcal{V}_{reg}(\Omega_p)$ is a unique solution to
\begin{align*}
	(x^{1/2}\mathbb{A}\nabla u_f, x^{1/2}\nabla v)=-\langle f, \overline{v}\rangle_{\mathcal{V}_{reg}'(\Omega_p), \mathcal{V}_{reg}(\Omega_p)}, \quad \forall v\in \mathcal{V}_{reg}(\Omega_p)
\end{align*}
(remark that with an abuse of notation we use $L^2_{\eta}(\Omega_p)$ both for scalar- and vector-valued functions).

By Theorem \ref{theorem:regularity}, $\mathcal{S}\in \mathcal{L}(L^2(\Omega_p), L^2(\Omega_p))$, and, with Lemma \ref{lem:f_belongs_vreg}, for any $0<\re<1$, $$\mathcal{S}\in \mathcal{L}(L^2_{1-\re}(\Omega_p), L^2_{1}(\Omega_p)).$$ With \eqref{eq:l2int} this shows that $\mathcal{S}\in \mathcal{L}(L^2_{(1-\re)\theta}(\Omega_p), L^2_{\theta}(\Omega_p))$, for all $\theta\in (0,1)$. Choosing $\re$ sufficiently close to $0$, and $\theta=\delta/(1-\re)<1$ yields $\mathcal{S}\in \mathcal{L}(L^2_{\delta}(\Omega_p), L^2_{\frac{\delta}{1-\re}}(\Omega_p))$, for all $\re$ sufficiently close to $0$, thus all $\re\in (0,1)$. Hence the statement of the proposition for bounds in the  weighted space $\mathcal{H}^1_{\delta+\re}(\Omega_p)$, for any $\re>0$. 

On the other hand, the corresponding result on the fractional spaces stems from Lemma \ref{lem:embedding}. 
\end{proof}

	\section{Proof of Theorem \ref{theorem:regularity}}
\label{appendix:regularity_proof}
In this Appendix we provide a proof of Theorem \ref{theorem:regularity} from \cite{baouendi_goulaouic}, where we argue that we can reduce the regularity of the coefficients compared to the regularity considered in \cite{baouendi_goulaouic}; the statement of this theorem is repeated below for the convenience of the reader. The proof of this result relies on the elements  introduced in Section \ref{sec:extension_bessel}, that appear in the manuscript later compared to the statement of Theorem \ref{theorem:regularity}.

\begin{theorem}
	\label{theorem:regularity_appendix}
		Let $f\in L^2(\Omega_p)$. Then the unique solution $u$ to  \eqref{eq:rp} belongs to $\mathcal{H}^1(\Omega_p)$, and satisfies the following stability bound: $\|u\|_{H^1(\Omega_p)}+\| x  u\|_{H^2(\Omega_p)}\lesssim \|f\|_{L^2(\Omega_p)}$. 

If, moreover, $f\in \mathcal{H}^1(\Omega_p)$, then the unique solution to $u$ to  \eqref{eq:rp} satisfies $u\in \mathcal{H}^2(\Omega_p)$, and the following stability bound holds true: $\|u\|_{H^2(\Omega_p)}+\|xu\|_{H^3(\Omega_p)}\lesssim \|f\|_{H^1(\Omega_p)}$. 
	
\end{theorem}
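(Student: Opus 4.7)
My approach is to adapt the pseudo-differential/Fourier argument of Baouendi--Goulaouic \cite{baouendi_goulaouic}, with modifications needed to accommodate the matrix coefficient $\mathbb{A}$, the boundary/periodic conditions, and the $C^{1,1}$ (rather than $C^{\infty}$) regularity of $\mathbb{A}$. The plan has three stages: a reduction to a local model near $\Sigma$, a Fourier-analytic treatment of the model in the tangential direction, and a bootstrap recovery of normal regularity from the equation itself.

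\textbf{Step 1 (Localization and extraction of the model).} Pick a smooth cut-off $\eta \in C^\infty(\overline{\Omega_p})$ vanishing in a neighbourhood of $\Sigma$. On $\operatorname{supp}\eta$ the weight $x$ is bounded below, so $(1-\eta)u$ satisfies a uniformly elliptic problem with $L^2$ right-hand side, with Dirichlet condition on $\Gamma_p$ and periodic condition on $\Gamma_p^{\pm}$; classical elliptic regularity (treating the periodic part by extending across $\Gamma^{\pm}_p$ using Assumption \ref{assump:matrices}) gives $(1-\eta)u\in H^2(\Omega_p)$ with the desired bound. Writing $v=\eta u$, the remaining analysis is localized near $\Sigma$. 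Using the periodicity, extend $v$ to the strip $\mathbb{R}^2_a$ by $\mathcal{E}_\delta$ (Section \ref{sec:extension_bessel}); the extension $V=\mathcal{E}_{\delta}v$ satisfies $\operatorname{div}(x\mathbb{A}\nabla V)=F$ on $\mathbb{R}^2_a$, with $F$ controlled by $\|f\|_{L^2(\Omega_p)}+\|u\|_{\mathcal{V}_{reg}(\Omega_p)}$ (using Theorem \ref{theorem:well_posedness_regular}), and with $V$ compactly supported in $y$.

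\textbf{Step 2 (Tangential regularity via Fourier in $y$).} Apply $\mathcal{F}_y$ and freeze the $C^{1,1}$ coefficients at $y=y_0$, which produces a family of one-dimensional ODE's parametrized by the tangential frequency $\xi$: schematically $\partial_x(xA_0(x)\partial_x \widehat V)-\xi^{2} B_0(x)\widehat V+R(\xi)\widehat V=\widehat F$, where $R(\xi)$ contains the first-order cross terms and lower-order contributions from the off-diagonal entries of $\mathbb{A}$ (and from freezing the coefficient, the error is controlled by $|y-y_0|^{1}\|\partial_y\mathbb{A}\|_{L^\infty}\|\nabla V\|$, which is admissible because $\mathbb{A}\in C^{1,1}$). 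The frozen ODE is of Bessel/hypergeometric type; its regular branch $\phi_\xi$ near $x=0$ admits the expansion $\phi_\xi(x)=1+O(x\log x)$, giving the sharp a priori estimate $|\xi|\,\|\widehat V(\cdot,\xi)\|_{L^2}+\|\partial_x\widehat V(\cdot,\xi)\|_{L^2}+\|x\partial_x^2\widehat V(\cdot,\xi)\|_{L^2}\lesssim \|\widehat F(\cdot,\xi)\|_{L^2}$ uniformly in $\xi$. Parseval and a partition of unity in $y_0$ then yield $\|V\|_{H^1(\mathbb{R}^2_a)}+\|xV\|_{H^2(\mathbb{R}^2_a)}\lesssim \|F\|_{L^2(\mathbb{R}^2_a)}$. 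Transferred back to $\Omega_p$, this gives the first assertion.

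\textbf{Step 3 ($\mathcal{H}^1$ data and main obstacle).} For $f\in \mathcal{H}^1(\Omega_p)$, differentiate the equation in $y$: $w:=\partial_y u$ formally solves $\operatorname{div}(x\mathbb{A}\nabla w)=\partial_y f-\operatorname{div}(x\,\partial_y\mathbb{A}\,\nabla u)$, with the same homogeneous boundary data, the right-hand side lying in $L^2(\Omega_p)$ thanks to the $\mathcal{H}^1$-bound from Step~2. A Nirenberg-style difference quotient argument $D_y^h u$ makes this rigorous: uniform bounds on $D_y^h u$ in the Step~2 norm pass to $w$. Thus $\partial_y u\in H^1(\Omega_p)$ and $x\partial_y u\in H^2(\Omega_p)$; in particular $x\partial_y^2u\in L^2$. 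Now solve the equation algebraically for the missing second-order term: $a_{11}(\bx)\,x\partial_x^2 u = f - 2\operatorname{Re}(a_{12})\,x\partial_x\partial_y u - a_{22}\,x\partial_y^2u + (\text{lower order})$, and use uniform positive-definiteness of $\mathbb{A}$ (in particular $a_{11}\ge c_{\mathbb{A}}>0$) to conclude $x\partial_x^2u\in L^2$, hence $u\in \mathcal{H}^2(\Omega_p)$. The \textbf{main obstacle}, and the reason that the original Baouendi--Goulaouic argument cannot be invoked verbatim, is the upgrade from the weighted gradient bound $\|x^{1/2}\nabla u\|_{L^2}\lesssim\|f\|_{L^2}$ (which is immediate from coercivity) to the unweighted bound $\|\nabla u\|_{L^2}\lesssim\|f\|_{L^2}$: this is exactly the extra regularization provided by the degenerate elliptic operator, and it is the step that requires the detailed Bessel/ODE analysis of Step~2 rather than a simple energy estimate. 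Everything else is either standard elliptic regularity or straightforward bootstrap.
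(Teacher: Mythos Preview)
Your Steps~1--2 are a reasonable alternative to the paper's route. The paper does \emph{not} work directly with the degenerate problem: it regularizes by replacing $x$ with $x+i\nu$ and imposing \emph{Dirichlet} data on $\Sigma$, obtains uniform-in-$\nu$ tangential bounds via commutator estimates with the Bessel potential~$\mathcal{J}$, and then passes to the limit. Your direct frozen-coefficient ODE analysis is closer to the original Baouendi--Goulaouic argument and, if executed carefully, gives the same conclusion for Part~1. (Two small points: the regular branch of the model ODE is $I_0(|\xi|x)=1+O(x^2)$, not $1+O(x\log x)$---the logarithm is in the singular branch~$K_0$; and your freezing error involves $\|\nabla V\|$, which is the unknown, so you need the usual small-patch absorption to close.)

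Step~3, however, has a genuine gap. After showing $\partial_y u\in H^1$ you only have $\partial_x\partial_y u,\,\partial_y^2 u\in L^2$; solving the equation algebraically as you do yields $x\,\partial_x^2 u\in L^2$, which is strictly weaker than $\partial_x^2 u\in L^2$. The jump ``hence $u\in\mathcal{H}^2(\Omega_p)$'' is therefore unjustified. Removing the weight requires a second-order Hardy argument that uses the Neumann condition $\gamma_n^\Sigma u=0$: set $q:=x\,\vec{e}_x\cdot\mathbb{A}\nabla u$, so that $q\in\mathcal{H}^1(\Omega_p)$ with $\gamma_0^\Sigma q=0$. One checks from the equation and the already-established bounds that $\partial_x^2 q\in L^2(\Omega_p)$, and then the elementary lemma ``$q(0)=0$, $q''\in L^2(0,a)\Rightarrow (q/x)'\in L^2(0,a)$'' (the paper's Lemma~\ref{lem:hardy_ho}/Corollary~\ref{cor:dmp}) gives $\partial_x(q/x)=\partial_x(\vec{e}_x\cdot\mathbb{A}\nabla u)\in L^2$, hence $\partial_x^2 u\in L^2$. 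This Hardy step is where the boundary condition on~$\Sigma$ is actually used in the normal direction, and it is missing from your outline.
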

Before proving Theorem \ref{theorem:regularity_appendix}, a couple of remarks are in order. In principle, the first part of the result ($f\in L^2(\Omega_p)$), is probably true when $\mathbb{A}$ is less regular, e.g. $C^{0,1}(\overline{\Omega}_p; \mathbb{C}^{2\times 2})$; however, in our regularity proof we used a non-optimal Lemma \ref{lem:commutator}, and did not prove the corresponding result for $\mathbb{A}$ being less regular.
Second, in the standard elliptic regularity estimates, cf. \cite[Theorem 4.18]{mclean}, for the uniform bound $\|u\|_{H^2(\Omega_p)}$, it suffices that coefficients are of regularity $C^{0,1}(\overline{\Omega}_p)$. We were not able to obtain such a result for our, degenerate, setting.


\subsection{Proof of the statement of Theorem \ref{theorem:regularity_appendix} for the case when $f\in L^2(\Omega_p)$}
The first part of the result is proven indirectly. 
We introduce a regularized problem (see also Proposition \ref{prop:lap_reg}). This section is devoted to construction and analysis of such a regularized problem. 
\paragraph{A regularized Dirichlet problem} We look for $u^{\nu}_r\in \mathcal{H}^1(\Omega_p)$, s.t. 
\begin{align}
	\label{eq:regularized_pb}
	\begin{split}
		&\operatorname{div}((x+i\nu)\mathbb{A}\nabla u^{\nu}_r)=f \text{ in }\Omega_p,\\
		&\gamma_0^{\Sigma} u^{\nu}_r=0,\\
		&\gamma_0^{\Gamma_p}u^{\nu}_r =0, \text{ periodic BCs at }y=\pm\ell.
	\end{split}
\end{align}
Introducing $\mathcal{H}^1_{\Sigma,0}(\Omega_p):=\{u\in \mathcal{H}^1(\Omega_p), \quad \gamma_0^{\Sigma}u=0\}$, we 
can write a variational formulation for this problem, namely: find $u_r^{\nu}\in \mathcal{H}^{1}_{\Sigma,0}(\Omega)$, s.t.  
\begin{align}
	\label{eq:anur}
	a_r^{\nu}(u^{\nu}_r, v)=-(f, v), \quad \forall v\in \mathcal{H}^1_{\Sigma,0}(\Omega_p), \quad a_r^{\nu}(q,v)=\int_{\Omega_p}(x+i\nu)\mathbb{A}\nabla q\overline{\nabla v}. 
\end{align}
We have the following result, that enables us to use the solution to \eqref{eq:regularized_pb} to analyze properties of the solution to \eqref{eq:rp}. 
\begin{theorem}
	\label{theorem:convergenceNeumann}
	Let $f\in L^2(\Omega_p)$. 
	Then, for $\nu>0$, the problem \eqref{eq:regularized_pb} admits a unique solution $u^{\nu}_r\in \mathcal{H}^1(\Omega_p)$. Moreover, \begin{align}
		\label{eq:vreg}
		\|u^{\nu}_r\|_{\mathcal{V}_{reg}(\Omega_p)}\leq C\|f\|_{L^2(\Omega_p)},\\
		\label{eq:vreg2}
		\nu^{1/2}	\|u^{\nu}_r\|_{\mathcal{H}^1(\Omega_p)}\leq C\|f\|_{L^2(\Omega_p)}.
	\end{align}
	As $\nu\rightarrow 0+$, up to a subsequence, $u^{\nu}_r\rightharpoonup u_r$ in $\mathcal{V}_{reg}(\Omega_p)$ and $u^{\nu}_r\rightarrow u_r$ in $H^{1/2-\re}(\Omega_p)$, for all $0<\re\leq 1/2$.  
\end{theorem}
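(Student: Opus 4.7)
The plan is to prove the four assertions in order, using Lax--Milgram together with two different splittings of the bilinear form $a_r^\nu$ (real vs.\ imaginary part) to obtain well-posedness at fixed $\nu>0$ and uniform $\nu$-independent estimates respectively; once the uniform bounds are in hand, convergence follows from a soft compactness argument and passage to the limit in the variational formulation.

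\textbf{Step 1 (well-posedness at fixed $\nu>0$).} The variational problem \eqref{eq:anur} is set in the closed subspace $\mathcal{H}^1_{\Sigma,0}(\Omega_p)\subset \mathcal{H}^1(\Omega_p)$, which inherits the Poincar\'e inequality from $\mathcal{H}^1(\Omega_p)$ (vanishing trace on $\Gamma_p$). Continuity of $a_r^\nu$ on this space is immediate since $x$ is bounded. Coercivity follows from the imaginary part: for any $u\in \mathcal{H}^1_{\Sigma,0}(\Omega_p)$,
\begin{align*}
\Im a_r^\nu(u,u)=\nu\int_{\Omega_p}\mathbb{A}\nabla u\cdot \overline{\nabla u}\geq \nu c_\mathbb{A}\|\nabla u\|^2_{L^2(\Omega_p)}\gtrsim \nu\|u\|^2_{\mathcal{H}^1(\Omega_p)},
\end{align*}
by Assumption \ref{assump:matrices} and Poincar\'e. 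Lax--Milgram yields existence and uniqueness of $u^\nu_r\in \mathcal{H}^1_{\Sigma,0}(\Omega_p)$.

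\textbf{Step 2 (the uniform bound \eqref{eq:vreg}).} Taking $v=u^\nu_r$ in \eqref{eq:anur} and extracting the real part, the $i\nu$-term disappears and
\begin{align*}
\Re a_r^\nu(u^\nu_r, u^\nu_r)=\int_{\Omega_p}x\,\mathbb{A}\nabla u^\nu_r\cdot \overline{\nabla u^\nu_r}\geq c_\mathbb{A}\|x^{1/2}\nabla u^\nu_r\|^2_{L^2(\Omega_p)}=c_\mathbb{A}|u^\nu_r|^2_{\mathcal{V}_{reg}(\Omega_p)}.
\end{align*}
Together with Cauchy--Schwarz and the Poincar\'e inequality for $\mathcal{V}_{reg}(\Omega_p)$ from Proposition \ref{prop:hardy} (equivalently Proposition \ref{prop:poincare_appendix} with $\delta=1$), this yields $|u^\nu_r|^2_{\mathcal{V}_{reg}(\Omega_p)}\lesssim \|f\|\|u^\nu_r\|\lesssim \|f\|\,|u^\nu_r|_{\mathcal{V}_{reg}(\Omega_p)}$, i.e.\ \eqref{eq:vreg}.

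\textbf{Step 3 (the $\nu$-weighted bound \eqref{eq:vreg2}).} Taking instead the imaginary part of $a_r^\nu(u^\nu_r,u^\nu_r)=-(f,u^\nu_r)$ and using $\Im a_r^\nu(u,u)\geq \nu c_\mathbb{A}\|\nabla u\|^2$ gives $\nu\|\nabla u^\nu_r\|^2\lesssim \|f\|\|u^\nu_r\|\lesssim \|f\|^2$ by Step 2. Combining with the $\mathcal{V}_{reg}$-Poincar\'e control of $\|u^\nu_r\|$ and $\nu^{1/2}\leq 1$ for small $\nu$, one obtains \eqref{eq:vreg2}.

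\textbf{Step 4 (convergence and identification of the limit).} The uniform bound \eqref{eq:vreg} combined with Lemma \ref{lem:embedding} (which gives $\mathcal{V}_{reg}(\Omega_p)=\mathcal{H}^1_1(\Omega_p)\hookrightarrow H^{1/2}(\Omega_p)$) yields a subsequence, still denoted $u^\nu_r$, that converges weakly in $\mathcal{V}_{reg}(\Omega_p)$ and in $H^{1/2}(\Omega_p)$ to some $u_r$. The compact embedding $H^{1/2}(\Omega_p)\hookrightarrow H^{1/2-\varepsilon}(\Omega_p)$ upgrades this to strong convergence in $H^{1/2-\varepsilon}(\Omega_p)$ for every $0<\varepsilon\leq 1/2$. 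To identify $u_r$, test \eqref{eq:anur} with $v\in \mathcal{C}^{\infty}_{comp}(\overline{\Omega}_p)$: the term $(x\mathbb{A}\nabla u^\nu_r,\nabla v)\to (x\mathbb{A}\nabla u_r,\nabla v)$ by weak $\mathcal{V}_{reg}$-convergence, and the vanishing term is controlled by
\begin{align*}
\bigl|i\nu(\mathbb{A}\nabla u^\nu_r,\nabla v)\bigr|\leq \nu\|\nabla u^\nu_r\|\,\|\mathbb{A}\nabla v\|_{L^\infty}|\operatorname{supp}v|^{1/2}\lesssim \nu^{1/2}\|f\|\,\|\nabla v\|_{L^\infty(\operatorname{supp}v)}\to 0,
\end{align*}
using \eqref{eq:vreg2}. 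Thus $(x\mathbb{A}\nabla u_r,\nabla v)=-(f,v)$ for all $v\in \mathcal{C}^{\infty}_{comp}(\overline{\Omega}_p)$; by density (Proposition \ref{prop:density_vreg}) this extends to all $v\in \mathcal{V}_{reg}(\Omega_p)$, so $u_r$ solves the variational formulation of Proposition \ref{prop:rp_var} and hence satisfies \eqref{eq:rp}.

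\textbf{Main obstacle.} The delicate point is the loss of the essential Dirichlet condition $\gamma_0^\Sigma u^\nu_r=0$ in the limit: while $u^\nu_r\in \mathcal{H}^1_{\Sigma,0}(\Omega_p)$ has a well-defined trace on $\Sigma$, the limit $u_r$ sits only in $\mathcal{V}_{reg}(\Omega_p)$ where this trace does not make classical sense. The saving feature is that $\mathcal{C}^{\infty}_{comp}(\overline{\Omega}_p)$ is dense in $\mathcal{V}_{reg}(\Omega_p)$, which lets us pass to the limit in the variational formulation without ever invoking a trace on $\Sigma$; the homogeneous Neumann condition $\gamma_n^\Sigma u_r=0$ then reappears automatically via Lemma \ref{cor:uv}. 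The second mild subtlety is that test functions in $\mathcal{C}^{\infty}_{comp}(\overline{\Omega}_p)$ do not belong to $\mathcal{H}^1_{\Sigma,0}(\Omega_p)$ in general, but they can be tested against the variational formulation of \eqref{eq:regularized_pb} after rewriting it as $(x+i\nu)\mathbb{A}\nabla u^\nu_r\in \mathcal{H}(\operatorname{div};\Omega_p)$ and integrating by parts, since the support condition kills boundary terms on $\Sigma$.
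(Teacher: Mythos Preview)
Your proof is correct and follows essentially the same approach as the paper, which simply refers to Lemma~\ref{lem:pb_abs_wp} (imaginary-part coercivity for well-posedness at fixed $\nu$ and the $\nu^{1/2}$-weighted bound) and Theorem~\ref{theorem:well_posedness_regular} (real-part coercivity for the uniform $\mathcal{V}_{reg}$-bound), then invokes Lemma~\ref{lem:embedding} and compact Sobolev embeddings for the convergence statements. Two minor remarks: the theorem does not ask you to identify $u_r$ as a solution of \eqref{eq:rp}, so Step~4's limit identification is extra (it is used later, in the proof of Theorem~\ref{theorem:regularity_appendix}); and your ``second mild subtlety'' is a non-issue, since by definition functions in $\mathcal{C}^{\infty}_{comp}(\overline{\Omega}_p)$ have support at positive distance from $\Sigma$ and hence already lie in $\mathcal{H}^1_{\Sigma,0}(\Omega_p)$, so they may be tested directly in \eqref{eq:anur}.
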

\begin{proof}
Cf. Lemma \ref{lem:pb_abs_wp} and Theorem \ref{theorem:well_posedness_regular} on how to obtain the results up to \eqref{eq:vreg2} including.
	
	From the bound in the statement of the theorem, it follows immediately that $u^{\nu}_r\rightharpoonup u_r$ in $\mathcal{V}_{reg}(\Omega_p)$ up to a subsequence. As for the statement about the strong convergence up to a subsequence, it follows from Lemma \ref{lem:embedding}, and the compactness of the embedding $H^s(\Omega_p)\subset H^{s+\epsilon}(\Omega_p)$, for all $s\geq 0$ and $\epsilon>0$.



\end{proof}

\paragraph{The regularity of $u^{\nu}_r$ for $f\in L^2(\Omega_p)$}
The key result of this section is the following statement.
\begin{theorem}
	\label{theorem:regularNeumann}
	Let $u^{\nu}_r$ be the unique solution of \eqref{eq:regularized_pb}, where the right-hand side $f\in L^2(\Omega_p)$.

	Then, for all $\nu>0$, $\|\partial_y u^{\nu}_r\|_{L^2(\Omega_p)}+\|x\partial_y \nabla u^{\nu}_r\|_{L^2(\Omega_p)}+\nu\|\partial_y \nabla u^{\nu}_r\|_{L^2(\Omega_p)}\leq C\|f\|_{L^2(\Omega_p)}$. 
	
\end{theorem}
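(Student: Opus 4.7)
The plan is to differentiate the PDE in the tangential direction $y$ and derive the estimates for $w := \partial_y u^\nu_r$. Because $u^\nu_r = 0$ on $\Sigma \cup \Gamma_p$ and is periodic in $y$, the difference quotient $D_h^y u^\nu_r$ (for $|h|$ small) belongs to $\mathcal{H}^1_{\Sigma, 0}(\Omega_p)$ and remains periodic in $y$. Using the standard duality $(D_h^y f, \phi)_{L^2} = -(f, D_{-h}^y \phi)_{L^2}$ valid for periodic $\phi$, I can make rigorous sense of $\partial_y f \in H^{-1}$ at the cost of pairing it only with test functions admitting a square-integrable $\partial_y$-derivative. Passing $h\to 0$ then yields the weak identity
\begin{align*}
\int_{\Omega_p}(x+i\nu)\mathbb{A}\nabla w \cdot \overline{\nabla v} = (f, \partial_y v)_{L^2(\Omega_p)} - \int_{\Omega_p}(x+i\nu)(\partial_y\mathbb{A})\nabla u^\nu_r \cdot \overline{\nabla v}
\end{align*}
for every $v \in \mathcal{H}^1_{\Sigma, 0}(\Omega_p)$ periodic in $y$; the contributions at $\Gamma_p^\pm$ disappear by periodicity, those at $\Sigma \cup \Gamma_p$ because $v$ vanishes there.

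The easy part is the bound $\nu\|\partial_y\nabla u^\nu_r\| \lesssim \|f\|$. Choosing $v = w$, using $\|\partial_y w\|\leq \|\nabla w\|$ and invoking the base stability estimate \eqref{eq:est_main_v2} to control $\|x\nabla u^\nu_r\|+\nu\|\nabla u^\nu_r\|\lesssim \|f\|$, the imaginary part of the identity above yields $\nu c_{\mathbb{A}}\|\nabla w\|^2 \leq C\|f\|\|\nabla w\|$, whence the claim. The real part alone, however, only produces $\|x^{1/2}\nabla w\|\lesssim \nu^{-1/2}\|f\|$, which degenerates as $\nu\to 0$; the uniform-in-$\nu$ bounds on $\|\partial_y u^\nu_r\|$ and $\|x\partial_y\nabla u^\nu_r\|$ therefore require a finer duality argument in the spirit of Theorem~\ref{theorem:stability_estimate}.

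To obtain $\|w\|\lesssim \|f\|$, I would introduce an auxiliary function $p^\nu \in \mathcal{H}^1_{\Sigma, 0}(\Omega_p)$ solving the non-regularized degenerate Dirichlet problem $\operatorname{div}(x\mathbb{A}\nabla p^\nu) = w$ with homogeneous Dirichlet BCs on $\Sigma \cup \Gamma_p$ and periodic BCs in $y$. Well-posedness follows from coercivity on $\mathcal{H}^1_{\Sigma,0}(\Omega_p)$ via the Poincar\'e inequality (Proposition \ref{prop:poincare_appendix}), and the Dirichlet analogue of Theorem~\ref{theorem:regularity_appendix} (established by the same pseudodifferential arguments of \cite{baouendi_goulaouic} as in \citeappendixregularity) supplies $\|p^\nu\|_{H^1(\Omega_p)} \lesssim \|w\|$ and in particular $\|\partial_y p^\nu\|_{L^2} \lesssim \|w\|$. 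Then, starting from $\|w\|^2 = -\int x\mathbb{A}\nabla p^\nu\cdot\overline{\nabla w}$, decomposing $x\mathbb{A} = (x+i\nu)\mathbb{A} - i\nu\mathbb{A}$, using the $w$-equation together with an integration by parts in $y$ (to shift $\partial_y$ onto the data of the original problem) and the Hermitian symmetry of $\mathbb{A}$, one arrives at
\begin{align*}
\|w\|^2 \lesssim \|f\|\|\partial_y p^\nu\| + \|f\|\|\nabla p^\nu\| + \nu\|\nabla p^\nu\|\|\nabla w\| \lesssim \|f\|\|w\|,
\end{align*}
where the last inequality combines $\|p^\nu\|_{H^1}\lesssim \|w\|$ with the previously established $\nu\|\nabla w\|\lesssim \|f\|$. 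Hence $\|w\|\lesssim \|f\|$. The bound $\|x\partial_y \nabla u^\nu_r\|\lesssim \|f\|$ is then obtained by testing the $w$-equation with $xw$ (a legitimate test function, since $x$ vanishes on $\Sigma$ and $u^\nu_r$ on $\Gamma_p$), taking the real part, and absorbing lower-order contributions exactly as in the second half of the proof of Theorem~\ref{theorem:stability_estimate}.

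The main obstacle is the duality step, which relies on a Dirichlet analogue of the elliptic regularity of Theorem~\ref{theorem:regularity_appendix} for the degenerate operator $\operatorname{div}(x\mathbb{A}\nabla\cdot)$. Fortunately, this does not create circularity: only the full $H^1$-regularity of the non-regularized auxiliary problem is needed, not its tangential refinement, and this can be extracted independently from the techniques of \cite{baouendi_goulaouic}. The overall bootstrap proceeds in the order $\nu\|\partial_y\nabla u^\nu_r\|\to \|\partial_y u^\nu_r\|\to \|x\partial_y\nabla u^\nu_r\|$, with the difference-quotient justification performed uniformly in $h$ throughout so that the limit $h\to 0$ transfers every estimate to $w=\partial_y u^\nu_r$.
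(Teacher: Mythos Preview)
Your bounds $\nu\|\partial_y\nabla u^\nu_r\|\lesssim\|f\|$ (by testing the $w$-equation with $w$ and taking the imaginary part) and $\|x\partial_y\nabla u^\nu_r\|\lesssim\|f\|$ (by testing with $xw$ once $\|w\|\lesssim\|f\|$ is known) match the paper's arguments in Propositions~\ref{prop:nu_second} and~\ref{prop:partialY}. The difficulty is entirely in the middle step, $\|\partial_y u^\nu_r\|\lesssim\|f\|$, and here your duality argument breaks down.

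The auxiliary problem you propose, namely $\operatorname{div}(x\mathbb{A}\nabla p^\nu)=w$ with homogeneous Dirichlet conditions on $\Sigma\cup\Gamma_p$, is \emph{not} well-posed. The form $\int x\mathbb{A}\nabla p\cdot\overline{\nabla q}$ is not coercive on $\mathcal{H}^1_{\Sigma,0}(\Omega_p)$ with respect to the $H^1$-norm; it is coercive only in the $\mathcal{V}_{reg}$-norm, and by Proposition~\ref{prop:density_vreg} the closure of $\mathcal{H}^1_{\Sigma,0}$ in that norm is all of $\mathcal{V}_{reg}$, so the Dirichlet condition at $\Sigma$ is lost. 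More structurally: any $H^1$-solution of $\operatorname{div}(x\mathbb{A}\nabla p)=w$ automatically satisfies $\gamma_n^\Sigma p=0$ (Lemma~\ref{cor:uv}), so imposing $\gamma_0^\Sigma p=0$ in addition over-determines the problem. One sees this already in 1D: from $(xp')'=w$ with $p'\in L^2$ one is forced to take $xp'=\int_0^x w$, leaving a single integration constant but two endpoint conditions.

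If instead you switch to the Neumann auxiliary problem (which \emph{is} well-posed in $\mathcal{V}_{reg}$), you then need $p^\nu\in H^1$ with $\|p^\nu\|_{H^1}\lesssim\|w\|$ in order to control the boundary term on $\Sigma$ that appears when you test the $w$-equation with $p^\nu$. But that estimate is exactly the first part of Theorem~\ref{theorem:regularity_appendix}, and Theorem~\ref{theorem:regularNeumann} is the key lemma in its proof, so the argument is circular. Your claim that the regularity ``can be extracted independently from the techniques of \cite{baouendi_goulaouic}'' is precisely what Appendix~D is doing; there is no shortcut that bypasses the present theorem.

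The paper handles $\|\partial_y u^\nu_r\|\lesssim\|f\|$ by a direct pseudodifferential argument (Proposition~\ref{prop:dyu}): one extends to the strip $\mathbb{R}^{2,+}_a$, tests the equation with $\mathcal{J}^2 U^\nu_{r,\delta}$, and controls the commutators $[\mathcal{J},\mathbb{A}]$ via Lemmas~\ref{lem:commutator} and~\ref{lem:self_adjoint_commutator}. The crucial input is the embedding $\|\mathcal{J}v\|_{L^2}\lesssim\|v\|_{\mathcal{V}_{reg}}$ of Lemma~\ref{lem:fourier_embedding}, which converts the available weighted control $\|x^{1/2}\nabla\mathcal{J}U^\nu_{r,\delta}\|$ into the desired $\|\mathcal{J}^2 U^\nu_{r,\delta}\|\sim\|\partial_y u^\nu_r\|$. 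This route never invokes regularity for the non-regularized problem and so avoids the circularity.
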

Once we have proven the above result, the proof of the first part of the Theorem \ref{theorem:regularity_appendix} is quite straightforward. Remark that it is impossible to prove the uniform boundedness of $\|\partial_x u^{\nu}_r\|_{L^2(\Omega_p)}$ in the above setting, cf. the discussion in the beginning of Appendix \ref{appendix:lap_auxiliary}.
\begin{proof}[Proof of Theorem \ref{theorem:regularity_appendix}]
	By Theorem \ref{theorem:convergenceNeumann} and Theorem \ref{theorem:regularNeumann}, and the uniqueness of the weak $L^2(\Omega_p)$-limit of $u^{\nu}_r$, it holds that $\partial_y u_r\in \mathcal{H}^1(\Omega_p)$, and, moreover, $\partial_y (x\mathbb{A}\nabla u_r)\in L^2(\Omega_p)$. The uniform control of the corresponding norms by $\|f\|_{L^2(\Omega_p)}$ is immediate from the weak convergence properties. It remains to consider $\|\partial_x u_r\|_{L^2(\Omega_p)}$. For this we rewrite 
	\begin{align*}
		\partial_x(x\vec{e}_x\cdot\mathbb{A}\nabla u_r)=f-\partial_y(x\vec{e}_y\cdot\mathbb{A}\nabla u_r)\in L^2(\Omega_p).
	\end{align*}
The function $q:=x\vec{e}_x\cdot \mathbb{A}\nabla u_r$ is thus in $\mathcal{H}^1_{\Sigma,0}(\Omega_p)$ (see Lemma \ref{cor:uv} for $\gamma^{\Sigma}_0q=\gamma_n^{\Sigma}u_r=0$), and, by Hardy's inequality, cf. \eqref{eq:multhardy}, it holds that $\vec{x}\mapsto q(\vec{x})/x\in L^2(\Omega_p)$. This shows that $\mathbb{A}\nabla u_r\in L^2(\Omega_p)$, hence the conclusion about $u\in \mathcal{H}^1(\Omega_p)$.

The estimates of Theorem \ref{theorem:regularNeumann} imply in particular that $x\partial_y^2 u_r, x\partial_{xy}u_r\in L^2(\Omega_p)$. From this and  $\operatorname{div}(x\mathbb{A}\nabla u_r)=f$ it follows also that $\partial_x(x\mathbb{A}\nabla u_r\cdot\vec{e}_x)\in L^2(\Omega_p)$, and, in particular, $x\partial_x^2 u_r\in L^2(\Omega_p)$.

The uniform control of the corresponding norms of $u$ by $\|f\|_{L^2(\Omega_p)}$ stated in the theorem is a corollary of the above reasoning and continuity estimates of Hardy's inequality \eqref{eq:multhardy}. 

\end{proof}

The (remaining) proof of Theorem \ref{theorem:regularNeumann} is fairly tedious, and relies on several auxiliary results. We proceed as follows: 
\begin{enumerate}
	\item we start by proving the bound  
	\begin{align}
		\label{eq:jbound}
		\|\partial_y u^{\nu}_r\|_{\Omega_p}\leq C\|f\|_{\Omega_p}.
	\end{align}

	\item next, we prove that   
	\begin{align}
		\label{eq:partialY}
		\|\partial_y(x\mathbb{A}\nabla u^{\nu}_{r})\|_{\Omega_p}+\nu\|\partial_y (\mathbb{A}\nabla u^{\nu}_r)\|_{\Omega_p}\leq C\|f\|_{\Omega_p}.	
	\end{align} 
	\item next, we will be able to argue that the above allows us to conclude that \begin{align}
		\label{eq:dxu}
		\|\partial_x((x+i\nu)\nabla u^{\nu}\|_{\Omega_p}\leq C\|f\|_{\Omega_p}.
	\end{align} 
Remark that the stated inequality in Theorem \ref{theorem:regularNeumann}, namely,   $\|u^{\nu}_r\|_{H^1(\Omega_p)}+\|x\partial_y \nabla u^{\nu}_r\|_{L^2(\Omega_p)}+\nu\|\partial_y\nabla u^{\nu}\|_{L^2(\Omega_p)}\lesssim \|f\|_{L^2(\Omega_p)}$ follows from the above bounds and Theorem \ref{theorem:convergenceNeumann}. 
\end{enumerate}
\subsubsection{Proof of \eqref{eq:jbound}}
\label{sec:dyu}
We will use the Fourier analysis techniques, to relate the weighted norm and the fractional Sobolev norms. 
 The definitions can be found in Section \ref{sec:extension_bessel}. In particular, it suffices to consider the counterpart of \eqref{eq:regularized_pb} for $U^{\nu}_{r,\delta}=\chi_{\ell,\delta}\mathcal{E}_{\delta}u^{\nu}_r$ that satisfies, cf. \eqref{eq:Unudelta},  
\begin{align*}
	&\operatorname{div}((x+i\nu)\mathbb{A}\nabla U^{\nu}_{r,\delta})=F^{\nu}_{r,\delta} \text{ in }\mathbb{R}^{2,+}_{a},\\
	&\gamma_{0}^{\Sigma_{\infty}}U^{\nu}_{r,\delta}=0,
\end{align*}
with $\|F^{\nu}_{r,\delta}\|_{\mathbb{R}^{2,+}_a}\lesssim \|f\|_{\Omega_p}$. 

The lemma that follows will enable us to work with second derivatives of $U^{\nu}_{r,\delta}$ as $L^2$-functions. It can be proven by using a classical elliptic regularity argument, cf. \cite[Theorem 4.18]{mclean}. 
\begin{lemma}
	\label{lem:ellregc}
	Let  $u^{\nu}_r$ be the unique solution of \eqref{eq:regularized_pb}, where the right-hand side $f\in L^2(\Omega_p)$. For all $\nu>0$, $u^{\nu}_r\in H^2(\Omega_p)$, and, moreover, $\partial_x u^{\nu}_r(x,\ell)=\partial_x u^{\nu}_r(x,-\ell)$.

	As a corollary, for all $\nu>0$, $U^{\nu}_{r,\delta}\in H^2(\mathbb{R}_a)$ and also $\left.\mathcal{J}^2 U^{\nu}_{r,\delta}\right|_{x=a}=0$. 
\end{lemma}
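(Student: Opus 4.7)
For each fixed $\nu>0$, the matrix-valued coefficient $(x+i\nu)\mathbb{A}(\vec{x})$ in \eqref{eq:regularized_pb} is uniformly elliptic on $\overline{\Omega_p}$: indeed, for $\vec{p}\in \mathbb{C}^2$ and $\vec{x}\in\overline{\Omega_p}$ we have $\Re((x+i\nu)\mathbb{A}\vec{p}\cdot\overline{\vec{p}}) = x\mathbb{A}\vec{p}\cdot\overline{\vec{p}}\geq 0$ and $\Im((x+i\nu)\mathbb{A}\vec{p}\cdot\overline{\vec{p}}) = \nu\mathbb{A}\vec{p}\cdot\overline{\vec{p}}\geq c_{\mathbb{A}}\nu\,|\vec{p}|^2$, so that the principal symbol is bounded below in modulus by $c_{\mathbb{A}}\nu$. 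Since $\mathbb{A}\in C^{1,1}(\overline{\Omega_p};\mathbb{C}^{2\times 2})$ and $f\in L^2(\Omega_p)$, I would apply standard interior and boundary elliptic regularity, as in \cite[Theorem 4.18]{mclean} (the complex-valued version goes through verbatim once coercivity of the imaginary part is noted), away from the boundary $\partial\Omega_p$ and near the smooth pieces of boundary $\Sigma$, $\Gamma_p$ where $u^{\nu}_r$ satisfies homogeneous Dirichlet conditions.

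The only non-standard feature is the mixed boundary data: Dirichlet on $\Sigma\cup\Gamma_p$ and periodic on $\Gamma^{\pm}_p$. The natural way to avoid the corner $\{(0,\pm\ell),(a,\pm\ell)\}$ issues is to identify $\Omega_p$ with the cylinder $(0,a)\times(\mathbb{R}/2\ell\mathbb{Z})$; on this cylinder the boundary consists of two smooth disjoint Dirichlet arcs, so the classical $H^2$-regularity up to the boundary applies and yields $u^{\nu}_r\in H^2(\Omega_p)$. The periodicity of $\partial_x u^{\nu}_r$ at $y=\pm\ell$ follows automatically from this cylinder viewpoint (or, equivalently, from testing the variational formulation \eqref{eq:anur} with arbitrary $v\in \mathcal{H}^1_{\Sigma,0}(\Omega_p)$ to extract the natural boundary condition on $\Gamma^{\pm}_p$).

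For the corollary, I would first observe that because $u^{\nu}_r$ and $\partial_y u^{\nu}_r$ are periodic in $y$ with values and first derivatives matching at $y=\pm\ell$, the periodic extension $\mathcal{E}u^{\nu}_r$ lies in $H^2_{\mathrm{loc}}(\mathbb{R}^2_a)$. Since $\chi_{\ell,\delta}\in C^{\infty}(\mathbb{R}^2;\mathbb{R})$ is independent of $x$ and bounded, $U^{\nu}_{r,\delta}=\chi_{\ell,\delta}\,\mathcal{E}u^{\nu}_r\in H^2(\mathbb{R}^2_a)$. Finally, because $u^{\nu}_r$ vanishes on $\Gamma_p$, we have $\mathcal{E}u^{\nu}_r(a,y)=0$ for all $y\in\mathbb{R}$, hence $U^{\nu}_{r,\delta}(a,\cdot)\equiv 0$ on $\mathbb{R}$; applying the operator $\mathcal{J}^2$, which acts only in the $y$-variable via the Fourier multiplier $(1+\xi_y^2)^{1/2}$, gives $\mathcal{J}^2 U^{\nu}_{r,\delta}(a,\cdot)=\mathcal{J}^2(0)=0$.

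The only step that requires any care is the very first one, namely justifying $H^2$-regularity up to the boundary in spite of the mixed Dirichlet/periodic conditions; the cylinder reformulation removes the apparent corners and reduces this to the textbook case.
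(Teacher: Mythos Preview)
Your proposal is correct and follows essentially the same approach as the paper: the paper's proof is a one-line reference to classical elliptic regularity, \cite[Theorem~4.18]{mclean}, and you flesh out precisely this argument. Your cylinder reformulation $(0,a)\times(\mathbb{R}/2\ell\mathbb{Z})$ is the natural way to handle the periodic boundary conditions and avoid artificial corners, and your justification of the corollary (periodic $H^2$ extension, then the observation that $\mathcal{J}^2$ acts only in $y$ and hence preserves the vanishing trace at $x=a$) matches the paper's property \eqref{eq:zero_trace}, which is likewise stated to follow by density.
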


The first result recalls that $\mathcal{V}_{reg}(\mathbb{R}^2_a)$ is embedded into the Bochner space $L^2((0, a); H^{1/2}(\mathbb{R}))$, see Lemma \ref{lem:embedding}.
\begin{lemma}
	\label{lem:fourier_embedding}
	For all $v\in \mathcal{V}_{reg}(\mathbb{R}^2_a)$, it holds that $\|\mathcal{J} v\|_{L^2(\mathbb{R}^2_a)}\leq C\|v\|_{\mathcal{V}_{reg}(\mathbb{R}^2_a)}$. 
\end{lemma}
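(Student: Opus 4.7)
The plan is to reduce the desired inequality to the embedding $\mathcal{V}_{reg} \hookrightarrow H^{1/2}$ of Lemma~\ref{lem:embedding}, applied separately on each of the two half-slabs obtained by cutting $\mathbb{R}^2_a$ along $\{x = 0\}$.

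First, by applying the Plancherel identity in the $y$-direction (for a.e.\ fixed $x$) and then Fubini, the quantity we wish to estimate rewrites as an anisotropic norm:
\begin{align*}
\|\mathcal{J} v\|^2_{L^2(\mathbb{R}^2_a)}
= \int_{-a}^a\!\!\int_{\mathbb{R}} (1+\xi_y^2)^{1/2}\,|\mathcal{F}_y v(x, \xi_y)|^2\, d\xi_y\, dx
= \int_{-a}^a \|v(x, \cdot)\|^2_{H^{1/2}(\mathbb{R})}\, dx.
\end{align*}

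Next, I would split $\mathbb{R}^2_a = \mathbb{R}^{2,-}_a \cup \mathbb{R}^{2,+}_a$ into the half-slabs $\mathbb{R}^{2,\pm}_a := \{(x,y):\pm x\in(0,a)\}$, and apply Lemma~\ref{lem:embedding} with $\theta=1$ on each of them, yielding $\|v\|_{H^{1/2}(\mathbb{R}^{2,\pm}_a)}\lesssim \|v\|_{\mathcal{V}_{reg}(\mathbb{R}^{2,\pm}_a)}$. It then remains to dominate the anisotropic norm above by the isotropic $H^{1/2}$-norm on each half-slab. For this, given $v\in H^{1/2}(\mathbb{R}^{2,+}_a)$, I would take an extension $V\in H^{1/2}(\mathbb{R}^2)$ with $\|V\|_{H^{1/2}(\mathbb{R}^2)}\lesssim \|v\|_{H^{1/2}(\mathbb{R}^{2,+}_a)}$ (classical for Lipschitz domains) and observe, using the full two-dimensional Fourier variables $\boldsymbol{\xi}=(\xi_x,\xi_y)$ and the pointwise inequality $(1+\xi_y^2)^{1/2}\leq (1+|\boldsymbol{\xi}|^2)^{1/2}$, that
\begin{align*}
\int_0^a \|v(x,\cdot)\|^2_{H^{1/2}(\mathbb{R})}\, dx
\;\leq\; \int_{\mathbb{R}} \|V(x,\cdot)\|^2_{H^{1/2}(\mathbb{R})}\, dx
\;=\; \int_{\mathbb{R}^2} (1+\xi_y^2)^{1/2}|\widehat{V}(\boldsymbol{\xi})|^2\, d\boldsymbol{\xi}
\;\leq\; \|V\|^2_{H^{1/2}(\mathbb{R}^2)}.
\end{align*}
Summing the bound obtained on $\mathbb{R}^{2,+}_a$ with its analogue on $\mathbb{R}^{2,-}_a$ then gives $\|\mathcal{J} v\|^2_{L^2(\mathbb{R}^2_a)}\lesssim \|v\|^2_{\mathcal{V}_{reg}(\mathbb{R}^2_a)}$, as desired.

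The only subtle point is that Lemma~\ref{lem:embedding} is stated for the bounded periodic domain $\Omega_p$, whereas $\mathbb{R}^{2,\pm}_a$ are unbounded in the $y$-direction. I expect this to be only a technical hurdle: the underlying Lizorkin--Uspenskii (or Jerison--Kenig) proof cited there is formulated on the half-space $\{\pm x>0\}$ without any assumption on periodicity or compactness in $y$, so it transfers to the strip either directly, via the inclusion $\mathbb{R}^{2,+}_a\subset \{x>0\}$ together with a trivial extension past $x=a$ (e.g.\ reflection) that preserves the weighted norm up to a constant, or by applying the same Fourier-in-$y$ machinery used above. This verification is the main obstacle but is otherwise routine.
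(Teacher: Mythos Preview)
Your proposal is correct. The paper itself gives no self-contained argument here---it simply cites \cite[Lemmas 1 and 2]{baouendi_goulaouic}, which establish the anisotropic embedding $\mathcal{V}_{reg}\hookrightarrow L^2_x H^{1/2}_y$ directly on the half-space, and remarks that the passage to the bounded strip $\mathbb{R}^{2,+}_a$ is easy. Your route is therefore more explicit than the paper's.

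The one genuine difference is that you pass through the \emph{isotropic} $H^{1/2}$-norm: you first invoke Lemma~\ref{lem:embedding} to get $\|v\|_{H^{1/2}(\mathbb{R}^{2,\pm}_a)}\lesssim\|v\|_{\mathcal{V}_{reg}}$, and then use an extension plus the pointwise Fourier inequality $(1+\xi_y^2)^{1/2}\le(1+|\boldsymbol{\xi}|^2)^{1/2}$ to dominate the anisotropic norm $\int_0^a\|v(x,\cdot)\|^2_{H^{1/2}(\mathbb{R})}\,dx$ by the isotropic one. In the Baouendi--Goulaouic argument the anisotropic bound is obtained directly from the weighted norm, without this detour. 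Both approaches rest on the same Lizorkin--Uspenskii analysis, so the extra step costs nothing in terms of strength; it just makes your proof depend on an extension operator and on Lemma~\ref{lem:embedding} rather than going straight to the source. Your caveat about adapting Lemma~\ref{lem:embedding} to the unbounded-in-$y$ strip is well placed and, as you note, the half-space results you cite require neither periodicity nor compactness in $y$, so the transfer is indeed routine.
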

\begin{proof}
	See the proof of \cite[Lemma 1 and 2]{baouendi_goulaouic}. Remark that in the above reference, the result is formulated for the domain $\mathbb{R}^{2,+}{+\infty}$, but extends easily to $\mathbb{R}^{2,+}_a$, $a<\infty$.
\end{proof}
The key technical result of this section is the following. It was proven in \cite{baouendi_goulaouic}, but our point is to show that it is also valid for $\mathbb{A}$ as in Assumption \ref{assump:matrices}. 
\begin{proposition}
	\label{prop:dyu}
	It holds that $\|\mathcal{J} U^{\nu}_{r,\delta}\|_{\mathcal{V}_{reg}(\mathbb{R}^{2,+}_a)}\leq C_{\delta}\|f\|_{\Omega_p}$.
\end{proposition}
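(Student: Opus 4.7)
The plan is to apply the Bessel potential $\mathcal{J}$ to the regularized equation satisfied by $U^{\nu}_{r,\delta}$, namely
\begin{align*}
\operatorname{div}((x+i\nu)\mathbb{A}\nabla U^{\nu}_{r,\delta})=F^{\nu}_{r,\delta} \text{ in }\mathbb{R}^{2,+}_a, \qquad \gamma_{0}^{\Sigma_{\infty}}U^{\nu}_{r,\delta}=0,
\end{align*}
and derive an energy estimate for $V:=\mathcal{J}U^{\nu}_{r,\delta}$ in $\mathcal{V}_{reg}(\mathbb{R}^{2,+}_a)$. Since $\mathcal{J}$ is a Fourier multiplier in $y$ only, it commutes with $\partial_x$, $\partial_y$, with multiplication by functions of $x$ (in particular $x+i\nu$), and with the trace on $\Sigma_{\infty}=\{x=0\}$. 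Therefore $\gamma_0^{\Sigma_{\infty}}V=0$, and a direct commutation yields the PDE
\begin{align*}
\operatorname{div}((x+i\nu)\mathbb{A}\nabla V)=\mathcal{J} F^{\nu}_{r,\delta}-\operatorname{div}((x+i\nu)[\mathcal{J},\mathbb{A}]\nabla U^{\nu}_{r,\delta}) \quad \text{in }\mathbb{R}^{2,+}_a.
\end{align*}
Because $U^{\nu}_{r,\delta}\in H^2(\mathbb{R}^2_a)$ by Lemma~\ref{lem:ellregc}, all the above manipulations make sense as $L^2$-identities for each $\nu>0$; the goal will be to obtain a bound on $V$ that is uniform in $\nu$.

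First I would test this identity with $\overline{V}\in H^1_{\Sigma_{\infty},0}$ and integrate by parts to obtain
\begin{align*}
-\int_{\mathbb{R}^{2,+}_a}(x+i\nu)\mathbb{A}\nabla V\cdot\overline{\nabla V}=\int_{\mathbb{R}^{2,+}_a}\mathcal{J}F^{\nu}_{r,\delta}\,\overline{V}+\int_{\mathbb{R}^{2,+}_a}(x+i\nu)[\mathcal{J},\mathbb{A}]\nabla U^{\nu}_{r,\delta}\cdot\overline{\nabla V}.
\end{align*}
Taking the real part and using the coercivity of $\mathbb{A}$ from Assumption~\ref{assump:matrices} yields
\begin{align*}
c_{\mathbb{A}}\|x^{1/2}\nabla V\|_{\mathbb{R}^{2,+}_a}^2\leq \left|\int\mathcal{J}F^{\nu}_{r,\delta}\overline{V}\right|+\left|\int(x+i\nu)[\mathcal{J},\mathbb{A}]\nabla U^{\nu}_{r,\delta}\cdot\overline{\nabla V}\right|.
\end{align*}
For the first term, by duality $\int\mathcal{J}F^{\nu}_{r,\delta}\overline{V}=\int F^{\nu}_{r,\delta}\overline{\mathcal{J}V}$, so that using $|x+i\nu|^{1/2}\leq C(x^{1/2}+\nu^{1/2})$ and a one-dimensional Hardy-type/Poincar\'e inequality (for functions with zero trace on $\Sigma_{\infty}$), one can bound $\|V\|$ and $\|\mathcal{J}V\|$ by $\|x^{1/2}\nabla V\|$ (up to absorption of the prefactor $\nu^{1/2}$ by \eqref{eq:vreg2} and \eqref{eq:stabFdelta}).

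The main obstacle is controlling the commutator term. Since $\mathbb{A}\in C^{1,1}$ and $\mathcal{J}=\langle D_y\rangle^{1/2}$ is a tangential pseudodifferential operator of order $1/2$, the symbolic calculus gives $[\mathcal{J},\mathbb{A}]$ as a tangential operator of order $-1/2$, and in particular one expects a commutator estimate of the form
\begin{align*}
\|[\mathcal{J},\mathbb{A}]w\|_{L^2(\mathbb{R}^2_a)}\leq C\|w\|_{L^2(\mathbb{R}^2_a)},\qquad \forall w\in L^2(\mathbb{R}^2_a),
\end{align*}
which needs the $C^{0,1}$-regularity in $y$ of $\mathbb{A}$ only. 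I would either cite a standard Calder\'on-type commutator result (invoking the Lemma~\ref{lem:commutator} alluded to in the paper) or prove this directly via the Fourier representation of $\mathcal{J}$. Assuming this bound, applied to $w=\nabla U^{\nu}_{r,\delta}$, and combined with the uniform bounds $\|x^{1/2}\nabla U^{\nu}_{r,\delta}\|,\nu^{1/2}\|\nabla U^{\nu}_{r,\delta}\|\lesssim \|f\|$ from Theorem~\ref{theorem:convergenceNeumann} (transferred to $\mathbb{R}^2_a$ through $\mathcal{E}_{\delta}$), one gets
\begin{align*}
\left|\int(x+i\nu)[\mathcal{J},\mathbb{A}]\nabla U^{\nu}_{r,\delta}\cdot\overline{\nabla V}\right|\leq \|(x+\nu)^{1/2}[\mathcal{J},\mathbb{A}]\nabla U^{\nu}_{r,\delta}\|\,\|(x+\nu)^{1/2}\nabla V\|\lesssim \|f\|\,\|x^{1/2}\nabla V\|+\nu^{1/2}\|f\|\|\nabla V\|,
\end{align*}
where the last factor is again absorbed using \eqref{eq:vreg2}. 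Combining everything and using Young's inequality, the coercive term $\|x^{1/2}\nabla V\|^2$ dominates and we obtain $\|x^{1/2}\nabla V\|\leq C_{\delta}\|f\|$, hence $\|V\|_{\mathcal{V}_{reg}(\mathbb{R}^{2,+}_a)}\leq C_{\delta}\|f\|$ by the weighted Poincar\'e inequality of Proposition~\ref{prop:poincare_appendix}. The bound \eqref{eq:jbound} on $\|\partial_y u^{\nu}_r\|$ then follows from Lemma~\ref{lem:fourier_embedding} and $\|\partial_y u^{\nu}_r\|_{\Omega_p}\leq \|\mathcal{J}^2 U^{\nu}_{r,\delta}\|_{\mathbb{R}^{2,+}_a}\lesssim \|\mathcal{J}U^{\nu}_{r,\delta}\|_{\mathcal{V}_{reg}(\mathbb{R}^{2,+}_a)}$.
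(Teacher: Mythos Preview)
Your overall strategy --- test the equation for $U^{\nu}_{r,\delta}$ with $\mathcal{J}^2U^{\nu}_{r,\delta}$ (equivalently, derive the PDE for $V=\mathcal{J}U^{\nu}_{r,\delta}$ and test with $\overline V$), then control the commutator $[\mathcal{J},\mathbb{A}]$ via Lemma~\ref{lem:commutator} and close the $\mathcal{V}_{reg}$-energy using Lemma~\ref{lem:fourier_embedding} --- is exactly the paper's route. There is, however, a genuine gap in how you handle the $i\nu$-part of the commutator term.

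You estimate
\[
\left|\int(x+i\nu)[\mathcal{J},\mathbb{A}]\nabla U^{\nu}_{r,\delta}\cdot\overline{\nabla V}\right|
\lesssim \|f\|\,\|x^{1/2}\nabla V\|+\nu^{1/2}\|f\|\,\|\nabla V\|,
\]
and then say the last term ``is again absorbed using \eqref{eq:vreg2}''. But \eqref{eq:vreg2} controls $\nu^{1/2}\|\nabla U^{\nu}_{r,\delta}\|$, not $\nu^{1/2}\|\nabla V\|=\nu^{1/2}\|\mathcal{J}\nabla U^{\nu}_{r,\delta}\|$; the latter is a half-derivative larger and is precisely the unknown you are trying to bound. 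As written, the argument does not close.

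There are two easy fixes. The paper's choice is to observe that after taking the real part, the $i\nu$-contribution is $\nu\,\Im\bigl([\mathcal{J},\mathbb{A}_{2\delta}]\nabla U^{\nu}_{r,\delta},\,\mathcal{J}\nabla U^{\nu}_{r,\delta}\bigr)$, and to use the self-adjointness identity of Lemma~\ref{lem:self_adjoint_commutator} to rewrite this as $\tfrac{\nu}{2i}\bigl([\mathcal{J}^2,\mathbb{A}_{2\delta}]\nabla U^{\nu}_{r,\delta},\,\nabla U^{\nu}_{r,\delta}\bigr)$. Now Lemma~\ref{lem:commutator} applied to $\mathcal{J}^2$ gives a bound $\lesssim\nu\|\nabla U^{\nu}_{r,\delta}\|^2\lesssim\|f\|^2$ by \eqref{eq:vreg2}, as desired. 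Alternatively, your naive Cauchy--Schwarz bound \emph{does} work if you also take the imaginary part of the energy identity, which furnishes $\nu\|\nabla V\|^2$ on the coercive side; summing the real and imaginary estimates and applying Young then absorbs $\nu^{1/2}\|f\|\|\nabla V\|$.

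A minor technical point: to apply Lemma~\ref{lem:commutator} you need the multiplier to lie in $H^2(\mathbb{R}^2_a)$; the periodically extended $\mathbb{A}$ is not compactly supported in $y$, so the paper first replaces $\mathbb{A}$ by $\mathbb{A}_\delta=\mathbb{A}\chi_{\ell,2\delta}+(1-\chi_{\ell,2\delta})\mathbb{I}$, which is legitimate since $U^{\nu}_{r,\delta}$ is supported in $\{\chi_{\ell,2\delta}=1\}$.
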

\begin{proof}
	Remark that $U^{\nu}_{r,\delta}$ satisfies the following variational formulation:
	\begin{align}
		\label{eq:varf}
		((x+i\nu)\mathbb{A}_{\delta}\nabla U^{\nu}_{r,\delta}, \nabla v)_{\mathbb{R}^{2,+}_a}=-(F^{\nu}_{r,\delta}, v), \quad \forall v\in \mathcal{V}_{reg}(\mathbb{R}_a^{2,+}),
	\end{align}
where 
\begin{align*}
	\mathbb{A}_{\delta}=\mathbb{A}\chi_{\ell, 2\delta}+(1-\chi_{\ell, 2\delta})\mathbb{I},
\end{align*}
where we used that $\chi_{\ell,2\delta}=1$ on $\operatorname{supp}\chi_{\ell,2\delta}$, and $U^{\nu}_{r,\delta}=0$ for $|y|>\delta$. Remark that this is a Hermitian positive definite $C^{1,1}(\mathbb{R}^{2,+}_{a})$-matrix.

	Choosing $v=\mathcal{J}^2U^{\nu}_{r,\delta}$, with $v=0$ on $x=0$, cf. Lemma \ref{lem:ellreg},  and using the symmetry of $\mathcal{J}$ of Lemma \ref{lem:plancherel}, yields 
	\begin{align*}
		(\mathcal{J}(x+i\nu)\mathbb{A}_{\delta}\nabla U^{\nu}_{r,\delta}, \mathcal{J}\nabla U^{\nu}_{r,\delta})_{\mathbb{R}^{2,+}_a}=-(F^{\nu}_{\delta}, \mathcal{J}^2U^{\nu}_{r,\delta}).	
	\end{align*}
	The goal is to obtain a control on $\|\mathcal{J}U^{\nu}_{r,\delta}\|_{\mathcal{V}_{reg}}$, thus we rewrite the left-hand side using a commutator $[\mathcal{J},(x+i\nu)\mathbb{A}_{2\delta} )u]=[\mathcal{J},\mathbb{A}_{2\delta}](x+i\nu)$: 
	\begin{align*}
		((x+i\nu)\mathbb{A}_{2\delta}\mathcal{J}\nabla U^{\nu}_{r,\delta}, \mathcal{J}\nabla U^{\nu}_{r,\delta})_{\mathbb{R}^{2,+}_a}+	([\mathcal{J},\mathbb{A}_{2\delta}](x+i\nu)\nabla U^{\nu}_{r,\delta}, \mathcal{J}\nabla U^{\nu}_{r,\delta})_{\mathbb{R}^{2,+}_a}=-(F^{\nu}_{\delta}, \mathcal{J}^2U^{\nu}_{r,\delta})_{\mathbb{R}^{2,+}_a}.	
	\end{align*}
	Remark that $\mathcal{J}\nabla=\nabla\mathcal{J}$. Taking the real part of the above yields
	\begin{align}
		\label{eq:a2delta}
		\begin{split}
		\|x^{1/2}\nabla \mathcal{J}U^{\nu}_{r,\delta}\|^2_{\mathbb{R}^{2,+}_a}&\lesssim \left(\|x^{1/2}[\mathcal{J},\mathbb{A}_{2\delta}]\nabla U^{\nu}_{r,\delta}\|_{\mathbb{R}^{2,+}_a}\| x^{1/2}\mathcal{J}\nabla U^{\nu}_{r,\delta}\|_{\mathbb{R}^{2,+}_a}+\nu\Im ([\mathcal{J}, \mathbb{A}_{2\delta}]\nabla U^{\nu}_{r,\delta}, \mathcal{J}\nabla U^{\nu}_{r,\delta})\right)_{\mathbb{R}^{2,+}_a}\\
		&+\|F^{\nu}_{r,\delta}\|_{\mathbb{R}^{2,+}_a}\| \mathcal{J}^2U^{\nu}_{r,\delta}\|_{\mathbb{R}^{2,+}_a}. 
		\end{split}
	\end{align}
Next, we estimate each term in the above. 

With Lemma \ref{lem:commutator}, and next \eqref{eq:vreg}, which extends to $U^{\nu}_{r,\delta}$, 
\begin{align}
	\label{eq:unub}
	\|x^{1/2}[\mathcal{J},\mathbb{A}_{2\delta}]\nabla U^{\nu}_{r,\delta}\|_{\mathbb{R}^{2,+}_a}=	\|x^{1/2}[\mathcal{J},(\mathbb{A}_{2\delta}-\mathbb{I})\chi_{\ell,2\delta}]\nabla U^{\nu}_{r,\delta}\|_{\mathbb{R}^{2,+}_a}\lesssim \|x^{1/2}\nabla U^{\nu}_{r,\delta}\|_{\mathbb{R}^{2,+}_a}\lesssim \|F^{\nu}_{r,\delta}\|_{\mathbb{R}^{2,+}_a}.
\end{align}
Remark that $C^{1,1}(\overline{\Omega}_p)$-regularity of $\mathbb{A}$ is sufficient to apply Lemma \ref{lem:commutator}, cf. Remark \ref{rem:regularity}. 

By Lemma \ref{lem:self_adjoint_commutator}, using that $[\mathcal{J}, \mathbb{A}_{2\delta}]=[\mathcal{J}, (\mathbb{A}-\mathbb{I})\chi_{\ell,2\delta}]$, 
\begin{align*}
|\Im ([\mathcal{J}, \mathbb{A}_{2\delta}]\nabla U^{\nu}_{r,\delta}, \mathcal{J}\nabla U^{\nu}_{r,\delta})_{\mathbb{R}^{2,+}_a}|\lesssim \|[\mathcal{J}^2, \mathbb{A}_{2\delta}]\nabla U^{\nu}_{r,\delta}\|_{\mathbb{R}^{2,+}_a}\|\nabla U^{\nu}_{r,\delta}\|_{\mathbb{R}^{2,+}_a},
\end{align*} 
and with the use of Lemma \ref{lem:commutator}, and next \eqref{eq:vreg2} adapted to $U^{\nu}_{r,\delta}$, we conclude that 
\begin{align}
	\label{eq:imest}
|\Im ([\mathcal{J}, \mathbb{A}_{2\delta}]\nabla U^{\nu}_{r,\delta}, \mathcal{J}\nabla U^{\nu}_{r,\delta})_{\mathbb{R}^{2,+}_a}|\lesssim \|\nabla U^{\nu}_{r,\delta}\|_{\mathbb{R}^{2,+}_a}^2\lesssim \nu^{-1}\|F^{\nu}_{r,\delta}\|_{\mathbb{R}^{2,+}_a}^2.
\end{align}
Finally, by Lemma \ref{lem:fourier_embedding}, $\|\mathcal{J}^2 U^{\nu}_{r,\delta}\|_{\mathbb{R}^{2,+}_a}\lesssim \|x^{1/2}\nabla\mathcal{J}U^{\nu}_{r,\delta}\|_{\mathbb{R}^{2,+}_a}$. Combining this estimate, \eqref{eq:imest}, \eqref{eq:unub} into \eqref{eq:a2delta}, and using $\|F^{\nu}_{r,\delta}\|_{\mathbb{R}^{2,+}_a}\lesssim \|f\|_{\Omega_p}$, yields 
\begin{align*}
	\|x^{1/2}\nabla \mathcal{J}U^{\nu}_{r,\delta}\|_{\mathbb{R}^{2,+}_a}\lesssim \|f\|_{\Omega_p}.
\end{align*}
\end{proof}
An immediate corollary of Proposition \ref{prop:dyu} and Lemma \ref{lem:fourier_embedding} is the bound 
\begin{align*}
	\|\mathcal{J}^2U^{\nu}_{\delta}\|_{\mathbb{R}^{2,+}_a}\lesssim \|f\|_{\Omega_p}.
\end{align*}
From $u^{\nu}_r=U^{\nu}_{r,\delta}$ on $\Omega_p$, the above and \eqref{eq:Ju} it is immediate that 
\begin{corollary}
	\label{cor:dyu}
	Let  $u^{\nu}_r$ be the unique solution of \eqref{eq:regularized_pb}, where the right-hand side $f\in L^2(\Omega_p)$. With some $C>0$, for all $\nu>0$, it holds that $\|\partial_y u^{\nu}_r\|_{L^2(\Omega_p)}\leq C\|f\|_{L^2(\Omega_p)}$. 	
\end{corollary}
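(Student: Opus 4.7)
\textbf{Proof plan for Corollary \ref{cor:dyu}.} The proof will be a short chaining together of three ingredients already established: Proposition \ref{prop:dyu}, Lemma \ref{lem:fourier_embedding}, and the Plancherel identity \eqref{eq:Ju}. The plan is to transfer the $\mathcal{V}_{reg}$-control of $\mathcal{J}U^{\nu}_{r,\delta}$ on the strip $\mathbb{R}^{2,+}_{a}$ into an $L^2$-control of $\partial_y u^{\nu}_{r}$ on $\Omega_p$, using the fact that $u^{\nu}_r$ coincides with $U^{\nu}_{r,\delta}$ on $\Omega_p$.

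First, I would fix $\delta>0$ (e.g.\ $\delta=\ell/4$), apply Proposition \ref{prop:dyu} to obtain
$\|\mathcal{J}U^{\nu}_{r,\delta}\|_{\mathcal{V}_{reg}(\mathbb{R}^{2,+}_{a})}\leq C_{\delta}\|f\|_{L^{2}(\Omega_p)}$, uniformly in $\nu$. Next, since $\mathcal{J}U^{\nu}_{r,\delta}$ belongs to $\mathcal{V}_{reg}(\mathbb{R}^{2,+}_{a})$ (as follows from Lemma \ref{lem:ellregc} and the definition of $\mathcal{J}$), I would invoke Lemma \ref{lem:fourier_embedding} with $v=\mathcal{J}U^{\nu}_{r,\delta}$ to deduce
\begin{align*}
\|\mathcal{J}^{2}U^{\nu}_{r,\delta}\|_{L^{2}(\mathbb{R}^{2,+}_{a})}\leq C\|\mathcal{J}U^{\nu}_{r,\delta}\|_{\mathcal{V}_{reg}(\mathbb{R}^{2,+}_{a})}\leq CC_{\delta}\|f\|_{L^{2}(\Omega_p)}.
\end{align*}

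The third step uses the Plancherel identity \eqref{eq:Ju} applied to $U^{\nu}_{r,\delta}\in H^{1}(\mathbb{R}^{2,+}_{a})$ (which holds by Lemma \ref{lem:ellregc}):
\begin{align*}
\|U^{\nu}_{r,\delta}\|^{2}_{L^{2}(\mathbb{R}^{2,+}_{a})}+\|\partial_{y}U^{\nu}_{r,\delta}\|^{2}_{L^{2}(\mathbb{R}^{2,+}_{a})}=\|\mathcal{J}^{2}U^{\nu}_{r,\delta}\|^{2}_{L^{2}(\mathbb{R}^{2,+}_{a})}\leq C'\|f\|^{2}_{L^{2}(\Omega_p)}.
\end{align*}
In particular, $\|\partial_{y}U^{\nu}_{r,\delta}\|_{L^{2}(\mathbb{R}^{2,+}_{a})}\lesssim \|f\|_{L^{2}(\Omega_p)}$.

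Finally, since $\chi_{\ell,\delta}\equiv 1$ for $|y|\leq \ell+\delta/2$, we have $U^{\nu}_{r,\delta}=\mathcal{E}u^{\nu}_{r}$ on $\Omega_p\subset \{|y|\leq \ell\}$; in particular $U^{\nu}_{r,\delta}(x,y)=u^{\nu}_{r}(x,y)$ pointwise on $\Omega_p$. Consequently $\|\partial_{y}u^{\nu}_{r}\|_{L^{2}(\Omega_p)}\leq \|\partial_{y}U^{\nu}_{r,\delta}\|_{L^{2}(\mathbb{R}^{2,+}_{a})}$, which yields the claimed bound with $C=C'^{1/2}$. There is no serious obstacle here; the only mild point of care is ensuring the regularity hypotheses needed to apply \eqref{eq:Ju} and Lemma \ref{lem:fourier_embedding} to $\mathcal{J}U^{\nu}_{r,\delta}$, both of which are supplied by Lemma \ref{lem:ellregc} and the support properties of $\chi_{\ell,\delta}$.
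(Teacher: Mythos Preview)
Your proof is correct and follows essentially the same route as the paper: apply Proposition~\ref{prop:dyu} to control $\|\mathcal{J}U^{\nu}_{r,\delta}\|_{\mathcal{V}_{reg}(\mathbb{R}^{2,+}_a)}$, feed this into Lemma~\ref{lem:fourier_embedding} to bound $\|\mathcal{J}^2 U^{\nu}_{r,\delta}\|_{L^2(\mathbb{R}^{2,+}_a)}$, then use \eqref{eq:Ju} and the identity $U^{\nu}_{r,\delta}=u^{\nu}_r$ on $\Omega_p$ to conclude. The paper's argument is stated more tersely but is identical in substance.
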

At this point, we will stop working with $U^{\nu}_{r,\delta}$, and get back to the original problem \eqref{eq:regularized_pb}. 
We now proceed by proving the bound \eqref{eq:partialY}. 
\subsubsection{Proof of \eqref{eq:partialY}}
The key auxiliary result is
\begin{proposition}
	\label{prop:nu_second}
For all $\nu>0$,\, $\nu\|\partial_y\nabla u^{\nu}_{r}\|_{\Omega_p}\lesssim \|f\|_{\Omega_p}$. 
\end{proposition}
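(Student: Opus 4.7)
The plan is to obtain a differentiated energy estimate by testing the equation \eqref{eq:regularized_pb} with $v:=-\partial_y^2 \overline{u^{\nu}_r}$, integrating by parts twice (once in $\mathbf{x}$, once in $y$), and exploiting the sign-definite imaginary part of the principal operator together with the previously established bounds \eqref{eq:vreg}, \eqref{eq:vreg2} and Corollary \ref{cor:dyu}.

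First, I would note that by Lemma \ref{lem:ellregc} one has $u^{\nu}_r\in H^2(\Omega_p)$ with periodic derivatives in $y$, so that $v=-\partial_y^2\overline{u^{\nu}_r}\in L^2(\Omega_p)$ with $\nabla v=-\partial_y^2\nabla\overline{u^{\nu}_r}\in L^2(\Omega_p)$, and moreover $v=0$ on $\Sigma\cup\Gamma_p$ since $u^{\nu}_r$ vanishes there. Testing the PDE with $v$ and integrating by parts in $\mathbf{x}$, the boundary contributions on $\Sigma\cup\Gamma_p$ vanish because $v|_{\Sigma\cup\Gamma_p}=0$, while those on $\Gamma_p^{\pm}$ cancel by periodicity. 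A second integration by parts in $y$ (again justified by periodicity) then yields, after rearranging,
\begin{align*}
\int_{\Omega_p}(x+i\nu)\mathbb{A}\,\partial_y\nabla u^{\nu}_r\cdot\overline{\partial_y\nabla u^{\nu}_r}
=\int_{\Omega_p}f\,\partial_y^2\overline{u^{\nu}_r}
-\int_{\Omega_p}(x+i\nu)(\partial_y\mathbb{A})\nabla u^{\nu}_r\cdot\overline{\partial_y\nabla u^{\nu}_r}.
\end{align*}

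Next I would take the imaginary part. Using the positive-definiteness of $\mathbb{A}$ (Assumption \ref{assump:matrices}), the left-hand side is bounded below by $c_{\mathbb{A}}\nu\|\partial_y\nabla u^{\nu}_r\|^2$. For the right-hand side, the Cauchy-Schwarz inequality together with $\|\partial_y^2 u^{\nu}_r\|\le\|\partial_y\nabla u^{\nu}_r\|$ and the $C^{1,1}$ regularity of $\mathbb{A}$ gives
\begin{align*}
c_{\mathbb{A}}\nu\|\partial_y\nabla u^{\nu}_r\|^2\lesssim \Big(\|f\|+\|x\nabla u^{\nu}_r\|+\nu\|\nabla u^{\nu}_r\|\Big)\|\partial_y\nabla u^{\nu}_r\|.
\end{align*}

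Finally, dividing by $\|\partial_y\nabla u^{\nu}_r\|$, I would invoke the estimates already established: the bound $\|x\nabla u^{\nu}_r\|\le a^{1/2}\|x^{1/2}\nabla u^{\nu}_r\|\lesssim \|f\|$ from \eqref{eq:vreg}, and $\nu\|\nabla u^{\nu}_r\|\le \nu^{1/2}(\nu^{1/2}\|\nabla u^{\nu}_r\|)\lesssim \nu^{1/2}\|f\|$ from \eqref{eq:vreg2}. For $\nu$ bounded (which is the regime of interest), both terms are controlled by $\|f\|$, yielding the desired $\nu\|\partial_y\nabla u^{\nu}_r\|\lesssim\|f\|$.

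The only real subtlety is justifying that all boundary contributions vanish during the two integrations by parts; this is where the combination of Dirichlet data on $\Sigma\cup\Gamma_p$ (giving $v=0$ there) with the periodic boundary conditions at $\Gamma_p^{\pm}$ (for both $u^{\nu}_r$ and $\partial_y u^{\nu}_r$, as ensured by Lemma \ref{lem:ellregc}) is crucial. The $H^2$-regularity of $u^{\nu}_r$ for fixed $\nu>0$ is also what makes the term $\int f\,\partial_y^2\overline{u^{\nu}_r}$ a well-defined Lebesgue integral even though $f$ has no $y$-regularity.
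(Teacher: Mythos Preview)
Your overall strategy---a differentiated energy estimate, taking the imaginary part to extract $\nu\|\partial_y\nabla u^{\nu}_r\|^2$, and closing with \eqref{eq:vreg}, \eqref{eq:vreg2}---is exactly what the paper does. The difference is a technical one that you have glossed over: the paper (following its Proposition~\ref{proposition:bounds_nu_appendix}) tests with the Nirenberg difference quotient $\delta_h^y\delta_{-h}^y u^{\nu}_r$ rather than with $-\partial_y^2 \overline{u^{\nu}_r}$ directly.

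The reason this matters is your claim that $\nabla v=-\partial_y^2\nabla\overline{u^{\nu}_r}\in L^2(\Omega_p)$. Lemma~\ref{lem:ellregc} gives only $u^{\nu}_r\in H^2(\Omega_p)$ (standard elliptic regularity with $f\in L^2$ and $C^{1,1}$ coefficients), which does \emph{not} place third derivatives in $L^2$; hence $v$ is not known to lie in $\mathcal{H}^1_{\Sigma,0}(\Omega_p)$ and your first integration by parts is unjustified. The difference-quotient device avoids this: $\delta_h^y\delta_{-h}^y u^{\nu}_r\in \mathcal{H}^1_{\Sigma,0}(\Omega_p)$ for every $h$, the discrete integration by parts shifts one $\delta_{-h}^y$ onto the flux, and one obtains a bound on $\|\nabla\delta_{-h}^y u^{\nu}_r\|$ uniform in $h$, from which the desired estimate follows by the standard limit (\cite[Lemma~4.13]{mclean}). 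Your identity and the subsequent estimates are then recovered verbatim. So the plan is right; just replace the direct $\partial_y^2$ test by its difference-quotient regularisation to make the argument rigorous.
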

\begin{proof}
	The proof mimics the proof of Proposition \ref{proposition:bounds_nu_appendix}, cf. in particular \eqref{eq:dhu} and the bounds of Theorem \ref{theorem:convergenceNeumann}.
\end{proof}
\begin{proposition}
	\label{prop:partialY}
	For all $\nu>0$, $\|\partial_y(x\nabla u^{\nu}_{r})\|_{\Omega_p}\leq C\|f\|_{\Omega_p}$. 
\end{proposition}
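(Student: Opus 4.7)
The goal reduces to showing $\|x\nabla w\|_{L^2(\Omega_p)}\lesssim\|f\|$ with $w:=\partial_y u^{\nu}_r$, since $\partial_y(x\nabla u^{\nu}_r)=x\nabla w$ (the weight $x$ is independent of $y$). For fixed $\nu>0$, Lemma~\ref{lem:ellregc} gives $u^{\nu}_r\in H^2(\Omega_p)$, so $w\in H^1(\Omega_p)$; tangential differentiation of the vanishing Dirichlet trace on the vertical portions of $\partial\Omega_p$ yields $w=0$ on $\Sigma\cup\Gamma_p$, and the periodic conditions on $\Gamma_p^{\pm}$ persist. Hence $w\in\mathcal{H}^1_{\Sigma,0}(\Omega_p)$ and, in particular, $xw$ is an admissible test function in this space.

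The first key step is to derive the weak formulation satisfied by $w$ without ever requiring $\partial_y f\in L^2$. Testing $\operatorname{div}((x+i\nu)\mathbb{A}\nabla u^{\nu}_r)=f$ against $-\partial_y\phi$ with $\phi\in C_0^{\infty}(\Omega_p)$ and integrating by parts in $y$ (the periodic boundary conditions kill the boundary contributions) yields, after extension by density,
$$\int_{\Omega_p}(x+i\nu)\mathbb{A}\nabla w\cdot\overline{\nabla\phi}=\int_{\Omega_p}f\,\partial_y\overline{\phi}-\int_{\Omega_p}(x+i\nu)(\partial_y\mathbb{A})\nabla u^{\nu}_r\cdot\overline{\nabla\phi},\qquad\phi\in\mathcal{H}^1_{\Sigma,0}(\Omega_p).$$
The crucial point is that $\partial_y f$ never appears explicitly; it is absorbed into $\int f\,\partial_y\overline{\phi}$.

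The second key step is to choose $\phi=xw$ and take the real part. Expanding $\nabla(xw)=w\vec{e}_x+x\nabla w$, the principal contribution $\int x^2\,\mathbb{A}\nabla w\cdot\overline{\nabla w}$ is bounded below by $c_{\mathbb{A}}\|x\nabla w\|^2$ by ellipticity of $\mathbb{A}$. On the RHS, the single "hard" term is $\int fx\partial_y\overline{w}$, controlled by $\|f\|\|x\nabla w\|$ and thus absorbable via Young's inequality. The remaining cross-term $\int(x+i\nu)\overline{w}\mathbb{A}\nabla w\cdot\vec{e}_x$ coming from $\nabla(xw)$ is estimated by $\|w\|\|x\nabla w\|+\nu\|w\|\|\nabla w\|$; these are absorbable using $\|w\|\lesssim\|f\|$ from Corollary~\ref{cor:dyu} and $\nu\|\nabla w\|\lesssim\|f\|$ from Proposition~\ref{prop:nu_second}. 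The terms involving $\partial_y\mathbb{A}\cdot\nabla u^{\nu}_r$ are controlled by Cauchy-Schwarz: the $x$-weight in $\phi$ pairs with $\nabla u^{\nu}_r$ to invoke $\|x\nabla u^{\nu}_r\|\lesssim\sqrt{a}\|x^{1/2}\nabla u^{\nu}_r\|\lesssim\|f\|$ from Theorem~\ref{theorem:convergenceNeumann}, while any stray $\nu\|\nabla u^{\nu}_r\|$ is handled by \eqref{eq:vreg2}. Collecting everything and absorbing all $\|x\nabla w\|^2$ contributions back into the LHS closes the estimate.

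The main obstacle is precisely the absence of control on $\partial_y f$: a naive test $\phi=w$ in the above weak formulation produces $\int f\,\partial_y\overline{w}$, bounded only by $\|f\|\|\partial_y w\|$, which would require the very type of uniform second-derivative estimate we are after. The $x$-weight in the test function is what saves the day; it re-balances the obstructing RHS term to $\|f\|\|x\partial_y w\|$, matching the weight of the coercive principal term so that absorption becomes possible. The same weight also tames the commutator term $\int x\overline{w}\mathbb{A}\nabla w\cdot\vec{e}_x$ generated by $\nabla(xw)$.
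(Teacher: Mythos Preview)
Your approach is correct and essentially the same as the paper's: both test the equation with $x$ times a $y$-derivative of the solution, take the real part to extract $\|x\nabla(\partial_y u^{\nu}_r)\|^2$ as the coercive term, and absorb the remaining terms using Corollary~\ref{cor:dyu}, Proposition~\ref{prop:nu_second}, and the bounds \eqref{eq:vreg}--\eqref{eq:vreg2}. The only difference is cosmetic: the paper works with Nirenberg difference quotients $x\delta_h^y\delta_{-h}^y u^{\nu}_r$, whereas you differentiate directly and use $\phi=xw$, which is legitimate since Lemma~\ref{lem:ellregc} already provides $u^{\nu}_r\in H^2(\Omega_p)$.

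One small slip: $C_0^{\infty}(\Omega_p)$ is \emph{not} dense in $\mathcal{H}^1_{\Sigma,0}(\Omega_p)$ because functions in the latter space are periodic on $\Gamma_p^{\pm}$ rather than vanishing there, and your test function $\phi=xw$ is precisely of this periodic (non-compactly-supported) type. The fix is immediate---start from smooth $y$-periodic $\phi$ vanishing on $\Sigma\cup\Gamma_p$, for which $-\partial_y\phi$ is again an admissible test function in \eqref{eq:anur} and the boundary terms at $y=\pm\ell$ cancel by periodicity (as you in fact remark)---so this does not affect the substance of the argument.
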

\begin{proof}	
	We start with the variational formulation \eqref{eq:anur}. We use the Nirenberg's finite difference quotient, defined as $\delta_y^hv(x,y)=\frac{v(x,y+h)-v(x,y)}{h}$, for $v\in \mathcal{H}^1_{\Sigma,0}(\Omega_p)$, where $v$ is extended by periodicity for $|y|>\ell$ (see the proof of Proposition \ref{proposition:bounds_nu_appendix}). 
	
	Let us test \eqref{eq:regularized_pb} with the Nirenberg's finite difference quotient $x\delta_y^h\delta_y^{-h} u^{\nu}_r$, which yields, cf. \eqref{eq:anur}, 
	\begin{align*}
		a_{r}^{\nu}(u^{\nu}_r, x\delta_y^h\delta_{y}^{-h} u^{\nu}_r)=-\int_{\Omega_p}fx\overline{\delta_y^h\delta_{y}^{-h} u^{\nu}_r}. 
	\end{align*} 
The same computations as in Appendix \ref{appendix:lap_auxiliary} show that
\begin{align*}
	&\int_{\Omega_p}(x+i\nu )\mathbb{A}\delta^y_{-h}\nabla u^{\nu}_r\,\overline{x\delta^y_{-h}\nabla u^{\nu}_r}+\int_{\Omega_p}(x+i\nu )\mathbb{A}\delta^y_{-h}\nabla u^{\nu}_r\vec{e}_x\overline{\delta_{-h}^y u^{\nu}_r}\\
	&+
	\int_{\Omega_p}(x+i\nu)(\delta_{-h}^y\mathbb{A})\nabla u^{\nu}_r\left(x\overline{\nabla \delta_{-h}^y u^{\nu}_r}+\overline{\vec{e}_x \delta_{-h}^y u^{\nu}_r}\right)=\int_{\Omega}f x\overline{\delta_h^y \delta_{-h}^y u^{\nu}_r}.
\end{align*}
Taking the real part of the above, and using that, in particular,  $\mathbb{A}\in C^1(\overline{\Omega_p})$, with appropriate periodicity constraints, we conclude that 
\begin{align*}
\|x\delta_{-h}^y\nabla u^{\nu}_r\|^2_{L^2(\Omega_p)}&\lesssim \|x\delta_{-h}^y\nabla u^{\nu}_r\|_{L^2(\Omega_p)}\|\delta_{-h}^y u^{\nu}_r\|_{L^2(\Omega_p)}+\nu\|\delta_{-h}^y\nabla u^{\nu}_r\|_{L^2(\Omega_p)}\|\delta_{-h}^y u^{\nu}_r\|_{L^2(\Omega_p)}\\
&+(\|x\nabla u^{\nu}_r\|_{L^2(\Omega_p)}+\nu\|\nabla u^{\nu}_r\|_{L^2(\Omega_p)})(\|x\nabla \delta_{-h}^y u^{\nu}_r\|_{L^2(\Omega_p)}+\|\delta_{-h}^y u^{\nu}_r\|_{L^2(\Omega_p)})+\|f\|_{L^2(\Omega_p)}\|x\delta_{-h}^y \nabla u^{\nu}_r\|_{L^2(\Omega_p)}.
\end{align*}
Using \eqref{eq:vreg}, \eqref{eq:vreg2}, the estimate of \cite[Lemma 4.13]{mclean} and $\|\partial_y u^{\nu}_r\|_{L^2(\Omega_p)}\lesssim \|f\|_{L^2(\Omega_p)}$ (cf. Corollary \ref{cor:dyu}), we obtain the stated estimate.  
\end{proof}
The stated inequality \eqref{eq:partialY} is a corollary of Propositions \ref{prop:nu_second} and \ref{prop:partialY} together with the estimates of Theorem \ref{theorem:convergenceNeumann}.  
%
%
%
\subsubsection{Proof of \eqref{eq:dxu}}
We start by remarking that 
\begin{align*}
	\partial_x((x+i\nu)\mathbb{A}\nabla u^{\nu}_r)=-(x+i\nu)\partial_y(\mathbb{A}\nabla u^{\nu}_r)+f, 
\end{align*}
therefore, with Theorem \ref{theorem:regularNeumann}, Corollary \ref{cor:dyu}, Propositions \ref{prop:nu_second}, \ref{prop:partialY}, and Assumption \ref{assump:matrices}, it holds that $\|\partial_x((x+i\nu)\mathbb{A}\nabla u^{\nu}_r)\|\lesssim \|f\|$. Since
\begin{align*}
\mathbb{A}\partial_x((x+i\nu)\nabla u^{\nu}_r)=	\partial_x((x+i\nu)\mathbb{A}\nabla u^{\nu}_r)-(\partial_x\mathbb{A})\cdot(x+i\nu)\nabla u^{\nu},
\end{align*}
we conclude, using Theorem \ref{theorem:regularNeumann}, about the validity of the inequality \eqref{eq:dxu}. 

\subsection{Proof of Theorem \ref{theorem:regularity_appendix} for the case $f\in \mathcal{H}^1(\Omega_p)$}
\subsubsection{An auxiliary result}
The proof of this result of \cite{baouendi_goulaouic} uses a Hardy inequality similar to the one of Lemma \ref{lem:hardy_ho}, however, the latter is stated without proof. One proof can be found in Theorem (I.1) of \cite{baouendi}, however, it has to be adapted to take into account that we do not have a vanishing derivative at $0$.
\begin{lemma}
	\label{lem:hardy_ho}
	Let $q\in H^2(0,1)$, $q(0)=0$, $q(1)=0$. Then $x\mapsto q(x)/x\in H^1(0,1)$, with $\|q/x\|_{H^1(0,1)}\lesssim \|q''\|_{L^2(0,1)}$.
\end{lemma}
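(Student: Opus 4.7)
The plan is to split the estimate into an $L^2$-bound on $q/x$ and an $L^2$-bound on its classical derivative, both reduced to explicit representations in terms of $q''$ to which a Hardy-type (or Schur-type) inequality is applied. The two boundary conditions will enter in different places: $q(0)=0$ is what allows the integration-by-parts identity for the derivative, while $q(1)=0$ enters only in the zeroth-order bound.

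For the $L^2$-bound on $q/x$, I would first use both boundary conditions to control $\|q'\|_{L^2}$ by $\|q''\|_{L^2}$: by Rolle's theorem applied to $q(0)=q(1)=0$, there exists $\xi\in(0,1)$ with $q'(\xi)=0$, and writing $q'(x)=\int_\xi^x q''$ gives $\|q'\|_{L^\infty(0,1)}\leq\|q''\|_{L^2(0,1)}$ by Cauchy--Schwarz, hence $\|q'\|_{L^2}\leq\|q''\|_{L^2}$. The classical 1D Hardy inequality (which needs only $q(0)=0$) then yields $\|q/x\|_{L^2}\leq 2\|q'\|_{L^2}\leq 2\|q''\|_{L^2}$.

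For the derivative, the key identity is obtained by integrating by parts in $\int_0^x tq''(t)\,dt$: since $q(0)=0$,
\begin{equation*}
\int_0^x tq''(t)\,dt=xq'(x)-q(x),\qquad\text{hence}\qquad (q/x)'(x)=\frac{xq'(x)-q(x)}{x^2}=\frac{1}{x^2}\int_0^x tq''(t)\,dt=:(Tq'')(x).
\end{equation*}
The operator $T$ has positive kernel $K(x,t)=\tfrac{t}{x^2}\,\mathbf{1}_{0<t<x<1}$. I would apply Schur's test with constant weights: a direct computation gives $\int_0^1 K(x,t)\,dt=\int_0^x t/x^2\,dt=1/2$ and $\int_0^1 K(x,t)\,dx=\int_t^1 t/x^2\,dx=1-t\leq 1$, so $\|T\|_{L^2\to L^2}\leq 1/\sqrt{2}$. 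This yields $\|(q/x)'\|_{L^2}\leq \tfrac{1}{\sqrt{2}}\|q''\|_{L^2}$ and, combined with the previous step, the desired $\|q/x\|_{H^1(0,1)}\lesssim \|q''\|_{L^2(0,1)}$.

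The main subtlety rather than obstacle is the justification that the pointwise derivative above is the distributional derivative of $q/x$ on $(0,1)$. Since $q\in H^2(0,1)\hookrightarrow C^1([0,1])$ and $q(0)=0$, the function $q/x$ extends continuously to $[0,1]$ with value $q'(0)$ at the origin, is classically differentiable on any $(\varepsilon,1)$, and the $L^2$-integrability of the pointwise derivative then forces $q/x\in H^1(0,1)$; a short density argument (approximating $q$ by smooth functions vanishing at both endpoints and passing to the limit in the estimates) removes any ambiguity. It is worth noting that $q(1)=0$ is genuinely used only in the zeroth-order bound: the example $q(x)=x$, which has $q(0)=0$, $q''\equiv 0$ but $\|q/x\|_{L^2}=1$, shows the inequality fails without the second boundary condition.
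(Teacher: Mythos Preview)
Your proof is correct and follows essentially the same route as the paper's: both hinge on the identity $(q/x)'(x)=\frac{1}{x^{2}}\int_{0}^{x}tq''(t)\,dt$, with the only tactical differences being that the paper bounds this crudely by $\frac{1}{x}\int_{0}^{x}|q''|$ and invokes the classical Hardy averaging inequality instead of your Schur test, and for the $L^{2}$-part it writes the explicit representation $q(x)/x=\int_{0}^{x}q''(t)(1-t/x)\,dt+c$ (with $c$ fixed by $q(1)=0$) and cites Hardy, rather than going through Rolle's theorem.
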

\begin{proof}
	Assume that $q''=f$, $f\in L^2(0,1)$, then, necessarily, it holds that 
	\begin{align*}
		q(x)=\int_0^x f(t)(x-t)dt+cx, \text{ so that }q(x)/x=\int_0^x f(t)\left(1-\frac{t}{x}\right)dt+c, \quad c=-\int_0^1f(t)(x-t)dt. 
	\end{align*}
By the standard Hardy's inequality, cf. \eqref{eq:multhardy}, $x\mapsto q(x)/x\in L^2(0,1)$. By a direct computation we obtain  
\begin{align*}
(q(x)/x)'=\frac{1}{x^2}\int_0^x f(t)tdt \implies |(q(x)/x)'|\leq \frac{1}{x}\int_0^x |f(t)|dt.
\end{align*}
Applying \eqref{eq:multhardy} to the above, we conclude that $\|(q(x)/x)'\|_{L^2(0,1)}\leq \|f\|_{L^2(0,1)}$.
\end{proof}
\begin{remark}
	By a direct computation, it can be verified that if $q\in H^m(0,1)$, $q(0)=0$, and $q^{(j)}(1)=0$, $j=0, \ldots, m-2$, then $x\mapsto q(x)/x\in H^{m-1}(0,1)$, and $\|q/x\|_{H^m(0,1)}\lesssim \|q^{(m)}\|_{L^2(0,1)}$. 
	
	The chosen boundary condition at $x=1$ is not essential for the validity of the result. 
\end{remark}
By the density of $\mathcal{C}^1_{comp}(\Omega_p)\cap\mathcal{C}^2(\Omega_p)$ in the space $\{v\in \mathcal{H}^1_{\Sigma,0}: \, \partial_x^2 v\in L^2(\Omega_p)\}$, the above extends to $\Omega_p$. 
\begin{corollary}
	\label{cor:dmp}
Assume that $q\in \mathcal{H}^1_{\Sigma,0}(\Omega_p)=\{v\in \mathcal{H}^1(\Omega_p): \, \gamma_0^{\Sigma}v=0\}$, and, additionally, $\partial_x^2 q\in L^2(\Omega_p)$. Then $(x,y)\mapsto \partial_x(q(x,y)/x)\in L^2(\Omega_p)$, moreover, $\|\partial_x (q/x)\|_{L^2(\Omega_p)}\lesssim \|\partial_x^2 q\|_{L^2(0,1)}$.
\end{corollary}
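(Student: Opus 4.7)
\textbf{Proof proposal for Corollary \ref{cor:dmp}.} The plan is to reduce the 2D statement to the 1D Hardy inequality of Lemma \ref{lem:hardy_ho} via a slicewise argument, as suggested by the density claim immediately preceding the corollary statement. First I would establish the result on the dense class $\mathcal{C}^1_{comp}(\Omega_p)\cap \mathcal{C}^2(\Omega_p)$, where $q/x$ is smooth (because $q$ vanishes in a neighborhood of $\Sigma$), so that $\partial_x(q/x)$ makes pointwise sense and all integrals are finite a priori. Functions in this class also satisfy $q(a,y)=0$ and the periodicity in $y$, so for every $y\in (-\ell,\ell)$ the slice $x\mapsto q(x,y)$ lies in $H^2(0,a)$ with vanishing traces at $x=0$ and $x=a$.

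Next I would rescale Lemma \ref{lem:hardy_ho} from $(0,1)$ to $(0,a)$ (this only changes the constant) and apply it slicewise: for each $y$,
\begin{equation*}
\int_0^a \left|\partial_x\!\left(\frac{q(x,y)}{x}\right)\right|^2 dx \;\lesssim\; \int_0^a |\partial_x^2 q(x,y)|^2 dx.
\end{equation*}
Integrating this inequality in $y\in (-\ell,\ell)$ and applying Fubini yields
\begin{equation*}
\|\partial_x(q/x)\|_{L^2(\Omega_p)}^2 \;\lesssim\; \|\partial_x^2 q\|_{L^2(\Omega_p)}^2,
\end{equation*}
which is the desired inequality on the dense class (and corrects the apparent typo in the statement: the right-hand side should of course be $\|\partial_x^2 q\|_{L^2(\Omega_p)}$, not $\|\partial_x^2 q\|_{L^2(0,1)}$).

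Finally, to conclude for a general $q$ in the space $\{v\in \mathcal{H}^1_{\Sigma,0}(\Omega_p):\ \partial_x^2 v\in L^2(\Omega_p)\}$, I would invoke the density asserted in the excerpt to pick a sequence $(q_n)\subset \mathcal{C}^1_{comp}(\Omega_p)\cap\mathcal{C}^2(\Omega_p)$ with $q_n\to q$ in $\mathcal{H}^1(\Omega_p)$ and $\partial_x^2 q_n\to \partial_x^2 q$ in $L^2(\Omega_p)$. The estimate already proven shows that $(\partial_x(q_n/x))_n$ is Cauchy in $L^2(\Omega_p)$, while the classical Hardy inequality \eqref{eq:multhardy} (applied to $q_n-q$, which belongs to $\mathcal{H}^1_{\Sigma,0}(\Omega_p)$) guarantees $q_n/x \to q/x$ in $L^2(\Omega_p)$. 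Identifying the limit as the distributional derivative $\partial_x(q/x)$ completes the proof, with the constant coming directly from Lemma \ref{lem:hardy_ho}.

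The main obstacle, and essentially the only nontrivial step, is the 1D Hardy-type inequality itself, which is Lemma \ref{lem:hardy_ho}; once that is in hand, the 2D extension is a routine slicing plus density argument. A minor technical point worth checking is that the density class indeed incorporates the vanishing trace at $x=a$ (so that Lemma \ref{lem:hardy_ho} is truly applicable slicewise); this is built into the definition of $\mathcal{C}^k(\overline{\Omega}_p)$ via the condition $\gamma_0^{\Gamma_p}u=0$, so no extra cutoff in $x$ is needed.
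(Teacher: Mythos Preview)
Your proof is correct and follows exactly the approach indicated in the paper, which simply states that the 1D result of Lemma~\ref{lem:hardy_ho} extends to $\Omega_p$ by density of $\mathcal{C}^1_{comp}(\Omega_p)\cap\mathcal{C}^2(\Omega_p)$ in $\{v\in \mathcal{H}^1_{\Sigma,0}:\partial_x^2 v\in L^2(\Omega_p)\}$. You have supplied precisely the details the paper omits (slicewise application, Fubini, and the limit identification), and you correctly caught the typo in the right-hand side norm.
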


\subsubsection{Proof of Theorem \ref{theorem:regularity_appendix} for $f\in \mathcal{H}^1(\Omega_p)$}
Consider the unique solution $u_r\in \mathcal{H}^1(\Omega_p)$ (exists by the first part of Theorem \ref{theorem:regularity_appendix}) to 
\begin{align}
	\label{eq:divxA}
	\begin{split}
	\operatorname{div}(x\mathbb{A}\nabla u_r)=f, \quad f\in\mathcal{H}^1(\Omega_p), \\
	\gamma_0^{\Sigma}u_r=0, \, \gamma_0^{\Gamma_p}u_r=0, \text{ periodic BCs at }y=\pm\ell.
	\end{split}
\end{align}
The key idea is again to regularize the problem by adding an absorption term and using the Dirichlet boundary conditions on $\Sigma$, i.e. rewrite the problem in the form  \eqref{eq:regularized_pb}. Then by the standard elliptic regularity result, $u^{\nu}_r\in \mathcal{H}^3(\Omega_p)$, see \cite[Theorem 4.18]{mclean} or \cite[Theorem 2.5.1.1]{grisvard_book}. In this case it is not difficult to verify that $\partial_y u^{\nu}_r\in \mathcal{H}^1(\Omega_p)$ satisfies (remark that we used below the periodicity of $\mathbb{A}$ in the direction $y$ to conclude from the original formulation that $\partial_y u^{\nu}_r$ satisfies periodic boundary conditions at $y=\pm\ell$):
\begin{align}
	\label{eq:divxA2}
	\begin{split}
	&\operatorname{div}((x+i\nu)\mathbb{A}\nabla \partial_y u^{\nu}_r)+\operatorname{div}((x+i\nu)(\partial_y\mathbb{A})\nabla u^{\nu}_r)=\partial_y f,\\
	&\gamma_0^{\Sigma}\partial_y u^{\nu}_r=0, \qquad \gamma_0^{\Gamma_p}\partial_y u^{\nu}_r=0, \text{ periodic BCs at }y=\pm \ell. 
	\end{split}
\end{align}
With Theorems \ref{theorem:convergenceNeumann} and \ref{theorem:regularNeumann}, the term below is uniformly bounded in $\nu>0$, provided that $\partial_y\mathbb{A}\in C^{0,1}(\overline{\Omega}_p)$:
\begin{align*} \|\operatorname{div}((x+i\nu)(\partial_y\mathbb{A})\nabla u^{\nu}_r)\|\leq \|\partial_x((x+i\nu)\nabla u^{\nu}_r)\|+\|(x+i\nu)\nabla u^{\nu}\|+\|(x+i\nu)\partial_y \nabla u^{\nu}\|. 
\end{align*}
We conclude that 
\begin{align*}
	&\operatorname{div}((x+i\nu)\mathbb{A}\nabla \partial_y u^{\nu}_r)=\tilde{f}^{\nu},\qquad \|\tilde{f}^{\nu}\|_{L^2(\Omega_p)}\lesssim \|f\|_{\mathcal{H}^1(\Omega_p)},\\
	&\gamma_0^{\Sigma}\partial_y u^{\nu}_r=0, \qquad \gamma_0^{\Gamma_p}\partial_y u^{\nu}_r=0, \text{ periodic BCs at }y=\pm \ell. 
\end{align*}
By Theorem \ref{theorem:convergenceNeumann}, we see that $\partial_y u^{\nu}_r$ is uniformly bounded in $\mathcal{V}_{reg}(\Omega_p)$. Therefore, with Theorem \ref{theorem:regularNeumann}, and using the uniqueness of the weak $L^2(\Omega_p)$-limit, we conclude that $\partial_y u_r\in \mathcal{V}_{reg}(\Omega_p)$. We have extracted the necessary information from \eqref{eq:divxA2}, and now start working with the original problem \eqref{eq:divxA}.

Weakly, $\partial_y u_r\in \mathcal{V}_{reg}(\Omega_p)$ satisfies, cf. \eqref{eq:divxA}, 
\begin{align*}
	&\operatorname{div}(x\mathbb{A}\nabla \partial_y u_r)+\operatorname{div}(x(\partial_y \mathbb{A})\,\nabla u_r)=\partial_y f,\\
	&\gamma_0^{\Sigma}\partial_y u_r=0, \qquad \gamma_0^{\Gamma_p}\partial_y u_r=0, \text{ periodic BCs at }y=\pm \ell. 
\end{align*}
and, as shown in the part 1 of Theorem \ref{theorem:regularity_appendix}, if $\mathbb{A}\in C^{1,1}(\overline{\Omega_p}; \mathbb{C}^{2\times 2})$, then
\begin{align*}
	\|\operatorname{div}(x(\partial_y \mathbb{A})\,\nabla u_r)\|\lesssim \|f\|.
\end{align*}
Thus $	\operatorname{div}(x\mathbb{A}\nabla \partial_y u_r)\in L^2(\Omega_p)$, and we can apply to $\partial_y u_r$ the part 1 of Theorem \ref{theorem:regularity_appendix}. In particular, 
\begin{align}
	\label{eq:bds}
	\|\nabla \partial_y u_r\|_{L^2(\Omega_p)}+\|x \partial_y u_r\|_{H^2(\Omega_p)}\lesssim \|f\|_{H^1(\Omega_p)}. 
\end{align}
It remains to show that the result holds true when $\partial_y u_r$ is replaced in the above by $\partial_x u_r$. More precisely, we want to show that 
\begin{align}
	\label{eq:todo}
	\|\partial_x^2 u_r\|_{L^2(\Omega_p)}+\|x\partial_x^3 u_r\|_{L^2(\Omega_p)}\lesssim \|f\|_{H^1(\Omega_p)}.
\end{align}

 For this, we rewrite \eqref{eq:divxA} once again, to see that 
\begin{align*}
	\partial_x(x\vec{e}_x\cdot \mathbb{A}\nabla u_r)=f-x\vec{e}_y\cdot (\partial_y \mathbb{A})\nabla u_r-x\vec{e}_y\mathbb{A}\cdot \nabla \partial_y u_r,
\end{align*}
so that, with $q:=x\vec{e}_x\cdot \mathbb{A}\nabla u_r$,
\begin{align*}
	\partial_x^2q&=\partial_x f-\vec{e}_y\cdot (\partial_y\mathbb{A})\nabla u_r-x\vec{e}_y\cdot (\partial_{xy}\mathbb{A})\nabla u_r-x\vec{e}_y\cdot \partial_x\nabla u_r-\vec{e}_y\mathbb{A}\nabla \partial_y u_r\\
	&-x\vec{e}_y\partial_x\mathbb{A}\cdot \nabla \partial_y u_r-x\vec{e}_y\cdot \mathbb{A}\nabla \partial_{xy}u_r. 
\end{align*}
From the bounds \eqref{eq:bds}, part 1 of Theorem \ref{theorem:regularity_appendix} and regularity assumptions on $\mathbb{A}$, we conclude that  $\|\partial_x^2 q\|_{L^2(\Omega_p)}\lesssim \|f\|_{H^1(\Omega_p)}.$

Since $\gamma_n^{\Sigma}u_r=0$ by Lemma \ref{cor:uv}, and $q\in \mathcal{H}^1(\Omega_p)$ by the part 1 of Theorem \ref{theorem:regularity_appendix}, we have that $\gamma_0^{\Sigma}q=\gamma_0^{\Sigma}(x\vec{e}_x\mathbb{A}\nabla u_r)=\gamma_n^{\Sigma}u_r=0$. Therefore, $q$ satisfies conditions of Corollary \ref{cor:dmp}, and we conclude that $\|\partial_x(q/x)\|_{L^2(\Omega_p)}=\|\partial_x(\vec{e}_x\cdot \mathbb{A}\nabla u_r)\|_{L^2(\Omega_p)}\lesssim \|f\|_{H^1(\Omega_p)}$. With \eqref{eq:bds} and $\|\nabla u_r\|_{L^2(\Omega_p)}\lesssim \|f\|_{L^2(\Omega_p)}$, we conclude that 
\begin{align}
	\label{eq:partx}
	\|\partial_x^2 u_r\|_{L^2(\Omega_p)}\lesssim \|f\|_{H^1(\Omega_p)}.
\end{align}
Next, rewriting $\partial_x^2 q$ by definition of $q$, we we obtain that 
\begin{align*}
	\partial_x^2q=x\partial_x^2(\vec{e}_x\mathbb{A}\nabla u_r)+2\partial_x(\vec{e}_x\mathbb{A}\nabla u_r),
\end{align*}
and, therefore, with \eqref{eq:partx} and \eqref{eq:bds}, we conclude that 
\begin{align*}
	\|x\partial_x^3 u_r\|_{H^2(\Omega_p)}\lesssim \|f\|_{H^1(\Omega_p)},
\end{align*}
thus the desired bound \eqref{eq:todo}.

	\section{Proof of Theorem \ref{theorem:green}}
\label{appendix:green_formula_to_submit}
Let $\re>0$, $
\Omega^{\re}_{\Sigma}$ be as in \eqref{eq:omegaresigma}, 
and the family of cut-off functions $\varphi_{\varepsilon}$ be defined in \eqref{eq:cutoff_phi}; recall that $\varphi_{\re}=1$ on $\Omega^{\re/2}_{\Sigma}$ and vanishes on $\Omega\setminus\Omega^{\re}_{\Sigma}$; also, it depends on $x$ only.   
Then 
	\begin{align*}
		\int_{\Omega_p}\operatorname{div}( x \mathbb{A}\nabla u)\overline{v}=\lim\limits_{\varepsilon\rightarrow 0+}	\int_{\Omega_p}\operatorname{div}( x \mathbb{A}\nabla u)\overline{v}(1-\varphi_{\varepsilon})d\vec{x}
		=\lim\limits_{\varepsilon\rightarrow 0+}	\int_{\Omega_p\setminus \Omega_{\Sigma}^{\varepsilon/2}}\operatorname{div}( x \mathbb{A}\nabla u)\overline{v}(1-\varphi_{\varepsilon})d\vec{x}.
	\end{align*}
We integrate by parts and use that $(1-\varphi_{\varepsilon})|_{|x|=\re/2}=0$, $v=0$ on $\Gamma_p$:
	\begin{align*}
		\int_{\Omega_p\setminus \Omega_{\Sigma}^{\varepsilon/2}}\operatorname{div}&( x \mathbb{A}\nabla u)\overline{v}(1-\varphi_{\varepsilon})d\vec{x}=-\int_{\Omega_p\setminus \Omega_{\Sigma}^{\varepsilon/2}}x\mathbb{A}\nabla u\cdot \nabla \overline{v}(1-\varphi_{\re})d\vec{x}+\int_{\Omega_p\setminus \Omega_{\Sigma}^{\varepsilon/2}}x\mathbb{A}\nabla u\cdot \vec{e}_x \overline{v}\varphi_{\re}'d\vec{x}\\
		&=\int_{\Omega_p\setminus \Omega^{\re/2}_{\Sigma}}
		u\overline{\operatorname{div}(x\mathbb{A}\nabla v)}(1-\varphi_{\re})-\int_{\Omega_p\setminus \Omega^{\re/2}_{\Sigma}}xu\overline{\mathbb{A}\nabla v}\cdot \vec{e}_x\varphi'_{\re}+\int_{\Omega_p\setminus \Omega_{\Sigma}^{\varepsilon/2}}x\mathbb{A}\nabla u\cdot \vec{e}_x \overline{v}\varphi_{\re}'d\vec{x}. 
 	\end{align*}
	Taking $\lim\limits_{\re\rightarrow 0+}$ of both sides of the above yields the following identity:
	\begin{align}
		\int_{\Omega_p}\operatorname{div}( x \mathbb{A}\nabla u)\overline{v}
		&=\lim\limits_{\varepsilon\rightarrow 0+}\underbrace{\int_{\operatorname{supp}\varphi'_{\varepsilon}} x \vec{e}_x\cdot\left(
			{\mathbb{A}\nabla u}\overline{v}-\overline{\mathbb{A}\nabla v}u\right)\; \varphi'_{\varepsilon}}_{\mathcal{I}^{\varepsilon}}
		\label{eq:Iterm}+\int_{\Omega_p}u\overline{\operatorname{div}( x  \mathbb{A}\nabla v)}.
	\end{align}
	With the decomposition $u=u_h\log|x|+u_{reg}$, $v=v_h\log|x|+v_{reg}$, the term $\mathcal{I}^{\varepsilon}$ rewrites
	\begin{align*}
		\mathcal{I}^{\varepsilon}&=\sum\limits_{j}\mathcal{I}_j^{\varepsilon}, \quad \mathcal{I}^{\varepsilon}_1:=\int_{\operatorname{supp}\varphi'_{\varepsilon}} x \mathbf{e}_x\cdot\left(\mathbb{A}\nabla u_{reg}\, \overline{v_h}-\overline{\mathbb{A}\nabla v_{reg}}{u_h}\right)\log|x|\partial_{x}\varphi_{\varepsilon},\\
		&\mathcal{I}^{\varepsilon}_2:=\int_{\operatorname{supp}\varphi'_{\varepsilon}} x \mathbf{e}_x\cdot\left(\mathbb{A}\nabla u_{reg}\, \overline{v_{reg}}-\overline{\mathbb{A}\nabla v_{reg}}{u_{reg}}\right)\partial_{x}\varphi_{\varepsilon},\\
		&\mathcal{I}^{\varepsilon}_3:=\int_{\operatorname{supp}\varphi'_{\varepsilon}} x \mathbf{e}_x\cdot\left(\mathbb{A}(\log|x|\nabla u_h+\frac{\mathbf{e}_x}{x}u_h)\, \overline{v_{reg}}-\overline{\mathbb{A}(\log|x|\nabla v_h+\frac{\mathbf{e}_x}{x}v_h)}{u_{reg}}\right)\partial_{x}\varphi_{\varepsilon},\\
		&\mathcal{I}^{\varepsilon}_4:=\int_{\operatorname{supp}\varphi'_{\varepsilon}} x \mathbf{e}_x\cdot\left(\mathbb{A}(\log|x|\nabla u_h+\frac{\mathbf{e}_x}{x}u_h)\, \overline{v_{h}}-\overline{\mathbb{A}(\log|x|\nabla v_h+\frac{\mathbf{e}_x}{x}v_h)}{u_{h}}\right)\log|x|\partial_{x}\varphi_{\varepsilon}\\
		&=\int_{\operatorname{supp}\varphi'_{\varepsilon}} x \mathbf{e}_x\cdot\left(\mathbb{A}\nabla u_h \overline{v_{h}}-\overline{\mathbb{A}\nabla v_h}u_h\right)\log^2|x|\partial_{x}\varphi_{\varepsilon}.
	\end{align*}
	In the last identity we used $\mathbf{e}_x\cdot\mathbb{A}\mathbf{e}_x=\mathbf{e}_x\cdot\overline{\mathbb{A}\mathbf{e}_x}=\mathbb{A}_{11}\in \mathbb{R}$, valid since $\mathbb{A}$ is Hermitian. We will see that only $\mathcal{I}^{\re}_3$ will not converge to zero as $\re\rightarrow 0$. Therefore, let us now examine the remaining integrals. 
	
	\textbf{Proof that $\mathcal{I}_j^{\varepsilon}\rightarrow 0$, as $\varepsilon\rightarrow 0$, with $j\in\{1,2,4\}$.} We treat these integrals in a similar manner, therefore, let us combine the relevant estimates. First of all, remark that with $C$ independent on $\re$, it holds
	\begin{align}
		\label{eq:est1}
		\| x \partial_x\varphi_{\varepsilon}\|_{L^{\infty}(\operatorname{supp}\varphi'_{\re})}\leq \| x \partial_x\varphi_{\varepsilon}\|_{L^{\infty}(\Omega^{\re}_{\Sigma})}\leq C\|x/\re\|_{L^{\infty}(\Omega^{\re}_{\Sigma})}\leq C,
	\end{align}
	where we used the definition of $\varphi_{\re}$ and the fact that $\operatorname{supp}\varphi'_{\re}\subseteq \overline{\Omega_{\Sigma}^{\re}}$. 
	Also, for functions $q,p\in \mathcal{H}^1_{\delta}(\Omega_p)$, with $\delta<1$, it holds, for any $\mathbf{e}\in \mathbb{C}^2$, and $a\in \{0,1,2\}$,
	\begin{align}
		\nonumber
		&\int_{\operatorname{supp}\varphi'_{\varepsilon}}|\log^a|x|\;\mathbf{e}\cdot\nabla q \, p|d\vec{x}\lesssim 
		\left(\int_{\operatorname{supp}\varphi'_{\varepsilon}}|x^{\delta/2}\nabla q|^2dx\right)^{1/2}\left(\int_{\operatorname{supp}\varphi'_{\varepsilon}}|\log^a|x|\;x^{-\delta/2}p|^2dx\right)^{1/2}\\
		\label{eq:funeq}
		&\lesssim \left(\int_{\operatorname{supp}\varphi'_{\varepsilon}}|x^{\delta/2}\nabla q|^2dx\right)^{1/2}\|p\|_{\mathcal{V}_{reg}(\Omega_p)}\lesssim \left(\int_{\operatorname{supp}\varphi'_{\varepsilon}}|x^{\delta/2}\nabla q|^2dx\right)^{1/2}\|p\|_{\mathcal{H}^1_{\delta}(\Omega_p)}\rightarrow 0, \quad \text{ as }\varepsilon\rightarrow 0+,
	\end{align} 
	where we used the Cauchy-Schwarz inequality in the first line and the Hardy inequality of Proposition \ref{prop:hardy} in the second line (remark that $\delta<1$). 
	
	By Theorem \ref{theorem:reg_well_posedness}, we have that $u_{reg}, \, v_{reg}, \, u_h, \, v_h\in \mathcal{H}^1_{\delta}(\Omega_p)$, for any $\delta>0$. Now we have all necessary ingredients to prove the desired result. 
	First, to show that $\lim\limits_{\varepsilon\rightarrow 0+}\mathcal{I}_1^{\varepsilon}= 0$, we use \eqref{eq:est1} which yields  
	\begin{align*}
		\mathcal{I}_1^{\re}&\leq C \int_{\operatorname{supp}\varphi'_{\re}}\left(|\vec{e}_x\cdot \mathbb{A}\nabla u_{reg}\,\overline{ v_h}|+|\vec{e}_x\cdot \overline{\mathbb{A}\nabla v_{reg}}\, u_h|\right)|\log x|d\vec{x}\\
		&=C\int_{\operatorname{supp}\varphi'_{\re}}\left(| \mathbb{A}^t\vec{e}_x\cdot\nabla u_{reg}\,\overline{ v_h}|+|\mathbb{A}\vec{e}_x\cdot \overline{\nabla v_{reg}}\, u_h|\right)|\log x|d\vec{x}.
	\end{align*}
	It remains to use the bound \eqref{eq:funeq} twice, first with $\mathbf{e}=\mathbb{A}^t\mathbf{e}_x$, $a=1$, $q=u_{reg}$ and $p=\overline{v_h}$, and then with $\mathbf{e}=\mathbb{A}\mathbf{e}_x$, $a=1$, $q=\overline{v_{reg}}$ and $p=u_h$. The terms $\mathcal{I}_2^{\re}$, $\mathcal{I}_4^{\re}$ are treated similarly.
%

\textbf{Proof that 	$\lim\limits_{\varepsilon\rightarrow 0+}\mathcal{I}_3^{\varepsilon}$ yields the thought boundary terms. }By the same reasoning as above, some terms in the expression for $\mathcal{I}_3^{\re}$ converge to zero, and it holds that $\lim\limits_{\varepsilon\rightarrow 0}\mathcal{I}_3^{\varepsilon}=\lim\limits_{\varepsilon\rightarrow 0}\widetilde{\mathcal{I}_3^{\varepsilon}}$, where
	\begin{align*}
		\widetilde{\mathcal{I}_3^{\varepsilon}}&=\int_{\operatorname{supp}\varphi'_{\varepsilon}}\partial_{x}\varphi_{\varepsilon} \mathbb{A}_{11}(u_h\overline{v}_{reg}-u_{reg}\overline{v}_h)d\vec{x}, \text{ with } \vec{e}_x\cdot \overline{\mathbb{A}\vec{e}_x}=\vec{e}_x\cdot \mathbb{A}\vec{e}_x=\mathbb{A}_{11}.
	\end{align*}
It remains to integrate by parts the above  (the integration by parts is justified by \cite[Theorem 1.5.3.1]{grisvard_book} and Lemma \ref{lem:L1} in   Appendix \ref{appendix:weighted}, using Theorem \ref{theorem:reg_well_posedness} that states that $u_{reg}$, $v_{reg}$ are from $\bigcap_{0<\delta<1/2}\mathcal{H}^1_{\delta}(\Omega_p)$):
	\begin{align*}
		\widetilde{\mathcal{I}_3^{\varepsilon}}
		=-\int_{\Sigma}a_{11}(u_h\overline{v}_{reg}-u_{reg}\overline{v}_h)d\vec{x}-\int_{\Omega_p}\varphi_{\varepsilon}\partial_x\left(\mathbb{A}_{11}(u_h\overline{v}_{reg}-u_{reg}\overline{v}_h)\right)d\vec{x},
	\end{align*}
By the Lebesgue's dominated convergence theorem, as $\re\rightarrow 0$, the last term tends to $0$, therefore
	\begin{align*}
		\lim\limits_{\varepsilon\rightarrow 0+}	\widetilde{\mathcal{I}_3^{\varepsilon}}&=
		-\int_{\Sigma}a_{11}(u_h\overline{v}_{reg}-u_{reg}\overline{v}_h)d\vec{x}=-\int_{\Sigma}\gamma_n^{\Sigma}u\overline{\gamma_0^{\Sigma} v_{reg}}+\int_{\Sigma}\gamma_n^{\Sigma}u_{reg}\overline{\gamma_n^{\Sigma} v_{reg}}.
	\end{align*}
This, together with previous considerations and \eqref{eq:Iterm}, proves the desired result. 

	\subsection{Proof of Propositions \ref{proposition:jbound} and \ref{proposition:bounds_nu}}
\label{appendix:commutators}
The proof of Proposition \ref{proposition:jbound} relies on some auxiliary facts on commutators of $\mathcal{J}$ with multiplication operators. The result would have been easy to obtain, had we considered $\mathbb{A}=\mathbb{T}=\operatorname{Id}$, cf. the proof of Proposition \ref{proposition:jbound_basic}; this seems to be not the case when $\mathbb{A}, \, \mathbb{T}$ are matrices. 
 
Some of the results below are well-known; we chose to present them for the sake of completeness.
\subsubsection{Preliminary results: properties of the Bessel potential}
We start by remarking that $\mathcal{J}$ is a symmetric operator, as follows from the Plancherel identity. 
\begin{lemma}
	\label{lem:plancherel}
	For all $u, v\in H^1(\mathbb{R}^2_a)$, it holds that $
		(\mathcal{J}u,v)_{L^2(\mathbb{R}^2_a)}=(u,\mathcal{J}v)_{L^2(\mathbb{R}^2_a)}.	$
\end{lemma}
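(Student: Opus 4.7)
The plan is to prove this identity by slice-wise application of Plancherel in the $y$-variable, combined with Fubini. Note that the scalar product $(\cdot,\cdot)_{L^2(\mathbb{R}^2_a)}$ is conjugate-linear in the second argument, and the multiplier $(1+\xi_y^2)^{1/4}$ that defines $\mathcal{J}$ is real-valued, so intuitively $\mathcal{J}$ is a self-adjoint Fourier multiplier in $y$ acting on an $L^2$-tensor product structure.

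First I would verify the identity for $u, v \in C^\infty_{0,y}(\mathbb{R}^2_a)$, where all objects are classically defined. By Fubini, for such $u, v$,
\begin{align*}
(\mathcal{J}u, v)_{L^2(\mathbb{R}^2_a)} = \int_{-a}^{a} \int_{\mathbb{R}} \mathcal{J}u(x,y) \, \overline{v(x,y)} \, dy \, dx.
\end{align*}
For each fixed $x \in (-a,a)$, the inner integral is an $L^2(\mathbb{R})$-scalar product, and Plancherel in the $y$-variable yields
\begin{align*}
\int_{\mathbb{R}} \mathcal{J}u(x,y) \, \overline{v(x,y)} \, dy = \int_{\mathbb{R}} (1+\xi_y^2)^{1/4} \mathcal{F}_y u(x,\xi_y) \, \overline{\mathcal{F}_y v(x,\xi_y)} \, d\xi_y.
\end{align*}
Since $(1+\xi_y^2)^{1/4} \in \mathbb{R}$, it can be placed inside the complex conjugate, giving
\begin{align*}
\int_{\mathbb{R}} \mathcal{F}_y u(x,\xi_y) \, \overline{(1+\xi_y^2)^{1/4}\mathcal{F}_y v(x,\xi_y)} \, d\xi_y = \int_{\mathbb{R}} \mathcal{F}_y u(x,\xi_y) \, \overline{\mathcal{F}_y(\mathcal{J}v)(x,\xi_y)} \, d\xi_y,
\end{align*}
and a second application of Plancherel recovers $\int_{\mathbb{R}} u(x,y) \, \overline{\mathcal{J}v(x,y)} \, dy$. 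Reassembling via Fubini concludes the identity on the dense subspace.

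The extension to $u, v \in H^1(\mathbb{R}^2_a)$ follows by a standard density argument: functions in $C^\infty_{0,y}(\mathbb{R}^2_a)$ are dense in $H^1(\mathbb{R}^2_a)$, and since $H^1(\mathbb{R}^2_a) \hookrightarrow H^{1/2}(\mathbb{R}^2_a)$ with $\mathcal{J}: H^{1/2}(\mathbb{R}^2_a) \to L^2(\mathbb{R}^2_a)$ bounded (as remarked just after the definition of $\mathcal{J}$), both sides of the identity are continuous on $H^1(\mathbb{R}^2_a) \times H^1(\mathbb{R}^2_a)$, and we pass to the limit.

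There is no real obstacle here; this is essentially bookkeeping around Fubini and Plancherel. The only points to keep track of are that (i) for $u \in L^2(\mathbb{R}^2_a)$ one has $u(x,\cdot) \in L^2(\mathbb{R})$ for a.e.\ $x$, so $\mathcal{F}_y u(x,\cdot)$ is well defined slice by slice, and (ii) the multiplier being real is precisely what produces the self-adjointness, which is why the same argument would not work for $\mathcal{J}$ replaced by $\mathcal{F}_y^{-1}((1+\xi_y^2)^{1/4}\operatorname{sgn}(\xi_y)\mathcal{F}_y \cdot)$, for instance.
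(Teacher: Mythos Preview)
Your proof is correct and follows exactly the approach the paper indicates: the paper simply states that the symmetry of $\mathcal{J}$ ``follows from the Plancherel identity,'' and you have filled in the routine details (slice-wise Plancherel in $y$, Fubini, reality of the multiplier, and density).
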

Also, the operator $\mathcal{J}$ commutes with multiplication by $y$-independent functions:
\begin{align*}
	\mathcal{J}(\varphi(x)h)=\varphi(x)\mathcal{J}h,\quad \forall h\in H^{1/2}(\mathbb{R}^{2}_a). 
\end{align*}
In what follows we will also need the following property, which can be verified by the density argument. 
Let the trace operator $\gamma_0^{x=\pm a}$ be defined via $\gamma_0^{x=\pm a}u(x,y)=u(\pm a, y)$ for sufficiently regular $u:\,\mathbb{R}^2_a\rightarrow \mathbb{C}$. Then for all $u\in H^2(\mathbb{R}^2_a)$, s.t. $\gamma_0^{x=\pm a}u=0$, it holds that
\begin{align}
	\label{eq:zero_trace}
	\gamma_0^{x=\pm a} \mathcal{J} u=0,\qquad \gamma_0^{x=\pm a} \mathcal{J}^2 u=0.
\end{align}
The result below is non-optimal, but sufficient for our needs.
\begin{lemma}
	\label{lem:commutator}
	Let $n\in \{1,2\}$, $\beta_0\in H^{2}(\mathbb{R}^2_a)$. Then, there exists $C>0$, s.t. for all $p\in L^2(\mathbb{R}_a^2)$, $n\in \{1,2\}$,
	\begin{align*}
		&\|[\mathcal{J}^n,\beta_0]p\|_{L^2(\mathbb{R}^2_a)}\leq C \|p\|_{L^2(\mathbb{R}^2_a)}.
	\end{align*}
\end{lemma}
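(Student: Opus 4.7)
\textbf{Proof plan for Lemma~\ref{lem:commutator}.}

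The plan is to proceed by Fourier analysis in the $y$-direction, treating $x \in (-a,a)$ as a parameter. The central observation is a symbol estimate: for $n\in\{1,2\}$, the symbol difference
\[
m_n(\xi,\eta) \;:=\; (1+\xi^2)^{n/4} - (1+\eta^2)^{n/4}
\]
satisfies $|m_n(\xi,\eta)| \le C_n\,|\xi-\eta|$, which follows from the mean value theorem applied to $s \mapsto (1+s^2)^{n/4}$, whose derivative $\tfrac{n}{2}s(1+s^2)^{n/4-1}$ is uniformly bounded on $\mathbb{R}$ precisely because $n/2 \le 1$. This is the reason the proof covers only $n \le 2$.

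First, writing the commutator via the $y$-Fourier transform,
\[
\mathcal{F}_y\bigl([\mathcal{J}^n,\beta_0]p\bigr)(x,\xi) \;=\; \int_{\mathbb{R}} m_n(\xi,\eta)\, \widehat{\beta_0}(x,\xi-\eta)\, \widehat{p}(x,\eta)\,d\eta,
\]
I would apply Schur's test (equivalently, change variables $\zeta = \xi - \eta$ and use Young's convolution inequality) with the symbol bound above to deduce, for each fixed $x$,
\[
\|[\mathcal{J}^n,\beta_0]p(x,\cdot)\|_{L^2_y} \;\le\; C\int_{\mathbb{R}} |\zeta|\,|\widehat{\beta_0}(x,\zeta)|\,d\zeta\cdot \|p(x,\cdot)\|_{L^2_y} \;=\; C\,\|\widehat{\partial_y\beta_0}(x,\cdot)\|_{L^1_\zeta}\|p(x,\cdot)\|_{L^2_y}.
\]

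Second, I would control $\|\widehat{\partial_y\beta_0}(x,\cdot)\|_{L^1_\zeta}$ uniformly in $x$. A standard Cauchy--Schwarz split of the integral (or the elementary interpolation inequality $\|\widehat{g}\|_{L^1}^2 \lesssim \|g\|_{L^2}\|g\|_{H^1}$ obtained by splitting the $\zeta$-integral into $|\zeta|\le R$ and $|\zeta|>R$ and optimizing in $R$) yields
\[
\|\widehat{\partial_y\beta_0}(x,\cdot)\|_{L^1_\zeta}^2 \;\lesssim\; \|\partial_y\beta_0(x,\cdot)\|_{L^2_y}\,\|\partial_y\beta_0(x,\cdot)\|_{H^1_y}.
\]
Taking $\sup_x$ and invoking the trace-type continuous embedding of the strip $H^2(\mathbb{R}^2_a) \hookrightarrow C([-a,a];H^{3/2}_y(\mathbb{R}))$ and of $H^1(\mathbb{R}^2_a)\hookrightarrow C([-a,a];H^{1/2}_y(\mathbb{R}))$ produces a finite bound depending only on $\|\beta_0\|_{H^2}$, and integration of the pointwise-in-$x$ estimate gives the claimed $L^2(\mathbb{R}^2_a)$ bound.

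The main technical obstacle is the borderline regularity: the elementary bound $\|\widehat{g}\|_{L^1}\lesssim \|g\|_{H^s}$ requires $s>1/2$, and the trace theorem from $H^2(\mathbb{R}^2_a)$ yields only $H^{3/2}_y$ uniformly in $x$ for $\beta_0$ (hence $H^{1/2}_y$ for $\partial_y\beta_0$), so a naive Cauchy--Schwarz misses by $\varepsilon$. I expect to close this gap by using the sharper multiplicative interpolation $\|\widehat{g}\|_{L^1}^2\lesssim \|g\|_{L^2}\|g\|_{H^1}$ above, which applied with $g=\partial_y\beta_0(x,\cdot)$ asks for $\partial_y\beta_0\in C_x L^2_y$ and $\partial_y^2\beta_0\in C_x L^2_y$; the first is automatic from $H^1\hookrightarrow C_x H^{1/2}_y$, while the second is the only place where a slightly higher regularity than $H^2$ would be needed --- consistent with the author's remark that the stated result is not optimal but suffices for the application, so even a crude bound with a constant depending on $\|\beta_0\|_{H^{2+\varepsilon}}$ or on $\|\nabla\beta_0\|_{L^\infty}$ (obtained via Sobolev embedding on the bounded strip) is acceptable.
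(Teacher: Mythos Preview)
Your approach is essentially the same as the paper's: write the commutator on the Fourier side in $y$, use the mean-value bound $|(1+\xi^2)^{n/4}-(1+\eta^2)^{n/4}|\le C|\xi-\eta|$ (valid precisely for $n\le 2$), apply Young's convolution inequality pointwise in $x$, and then control $\int_{\mathbb R}|\xi\,\widehat{\beta_0}(x,\xi)|\,d\xi$ by Cauchy--Schwarz against the weight $(1+\xi^2)^{-1/2}$, which produces $\|\beta_0(x,\cdot)\|_{H^2_y}$.

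Your worry about the borderline regularity is well placed and in fact applies to the paper's proof too: the final step tacitly needs $\sup_x\|\beta_0(x,\cdot)\|_{H^2_y}<\infty$, which does not follow from $\beta_0\in H^2(\mathbb{R}^2_a)$ alone. The resolution, both for you and for the paper, is Remark~\ref{rem:regularity}: in every application $\beta_0$ is $C^{1,1}$ with compact support, so $\partial_y^2\beta_0\in L^\infty$ with compact support and the needed uniform-in-$x$ bound is immediate. Your instinct to fall back on $\|\beta_0\|_{H^{2+\varepsilon}}$ or on the $C^{1,1}$ norm is therefore exactly right and matches how the lemma is actually used.
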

\begin{remark}
	\label{rem:regularity}
We will often apply the above result in the case when $\beta_0\in C^{1,1}([-a,a]\times\mathbb{R}; \mathbb{C})$ and is compactly supported.
\end{remark}
\begin{proof}
	By the density argument, it suffices to prove that $\|[\mathcal{J}^n,\beta_0]p\|_{L^2(\mathbb{R}^2_a)}\leq C\|p\|_{L^2(\mathbb{R}^2_a)}$, for all $p\in C_0^{\infty}(\mathbb{R}^2_a)$.
	
	We denote by $\xi$ the Fourier variable in the direction $y$ and introduce  $\mu(\xi)=(1+\xi^2)^{1/4}$, so that
	\begin{align}
		\label{eq:jbeta}
		[\mathcal{J}^n,\beta_0]p=\mathcal{F}_y^{-1}\left(\mu^n\mathcal{F}_y\left(\beta_0 p\right)\right)-\beta_0\mathcal{F}_y^{-1}\left(\mu^n\mathcal{F}_y p\right).
	\end{align}
We use the notation $\hat{v}=\mathcal{F}_yv$. Also, for $u\in L^1(\mathbb{R}^2_a)$, $v\in L^2(\mathbb{R}_a^2)$, we denote the convolution in $y$-direction by
	\begin{align*}
		u\ast v(x,y):=\int_{\mathbb{R}}u(x,y-y')v(x,y')dy'.
	\end{align*} 
	With $\mathcal{F}_y(uv)=\hat{u}\ast \hat{v}$, we can rewrite the equation \eqref{eq:jbeta} (recall that we assume that $p\in C_0^{\infty}(\mathbb{R}^2_a)$):
	\begin{align*}
		\mathcal{F}_y\left([\mathcal{J}^n,\beta_0]p\right)(x,\xi)&=	\left(\mu^n(\xi)\hat{\beta}_0\ast\hat{p}-\hat{\beta}_0\ast(\mu^n(\xi)\hat{p})\right)(x,\xi)=\int_{\mathbb{R}}\hat{\beta}_0(x,\xi')(\mu^n(\xi)-\mu^n(\xi-\xi'))\hat{p}(x,\xi-\xi')d\xi'.
	\end{align*}

	%
	%
	%
	Next, we employ a Lipschitz bound on $\mu^n$, so that $|\mu^n(\xi)-\mu^n(\xi-\xi')|\leq |\xi'|\sup_{t\in [\xi'-\xi, \xi]}\frac{n|t|}{2(1+t^2)^{1-n/4}}\leq C|\xi'|$, \text{ since }$n\in \{1,2\}$. This yields a.e. $(x,\xi)\in \mathbb{R}^2_a$, 
	\begin{align*}
		|\mathcal{F}_y\left([\mathcal{J}^n,\beta]p\right)(x,\xi)|\leq C\int_{\mathbb{R}}|\xi'\hat{\beta}_0(x,\xi')| |\hat{p}(x,\xi-\xi')|d\xi'.
	\end{align*}
	We recognize in the right-hand side of the above a convolution, and use the Young inequality for convolutions: 
	\begin{align}
		\label{eq:mbound}
		\|\mathcal{F}_y\left([\mathcal{J}^n,{\beta}_0]p\right)(x,.)\|_{L^2(\mathbb{R})}^2\leq C \|\hat{p}(x,.)\|_{L^2(\mathbb{R})}^2\, \left(\int_{\mathbb{R}}|\xi\hat{\beta}_0(x,\xi)|d\xi\right)^2, \quad \text{a.e. }x\in (-a,a).
	\end{align}
	Next, we use the Cauchy-Schwarz inequality:
	\begin{align*}
		\left(\int_{\mathbb{R}}|\xi\hat{\beta}_0(x,\xi)|d\xi\right)^2\leq \int_{\mathbb{R}}|\xi \hat{\beta}_0(x,\xi)|^2(1+\xi^2)d\xi \int_{\mathbb{R}}(1+\xi^2)^{-1}d\xi\leq C\|\hat{\beta}_0(x,.)\|_{H^{2}(\mathbb{R})}^2.
	\end{align*}
	Therefore, from \eqref{eq:mbound}, the Plancherel identity, and the above bound, we conclude that 
	\begin{align*}
		\|[\mathcal{J}^n,\beta_0]p\|_{L^2(\mathbb{R}^2_a)}^2=	\|\mathcal{F}_y[\mathcal{J}^n,\beta_0]p\|_{L^2(\mathbb{R}^2_a)}^2\leq C\|p\|^2_{L^2(\mathbb{R}^2_a)}\|\beta_0\|^2_{H^2(\mathbb{R}^2_a)}.
	\end{align*}
\end{proof}
We will also need a corresponding result on a commutator of a self-adjoint operator and $\mathcal{J}$.  
\begin{lemma}
	\label{lem:self_adjoint_commutator}
	Let $E:=(L^2(\mathbb{R}^2_a))^2$. 
	Let $\mathcal{A}: D(\mathcal{A})\rightarrow E$ be a self-adjoint operator, with a domain $D(\mathcal{A})\subset E$. Then, for all $\vec{v}\in E$ s.t. $\mathcal{J}\vec{v}, \mathcal{J}^2\vec{v}\in D(\mathcal{A}) \text{  and   }\mathcal{A}\vec{v}\in \left(H^1(\mathbb{R}^2_a)\right)^2,$ it holds that $$\Im ([\mathcal{J}\mathbb{I}, \mathcal{A}]\vec{v},\mathcal{J} \vec{v})_{E}=\frac{1}{2i}([\mathcal{J}^2\mathbb{I}, \mathcal{A}]\vec{v},\vec{v})_{E}.$$ 
\end{lemma}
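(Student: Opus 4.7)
The plan is to exploit two ingredients only: the symmetry of $\mathcal{J}$ on $H^{1/2}(\mathbb{R}^2_a)$ provided by Lemma \ref{lem:plancherel}, and the self-adjointness of $\mathcal{A}$ (which in particular implies that $(\mathcal{A}w,w)\in\mathbb{R}$ for all $w\in D(\mathcal{A})$). The identity then follows from a few algebraic manipulations of inner products, with the domain conditions on $\vec{v}$ just ensuring that each step makes sense.

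First I would expand $([\mathcal{J}\mathbb{I},\mathcal{A}]\vec{v},\mathcal{J}\vec{v})_E=(\mathcal{J}\mathcal{A}\vec{v},\mathcal{J}\vec{v})_E-(\mathcal{A}\mathcal{J}\vec{v},\mathcal{J}\vec{v})_E$. The hypothesis $\mathcal{A}\vec{v}\in (H^1(\mathbb{R}^2_a))^2\subset (H^{1/2}(\mathbb{R}^2_a))^2$ allows to apply Lemma \ref{lem:plancherel} componentwise to the first term, moving a $\mathcal{J}$ from left to right:
\begin{align*}
(\mathcal{J}\mathcal{A}\vec{v},\mathcal{J}\vec{v})_E=(\mathcal{A}\vec{v},\mathcal{J}^2\vec{v})_E.
\end{align*}
Since $\vec{v},\mathcal{J}^2\vec{v}\in D(\mathcal{A})$, self-adjointness of $\mathcal{A}$ yields $(\mathcal{A}\vec{v},\mathcal{J}^2\vec{v})_E=(\vec{v},\mathcal{A}\mathcal{J}^2\vec{v})_E$. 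For the second term I would use that $\mathcal{J}\vec{v}\in D(\mathcal{A})$, so $(\mathcal{A}\mathcal{J}\vec{v},\mathcal{J}\vec{v})_E\in\mathbb{R}$ (standard consequence of self-adjointness: $\overline{(\mathcal{A}w,w)}=(w,\mathcal{A}w)=(\mathcal{A}w,w)$). Taking the imaginary part therefore discards it entirely, leaving
\begin{align*}
\Im([\mathcal{J}\mathbb{I},\mathcal{A}]\vec{v},\mathcal{J}\vec{v})_E=\Im(\vec{v},\mathcal{A}\mathcal{J}^2\vec{v})_E=\tfrac{1}{2i}\bigl[(\vec{v},\mathcal{A}\mathcal{J}^2\vec{v})_E-(\mathcal{A}\mathcal{J}^2\vec{v},\vec{v})_E\bigr].
\end{align*}

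Next I would expand the right-hand side of the claimed identity symmetrically:
\begin{align*}
([\mathcal{J}^2\mathbb{I},\mathcal{A}]\vec{v},\vec{v})_E=(\mathcal{J}^2\mathcal{A}\vec{v},\vec{v})_E-(\mathcal{A}\mathcal{J}^2\vec{v},\vec{v})_E.
\end{align*}
Applying the symmetry of $\mathcal{J}^2$ (which is the iteration of Lemma \ref{lem:plancherel}, and again justified by $\mathcal{A}\vec{v}\in H^1$) to the first term gives $(\mathcal{J}^2\mathcal{A}\vec{v},\vec{v})_E=(\mathcal{A}\vec{v},\mathcal{J}^2\vec{v})_E=(\vec{v},\mathcal{A}\mathcal{J}^2\vec{v})_E$, where in the last equality I re-use self-adjointness of $\mathcal{A}$. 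Substituting,
\begin{align*}
\tfrac{1}{2i}([\mathcal{J}^2\mathbb{I},\mathcal{A}]\vec{v},\vec{v})_E=\tfrac{1}{2i}\bigl[(\vec{v},\mathcal{A}\mathcal{J}^2\vec{v})_E-(\mathcal{A}\mathcal{J}^2\vec{v},\vec{v})_E\bigr],
\end{align*}
which coincides with the expression obtained for $\Im([\mathcal{J}\mathbb{I},\mathcal{A}]\vec{v},\mathcal{J}\vec{v})_E$ above. This concludes the proof.

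There is really no serious obstacle: the argument is essentially bookkeeping. The only point requiring a little care is to check that all the symmetry/self-adjointness moves are legitimate under the stated regularity of $\vec{v}$ — namely that $\mathcal{A}\vec{v}\in H^1$ (hence in $H^{1/2}$) licenses the use of Lemma \ref{lem:plancherel} for $\mathcal{J}$ and $\mathcal{J}^2$, and that the conditions $\mathcal{J}\vec{v},\mathcal{J}^2\vec{v}\in D(\mathcal{A})$ together with $\vec{v}\in D(\mathcal{A})$ (implicit in $\mathcal{A}\vec{v}$ being well-defined) allow the two applications of self-adjointness.
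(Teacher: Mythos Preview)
Your proof is correct and follows essentially the same approach as the paper's: expand the commutator, use that $(\mathcal{A}\mathcal{J}\vec{v},\mathcal{J}\vec{v})_E\in\mathbb{R}$ by self-adjointness of $\mathcal{A}$, and then shuttle factors of $\mathcal{J}$ across the inner product via Lemma~\ref{lem:plancherel}. The only cosmetic difference is that you reduce both sides to the common expression $\tfrac{1}{2i}[(\vec{v},\mathcal{A}\mathcal{J}^2\vec{v})_E-(\mathcal{A}\mathcal{J}^2\vec{v},\vec{v})_E]$, whereas the paper transforms the left-hand side directly into $([\mathcal{J}^2\mathbb{I},\mathcal{A}]\vec{v},\vec{v})_E$.
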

\begin{proof}
By the definition of the commutator, for $\vec{v}$ as in the statement of the lemma, 
	\begin{align*}
		\Im ([\mathcal{J}\mathbb{I}, \mathcal{A}]\vec{v},\mathcal{J} \vec{v})_{E}=\Im (\mathcal{J}\mathcal{A}\vec{v}, \mathcal{J}\vec{v})_{E}-\Im (\mathcal{A}\mathcal{J}\vec{v}, \mathcal{J}\vec{v})_{E}.
	\end{align*}
	By self-adjointness, $(\mathcal{A}\vec{u}, \vec{u})_E=(\vec{u}, \mathcal{A}\vec{u})_E=\overline{(\mathcal{A}\vec{u}, \vec{u})_E}$, thus the second term in the above vanishes. Therefore, 
	\begin{align*}
		2i\Im ([\mathcal{J}\mathbb{I}, \mathcal{A}]\vec{v},\mathcal{J} \vec{v})_E=(\mathcal{J}\mathcal{A}\vec{v}, \mathcal{J}\vec{v})_E-(\mathcal{J}\vec{v}, \mathcal{J}\mathcal{A}\vec{v})_E=(\mathcal{J}^2\mathcal{A}\vec{v}, \vec{v})_E-(\mathcal{A}\mathcal{J}^2\vec{v}, \vec{v})_E,
	\end{align*}
	where in the last identity we used first the self-adjointness of $\mathcal{J}$, see Lemma \ref{lem:plancherel}, and next of $\mathcal{A}$.
\end{proof}
\subsubsection{Proof of Proposition \ref{proposition:jbound}}
\label{sec:prop_jbound}
Proposition \ref{proposition:jbound} is a simple corollary of its counterpart with $\mathbb{B}=\mathbb{I}$, as we argue on p. \pageref{proof_prop_jbound}.
\begin{proposition}
	\label{proposition:jbound_basic}
	Let $(u^{\nu})_{\nu>0}$ solve \eqref{eq:unu_orig},  and $U^{\nu}_{\delta}=\mathcal{E}_{\delta}u^{\nu}$ satisfy \eqref{eq:Unudelta}. Then there exists $C>0$, s.t.  
	\begin{align}
		\label{eq:lemboundnu}
		\nu\|\mathcal{J}\nabla U^{\nu}_{\delta}\|^2\leq C \left(\|f\|^2+
		\|f\|\|\partial_y u^{\nu}\|\right), \text{ for all }0<\nu<1.
	\end{align}
\end{proposition}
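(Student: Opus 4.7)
The plan is to mimic the derivation of the basic energy estimate \eqref{eq:est_main} from Lemma \ref{lem:pb_abs_wp}, replacing the test function $u^{\nu}$ by $v=\mathcal{J}^{2}U^{\nu}_{\delta}$ so that one extra half-order of tangential regularity is captured. This test function is admissible: by Lemma \ref{lem:ellreg} we have $U^{\nu}_{\delta}\in H^{2}(\mathbb{R}^{2}_{a})$ with $\gamma_{0}^{x=\pm a}U^{\nu}_{\delta}=0$, and these properties are preserved under $\mathcal{J}^{2}$ by \eqref{eq:zero_trace}. Testing \eqref{eq:Unudelta} against $\mathcal{J}^{2}U^{\nu}_{\delta}$, integrating by parts, using the commutation $\mathcal{J}\nabla=\nabla\mathcal{J}$ and the Plancherel symmetry of $\mathcal{J}$ from Lemma \ref{lem:plancherel} to distribute one factor $\mathcal{J}$ onto each gradient, inserting the commutator identity $\mathcal{J}\bigl((x\mathbb{A}+i\nu\mathbb{T})\nabla U^{\nu}_{\delta}\bigr)=(x\mathbb{A}+i\nu\mathbb{T})\mathcal{J}\nabla U^{\nu}_{\delta}+x[\mathcal{J},\mathbb{A}]\nabla U^{\nu}_{\delta}+i\nu[\mathcal{J},\mathbb{T}]\nabla U^{\nu}_{\delta}$, and finally taking the imaginary part (which annihilates the Hermitian $x\mathbb{A}$-piece by Corollary \ref{cor:M}), produces
\begin{align*}
\nu c_{\mathbb{T}}\|\mathcal{J}\nabla U^{\nu}_{\delta}\|^{2}\leq \bigl|(F^{\nu}_{\delta},\mathcal{J}^{2}U^{\nu}_{\delta})\bigr|+\bigl|\Im\!\int x[\mathcal{J},\mathbb{A}]\nabla U^{\nu}_{\delta}\cdot\overline{\mathcal{J}\nabla U^{\nu}_{\delta}}\bigr|+\nu\bigl|\!\int[\mathcal{J},\mathbb{T}]\nabla U^{\nu}_{\delta}\cdot\overline{\mathcal{J}\nabla U^{\nu}_{\delta}}\bigr|.
\end{align*}

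The source contribution encodes the $\|f\|\|\partial_{y}u^{\nu}\|$-term of the target: by Cauchy--Schwarz, the bound $\|F^{\nu}_{\delta}\|\lesssim\|f\|$ from \eqref{eq:stabFdelta}, the Plancherel identity $\|\mathcal{J}^{2}U^{\nu}_{\delta}\|^{2}=\|U^{\nu}_{\delta}\|^{2}+\|\partial_{y}U^{\nu}_{\delta}\|^{2}$ from \eqref{eq:Ju}, the mapping bounds \eqref{eq:important_mappings} controlling $U^{\nu}_{\delta}$ and $\partial_{y}U^{\nu}_{\delta}$ by $u^{\nu}$ and $\partial_{y}u^{\nu}$, and $\|u^{\nu}\|\lesssim\|f\|$ from \eqref{eq:est_main_v2}, one obtains $|(F^{\nu}_{\delta},\mathcal{J}^{2}U^{\nu}_{\delta})|\leq C(\|f\|^{2}+\|f\|\|\partial_{y}u^{\nu}\|)$. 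The $i\nu[\mathcal{J},\mathbb{T}]$-commutator is immediate: Lemma \ref{lem:commutator} applied to a $y$-cutoff of $\mathbb{T}$ matching the compact $y$-support of $\nabla U^{\nu}_{\delta}$ gives $\|[\mathcal{J},\mathbb{T}]\nabla U^{\nu}_{\delta}\|\lesssim\|\nabla U^{\nu}_{\delta}\|$, which combined with $\nu^{1/2}\|\nabla U^{\nu}_{\delta}\|\lesssim\|f\|$ from \eqref{eq:stabU} yields $C\nu^{1/2}\|f\|\|\mathcal{J}\nabla U^{\nu}_{\delta}\|$, absorbable by Young into $\varepsilon\nu\|\mathcal{J}\nabla U^{\nu}_{\delta}\|^{2}+C_{\varepsilon}\|f\|^{2}$.

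The genuine obstacle is the commutator $x[\mathcal{J},\mathbb{A}]$. The key idea is to exploit that multiplication by the pointwise Hermitian matrix $x\mathbb{A}$ is self-adjoint on $(L^{2}(\mathbb{R}^{2}_a))^{2}$: Lemma \ref{lem:self_adjoint_commutator} then converts the imaginary part into $\frac{1}{2i}([\mathcal{J}^{2},x\mathbb{A}]\nabla U^{\nu}_{\delta},\nabla U^{\nu}_{\delta})=\frac{1}{2i}([\mathcal{J}^{2},\mathbb{A}](x\nabla U^{\nu}_{\delta}),\nabla U^{\nu}_{\delta})$, using that $x$ commutes with $\mathcal{J}^{2}$ (Fourier symbol independent of $x$) and with multiplication by $\mathbb{A}$. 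The crucial feature of this reformulation is that the factor $x$ now sits inside the commutator's argument, so that the $\nu$-uniform stability $\|x\nabla U^{\nu}_{\delta}\|\lesssim\|f\|$ from \eqref{eq:stabU} can be invoked rather than the much weaker $\|\nabla U^{\nu}_{\delta}\|\lesssim\nu^{-1/2}\|f\|$. I then plan to Plancherel-symmetrize once more, rewriting this bracket as $([\mathcal{J},\mathbb{A}](x\nabla U^{\nu}_{\delta}),\mathcal{J}\nabla U^{\nu}_{\delta})-(\mathcal{J}\nabla U^{\nu}_{\delta},[\mathcal{J},\mathbb{A}](x\nabla U^{\nu}_{\delta}))$ (using the skew-symmetry of $[\mathcal{J},\mathbb{A}]$ and the commutation of $x$ with it), so that both terms reduce via Lemma \ref{lem:commutator} to $\|f\|\|\mathcal{J}\nabla U^{\nu}_{\delta}\|$, which the final Young inequality absorbs into $\varepsilon\nu\|\mathcal{J}\nabla U^{\nu}_{\delta}\|^{2}$ plus a controlled residue. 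The hard part of the whole argument will be carefully arranging this last Young step so that the residue does not carry a spurious $\nu^{-1}$ factor; I expect this requires combining the above rearrangement with the interpolation inequality $\|\mathcal{J}\nabla U^{\nu}_{\delta}\|^{2}\leq\|\nabla U^{\nu}_{\delta}\|\|\mathcal{J}^{2}\nabla U^{\nu}_{\delta}\|$ and the $\nu$-explicit control on $\|\partial_{y}\nabla U^{\nu}_{\delta}\|$ from Proposition \ref{proposition:bounds_nu} to extract the correctly weighted dependence on $\|f\|$ and $\|\partial_{y}u^{\nu}\|$.
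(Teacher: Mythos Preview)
Your overall setup is sound and matches the paper up to the point where the commutator $\Im\!\int x[\mathcal{J},\mathbb{A}]\nabla U^{\nu}_{\delta}\cdot\overline{\mathcal{J}\nabla U^{\nu}_{\delta}}$ has to be estimated. There, however, your argument has a genuine gap that your closing paragraph does not close.

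Your rewriting via Lemma~\ref{lem:self_adjoint_commutator} and the commutation of $x$ with both $\mathcal{J}$ and $\mathbb{A}$ is correct and yields $2i\,\Im\mathcal{S}_A=\bigl([\mathcal{J}^2,\mathbb{A}](x\vec{v}),\vec{v}\bigr)$ with $\vec{v}=\nabla U^{\nu}_{\delta}$. But your subsequent ``Plancherel-symmetrize once more'' step is circular: the two terms you write down add up to $2i\,\Im\bigl([\mathcal{J},\mathbb{A}](x\vec{v}),\mathcal{J}\vec{v}\bigr)$, which is exactly $\mathcal{S}_A$ again. The best bound you can extract by Cauchy--Schwarz and Lemma~\ref{lem:commutator} is therefore $|\Im\mathcal{S}_A|\lesssim\|x\vec{v}\|\,\|\mathcal{J}\vec{v}\|\lesssim\|f\|\,\|\mathcal{J}\nabla U^{\nu}_{\delta}\|$, or equivalently $\lesssim\|x\vec{v}\|\,\|\vec{v}\|\lesssim\nu^{-1/2}\|f\|^2$. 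Either way, Young's inequality against $\nu\|\mathcal{J}\nabla U^{\nu}_{\delta}\|^2$ leaves a residue $\nu^{-1}\|f\|^2$ (resp.\ $\nu^{-1/2}\|f\|^2$), not $\|f\|^2$. Your proposed rescue via the interpolation $\|\mathcal{J}\nabla U^{\nu}_{\delta}\|^2\le\|\nabla U^{\nu}_{\delta}\|\,\|\mathcal{J}^2\nabla U^{\nu}_{\delta}\|$ together with Proposition~\ref{proposition:bounds_nu} gives only $\nu\|\mathcal{J}\nabla U^{\nu}_{\delta}\|^2\lesssim\nu\cdot\nu^{-1/2}\|f\|\cdot\nu^{-1}\|f\|=\nu^{-1/2}\|f\|^2$, so it does not help either.

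The missing idea is structural: the obstruction comes entirely from the $(1,1)$-entry of $[\mathcal{J}^2,\mathbb{A}]$, which pairs $v_1=\partial_x U^{\nu}_{\delta}$ with itself. The paper removes this entry by first dividing the equation by $\mathbb{A}_{11}$, i.e.\ rewriting \eqref{eq:Unudelta} as $\operatorname{div}\bigl((x\widetilde{\mathbb{A}}+i\nu\widetilde{\mathbb{T}})\nabla U^{\nu}_{\delta}\bigr)=\widetilde{F}^{\nu}_{\delta}$ with $\widetilde{\mathbb{A}}=\mathbb{A}_{11}^{-1}\mathbb{A}$ (cut off in $y$), so that $\widetilde{\mathbb{A}}_{11}\equiv 1$ and $[\mathcal{J}^2,\widetilde{\mathbb{A}}_{11}]=0$. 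After this normalization the quadratic form $\bigl(x[\mathcal{J}^2,\widetilde{\mathbb{A}}]\vec{v},\vec{v}\bigr)$ splits entrywise into a $(2,2)$-diagonal piece bounded by $\|xv_2\|\,\|v_2\|$ and off-diagonal pieces bounded by $\|xv_1\|\,\|v_2\|$; every term carries a factor $v_2=\partial_y U^{\nu}_{\delta}$, yielding $|\Im\mathcal{S}_A|\lesssim\|f\|\bigl(\|u^{\nu}\|+\|\partial_y u^{\nu}\|\bigr)$ with no $\mathcal{J}\nabla U^{\nu}_{\delta}$ on the right. This is precisely what produces the target bound after Young.
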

\begin{proof}
To prove \eqref{eq:lemboundnu}, we rewrite the extended problem \eqref{eq:Unudelta} in a more convenient form. In particular, we decompose the matrix $\mathbb{A}=\mathbb{D}+\mathbb{H}$, with $\mathbb{D}=\operatorname{diag}\mathbb{A}=\operatorname{diag}(\mathbb{A}_{11}, \mathbb{A}_{22})$. Next, we define the modified matrices:
\begin{align}
	\label{eq:ptilde}
	\begin{split}
		\widetilde{\mathbb{A}}&:=\mathbb{A}_{11}^{-1}\mathbb{A}\chi_{\ell,2\delta}+\mathbb{I}(1-\chi_{\ell,2\delta})=\mathbb{I}+\mathbb{E}_{\mathbb{A}}, \quad \mathbb{E}_{\mathbb{A}}=(\mathbb{A}_{11}^{-1}\mathbb{A}-\mathbb{I})\chi_{\ell,2\delta},\\
		\widetilde{\mathbb{T}}&:=\mathbb{A}_{11}^{-1}\mathbb{T}\chi_{\ell,2\delta}+\mathbb{I}(1-\chi_{\ell,2\delta})=\mathbb{I}+\mathbb{E}_{\mathbb{T}}, \quad \mathbb{E}_{\mathbb{T}}=(\mathbb{A}_{11}^{-1}\mathbb{A}-\mathbb{I})\chi_{\ell,2\delta},\\
		\widetilde{\mathbb{D}}&:=\mathbb{A}_{11}^{-1}\mathbb{D}\chi_{\ell,2\delta}+\mathbb{I}(1-\chi_{\ell,2\delta})=\mathbb{I}+\mathbb{E}_{\mathbb{D}}, \quad \mathbb{E}_{\mathbb{D}}=(\mathbb{A}_{11}^{-1}\mathbb{D}-\mathbb{I})\chi_{\ell,2\delta}, \\
		\widetilde{\mathbb{H}}&:=\mathbb{A}_{11}^{-1}\mathbb{H}\chi_{\ell,2\delta}+\mathbb{I}(1-\chi_{\ell,2\delta})=\mathbb{I}+\mathbb{E}_{\mathbb{H}}, \quad \mathbb{E}_{\mathbb{H}}=(\mathbb{A}_{11}^{-1}\mathbb{H}-\mathbb{I})\chi_{\ell,2\delta}
	\end{split}
\end{align}  
In the above, $\chi_{\ell,2\delta}$ is the same truncation function as in \eqref{eq:childelta}. The above defined matrix-valued functions are constant and equal to $\mathbb{I}$ for $|y|\geq \ell+2\delta$. The matrices $\widetilde{\mathbb{A}}, \widetilde{\mathbb{T}}, \widetilde{\mathbb{D}}, \widetilde{\mathbb{H}}$ are Hermitian and positive definite, and are $C^{1,1}([-a,a]\times \mathbb{R}; \mathbb{C}^{2\times 2})$. 
Moreover, $\widetilde{\mathbb{D}}_{11}(\vec{x})=1$ for all $\vec{x}\in \mathbb{R}^2_a$. As we will see further, this will allow us to avoid appearance of  $\|F^{\nu}_{\delta}\|_{\mathbb{R}^2_a}\|\partial_x \mathcal{J}U^{\nu}_{\delta}\|_{\mathbb{R}^2_a}$ in the right-hand side, which would have prevented us from obtaining the sharp bound \eqref{eq:lemboundnu}. 
%

Let us now rewrite the problem satisfied by $U^{\nu}_{\delta}$ \eqref{eq:Unudelta}. We start by remarking that, as  $\operatorname{supp}U^{\nu}_{\delta}, \operatorname{supp}F^{\nu}_{\delta}\subseteq\operatorname{supp}\chi_{\ell,\delta}$, and  $\chi_{\ell,2\delta}=1$ on $\operatorname{supp}\chi_{\ell,\delta}$, the matrices $\mathbb{A}$ and $\mathbb{T}$ in \eqref{eq:Unudelta} can be replaced by $\mathbb{A}_{11}\widetilde{\mathbb{A}}, \, \mathbb{A}_{11}\widetilde{\mathbb{T}}$:
\begin{align*}
	\operatorname{div}\left(\left(x\mathbb{A}+i\nu\mathbb{T}\right)\nabla U^{\nu}_{\delta}\right)&=	\operatorname{div}\left(\mathbb{A}_{11}\left(x\widetilde{\mathbb{A}}+i\nu\widetilde{\mathbb{T}}\right)\nabla U^{\nu}_{\delta}\right)=\nabla \mathbb{A}_{11}\cdot \left(x\widetilde{\mathbb{A}}+i\nu\widetilde{\mathbb{T}}\right)\nabla U^{\nu}_{\delta}+	\mathbb{A}_{11}\operatorname{div}\left(\left(x\widetilde{\mathbb{A}}+i\nu\widetilde{\mathbb{T}}\right)\nabla U^{\nu}_{\delta}\right).
\end{align*}
The above can be rewritten as 
\begin{align}
	\label{eq:neweq}
	\operatorname{div}\left(\left(x\widetilde{\mathbb{A}}+i\nu\widetilde{\mathbb{T}}\right)\nabla U^{\nu}_{\delta}\right)=\widetilde{F}^{\nu}_{\delta},\quad \text{ in }\mathbb{R}^2_a,
\end{align}
where the right-hand side $\widetilde{F}^{\nu}_{\delta}$ satisfies the following bound, with some $C_j>0$, $j=1,2$, independent of $\nu>0$:
\begin{align}
	\label{eq:fnudelta_bound}
	\|\widetilde{F}^{\nu}_{\delta}\|_{L^2(\mathbb{R}^2_a)}\leq C_1(\|x\nabla U^{\nu}_{\delta}\|_{L^2(\mathbb{R}^2_a)}+\nu\|\nabla U^{\nu}_{\delta}\|_{L^2(\mathbb{R}^2_a)})+\|F^{\nu}_{\delta}\|_{L^2(\mathbb{R}^2_a)}\leq C_2\|f\|_{L^2(\Omega)}, 
\end{align}
with the latter bounds following from \eqref{eq:stabU} and \eqref{eq:stabFdelta}. Moreover, $\operatorname{supp}\widetilde{F}^{\nu}_{\delta}\subseteq {\operatorname{supp}}\chi_{\ell,2\delta}.$

We test \eqref{eq:neweq} with $\mathcal{J}^2U^{\nu}_{\delta}$, which belongs to $H^1(\mathbb{R}^2_a)$, because of the identity \eqref{eq:Ju}  and the elliptic regularity for $U^{\nu}_{\delta}$ as stated in Lemma \ref{lem:ellreg}; moreover, $\mathcal{J}^2U^{\nu}_{\delta}\in H^1_0(\mathbb{R}^2)$ according to \eqref{eq:zero_trace}. Integrating by parts, we get
\begin{align*}
	\int_{\mathbb{R}^2_a}(x\widetilde{\mathbb{A}}&+i\nu\widetilde{\mathbb{T}})\nabla U^{\nu}_{\delta}\cdot \overline{\mathcal{J}^2\nabla U^{\nu}_{\delta}}=-\int_{\mathbb{R}^2_a}\widetilde{F}^{\nu}_{\delta}\overline{\mathcal{J}^2 U^{\nu}_{\delta}}.
\end{align*}
With Lemma \ref{lem:plancherel} on the symmetry of $\mathcal{J}$, we obtain the new identity:
\begin{align}
	\label{eq:ident}
	\begin{split}
		\int_{\mathbb{R}^2_a}(x\widetilde{\mathbb{A}}&+i\nu\widetilde{\mathbb{T}}) \mathcal{J}\nabla U^{\nu}_{\delta}\cdot \overline{\mathcal{J}\nabla U^{\nu}_{\delta}}+
		\overbrace{\int_{\mathbb{R}^2_a}[\mathcal{J}\mathbb{I},(x\widetilde{\mathbb{A}}+i\nu\widetilde{\mathbb{T}})]\nabla U^{\nu}_{\delta}\cdot \overline{\nabla \mathcal{J}U^{\nu}_{\delta}}}^{\mathcal{S}}=-\int_{\mathbb{R}^2_a}\widetilde{F}^{\nu}_{\delta}\overline{\mathcal{J}^2 U^{\nu}_{\delta}}.	
	\end{split}	
\end{align}
Taking the imaginary part of \eqref{eq:ident}, and using the fact that $\widetilde{\mathbb{A}}$ is a Hermitian matrix, and $\widetilde{\mathbb{T}}$ is Hermitian positive definite, yields the following inequality, with some $C>0$,
\begin{align}
	\label{eq:fbound}
	\nu\|\mathcal{J}\nabla U^{\nu}_{\delta}\|_{L^2(\mathbb{R}^2_a)}^2\leq C\left( |\Im \mathcal{S}|_{L^2(\mathbb{R}^2_a)}+\|\widetilde{F}^{\nu}_{\delta}\|_{L^2(\mathbb{R}^2_a)}\|\mathcal{J}^2 U^{\nu}_{\delta}\|_{L^2(\mathbb{R}^2_a)}\right).
\end{align}
The first term $|\Im\mathcal{S}|$ in the right-hand side of the above needs to be treated with care, since a na\"ive bound using Lemma \ref{lem:commutator} and \eqref{eq:stabU} would allow to replace  in \eqref{eq:fbound} $|\Im S|$ by  $\|F^{\nu}_{\delta}\|_{L^2(\mathbb{R}^2_a)}\|\nabla\mathcal{J}U^{\nu}_{\delta}\|_{L^2(\mathbb{R}^2_a)}$, which would not yield \eqref{eq:lemboundnu} because of the loss of a half a power of $\nu$.  

\textbf{Bounding $\Im\mathcal{S}$. }		
We rewrite $
	\mathcal{S}=\sum_{j}\mathcal{S}_j, $
with $\mathcal{S}_1, \mathcal{S}_2, \mathcal{S}_3$ defined as follows. With $\vec{v}=\nabla U^{\nu}_{\delta}$, and recalling that  $\widetilde{\mathbb{A}}=\widetilde{\mathbb{D}}+\widetilde{\mathbb{H}}$, we have that  
\begin{align}
	\label{eq:s123}
	\mathcal{S}_1=\int_{\mathbb{R}^2_a}x[\mathcal{J}\mathbb{I}, \widetilde{\mathbb{D}}]\vec{v}\cdot \overline{\mathcal{J}\vec{v}},\qquad 
	\mathcal{S}_2=\int_{\mathbb{R}^2_a}x[\mathcal{J}\mathbb{I}, \widetilde{\mathbb{H}}]\vec{v}\cdot \overline{\mathcal{J}\vec{v}},\qquad 			\mathcal{S}_3=i\nu \int_{\mathbb{R}^2_a}x[\mathcal{J}\mathbb{I}, \widetilde{\mathbb{T}}]\vec{v}\cdot \overline{\mathcal{J}\vec{v}}.
\end{align} 
\textbf{A bound on $\Im \mathcal{S}_1$. }We have, since $\widetilde{\mathbb{D}}_{11}=1$, see  \eqref{eq:ptilde}, and $\widetilde{\mathbb{D}}$ is a diagonal matrix:
\begin{align}
	\label{eq:I1}
	\Im\mathcal{S}_1=\Im \int_{\mathbb{R}^2_a}x[\mathcal{J}\mathbb{I}, \widetilde{\mathbb{D}}]\vec{v}\cdot \overline{\mathcal{J}\vec{v}}&=	\Im \int_{\mathbb{R}^2_a}x[\mathcal{J},\widetilde{\mathbb{D}}_{22}]v_2\cdot \overline{\mathcal{J}v_2}=\Im \int_{\mathbb{R}^2_a}x[\mathcal{J},\mathbb{E}_{\mathbb{D},22}]v_2\cdot \overline{\mathcal{J}v_2}.
\end{align}
Next, we make use of Lemma \ref{lem:self_adjoint_commutator}, with $\mathcal{A}\vec{v}=x\mathbb{E}_{\mathbb{D},22}v_2$ (we recall that $\mathbb{A}$ is Hermitian, thus $\mathbb{D}$, $\widetilde{\mathbb{D}}$, ${\mathbb{E}}_{\mathbb{D},22}$ are real-valued). This yields
\begin{align*}
	2i\Im \mathcal{S}_1&=\int_{\mathbb{R}^2_a}x[\mathcal{J}^2, \mathbb{E}_{\mathbb{D},22}]v_2\overline{v_2}.
\end{align*}
Next, we employ Lemma \ref{lem:commutator} with compactly supported  $\beta_0=\mathbb{E}_{\mathbb{D},22}=(\mathbb{A}_{22}/\mathbb{A}_{11}-1)\chi_{\ell,2\delta}$, cf. Remark \ref{rem:regularity}, and use the regularity Assumption \ref{assump:matrices}, which results in 
\begin{align*}
	|\Im \mathcal{S}_1|\leq C \|xv_2\|_{L^2(\mathbb{R}^2_a)}\|v_2\|_{L^2(\mathbb{R}^2_a)}.
\end{align*}
Recalling that $v_2=\partial_y U^{\nu}_{\delta}$, the bound \eqref{eq:stabU}, namely $\|x\nabla U^{\nu}_{\delta}\|_{L^2(\mathbb{R}^2_a)}\lesssim \|f\|_{L^2(\Omega)}$, and the bound \eqref{eq:important_mappings} on $\|\partial_y U^{\nu}_{\delta}\|_{L^2(\mathbb{R}^2_a)}$, we conclude that 
\begin{align}
	\label{eq:boundI1}
	\left|\Im \mathcal{S}_1\right|\leq C (\|u^{\nu}\|_{L^2(\Omega)}+\|\partial_y u^{\nu}\|_{L^2(\Omega)})\|f\|_{L^2(\Omega)}.
\end{align}
Remark that it is due to our rewriting \eqref{eq:neweq} that $\widetilde{\mathbb{D}}=1$, and we do not have terms of the type $\|x\partial_x U^{\nu}_{\delta}\|\|\partial_x U^{\nu}_{\delta}\|$ occurring in the bound on $\Im \mathcal{S}_1$.\\
\textbf{A bound on $\Im \mathcal{S}_2$. }
Let us consider
\begin{align}
	\label{eq:I2}
	\Im  \mathcal{S}_2=\operatorname{Im}\int_{\mathbb{R}^2_a}x[\mathcal{J}\mathbb{I}, \widetilde{\mathbb{H}}]\vec{v}\cdot\overline{\mathcal{J}\vec{v}}=\frac{1}{2i}
	\left(x[\mathcal{J}^2\mathbb{I}, \widetilde{\mathbb{H}}]\vec{v}, \vec{v}\right),
\end{align}
where the last identity follows again by Lemma \ref{lem:self_adjoint_commutator} with $\mathcal{A}\vec{v}=\widetilde{\mathbb{H}}\vec{v}$,  and using the fact that $\widetilde{\mathbb{H}}$ is Hermitian. 
Since $\operatorname{diag}\widetilde{\mathbb{H}}=0$, and using the decomposition \eqref{eq:ptilde}, we rewrite the above as follows:
\begin{align*}
	|\Im \mathcal{S}_2|&\lesssim \|x[\mathcal{J}^2, \widetilde{\mathbb{H}}_{21}]v_1\|_{L^2(\mathbb{R}^2_a)}\|v_2\|_{L^2(\mathbb{R}^2_a)}+\|[\mathcal{J}^2, \widetilde{\mathbb{H}}_{12}]v_2\|_{L^2(\mathbb{R}^2_a)}\|xv_1\|_{L^2(\mathbb{R}^2_a)}\\
	&=\|x[\mathcal{J}^2, \mathbb{E}_{\mathbb{H},21}]v_1\|_{L^2(\mathbb{R}^2_a)}\|v_2\|_{L^2(\mathbb{R}^2_a)}+\|[\mathcal{J}^2, \mathbb{E}_{\mathbb{H},12}]v_2\|_{L^2(\mathbb{R}^2_a)}\|xv_1\|_{L^2(\mathbb{R}^2_a)}.
\end{align*}
By the repeated application of Lemma \ref{lem:commutator}, first with $\beta_0=\mathbb{E}_{\mathbb{H},21}$, next with $\beta_0=\mathbb{E}_{\mathbb{H},12}$, and using $\vec{v}=\nabla U^{\nu}_{\delta}$,  
\begin{align}
	\label{eq:boundI2}
	|\Im \mathcal{S}_2|\lesssim \|x\partial_x U^{\nu}_{\delta}\|_{L^2(\mathbb{R}^2_a)}\|\partial_y U^{\nu}_{\delta}\|_{L^2(\mathbb{R}^2_a)}\lesssim \|f\|_{L^2(\Omega)}(\|u^{\nu}\|_{L^2(\Omega)}+\|\partial_y u^{\nu}\|_{L^2(\Omega)}),
\end{align}	
with the last bound obtained like in \eqref{eq:boundI1}.\\
\textbf{A bound on $\Im \mathcal{S}_3$. }It remains to consider the remaining term, namely,  
\begin{align}
	\label{eq:I3}
	\Im\mathcal{S}_3=\nu\Re \int_{\mathbb{R}_a^2}[\mathcal{J}\mathbb{I}, \mathbb{\widetilde{T}}]\vec{v}\cdot\overline{\mathcal{J}\vec{v}}=\nu\Re \int_{\mathbb{R}_a^2}[\mathcal{J}\mathbb{I}, \mathbb{E}_{\mathbb{T}}]\vec{v}\cdot\overline{\mathcal{J}\vec{v}}.
\end{align}
With the Cauchy-Schwarz inequality, we obtain that 
\begin{align*}
	|\Im \mathcal{S}_3|\leq \nu \|[\mathcal{J}\mathbb{I}, \mathbb{E}_{\mathbb{T}}]\vec{v}\|_{L^2(\mathbb{R}^2_a)}\|{\mathcal{J}\vec{v}}\|_{L^2(\mathbb{R}^2_a)}.
\end{align*}
Next we use Lemma \ref{lem:commutator} (where $\beta_0=\mathbb{E}_{\mathbb{T},ij},\; i,j\in\{1,2\}$), which gives, together with $\vec{v}=\nabla U^{\nu}_{\delta}$,  
\begin{align}
	\label{eq:boundI3}
	|\Im \mathcal{S}_3|\lesssim \nu \|\nabla U^{\nu}_{\delta}\|_{L^2(\mathbb{R}^2_a)}\|\mathcal{J}\nabla U^{\nu}_{\delta}\|_{L^2(\mathbb{R}^2_a)}\overset{\eqref{eq:stabU}}{\lesssim} \nu^{1/2}\|f\|_{L^2(\Omega)}\|\mathcal{J}\nabla U^{\nu}_{\delta}\|_{L^2(\mathbb{R}^2_a)}.
\end{align}	
\textbf{The final bound on $\Im \mathcal{S}$. }	Gathering \eqref{eq:boundI1}, \eqref{eq:boundI2}, \eqref{eq:boundI3} into $\mathcal{S}=\sum\limits_{j=1}^3\mathcal{S}_j$, we conclude that 
\begin{align}
	\label{eq:boundImI}
	|\Im \mathcal{S}|\leq \nu^{1/2}\|f\|_{L^2(\Omega)}\|\mathcal{J}\nabla U^{\nu}_{\delta}\|_{L^2(\mathbb{R}^2_a)}+\|f\|_{L^2(\Omega)}(\|u^{\nu}\|_{L^2(\Omega)}+\|\partial_y u^{\nu}\|_{L^2(\Omega)}).
\end{align}
\textbf{Bounding $\mathcal{J}\nabla U^{\nu}_{\delta}$. }We get back to \eqref{eq:fbound}. We employ  \eqref{eq:boundImI}, the bound for $\widetilde{F}^{\nu}_{\delta}$ in \eqref{eq:fnudelta_bound}, as well as \eqref{eq:Ju}, namely, $\|\mathcal{J}^2U^{\nu}_{\delta}\|_{L^2(\mathbb{R}^2_a)}\leq \left(\|U^{\nu}_{\delta}\|_{L^2(\mathbb{R}^2_a)}+\|\partial_y U^{\nu}_{\delta}\|_{L^2(\mathbb{R}^2_a)}\right)$. This results in the following inequality:
\begin{align*}
	\nu\|\mathcal{J}\nabla U^{\nu}_{\delta}\|^2_{L^2(\mathbb{R}^2_a)}&\lesssim \nu^{1/2}\|f\|_{L^2(\Omega)}\|\mathcal{J}\nabla U^{\nu}_{\delta}\|_{L^2(\mathbb{R}^2_a)}+\|f\|_{L^2(\Omega)}\left(\|\partial_y u^{\nu}\|_{L^2(\Omega)}+\|u^{\nu}\|_{L^2(\Omega)}\right)\\
	&+\|f\|_{L^2(\Omega)}(\|U^{\nu}_{\delta}\|_{L^2(\mathbb{R}^2_a)}+\|\partial_y U^{\nu}_{\delta}\|_{L^2(\mathbb{R}^2_a)}).
\end{align*}
Next, we use \eqref{eq:important_mappings}, which allows to replace $U^{\nu}_{\delta}$ by $u^{\nu}$ in the last term:
\begin{align*}
	\nu\|\mathcal{J}\nabla U^{\nu}_{\delta}\|^2_{L^2(\mathbb{R}^2_a)}&\lesssim \nu^{1/2}\|f\|_{L^2(\Omega)}\|\mathcal{J}\nabla U^{\nu}_{\delta}\|_{L^2(\mathbb{R}^2_a)}+\|f\|_{L^2(\Omega)}\left(\|\partial_y u^{\nu}\|_{L^2(\Omega)}+\|u^{\nu}\|_{L^2(\Omega)}\right).
\end{align*}
Finally, with \eqref{eq:est_main_v2}, we deduce that the following inequality holds true with some $C>0$ independent of $\nu$, uniformly in $0<\nu<1$:
\begin{align*}
	\nu\|\mathcal{J}\nabla U^{\nu}_{\delta}\|^2_{L^2(\mathbb{R}^2_a)}\leq C( \|f\|_{L^2(\Omega)}\times \nu^{1/2}\|\mathcal{J}\nabla U^{\nu}_{\delta}\|_{L^2(\mathbb{R}^2_a)}+\|f\|_{L^2(\Omega)}\|\partial_y u^{\nu}\|_{L^2(\Omega)}+\|f\|^2_{L^2(\Omega)})
\end{align*}
and the desired bound follows by applying the Young's inequality to the above. 
\end{proof}
Now we are fully equipped to prove Proposition \ref{proposition:jbound}.
\paragraph{Proof of Proposition \ref{proposition:jbound}}
\label{proof_prop_jbound}
Since $U^{\nu}_{\delta}$ vanishes outside of $\operatorname{supp}\chi_{\ell,2\delta}$, we can define, like in the proof of Proposition \ref{proposition:jbound_basic}, cf. \eqref{eq:ptilde}, 
$
	\widetilde{\mathbb{B}}=\mathbb{B}\chi_{\ell,2\delta}+\mathbb{I}(1-\chi_{\ell,2\delta})=(\mathbb{B}-\mathbb{I})\chi_{\ell,2\delta}+\mathbb{I}, $
so that $\mathbb{B}\nabla U^{\nu}_{\delta}=\widetilde{\mathbb{B}}\nabla U^{\nu}_{\delta}$. Then 
\begin{align*}
	\nu^{1/2}\mathcal{J}(\mathbb{B}\nabla U^{\nu}_{\delta})&=	\nu^{1/2}\mathcal{J}(\widetilde{\mathbb{B}}\nabla U^{\nu}_{\delta})=\nu^{1/2}\widetilde{\mathbb{B}}\mathcal{J}\nabla U^{\nu}_{\delta}+
	[\mathcal{J}\mathbb{I}, \widetilde{\mathbb{B}}]\nabla U^{\nu}_{\delta}=\nu^{1/2}\widetilde{\mathbb{B}}\mathcal{J}\nabla U^{\nu}_{\delta}+\nu^{1/2}[\mathcal{J}\mathbb{I}, \left(\mathbb{B}-\mathbb{I}\right)\chi_{\ell,2\delta}]\nabla U^{\nu}_{\delta}.
\end{align*}
The first term is bounded with Proposition \ref{proposition:jbound_basic}. For the second term we use Lemma \ref{lem:commutator}, cf.  Remark \ref{rem:regularity}: $$\nu^{1/2}\|[\mathcal{J}\mathbb{I}, \left(\mathbb{B}-\mathbb{I}\right)\chi_{\ell,2\delta}]\nabla U^{\nu}_{\delta}\|_{L^2(\mathbb{R}^2_a)}\lesssim \nu^{1/2}\|\nabla U^{\nu}_{\delta}\|_{L^2(\mathbb{R}^2_a)}\overset{\eqref{eq:stabU}}{\lesssim}\|f\|.$$
\subsubsection{Proof of Proposition \ref{proposition:bounds_nu}}
\label{sec:proof_bounds_nu}
Proposition \ref{proposition:bounds_nu} is again a corollary of its counterpart for $u^{\nu}$, with $\mathbb{B}=\mathbb{I}$, namely
\begin{proposition}
	\label{proposition:bounds_nu_appendix}
	There exists $C>0$, s.t. for all $\nu>0$, the following holds true: 
	\begin{align*}	
		\nu\|\nabla \partial_y u^{\nu}\|+\nu^{3/2} \|\nabla \partial_x u^{\nu}\|\leq C\|f\|.
	\end{align*}
\end{proposition}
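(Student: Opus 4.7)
The plan is to reduce the estimate to two separate bounds: (i) $\nu\|\partial_y\nabla u^\nu\|\lesssim\|f\|$ and (ii) $\nu^{3/2}\|\partial_x^2 u^\nu\|\lesssim\|f\|$. The cross-derivative contribution to $\nabla\partial_x u^\nu$ is then immediate, since $\nu^{3/2}\|\partial_{xy}u^\nu\|\le\nu^{1/2}\cdot\nu\|\partial_y\nabla u^\nu\|\lesssim\nu^{1/2}\|f\|$ for $\nu$ bounded. Bound (i) will be established through Nirenberg's difference-quotient technique in the $y$-direction, where $y$-periodicity permits unrestricted translations; bound (ii) will follow by isolating $\partial_x^2 u^\nu$ in the equation and exploiting the pointwise ellipticity estimate $|x\mathbb{A}_{11}+i\nu\mathbb{T}_{11}|\gtrsim\sqrt{x^2+\nu^2}$ together with step (i).

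For (i), I would test the variational formulation $a_\nu(u^\nu,v)=-(f,v)$ with $v=\delta^{-h}_y\delta^h_y u^\nu$, where $\delta^h_y w(\cdot,y):=(w(\cdot,y+h)-w(\cdot,y))/h$; the test function lies in $\mathcal{H}^1(\Omega)$ because of $y$-periodicity and $u^\nu\in\mathcal{H}^2(\Omega)$ (Lemma \ref{lem:pb_abs_wp}). Applying the discrete integration-by-parts identity $\int g\,\overline{\delta^{-h}_y w}=-\int\delta^h_y g\,\bar w$ and the Leibniz rule $\delta^h_y(AB)=A(\cdot,y+h)\delta^h_y B+(\delta^h_y A)B$ to $(x\mathbb{A}+i\nu\mathbb{T})\nabla u^\nu$ rearranges the identity into
\begin{align*}
\int(x\mathbb{A}+i\nu\mathbb{T})(\cdot,y+h)\delta^h_y\nabla u^\nu\cdot\overline{\delta^h_y\nabla u^\nu} = -\int\delta^h_y(x\mathbb{A}+i\nu\mathbb{T})\nabla u^\nu\cdot\overline{\delta^h_y\nabla u^\nu}+\int f\,\overline{\delta^{-h}_y\delta^h_y u^\nu}.
\end{align*}
Taking the imaginary part uses $\mathbb{T}\ge c_\mathbb{T}\mathbb{I}$ to produce the lower bound $\nu c_\mathbb{T}\|\delta^h_y\nabla u^\nu\|^2$. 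The commutator term is bounded by $\|\partial_y\mathbb{A}\|_\infty\|x\nabla u^\nu\|\|\delta^h_y\nabla u^\nu\|+\nu\|\partial_y\mathbb{T}\|_\infty\|\nabla u^\nu\|\|\delta^h_y\nabla u^\nu\|\lesssim\|f\|\|\delta^h_y\nabla u^\nu\|$, thanks to the sharp a priori estimates $\|x\nabla u^\nu\|+\nu^{1/2}\|\nabla u^\nu\|\lesssim\|f\|$ of \eqref{eq:est_main_v2}. The source integral is controlled via the elementary inequality $\|\delta^{-h}_y w\|\le\|\partial_y w\|$, giving $\|f\|\|\delta^h_y\nabla u^\nu\|$. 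Dividing and sending $h\to 0$ (valid because $u^\nu\in\mathcal{H}^2(\Omega)$) yields (i).

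For (ii), I would expand $\operatorname{div}((x\mathbb{A}+i\nu\mathbb{T})\nabla u^\nu)=f$ and solve for $\partial_x^2 u^\nu$. Using $\mathbb{A}_{21}=\overline{\mathbb{A}_{12}}$ and $\mathbb{T}_{21}=\overline{\mathbb{T}_{12}}$,
\begin{align*}
(x\mathbb{A}_{11}+i\nu\mathbb{T}_{11})\partial_x^2 u^\nu = f-(2x\Re\mathbb{A}_{12}+2i\nu\Re\mathbb{T}_{12})\partial_{xy}u^\nu-(x\mathbb{A}_{22}+i\nu\mathbb{T}_{22})\partial_y^2 u^\nu - R,
\end{align*}
where $R$ collects lower-order terms with $|R|\lesssim(1+|x|+\nu)|\nabla u^\nu|$. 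Since $\mathbb{A}_{11},\mathbb{T}_{11}\ge c>0$ by Assumption \ref{assump:matrices}, $|x\mathbb{A}_{11}+i\nu\mathbb{T}_{11}|\ge c\sqrt{x^2+\nu^2}$; combined with $(|x|+\nu)/\sqrt{x^2+\nu^2}\le\sqrt{2}$, this yields the pointwise bound $|\partial_x^2 u^\nu|\lesssim|f|/\sqrt{x^2+\nu^2}+|\partial_y\nabla u^\nu|+|\nabla u^\nu|/\sqrt{x^2+\nu^2}$. Multiplying by $\nu^{3/2}$ and taking $L^2$-norms, each contribution is bounded by $C\|f\|$: the first via $\nu^{3/2}\|f/\sqrt{x^2+\nu^2}\|\le\nu^{1/2}\|f\|$ (using $x^2+\nu^2\ge\nu^2$); the second via $\nu^{3/2}\|\partial_y\nabla u^\nu\|=\nu^{1/2}(\nu\|\partial_y\nabla u^\nu\|)\lesssim\nu^{1/2}\|f\|$ by (i); the third via $\nu^{3/2}\|\nabla u^\nu/\sqrt{x^2+\nu^2}\|\le\nu^{1/2}\|\nabla u^\nu\|\lesssim\|f\|$ by \eqref{eq:est_main_v2}.

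The main obstacle is step (i): a naive Cauchy-Schwarz on the commutator would force the use of $\|\nabla u^\nu\|\lesssim\nu^{-1/2}\|f\|$ and produce only the weaker bound $\nu^{1/2}\|\partial_y\nabla u^\nu\|\lesssim\|f\|$. The decisive observation is that the $\mathbb{A}$-contribution to $\delta^h_y(x\mathbb{A}+i\nu\mathbb{T})$ is $x\,\delta^h_y\mathbb{A}$, which carries the weight $x$ and pairs with the improved estimate $\|x\nabla u^\nu\|\lesssim\|f\|$ of Theorem \ref{theorem:stability_estimate} rather than with $\|\nabla u^\nu\|$. Step (ii) is then essentially algebraic: the ellipticity weight $\sqrt{x^2+\nu^2}^{-1}$ reinstates both scales $|x|$ and $\nu$ at the $L^2$-level, which exactly matches the $|x|$- and $\nu$-factors carried by every off-principal term.
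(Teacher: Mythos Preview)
Your proof is correct. Step~(i) is essentially identical to the paper's argument: both test the equation against the $y$-direction Nirenberg quotient $\delta_h^y\delta_{-h}^y u^\nu$, take the imaginary part, and exploit that the commutator $\delta_{-h}^y(x\mathbb{A}+i\nu\mathbb{T})$ carries the weights $x$ and $\nu$ exactly matching the sharp estimates $\|x\nabla u^\nu\|+\nu^{1/2}\|\nabla u^\nu\|\lesssim\|f\|$ of~\eqref{eq:est_main_v2}.

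Step~(ii), however, takes a genuinely different route. The paper localizes $u^\nu$ near $\Sigma$ with a cutoff $\varphi_\varepsilon(x)$, applies the Nirenberg quotient in the $x$-direction to the localized problem (the localization is needed so that $\delta_h^x u^\nu_\varepsilon$ stays in $\mathcal{H}^1(\Omega)$), and observes that $\delta_{-h}^x(x\mathbb{A})=\mathbb{A}+x\delta_{-h}^x\mathbb{A}$ now produces an unweighted $\|\nabla u^\nu\|$ term, bounded only by $\nu^{-1/2}\|f\|$---hence the loss to $\nu^{3/2}$. The region away from $\Sigma$ is then handled separately by classical elliptic regularity. Your approach bypasses both the localization and the second Nirenberg argument: you expand the PDE, isolate $(x\mathbb{A}_{11}+i\nu\mathbb{T}_{11})\partial_x^2 u^\nu$, and use the pointwise lower bound $|x\mathbb{A}_{11}+i\nu\mathbb{T}_{11}|\gtrsim\sqrt{x^2+\nu^2}$ (valid since the diagonal entries of Hermitian positive-definite matrices are real and bounded below). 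All off-principal terms then carry either a factor $(|x|+\nu)/\sqrt{x^2+\nu^2}\lesssim 1$ or reduce to $\partial_y\nabla u^\nu$, which step~(i) already controls. This is shorter and avoids the cutoff bookkeeping; the paper's route, on the other hand, is more uniform in spirit (both directions handled by the same variational mechanism) and does not rely on step~(i) having been proven first.
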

\begin{proof}
	\textbf{Proof of the bound $\nu\|\nabla \partial_y u^{\nu}\|\lesssim \|f\|$.} We use the Nirenberg's quotient techniques. 
	For $v$ sufficiently regular, and $h\in \mathbb{R}\setminus\{0\}$ small enough, we define the Nirenberg's quotient which takes into account periodic boundary conditions, in the following manner:
	\begin{align*}
		\delta_h^y v=\frac{\tau_h^y v-v}{h}, \quad \tau_h^y v=\left\{
		\begin{array}{ll}
			v(x,y+h), & y+h \text{ and } y\in [-\ell,\ell],\\
			v(x,y-2\ell+h), & y+h>\ell,\, y\in [-\ell,\ell],\\
			v(x,y+2\ell+h), & y+h<-\ell,\, y\in [-\ell,\ell].
		\end{array}
		\right.
	\end{align*}
	Remark that $\delta_h^y v(x, \ell)=\delta_h^y v(x,-\ell)$ for $v$ s.t. $v(x,\ell)=v(x,-\ell)$. 
	
	Next, we test the problem with absorption \eqref{eq:unu_orig} with $\delta_h^y\delta_{-h}^y u^{\nu}$ and integrate by parts, first at the continuous level, and next at the discrete level; we make use of periodic boundary conditions as well. This allows to obtain the following identity:
	\begin{align*}
		\int_{\Omega}( x \mathbb{A}+i\nu\mathbb{T})\nabla \delta_{-h}^yu^{\nu}\cdot\overline{\nabla \delta_{-h}^yu^{\nu}}+\int_{\Omega}\left(\delta_{-h}^y  \left(x  \mathbb{A}+i\nu \mathbb{T}\right)\right)\nabla u^{\nu}\cdot\overline{\nabla \delta_{-h}^y u^{\nu}}=\int_{\Omega}f\,  \overline{\delta_h^y\delta_{-h}^yu^{\nu}}.
	\end{align*}
	Next, we take the imaginary part of the above, use that $\mathbb{A}=\mathbb{A}^*$ and $\mathbb{T}=\mathbb{T}^*$,  and bound the sign-indefinite terms (namely the second term in the left-hand side) with the Cauchy-Schwarz inequality:
	\begin{align}
		\label{eq:dhu}
		\begin{split}
			\nu\|\nabla \delta_{-h}^yu^{\nu}\|^2&\lesssim \left(\max_{i,j}\|\delta_{-h}^y\mathbb{A}_{ij}\|_{L^{\infty}(\Omega)}\|x \, \nabla u^{\nu}\|+\max_{i,j}\|\delta_{-h}^y\mathbb{T}_{ij}\|_{L^{\infty}(\Omega)}\nu \|\nabla u^{\nu}\|\right)\|\nabla \delta_{-h}^yu^{\nu}\|+\|f\|\|\delta_{h}^y\delta_{-h}^yu^{\nu}\|.
		\end{split} 
	\end{align}
	We conclude, using the stability estimate of Theorem \ref{theorem:stability_estimate} and the estimate \eqref{eq:est_main_v2}:
	\begin{align*}
		\nu\|\nabla \delta_{-h}^yu^{\nu}\|^2\lesssim  \|f\|\|\nabla \delta_{-h}^yu^{\nu}\|+\|f\|\|\partial_y\delta_{-h}^yu^{\nu}\|,
	\end{align*}
	where we employed as well \cite[Lemma 4.13]{mclean} to bound $\|\delta_h^y .\|$ by $\|\partial_y.\|$. The above yields immediately 
$
		\nu\|\nabla \delta_{-h}^y u^{\nu}\|\lesssim \|f\|,$
	and next we use again \cite[Lemma 4.13]{mclean} on the connection of difference quotients and derivatives, we obtain the first part of the desired bound in the statement of the proposition.\\
	\textbf{Proof of the bound $\nu^{3/2} \|\nabla \partial_x u^{\nu}\|\lesssim \|f\|$. } 
	We start by proving the desired bound in $\Omega_{\Sigma}^{\varepsilon}$, $\varepsilon>0$.
	We consider the problem \eqref{eq:unu_orig} written for  $u^{\nu}_{\varepsilon}(x,y):=u^{\nu}(x,y)\varphi_{\varepsilon}(x)$, where $\varphi_{\varepsilon}$ is the cut-off function in $x$-direction defined in \eqref{eq:cutoff_phi} and $0<\re<a/2$ is fixed. This yields the new problem
	\begin{align}
		\nonumber
		\operatorname{div}\left((x\mathbb{A}+i\nu\mathbb{T})\nabla u^{\nu}_{\varepsilon}\right)&=[	\operatorname{div}\left((x\mathbb{A}+i\nu\mathbb{T})\nabla .\right), \varphi_{\varepsilon}]u^{\nu}+\varphi_{\varepsilon}f\\
		\nonumber
		&=(x\mathbb{A}+i\nu \mathbb{T})\nabla \varphi_{\varepsilon}\cdot \nabla u^{\nu}+\nabla \varphi_{\varepsilon}\cdot (x\mathbb{A}+i\nu \mathbb{T})\nabla u^{\nu}+\operatorname{div}((x\mathbb{A}+i\nu\mathbb{T})\nabla \varphi_{\re})u^{\nu}\\
		\label{eq:newpb}
		&=f_{\varepsilon}^{\nu},\quad \text{ with } \|f^{\nu}_{\re}\|\lesssim \|u^{\nu}\|_{\mathcal{V}_{sing}(\Omega)}+\nu\|\nabla u^{\nu}\|+\|u^{\nu}\|\lesssim \|f\|,		
	\end{align}
	where the last bound follows from the Cauchy-Schwarz inequality, Theorem \ref{theorem:stability_estimate} and \eqref{eq:est_main_v2}. Next we test the above with the Nirenberg's quotient $\delta_h^x \delta_{-h}^x u^{\nu}_{\varepsilon}$, defined via
	\begin{align*}
		\delta_h^x v=\left\{
		\begin{array}{ll}
			\frac{v(x+h,y)-v(x,y)}{h},& \quad |x|, |x+h|<a,\\
			0, & \text{ otherwise, }
		\end{array}
		\right.\quad h\in \mathbb{R}\setminus \{0\},
	\end{align*}
	which, due to the localization, belongs to $\mathcal{H}^1(\Omega)$.  Integrating by parts yields
	\begin{align*}
		\int_{\Omega}( x \mathbb{A}+i\nu\mathbb{T})\nabla \delta_{-h}^xu^{\nu}_{\re}\cdot\overline{\nabla \delta_{-h}^xu^{\nu}_{\re}}+\int_{\Omega}\left(\delta_{-h}^x \left(x  \mathbb{A}+i\nu \mathbb{T}\right)\right)\nabla u^{\nu}_{\re}\cdot\overline{\nabla \delta_{-h}^x u^{\nu}_{\re}}=\int_{\Omega}f^{\nu}_{\re}\,  \overline{\delta_h^x\delta_{-h}^xu^{\nu}_{\re}}.
	\end{align*}
	Proceeding like before, and noting that $\delta^{x}_{-h}x\mathbb{A}=\mathbb{A}+x\delta^x_{-h}\mathbb{A}$, we obtain a counterpart of the bound \eqref{eq:dhu}:
	\begin{align*}
		\nu\|\nabla \delta_{-h}^xu^{\nu}_{\varepsilon}\|^2\lesssim \left(\|\nabla u^{\nu}_{\varepsilon}\|+\|x\nabla u^{\nu}_{\varepsilon}\|\right)\|\nabla \delta_{-h}^xu^{\nu}_{\varepsilon}\|+\|f^{\nu}_{\re}\|\|\delta_{h}^x\delta_{-h}^xu^{\nu}_{\varepsilon}\|.
	\end{align*}
	Next, we employ the inequality \eqref{eq:est_main_v2} to bound $\|\nabla u^{\nu}_{\re}\|$, Theorem \ref{theorem:stability_estimate} for $\|x\nabla u^{\nu}_{\varepsilon}\|$ and \eqref{eq:newpb} for $\|f^{\nu}_{\re}\|$:
	\begin{align*}
		\nu\|\nabla \delta_{-h}^xu^{\nu}_{\varepsilon}\|^2\lesssim \nu^{-1/2}\|f\|\|\nabla \delta_{-h}^xu^{\nu}_{\varepsilon}\|+\|f\|\|\delta_{h}^x\delta_{-h}^xu^{\nu}_{\varepsilon}\|.
	\end{align*}
	Proceeding like in before yields the desired estimate for $\nu\|\nabla \partial_x u^{\nu}_{\varepsilon}\|$. As for the $u^{\nu}_{\varepsilon}(x,y)=u^{\nu}(x,y)(1-\varphi_{\varepsilon}(x,y))$, the interested reader can verify that it satisfies the problem analogous to \eqref{eq:newpb}, which can be posed on $\Omega\setminus \overline{\Omega_{\Sigma}^{\varepsilon/2}}$, with $\left.u^{\nu}(x,y)(1-\varphi_{\varepsilon}(x,y))\right|_{x=\varepsilon/2}=0$. The resulting problem decouples into two independent elliptic  problems, and the respective stability estimate is $\nu$-independent, cf. e.g. \cite[pp. 133-141]{mclean}. 
\end{proof}
Now we can prove Proposition \ref{proposition:bounds_nu}. 
\begin{proof}[Proof of Proposition \ref{proposition:bounds_nu}]
	First of all, remark that 
	\begin{align*}
		\nu\| \partial_y(\mathbb{B}\nabla U^{\nu}_{\delta})\|\lesssim \nu\|\nabla U^{\nu}_{\delta}\|+\nu\|\partial_y \nabla U^{\nu}_{\delta}\|\lesssim \nu(\|\partial_y u^{\nu}\|+\|u^{\nu}\|)+\nu\|\partial_y \nabla u^{\nu}\|,
	\end{align*}
	where we used \eqref{eq:important_mappings}. It remains to apply the bound \eqref{eq:est_main_v2} for $\nu\|u^{\nu}\|_{H^1(\Omega)}\lesssim \nu^{1/2}\|f\|$ and the previous Proposition \ref{proposition:bounds_nu} to obtain the desired estimate in the statement of the proposition.  
\end{proof}
We will also use the above bounds written in the following form:
\begin{align}
	\label{eq:unubtmp2}
	\nu\|\partial_y(\mathbb{B}\nabla u^{\nu})\|_{L^2(\Omega)}\leq C\|f\|_{L^2(\Omega)}.
\end{align}

	\section{Lifting lemma with improved regularity estimates}
\label{appendix:lifting_lemmas}
Below we will present the lifting lemma for functions defined on the real line $\mathbb{R}$. 
We will construct the lifting using the same idea as in Lemma \ref{lemma:lifting_lemma}, however, we need somewhat finer estimates which exploit the cases of low- and high-frequencies. We chose to keep all these results apart, despite the fact that we repeat some of the estimates, since such a generalization makes the elementary proof of Proposition \ref{prop:gnubound_proof} unnecessarily complicated.

Assume that $\psi\in H^{1/2}(\Sigma_{\infty})$, and let us define $\Psi$ equal to $0$ for $|x|>\nu$ and as a solution to the Dirichlet boundary-value problem for the Laplace equation on $(0,\nu)\times \mathbb{R}$:
\begin{align*}
	\Delta \Psi=0 \text{ on }(0, \nu), \qquad \Psi(0,y)=\psi(y), \quad \Psi(\nu, y)=0, \, y\in \mathbb{R}. 
\end{align*}
The above problem is well-posed in $H^1(\mathbb{R}^{2,+}_{\nu})$ due to the Lax-Milgram lemma and the validity of the Poincar\'e inequality in infinite strips. More precisely, it can be shown that
\begin{align*}
	\Psi=\mathcal{F}_y^{-1}\hat{\Psi}, \quad \hat{\Psi}(x,\xi)=\lambda_{\nu}(x,\xi)\hat{\psi}(\xi)\mathbb{1}_{0<x<\nu}, \quad \lambda_{\nu}(x,\xi):=\frac{\mathrm{e}^{\xi (x-\nu)}-\mathrm{e}^{-\xi(x-\nu)}}{\mathrm{e}^{-\xi\nu}-\mathrm{e}^{\xi\nu}}.
\end{align*}
For $0<\nu<a$, we denote by $L^{\nu}$ the operator mapping $H^{1/2}(\Sigma_{\infty})\ni\psi$ into $\Psi\in \{u\in H^1(\mathbb{R}^{2,+}_{a}): \, u(a)=0\}$; we will use the same notation for the operator mapping from $\Psi$ into the restriction of $\Psi$ to $\mathbb{R}^{2,+}_{\nu}$. Some of the properties of this operator are summarized below. 
\begin{lemma}
	Let $0<\nu<a$. The operator $L^{\nu}$ satisfies: 
	\begin{enumerate}
		\item  $L^{\nu}\in \mathcal{L}({H}^{s}(\Sigma_{\infty}), H^{s+1/2}(\mathbb{R}^{2,+}_{\nu}))$, for all $s\geq 1/2$. 
		\item $L^{\nu}\in \mathcal{L}(H^{1/2}(\Sigma_{\infty}), H^1(\mathbb{R}^{2,+}_a))$.
		\item For all $w>0$, $L^{\nu}\mathcal{L}_{w}= \boldsymbol{\mathcal{L}}_{w}L^{\nu}$, $L^{\nu}\mathcal{H}_{w}= \boldsymbol{\mathcal{H}}_{w}L^{\nu}$.
		\item For all $\psi\in H^{3/2}(\Sigma_{\infty})$, $\partial_y L^{\nu}\psi=L^{\nu}\partial_y \psi$. 
	\end{enumerate}  
	Let $0<\re\leq 1/2$ be fixed. Then,  for all $\psi\in H^{1/2}(\Sigma_{\infty})$, with $C_{\re}>0$ independent of $\nu$, but depending on $\re>0$, it holds that
	\begin{align}
		\label{eq:psibound_start0}
		\|\partial_yL^{\nu}\mathcal{L}_{\re\nu^{-1}}\psi\|_{L^2(\mathbb{R}^{2,+}_{\nu})}\leq C_{\re}\|\psi\|_{H^{1/2}(\Sigma_{\infty})},\\
		\label{eq:psibound_start_dx}
		\nu^{1/2}\|\mathcal{J}\partial_x L^{\nu}\mathcal{L}_{\re\nu^{-1}}\psi\|_{L^2(\mathbb{R}^{2,+}_{\nu})}\leq C_{\re}\|\psi\|_{H^{1/2}(\Sigma_{\infty})},\\
		\label{eq:partialy_psibound}
		\|\nabla L^{\nu}\mathcal{H}_{\re\nu^{-1}}\psi\|_{L^2(\mathbb{R}^{2,+}_{\nu})}\leq C_{\re}\|\psi\|_{H^{1/2}(\Sigma_{\infty})}.
	\end{align}
As a corollary $\|\partial_y L^{\nu}\|_{\mathcal{L}(H^{1/2}(\Sigma_{\infty}), L^2(\mathbb{R}^{2,+}_{\nu}))}\leq 2C_{\re}$. 
\end{lemma}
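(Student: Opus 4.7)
The plan is to exploit the fully explicit Fourier representation
\[
\widehat{L^\nu\psi}(x,\xi)=\lambda_\nu(x,\xi)\widehat\psi(\xi)\,\mathbb 1_{0<x<\nu},\qquad \lambda_\nu(x,\xi)=\frac{\sinh(\xi(\nu-x))}{\sinh(\xi\nu)},
\]
and reduce all the assertions to one-dimensional calculus in the transverse variable $x$. Properties (1)--(4) are immediate: (1) is standard elliptic regularity for the Dirichlet problem in the strip $(0,\nu)\times\mathbb R$; (2) follows because $\lambda_\nu(\nu,\xi)=0$ so the zero extension past $x=\nu$ preserves $H^1$ regularity, and Plancherel gives the $H^1$ bound; (3) and (4) are trivial since $L^\nu$ is a Fourier multiplier in the $y$-variable, hence commutes with $\mathcal L_w$, $\mathcal H_w$ and $\partial_y$.

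The three quantitative bounds are obtained from the single explicit identity
\[
\int_0^\nu\!\bigl(|\partial_x\lambda_\nu|^2+\xi^2|\lambda_\nu|^2\bigr)\,dx=|\xi|\coth(|\xi|\nu),
\]
together with companion formulas for $\int_0^\nu|\lambda_\nu|^2\,dx$ and $\int_0^\nu|\partial_x\lambda_\nu|^2\,dx$ that follow from the antiderivatives of $\sinh^2$ and $\cosh^2$. For the low-frequency bound \eqref{eq:psibound_start0}, for $|\xi\nu|<\varepsilon\le 1/2$ one has $|\lambda_\nu(x,\xi)|\lesssim (\nu-x)/\nu\le 1$, hence $\int_0^\nu|\lambda_\nu|^2 dx\lesssim\nu$. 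Plugging this into
\[
\|\partial_y L^\nu\mathcal L_{\varepsilon/\nu}\psi\|_{L^2(\mathbb R^{2,+}_\nu)}^2=\int_{|\xi|<\varepsilon/\nu}\xi^2|\widehat\psi(\xi)|^2\!\int_0^\nu|\lambda_\nu|^2\,dx\,d\xi
\]
and using $\xi^2\le(\varepsilon/\nu)(1+\xi^2)^{1/2}$ on the support of the multiplier yields the $\nu$-independent bound $C_\varepsilon\|\psi\|_{H^{1/2}(\Sigma_\infty)}^2$. The low-frequency bound \eqref{eq:psibound_start_dx} is the same computation with the multiplier $(1+\xi^2)^{1/2}$ under $\mathcal J$: on $|\xi\nu|<\varepsilon$ the elementary expansion of $\sinh$ and $\cosh$ gives $\int_0^\nu|\partial_x\lambda_\nu|^2 dx\lesssim 1/\nu$, so the extra factor $\nu$ in front of the norm cancels.

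For the high-frequency bound \eqref{eq:partialy_psibound}, one uses the coth identity directly: on $|\xi\nu|>\varepsilon$ one has $\coth(|\xi|\nu)\le\coth\varepsilon=:C_\varepsilon'$, whence
\[
\|\nabla L^\nu\mathcal H_{\varepsilon/\nu}\psi\|_{L^2(\mathbb R^{2,+}_\nu)}^2=\int_{|\xi|>\varepsilon/\nu}|\xi|\coth(|\xi|\nu)|\widehat\psi|^2\,d\xi\le C_\varepsilon'\!\int_{\mathbb R}(1+\xi^2)^{1/2}|\widehat\psi|^2d\xi=C_\varepsilon'\|\psi\|_{H^{1/2}(\Sigma_\infty)}^2.
\]
The stated corollary $\|\partial_y L^\nu\|_{\mathcal L(H^{1/2},L^2)}\le 2C_\varepsilon$ follows by splitting $\psi=\mathcal L_{\varepsilon/\nu}\psi+\mathcal H_{\varepsilon/\nu}\psi$, using \eqref{eq:psibound_start0} on the low-frequency part and bounding $\|\partial_y L^\nu\mathcal H_{\varepsilon/\nu}\psi\|\le\|\nabla L^\nu\mathcal H_{\varepsilon/\nu}\psi\|$ controlled by \eqref{eq:partialy_psibound} on the high-frequency part.

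There is no real obstacle here: once one writes down $\lambda_\nu$ explicitly and identifies the two regimes $|\xi\nu|\lessgtr\varepsilon$ separated by the filter, all estimates reduce to elementary bounds on $\sinh/\cosh$. The only subtle point, which should be highlighted, is why the filter threshold must be $\varepsilon\nu^{-1}$ rather than any fixed constant: this scaling is precisely what balances the $\nu$-dependence coming from the transverse integrals against the $\nu^{-1}$ factor extracted from $\xi^2$ on the low-frequency side, so that all the constants $C_\varepsilon$ are finite and independent of $\nu$.
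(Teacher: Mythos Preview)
Your proposal is correct and follows essentially the same approach as the paper: both reduce everything to the explicit Fourier kernel $\lambda_\nu(x,\xi)=\sinh(\xi(\nu-x))/\sinh(\xi\nu)$, split into the regimes $|\xi\nu|\lessgtr\varepsilon$, and estimate $\int_0^\nu|\lambda_\nu|^2\,dx$ and $\int_0^\nu|\partial_x\lambda_\nu|^2\,dx$ separately in each. The only cosmetic difference is that the paper bounds the exponentials directly in both regimes, whereas you package the high-frequency case into the clean identity $\int_0^\nu(|\partial_x\lambda_\nu|^2+\xi^2|\lambda_\nu|^2)\,dx=|\xi|\coth(|\xi|\nu)$, which makes \eqref{eq:partialy_psibound} a one-liner; otherwise the arguments coincide.
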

\begin{proof}
The fact that $L^{\nu}\in \mathcal{L}({H}^{s}(\Sigma_{\infty}), H^{s+1/2}(\mathbb{R}^{2,+}_{\nu})), \, s\geq 1/2,$ follows by adapting the elliptic regularity results to unbounded domains (cf. \cite[proof of Theorem 4.18]{mclean}).  The fact that $L^{\nu}\in \mathcal{L}(H^{1/2}(\Sigma_{\infty}), H^1(\mathbb{R}^{2,+}_a))$ stems from its definition. So do the commutator relations $L^{\nu}\mathcal{L}_{w}= \boldsymbol{\mathcal{L}}_{w}L^{\nu}$, $L^{\nu}\mathcal{H}_{w}= \boldsymbol{\mathcal{H}}_{w}L^{\nu}$. Similarly, as $\mathcal{F}_y(\partial_y L^{\nu}\psi)(\xi)=i\xi \lambda_{\nu}(.,\xi)\hat{\psi}(\xi)$, the property $\partial_y L^{\nu}\psi=L^{\nu}\partial_y \psi$ follows.  \\
\textbf{Proof of the operator bounds. }
Let $\psi\in H^{1/2}(\mathbb{R})$. 
We start with preliminary expressions. 
From the definition of $L^{\nu}$ and the Plancherel identity it follows, with $\lambda_{\nu}(x,\xi)=\frac{\mathrm{e}^{\xi (x-\nu)}-\mathrm{e}^{-\xi(x-\nu)}}{\mathrm{e}^{-\xi\nu}-\mathrm{e}^{\xi\nu}}$,  
\begin{align}
	\label{eq:Lnu0}
&	\|\partial_yL^{\nu}\psi\|^2_{L^2(\mathbb{R}^{2+}_{\nu})}=\int_{\mathbb{R}}\xi^2|\hat{\psi}(\xi)|^2\|\lambda_{\nu}(.,\xi)\|^2_{L^2(0,\nu)}d\xi,\\
\label{eq:Lnu1}
&	\|\partial_x L^{\nu}\psi\|^2_{L^2(\mathbb{R}^{2+}_{\nu})}=\int_{\mathbb{R}}|\hat{\psi}(\xi)|^2\|\partial_x\lambda_{\nu}(.,\xi)\|^2_{L^2(0,\nu)}d\xi,\\
\label{eq:Lnu2}
&	\|\mathcal{J}\partial_x L^{\nu}\psi\|^2_{L^2(\mathbb{R}^{2+}_{\nu})}=\int_{\mathbb{R}}(1+|\xi|^2)^{1/2}|\hat{\psi}(\xi)|^2\|\partial_x\lambda_{\nu}(.,\xi)\|^2_{L^2(0,\nu)}d\xi. 
\end{align}
\textbf{Proof of \eqref{eq:psibound_start0}, \eqref{eq:psibound_start_dx}. }
From \eqref{eq:Lnu0} and \eqref{eq:Lnu2}, as well as the definition of $\mathcal{L}_{w}$, we obtain that 
\begin{align}
	\label{eq:bf1}
\|\partial_yL^{\nu}\mathcal{L}_{\re\nu^{-1}}\psi\|^2_{L^2(\mathbb{R}^{2+}_{\nu})}&=\int_{|\xi\nu|<\re}\xi^2|\hat{\psi}(\xi)|^2\|\lambda_{\nu}(.,\xi)\|^2_{L^2(0,\nu)}d\xi\leq \sup_{\xi: |\xi\nu|<\re}(|\xi|\|\lambda_{\nu}(.,\xi)\|^2_{L^2(0,\nu)})\|\psi\|^2_{H^{1/2}(\Sigma_{\infty})},\\
\label{eq:bf2}
\|\mathcal{J}\partial_x L^{\nu}\mathcal{L}_{\re\nu^{-1}}\psi\|^2_{L^2(\mathbb{R}^{2+}_{\nu})}&\leq \sup_{\xi: |\xi\nu|<\re}\|\partial_x\lambda_{\nu}(.,\xi)\|^2_{L^2(0,\nu)}\|\psi\|^2_{H^{1/2}(\Sigma_{\infty})}.
\end{align}
We proceed by bounding $\lambda_{\nu}$ in the case when $|\xi\nu|<\re$. We use the mean-value theorem for the upper bound:
\begin{align*}
	|\mathrm{e}^{\xi(x-\nu)}-\mathrm{e}^{-\xi(x-\nu)}|\leq \xi|x-\nu|\left(\sup_{t\in [-\xi\nu,0]}\mathrm{e}^t+\sup_{t\in [0,\xi\nu]}\mathrm{e}^t\right)\lesssim |\xi\nu|\mathrm{e}^{|\xi\nu|}\leq \mathrm{e}^{\re}|\xi\nu|,
\end{align*}
and the inequality $|\mathrm{e}^{x}-1-x|\leq \frac{e^{\re}|x|^2}{2}$, $\forall x\in [-\re,\re],$ (the Lagrange form of the remainder), which yields  
\begin{align}
	\label{eq:b}
	\left|\mathrm{e}^{\xi\nu}-\mathrm{e}^{-\xi\nu}\right|\geq 2|\xi\nu|-\mathrm{e}^{\re}|\xi\nu|^2\geq |\xi\nu|(2-\mathrm{e}^{\re}\re)\geq |\xi\nu|, \text{ since $\re\leq 1/2$.}
\end{align}
This allows to bound 
\begin{align*}
	\|\lambda_{\nu}(.,\xi)\|_{L^{\infty}(0,\nu)}\lesssim 1  \implies \|\lambda_{\nu}(.,\xi)\|^2_{L^2(0,\nu)}\lesssim \nu\implies \sup_{\xi: |\xi\nu|<\re}|\xi|\|\lambda_{\nu}(.,\xi)\|^2_{L^2(0,\nu)}\lesssim \re\leq 1/2.
\end{align*}
This, together with \eqref{eq:bf1}, proves \eqref{eq:psibound_start0}. In a similar manner, for $|\xi\nu|\leq \re$,
\begin{align*}
	\partial_x \lambda_{\nu}(x,\xi)&=\xi\frac{\mathrm{e}^{\xi(x-\nu)}+\mathrm{e}^{-\xi(x-\nu)}}{\mathrm{e}^{-\xi\nu}-\mathrm{e}^{\xi\nu}}, \text{ therefore, with \eqref{eq:b}, }\\
	\|\partial_x \lambda_{\nu}(.,\xi)\|^2_{L^2(0,\nu)}&\leq |\xi\nu|^{-2} \int_0^{\nu}\xi^2|\mathrm{e}^{\xi(x-\nu)}+\mathrm{e}^{-\xi(x-\nu)}|^2dx\leq 4\nu^{-2}\int_{0}^{\nu}\mathrm{e}^{2|\xi\nu|}dx\leq 4\nu^{-1}\mathrm{e}^{2\re}.
\end{align*}
Plugging in this estimate into \eqref{eq:bf2} yields \eqref{eq:psibound_start_dx}. \\
\textbf{Proof of \eqref{eq:partialy_psibound}. }Using \eqref{eq:Lnu0} and \eqref{eq:Lnu1}, we obtain 
\begin{align}
	\label{eq:nablaLnu}
\|\nabla L^{\nu}\mathcal{H}_{\re\nu^{-1}}\psi\|_{L^2(\mathbb{R}^{2,+}_{\nu})}^2\leq \left(\sup_{\xi: \, |\xi\nu|>\re}\||\xi|^{-1/2}\partial_x\lambda_{\nu}(.,\xi)\|^2_{L^2(0,\nu)}+\sup_{\xi: \, |\xi\nu|>\re}\||\xi|^{1/2}\lambda_{\nu}(.,\xi)\|^2_{L^2(0,\nu)}\right)\|\psi\|^2_{H^{1/2}(\Sigma_{\infty})}.
\end{align}
To bound $\lambda_{\nu}$, we make use of the following bound, valid for $|\xi\nu|>\varepsilon$, $\varepsilon\leq 1/2$, 
\begin{align}
	\label{eq:c}
	|\mathrm{e}^{\xi\nu}-\mathrm{e}^{-\xi\nu}|=\mathrm{e}^{|\xi\nu|}|1-\mathrm{e}^{-2|\xi\nu|}|\geq \mathrm{e}^{|\xi\nu|}(1-\mathrm{e}^{-2\re})\geq \mathrm{e}^{|\xi\nu|}c_{\re}.
\end{align}
With this bound, for all $|\xi\nu|>\varepsilon$, it holds that, where the hidden constant depends on $\re$, 
\begin{align*}
\|\lambda_{\nu}(.,\xi)\|^2_{L^2(0,\nu)}\lesssim \mathrm{e}^{-2|\xi\nu|}\int_0^{\nu}(\mathrm{e}^{\xi(x-\nu)}-\mathrm{e}^{-\xi(x-\nu)})^2dx
&\lesssim  \mathrm{e}^{-2|\xi\nu|}\int_0^{\nu}\mathrm{e}^{2|\xi||x-\nu|}dx\lesssim \mathrm{e}^{-2|\xi\nu|}\frac{\mathrm{e}^{2|\xi|\nu}-1}{|\xi|}\lesssim |\xi|^{-1}.
\end{align*}
Similarly, 
\begin{align*}
	\|\partial_x \lambda_{\nu}(.,\xi)\|^2_{L^2(0,\nu)}\lesssim \xi^2\mathrm{e}^{-2|\xi\nu|}\int_0^{\nu}(\mathrm{e}^{\xi(x-\nu)}+\mathrm{e}^{-\xi(x-\nu)})^2dx\lesssim \mathrm{e}^{-2|\xi\nu|}\int_0^{\nu}\mathrm{e}^{2|\xi||x-\nu|}dx\lesssim |\xi|.
\end{align*}
Plugging in the above two bounds into \eqref{eq:nablaLnu} yields the desired estimate \eqref{eq:partialy_psibound}.

The final bound on $\partial_y L^{\nu}$ follows from the following identity, valid for all $w>0$,
\begin{align*}
	\partial_y L^{\nu}=\partial_y L^{\nu}(\mathcal{L}_w+\mathcal{H}_w),
\end{align*}
and the uniform bounds \eqref{eq:partialy_psibound} and \eqref{eq:psibound_start0}.
\end{proof}
\begin{remark}
	One sees easily in the derivation of the above estimates that in the low-frequency case, $|\xi\nu|<\re$, it seems impossible to control $\|\partial_x L^{\nu}\mathcal{L}_{\nu^{-1}\re}\psi\|_{L^2(\mathbb{R}^{2,+}_{(0,\nu)})}$ uniformly in $\nu$, contrary to the derivative tangent to the interface. 
\end{remark}

	\section{Proof of Proposition \ref{prop:lap_reg} }
\label{appendix:lap_auxiliary}
Let us remark that while in view of the regularity result Theorem \ref{theorem:regularity}, the result may seem trivial, in its proof it is crucial that Neumann boundary conditions at $\Sigma$ vanish. In particular, it can be checked on a toy 1D example that the corresponding statement with homogeneous Dirichlet BCs on $\Sigma$ does not hold true. 

We start by proving the corresponding result for $\delta=0$, and next arguing on how to extend it for $\delta>0$. 

\textbf{Proof for $\delta=0$. }The proof is similar to the analogous result of \cite{baouendi_goulaouic}. Since $f\in L^2(\Omega_p)$, by the same arguments that led to Theorem \ref{theorem:stability_estimate}, the estimate \eqref{eq:est_main_v2}, Proposition \ref{proposition:regularity_dy} and the bound \eqref{eq:unubtmp2}, we have 
\begin{align}
	\label{eq:vnuu}
	\|v^{\nu}\|_{L^2(\Omega_p)}+\|\partial_y v^{\nu}\|_{L^2(\Omega_p)}+\|x\nabla v^{\nu}\|_{L^2(\Omega_p)}+\nu^{1/2}\|\nabla v^{\nu}\|_{L^2(\Omega_p)}+\nu\|\partial_y \nabla v^{\nu}\|_{L^2(\Omega_p)}\lesssim \|f\|_{L^2(\Omega_p)},
\end{align}
for all $0<\nu<\nu_0$ (with a fixed $\nu_0>0$). It remains to prove that $\|\partial_x v^{\nu}\|_{\Omega_p}\lesssim \|f\|_{\Omega_p}$. Assume that
\begin{align}
	\label{eq:tpv}
	\|x\partial_y\nabla v^{\nu}\|_{L^2(\Omega_p)}\lesssim \|f\|_{L^2(\Omega_p)}.
\end{align}
We rewrite $\operatorname{div}((x+i\nu r)\mathbb{A}\nabla v^{\nu})=f$ as 
\begin{align*}
\partial_x(\vec{e}_x\cdot (x+i\nu r)\mathbb{A}\nabla v^{\nu})=-x\partial_y(\vec{e}_y\cdot\mathbb{A}\nabla v^{\nu})-i\nu \partial_y(r\vec{e}_y\cdot \mathbb{A}\nabla v^{\nu})+f, 
\end{align*}
and, by using \eqref{eq:vnuu} and \eqref{eq:tpv} to bound the right-hand side of the above, $q^{\nu}:=\vec{e}_x\cdot (x+i\nu r)\mathbb{A}\nabla v^{\nu}$ 
satisfies $
	\|\partial_x q^{\nu}\|_{L^2(\Omega_p)}\lesssim \|f\|_{L^2(\Omega_p)}.$ 
Moreover, by \eqref{eq:vnuu} and \eqref{eq:tpv}, $\|\partial_y q^{\nu}\|_{L^2(\Omega_p)}\lesssim \|f\|_{L^2(\Omega_p)}$. With \eqref{eq:vnuu} and the above, we conclude that 
$
	\|q^{\nu}\|_{H^1(\Omega_p)}\lesssim \|f\|_{L^2(\Omega_p)}.$
From the definition of $q^{\nu}$ it follows that  $\gamma_0^{\Sigma}q^{\nu}=\gamma_{n,\nu}^{\Sigma}v^{\nu}=0$, and thus we can use Hardy's inequality on p. 313 of   \cite{brezis2010functional}: 
\begin{align*}
	\left\|\frac{q^{\nu}}{x}\right\|_{L^2(\Omega_p)}\lesssim \|q^{\nu}\|_{H^1(\Omega_p)}\lesssim \|f\|_{L^2(\Omega_p)}, \text{ which implies }\\
\int_{\Omega_p}\left(1+\frac{\nu^2 r^2(\vec{x})}{x^2}\right)|\mathbb{A}\nabla v^{\nu}(\vec{x})|^2d\vec{x} \lesssim \|f\|^2_{L^2(\Omega_p)},
\end{align*}
hence $\|v^{\nu}\|_{\mathcal{H}^1(\Omega_p)}\lesssim \|f\|_{L^2(\Omega_p)}$ (the desired statement for $\delta=0$). Remark the importance of  $\gamma_{n,\nu}^{\Sigma}v^{\nu}=0$.

It remains to show \eqref{eq:tpv}. Test the problem stated in Proposition \ref{prop:lap_reg} with the Nirenberg's quotient $x\delta_h^y \delta_{-h}^y v^{\nu}$, $\delta_h^y$  defined like in Proposition \ref{proposition:bounds_nu_appendix}, and next integrate by parts, at the continuous and at the discrete level, to obtain
\begin{align*}
	\int_{\Omega_p}\delta^y_{-h}\left((x+i\nu r)\mathbb{A}\nabla v^{\nu}\right)\overline{\nabla (x\delta^y_{-h}v^{\nu})}=\int_{\Omega}f x\overline{\delta_h^y \delta_{-h}^y v^{\nu}}.
\end{align*}
The above rewrites 
\begin{align*}
	&\int_{\Omega_p}(x+i\nu r)\mathbb{A}\delta^y_{-h}\nabla v^{\nu}\,\overline{x\delta^y_{-h}\nabla v^{\nu}}+\int_{\Omega_p}(x+i\nu r)\mathbb{A}\delta^y_{-h}\nabla v^{\nu}\vec{e}_x\overline{\delta_{-h}^y v^{\nu}}\\
	&+
	\int_{\Omega_p}(x\delta_{-h}^y\mathbb{A}+i\nu \delta_{-h}^y(r\, \mathbb{A}))\nabla v^{\nu}\left(x\overline{\nabla \delta_{-h}^y v^{\nu}}+\overline{\vec{e}_x \delta_{-h}^y v^{\nu}}\right)=\int_{\Omega}f x\overline{\delta_h^y \delta_{-h}^y v^{\nu}}.
\end{align*}
Taking the real part of the above, and using that $\mathbb{A}$ is Hermitian, we observe that the first term in the above is sign-definite, while the remaining terms can be bounded using the Cauchy-Schwarz inequality (all the norms below are $\|.\|=\|.\|_{L^2(\Omega_p)}$):
\begin{align}
	\begin{split}
		\int_{\Omega_p}|x\delta_{-h}^y\nabla v^{\nu}|^2&\lesssim 
		(\|x\delta_{-h}^y\nabla v^{\nu}\|+\nu\|\delta_{-h}^y\nabla v^{\nu}\|)\|\delta_{-h}^y v^{\nu}\|\\
		&+\left(\|x	\nabla v^{\nu}\|+\nu\|\nabla v^{\nu}\|\right)(\|x\delta_{-h}^y\nabla v^{\nu}\|+\|\delta_{-h}^y v^{\nu}\|)+\|f\|\|x\delta_{-h}^y\delta_{h}^y v^{\nu}\|.
	\end{split}
\end{align}
Together with \cite[Lemma 4.13]{mclean} (the latter links $\delta_{-h}^y$ and $\partial_y$) and \eqref{eq:vnuu}, we conclude that 
\begin{align*}
	\|x\partial_y\nabla v^{\nu}\|^2\lesssim \|f\|\|x\partial_y\nabla v^{\nu}\|+\|f\|^2.
\end{align*}
The desired bound \eqref{eq:tpv} follows with the Young inequality.
  
\textbf{Proof for $\delta>0$. }We proceed by interpolation. In particular, by testing the problem stated in Proposition \ref{prop:lap_reg} with $v^{\nu}\in \mathcal{H}^1(\Omega_p)$, and using Lemma \ref{lem:f_belongs_vreg}, we remark that for any $\delta<1$, with a hidden constant independent of $\nu$, $
	|v^{\nu}|_{\mathcal{H}^1_{1}(\Omega_p)}\lesssim \|f\|_{L^2_{\delta}(\Omega_p)},$ 
and next proceed like in Proposition \ref{prop:regularity2}.

	\section{Proof of Proposition \ref{proposition:regularity_dy}}
\label{sec:prop_reg_dy}
Let $\nu>0$, and let us define $p^{\nu}\in \mathcal{H}^1(\Omega\setminus\Sigma)$ as a unique solution to the following  problem:
\begin{align}
	\label{eq:divpnu}
	\begin{split}
		&\operatorname{div}(x\mathbb{A}p^{\nu})=\partial_y u^{\nu}, \text{ in }\Omega_p\cup\Omega_n,\\
		&\gamma_{n}^{\Sigma}p^{\nu}=0,\quad
		\gamma_0^{\Gamma_p\cup\Gamma_n}p^{\nu}=0,\\
		&\text{ periodic BCs on }\Gamma^{\pm}_p\cup\Gamma^{\pm}_n.
	\end{split}
\end{align}
The above problem is well-posed due to $\partial_y u^{\nu}\in L^2(\Omega)$ (cf. Lemma \ref{lem:pb_abs_wp}), Theorem \ref{theorem:fl2} (existence and uniqueness of $p^{\nu}\in \mathcal{V}_{reg}(\Omega_p)$), which also affirms that $p^{\nu}\in \mathcal{H}^1(\Omega\setminus\Sigma)$. The solution to \eqref{eq:divpnu} satisfies the following variational formulation:
\begin{align}
	\label{eq:vp0}
	\int_{\Omega_p\cup\Omega_n}\overline{x\mathbb{A}\nabla p^{\nu}}\cdot \nabla v=-\int_{\Omega_p\cup\Omega_n}\overline{\partial_y u^{\nu}} v, \text{ for all }v\in \mathcal{H}^1(\Omega\setminus \Sigma). 
\end{align}
Next, again, due to $\partial_y u^{\nu}\in \mathcal{H}^1(\Omega)$ (by elliptic regularity, cf. Lemma \ref{lem:pb_abs_wp}), and $\gamma_0^{\Gamma_p\cup\Gamma_n}\partial_yu^{\nu}=0$,  we can take in \eqref{eq:vp0}  $v=\partial_y u^{\nu}$ and use the fact that $\mathbb{A}$ is a hermitian matrix; this yields 
\begin{align}
	\label{eq:vp}
	\int_{\Omega_p\cup\Omega_n}\overline{\nabla p^{\nu}}\cdot x\mathbb{A}\nabla \partial_y u^{\nu}=-\int_{\Omega_p\cup\Omega_n}|\partial_y u^{\nu}|^2.
\end{align}
Next, by the elliptic regularity estimate of Theorem \ref{theorem:regularity}, and using the fact that $u^{\nu}\in \mathcal{H}^2(\Omega)$ (Lemma \ref{lem:pb_abs_wp}), it holds that $p^{\nu}\in \mathcal{H}^2(\Omega\setminus\Sigma)$. Remark that $\gamma_0^{\Gamma_p\cup\Gamma_n}\partial_y p^{\nu}=0$.  
This enables us to test the problem with absorption \eqref{eq:unu_orig} with $\partial_y p^{\nu}\in \mathcal{H}^1(\Omega\setminus \Sigma)$. Integration by parts yields (recall \eqref{eq:trace_left_right} for the notation $[\gamma_0^{\Sigma}.]$)
\begin{align*}
	-\int_{\Omega_p\cup\Omega_n}(x\mathbb{A}+i\nu\mathbb{T})\nabla u^{\nu}\cdot\overline{\nabla \partial_y p^{\nu}}-\int_{\Sigma}\gamma_{n, \nu}^{\Sigma}u^{\nu}[\gamma_{0}^{\Sigma}\partial_y \overline{p^{\nu}}]=\int_{\Omega}f\, \overline{\partial_y p^{\nu}}.
\end{align*}
Since $u^{\nu}, p^{\nu}\in \mathcal{H}^2(\Omega\setminus\Sigma)$, in the above expression the integral over the interface $\Sigma$ is a Lebesgue integral. Moreover, $\gamma_0^{\Sigma,\lambda}\partial_y p^{\nu}=\partial_y\gamma_0^{\Sigma,\lambda}p^{\nu}$, $\lambda\in \{n,p\}$, the result being true by density of $\mathcal{C}^{\infty}(\overline{\Omega_{\lambda}})$ functions in $\mathcal{H}^2(\Omega_{\lambda})$ (resp. their traces on $\Sigma$ in $\mathcal{H}^{3/2}(\Sigma)$). 
Integrating by parts on $\Sigma$ (justified in particular by the bound \eqref{eq:gnubound_improved} and can be proven using the usual density argument),  and, next, on $\Omega_p\cup\Omega_n$ we obtain:
\begin{align}
	\label{eq:u_test_p}
	\int_{\Omega_p\cup\Omega_n}(x\mathbb{A}+i\nu\mathbb{T})\nabla \partial_y u^{\nu}\cdot\overline{\nabla p^{\nu}}+\langle \partial_y\gamma_{n, \nu}^{\Sigma}u^{\nu}, [\gamma_{0}^{\Sigma} \overline{p^{\nu}}]\rangle_{\mathcal{H}^{-1/2}(\Sigma), \mathcal{H}^{1/2}(\Sigma)}=\int_{\Omega_p\cup\Omega_n}f\, \overline{\partial_y p^{\nu}}.
\end{align}
Replacing the first term in the above by the right-hand side of \eqref{eq:vp} yields the identity
\begin{align*}
	-\int_{\Omega_p\cup\Omega_n}|\partial_y u^{\nu}|^2=-i\nu \int_{\Omega_p\cup\Omega_n}\mathbb{T}\partial_y \nabla u^{\nu}\, \overline{\nabla p^{\nu}}-\langle \partial_y\gamma_{n, \nu}^{\Sigma}u^{\nu}, [\gamma_{0}^{\Sigma} \overline{p^{\nu}}]\rangle_{\mathcal{H}^{-1/2}(\Sigma), \mathcal{H}^{1/2}(\Sigma)}-\int_{\Omega_p\cup\Omega_n}f\overline{\partial_y p^{\nu}}.
\end{align*}
With the Cauchy-Schwarz inequality and the continuity of the trace operator on $\mathcal{H}^1(\Omega_{\lambda})$, $\lambda\in \{n,p\}$,  
\begin{align*}
	\|\partial_y u^{\nu}\|^2_{L^2(\Omega)}\lesssim \left(\nu\|\partial_y \nabla u^{\nu}\|_{L^2(\Omega)}+\|\partial_y \gamma_{n,\nu}^{\Sigma}u^{\nu}\|_{\mathcal{H}^{-1/2}(\Sigma)}+\|f\|_{L^2(\Omega)}\right)\|p^{\nu}\|_{\mathcal{H}^1(\Omega\setminus\Sigma)}.
\end{align*}
To bound the right-hand side of the above, we use Theorem \ref{theorem:regularity} ($\|p^{\nu}\|_{\mathcal{H}^1(\Omega\setminus\Sigma)}\lesssim\|\partial_y u^{\nu}\|_{L^2(\Omega)}$), Proposition \ref{proposition:bounds_nu} in the form \eqref{eq:unubtmp2} ($\nu\|\partial_y \nabla u^{\nu}\|_{L^2(\Omega)}\lesssim \|f\|_{L^2(\Omega)}$) and Proposition \ref{prop:gnubound_improved} on the control of the conormal trace, namely, $\|\partial_y \gamma_{n,\nu}^{\Sigma}u^{\nu}\|_{\mathcal{H}^{-1/2}(\Sigma)}\lesssim \|f\|_{L^2(\Omega)}+\sqrt{\|f\|_{L^2(\Omega)}\|\partial_y u^{\nu}\|_{L^2(\Omega)}}$. Altogether, this yields
\begin{align*}
	\|\partial_y u^{\nu}\|^2\leq C (\|f\|+\sqrt{\|f\|\|\partial_y u^{\nu}\|})\|\partial_y u^{\nu}\|, \quad C>0,
\end{align*}
uniformly in $\nu>0$. 
Applying the Young inequality to the above yields the desired bound.

	\section{Proofs of the results for general domains}
\label{appendix:proof_general}
\subsection{Proof of Proposition \ref{prop:decomp2}}
We start with the following auxiliary problem. 
Given $f\in L^2(D_p)$, $g\in H^{1/2}(I)$, find $u\in \mathcal{V}_{sing}(D_p)$, s.t. 
\begin{align*}
	&\operatorname{div}(\alpha\mathbb{H}\nabla u)=f \text{ in }D_p,\\
	&\gamma_n^{I}u=g,\quad \gamma_0^{I_p}u=0.
\end{align*}
The following is a counterpart of Theorem \ref{thm:sp_wp}, combined with Theorem \ref{theorem:regularity}.
\begin{theorem}
	\label{theorem:sp_wp_general}
	If $g=0$, the above problem admits a unique solution  $u\in \mathcal{V}_{sing}(D_p)$. This solution also satisfies the following. For $f\in L^2(D_p)$, $u\in H^1(D_p)$ and $\|u\|_{H^1(D_p)}+\|\alpha u\|_{H^2(D_p)}\lesssim \|f\|_{L^2(D_p)}$.
\end{theorem}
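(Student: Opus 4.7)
The plan is to mirror the three-step analysis performed for the rectangle (Theorems \ref{theorem:well_posedness_regular}, \ref{theorem:regularity}, \ref{thm:sp_wp}) and reduce the curvilinear problem to its flat counterpart by localization and a change of coordinates. Since $\alpha$ is bounded away from $0$ on any compact subset of $\overline{D_p}\setminus I$, the PDE is uniformly elliptic there, so the delicate analysis is only needed in a tubular neighbourhood $U_I$ of the interface. Inside $U_I$, Assumption \ref{assump:alpha} states that $\alpha$ is the signed distance, and \eqref{eq:alphaprop} gives $\nabla\alpha|_I=\vec{n}_I$; hence, for a sufficiently small $\delta_0>0$, the map $\Phi:U_I\cap\overline{D_p}\to[0,\delta_0)\times I$ sending $\vec{x}$ to $(\alpha(\vec{x}),\pi_I(\vec{x}))$, with $\pi_I$ the nearest-point projection on $I$, is a $C^{1,1}$-diffeomorphism. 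In the variables $(s,t)=\Phi(\vec{x})$, the operator $\operatorname{div}(\alpha\mathbb{H}\nabla\cdot)$ takes the form $\operatorname{div}(s\widetilde{\mathbb{H}}\nabla\cdot)+\text{l.o.t.}$, with $\widetilde{\mathbb{H}}$ Hermitian positive-definite and $C^{1,1}$, so that Assumption \ref{assump:matrices} is met and the rectangle results of Sections \ref{sec:simpl}--\ref{sec:LAP_Problem} apply verbatim on each coordinate patch of $I$.

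\textbf{Existence/uniqueness.} I would first prove the direct analogue of Theorem \ref{theorem:well_posedness_regular} in $\mathcal{V}_{reg}(D_p)$: the sesquilinear form
\begin{equation*}
a_r(u,v):=\int_{D_p}\alpha\mathbb{H}\nabla u\cdot\overline{\nabla v}
\end{equation*}
is continuous on $\mathcal{V}_{reg}(D_p)\times\mathcal{V}_{reg}(D_p)$ and, thanks to the positive-definiteness of $\mathbb{H}$ (Assumption \ref{assump:matrices_general}) combined with a Poincar\'e-type inequality on $\mathcal{V}_{reg}(D_p)$ (valid because $I_p\neq\emptyset$, cf. Remark \ref{rem:assumptionomegaint}), coercive; Lax--Milgram then supplies a unique solution for every $f\in L^2(D_p)\hookrightarrow(\mathcal{V}_{reg}(D_p))'$. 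The vanishing Neumann trace $\gamma_n^I u=0$ follows as in Lemma \ref{cor:uv}, once the statement is transported through $\Phi$. Uniqueness in the larger space $\mathcal{V}_{sing}(D_p)$ is obtained by the duality argument of Theorem \ref{thm:sp_wp}: any homogeneous solution $u\in\mathcal{V}_{sing}(D_p)$ is tested against a regular solution of the same problem with right-hand side $u$, which is admissible after establishing the density of $\mathcal{C}^{\infty}(\overline{D_p})$ in $\mathcal{V}_{sing}(D_p)$ (again a local claim transported from $\Omega_p$ by $\Phi$).

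\textbf{Elliptic regularity.} For the bound $\|u\|_{H^1(D_p)}+\|\alpha u\|_{H^2(D_p)}\lesssim\|f\|_{L^2(D_p)}$, I would pick $\chi\in C^{\infty}_{comp}(U_I)$ equal to $1$ in a smaller neighbourhood of $I$ and split $u=\chi u+(1-\chi)u$. The interior piece $(1-\chi)u$ solves a uniformly elliptic Dirichlet problem on $\{\alpha\geq c>0\}$ with right-hand side $(1-\chi)f-[\operatorname{div}(\alpha\mathbb{H}\nabla\cdot),\chi]u\in L^2$, so standard elliptic regularity gives $H^2$-bounds. For the boundary-layer piece $\chi u$, its pushforward by $\Phi$ satisfies, on a rectangular neighbourhood of $I$, a problem of the exact form treated by Theorem \ref{theorem:regularity} (possibly after a further partition of unity along $I$, since $I$ is globally a $C^3$ loop), with an $L^2$ right-hand side bounded by $\|f\|_{L^2(D_p)}+\|u\|_{H^1}$. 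The stated estimate then follows by combining the two contributions and a standard absorption argument.

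\textbf{Main obstacle.} The technical heart of the argument is the coordinate transformation near $I$. One must verify that the transformed principal symbol really is of the form $s\widetilde{\mathbb{H}}$ for a Hermitian positive-definite $C^{1,1}$-matrix, and that the lower-order terms generated by the Jacobian and by $\operatorname{div}$ can be absorbed either into the $L^2$ right-hand side or into the $H^1$-norm of $u$ handled by the already-established $\mathcal{V}_{reg}$ bound. This relies crucially on $\nabla\alpha|_I=\vec{n}_I$ and on the $C^{1,1}$-regularity of the signed distance, both guaranteed by the hypothesis $I\in C^3$ together with Assumption \ref{assump:alpha}. A secondary (but benign) point is that the periodic boundary conditions used in the rectangle case are replaced here by the compactness of $I$, which in fact simplifies the argument as no boundary data on $\partial I$ has to be prescribed.
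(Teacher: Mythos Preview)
Your proposal is correct and follows essentially the same route as the paper: existence in $\mathcal{V}_{reg}(D_p)$ via Lax--Milgram using the Poincar\'e inequality (valid because $I_p\neq\emptyset$), uniqueness in $\mathcal{V}_{sing}(D_p)$ by the duality argument of Theorem~\ref{thm:sp_wp}, and regularity by localization plus the change of variables that flattens $I$, reducing to Theorem~\ref{theorem:regularity}. One small bookkeeping point: the commutator $[\operatorname{div}(\alpha\mathbb{H}\nabla\cdot),\chi]u$ is supported on $\operatorname{supp}\nabla\chi$, where $\alpha$ is bounded away from zero, so it is controlled by $\|u\|_{\mathcal{V}_{reg}(D_p)}\lesssim\|f\|_{L^2(D_p)}$ rather than by $\|u\|_{H^1}$---this removes the apparent circularity and makes the ``absorption argument'' you mention unnecessary.
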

\begin{proof}
	See the proof of Theorem \ref{thm:sp_wp}. Remark that we make use of the fact that the corresponding sesqulinear form is strictly elliptic, in other words, in virtue of the Poincar\`e inequality of  Proposition \ref{prop:poincare_appendix} (which is true in particular due to the homogeneous Dirichlet boundary condition at $I_p\neq \emptyset$), the above problem indeed admits a unique solution in $\mathcal{V}_{reg}(D_p)$. The regularity results of \cite{baouendi_goulaouic}, cf. Theorem \ref{theorem:regularity}, hold true in this case as well, cf. the respective proofs in \citeappendixregularity and the change of coordinates described after Lemma \ref{lem:deftraces_general}.
\end{proof}
At this point we will not need the corresponding result for regularity of $u$ when $f\in H^1(D_p)$, since at the point where it will be needed, we will work with a coordinate-transformed problem, mapped on the rectangular domain $\Omega$. 

With this result, we obtain 
\begin{theorem}
	\label{theorem:decomposition_general}
	The above problem admits a unique solution $u\in \mathcal{V}_{sing}(D_p)$, which writes
	\begin{align*}
		u=u_{sing}+u_{reg}, \qquad u_{sing}=u_h\log|\alpha|,
	\end{align*}
	where $u_h\in H^1(D_p)$ is s.t. 
	\begin{align}
		\label{eq:nablauh}
		\begin{split}
			&\operatorname{div}(\mathbb{H}\nabla u_h)=0 \text{ in }D_p,\\
			&\gamma_0^{I}u_h=h_{I}^{-1}g,\qquad 	\gamma_0^{I_p}u_h=0, 
		\end{split}
	\end{align}
	and $u_{reg}\in \bigcap_{\re>0}\mathcal{H}^{1}_{\re}(D_p)\bigcap \bigcap_{\re>0}H^{1-\re}(D_p)$. In the above, $\gamma_n^I u=\gamma_n^Iu_{sing}$.  
\end{theorem}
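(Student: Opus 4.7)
The strategy is to mimic the proof of Theorem \ref{theorem:reg_well_posedness}, with the rectangular framework replaced by a tubular neighborhood of the loop $I$. The existence of a unique solution in $\mathcal{V}_{sing}(D_p)$ when $g\equiv 0$ is already Theorem \ref{theorem:sp_wp_general}, so by linearity it suffices to handle the case $f\equiv 0$ and exhibit an explicit decomposition that satisfies all stated properties.

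\textbf{Step 1 (the singular part).} I would first define $u_h\in H^1(D_p)$ as the unique Lax--Milgram solution of the strictly elliptic Dirichlet problem \eqref{eq:nablauh}; here, existence and uniqueness rely on the Poincar\'e inequality on $D_p$ (made available by the non-empty Dirichlet boundary $I_p$) and the lifting $h_I^{-1}g\in H^{1/2}(I)$, together with the uniform positivity of $\mathbb{H}$. Setting $u_{sing}:=u_h\log|\alpha|$, I would verify that $u_{sing}\in \mathcal{V}_{sing}(D_p)$ by writing
\begin{align*}
    \alpha\nabla u_{sing}=\alpha\log|\alpha|\nabla u_h+\operatorname{sign}(\alpha)u_h\nabla\alpha,
\end{align*}
using that $\alpha\log|\alpha|\in L^\infty(D_p)$, $\nabla\alpha\in L^\infty(D_p)$ (since $\alpha$ is $C^{1,1}$ and equals a signed distance near $I$), and applying a Hardy-type inequality to control $u_h/|\alpha|$ inside $U_I$ via $\gamma_0^{I_p}u_h=0$ and $\|u_h\|_{H^1(D_p)}\lesssim\|g\|_{H^{1/2}(I)}$.

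\textbf{Step 2 (the regular part via a localization and a change of variables).} Writing $u_{reg}:=u-u_{sing}$, I would verify that $u_{reg}$ solves
\begin{align*}
    \operatorname{div}(\alpha\mathbb{H}\nabla u_{reg})=-(1+\log|\alpha|)\nabla\alpha\cdot\mathbb{H}\nabla u_h-\operatorname{div}(u_h\mathbb{H}\nabla\alpha)=:f_h,
\end{align*}
where Assumption \ref{assump:alpha}, the $C^{1,1}$ regularity of $\mathbb{H}$ and $\|u_h\|_{H^1(D_p)}\lesssim\|g\|_{H^{1/2}(I)}$ imply $f_h\in L^2_\delta(D_p)$ for every $\delta>0$. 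To extract the announced regularity $u_{reg}\in \bigcap_{\re>0}(\mathcal{H}^1_\re(D_p)\cap H^{1-\re}(D_p))$ I would decompose $u_{reg}=\chi u_{reg}+(1-\chi)u_{reg}$, where $\chi$ is a smooth cutoff equal to $1$ in a small tubular neighborhood of $I$. Away from $\operatorname{supp}\chi$ the operator $\operatorname{div}(\alpha\mathbb{H}\nabla\cdot)$ is uniformly elliptic, so classical regularity on the smooth domain $D_p\setminus\operatorname{supp}\chi$ gives $(1-\chi)u_{reg}\in H^1(D_p)$ with the desired bounds. On the tubular neighborhood, using the $C^{2,1}$-regularity of $I$ and the fact that $\alpha$ is a $C^{1,1}$ signed distance, the map $\Psi:(s,\alpha)\mapsto \gamma(s)+\alpha\vec n(s)$ (with $\gamma$ an arclength parametrization of $I$) is a $C^{2,1}$-diffeomorphism onto its image, and pulls the equation back to a rectangle of the form studied in Section \ref{sec:simpl} with a transformed matrix $\widetilde{\mathbb{H}}$ satisfying Assumption \ref{assump:matrices}. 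Lemma \ref{lem:propreg2} then supplies the weighted and fractional Sobolev bounds for $\chi u_{reg}$, which are pushed forward back to $D_p$.

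\textbf{Step 3 (trace identity and uniqueness of the decomposition).} Finally, I would show $\gamma_n^I u=\gamma_n^I u_{sing}$ (equivalently $\gamma_n^I u_{reg}=0$) by testing in the definition of the conormal trace exactly as in the proof of Theorem \ref{theorem:reg_well_posedness}: the key computation
\begin{align*}
    \alpha\mathbb{H}\nabla u_{sing}=u_h\mathbb{H}\nabla\alpha+\alpha\log|\alpha|\mathbb{H}\nabla u_h
\end{align*}
combined with $\left.\mathbb{H}\nabla\alpha\cdot\vec n\right|_I=h_I$ (from \eqref{eq:Hi}--\eqref{eq:alphaprop}) and the boundary condition $\gamma_0^I u_h=h_I^{-1}g$ yields $\gamma_n^I u_{sing}=g$. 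Uniqueness of the decomposition follows from uniqueness in Theorem \ref{theorem:sp_wp_general} applied to the difference of two such decompositions.

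\textbf{Main obstacle.} The principal technical difficulty is \emph{not} the existence argument, which is essentially algebraic, but the transfer of the weighted/fractional regularity results of Section \ref{sec:simpl} to the curvilinear setting via the diffeomorphism $\Psi$. One must verify that the pulled-back matrix $\widetilde{\mathbb{H}}$ inherits the $C^{1,1}$ regularity and positivity required by Assumption \ref{assump:matrices}, and that the signed-distance coordinate $\alpha$ plays exactly the role of the variable $x$ in the weighted norms. Because $\alpha$ is only $C^{1,1}$ (one derivative less than what \cite{baouendi_goulaouic} uses), some care is needed to check that the commutator and lifting estimates carry through; this is precisely the reason for the $C^3$-regularity assumption on $I$ and the $C^{1,1}$ assumption on $\mathbb{H}$, $\mathbb{N}$ in Assumption \ref{assump:matrices_general}.
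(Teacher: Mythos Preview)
Your proposal is correct and follows essentially the same approach as the paper: the paper's own proof is little more than a pointer back to Theorem \ref{theorem:reg_well_posedness}, recording the key identity $\alpha\mathbb{H}\nabla(u_h\log|\alpha|)=u_h\mathbb{H}\nabla\alpha+\alpha\log|\alpha|\mathbb{H}\nabla u_h$ and invoking Theorem \ref{theorem:sp_wp_general} for uniqueness. Your Steps 1--3 are exactly this strategy made explicit, and your localization/change-of-variables argument in Step 2 is precisely how the paper transfers the rectangular results (see the proof of Theorem \ref{theorem:sp_wp_general} and the charts $\bpsi_k$ introduced after Lemma \ref{lem:deftraces_general}); the ``main obstacle'' you flag about the regularity of the pulled-back matrix is addressed by the paper at \eqref{eq:atilde}.
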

\begin{proof}
	The uniqueness follows from Theorem \ref{theorem:sp_wp_general}. The existence follows verbatim by the same argument as in the proof of Theorem \ref{theorem:reg_well_posedness}. Indeed, remark that $u_h$ that solves \eqref{eq:nablauh} satisfies
	\begin{align*}
		\alpha \mathbb{H}\nabla (u_h\log|\alpha|)= u_h\mathbb{H}\nabla \alpha+\alpha\log|\alpha|\mathbb{H}\nabla u_h.
	\end{align*}
With the above we can prove that, in the sense of equality in $H^{-1/2}(I)$,  $\gamma_n^Iu_{sing}=\gamma_0^{I}u_h\vec{n}\cdot \mathbb{H}\vec{n}=h_I\gamma_0^Iu_h=g$, see \eqref{eq:signed_distance}. This can be justified  rigorously like in Theorem \ref{theorem:reg_well_posedness}. 
\end{proof}
Proposition \ref{prop:decomp2} follows immediately from the above.

\subsubsection{The Green's formula}
\label{sec:green_general}
To motivate Definition \ref{def:trace_general}, we need an appropriate Green's formula. 
Let 
\begin{align*}
	\mathcal{V}_{sing}(\operatorname{div}(\alpha \mathbb{H}\nabla .); D_p):=\{v\in \mathcal{V}_{sing}(D_p): \, \operatorname{div}(\alpha\mathbb{H}\nabla v)\in L^2(\Omega_p), \, \gamma_n^{I}v\in H^{1/2}(I)\}.
\end{align*}

Then the following counterpart of Theorem \ref{theorem:green} holds true. 
\begin{theorem}
	\label{theorem:green_general}
	For $u, v\in 	\mathcal{V}_{sing}(\operatorname{div}(\alpha \mathbb{H}\nabla. ); D_p)$, it holds that 
	\begin{align*}
		\int_{D_p}\operatorname{div}(\alpha\mathbb{H}\nabla u)\overline{v}-	\int_{D_p}\overline{\operatorname{div}(\alpha\mathbb{H}\nabla v)}u=-\langle \gamma_n^I u, \overline{\gamma_0^I v}\rangle_{L^2(I)}+\overline{\langle \gamma_n^I v, \overline{\gamma_0^I u}\rangle}_{L^2(I)}.
	\end{align*}
\end{theorem}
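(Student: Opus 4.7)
The plan is to mirror the proof of Theorem \ref{theorem:green} line by line, with the signed distance function $\alpha$ playing the role of $x$ and the loop $I$ playing the role of $\Sigma$. The key simplification is that, by Assumption \ref{assump:alpha} and \eqref{eq:alphaprop}, $\alpha\in C^{1,1}(\overline{D};\mathbb{R})$ and $\nabla\alpha|_I=\vec{n}_I$ with $|\nabla\alpha|=1$ in a tubular neighborhood $U_I$ of $I$, so that the quantity $\vec{n}_I\cdot\mathbb{H}\nabla\alpha|_I=\vec{n}_I\cdot\mathbb{H}_I\vec{n}_I=h_I$ will step precisely into the role played by $a_{11}$ in the rectangular setting.

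First, I would introduce a cut-off $\varphi_\varepsilon(\vec{x}):=\varphi_1(\alpha(\vec{x})/\varepsilon)$ with $\varphi_1$ as in \eqref{eq:cutoff_phi}; this is well defined and $C^{1,1}$ in a tubular neighborhood of $I$ for $\varepsilon$ small enough, and can be extended by $0$ away from $U_I$. Since $u,v\in \mathcal{V}_{sing}(\operatorname{div}(\alpha\mathbb{H}\nabla.);D_p)$, the Lebesgue dominated convergence theorem gives
\begin{align*}
\int_{D_p}\operatorname{div}(\alpha\mathbb{H}\nabla u)\overline{v}=\lim_{\varepsilon\to 0+}\int_{D_p\setminus\{|\alpha|<\varepsilon/2\}}\operatorname{div}(\alpha\mathbb{H}\nabla u)\overline{v}(1-\varphi_\varepsilon).
\end{align*}
On the set $\{|\alpha|>\varepsilon/2\}$ the integrand is regular enough for two successive integrations by parts; the traces on $I_p$ vanish by $\gamma_0^{I_p}u=\gamma_0^{I_p}v=0$, and the conormal boundary terms on $\partial D_p\setminus I_p$ that arise from derivatives falling on $\varphi_\varepsilon$ reassemble into interior integrals of the form $\int_{D_p}\alpha\vec{n}_\alpha\cdot(\mathbb{H}\nabla u\,\overline{v}-\overline{\mathbb{H}\nabla v}\,u)\varphi_\varepsilon'(\alpha/\varepsilon)\varepsilon^{-1}|\nabla\alpha|^2$, where $\vec{n}_\alpha:=\nabla\alpha/|\nabla\alpha|$. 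This is the exact analog of the interior term $\mathcal I^\varepsilon$ in \eqref{eq:Iterm}.

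Next, I would invoke Theorem \ref{theorem:decomposition_general} to write $u=u_{reg}+u_h\log|\alpha|$ and $v=v_{reg}+v_h\log|\alpha|$, with $u_h,v_h\in H^1(D_p)$ and $u_{reg},v_{reg}\in\bigcap_{\delta>0}\mathcal{H}^1_\delta(D_p)$. Splitting $\mathcal I^\varepsilon$ into four terms $\mathcal I_j^\varepsilon$, $j=1,\dots,4$, exactly as in the proof of Theorem \ref{theorem:green}, three of them involve either $\log|\alpha|$ or $\log^2|\alpha|$ weights and vanish as $\varepsilon\to 0$ by the same Cauchy--Schwarz plus Hardy argument used in \eqref{eq:funeq}. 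Here I would use the Hardy-type inequality of Proposition \ref{prop:hardy} transported to $D_p$; this transport is standard because, in a tubular neighborhood of $I$, one can straighten the interface via the diffeomorphism $(s,t)\mapsto \gamma(s)+t\vec{n}_I(\gamma(s))$ induced by an arc-length parametrization $\gamma$ of $I$, under which $\alpha$ becomes the coordinate $t$ and the weighted spaces $\mathcal{H}^1_\delta$ transform into their rectangular counterparts. The surviving term $\mathcal I_3^\varepsilon$ uses the identity $\vec{n}_\alpha\cdot\mathbb{H}\vec{n}_\alpha|_I=h_I$ and yields, after one more integration by parts and dominated convergence,
\begin{align*}
\lim_{\varepsilon\to 0+}\mathcal I_3^\varepsilon=-\int_I h_I\bigl(u_h\overline{v_{reg}}-u_{reg}\overline{v_h}\bigr)\,d\sigma.
\end{align*}
Combining the two forms $\gamma_n^Iu=h_I\gamma_0^Iu_h$ and $\gamma_0^Iu=\gamma_0^Iu_{reg}$ from Theorem \ref{theorem:decomposition_general} and Definition \ref{def:trace_general}, this surviving term equals the right-hand side of the claimed Green's formula.

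The main obstacle is purely technical: one has to justify the two integrations by parts when $\alpha$ is only $C^{1,1}$ rather than smooth, and to transport the Hardy inequality and the density results of Appendix \ref{appendix:weighted} from the straight interface $\Sigma$ to the curved loop $I$. Both issues are handled by the tubular-neighborhood diffeomorphism described above combined with a partition-of-unity argument that reduces the global estimate to one localized in a neighborhood of $I$ (where the change of coordinates is bi-Lipschitz and preserves all the relevant weighted and fractional norms up to constants) and one localized away from $I$ (where $\alpha$ is bounded away from zero and the classical Green's formula of \cite{girault_raviart} applies directly).
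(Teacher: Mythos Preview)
Your approach is correct but takes a genuinely different route from the paper. You work intrinsically on $D_p$, reproducing the cut-off argument of Theorem \ref{theorem:green} with $\varphi_\varepsilon(\alpha(\cdot)/\varepsilon)$ in place of $\varphi_\varepsilon(x)$ and tracking how the surviving boundary term produces the factor $\nabla\alpha\cdot\mathbb{H}\nabla\alpha|_I=h_I$. The paper, by contrast, does not redo that computation: it localizes via a partition of unity $\{\chi_k\}$ subordinate to charts $\bpsi_k:\overline{\Omega}\to\overline{\mathcal{U}_k}$, pulls each localized piece back to the rectangle $\Omega$, and applies the already-proven Corollary \ref{cor:green} there. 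The bulk of the paper's proof (Proposition \ref{prop:green_general_ingredient}) is then devoted to checking how the various ingredients transform under the chart---in particular Lemma \ref{lem:alphadef} showing $\widetilde{\alpha}=\tilde{x}\gamma$ with $\gamma|_\Sigma=|\mathbb{J}_\Sigma^{-t}\vec{n}_\Sigma|^{-1}$, and the identity $\widetilde{a}_{11}=\rho_\Sigma\widetilde{h_I}$ relating the transformed coefficient to $h_I$ and the surface Jacobian. Your approach is conceptually cleaner and avoids this bookkeeping, at the price of having to transport the Hardy inequality, the density results of Appendix \ref{appendix:weighted}, and the $L^1$-integrability lemma (the analog of Lemma \ref{lem:L1}, needed to justify the final integration by parts in $\mathcal{I}_3^\varepsilon$) to the curved setting; the paper's approach is more modular and reuses the rectangular machinery wholesale, which is consistent with its overall strategy of proving everything first in the simplified geometry.
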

The proof of this theorem is technical and relies on a well-chosen change of coordinates and the Green's formula on rectangular domains of Theorem \ref{theorem:green}. By a localization argument and the same argument as in Lemma \ref{lem:deftraces} it is sufficient to prove the above result for $D_p$ replaced by a vicinity of $I$. In particular, the following statement  can be proven like Lemma \ref{lem:deftraces}. 
\begin{lemma}
	\label{lem:deftraces_general}
	Assume that $v\in \mathcal{V}_{sing}(\operatorname{div}(\alpha\mathbb{H}\nabla.); D_p)$ admits two decompositions 
	\begin{align*}
		v=v_{h,j}\log|\alpha|+v_{reg,j},\quad j=1,2, 
	\end{align*}
	where $v_{h,j}\in H^1(D_p)$ with $\gamma_0^{I_p}v_{h,j}=0$, and $v_{reg,j}\in \mathcal{H}^1_{\delta}(D_p)$, $0<\delta<1$, \, $j=1,2$. Then
	\begin{align*}
		\gamma_0^I v_{h,1}=\gamma_0^I v_{h,2}, \qquad \gamma_0^I v_{reg,1}=\gamma_0^I v_{reg,2},
	\end{align*}
	and, in particular, $\gamma_0^Iv_{h,j}=h_{I}^{-1}\gamma_{n}^I v$, $\gamma_0^I v_{reg,j}=\gamma_0^I v$, $j=1,2$.
\end{lemma}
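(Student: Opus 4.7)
The plan is to reduce the statement to its rectangular analogue, Lemma \ref{lem:deftraces}, via a localization followed by a change of coordinates into normal form near $I$, and then read off the last two identities from the uniqueness clause of Theorem \ref{theorem:decomposition_general}.

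First, subtract the two decompositions and set
\[
d_h := v_{h,2}-v_{h,1}\in H^1(D_p),\qquad d_r := v_{reg,1}-v_{reg,2}\in\mathcal{H}^1_\delta(D_p),
\]
so that $d_r = d_h\log|\alpha|$ a.e. in $D_p$. The two claims $\gamma_0^I v_{h,1}=\gamma_0^I v_{h,2}$ and $\gamma_0^I v_{reg,1}=\gamma_0^I v_{reg,2}$ amount to $\gamma_0^I d_h=0$ and $\gamma_0^I d_r=0$, and since both $d_h$ and $d_r$ have well-defined traces on $I$ (by $H^1$- and by Corollary \ref{cor:tr}, respectively), it suffices to prove these trace identities.

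Next I would localize: pick $\chi\in C^\infty(\overline{D})$ supported in the tubular neighbourhood $U_I$ where $\alpha$ is the signed distance to $I$ (see Assumption \ref{assump:alpha}), and equal to $1$ in a smaller neighbourhood of $I$. Multiplication by $\chi$ preserves $H^1(D_p)$ and $\mathcal{H}^1_\delta(D_p)$, and one checks immediately that $\chi d_r = \chi d_h\,\log|\alpha|$. Since $\chi=1$ near $I$, the traces of $d_h, d_r$ and of $\chi d_h, \chi d_r$ on $I$ coincide, so the problem is reduced to a thin neighbourhood of $I$ on the $D_p$-side.

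The core step is now to straighten $I$. Because $I$ is a $C^3$-loop without self-intersections, the map
\[
\Phi:(t,s)\longmapsto x_0(s)+t\,\vec{n}_I(s),
\]
where $x_0:\mathbb{R}/L\mathbb{Z}\to I$ is an arc-length parametrization of $I$ and $\vec{n}_I(s)$ the unit normal pointing into $D_p$, is a $C^2$-diffeomorphism from $(-\varepsilon_0,\varepsilon_0)\times(\mathbb{R}/L\mathbb{Z})$ onto a tubular neighbourhood of $I$ for some $\varepsilon_0>0$; the Jacobian is bounded above and below. Crucially, in this parametrization one has $\alpha\circ\Phi(t,s)=t$, by the very definition of the signed distance, cf. \eqref{eq:signed_distance}. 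Hence the pullbacks
\[
\widetilde{d}_h := \chi d_h \circ \Phi,\qquad \widetilde{d}_r := \chi d_r\circ\Phi
\]
belong, respectively, to $H^1(\Omega_p)$ (with periodic boundary conditions in $s$ and vanishing trace on the ``outer'' boundary $t=\varepsilon_0$) and to $\mathcal{H}^1_\delta(\Omega_p)$, where here I identify $(t,s)$ with $(x,y)$ on the rectangle $\Omega_p=(0,\varepsilon_0)\times(-L/2,L/2)$. The identity $d_r=d_h\log|\alpha|$ becomes exactly
\[
\widetilde{d}_r = \widetilde{d}_h\,\log|t|.
\]
Comparing with the hypothesis of Lemma \ref{lem:deftraces} (with $u=0$, $u_{s,1}=\widetilde{d}_h$, $u_{s,2}=0$, $u_{r,1}=\widetilde{d}_r$, $u_{r,2}=0$), we obtain $\gamma_0^\Sigma\widetilde{d}_h=0$ and $\gamma_0^\Sigma\widetilde{d}_r=0$. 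Translating back via $\Phi$ (which preserves the trace operator on $I$ since $\Phi$ is a $C^2$-diffeomorphism mapping $\{t=0\}$ onto $I$) yields $\gamma_0^I d_h=0$ and $\gamma_0^I d_r=0$.

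Finally, the last sentence of the statement is immediate: by Theorem \ref{theorem:decomposition_general}, \emph{any} decomposition of the form $v=v_h\log|\alpha|+v_{reg}$ with $v_h\in H^1(D_p)$, $\gamma_0^{I_p}v_h=0$, and $v_{reg}\in\mathcal{H}^1_\delta(D_p)$ satisfies $\gamma_n^Iv=\gamma_n^I(v_h\log|\alpha|)=h_I\gamma_0^I v_h$ and $\gamma_0^I v_{reg}$ is the trace defined by the canonical (harmonic-lifting) decomposition; by the first part of the lemma these two quantities are independent of the chosen decomposition, hence equal to $h_I^{-1}\gamma_n^I v$ and $\gamma_0^I v$ respectively.

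The main obstacle I anticipate is the bookkeeping at the change-of-coordinates step: one must check carefully that the weighted norm $\||\alpha|^{\delta/2}\nabla\cdot\|_{L^2(D_p)}$ pulls back to an equivalent norm $\||t|^{\delta/2}\nabla_{(t,s)}\cdot\|_{L^2(\Omega_p)}$ (using the identity $\alpha\circ\Phi=t$ and the boundedness of the Jacobian and its inverse), and that the periodic structure in $s$, which has no analogue on the general loop $I$, is simply inherited from the parametrization and does not affect the trace identities. Once this is done, the proof proceeds by a direct application of Lemma \ref{lem:deftraces}.
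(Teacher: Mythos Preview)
Your proof is correct and matches the paper's indicated approach: the paper does not spell out the argument but simply states that the lemma ``can be proven like Lemma \ref{lem:deftraces}'', and what you do---localize near $I$, straighten via a change of coordinates so that $\alpha$ becomes the normal variable, and invoke Lemma \ref{lem:deftraces}---is precisely an implementation of that. One small difference worth noting: the paper's general machinery in Section \ref{sec:green_general} works with a finite atlas of local charts $\bpsi_k$ under which $\widetilde{\alpha}(\tbx)=\tilde{x}\gamma(\tbx)$ with a positive $C^2$ factor $\gamma$ (Lemma \ref{lem:alphadef}), so $\log|\alpha|$ pulls back to $\log|\tilde{x}|+\log|\gamma|$ and the extra $\log|\gamma|$ term must be absorbed into the regular part; your choice of the global tubular chart $\Phi(t,s)=x_0(s)+t\,\vec n_I(s)$, available here because $I$ is a simple $C^3$ loop, gives $\alpha\circ\Phi=t$ exactly and sidesteps this bookkeeping, which is a clean simplification.
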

Let us now concentrate on establishing the necessary results in the vicinity of $I$. Since $I$ is compact and the domain $D_p$ is of $C^{3}$ regularity, there exist open sets $\{\mathcal{U}_k\}_{k=1}^N$ s.t. $I\subset \bigcup_{k}\mathcal{U}_k$, and associated local $C^{3}$ charts  $\bpsi_k: \, \overline{\Omega}\rightarrow \overline{\mathcal{U}_k}$, where $\Omega$ is like in Section \ref{sec:simpl}, and $$
\mathcal{U}_{p,k}:=\mathcal{U}_k\cap D_p=\bpsi_k(\Omega_p), \quad \mathcal{U}_k\cap D_n=\bpsi_k(\Omega_n), \quad \mathcal{U}_k\cap I = \bpsi_k(\Sigma),$$ (this is a corollary of the definition \cite[p.94]{gilbrag_trudinger}). Without loss of generality, we can assume that $\{\mathcal{U}_k\}_{k=1}^N\subset U_I$, where $\alpha$ equals the signed distance. Next, we define the subordinate partition of unity $\{\chi_k\}_{k=1}^N\subset C^{\infty}(\mathbb{R}^2)$, see \cite[Corollary 3.22]{mclean}, and functions 
\begin{align*}
	u_k:=\chi_k u,\quad v_k=\chi_k v. 
\end{align*}
By direct computation it follows that for $u, v$ like in Theorem \ref{theorem:green_general}, $u_k,\, v_k\in \mathcal{V}_{sing}(\operatorname{div}(\alpha\mathbb{H}\nabla.); D_p)$, and 
\begin{align}
	\label{eq:support}
	\operatorname{supp}u_k\subsetneq \mathcal{U}_k, \quad 	\operatorname{supp}v_k\subsetneq \mathcal{U}_k.
\end{align}
We then have the following result. 
\begin{proposition}
	\label{prop:green_general_ingredient}
	Let $k,m\in\{1,\ldots, N\}$. Then for  $\mathcal{U}_{p,k}$, $u_k$, $v_m$ as above, it holds that 
	\begin{align}
		\label{eq:ukr}
		\int_{\mathcal{U}_{p,k}}\operatorname{div}(\alpha \mathbb{H}\nabla u_k)\overline{v}_m-	\int_{\mathcal{U}_{p,k}}\overline{\operatorname{div}(\alpha \mathbb{H}\nabla v_m)}{u}_k=-\int_{I}\chi_k\gamma_n^I u\overline{\chi_m \gamma_0^I v}+\int_{I}\overline{\chi_m\gamma_n^I v}{\chi_k \gamma_0^I u}.
	\end{align}
\end{proposition}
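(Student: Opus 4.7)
\textbf{Proof plan for Proposition \ref{prop:green_general_ingredient}.} The strategy is to transport the identity on $\mathcal{U}_{p,k}$ to a corresponding identity on the reference rectangle $\Omega_p$ via the local chart $\bpsi_k$, and there invoke Theorem \ref{theorem:green}. First, I would refine the choice of the chart $\bpsi_k: \overline{\Omega}\to\overline{\mathcal{U}_k}$ so that it is adapted to the signed distance function. Using the $C^{3}$-regularity of $I$ and \eqref{eq:alphaprop}, one can build a straightening diffeomorphism (a tubular neighborhood construction) such that $\alpha\circ\bpsi_k(x,y)=x$ for $(x,y)$ in a neighborhood of $\Sigma$, with $\bpsi_k(\Sigma)=\mathcal{U}_k\cap I$ and $\vec{n}_I\circ\bpsi_k$ corresponding to $\vec{e}_x$ on $\Sigma$. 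This way, the degeneracy in the operator $\operatorname{div}(\alpha\mathbb{H}\nabla\cdot)$ is mapped exactly to the degeneracy in $\operatorname{div}(x\mathbb{A}\nabla\cdot)$.

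Next, define the pull-backs $\tilde{u}_k:=u_k\circ\bpsi_k$, $\tilde{v}_m:=v_m\circ\bpsi_k$. A direct chain-rule computation shows that
\begin{equation*}
\operatorname{div}(\alpha\,\mathbb{H}\nabla u_k)\circ \bpsi_k \;=\; \frac{1}{J_k}\,\operatorname{div}\!\bigl(x\,\mathbb{A}\,\nabla \tilde{u}_k\bigr),
\end{equation*}
where $J_k=|\det D\bpsi_k|$ and $\mathbb{A}:=J_k\,(D\bpsi_k)^{-1}\mathbb{H}(\bpsi_k)(D\bpsi_k)^{-t}$ is Hermitian positive definite and $C^{1,1}(\overline{\Omega})$, so it satisfies Assumption \ref{assump:matrices}. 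The support hypotheses $\operatorname{supp}u_k,\operatorname{supp}v_m\Subset \mathcal{U}_k$, cf.\ \eqref{eq:support}, guarantee that $\tilde{u}_k$ and $\tilde{v}_m$ vanish identically in a neighborhood of $\Gamma_p\cup\Gamma_p^{\pm}$, so in particular the Dirichlet and periodic boundary conditions built into $\mathcal{H}^1(\Omega_p)$ are satisfied for free. Checking that $\tilde{u}_k,\tilde{v}_m\in\mathcal{V}_{sing}(\operatorname{div}(x\mathbb{A}\nabla\cdot);\Omega_p)$ then reduces to transporting the $\mathcal{V}_{sing}$- and $L^2$-norms through the Lipschitz diffeomorphism $\bpsi_k$, which is standard since $\alpha\circ\bpsi_k=x$ near $\Sigma$ and $J_k$ is bounded above and below; the Neumann trace $\gamma_n^\Sigma\tilde{u}_k\in H^{1/2}(\Sigma)$ follows analogously from $\gamma_n^I u\in H^{1/2}(I)$.

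Applying Theorem \ref{theorem:green} to $\tilde{u}_k,\tilde{v}_m$ on $\Omega_p$ yields
\begin{equation*}
\int_{\Omega_p}\!\operatorname{div}(x\mathbb{A}\nabla\tilde{u}_k)\overline{\tilde{v}_m}
-\int_{\Omega_p}\!\overline{\operatorname{div}(x\mathbb{A}\nabla\tilde{v}_m)}\tilde{u}_k
=-\langle\gamma_n^\Sigma\tilde{u}_k,\overline{\gamma_0^\Sigma\tilde{v}_m}\rangle_{L^2(\Sigma)}
+\overline{\langle\gamma_n^\Sigma\tilde{v}_m,\overline{\gamma_0^\Sigma\tilde{u}_k}\rangle}_{L^2(\Sigma)} .
\end{equation*}
The left-hand side transforms into the volume integrals of \eqref{eq:ukr} by the change-of-variables $\vec{x}=\bpsi_k(x,y)$, the Jacobian $J_k$ cancelling against the $1/J_k$ factor above. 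For the right-hand side, it remains to identify the transformed traces. Since $\alpha\circ\bpsi_k=x$, the singular logarithmic part of the decomposition from Theorem \ref{theorem:decomposition_general}, namely $u_h\log|\alpha|$, pulls back to $(u_h\circ\bpsi_k)\log|x|$, while the regular part pulls back to a function in $\mathcal{H}^{1-}(\Omega_p)\cap\bigcap_{\re>0}\mathcal{H}^1_\re(\Omega_p)$. Invoking Lemma \ref{lem:deftraces_general} (and Lemma \ref{lem:deftraces} on the $\Omega_p$-side) shows that the Dirichlet trace introduced in Definition \ref{def:trace_general} is compatible with the chart, i.e.\ $\gamma_0^\Sigma\tilde{u}_k=(\chi_k\gamma_0^I u)\circ\bpsi_k|_\Sigma$ and similarly for $\tilde{v}_m$; the Neumann trace is transported through $\gamma_n^\Sigma\tilde{u}_k\,dy=(\chi_k\gamma_n^I u)\circ\bpsi_k\,ds$ by the chosen straightening. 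Transporting the boundary integrals then produces exactly the right-hand side of \eqref{eq:ukr}.

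\textbf{Main obstacle.} The non-routine step is the verification that the chart-induced pullback is compatible with the decomposition of Theorem \ref{theorem:decomposition_general} and, consequently, with Definition \ref{def:trace_general}: one has to check that both the logarithmic singular part and the regular part transform into objects of exactly the same structure on $\Omega_p$, so that the Dirichlet trace on $I$ corresponds to $\gamma_0^\Sigma\tilde{u}_{reg}$ on $\Sigma$. This is precisely where the careful choice $\alpha\circ\bpsi_k=x$ (rather than a generic straightening) becomes essential, and where Lemma \ref{lem:deftraces_general} is used to absorb the ambiguity stemming from different possible decompositions.
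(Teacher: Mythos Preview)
Your approach is correct and is a genuine variant of the paper's argument. The key difference lies in the choice of chart. You construct Fermi (normal) coordinates adapted to $I$ so that $\alpha\circ\bpsi_k=x$ exactly; the paper instead works with a \emph{generic} $C^3$ chart, for which one only has $\alpha\circ\bpsi_k(\tbx)=\tilde{x}\,\gamma(\tbx)$ with $\gamma\in C^2(\overline{\Omega})$, $\gamma>0$ (Lemma~\ref{lem:alphadef}). As a consequence, the paper's pulled-back decomposition reads
\[
\widetilde{u}_k=\widetilde{\chi}_k\widetilde{u}_h\log|\tilde{x}|+\widetilde{\chi}_k(\widetilde{u}_{reg}+\widetilde{u}_h\log|\gamma|),
\]
and one must invoke Corollary~\ref{cor:green} (not Theorem~\ref{theorem:green} directly) to allow this non-canonical decomposition; the extra $\log|\gamma|$ terms then cancel between the two boundary integrals. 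The paper also carries out the explicit computation $\widetilde{a}_{11}=\rho_{\Sigma}\,\widetilde{h_I}$, which produces precisely the arc-length element $\rho_{\Sigma}$ needed to convert $\int_{\Sigma}\ldots d\tilde{y}$ into $\int_{I}\ldots d\Gamma$.

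Your route buys a cleaner decomposition (no $\log|\gamma|$ bookkeeping), at the cost of constructing the special chart; the paper's route is chart-agnostic and makes the cancellation mechanism fully explicit. One minor imprecision in your write-up: in the Fermi chart the relation is $(D\bpsi_k)^t(\vec{n}_I\circ\bpsi_k)=\vec{e}_x$ on $\Sigma$, which gives $|\mathbb{J}^{-t}_{\Sigma}\vec{n}_{\Sigma}|=1$ but not $\vec{n}_I\circ\bpsi_k=\vec{e}_x$; this does not affect your argument, since the Neumann-trace transformation $\gamma_n^{\Sigma}\tilde{u}_k=\rho_{\Sigma}\,(\chi_k\gamma_n^I u)\circ\bpsi_k$ still holds and provides the correct surface measure.
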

This result will be proven by a change of coordinates. Let us introduce several auxiliary results. Let us fix $k\in \{1,\ldots, N\}$. 
For $\vec{x}\in \mathcal{U}_{k}$, it holds that  $\vec{x}:=\bpsi_k(\widetilde{\vec{x}})$, $\widetilde{\vec{x}}\in \Omega$. We define the Jacobian and its determinant:
\begin{align*}
	{\mathbb{J}}=D\boldsymbol{\psi}_k, \quad  j=\operatorname{det}{\mathbb{J}}, \quad \mathbb{J}_{\Sigma}=\left. \mathbb{J}\right|_{\Sigma}, \quad j_{\Sigma}=\left. j\right|_{\Sigma}.
\end{align*}
For any function $h: \, \mathcal{U}_k\rightarrow \mathbb{C}$, we define its pullback  $\widetilde{h}(\tbx)=h({\bpsi}_k(\tbx))$, $\tbx\in \Omega$. Moreover, we denote by $\widetilde{\operatorname{div}}$, $\widetilde{\operatorname{\nabla}}$ etc. differential operators written in $\widetilde{\vec{x}}$-coordinates.

Let $I_k:=\overline{\mathcal{U}_k}\cap I$. Denoting by $\vec{n}_{\Sigma}=(1,0)$ the unit normal to $\Sigma$, we recall that, see \cite[(2.1.62), (2.1.58)]{boffi}.
\begin{lemma}
	\label{lem:transformations}
	For $f\in L^1(I_k)$, it holds that 
$
		\int_{I_k}fd\Gamma=\int_{\Sigma}\widetilde{f}(\widetilde{y})\rho_{\Sigma}(\tilde{y})d\tilde{y}, \quad \rho_{\Sigma}=|\mathbb{J}^{-t}_{\Sigma}\vec{n}_{\Sigma}|j_{\Sigma}=|\left.\partial_{y}\bpsi_k\right|_{\Sigma}|.
$
\end{lemma}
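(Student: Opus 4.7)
The plan is to derive both equalities in the lemma from the standard parametric definition of the line integral over $I_k$, using $\left.\boldsymbol{\psi}_k\right|_{\Sigma}$ as a parametrization. Since $\Sigma=\{0\}\times(-\ell,\ell)$ and $\boldsymbol{\psi}_k\in C^3(\overline{\Omega})$ maps $\Sigma$ diffeomorphically onto $I_k\subset I$, the curve $I_k$ admits the parametrization $(-\ell,\ell)\ni\tilde{y}\mapsto \boldsymbol{\psi}_k(0,\tilde{y})$. The standard change-of-variables formula for line integrals then gives directly
\begin{equation*}
\int_{I_k}f\,d\Gamma=\int_{-\ell}^{\ell}f(\boldsymbol{\psi}_k(0,\tilde{y}))\,\bigl|\partial_{\tilde{y}}\boldsymbol{\psi}_k(0,\tilde{y})\bigr|\,d\tilde{y}=\int_{\Sigma}\widetilde{f}(\tilde{y})\,\bigl|\left.\partial_{y}\boldsymbol{\psi}_k\right|_{\Sigma}(\tilde{y})\bigr|\,d\tilde{y},
\end{equation*}
which is the second of the two stated expressions for $\rho_{\Sigma}$.

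To obtain the first expression $\rho_{\Sigma}=|\mathbb{J}^{-t}_{\Sigma}\vec{n}_{\Sigma}|\,j_{\Sigma}$, I would write $\mathbb{J}=(\partial_x\boldsymbol{\psi}_k\,|\,\partial_y\boldsymbol{\psi}_k)$ as a matrix with columns, so that $j=\det\mathbb{J}$ and, using the explicit inverse of a $2\times 2$ matrix, $\mathbb{J}^{-t}\vec{n}_{\Sigma}=\mathbb{J}^{-t}(1,0)^t=\frac{1}{j}(\partial_y\psi_{k,2},-\partial_y\psi_{k,1})^t$, whence $|\mathbb{J}^{-t}\vec{n}_{\Sigma}|=|\partial_y\boldsymbol{\psi}_k|/j$. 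Evaluating on $\Sigma$ and multiplying by $j_{\Sigma}$ yields $|\mathbb{J}^{-t}_{\Sigma}\vec{n}_{\Sigma}|\,j_{\Sigma}=|\left.\partial_y\boldsymbol{\psi}_k\right|_{\Sigma}|$, which matches the parametric expression. This algebraic identity is the intrinsic reason why the Piola-type surface density can be written in either form.

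There is no real obstacle here: both identities are routine consequences of the $C^{3}$-regularity of $\boldsymbol{\psi}_k$ and the local product structure $\Omega_p=(0,a)\times(-\ell,\ell)$ with $\Sigma=\{0\}\times(-\ell,\ell)$, which makes $\vec{e}_y$ the unit tangent to $\Sigma$ and $\vec{n}_{\Sigma}=\vec{e}_x$. The only minor point worth noting is that $\partial_y\boldsymbol{\psi}_k$ does not vanish on $\Sigma$ since $\mathbb{J}_{\Sigma}$ is invertible (as $\boldsymbol{\psi}_k$ is a diffeomorphism), so $\rho_{\Sigma}>0$ and the formula is meaningful. Since the result is exactly \cite[(2.1.58),(2.1.62)]{boffi}, one may simply invoke those references; the brief derivation above is included for the reader's convenience.
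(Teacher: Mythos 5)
Your proof is correct. The paper itself does not provide a proof: it simply cites \cite[(2.1.58),(2.1.62)]{boffi} for both identities. What you have done is supply the elementary verification that those references summarize — the parametric line-integral formula for the second expression and the cofactor computation for the algebraic identity connecting the two expressions for $\rho_\Sigma$. The derivation is sound; note only that in the step $|\mathbb{J}^{-t}\vec{n}_\Sigma| = |\partial_y\boldsymbol{\psi}_k|/j$ you are implicitly using that the chart is orientation-preserving, i.e.\ $j_\Sigma>0$ — otherwise a $|j|$ would appear and the identity $|\mathbb{J}^{-t}_\Sigma \vec{n}_\Sigma| j_\Sigma = |\partial_y\boldsymbol{\psi}_k|_\Sigma$ would acquire a sign. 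This is the standard convention in \cite{boffi} and is harmless here, but is worth flagging if one ever works with a non-oriented atlas.
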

Next, let us see how $\widetilde{\alpha}$ is transformed under $\vec{\bpsi}_k$. 
\begin{lemma}
	\label{lem:alphadef}
	The coefficient $\widetilde{\alpha}(\tbx)=\alpha(\bpsi_k(\tbx))$ satisfies 
	\begin{align*}
	(\tilde{x},\tilde{y})=\tbx\mapsto \widetilde{\alpha}(\tbx)=\tilde{x}\gamma(\tbx),
\end{align*}
where   $\gamma\in C^{2}(\overline{\Omega})$ and  $ \inf_{\tbx\in \Omega}\gamma(\tbx)>0$.  Moreover, $\left. \gamma\right|_{\Sigma}=|\mathbb{J}^{-t}_{\Sigma}\vec{n}_{\Sigma}|^{-1}$. 
\end{lemma}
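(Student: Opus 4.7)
The strategy is a first-order Taylor factorization in the $\tilde x$-variable, combined with the sign and normal-transformation properties of the signed distance. Since $\bpsi_k$ maps $\Sigma=\{\tilde x=0\}$ onto $I_k\subset I$ and $\alpha$ vanishes on $I$, the pullback $\widetilde{\alpha}$ satisfies $\widetilde{\alpha}(0,\tilde y)=0$ for all $\tilde y\in(-\ell,\ell)$. The fundamental theorem of calculus in the $\tilde x$-direction then yields
\[
\widetilde{\alpha}(\tilde x,\tilde y)=\tilde x\int_0^1\partial_{\tilde x}\widetilde{\alpha}(s\tilde x,\tilde y)\,ds=:\tilde x\,\gamma(\tilde x,\tilde y),
\]
which is the desired factorization.

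For the $C^2$-regularity of $\gamma$, one needs $\widetilde{\alpha}\in C^3(\overline{\Omega})$, so that differentiation under the integral sign can be invoked twice. Although $\alpha$ is only globally $C^{1,1}$ on $\overline{D}$, Assumption \ref{assump:alpha} together with the $C^3$-smoothness of $I$ guarantees, via the classical result of Krantz--Parks cited after \eqref{eq:signed_distance}, that $\alpha$ is $C^3$ in the tubular neighborhood $U_I$; restricting (without loss of generality) the charts $\bpsi_k$ so that $\mathcal{U}_k\subset U_I$, and using the $C^3$-regularity of $\bpsi_k$, one obtains $\widetilde{\alpha}\in C^3(\overline{\Omega})$. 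Standard differentiation under the integral then yields $\gamma\in C^2(\overline{\Omega})$.

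The boundary value is computed from $\gamma|_\Sigma=\partial_{\tilde x}\widetilde{\alpha}(0,\tilde y)$ using the chain rule $\partial_{\tilde x}\widetilde{\alpha}(\tbx)=\nabla\alpha(\bpsi_k(\tbx))\cdot\mathbb{J}(\tbx)\vec n_\Sigma$. Evaluating on $\Sigma$, we invoke \eqref{eq:alphaprop}, namely $\nabla\alpha|_I=\vec n_I$, together with the standard normal-transformation formula $\vec n_I=\mathbb{J}_\Sigma^{-t}\vec n_\Sigma/|\mathbb{J}_\Sigma^{-t}\vec n_\Sigma|$, to find
\[
\gamma|_\Sigma=\frac{(\mathbb{J}_\Sigma^{-t}\vec n_\Sigma)\cdot(\mathbb{J}_\Sigma\vec n_\Sigma)}{|\mathbb{J}_\Sigma^{-t}\vec n_\Sigma|}=\frac{\vec n_\Sigma^{\,t}\vec n_\Sigma}{|\mathbb{J}_\Sigma^{-t}\vec n_\Sigma|}=\frac{1}{|\mathbb{J}_\Sigma^{-t}\vec n_\Sigma|}.
\]

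It remains to establish strict positivity. On $\Sigma$, the identity just derived gives $\gamma|_\Sigma>0$ since $\mathbb{J}_\Sigma$ is invertible. Off $\Sigma$, the chart $\bpsi_k$ maps $\Omega_p$ into $D_p$ and $\Omega_n$ into $D_n$; by Assumption \ref{assump:alpha}, $\alpha>0$ on $D_p$ and $\alpha<0$ on $D_n$, so in both sub-rectangles $\widetilde{\alpha}=\tilde x\,\gamma$ shares the sign of $\tilde x$, which forces $\gamma>0$ throughout $\Omega\setminus\Sigma$. Continuity of $\gamma$ on the compact set $\overline{\Omega}$ then yields $\inf_{\overline{\Omega}}\gamma>0$. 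The main obstacle is the $C^2$-regularity of $\gamma$, which cannot be extracted from the global $C^{1,1}$-regularity of $\alpha$ alone and compels one to invoke the stronger local smoothness of the signed distance function to the $C^3$-loop $I$.
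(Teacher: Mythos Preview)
Your proof is correct and takes a slightly different, more explicit route than the paper. The paper establishes the factorization and positivity of $\gamma$ in one stroke via the bi-Lipschitz property of $\bpsi_k$, which gives two-sided bounds $c|\tilde x|\le|\widetilde\alpha(\tbx)|\le C|\tilde x|$ and hence $\gamma\ge c>0$ directly; the regularity of $\gamma$ is then asserted from the regularity of $\bpsi_k$ and of the signed distance without spelling out the mechanism. You instead invoke Hadamard's lemma (the integral Taylor remainder) to produce an explicit formula $\gamma(\tbx)=\int_0^1\partial_{\tilde x}\widetilde\alpha(s\tilde x,\tilde y)\,ds$, from which the $C^2$-regularity follows transparently once $\widetilde\alpha\in C^3$ is secured, and you obtain positivity separately by a sign-matching argument across $\Omega_p$ and $\Omega_n$. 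Your treatment of the regularity step is in fact more careful than the paper's: you correctly isolate that $C^2$-regularity of $\gamma$ requires $C^3$-regularity of $\widetilde\alpha$, which the global $C^{1,1}$ assumption on $\alpha$ does not supply, and you explicitly invoke the local $C^3$-smoothness of the signed distance to a $C^3$ curve. The computation of $\gamma|_\Sigma$ is essentially identical in both proofs, relying on the chain rule, \eqref{eq:alphaprop}, and the normal-transformation formula.
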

\begin{proof}
As $\mathcal{U}_k\subset U_I$, we use the expression  \eqref{eq:signed_distance} of $\alpha$ (below $\operatorname{dist}$ is the signed distance to $I$, $\operatorname{dist}>0$ in $\Omega_p\cap \mathcal{U}_I$)
	\begin{align*}
		\alpha(\bpsi_k(\tbx))&=\operatorname{dist}(\bpsi_k(\tbx), I)=\operatorname{dist}(\bpsi_k(\tbx), \bpsi_k(\Sigma)), \text{ and it holds that }\\	
		&|\operatorname{dist}(\bpsi_k(\tbx), \bpsi_k(\Sigma))|=\inf_{\tbx_{\Sigma}\in \Sigma}|\bpsi_k(\tbx)-\bpsi_k(\tbx_{\Sigma})|\geq c\inf_{\tbx_{\Sigma}\in \Sigma}|\tbx-\tbx_{\Sigma}|=c\tilde{x},\\ 
		&|\operatorname{dist}(\bpsi_k(\tbx), \bpsi_k(\Sigma))|=\inf_{\tbx_{\Sigma}\in \Sigma}|\bpsi_k(\tbx)-\bpsi_k(\tbx_{\Sigma})|\leq C\inf_{\tbx_{\Sigma}\in \Sigma}|\tbx-\tbx_{\Sigma}|=C\tilde{x},
	\end{align*}
	for some constants $C, c>0$, since $\bpsi_k$ is bi-Lipschitz. From the above considerations,  regularity of $\bpsi_k$ and of the distance function it follows that 
	\begin{align*}
		\tbx\mapsto \alpha(\bpsi_k(\tbx))\in C^2(\overline{\Omega}),\qquad 
		\text{ and for }\tbx=(\tilde{x},\tilde{y}),\quad \alpha(\bpsi_k(\tbx))=\tilde{x}\gamma(\tbx),	
	\end{align*}
	where $\gamma(\tbx)\geq c$ and $\gamma\in C^{2}(\overline{\Omega})$. One verifies that $ \left.\gamma\right|_{\Sigma}=\left.\widetilde{\nabla}\widetilde{\alpha}\right|_{\Sigma}\cdot \vec{n}_{\Sigma},$
and with the change of coordinates relations, it holds that 
\begin{align*}
	\left.\gamma\right|_{\Sigma}=\mathbb{J}^t_{\Sigma}\left.(\nabla \alpha\circ\bpsi_k)\right|_{\Sigma} \cdot\vec{n}_{\Sigma}=\mathbb{J}^t_{\Sigma}(\vec{n}_I\circ \bpsi_k)\cdot\vec{n}_{\Sigma}=|\mathbb{J}^{-t}_{\Sigma}\vec{n}_{\Sigma}|^{-1}, 
\end{align*}
where the identity before the last one follows from \eqref{eq:alphaprop} and the last identity from \cite[(2.1.94)]{boffi} (namely, $\vec{n}_I\circ \vec{\psi}_k=|\mathbb{J}^{-t}_{\Sigma}\vec{n}_{\Sigma}|^{-1}\mathbb{J}^{-t}_{\Sigma}\vec{n}_{\Sigma}$).
\end{proof}
Equipped with these  results, we can prove Proposition \ref{prop:green_general_ingredient}.
\begin{proof}[Proof of Proposition \ref{prop:green_general_ingredient}] 
\textbf{Rewriting the desired identity in $\Omega_p$. }
Following  the change of coordinates as described in \cite[Section 2.1.3]{boffi}, we have that
	\begin{align*}
		\operatorname{div}(\alpha \mathbb{H}\nabla u_k)(\vec{x})=\frac{1}{j}\widetilde{\operatorname{div}}(j\widetilde{\alpha}\mathbb{J}^{-1}\widetilde{\mathbb{H}}\mathbb{J}^{-t}\widetilde{\nabla}\widetilde{u}_k)(\bpsi_k^{-1}(\vec{x})).
	\end{align*}
Let us introduce, using the notation of Lemma \ref{lem:alphadef}, the positive definite Hermitian matrix, cf. the above:
\begin{align}
	\label{eq:atilde}
	\widetilde{\mathbb{A}}:=\gamma j \mathbb{J}^{-1}\widetilde{\mathbb{H}}\mathbb{J}^{-t}\in C^{1,1}(\overline{\Omega}; \mathbb{C}^{2\times 2}), 
\end{align}
remark the regularity of $\widetilde{\mathbb{A}}$ compared to Assumption \ref{assump:matrices} (which justifies Assumption \ref{assump:matrices_general} and our requirements on the regularity of $D_p, \, D_n, I$). 
 
Parametrizing the integral in the left-hand side, we see that 
	\begin{align}
		\label{eq:lhs_paramerization}
		\int_{\mathcal{U}_{p,k}}\operatorname{div}(\alpha \mathbb{H}\nabla u_k)\overline{{v}}_m=\int_{\Omega_p}\widetilde{\operatorname{div}}(\tilde{x}	\widetilde{\mathbb{A}}\widetilde{\nabla}\widetilde{u}_k)\overline{\widetilde{v}_m},
	\end{align}
	so that the left-hand side of \eqref{eq:ukr} equals to:
	\begin{align}
		\label{eq:I}
		\mathcal{I}:=\int_{\Omega_p}\widetilde{\operatorname{div}}(\tilde{x}	\widetilde{\mathbb{A}}\widetilde{\nabla}\widetilde{u}_k)\overline{\widetilde{v}_m}-\int_{\Omega_p}\overline{\widetilde{\operatorname{div}}(\tilde{x}	\widetilde{\mathbb{A}}\widetilde{\nabla}\widetilde{v}_m)}{\widetilde{u}_k}.
	\end{align}
	We will apply to the above the integration by parts Theorem \ref{theorem:green}, more precisely, its Corollary \ref{cor:green}. Remark that while the matrix  $\widetilde{\mathbb{A}}$ in the above does not satisfy periodicity constraints, the functions $\widetilde{u}_k$, $\widetilde{v}_m$ satisfy
$
		\operatorname{supp}\widetilde{u}_k\subsetneq \Omega, \quad \operatorname{supp}\widetilde{v}_m\subsetneq \Omega,$ 
	due to \eqref{eq:support}. This allows to extend the statement of Theorem \ref{theorem:green} and of Corollary \ref{cor:green} to this case in a trivial manner.  
	
\textbf{Evaluating $\mathcal{I}$. }	
	To evaluate $\mathcal{I}$, by Corollary \ref{cor:green}, it suffices to find a decomposition of $\widetilde{u}_k$, $\widetilde{v}_m$, s.t. 
	\begin{align}
		\label{eq:tildek}
		\widetilde{u}_k=\widetilde{u}_{k,s}\log|\tilde{x}|+\widetilde{u}_{k,r}, \quad		\widetilde{v}_m=\widetilde{v}_{m,s}\log|\tilde{x}|+\widetilde{v}_{m,r},
	\end{align}
where $\widetilde{u}_{k,s}$, $\widetilde{v}_{m,s}\in \mathcal{H}^1(\Omega_p)$ and $\widetilde{u}_{k,r}$ and $\widetilde{v}_{m,r}\in \mathcal{H}^1_{\delta}(\Omega_p)$, $0<\delta<1$. 
This decomposition will be constructed with the help of corresponding decomposition for the original functions $u_k, \, v_m$. Using the decomposition defined in Theorem  \ref{theorem:decomposition_general}, we write
	\begin{align*}
		u_k=\chi_k u_h\log|\alpha|+\chi_k u_{reg}, \quad v_m=\chi_m v_h\log|\alpha|+\chi_m v_{reg}, 
	\end{align*}
so that
	\begin{align*}
		\widetilde{u}_k=\widetilde{\chi}_k \widetilde{u}_h\log|\widetilde{\alpha}|+\widetilde{\chi}_k\widetilde{u}_{reg},\quad 	\widetilde{v}_m=\widetilde{\chi}_m \widetilde{v}_h\log|\widetilde{\alpha}|+\widetilde{\chi}_m\widetilde{v}_{reg},
	\end{align*}
	and we rewrite the above to match with \eqref{eq:tildek} and using Lemma \ref{lem:alphadef}:
	\begin{align}
		\label{eq:decomp_uk}
		\begin{split}
	&	\widetilde{u}_k=\widetilde{u}_{k,s}\log|\tilde{x}|+\widetilde{u}_{k,r}, \qquad	\widetilde{u}_{k,s}=\widetilde{\chi}_k\widetilde{u}_h, \quad \widetilde{u}_{k,r}=\widetilde{\chi}_k(\widetilde{u}_{reg}+\widetilde{u}_h\log|\gamma|),
	\end{split}
	\end{align}
and similarly for $\widetilde{v}_m$. The regularity of the above functions allows to apply Corollary \ref{cor:green} to
\eqref{eq:I}: 
\begin{align}
	\label{eq:i_id}
	\mathcal{I}=-\int_{\Sigma}\widetilde{a}_{11}\gamma_0^{\Sigma}\widetilde{u}_{k,s}\, \overline{\gamma_0^{\Sigma}\widetilde{v}_{m,r}}+\int_{\Sigma}\widetilde{a}_{11}\overline{\gamma_0^{\Sigma}\widetilde{v}_{m,s}}\, {\gamma_0^{\Sigma}\widetilde{u}_{k,r}}.
\end{align}
It remains to apply the change of coordinates of Lemma \ref{lem:L1}. We rewrite, recalling the definition \eqref{eq:atilde} of $\widetilde{\mathbb{A}}$, $\left. \gamma\right|_{\Sigma}=|\mathbb{J}^{-t}_{\Sigma}\vec{n}_{\Sigma}|^{-1}$, cf. Lemma \ref{lem:alphadef}, and $\rho_{\Sigma}=|\mathbb{J}^{-t}_{\Sigma}\vec{n}_{\Sigma}|j_{\Sigma}$, cf. Lemma \ref{lem:transformations},
\begin{align*}
\widetilde{a}_{11}=\vec{n}_{\Sigma}\cdot\widetilde{\mathbb{A}}_{\Sigma}\vec{n}_{\Sigma}=|\mathbb{J}_{\Sigma}^{-t}\vec{n}_{\Sigma}|^{-1}j_{\Sigma}\vec{n}_{\Sigma}\cdot \mathbb{J}^{-1}_{\Sigma}\widetilde{\mathbb{H}}\mathbb{J}^{-t}_{\Sigma}\vec{n}_{\Sigma}=\rho_{\Sigma}|\mathbb{J}_{\Sigma}^{-t}\vec{n}_{\Sigma}|^{-2}\vec{n}_{\Sigma}\cdot \mathbb{J}^{-1}_{\Sigma}\widetilde{\mathbb{H}}\mathbb{J}^{-t}_{\Sigma}\vec{n}_{\Sigma}.
\end{align*}
By \cite[(2.1.94)]{boffi}, $\vec{n}_I\circ \bpsi_k=\mathbb{J}^{-t}_{\Sigma}\vec{n}_{\Sigma}|\mathbb{J}^{-t}_{\Sigma}\vec{n}_{\Sigma}|^{-1}$, thus 
\begin{align*}
\widetilde{a}_{11}=\rho_{\Sigma}(\vec{n}_I\cdot \mathbb{H}\vec{n}_I)\circ {\bpsi}_k=\rho_{\Sigma}\widetilde{h_I}.
\end{align*}
Rewriting \eqref{eq:i_id} with Lemma \ref{lem:L1} and recalling the definition \eqref{eq:decomp_uk}, we obtain 
\begin{align*}
	\mathcal{I}&=-\int_{I}h_I \chi_k u_h\, \overline{\chi_m(v_{reg}+v_h\log|\gamma\circ{\bpsi}_k^{-1}|)}+\int_{I}h_I \overline{\chi_m v_h}\, {\chi_k(u_{reg}+u_h\log|\gamma\circ{\bpsi}_k^{-1}|)}\\
	&=-\int_{I}h_I\chi_k u_h\, {\overline{\chi_m}v_{reg}}+\int_{I}h_I\overline{\chi_m v_h}\chi_k u_{reg}	=-\int_{I}\chi_k\gamma_n^I u\overline{\chi_m \gamma_0^I v}+\int_{I}\overline{\chi_m\gamma_n^I v}{\chi_k \gamma_0^I u},
\end{align*} 
where in the last identity we used definitions of $u_h,\,v_h$ in Definition \ref{def:trace_general}, and the fact that $\operatorname{supp}\chi_k\subsetneq \mathcal{U}_k$. 
\end{proof}
Proposition  \ref{prop:green_general_ingredient} enables us to prove Theorem \ref{theorem:green_general}. 
\begin{proof}[Proof of Theorem \ref{theorem:green_general}] 
Let $\chi$ be a regular function $C^{\infty}(\mathbb{R}^2; [0,\,1])$,  equal to $1$ in the vicinity of $I$ that is included into $\bigcup\limits_k\mathcal{U}_k$, and vanishing outside of $\bigcup\limits_{k}\mathcal{U}_k$. We rewrite 
\begin{align*}
\mathcal{I}:=	\int_{D_p}\operatorname{div}(\alpha\mathbb{H}\nabla u)\overline{v}-	\int_{D_p}\overline{\operatorname{div}(\alpha\mathbb{H}\nabla v)}{u}=	\int_{D_p}\operatorname{div}(\alpha\mathbb{H}\nabla (\chi u))\overline{\chi v}-	\int_{D_p}\overline{\operatorname{div}(\alpha\mathbb{H}\nabla(\chi v))}{\chi u}, 
\end{align*}
where the desired identity follows by a classical Green's formula and the fact that $(1-\chi)u$, resp. $(1-\chi)v$ vanishes in the vicinity of $I$. It remains to decompose $\chi u$, $\chi v$ using the partition of unity $\{\chi_k\}_{k=1}^N$, with $\operatorname{supp}\chi_k\subset \mathcal{U}_k$, and use the result of Proposition \ref{prop:green_general_ingredient} (evidently valid with $u,v$ replaced by $\chi u$, $\chi v$).
\end{proof}
\subsubsection{The key stability bound}
The counterpart of Theorem \ref{theorem:stability_estimate} reads. 
\begin{theorem}[The first stability estimate]
	\label{theorem:stability_estimate_general}
	There exists $C>0$, s.t. for all $\nu>0$ sufficiently small, the solution to \eqref{eq:B32} satisfies the following stability bound: $
		\|v^{\nu}\|_{\mathcal{V}_{sing}(D)}\leq C\|f\|_{L^2(D)}.$
\end{theorem}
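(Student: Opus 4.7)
The plan is to transpose the proof of Theorem \ref{theorem:stability_estimate} step by step from the rectangular geometry to the general one, using the local charts $\bpsi_k$ from Section \ref{sec:green_general} to reduce any geometrically delicate step to its rectangular counterpart.

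\textbf{Step 1: The conormal trace bound on $I$.} First I would establish the analogue of Proposition \ref{prop:gnubound_proof}: for $0<\nu<\nu_0$,
\begin{align*}
\|\gamma_{n,\nu}^{I} v^{\nu}\|_{\mathcal{H}^{-1/2}(I)}\;\leq\; C\big(\nu^{1/2}\|f\|+\sqrt{\|f\|\|v^{\nu}\|}\big).
\end{align*}
This rests on a counterpart of Lemma \ref{lemma:lifting}: for each $\phi\in\mathcal{H}^{1/2}(I)$ and each sufficiently small $\delta>0$, one builds $\Phi^{\delta}\in H^{1}(D_p)$ with $\gamma_0^{I}\Phi^{\delta}=\phi$, supported in the tubular neighborhood $\{0<\alpha<\delta\}$, vanishing on $\{\alpha=\delta\}$, and satisfying $\|\Phi^{\delta}\|_{L^2(D_p)}+\delta\|\nabla\Phi^{\delta}\|_{L^2(D_p)}\leq C\delta^{1/2}\|\phi\|_{\mathcal{H}^{1/2}(I)}$. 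I would construct this by pulling back, via the charts $\bpsi_k$, the explicit Fourier-series liftings of Lemma \ref{lemma:lifting_lemma} localized to $\Omega_p$, and then gluing by a partition of unity subordinate to the cover $\{\mathcal{U}_k\}$. The required bounds transfer under the bi-Lipschitz change of variables because $\alpha\circ\bpsi_k(\tilde{x},\tilde{y})=\tilde{x}\gamma(\tilde{x},\tilde{y})$ with $\gamma$ uniformly bounded above and below (Lemma \ref{lem:alphadef}), so the tube $\{|\alpha|<\delta\}$ corresponds up to bounded distortion to the strip $\{|\tilde{x}|<c\delta\}$. Once the lifting is constructed, the proof of the trace bound copies that of Proposition \ref{prop:gnubound_proof}, using the variational definition of $\gamma_{n,\nu}^{I}$ and the estimate $\nu^{1/2}\|\nabla v^{\nu}\|\lesssim\sqrt{\|f\|\|v^{\nu}\|}$ from Lemma \ref{lem:pb_abs_wp_general}.

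\textbf{Step 2: Duality against a homogeneous Neumann problem.} Let $p^{\nu}\in\mathcal{H}^{1}(D\setminus I)$ be the unique solution of
\begin{align*}
&\operatorname{div}(\alpha\mathbb{H}\nabla p^{\nu})=v^{\nu}\quad\text{in } D_p\cup D_n,\\
&\gamma_n^{I}p^{\nu}=0,\quad \gamma_0^{I_p\cup I_n}p^{\nu}=0,
\end{align*}
solved separately in $D_p$ and $D_n$. Theorem \ref{theorem:sp_wp_general} gives both well-posedness in $\mathcal{V}_{reg}(D_{\lambda})$ and the elliptic regularity bound $\|p^{\nu}\|_{H^{1}(D_{\lambda})}\lesssim\|v^{\nu}\|_{L^2(D_{\lambda})}$ (here the hypothesis $\Omega_{int}\neq\emptyset$ is crucial to ensure the Dirichlet condition on $I_p$ and hence the Poincaré inequality in $\mathcal{V}_{reg}(D_p)$, as flagged in Remark \ref{rem:assumptionomegaint}). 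Now test \eqref{eq:B32} against $p^{\nu}$ and test the $p^{\nu}$ equation against $v^{\nu}$; subtracting (and using that $\mathbb{H}$ is Hermitian and $\gamma_n^{I}p^{\nu}=0$) yields
\begin{align*}
-\int_{D}|v^{\nu}|^{2}=-i\nu\int_{D\setminus I}\mathbb{N}\nabla v^{\nu}\cdot\overline{\nabla p^{\nu}}-\int_{D}f\,\overline{p^{\nu}}-\langle \gamma_{n,\nu}^{I}v^{\nu},\overline{[\gamma_0^{I}p^{\nu}]}\rangle_{\mathcal{H}^{-1/2}(I),\mathcal{H}^{1/2}(I)}.
\end{align*}
Cauchy--Schwarz, continuity of the Dirichlet trace on $\mathcal{H}^{1}(D_{\lambda})$, the trace bound of Step 1, the regularity bound $\|p^{\nu}\|_{H^{1}(D\setminus I)}\lesssim\|v^{\nu}\|$, and the basic estimate $\nu\|\nabla v^{\nu}\|\lesssim\|f\|$ then give
\begin{align*}
\|v^{\nu}\|\lesssim\|f\|+\nu^{1/2}\|f\|+\sqrt{\|f\|\|v^{\nu}\|},
\end{align*}
and the Young inequality absorbs the last term to yield $\|v^{\nu}\|_{L^{2}(D)}\lesssim\|f\|$.

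\textbf{Step 3: The weighted gradient bound.} Testing \eqref{eq:B32} with $\alpha v^{\nu}\in\mathcal{H}^{1}(D)$, integrating by parts, and taking the real part gives
\begin{align*}
-\int_{D}\alpha^{2}\,\mathbb{H}\nabla v^{\nu}\cdot\overline{\nabla v^{\nu}}-\Re\int_{D}\overline{v^{\nu}}\,(\alpha\mathbb{H}+i\nu\mathbb{N})\nabla v^{\nu}\cdot\nabla\alpha=\Re\int_{D}f\,\overline{\alpha v^{\nu}}.
\end{align*}
Positivity of $\mathbb{H}$, boundedness of $\nabla\alpha$ on $\overline{D}$, the estimates $\|v^{\nu}\|\lesssim\|f\|$ (Step 2) and $\nu\|\nabla v^{\nu}\|\lesssim\|f\|$, combined with the Young inequality, yield $\|\alpha\nabla v^{\nu}\|\lesssim\|f\|$. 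Together with Step 2 this is $\|v^{\nu}\|_{\mathcal{V}_{sing}(D)}\lesssim\|f\|$.

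\textbf{Main obstacle.} The only non-routine step is Step 1: the explicit rectangular lifting of Lemma \ref{lemma:lifting_lemma} relied on separation of variables and periodicity in $y$. On the general curve $I$ these tools are unavailable, and one must patch together local pullbacks via the charts $\bpsi_k$ with a partition of unity, keeping the $\delta$-dependence in $\|\Phi^{\delta}\|_{L^{2}}+\delta\|\nabla\Phi^{\delta}\|_{L^{2}}\leq C\delta^{1/2}\|\phi\|_{\mathcal{H}^{1/2}(I)}$ uniform in the chart index. This is technical but mechanical, since the distortion factors are controlled uniformly by the $C^{3}$-regularity of $I$ and of the charts, and the tubular neighborhood structure of $\alpha$ given by Lemma \ref{lem:alphadef} identifies the support of the local lifting with the image of $\{0<\tilde{x}<\delta\}\times$ interval under $\bpsi_k$.
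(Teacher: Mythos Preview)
Your proposal is correct and follows the same approach as the paper. The paper's own proof is a two-line pointer saying it mimics Theorem \ref{theorem:stability_estimate}, relying on (i) the general-geometry conormal bound Proposition \ref{prop:gnubound_proof_general} (obtained, as you do, by transporting the rectangular lifting Lemma \ref{lemma:lifting} through charts and a partition of unity) and (ii) the Baouendi--Goulaouic regularity for the decoupled Neumann problem---exactly your Steps 1--3.
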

The proof of this result relies on the following proposition. 
\begin{proposition}
	\label{prop:gnubound_proof_general}
	Given $u^{\nu}$ as in \eqref{eq:B32}, let the co-normal derivative at the interface $\Sigma$ be denoted by 
	$$g^{\nu}:=\gamma_{n, \nu}^{I}v^{\nu}=\left.( \alpha \mathbb{H}+i\nu\mathbb{N})\nabla v^{\nu}\right|_{I}\cdot \vec{n}_I.
	$$
	There exists $C>0$, s.t. for all $\nu>0$ sufficiently small, it satisfies the following bound:
	\begin{align}
		\label{eq:gnubound_general}
		\|g^{\nu}\|_{{H}^{-1/2}(I)}\leq C\left(  \nu^{1/2}\|f\|+\sqrt{\|f\|\|v^{\nu}\|}\right).
	\end{align}
\end{proposition}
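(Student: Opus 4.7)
The plan is to mirror the proof of Proposition \ref{prop:gnubound_proof} almost verbatim, replacing the rectangular ingredients by their curvilinear analogs. Two stability ingredients are needed: first, the analog of \eqref{eq:est_main}, namely $\nu^{1/2}\|\nabla v^{\nu}\|_{L^2(D)} \lesssim \sqrt{\|f\|\|v^{\nu}\|}$, which is obtained by testing \eqref{eq:B32} with $v^{\nu}$, taking the imaginary part, and using the uniform positive-definiteness of $\mathbb{N}$ from Assumption \ref{assump:matrices_general} together with the Poincar\'e inequality in $H^1_0(D)$; and second, a lifting lemma on $D_p$ analogous to Lemma \ref{lemma:lifting}. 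Precisely, I would establish that for any $\phi\in H^{1/2}(I)$ and any $0<\delta<\delta_0$, there exists $\Phi^{\delta}\in H^1(D_p)$ with $\gamma_0^I\Phi^{\delta}=\phi$, $\operatorname{supp}\Phi^{\delta}\subseteq\{\vec{x}\in D_p:\, 0\leq \alpha(\vec{x})<\delta\}$, and $\|\Phi^{\delta}\|_{L^2(D_p)}+\delta\|\nabla\Phi^{\delta}\|_{L^2(D_p)}\leq C\delta^{1/2}\|\phi\|_{H^{1/2}(I)}$, with $C$ independent of $\delta$.

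To build $\Phi^{\delta}$ I would use the charts $\{\bpsi_k\}_{k=1}^N$ and subordinate partition of unity $\{\chi_k\}$ introduced in the proof of Theorem \ref{theorem:green_general}. For each $k$, push $\chi_k\phi$ back to $\Sigma$ via $\bpsi_k$, producing $\widetilde{\phi_k}\in\mathcal{H}^{1/2}(\Sigma)$; apply Lemma \ref{lemma:lifting_lemma} to obtain $\widetilde{\Phi^{\delta}_k}\in\mathcal{H}^1(\Omega_p)$ supported in $\Omega^{\delta}_{\Sigma}\cap\Omega_p$ satisfying \eqref{eq:bound_data}; then let $\Phi^{\delta}_k:=\widetilde{\Phi^{\delta}_k}\circ\bpsi_k^{-1}$ and set $\Phi^{\delta}:=\sum_k\Phi^{\delta}_k$ (extended by zero). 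The crucial point is Lemma \ref{lem:alphadef}, which asserts $\widetilde{\alpha}(\tbx)=\tilde{x}\gamma(\tbx)$ with $\gamma$ bounded from above and below by positive constants, so that the level set $\{0<\alpha<\delta\}\cap\mathcal{U}_{p,k}$ corresponds, up to a bi-Lipschitz change of coordinates, to $\Omega^{\delta/\gamma}_{\Sigma}\cap\Omega_p$; this guarantees that the support condition and the scaling of \eqref{eq:bound_data} in $\delta$ are preserved under the change of variables. Summing the local pieces and using that the partition of unity is fixed (so its contribution is $\delta$-independent) yields the announced lifting.

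With these two ingredients the argument runs exactly as for Proposition \ref{prop:gnubound_proof}. Fix $\phi\in H^{1/2}(I)$ with $\|\phi\|_{H^{1/2}(I)}=1$; the variational definition of the conormal trace (valid because $(\alpha\mathbb{H}+i\nu\mathbb{N})\nabla v^{\nu}\in\mathcal{H}(\operatorname{div};D)$) gives
\begin{align*}
-\langle g^{\nu},\phi\rangle_{H^{-1/2}(I),H^{1/2}(I)}=\int_{D_p\cap\{0<\alpha<\delta\}}f\,\Phi^{\delta}+\int_{D_p\cap\{0<\alpha<\delta\}}(\alpha\mathbb{H}+i\nu\mathbb{N})\nabla v^{\nu}\cdot\nabla\Phi^{\delta}.
\end{align*}
Applying the Cauchy--Schwarz inequality, Assumption \ref{assump:matrices_general}, and the pointwise bound $|\alpha|<\delta$ on the support of $\Phi^{\delta}$, followed by the lifting estimate and $\nu^{1/2}\|\nabla v^{\nu}\|\lesssim\sqrt{\|f\|\|v^{\nu}\|}$, yields
\begin{align*}
|\langle g^{\nu},\phi\rangle_{I}|\leq C\left(\delta^{1/2}\|f\|+\delta^{-1/2}(\delta+\nu)\nu^{-1/2}\sqrt{\|f\|\|v^{\nu}\|}\right).
\end{align*}
Choosing $\delta=\nu$ gives \eqref{eq:gnubound_general}.

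The main obstacle is verifying that the lifting construction and its $\delta$-scaling survive the transition from the rectangular model to the curvilinear geometry. This reduces entirely to Lemma \ref{lem:alphadef}: the equivalence $\widetilde{\alpha}\sim\tilde{x}$ ensures that tubular $\delta$-neighborhoods of $I$ correspond to horizontal $\delta$-strips around $\Sigma$ with uniformly controlled distortion, so the constants in \eqref{eq:bound_data} transfer with only a harmless multiplicative factor. Everything else is a routine rerun of the rectangular proof.
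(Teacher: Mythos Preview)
Your proposal is correct and follows essentially the same approach as the paper: mimic Proposition \ref{prop:gnubound_proof} using the energy estimate $\nu^{1/2}\|\nabla v^{\nu}\|\lesssim\sqrt{\|f\|\|v^{\nu}\|}$ together with a curvilinear lifting lemma obtained from Lemma \ref{lemma:lifting} via localization and change of variables. The paper merely sketches this by referring to a ``standard localization/change of variables argument,'' whereas you spell out the chart-and-partition-of-unity construction explicitly, invoking Lemma \ref{lem:alphadef} to control the $\delta$-scaling --- this is exactly the content the paper leaves implicit.
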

\begin{proof}
	The proof mimics the proof of Proposition \ref{prop:gnubound_proof}, which is based on integration by parts, estimates of Lemma \ref{lem:pb_abs_wp}  (obtained by integration by parts), and an appropriate lifting lemma, which follows from Lemma \ref{lemma:lifting} by a standard localization/change of variables argument (cf. \cite[proof of Lemma 2.7.3]{sayas_tdbie}).
\end{proof}
\begin{proof}[Proof of Theorem \ref{theorem:stability_estimate_general}]
The proof mimics the proof of Theorem \ref{theorem:stability_estimate}. The latter relies on two ingredients: 
\begin{itemize}
	\item Proposition \ref{prop:gnubound_proof}, see its counterpart Proposition \ref{prop:gnubound_proof_general}; 
	\item integration by parts combined with Theorem \ref{theorem:regularity}. 
\end{itemize}
The second result transfers almost verbatim by using the general results of \cite{baouendi_goulaouic}.
\end{proof}
\subsubsection{An improved regularity of the conormal derivative}
The next result is a counterpart of Theorem \ref{theorem:gnubound_improved}.
\begin{theorem}
	\label{theorem:gnubound_improved_general}
	Given $v^{\nu}$ as in \eqref{eq:B32}, let the co-normal derivative at the interface $I$ be denoted by $g^{\nu}:=\gamma_{n, \nu}^{I}v^{\nu}=\left.( \alpha \mathbb{H}+i\nu\mathbb{N})\nabla v^{\nu}\right|_{I}\cdot \vec{n}_I$. Then $g^{\nu}\in {H}^{1/2}(I)$, and there exist $C, \nu_0>0$, s.t. 
	\begin{align}
		\label{eq:gnubound_key_general}
		\|g^{\nu}\|_{{H}^{1/2}(I)}\leq C\|f\|, \quad \text{ for all }0<\nu<\nu_0.
	\end{align}
\end{theorem}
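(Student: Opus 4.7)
\textbf{Proof plan for Theorem \ref{theorem:gnubound_improved_general}.} The strategy is to reduce the statement to its rectangular counterpart, Theorem \ref{theorem:gnubound_improved}, via a localization–change-of-coordinates argument analogous to the one used in Section \ref{sec:green_general}. The plan is as follows. First, since $\|g^{\nu}\|_{H^{-1/2}(I)} \lesssim \|f\|$ is already available from Proposition \ref{prop:gnubound_proof_general} combined with Theorem \ref{theorem:stability_estimate_general}, by the Fourier characterisation on the closed regular curve $I$ (cf. \cite[Thm. 4.2.1]{wendland_hsiao}) it suffices to bound $\|\partial_\tau g^{\nu}\|_{H^{-1/2}(I)}$, where $\partial_\tau$ is a tangential derivative along $I$. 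Using the partition of unity $\{\chi_k\}_{k=1}^N$ subordinate to the cover $\{\mathcal{U}_k\}$ of $I$ by charts $\bpsi_k:\overline{\Omega}\to\overline{\mathcal{U}_k}$, it is in turn enough to obtain, for each $k$, a bound on $\|\partial_\tau(\chi_k g^{\nu})\|_{H^{-1/2}(I_k)}$.

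Fix $k$. Pushing forward through $\bpsi_k^{-1}$, the function $\widetilde{v}^{\nu}_k:=\widetilde{\chi_k v^{\nu}}$ satisfies on $\Omega$ an equation of the form
\begin{equation*}
\widetilde{\operatorname{div}}\bigl((\tilde x\widetilde{\mathbb{A}}+i\nu\widetilde{\mathbb{T}})\widetilde{\nabla}\widetilde{v}^{\nu}_k\bigr)=\widetilde{F}^{\nu}_k\quad\text{in }\Omega,
\end{equation*}
where $\widetilde{\mathbb{A}}=\gamma j \mathbb{J}^{-1}\widetilde{\mathbb{H}}\mathbb{J}^{-t}$ and $\widetilde{\mathbb{T}}=j\mathbb{J}^{-1}\widetilde{\mathbb{N}}\mathbb{J}^{-t}$ are $C^{1,1}$ Hermitian positive definite (cf. Lemmas \ref{lem:alphadef}, \ref{lem:transformations} and \eqref{eq:atilde}), and the right-hand side $\widetilde{F}^{\nu}_k$ is a commutator-type remainder of the form $\widetilde{\chi_k f} + \widetilde{v}^{\nu}\widetilde{L}\widetilde{\chi}_k + \widetilde{\nabla}\widetilde{\chi}_k\cdot \text{first-order terms in }\widetilde{v}^{\nu}$. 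By Theorem \ref{theorem:stability_estimate_general} and \eqref{eq:est_main_v2} applied to $v^{\nu}$, $\widetilde{F}^{\nu}_k$ satisfies $\|\widetilde{F}^{\nu}_k\|_{L^2(\Omega)}\lesssim \|f\|_{L^2(D)}$ uniformly in $\nu$. Moreover, since $\operatorname{supp}\widetilde{v}^{\nu}_k\Subset \Omega$, $\widetilde{v}^{\nu}_k$ trivially satisfies the homogeneous Dirichlet condition on $\Gamma_p\cup\Gamma_n$ and the periodicity conditions at $\Gamma^{\pm}$ required in \eqref{eq:unu_orig}. The extended coefficients on $\Omega$ can be modified outside a neighbourhood of $\operatorname{supp}\widetilde{v}^{\nu}_k$ to match Assumption \ref{assump:matrices} (including the periodicity constraints) without changing the equation satisfied by $\widetilde{v}^{\nu}_k$, exactly as in \eqref{eq:ptilde}.

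Thus $\widetilde{v}^{\nu}_k$ is (a multiple of) a solution to a problem of the form \eqref{eq:unu_orig} with data bounded uniformly in $\nu$ by $\|f\|_{L^2(D)}$, cf. Remark \ref{rem:stab_results}. Applying Theorem \ref{theorem:gnubound_improved} yields
\begin{equation*}
\|\gamma^{\Sigma}_{n,\nu}\widetilde{v}^{\nu}_k\|_{\mathcal{H}^{1/2}(\Sigma)}\leq C\|f\|_{L^2(D)}.
\end{equation*}
Pulling back to $I$ via $\bpsi_k$ and using the equivalence of the $\mathcal{H}^{1/2}$-norm on $\Sigma$ with the parametrised $H^{1/2}$-norm on $I_k$ (cf. \cite[Thm. 4.2.1]{wendland_hsiao}), the conormal derivative transforms consistently because of the identity $\widetilde{a}_{11}=\rho_{\Sigma}\widetilde{h_I}$ established in the proof of Proposition \ref{prop:green_general_ingredient}, so that $\gamma^{\Sigma}_{n,\nu}\widetilde{v}^{\nu}_k$ equals (up to the Jacobian factor $\rho_{\Sigma}$) the pullback of $\chi_k g^{\nu}$. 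Summing over $k$ gives the result.

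\textbf{Main obstacle.} The delicate point is the treatment of the commutator terms in $\widetilde{F}^{\nu}_k$: controlling them in $L^2$ uniformly in $\nu$ requires $\|v^{\nu}\|_{\mathcal{V}_{sing}(D)}\lesssim \|f\|$ from Theorem \ref{theorem:stability_estimate_general}, and also $\nu^{1/2}\|\nabla v^{\nu}\|\lesssim \|f\|$; both are available but must be combined carefully with the precise form of the commutators produced by the cut-offs $\chi_k$ and the change of variables. A secondary technical point is that Theorem \ref{theorem:gnubound_improved} was stated under Assumption \ref{assump:matrices} (including the periodicity); the modification of the extended coefficients outside $\operatorname{supp}\widetilde{v}^{\nu}_k$, together with the fact that $\widetilde{v}^{\nu}_k$ is compactly supported in $\Omega$, makes this adjustment harmless but needs to be stated explicitly.
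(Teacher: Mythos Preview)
Your proposal is correct and follows essentially the same approach as the paper's own proof: localize via a partition of unity subordinate to a cover of $I$, straighten each chart to the rectangle $\Omega$, modify the coefficients outside the support of $\widetilde{v}^{\nu}_k$ to enforce Assumption~\ref{assump:matrices} (periodicity included), control the commutator right-hand side via Theorem~\ref{theorem:stability_estimate_general}, and then invoke Theorem~\ref{theorem:gnubound_improved} on each chart. Your opening remark about reducing to a bound on $\|\partial_\tau g^{\nu}\|_{H^{-1/2}(I)}$ is a harmless detour, since you (and the paper) ultimately apply Theorem~\ref{theorem:gnubound_improved} directly, which already yields the full $H^{1/2}$-bound on each piece; the paper instead starts from the equivalent partition-of-unity norm $\|g^{\nu}\|^2_{H^{1/2}(I)}=\sum_k\|(\chi_k g^{\nu})\circ\bpsi_k\|^2_{\widetilde H^{1/2}(\Sigma)}$ and sums, and you could drop the tangential-derivative sentence without loss.
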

\begin{proof}
This result is proven by recalling the following equivalent expression to the norm in $H^{1/2}(I)$, cf. \cite[(3.29)]{mclean}, see also the notation after Proposition \ref{prop:green_general_ingredient}:
\begin{align}
	\label{eq:gnubound_upper}
	\|g^{\nu}\|^2_{H^{1/2}(I)}= \sum\limits_{k=1}^N\|(\chi_kg^{\nu})\circ {\bpsi}_k\|_{\widetilde{H}^{1/2}(\Sigma)}^2,
\end{align}
where we also recall that $(\chi_kg^{\nu})\circ {\bpsi}_k$ has a support strictly included into $\Sigma$ for all $k=1,\ldots, N$ and define, for $q\in H^{1/2}(\Sigma)$ its extension $q_0$ by $0$ to $\mathbb{R}$ together with the associated norm
$
	\|q\|_{\widetilde{H}^{1/2}(\Sigma)}:=\|q_0\|_{H^{1/2}(\mathbb{R})}.$
To prove Theorem \ref{theorem:gnubound_improved_general} we will rely on localization techniques. Let us fix $k\in \mathbb{N}$. It is straightforward to see that $v^{\nu}_k\in H^1(\mathcal{U}_k)$ satisfies the following problem, cf. \eqref{eq:B32}:
\begin{align*}
	&\operatorname{div}((\alpha\mathbb{H}+i\nu\mathbb{N}\nabla)v^{\nu}_k)=2(\alpha\mathbb{H}+i\nu\mathbb{N})\nabla v^{\nu}\cdot \nabla \chi_k+v^{\nu}\operatorname{div}((\alpha\mathbb{H}+i\nu\mathbb{N})\nabla\chi_k)+\chi_k f =:f^{\nu}_k\text{ in }\mathcal{U}_k,\\
	&v^{\nu}_k=0 \text{ in a vicinity of }\partial \mathcal{U}_k.
\end{align*}
By Theorem \ref{theorem:stability_estimate_general} and Lemma \ref{lem:pb_abs_wp_general}, $
	\|f^{\nu}_k\|_{L^2(D)}\leq C\|f\|_{L^2(D)}.$
Next, we transform the above problem to $\Omega$. In particular, following the proof and the notation of Proposition \ref{prop:green_general_ingredient}, we rewrite the above as follows: 
\begin{align*}
	&\widetilde{\operatorname{div}}((\tilde{x}\widetilde{\mathbb{A}}+i\nu\widetilde{\mathbb{T}})\widetilde{\nabla}\widetilde{v}^{\nu}_k)=j\widetilde{f}^{\nu}_k\text{ in }\Omega,\\
	&\widetilde{v}^{\nu}_k=0 \text{ in a vicinity of }\partial\Omega.
\end{align*}
with $\widetilde{\mathbb{T}}=j\mathbb{J}^{-1}\mathbb{N}\mathbb{J}^{-t}$. We have thus obtained the problem \eqref{eq:unu_orig}, modulo the boundary conditions and the periodicity constraints on the tensors $\widetilde{\mathbb{A}}$ and $\widetilde{\mathbb{T}}$. Using the fact that $\widetilde{v}_k^{\nu}=0$ in a vicinity of $\Omega$, in particular, there exists $\delta>0$, s.t. $\widetilde{v}^{\nu}_k(\tilde{x},\tilde{y})=0$ for $|\tilde{y}|<\ell-\delta$, we fix $\delta$ and recall the definition  \eqref{eq:childelta} of a cutoff function $\chi_{\ell-3\delta/2,\delta}$. Then $\widetilde{v}^{\nu}_k$ satisfies as well the problem where 
\begin{align}
	\label{eq:vnuk_pb}
	\begin{split}
	&\widetilde{\operatorname{div}}((\tilde{x}\widetilde{\mathbb{A}}_{\delta}+i\nu\widetilde{\mathbb{T}}_{\delta})\widetilde{\nabla}\widetilde{v}^{\nu}_k)=j\widetilde{f}^{\nu}_k\text{ in }\Omega,\\
	&\widetilde{v}^{\nu}_k=0 \text{ in a vicinity of }\partial\Omega, 
	\end{split}
\end{align}
where the new matrices satisfy now periodicity constraints, and remain Hermitian and positive-definite:
\begin{align*}
\widetilde{\mathbb{A}}_{\delta}= \chi_{\ell-3\delta/2,\delta}\widetilde{\mathbb{A}}+(1-\chi_{\ell-3\delta/2,\delta})\mathbb{I}, \quad \widetilde{\mathbb{T}}_{\delta}= \chi_{\ell-3\delta/2,\delta}\widetilde{\mathbb{T}}+(1-\chi_{\ell-3\delta/2,\delta})\mathbb{I}.
\end{align*}
Remark that evidently, $\widetilde{v}_k^{\nu}\in \mathcal{H}^1(\Omega)$. From explicit expressions, cf. \eqref{eq:atilde}, it follows that  $\widetilde{\mathbb{A}}_{\delta}$ and $\widetilde{\mathbb{T}}_{\delta}$ satisfy as well regularity constraints in Assumption \ref{assump:matrices}. 
By Theorem \ref{theorem:gnubound_improved}, we conclude that, see also Remark \ref{rem:stab_results},
\begin{align}
	\label{eq:gammannu}
	\|\gamma_{n,\nu}^{\Sigma,\delta}\widetilde{v}_k^{\nu}\|_{\mathcal{H}^{1/2}(\Sigma)}\leq C\|j\widetilde{f}^{\nu}_k\|_{L^2(\Omega)},
\end{align}
where $\gamma_{n,\nu}^{\Sigma,\delta}\widetilde{v}_k^{\nu}=\gamma_0^{\Sigma}(\widetilde{x}\widetilde{\mathbb{A}}_{\delta}+i\nu \widetilde{\mathbb{T}}_{\delta})\widetilde{\nabla}\tilde{v}_k^{\nu}\cdot \vec{n}_{\Sigma}$. 

Because  $\gamma_{n,\nu}^{\Sigma,\delta}\widetilde{v}_k^{\nu}=\gamma_{n,\nu}^{\Sigma}\widetilde{v}_k^{\nu}$ and has a  support strictly included into $\Sigma$, the bound \eqref{eq:gammannu} implies that 
\begin{align}
	\label{eq:vnuk}
	\|\gamma_{n,\nu}^{\Sigma}\widetilde{v}_k^{\nu}\|_{\widetilde{H}^{1/2}(\Sigma)}\leq C'\|j\widetilde{f}^{\nu}_k\|_{L^2(\Omega)},
\end{align}
for some $C'>0$ (see \cite[Theorem 4.2.1]{wendland_hsiao} for the equivalence of different norms on $H^{1/2}(\Sigma)$).
%
%
Finally, to use the above bound in \eqref{eq:gnubound_upper}, it remains to establish a connection between $\gamma_{n,\nu}^{\Sigma}\widetilde{v}_k^{\nu}$ and $\chi_kg^{\nu}\circ \bpsi_k$:
\begin{align*}
\rho^{-1}_{\Sigma}\gamma_{n,\nu}^{\Sigma}\widetilde{v}_k^{\nu}=\chi_kg^{\nu}\circ \bpsi_k,
\end{align*}
with $\rho_{\Sigma}>c>0$ is defined Lemma \ref{lem:transformations}. The above result can be obtained either by direct computations, or recalling that the conormal derivatives can be defined variationally, cf. the expression after \eqref{eq:hdelta}, and next using \eqref{eq:lhs_paramerization}, an analogous result for $\int_{\Omega_p}\alpha\mathbb{H}\nabla u\cdot \nabla v$, and Lemma \ref{lem:transformations}.

With \eqref{eq:vnuk}, and by using the fact that $\rho_{\Sigma}$ is regular, we conclude that 
$
	\|\chi_k g^{\nu}\circ \bpsi_k\|_{\widetilde{H}^{1/2}(\Sigma)}\leq C\|f\|_{L^2(\Omega)},$
which, when inserted into \eqref{eq:gnubound_upper},  implies the desired bound \eqref{eq:gnubound_key_general}.
\end{proof}

\subsubsection{An important property of the regular part}
The following result is a counterpart of Theorem \ref{theorem:convergence_decomposition}. 
\begin{theorem}
	\label{theorem:convergence_decomposition_general}
	Let $(v^{\nu})_{\nu>0}\subset {H}^1_0(D)$ be a sequence of solutions to \eqref{eq:B32}. Then there exists a subsequence $(v^{\nu_k})_{k\in\mathbb{N}}$ which converges weakly in $L^2(D)$ to a limit ${v}^*\in \mathcal{V}_{sing}(\operatorname{div}(\alpha\mathbb{H}\nabla.);D)$. This limit necessarily satisfies 
	\begin{align*}
		\operatorname{div}(\alpha\mathbb{H}\nabla {v}^*)= f\text{ in }\Omega,\\
		[\gamma_0^{I}{v}^*]=-i\pi h_I^{-1}\gamma_n^{I}{v}^*.
	\end{align*}
\end{theorem}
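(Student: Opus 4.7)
The plan is to mimic the proof of Theorem \ref{theorem:convergence_decomposition} via a localization argument that transports everything to the rectangular model of Section \ref{sec:simpl}. First, I would use Theorem \ref{theorem:stability_estimate_general} to extract a subsequence $v^{\nu_k}\rightharpoonup v^*$ weakly in $L^2(D)$, and hence in $\mathcal{V}_{sing}(D)$ (the latter space also being reflexive). Combining $\alpha\mathbb{H}\nabla v^{\nu_k}\rightharpoonup \alpha\mathbb{H}\nabla v^*$ in $L^2(D)$ with the analogue of \eqref{eq:est_main2} giving $\nu^{1/2}\|\nabla v^{\nu}\|\lesssim \|f\|$, the full flux converges weakly to $\alpha\mathbb{H}\nabla v^*$, whose distributional divergence must therefore equal $f$. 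Together with Theorem \ref{theorem:gnubound_improved_general}, which forces $\gamma_n^{I} v^{\nu_k}\rightharpoonup \gamma_n^{I}v^*\in H^{1/2}(I)$ by continuity of the normal trace on $H(\operatorname{div};\cdot)$, this places $v^*$ in $\mathcal{V}_{sing}(\operatorname{div}(\alpha\mathbb{H}\nabla\cdot);D)$ with $\operatorname{div}(\alpha\mathbb{H}\nabla v^*)=f$.

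For the jump relation, I would localize around $I$ using the partition of unity $\{\chi_k\}_{k=1}^N$ introduced in Section \ref{sec:green_general}. Setting $v^{\nu}_k:=\chi_k v^{\nu}$ and pulling back through the chart $\bpsi_k$, the function $\widetilde{v^{\nu}_k}=v^{\nu}_k\circ\bpsi_k$ satisfies a problem of the form \eqref{eq:vnuk_pb} on $\Omega$, with Hermitian positive definite $C^{1,1}$ coefficients $\widetilde{\mathbb{A}}_{\delta},\widetilde{\mathbb{T}}_{\delta}$ (satisfying the periodicity required by Assumption \ref{assump:matrices}) and a right-hand side uniformly bounded in $L^2(\Omega)$, by Theorem \ref{theorem:stability_estimate_general} and Lemma \ref{lem:pb_abs_wp_general}. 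The key structural observation, due to Lemma \ref{lem:alphadef}, is that $\widetilde{\alpha}(\tbx)=\tilde{x}\gamma(\tbx)$ with $\gamma\in C^2(\overline{\Omega})$ strictly positive, so $\log|\widetilde{\alpha}|=\log|\tilde{x}|+\log\gamma$, the smooth factor $\log\gamma$ being harmless for the trace analysis.

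I would then apply Proposition \ref{prop:aux} to $\widetilde{v^{\nu}_k}$ (invoking Remark \ref{rem:stab_results} to handle the lower-order $L^2$ right-hand side), producing
\[ \widetilde{v^{\nu}_k}=\widetilde{u}^{\nu}_{h,k}\log(\tilde{x}+i\nu\tilde{r}_k)+\widetilde{u}^{\nu}_{cont,k}, \]
with $\|\widetilde{u}^{\nu}_{h,k}\|_{\mathcal{H}^1(\Omega)}+\|\widetilde{u}^{\nu}_{cont,k}\|_{\mathcal{H}^{1-\re}(\Omega)\cap\mathcal{H}^1_{\re}(\Omega)}\leq C_{\re}\|f\|$ uniformly in $\nu$. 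Passing to a further subsequence and taking the pointwise limit inside the logarithm as in \eqref{eq:utildlog} gives the decomposition $\widetilde{v^*_k}=\widetilde{u}^*_{h,k}(\log|\tilde{x}|+i\pi\mathbb{1}_{\tilde{x}<0})+\widetilde{u}^*_{cont,k}$. Lemma \ref{lem:deftraces_general} (whose rectangular ancestor Lemma \ref{lem:deftraces} precisely handles this setting) then yields, on $\Sigma$, the identities $[\gamma_0^{\Sigma}\widetilde{v^*_k}]=-i\pi\gamma_0^{\Sigma}\widetilde{u}^*_{h,k}$ and $\gamma_n^{\Sigma}\widetilde{v^*_k}=\widetilde{a}_{11}\gamma_0^{\Sigma}\widetilde{u}^*_{h,k}$, whence $[\gamma_0^{\Sigma}\widetilde{v^*_k}]=-i\pi\widetilde{a}_{11}^{-1}\gamma_n^{\Sigma}\widetilde{v^*_k}$.

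Finally, I would transport these relations back to $I$ using the correspondences established in the proof of Proposition \ref{prop:green_general_ingredient}: $\widetilde{a}_{11}=\rho_{\Sigma}\widetilde{h_I}$ and $\gamma_n^{\Sigma}\widetilde{v^*_k}=\rho_{\Sigma}(\chi_k\gamma_n^I v^*)\circ\bpsi_k$. After dividing out the scalar weight $\rho_{\Sigma}>0$ and summing $\sum_k$ against the partition of unity (linearity of both traces in the decomposition of Theorem \ref{theorem:decomposition_general} makes this legitimate), one obtains the desired $[\gamma_0^I v^*]=-i\pi h_I^{-1}\gamma_n^I v^*$ on $I$. The main obstacle I anticipate is verifying that the Dirichlet-trace jump transforms consistently under $\bpsi_k$ through the decomposition — specifically, checking that the smooth factor $\log\gamma$ produced by $\log|\widetilde{\alpha}|=\log|\tilde{x}|+\log\gamma$ is absorbed entirely into the regular part and so does not contaminate the jump; this is essentially the content of Lemma \ref{lem:deftraces_general}, but the bookkeeping requires care near the chart overlaps to ensure that the reconstructed $\gamma_0^I v^*$ from the pieces $(\chi_k v^*)\circ\bpsi_k$ agrees with the intrinsic Definition \ref{def:trace_general}.
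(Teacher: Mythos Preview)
Your proposal is correct and follows essentially the same route as the paper: localize via the partition of unity $\{\chi_k\}$, pull back through the charts $\bpsi_k$ to land on the rectangular problem \eqref{eq:vnuk_pb}, invoke Proposition \ref{prop:aux} there, pass to the limit as in \eqref{eq:utildlog}, and then transport the jump relation back to $I$ using the identities $\widetilde{a}_{11}=\rho_{\Sigma}\widetilde{h_I}$ and $\gamma_n^{\Sigma}\widetilde{v_k^*}=\rho_{\Sigma}\,(\chi_k\gamma_n^I v^*)\circ\bpsi_k$ from the proof of Proposition \ref{prop:green_general_ingredient}. The only cosmetic difference is the order of operations: the paper first rewrites the limit decomposition in terms of $\log|\widetilde{\alpha}|$ (absorbing $\log\gamma$ into the regular part), transports the decomposition itself back to $\mathcal{U}_k$, and then matches it against the intrinsic decomposition $\chi_k(v_h^*\log|\alpha|+v_{reg}^*)$ via Lemma \ref{lem:deftraces_general}; you instead derive the jump relation on $\Sigma$ first and transport the relation. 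Your ``main obstacle'' is precisely the bookkeeping the paper handles through this comparison, and Lemma \ref{lem:deftraces_general} is indeed the tool that does it.
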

\begin{proof}
	The existence and convergence results for the subsequence follow by the same argument as in Theorem \ref{theorem:convergence_decomposition}. 
To prove the desired result about the decomposition, we rely on the localization and change of variable techniques, cf. the proof of Proposition \ref{prop:green_general_ingredient}. By Lemma \ref{lem:deftraces_general}, we see that it is sufficient to prove the result about the jump of $v$ with $v^*$ replaced by $\chi v^*$, where $\chi$ is a cutoff equal to $1$ in the sufficiently small vicinity of the interface $I$ and vanishing otherwise. 
In particular, with the notation of the proof of Proposition \ref{prop:green_general_ingredient}, Proposition \ref{prop:aux} shows that  $\widetilde{v}_{k}^{\nu}:=(v^{\nu}\chi_k)\circ\bpsi_k$ writes 
\begin{align*}
	\widetilde{v}_{k}^{\nu}=\widetilde{v}_{k,h}^{\nu}\log(\widetilde{x}+i\nu\widetilde{r})+\widetilde{v}_{k,cont}^{\nu} \text{ in }\Omega,
\end{align*}
where $\widetilde{r}=\widetilde{\mathbb{T}}_{11}/\widetilde{\mathbb{A}}_{11}$, with $\widetilde{v}_{k,cont}^{\nu}\in \mathcal{H}^1_{\re}(\Omega)$, $0<\re<1$,  $\widetilde{v}_{k,h}^{\nu}\in \mathcal{H}^1(\Omega)$. Passing to the limit like in the proof of Theorem \ref{theorem:convergence_decomposition}, we conclude that 
\begin{align*}
	\widetilde{v}_{k}^{*}=\widetilde{v}_{k,h}^{*}\log(|\widetilde{x}|+i\pi\mathbb{1}_{\widetilde{x}<0})+\widetilde{v}_{k,cont}^{*} \text{ in }\Omega,	
\end{align*}
 with $\widetilde{v}_{k,cont}^*\in \mathcal{H}^1_{\re}(\Omega)$, $0<\re<1$,  $\widetilde{v}_{k,h}^*\in \mathcal{H}^1(\Omega)$. With Lemma \ref{lem:alphadef} the above rewrites 
\begin{align*}
	\widetilde{v}_{k}^{*}=\widetilde{v}_{k,h}^{*}\log(|\widetilde{\alpha}|-\log|\gamma|+i\pi\mathbb{1}_{\widetilde{x}<0})+\widetilde{v}_{k,cont}^{*}\text{ in }\Omega,	
\end{align*} 
and coming back to the original coordinates we rewrite
\begin{align*}
	v_k^*=v^*_{k,h}\log(|\alpha|+i\pi \mathbb{1}_{\mathcal{U}_{k}\cap D_n})+v_{k,cont}^*-{v}^*_{k,h}\log|\gamma\circ\bpsi_k^{-1}|,
\end{align*}
with ${v}_{k,cont}^*\in \mathcal{H}^1_{\re}(D)$, $0<\re<1$,  ${v}_{k,h}^*\in {H}^1_0(D)$. By Lemma \ref{lem:deftraces_general}, using the decomposition of $v_k^*$ from Proposition \ref{prop:decomp2}, namely, $v_k^*=\chi_k(v_h^*\log|\alpha|+v_{reg}^*)$, we conclude that 
\begin{align*}
	\gamma_0^{I}v_{k,h}^*=\gamma_0^I\chi_k v_h^*, \text{ and }\gamma_0^{I
	,\lambda}(v_{k,cont}^*-{v}^*_{k,h}\log|\gamma\circ\bpsi_k|+i\pi \mathbb{1}_{\mathcal{U}_{k}\cap D_n} v^*_{k,h})=\gamma_0^{I,\lambda}\chi_k v_{reg}^*,\quad \lambda\in\{n,p\},
\end{align*}
so that, since $\operatorname{supp}\chi_k\subset\mathcal{U}_k$, 
\begin{align*}
	[\gamma_0^I \chi_k v_{reg}^*]=-i\pi \gamma_0^{I}v^*_{k,h}=-i\pi\gamma_0^I\chi_k v_h^*=-i\pi h_{I}^{-1}\gamma_0^I \chi_k \gamma_n^I v^*.
\end{align*}
By the argument of Lemma \ref{lem:deftraces_general} and $\sum\limits_k\chi_k=1$ on $I$, we conclude that $[\gamma_0^I v^*]\equiv [\gamma_0^I v_{reg}^*]=-i\pi h_I^{-1}\gamma_n v^*$.  
\end{proof}
\subsubsection{Proof of Theorems \ref{theorem:LAP2}, \ref{theorem:main_result2}}
Since all the required tools have been given in the previous sections, the proof repeats verbatim the corresponding proof in Section \ref{sec:LAP_Problem}.


	\bibliographystyle{alpha} 
	\bibliography{general_bibliography}
\end{document}